\documentclass[11pt,reqno]{amsart}
\PassOptionsToPackage{dvipsnames}{xcolor}
\usepackage[
  a4paper,
  hmargin=1in,  
  vmargin=1in
]{geometry}
\usepackage[T1]{fontenc}
\usepackage[english]{babel}

\usepackage{mathtools} 
\usepackage{amssymb, amsthm}
\numberwithin{equation}{section}
 
\usepackage[T1]{fontenc}
 
\usepackage[sb]{libertinus}   
\usepackage[amsthm,upint]{libertinust1math} 
\usepackage[cal=euler,scr=rsfso,bb=boondox]{mathalpha} 
\usepackage{comment}

\usepackage{graphicx}
\usepackage{array}
\usepackage{enumerate}          
\usepackage{tikz}
\usepackage[dvipsnames]{xcolor} 
\usepackage[
  breaklinks   = true,          
  pagebackref  = true,          
  colorlinks   = true,          
  citecolor    = Green,
  linkcolor    = OrangeRed,
  urlcolor     = black,
  pdfstartview = FitW           
]{hyperref}
\usepackage[normalem]{ulem}

\theoremstyle{plain} 
\newtheorem{thm}{Theorem}[section]
\newtheorem{corollary}[thm]{Corollary}
\newtheorem{lem}[thm]{Lemma}
\newtheorem{prop}[thm]{Proposition}

\theoremstyle{remark}
\newtheorem{rem}[thm]{Remark}

\newcommand{\bbE}{\mathbf{E}}
\newcommand{\E}{\mathbf{E}}

\newcommand{\Var}{\operatorname{Var}}
\newcommand{\Cov}{\operatorname{Cov}}
\newcommand{\bbP}{\mathbf{P}}
\renewcommand{\P}{\mathbf{P}}

\newcommand{\bbZ}{\mathbb{Z}}
\newcommand{\bbR}{\mathbb{R}}

\newcommand{\bbG}{\mathbb{G}}

\newcommand{\wtN}{\widetilde{N}}

\newcommand{\ol}{\overline}

\renewcommand{\bar}{\widebar}
\newcommand{\wh}{\widehat}

\DeclareMathOperator*{\argmax}{arg\,max}

\newcommand{\cI}{\mathcal I}

\newcommand{\cF}{\mathcal F}

\newcommand{\cS}{\mathcal S}

\newcommand{\rmc}{\mathrm{c}}

\newcommand{\rmd}{\mathrm{d}}
\newcommand{\rme}{\mathrm{e}}
\newcommand{\rmB}{\mathrm{B}}

\newcommand{\supp}{\mathrm{supp}}

\newcommand{\wt}{\widetilde}

\newcommand{\eqd}{\overset{\mathrm{d}}{=}}
\newcommand{\1}{{\bf 1}}

\newcommand{\ind}[1]{\mathbf{1}_{\{ #1 \}}}    

\newcommand*{\dif}{\ensuremath{\mathop{}\!\mathrm{d}}}

\usepackage{bbm}
\newcommand{\cM}{\mathcal{M}}

\title
{Aging in a spin glass with logarithmic correlations}

\makeatletter
\let\aff@setauthors\@setauthors
\def\@setauthors{\aff@setauthors
  \begingroup\centering\footnotesize\scshape
  Technion -- Israel Institute of Technology, Haifa 3200003, Israel\par\endgroup}
\makeatother

\author{Aser Cortines}
\address[Aser Cortines]{Technion -- Israel Institute of Technology, Haifa 3200003, Israel}

\author{Oren Louidor\textsuperscript{1}}
\address[Oren Louidor]{Technion -- Israel Institute of Technology, Haifa 3200003, Israel}
\email{oren.louidor@gmail.com}

\author{Heng Ma\textsuperscript{2}}
\address[Heng Ma]{Technion -- Israel Institute of Technology, Haifa 3200003, Israel}
\email{hengmamath@gmail.com}

\author{Adela Svejda}
\address[Adela Svejda]{Technion -- Israel Institute of Technology, Haifa 3200003, Israel}

\thanks{\textsuperscript{1}\,Email: \texttt{oren.louidor@gmail.com}}
\thanks{\textsuperscript{2}\,Email: \texttt{hengmamath@gmail.com}}

\makeatletter
\newif\ifnotoc@skip
\let\notoc@tocwrite\@tocwrite
\def\@tocwrite#1#2{%
  \ifnotoc@skip\global\notoc@skipfalse
  \else\notoc@tocwrite{#1}{#2}\fi}
\newcommand{\notoc}[1]{\global\notoc@skiptrue #1}
\makeatother

\begin{document}

\begin{abstract}
We consider a continuous-time random walk on the discrete two dimensional box, driven by the discrete Gaussian Free Field (DGFF) acting as potential: When at a vertex, the walk waits an exponentially distributed time with mean given by the exponential of the field times an inverse temperature parameter and then jumps to one of its neighbors uniformly at random. We prove that when the temperature is below the critical value the walk exhibits ``aging'' at a range of pre-equilibrium time scales: Observed at any such time and then again after an additional time of the same order, there is a positive probability that the walk is found within finite distance from where it was before, with this probability given asymptotically by the Generalized Arcsine Law with a temperature-dependent parameter. We show that this is a consequence of an intricate trapping mechanism which localizes the walk for periods of time which increase with the age of the system, and describe the complex structure of the underlying trapping landscape, which is intimately related to the geometry of the near-extreme level-sets of the DGFF. Altogether, this work demonstrates for the first time an Arcsine-Law aging in the case of a spin-glass-type system with a logarithmically correlated potential, throughout its glassy phase, as predicted in the physic literature~\cite{carpentier2001glass}.
\end{abstract}

\maketitle

\vspace{-0.1cm}
\setcounter{tocdepth}{2}
\begingroup\linespread{0.95}\selectfont
\tableofcontents
\endgroup

\newpage

\section{Introduction}
\subsection{Model and statement of aging}
For $N \geq 1$, let $\mathsf{V}_{N} \coloneq(-N,N)^2 \cap \bbZ^2 $ be the 2D box of side-length $2N-1$, viewed as a subgraph of $\bbZ^2$ containing all nearest-neighbor edges between its vertices. Define $\mathsf{T}_N$ as the torus obtained from $\mathsf{V}_{N}$ by adding an edge between any two vertices on the inner boundary of $\mathsf{V}_N$ which share a common coordinate. Consider then two processes on the same probability space. The first is the discrete Gaussian Free Field (DGFF) on $\mathsf{V}_N$ with zero boundary conditions outside, namely the Gaussian field $h = (h_x :\: x \in \mathsf{V}_N)$ with zero mean and covariance given by the discrete Green Function on $\mathsf{V}_N$ with 
Dirichlet boundary conditions (see Subsection~\ref{s:1.3} for more details).

The second is defined, conditionally on a realization of the first, as the continuous-time Markov Chain $X = (X_t :\: t \geq 0)$ on $\mathsf{T}_N$ with transition probabilities given by
\begin{equation*}
 \P \bigl( X_{t+\dif t} = y  \mid X_t = x \,,\, h \bigr) = 
\begin{cases}
\frac{1}{4} \, \rme^{-\beta  h_x }\, \dif t, & x \sim y \,,\\
0, & \text{otherwise.}
\end{cases}
\end{equation*}
Above, $\beta > 0$ will be referred to as {\em inverse-temperature}, and $x \sim y$ means that $x$ and $y$ share an edge in $\mathsf{T}_N$. Evidently, $X$ is a continuous-time simple random walk on the torus $\mathsf{T}_N$ with mean holding time $\rme^{\beta h_x}$ at $x$. It can be viewed as an instance of a random walk in random potential, with the potential given by $-h$.

Our subject of interest is the phenomenon of {\em aging}, whereby a diffusive motion gradually slows down as it moves about a medium. As it turns out, this phenomenon is already present in the above setup, provided the walk is observed at time scales which are properly tuned to the size of the underlying domain. To make a precise statement, recall that the distribution function of the {\em Generalized Arcsine Law} with parameter $\lambda \in (0,1)$ is given by
\begin{equation}
\label{e:1.2a}
{\rm Asl}_{\lambda}(v)
\coloneq \frac{\sin (\lambda \pi)}{\pi} \int_{0}^{v} u^{\lambda-1} (1-u)^{-\lambda} \dif u  
\quad ; \qquad v \in [0,1] \,,
\end{equation}
and set
\begin{equation}
\label{e:alpha}
	g\coloneq \frac{2}{\pi}
\qquad ; \qquad
	\alpha \coloneq \sqrt{2\pi} \,.
\end{equation}
The principal result of this paper is thus:
\begin{thm}[Aging]
\label{thm:1}
Fix any $\beta > \alpha$ and let $\epsilon > 0$ be arbitrarily small. Let $\{t_N\}$ be any sequence satisfying
\begin{equation}
(\log N)^{-\frac{5-\epsilon}{2\alpha} \beta}
\leq \frac{t_N}{N^{2\sqrt{g}\beta} \log N} \leq (\log N)^{-\frac{3+\epsilon}{2\alpha}\beta}.
\label{eq:gamma-N-range}
\end{equation}
Then, for any $\theta \in [0,\infty)$, 
\begin{equation} \label{equation:thm:1}
\lim_{R \to \infty}\limsup_{N \to \infty}  \Big| \P \Bigl( \big\|X_{t_N(1+\theta)} - X_{t_N}\big\| < R \bigr) - \mathrm{Asl}_{\frac{\alpha}{\beta}}\Bigl(\tfrac{1}{1+\theta}\Bigr) \Big|=0 \,,
\end{equation}
where $\mathrm{Asl}_\lambda$ is the Generalized Arcsine Law defined in~\eqref{e:1.2a}.   The convergence is uniform w.r.t. $\{t_N\}$.
\end{thm}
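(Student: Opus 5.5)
The plan is to reduce the statement to a trap model and then to a stable subordinator, for which the arcsine law is classical. Write the walk as the time change of a discrete-time simple random walk $Y$ on $\mathsf{T}_N$ via the clock $S(n)=\sum_{k<n}\rme^{\beta h_{Y_k}}\tau_k$, where $\tau_k$ are i.i.d.\ standard exponentials. At the time scales \eqref{eq:gamma-N-range} the clock is dominated by visits to \emph{traps}. These are clusters of vertices where $h$ is within $O(1)$ of its maximum $m_N \approx 2\sqrt g\log N - \tfrac{3}{4}\sqrt g\log\log N$ in a local neighborhood. Using the known convergence of the extremal process of the DGFF to a Cox process with intensity $Z(\dif x)\otimes \rme^{-\alpha u}\dif u$ and i.i.d.\ cluster shapes, we replace $h$ near each local extreme by $m_N+u+$(cluster profile). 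The contribution of such a trap to the clock, per visit by $Y$, is then $\rme^{\beta(m_N+u)}$ times the cluster's local Green's-function weight $\sum_z G_{\text{cluster}}(z)\rme^{\beta\phi(z)}$ times an exponential-type variable.

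The key steps are as follows.

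\emph{(i)} Fix the depth window $u\ge -K$ singled out by $t_N$. The exponent range in \eqref{eq:gamma-N-range} (between $(\log N)^{-5\beta/2\alpha}$ and $(\log N)^{-3\beta/2\alpha}$) corresponds to traps at height $m_N-c\log\log N$ for a range of $c$. Here the number of level-set points is polynomially large in $\log N$ but still sparse, so traps are well separated on scale $N(\log N)^{-O(1)}$.

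\emph{(ii)} Show that $Y$, run until it has accumulated time $t_N$, visits a number of distinct traps that tends to infinity. Between traps it mixes in the sense that the next trap hit is approximately chosen according to the harmonic measure, which is asymptotically uniform by the recurrent 2D random walk and the sparsity of traps. The number of returns to a trap before escaping is geometric, with mean of order $\log N$ (the 2D escape probability). This produces the extra $\log N$ factor in the normalization.

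\emph{(iii)} Conclude that the total time spent in traps is, after normalization by $t_N$, a sum of i.i.d.-like heavy-tailed variables whose tail is $\P(W>s)\sim C s^{-\alpha/\beta}$, inherited from the $\rme^{-\alpha u}$ intensity. The contribution of nondeep vertices and of $Y$'s travel time is negligible (first moment, using $\beta>\alpha$). Hence the rescaled clock converges to an $\alpha/\beta$-stable subordinator $V$.

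\emph{(iv)} Deduce \eqref{equation:thm:1}. The event $\|X_{t_N(1+\theta)}-X_{t_N}\|<R$ coincides, up to errors vanishing as $R\to\infty$ then $N\to\infty$, with the event that the interval $[1,1+\theta]$ (in units of $t_N$) contains no jump-start of $V$. That is, the same trap is occupied at both times, and within a trap the walk stays at bounded distance w.h.p.\ since cluster profiles decay. The classical Dynkin–Lamperti result gives $\P(V \text{ does not jump over }[1,1+\theta]\ \text{starting}) = \mathrm{Asl}_{\alpha/\beta}(1/(1+\theta))$. We also need the converse: if a new trap is entered, the displacement exceeds $R$, which holds because traps are far apart. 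Uniformity in $t_N$ follows since all estimates depend on $t_N$ only through the window exponent lying in a compact range.

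The main obstacle is step (ii)–(iii): controlling the correlations of the DGFF. In particular, one must show that the trap depths seen successively by the walk are asymptotically independent with the correct tail, despite log-correlations and clustering of near-maxima. Here I would first condition on the coarse field (via the Gibbs–Markov decomposition at an intermediate scale), use the Cox-process limit for the local extremes given $Z$, and prove a quenched law of large numbers for the hitting distribution. This should yield a Laplace-transform convergence of the clock given the environment.
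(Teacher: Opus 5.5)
Your architecture is the paper's: jump chain plus clock process, $\Theta(\log N)$ geometric returns per trap, a nearly uniform choice of the next trap, convergence to an $\alpha/\beta$-stable subordinator, and the arcsine law for its range avoiding $[1,1+\theta]$. The gap is in the landscape input, which does not exist at the scale you need.

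In time $t_N$ the jump chain makes $\Theta(N^2(\log N)^{1-\gamma})$ steps and finds only a fraction $(\log N)^{-\gamma}$ of the $r_N$-clusters. The relevant traps are therefore local maxima at $m_N-\frac{\gamma}{\alpha}\log\log N+O(1)$, of which there are about $(\log N)^{\gamma}$. The $O(1)$ clusters within $O(1)$ of $m_N$ (your first paragraph and the window $u\ge -K$) are never visited.

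The Cox-process limit of the extremal process is a fixed-depth vague limit. It gives no counts, no isolation and no cluster statistics at depth $\Theta(\log\log N)$. Conditioning on the binding field at a fixed scale $N/K$ does not fix this, since the relevant sub-box maxima remain $\Theta(\log\log N)$ below $m_{N/K}$. Letting $K$ grow with $N$ would need that limit uniformly in $K$ and in the binding field, which is itself a new estimate (compare Proposition~\ref{prop:cluster-law}).

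What is actually required are depth-uniform one- and two-point bounds for near-extremal local maxima, obtained from the concentric decomposition and two-barrier ballot estimates. From these the paper derives:
\begin{itemize}
\item tightness of $|\mathsf{NT}^*(\delta)|/(\delta^{-\alpha/\beta}\kappa_N)$ from above and away from zero, via a truncated second moment and a Gibbs--Markov decomposition;
\item a law of large numbers in probability for the empirical joint law of depth and relative cluster profile over $\delta$-normal bottoms;
\item negligibility of deep, wide, non-isolated and shallow trap-vertices;
\item $\int\|s\|_1\sigma_\beta(\mathrm{d}s)<\infty$, which uses $\beta>\alpha$.
\end{itemize}
These are Theorems~\ref{thm:non_reachable_traps}--\ref{thm:sumability_clusters}. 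The quenched LLN, combined with uniform hitting, is what actually disposes of the DGFF correlations you identify as the main obstacle.

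Your claim that traps are well separated on scale $N(\log N)^{-O(1)}$ is false. On the box of side $N2^{-k}$ containing a typical trap, the binding field sits about $\sqrt{k}$ below $m_N-m_{N2^{-k}}$. That box therefore holds roughly $(\log N)^{\gamma}\rme^{-c\sqrt{k}}$ traps, which diverges for $k=C\log\log N$. Separation holds only on scale $N/r_N$ with $\log r_N\gg(\log\log N)^2$, which is why $r_N$ is super-polylogarithmic. Even then it holds only after discarding a non-isolated set that must be shown to be $o(\kappa_N)$. The hitting estimates (Lemma~\ref{prop:hitting-time}) consequently have to be proved for targets only $N/r_N^2$ apart.

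Step (iv) has two further holes.

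First, the walk does not stay within bounded distance of its trap: between its $\Theta(\log N)$ returns to the bottom it leaves the trap region. What is true is that the time spent outside $\mathsf{NT}(\delta,r)$ is $o(t_N)$ (Proposition~\ref{prop:approximation:clock:process}). Turning this into $\P(X_{t_N}\notin\mathsf{NT}(\delta,r))\to 0$ at a \emph{deterministic} time needs an extra anti-concentration step. Proposition~\ref{lem:only:moderate:traps:visited} does this by starting from a randomized local-time level $T(\eta,\mathbb{V})$, whose conditional law has density at most $\eta^{-2}/t_N$, and applying the strong Markov property.

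Second, your converse (``a new trap entered implies displacement larger than $R$'') fails if the walk returns to the old trap before time $(1+\theta)t_N$. Revisits must be ruled out, as in Proposition~\ref{p:2.3}. This uses that each newly hit trap is nearly uniform among $\Theta(\kappa_N)$ candidates while only $O(1)$ are visited up to time $Ct_N$.
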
 

In words, the theorem says that if the location of the walk is observed at time $t_N = t_N(\gamma)$ for $\gamma \in (\epsilon,1-\epsilon)$, where
\begin{equation}
\label{e:1.11}
t_N(\gamma) \coloneq N^{2\sqrt{g}\beta} (\log N)^{1-\frac{3+2\gamma}{2\alpha}\beta} \,,
\end{equation}
and then again $\theta t_N$ time later, then with positive probability the two locations are within $O(1)$ of each other and, moreover, this probability tends to an explicit value as $N \to \infty$. Since, for fixed $N$, the time $t_N$ can still span a large range of values in~\eqref{eq:gamma-N-range}, we see that the further the process has run on the domain, the longer it stays localized to where it has arrived. Metaphorically, it is said that the motion {\em ages} over time. See Subsection~\ref{s:1.3} for further discussion of the physical context and interpretation of this result. 

We remark that the convergence in~\eqref{equation:thm:1} is with respect to both the law of the field and the law of the walk. However, it takes an elementary argument to turn this {\em annealed}-type result to a {\em quenched}-type statement, whereby the convergence in~\eqref{equation:thm:1} holds in probability with respect to the law of the field, with the probability in the statement taken conditionally on $h$.

\subsection{The trapping landscape}
\label{s:1.2}
The underlying mechanism behind aging is (rather tautologically) that of {\em trapping}. Informally, traps are subsets of the domain to which the walk stays localized for a long period of time (a formal definition will be given below). Using the terminology of physical traps (as holes in the ground, or more physically, energy-wells), we shall refer to the {\em depth} of a trap, again informally for the moment, as a measure of how long it localizes the walk for. 

Aging occurs when deeper and deeper traps are discovered as the motion progresses and, for aging of the type shown here, when the time until a trap is discovered is proportional to the time for which it localizes the walk (as determined by its depth). This entails that a necessary input for a proof and characterization of aging is a detailed description of the underlying trapping landscape and its interplay with the motion. This is precisely the focus of this subsection.

Henceforth we implicitly fix $\beta > 0$, assume $\gamma \in (0,1)$ and abbreviate $t_N \equiv t_N(\gamma)$ as defined in~\eqref{e:1.11}. We remark that all results in this subsection hold uniformly in $\gamma$ provided that it is chosen from a fixed compact subinterval of $(0,1)$.

\subsubsection{Traps and their classification}
\label{s:1.2.1}
The mean holding time at vertex $x$ as determined by the field $h$ is 
\begin{equation*}
\tau_x \coloneq \rme^{\beta h_x} \,.
\end{equation*}
As it turns out (see Subsection~\ref{s:1.4a}), the relevant scale for this quantity to induce trapping at time $\Theta(t_N)$ is
\begin{equation}
\label{e:1.2}
\wh{t}_N \coloneq \frac{t_N}{\log N} = N^{2\sqrt{g}\beta} (\log N)^{-\frac{3+2\gamma}{2\alpha}\beta} \,.
\end{equation}
This motivates the definition of the normalized mean holding time
\begin{equation*}
\wh{\tau}_x \coloneq \frac{\tau_x}{\wh{t}_N} \,.
\end{equation*}
We shall treat {\em all} vertices in domain as {\em trap-vertices}, even if the slow-down they inflict upon the walk is just minor, and accoridngluy refer to $\wh{\tau}_x$ as the {\em depth} of the trap-vertex $x$.

Given $I \subset [0,\infty)$, we denote the set of trap-vertices whose depth lies in $I$ by
\begin{equation*}
\mathsf{T}(I) \coloneq \bigl\{ x \in \mathsf{V}_{N} :\: \wh{\tau}_x \in I \bigr\} \,.
\end{equation*}
For $\delta > 0$, we shall use the abbreviation 
\begin{equation*}
\mathsf{T}(\delta) \equiv \mathsf{T}\bigl([\delta, \infty)\bigr) \,,
\end{equation*}
and refer to the vertices in the above set as {\em $\delta$-trap-vertices}.
Due to the nature of the DGFF, $\delta$-trap-vertices tend to {\em cluster} (see Subsection~\ref{s:1.3}). This structural-geometric property is of utmost importance for the interplay between the landscape and the motion, and consequently for the way we define and classify traps.

To quantify this clustering structure, we take a sequence $\{r_N\}$ which grows faster than any power of $\log N$ but slower than any polynomial in $N$:
\begin{equation}\label{def_rn}
r_N \coloneq \rme^{(\log \log N)^{100(2+\gamma) }} \,,
\end{equation} 
and denote the set of {\em isolated} $\delta$-trap-vertices by
\begin{equation*}
\mathsf{IT}(\delta) \coloneq \Bigl\{ x \in \mathsf{T}(\delta) :\: x+y \notin \mathsf{T} ( \delta  ) \,,\,\,
\forall y \in \mathsf{B}_{N/r_N} \setminus \mathsf{B}_{r_N} 
\Bigr\} \,.
\end{equation*}
Hereafter we use $\mathsf{B}_r$ to denote the ball of radius $r$ in the supremum norm and occasionally use the notation $\mathsf{B}_r(x) \coloneq x+\mathsf{B}_r$.

It follows by construction that,
\begin{equation}
\label{e:1.16a}
\forall \  x,y \in \mathsf{IT}(\delta) \, , \quad  	\|x-y\|_\infty \in [0,r_N] \cup [N/r_N, N] \,.
\end{equation}
We call a set satisfying the above {\em $r_N$-clustered}. The vertices of $\mathsf{IT}(\delta)$ uniquely partition into subsets of diameter at most $r_N$, which are at least $N/r_N$ apart. These subsets will be referred to as the {\em $r_N$-clusters} of the set. In the case of $\mathsf{IT}(\delta)$, each such $r_N$-cluster can be naturally represented by the a.s. unique vertex carrying the $r_{N}$-local maximum of $h$ on it, so that altogether the collection of all $r_N$-clusters of $\mathsf{IT}(\delta)$ can be captured by
\begin{equation*}
\mathsf{IT}^*(\delta) \coloneq \Bigl\{ x \in \mathsf{IT}(\delta) :\: \wh{\tau}_x = \max_{y \in \mathsf{B}_{r_N}} \wh{\tau}_{x+y} \Bigr\} \,.
\end{equation*}
Using again the terminology of physical traps, we refer to $x \in \mathsf{IT}^*(\delta)$ as a {\em $\delta$-trap-bottom} (of the corresponding $r_N$-cluster of $\delta$-trap-vertices in $\mathsf{IT}(\delta)$). (Henceforth, all sets containing trap-bottoms will be marked by a $^*$ superscript.)

Next we distinguish between {\em $\delta$-normal} and {\em $\delta$-deep} trap-bottoms, according to whether their depth lies below or above $\delta^{-1}$. The corresponding subsets of $\mathsf{IT}^*(\delta)$ are then,
\begin{equation*}
\begin{split}
\mathsf{NT}^*(\delta) & \coloneq \Bigl\{ x \in \mathsf{IT}^*(\delta) :\: \wh{\tau}_x \in \bigl[\delta,\, \delta^{-1}\bigr] \Bigr\}  \,,\\
\mathsf{DT}^*(\delta) & \coloneq \mathsf{IT}^*(\delta) \setminus \mathsf{NT}^*(\delta) = \Bigl\{ x \in \mathsf{IT}^*(\delta) :\: \wh{\tau}_x \in \bigl(\delta^{-1},\,\infty\bigr) \Bigr\} \,.
\end{split}
\end{equation*}
The $\delta$-normal trap-bottoms are further divided according to whether the diameter of the corresponding cluster is smaller or larger than $r > 0$. The former are called {\em $(\delta,r)$-regular trap-bottoms}, and the latter {\em $(\delta, r)$-wide trap-bottoms}:
\begin{equation*}
\begin{split}
\mathsf{RT}^*(\delta, r) & \coloneq \Bigl\{ x \in \mathsf{NT}^*(\delta) :\: \max_{y \in \mathsf{B}_{r_N} \setminus \mathsf{B}_r} \wh{\tau}_{x+y} \in \bigl(0,\, \delta\bigr)\ \Bigr\}  \,,\\
\mathsf{WT}^*(\delta, r) & \coloneq  \mathsf{NT}^*(\delta) \setminus \mathsf{RT}^*(\delta, r) = \Bigl\{ x \in \mathsf{NT}^*(\delta) :\: \max_{y \in \mathsf{B}_{r_N} \setminus \mathsf{B}_r} \wh{\tau}_{x+y} \in \big[\delta,\, \infty\big)\ \Bigr\} \,.
\end{split}
\end{equation*}

The {\em traps} themselves are defined as the collections of vertices in balls of appropriate radii around the corresponding bottoms. Thus, 
\begin{equation}
\begin{split}
x+\mathsf{B}_{r} :\,  x \in \mathsf{NT}^*(\delta)
\ , \quad
& x+\mathsf{B}_{r} :\,  x \in \mathsf{RT}^* (\delta,r)
\ , \quad
x+\mathsf{B}_{2 r_N} :\,  x \in \mathsf{WT}^* (\delta,r)  
\ \ \text{and} \\
& x+\mathsf{B}_{2 r_N}  :\,  x \in \mathsf{DT}^*(\delta) \,,
\end{split}
\end{equation}
denote the {\em $(\delta,r)$-normal}, {\em $(\delta,r)$-regular}, {\em $(\delta,r)$-wide} and {\em $\delta$-deep} {\em trap} at (bottom) $x$, respectively. The union of all such traps are then given, respectively, by
\begin{equation}
\begin{split}
	\mathsf{NT}(\delta, r) \coloneq \mathsf{NT}^*(\delta) + \mathsf{B}_{r}  \ , \ \ 
& \mathsf{RT}(\delta, r) \coloneq \mathsf{RT}^*(\delta, r) + \mathsf{B}_{r}  \ , \ \ 
\mathsf{WT}(\delta, r) \coloneq \mathsf{WT}^*(\delta, r) + \mathsf{B}_{2 r_N}  \ \ \text{and} \\
	& \mathsf{DT}(\delta) \coloneq \mathsf{DT}^* (\delta)+ \mathsf{B}_{2 r_N} \,.
\end{split}
\end{equation}
Thus, a {\em trap} is a ball of certain radius around a trap-bottom, which, in turn, is a local-max of an $r_N$-cluster of the isolated set of (trap-)vertices $x \in \mathsf{V}_N$ whose depth $\wh{\tau}_x$ is above $\delta$. The type of the trap (normal, regular, deep, etc.) is determined by its bottom and the $r_N$-cluster around it. These, rather subtle, definitions are reflective of the intricate mechanism by which the trapping landscape inflicts a slowdown on the motion (see Subsection~\ref{s:1.4a}).

A $\delta$-trap-vertex is called a {\em non-isolated $\delta$-trap-vertex} if it lies within distance $2 r_N$ of a $\delta$-trap-vertex which is not in $\mathsf{IT}(\delta)$, namely
\begin{equation*}
\mathsf{NIT}(\delta) \coloneq  \bigl( \mathsf{T}(\delta) \setminus \mathsf{IT}(\delta) \bigr) + \mathsf{B}_{2 r_N}.
\end{equation*}
Lastly, a trap-vertex is called {\em $\delta$-shallow} if its depth is below $\delta$ and it is neither non-isolated nor contained in a deep trap:
\begin{equation*}
	\mathsf{ST}(\delta) \coloneq \mathsf{T}\bigl((0, \delta)\bigr) \setminus \bigl(\mathsf{NIT}(\delta) \cup \mathsf{DT}(\delta)\bigr) \,.
\end{equation*}

\bigskip
It is not difficult to verify that, by construction, we overall have
\begin{equation}
	\mathsf{V}_{N} = \mathsf{RT}(\delta,r) \cup \mathsf{WT}(\delta, r) \cup \mathsf{DT}(\delta) \cup \mathsf{NIT}(\delta)   \cup \mathsf{ST}(\delta) \,.
	 \label{eq:torus-decomp}
\end{equation}
We shall show below that the only traps which are ``relevant'' for the walk are the regular ones. Up to time $\Theta(t_N)$, the time spent by the walk at the remaining wide, deep, shallow and non-isolated trap-vertices is negligible, asymptotically with high probability (a.w.h.p.) as $N \to \infty$ followed by $(\delta, r) \to (0,\infty)$. The reason for distinguishing between the different types of non-regular traps is that their negligibility follows from different reasons, as shown in the next subsection.
\begin{figure}[tp]
	\centering
	\includegraphics[width=0.7\textwidth]{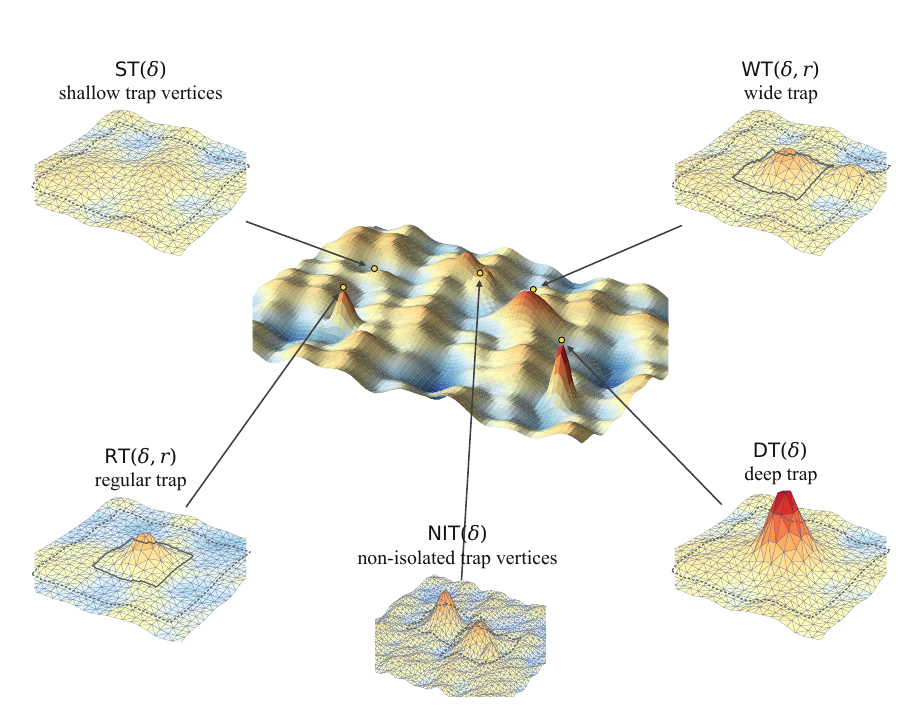}
	\caption{Illustration of different types of traps (not a real simulation). $\partial \mathsf{B}_{r_N}$ and $\partial \mathsf{B}_{r}$ are represented by dashed and solid lines, resp., and are not drawn to scale ($r \ll r_N \ll N/r_N$).
	}
	\label{fig:traps}
\end{figure}

\subsubsection{Trapping landscape results}
Let us introduce another scale parameter:
\begin{equation}
\label{e:1.12}
	\kappa_N \coloneq (\log N)^\gamma \,,
\end{equation}
with $\gamma$ the same as in the definition of $t_N(\gamma)$ in~\eqref{e:1.11}. As will be shown, $\kappa_N^{-1}$ captures the order of the fraction of vertices {\em and} $r_N$-clusters found by the walk during time $t_N(\gamma)$. Any set of trap-vertices of size $o(\kappa_N)$ or composed of that many $r_N$-clusters is completely avoided by the walk with high probability w.h.p. Our first statement shows that this is indeed the case for wide, deep and non-isolated traps. 
\begin{thm}[Non-reachable traps]
 \label{thm:non_reachable_traps} Fix any $\epsilon > 0$. Then the following assertions hold:
 \begin{align}  
& \lim_{\delta \downarrow 0}\,
\limsup_{N \to \infty}\, \P( 
| \mathsf{DT}^*(\delta) | >  \epsilon \kappa_N  ) =0  . \label{e:10} \\
& \lim_{N \to \infty}\, \P( 
|  \mathsf{T}(\delta) \setminus\mathsf{IT}(\delta)  | > \epsilon \kappa_N ) =0 , \, \text{ for all } \delta \in (0,1) . \label{e:11}  \\
&   \lim_{r \to \infty} \,
\limsup_{N \to \infty}\, \P( | \mathsf{WT}^*(\delta, r) | > \epsilon \kappa_N ) =0 ,\,\text{ for all } \delta \in (0,1) . \label{e:12}  
 \end{align}
\end{thm}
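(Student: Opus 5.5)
We reduce each assertion to known results on the extreme and near-extreme level sets of the DGFF. Recall $m_N = 2\sqrt{g}\log N - \tfrac{3}{4}\sqrt{g}\log\log N$ is the centering of the maximum. The condition $\wh\tau_x \ge \delta$ is equivalent to $h_x \ge m_N - u_N + \beta^{-1}\log\delta$, where
\[
u_N = \tfrac{\gamma}{\alpha}\log\log N\,(1+o(1)).
\]
A direct computation from~\eqref{e:1.2} shows this, using $2\sqrt g/\alpha\cdot$ conversions and $\alpha = 2/\sqrt g$. So $\mathsf{T}(\delta)$ is a level set at height $u_N \to\infty$ below $m_N$. The standard first-moment/ballot estimates, as in Biskup--Louidor and Ding--Zeitouni, give
\[
\E|\{x: h_x \ge m_N - u\}| \asymp u\,\rme^{\alpha u}
\]
uniformly in $u \le (\log N)^{o(1)}$. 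At $u = u_N$ this yields order $(\log N)^{\gamma}\log\log N$ vertices. The number of $r_N$-clusters, i.e. local maxima at level $m_N - u$, is of order $\rme^{\alpha u}$, which is order $\kappa_N$, with a random multiplicative constant given by the critical derivative martingale $Z$.

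For \eqref{e:10}, note that deep trap-bottoms are local maxima with $h_x \ge m_N - u_N + \beta^{-1}\log\delta^{-1}$. We bound the number of $r_N$-local maxima above level $m_N - u_N + s$, with $s = \beta^{-1}\log\delta^{-1}$. Its expectation, restricted to the good event that $Z$ is bounded and there is no entropic repulsion failure, is at most $C\,\rme^{\alpha(u_N - s)}(u_N - s)_+$ up to corrections. Dividing by $\kappa_N = \rme^{\alpha u_N/\ldots}$ gives a ratio of order $\rme^{-\alpha s}\to 0$ as $\delta\downarrow 0$. Markov's inequality then gives the claim. To handle the harmless log-factor mismatch, I would count cluster representatives using the "truncated" first moment: require $h$ at scale $r_N$ around $x$ to stay below the linear barrier. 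This removes the extra factor of $u$, which is the known fact that the count of clusters of the extremal process above $m_N - u$ is $\asymp Z\rme^{\alpha u}$.

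For \eqref{e:11}, we use a second-moment bound on pairs. A non-isolated vertex $x$ has a partner $x+y \in \mathsf{T}(\delta)$ with $r_N \le \|y\| \le N/r_N$. Decompose by the scale $k = \log_2\|y\|$ and compute $\E$ of the number of such pairs via the Gaussian branching approximation. The two values share a common ancestor at scale $k$, with $\log(N/2^k) \ge \log r_N = (\log\log N)^{100(2+\gamma)}$. The barrier estimate forces the common ancestor field to be at least order $\sqrt{\min(k,\log N - k)}$ below the linear interpolation. This gives a gain of $\rme^{-c\sqrt{\log r_N}}$ relative to the count $\kappa_N$ polylog, and $\rme^{-c(\log\log N)^{50}}$ beats any power of $\log N$. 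Summing over $k$ gives $\E|\mathsf{T}(\delta)\setminus\mathsf{IT}(\delta)| = o(\kappa_N)$ on the good event.

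For \eqref{e:12}, a wide bottom has a second $\delta$-vertex at distance in $[r, r_N]$ of its cluster. The same pair decomposition applies, now with $\|y\| \in [r, r_N]$, and the count per bottom is controlled by the local cluster structure. The expected number of pairs with separation of order $2^k$ carries a factor $\rme^{-c\sqrt{k}}$ times polylog-in-$k$, uniformly in $N$. This sums to a tail $\epsilon(r)\kappa_N$ with $\epsilon(r)\to 0$, matching the known result that clusters of the extremal process are a.s. finite (Biskup--Louidor). The \emph{main obstacle} is making the barrier/ballot estimate uniform when $u = u_N$ grows like $\log\log N$, rather than staying fixed. I would handle this by an explicit concentric decomposition with the ballot upper bound of Ding--Zeitouni type, valid for $u \le \log^{1/2} N$.
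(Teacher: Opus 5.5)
Your treatment of \eqref{e:10} matches the paper's. The paper applies Markov's inequality to the number of $r_N$-local maxima above $m_N+s_N^+(\delta)$ on $\{h^*_{\mathsf{V}_N}\le m_N+u\}$, and the local-maximum constraint removes the extra factor $1+u-t\asymp\log\log N$ (Corollary~\ref{cor-locmax} with $s=t$). Note that this fix is essential, not cosmetic. Note also that the level-set first moment you quote, $\asymp u\,\rme^{\alpha u}$ at depth $u$, holds only after truncating the maximum; untruncated it is $\asymp \rme^{\alpha u}\log N$.

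The genuine gap is in \eqref{e:11}. You bound $|\mathsf{T}(\delta)\setminus\mathsf{IT}(\delta)|$ by the number of pairs $(x,x+y)$ in $\mathsf{T}(\delta)$ with $\|y\|\in[r_N,N/r_N]$. You then claim a gain $\rme^{-c\sqrt{\log r_N}}$, because the common ancestor must lie $\sqrt{\min(k,\log N-k)}$ below the barrier. Nothing forces this in a first moment of pairs.

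\begin{itemize}
\item An ancestor at scale $\ell$ lying $w$ below the barrier is weighted by roughly $\ell^{-3/2}(1+w)\rme^{\alpha w}$ (expected number of such ancestors) times $\bigl((1+w)\rme^{-\alpha w}\kappa_N\bigr)^2$ (two subtrees climbing back). The product $\ell^{-3/2}(1+w)^3\rme^{-\alpha w}\kappa_N^2$ is dominated by $w=O(1)$.
\item The $\sqrt{\ell}$ repulsion is what a \emph{single} vertex sees, with weight $\ell^{-3/2}(1+w)^2\kappa_N$.
\item Summing over $\ell\ge\log_2 r_N$, the expected pair count is of order $\kappa_N^2(\log r_N)^{-1/2}$, even on $\{h^*_{\mathsf{V}_N}\le m_N+u\}$. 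This is consistent with \eqref{e:27}.
\item This order is genuinely attained. With probability $\asymp(\log r_N)^{-1/2}$, some ancestor at a scale $\ell\ge\log_2 r_N$ sits $O(1)$ below the barrier. With conditional probability bounded below, its subtree then carries order $\kappa_N$ vertices of $\mathsf{T}(\delta)$, i.e.\ order $\kappa_N^2$ pairs at mutual distances in $[r_N,N/r_N]$.
\end{itemize}

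Since $\kappa_N(\log r_N)^{-1/2}=(\log N)^{\gamma}(\log\log N)^{-50(2+\gamma)}\to\infty$, no bound on pairs can give $o(\kappa_N)$. The union bound over partner locations is lossy precisely because, given one partner, there are typically order $\kappa_N$ of them. Nor can the repulsion be built into a high-probability good event. For a single vertex, failure of $\sqrt{\ell}$-type repulsion at some scale $\ell\ge\log_2 r_N$ has probability only a negative power of $\log r_N$, far above $\kappa_N^{-1}$.

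What is needed is a first moment of the number of \emph{vertices} $z\in\mathsf{T}(\delta)$ whose annulus $z+(\mathsf{B}_{N/r_N}\setminus\mathsf{B}_{r_N})$ contains some point above $m_N+s_N^-(\delta)$. The existence of such a point must be treated as a single event, not union-bounded over its location. This is \eqref{e:26} in Proposition~\ref{prop:annular-high-point}:
\begin{itemize}
\item Given $h_z=m_N+t$ and $h^*_{\mathsf{V}_N}\le m_N+u$, the concentric-decomposition backbone is a bridge below a barrier.
\item A point above $m_N+s$ at distance $2^k$ forces the bridge to come roughly within $u-s$ of the barrier at scale $k$.
\item The entropic repulsion bound of Lemma~\ref{lem-ent-rw} makes this unlikely by a negative power of $u-s$, simultaneously for all scales between $\rho(u-s)$ and $\wtN/\rho(u-s)$.
\end{itemize}
With $u=\log\log N$ and $s=s_N^-(\delta)$, this gives an expectation $\lesssim(u+1)(u-s+1)^{-3}\rme^{-\alpha s}=o(\kappa_N)$ in the bulk, plus a crude bound near $\partial\mathsf{V}_N$.

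In \eqref{e:12} the same misdiagnosis recurs. The expected number of partners within $O_\delta(1)$ of a bottom at distance $\asymp 2^k$ decays only polynomially in $k$, roughly like $k^{-3/2}$, not like $\rme^{-c\sqrt{k}}$. However, the $r_N$-local-maximum property of the bottom rules out the blow-up seen above, and $\sum_{k\ge\log_2 r}k^{-3/2}\to0$ uniformly in $N$. So that part of your plan can be repaired once the rate is corrected. The paper instead uses the one-point bounds \eqref{e:26a} and \eqref{e:26}.
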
 

We turn to the $\delta$-shallow trap-vertices. While their number is not $o(\kappa_N)$, it is their shallowness which ensures that their total contribution to the time accumulated by the walk is negligible. This is a consequence of the next theorem, which shows that the sum of the depths of all such vertices is $o(\kappa_N)$ w.h.p. Since only a fraction of order $\kappa_N^{-1}$ of them will be visited, their total contribution to the accumulated depth will be $o(1)$ and hence negligible as well.
\begin{thm}[Negligibility of shallow traps]
\label{thm:sum_shallow_traps}
For any $\epsilon > 0$,  we have 
\begin{equation*}
	\lim_{\delta \downarrow 0}\, \limsup_{N \to \infty}\, \P \Bigl( \sum_{x \in \mathsf{ST} (\delta)}  \wh{\tau}_x  >  \epsilon  \, \kappa_N  \Bigr) = 0 . 
\end{equation*}
\end{thm}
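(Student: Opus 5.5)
Since $\mathsf{ST}(\delta)\subseteq\mathsf{T}((0,\delta))$, it suffices to bound
\[
S_N(\delta):=\sum_{x\in \mathsf{V}_N}\wh\tau_x \ind{\wh\tau_x<\delta},
\]
and the restriction to vertices that are isolated and outside deep traps is needed only to remove a rare event. We cannot simply bound $\E S_N(\delta)$. For $\beta>\alpha$ the first moment of $\sum_x \rme^{\beta h_x}$ is dominated by heights near $\sqrt g\,\beta\cdot 2\log N$, far above the maximum $m_N=2\sqrt g\log N-\tfrac34\sqrt g\log\log N$. The truncation $\wh\tau_x<\delta$ caps $h_x$ at level $u_N:=\beta^{-1}\log(\delta\wh t_N)=2\sqrt g\log N-\tfrac{3+2\gamma}{2\alpha}\log\log N+O(1)$. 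This level lies $\tfrac{\gamma}{\alpha}\log\log N$ (up to constants, using $\sqrt g\cdot\frac34\cdot$ vs.\ $\frac{3}{2\alpha}$, which agree since $\alpha=\sqrt{2\pi}$ and $\sqrt g=\sqrt{2/\pi}$ give $\tfrac{3\sqrt g}{4}=\tfrac{3}{2\alpha}$) below $m_N$. So the sum is governed by heights $u_N-s$, $s\ge0$.

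The plan is to compute a truncated first moment of level-set sizes. I would use the standard Gaussian/ballot-type estimate for the DGFF, obtained from the first moment of intermediate level sets with a barrier (as in Biskup--Louidor). Write $m_N-u_N=\lambda_N:=\tfrac{\gamma}{\alpha}\log\log N$. The expected number of vertices with $h_x\in[m_N-\lambda_N-s-1,\,m_N-\lambda_N-s]$ is then of order $(\lambda_N+s)\,\rme^{\alpha(\lambda_N+s)}$. With the barrier restriction (no ancestor along the concentric decomposition exceeding the max-line by much) the linear prefactor $(\lambda+s)$ is genuine. Without the barrier the prefactor is a power of $\log N$ and must be discarded. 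For such a vertex, $\wh\tau_x\approx\delta\rme^{-\beta s}$. Hence
\[
\E\Bigl[S_N(\delta);\,G_N\Bigr]\lesssim \sum_{s\ge0}\delta\,\rme^{-\beta s}(\lambda_N+s)\rme^{\alpha(\lambda_N+s)}\asymp \delta\,\lambda_N\,\rme^{\alpha\lambda_N}\sum_s\rme^{-(\beta-\alpha)s}(1+s/\lambda_N).
\]
Here $G_N$ is the good event that the barrier holds, and its probability tends to one. The series converges because $\beta>\alpha$. Also $\rme^{\alpha\lambda_N}=(\log N)^{\gamma}=\kappa_N$. This gives the bound $\delta\,\kappa_N\log\log N$, which overshoots $\kappa_N$ by a $\log\log N$ factor.

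\textbf{Main obstacle.} The hard part is to remove that extra factor $\lambda_N$. The way to do it is to use that $x\in\mathsf{ST}(\delta)$ is \emph{not} near a clustered point, since $\mathsf{NIT}(\delta)$ and $\mathsf{DT}(\delta)$ are excluded. The $\lambda$ factor in the level-set count comes from vertices whose coarse-grained ancestors (the harmonic averages at scales between $r_N$ and $N/r_N$) sit high. Such vertices have many $\delta$-trap-vertices at mesoscopic distance, making them non-isolated. I would therefore decompose along the concentric-annuli (Gibbs--Markov) representation of $h_x$. Requiring that no point of $\mathsf{T}(\delta)$ lie in $x+(\mathsf{B}_{N/r_N}\setminus\mathsf{B}_{r_N})$, and that $x$ be at distance more than $2r_N$ from deep bottoms, forces the random walk of ancestor values to stay at height $\le -\lambda+O(\text{small})$ below the max line on intermediate scales. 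A ballot estimate then replaces $(\lambda+s)$ by $O(1+s)$ times a factor $\lambda^{-1}$ that is cancelled. The resulting bound is $\E[S_N(\delta)\ind{\cdots}]\le C\delta\kappa_N$. The regions within $r_N$ of $\mathsf{B}_{r_N}$-scale and within $N/r_N$ of the boundary are treated separately, using the fact that scales of size $\log r_N=(\log\log N)^{O(1)}$ contribute only bounded fluctuations at this truncation.

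Finally, Markov's inequality gives $\P(\sum_{\mathsf{ST}(\delta)}\wh\tau_x>\epsilon\kappa_N)\le C\delta/\epsilon+\P(G_N^{\rm c})$. Taking $N\to\infty$ and then $\delta\downarrow0$ completes the proof. If the ballot-improvement step proves too delicate, a fallback is to combine the barrier bound with Theorem~\ref{thm:non_reachable_traps}. That theorem shows the clustered sets contain $o(\kappa_N)$ points, and one would argue that the excess $\lambda_N$ mass is concentrated in $\mathsf{NIT}(\delta)$.
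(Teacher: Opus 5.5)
Your diagnosis is right: if you do not use the exclusion of $\mathsf{NIT}(\delta)\cup\mathsf{DT}(\delta)$, the truncated sum is genuinely of order $\delta^{1-\alpha/\beta}\kappa_N\log\log N$. Your opening claim is therefore false. It does not suffice to bound $S_N(\delta)$, and the restriction is not there to remove a rare event; it is what removes this factor.

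The gap is in the step meant to remove $\lambda_N$, which fails for two reasons.

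First, the hypothesis is not available. The condition that no point of $\mathsf T(\delta)$ lies in $x+(\mathsf B_{N/r_N}\setminus\mathsf B_{r_N})$ is the isolation condition for $\delta$-trap-vertices. A vertex $x\in\mathsf{ST}(\delta)$ has $\wh\tau_x<\delta$, so it is not a trap-vertex and no such condition is imposed on it. For instance, $x$ can lie at distance $3r_N$ from an isolated normal trap-bottom.

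Second, even granting such a constraint, it acts on the wrong scales. In the concentric-decomposition ballot estimate, the factor $1+u-t\gtrsim\lambda_N+j$ is the endpoint factor of the backbone bridge. It is governed by the barrier on the last $\mathrm{poly}(u-t)$ scales, i.e.\ by the field in $x+\mathsf B_{\rho(u-t)}\subset x+\mathsf B_{r_N}$. Lowering the barrier by $O(\lambda_N)\ll\sqrt{\log r_N}$ on the scales between $r_N$ and $N/r_N$ changes the bridge probability by at most a constant factor, since the terminal barrier in Lemma~\ref{lem-two-barrier-ballot-upper-bound-rw} is unchanged. The excess comes from shallow vertices within distance $r_N$ of a much higher point. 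So your remark that the scales below $\log r_N$ contribute only bounded fluctuations is backwards.

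The missing idea is a purely local consequence of $x\in\mathsf{ST}(\delta)$, namely $h^*_{x+\mathsf B_{r_N}}\le m_N+s^+_N(\delta)$. To see it, suppose $y\in\mathsf B_{r_N}(x)$ has $\wh\tau_y>\delta^{-1}$, and let $z$ maximize $h$ over $\mathsf B_{r_N}(y)$. Both $y$ and $z$ lie in $\mathsf{IT}(\delta)$, because $x\notin\mathsf{NIT}(\delta)$.
\begin{itemize}
\item If $z\in\mathsf{IT}^*(\delta)$, then $z\in\mathsf{DT}^*(\delta)$ and hence $x\in\mathsf{DT}(\delta)$, a contradiction.
\item Otherwise some $w\in\mathsf B_{r_N}(z)\setminus\mathsf B_{r_N}(y)$ has $h_w>h_z$. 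This $w\in\mathsf T(\delta)$ lies in $y+(\mathsf B_{N/r_N}\setminus\mathsf B_{r_N})$, contradicting the isolation of $y$.
\end{itemize}

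With this in hand, the estimate goes as follows.
\begin{itemize}
\item Take the slab $h_x\in m_N+s^-_N(\delta)-j+[0,1]$ with $j\le(\log\log N)^2$, so that $\rho(u-t)\le r_N$. Corollary~\ref{cor-locmax} with $s=s^+_N(\delta)$ replaces the ballot factor $1+u-t$ by $1+s-t\lesssim j+\log\frac1\delta$.
\item Summing $\delta\rme^{-\beta j}(j+\log\frac1\delta)\rme^{-\alpha(s^-_N(\delta)-j)}$ gives $(1+u)\,\delta^{1-\alpha/\beta}\log\frac1\delta\,\kappa_N$ on $\{h^*_{\mathsf V_N}\le m_N+u\}$. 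The power is $\delta^{1-\alpha/\beta}$ rather than your $\delta$, which is harmless since $\beta>\alpha$.
\item For $(\log\log N)^2<j\le\frac12\sqrt{\log N}$ the local constraint can be dropped, because $\lambda_N\ll j$ is then absorbed by $\rme^{-(\beta-\alpha)j}$.
\item For larger $j$ a plain Gaussian tail suffices, and Markov's inequality together with \eqref{e:19} concludes.
\end{itemize}

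Your fallback through Theorem~\ref{thm:non_reachable_traps} is essentially circular. Showing that the excess mass sits in $\mathsf{NIT}(\delta)\cup\mathsf{DT}(\delta)$ (not only in $\mathsf{NIT}(\delta)$) is the content of the theorem itself. Moreover, counts of non-isolated vertices or deep bottoms do not control the $\wh\tau$-mass of the $r_N$-balls around them without exactly the local estimate above.
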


This leaves only the regular traps. The next, ``positive'', result shows that these are indeed abundant enough. Again the relevant scale is $\kappa_N$.
\begin{thm}[Abundance of regular traps]
\label{thm:regular traps1}
For  any $\epsilon > 0$,  there exist constants $0<\rho^{-}_{\epsilon}<\rho^{+}_{\epsilon} <\infty$ such that   
\begin{equation}
\label{e:14}  \inf_{\delta \in (0,1/2]} \, \inf_{r \ge 1} \  
\liminf_{N \to \infty} \, 
\P\Bigl( \rho^{-}_{\epsilon} \le \frac{ |\mathsf{RT}^*(\delta, r) | }{\delta^{-\alpha/\beta}  \kappa_N  } \le  \frac{ |\mathsf{NT}^*(\delta ) | }{\delta^{-\alpha/\beta}  \kappa_N  } \le \rho^{+}_{\epsilon}  \Bigr)  \ge 1- \epsilon  .
\end{equation}
\end{thm}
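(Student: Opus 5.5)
The natural route is through the extremal theory of the DGFF, once we identify what the threshold $\wh{\tau}_x \geq \delta$ means for the field. Writing $m_N \coloneq 2\sqrt{g}\log N - \tfrac34\sqrt{g}\log\log N$ and using $\tfrac34\sqrt g = \tfrac{3}{2\alpha}$, the definition~\eqref{e:1.2} gives
\begin{equation*}
\wh{\tau}_x \geq \delta \iff h_x \geq m_N - u_N + \beta^{-1}\log\delta \,, \qquad u_N \coloneq \tfrac{\gamma}{\alpha}\log\log N \,,
\end{equation*}
so that $\rme^{\alpha u_N} = \kappa_N$. Thus $\delta$-trap-bottoms are $r_N$-local maxima of $h$ at depth roughly $u_N - \beta^{-1}\log\delta$ below $m_N$. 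The plan is to invoke, or prove, an intermediate-depth version of the convergence of the structured extremal process. Concretely, we want the point measure
\begin{equation*}
\kappa_N^{-1}\sum_{x \,:\, r_N\text{-local max}} \delta_{x/N}\otimes\delta_{h_x - m_N + u_N}\otimes\delta_{(h_x - h_{x+y})_{y}}
\end{equation*}
to converge in law to $Z(\dif z)\otimes \rme^{-\alpha s}\dif s\otimes \nu(\dif\phi)$. Here $Z$ is the critical derivative-martingale measure, which is a.s.\ finite with positive total mass, and $\nu$ is the cluster law. Convergence is needed at least on sets of the form $[a,b]\times\{\phi:\,\phi_y \geq c \text{ for } 1\le\|y\|\le r_N\}$. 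If this is not among the earlier results, I would derive it by a first and second moment computation conditional on the field at a coarse scale $N/r_N$. The concentric-decomposition/ballot estimates give that the mean number of such local maxima above level $m_N - u$ is $\asymp u\,\rme^{\alpha u}$ times the normalizing factor, with the extra $u$ absorbed by the derivative-martingale normalization, as in the Ding--Roy--Zeitouni intermediate-level computations.

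\emph{Upper bound.} Every $x \in \mathsf{NT}^*(\delta)$ is an $r_N$-local maximum with $h_x \geq m_N - u_N + \beta^{-1}\log\delta$. By the above, the number of such maxima divided by $\kappa_N$ converges in law to $\alpha^{-1} Z(\text{domain})\,\delta^{-\alpha/\beta}$. Since $Z$ is a.s.\ finite, choosing $\rho^+_\epsilon$ as a large quantile of $\alpha^{-1}Z$ gives the right inequality uniformly in $\delta \le 1/2$. The middle inequality $\mathsf{RT}^*\subseteq\mathsf{NT}^*$ is trivial.

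\emph{Lower bound, uniform in $r\geq 1$.} The difficulty is that the event must hold even for $r=1$, so we need clusters in which the top is the only $\delta$-trap-vertex within $\mathsf{B}_{r_N}$. I would pick $c>0$ such that $q \coloneq \nu(\phi_y \ge c\ \forall y\ne 0) > 0$, which follows from the known tail of the cluster law. I would then count $r_N$-local maxima with $\wh{\tau}_x \in [\rme^{-\beta c}\cdot\rme^{\beta c}\delta', \delta^{-1}]$, taking the lower end to be $\delta\,\rme^{\beta c}$. For these the top is at least at depth $\delta\,\rme^{\beta c}$, and every other point of $\mathsf{B}_{r_N}(x)$ has depth below $\delta$. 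By the convergence above, their number is at least $q\,\alpha^{-1} Z\,\delta^{-\alpha/\beta}(\rme^{-\alpha c}-\delta^{2\alpha/\beta})\kappa_N$. This is $\geq \rho^-_\epsilon\,\delta^{-\alpha/\beta}\kappa_N$ with probability $\geq 1-\epsilon/2$ for all $\delta\le 1/2$, once $\rho^-$ is small, since $\rme^{-\alpha c}-\delta^{2\alpha/\beta}$ is bounded below after possibly shrinking $c$.

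It remains to show that these maxima lie in $\mathsf{IT}^*(\delta)$, i.e.\ that no $\delta$-trap-vertex lies in the annulus $\mathsf{B}_{N/r_N}\setminus\mathsf{B}_{r_N}$ around them. Those that fail this are within distance $N/r_N$ of another cluster, and hence are counted in $\mathsf{T}(\delta)\setminus\mathsf{IT}(\delta)$. By~\eqref{e:11} there are $o(\kappa_N)$ of these, so removing them costs nothing. A maximum of $\mathsf{B}_{r_N}$ that is isolated is then a $(\delta,r)$-regular trap-bottom for every $r\ge 1$.

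The main obstacle is the intermediate-depth extremal convergence itself. It requires a depth $u_N\to\infty$ rather than fixed, joint control of the local cluster shape within $\mathsf{B}_{r_N}$ with $r_N$ superpolylogarithmic, and uniformity in $\delta$. For the last point I would use that the limiting intensity is exactly exponential, so $\delta$ enters only through the factor $\delta^{-\alpha/\beta}$, while the error terms in the second-moment bound are controlled uniformly for depths in $[u_N, u_N+\beta^{-1}\log 2^{\,\cdot}]$-type windows.
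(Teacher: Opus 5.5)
\textbf{Main gap.} Your argument rests entirely on what you yourself call the main obstacle: a law of large numbers for the structured extremal process at the diverging depth $u_N$, jointly with cluster shapes. This is not established anywhere. Your sketch, ``first and second moment computation conditional on the field at scale $N/r_N$'', names the project but does not carry it out. The paper explicitly declines to derive such a scaling limit because it is non-trivial.

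What you need is also not a convergence-in-law statement in the usual sense. The event $\{\phi_y\ge c \text{ for } 1\le\|y\|\le r_N\}$ depends on $N$, while convergence of cluster shapes to $\nu$ in the product topology controls only finitely many coordinates. You would therefore still need a separate uniform estimate ruling out near-maximal values in the annuli $\mathsf{B}_{r_N}\setminus\mathsf{B}_R$.

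The theorem only asks for two-sided tightness, and the paper gets it from truncated moments without identifying any limit:
\begin{itemize}
\item \emph{Upper bound.} A first moment on $\{h^*_{\mathsf{V}_N}\le m_N+u\}$ (Corollary~\ref{cor-locmax}, giving \eqref{e:28}), plus the right tail of the maximum.
\item \emph{Lower bound.} The cluster constraint is built into the counted set. The paper counts $r_N$-isolated $r_N$-local maxima with $\wh\tau_x\in[\delta,2\delta]$ whose other points in $\mathsf{B}_{r_N}(x)$ are all at least $2$ below $h_x$; these lie in $\mathsf{RT}^*(\delta,r)$ for every $r$. Lemma~\ref{lem:16a} shows there are $\gtrsim \rme^{-\alpha s}$ of them, via a Gibbs--Markov split into $K^2$ sub-boxes:
  \begin{itemize}
  \item the sub-box counts are i.i.d.\ with bounded ratio of second moment to squared first moment (Lemma~\ref{lem:16c}, using the one-point lower bound that carries the $\mathsf{B}_{r_N}$ constraint and the two-point upper bound);
  \item Chebyshev is applied conditionally on the binding field;
  \item the number of sub-boxes with tame binding field is shown to diverge with $K$.
  \end{itemize}
  Non-isolated maxima are then discarded, much as you do via \eqref{e:11}.
\end{itemize}

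\textbf{Reversed window in your lower bound.} If $\wh\tau_x\ge\delta\rme^{\beta c}$ and $h_{x+y}\le h_x-c$, you only get $\wh\tau_{x+y}\le \rme^{-\beta c}\wh\tau_x$. This can be as large as $\rme^{-\beta c}\delta^{-1}$, which exceeds $\delta$ for the small $c$ you choose. Your claim that every other vertex of $\mathsf{B}_{r_N}(x)$ has depth below $\delta$ needs the opposite constraint $\wh\tau_x<\delta\rme^{\beta c}$. The window must therefore sit just above the threshold, $\wh\tau_x\in[\delta,\delta\rme^{\beta c})$. This changes your count to order $q\,\alpha^{-1}Z\,(1-\rme^{-\alpha c})\,\delta^{-\alpha/\beta}\kappa_N$, which is still of the right order. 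It is exactly the paper's choice of $[\delta,2\delta]$ with gap $2>\beta^{-1}\log 2$.

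The remaining steps are fine: the inclusion $\mathsf{RT}^*\subseteq\mathsf{NT}^*$, and removing non-isolated maxima via \eqref{e:11}.
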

We remark that the number of regular traps should no doubt admit a scaling limit after normalization by $\kappa_N$ (see the discussion in Subsection~\ref{s:1.3} and also~\cite{biskup2024near}). However, since deriving such a limit is a non-trivial task, we circumvent it by proving tightness for the total number of regular traps (both from above and away from $0$, as in the previous theorem), together with a limit in probability for the empirical distribution of the joint relative depths of all vertices in such traps. The latter is the subject of the next theorem.

\begin{thm}[Asymptotic distribution of regular traps]
\label{thm:regular traps2} Let $r \ge 1$ and 
 $s \in (0,1]^{\bbZ^2}$ with $\{y: s_{y} < 1\} \subset \mathsf{B}_r \setminus \{0\}$.  
 Let also $I \subset [\delta,\delta^{-1}]$ be an interval. Then, for all $\epsilon > 0$, 
\begin{equation} \label{e:16.2} 
 \limsup_{N \to \infty}\P\biggl( \Big|     \sum_{x \in \mathsf{NT}^*(\delta)}  \frac{  \ind{\wh{\tau}_x \in I ,\,\, \wh{\tau}_{x+y}/\wh{\tau}_x \le s_y ,\,\forall\, y  \in \mathsf{B}_{r} }  }
 { |\mathsf{NT}^*(\delta) |}   \, 
- \, \frac{\int_{I} t^{-(\alpha/\beta + 1)} \rmd t}{\int_{\delta}^{\delta^{-1}} t^{-(\alpha/\beta + 1)} \rmd t} 
\times \sigma_\beta \Bigl(\prod_{y \in \bbZ^2} (0, s_y]\Bigr)  \Big| > \epsilon \biggr)=0 \,,
\end{equation} 
where $\sigma_\beta$ is a probability measure 
on $(0,1]^{\bbZ^2}$.
\end{thm}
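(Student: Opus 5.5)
We will derive Theorem~\ref{thm:regular traps2} from the cluster-process description of the near-extremal level sets of the DGFF, as in Biskup--Louidor. Since $\wh\tau_x=\rme^{\beta h_x}/\wh t_N$, the event $\wh\tau_x\ge\delta$ is the event $h_x\ge m_N-\lambda_N+\beta^{-1}\log\delta$, with $m_N=2\sqrt g\log N-\tfrac34\sqrt g\log\log N$ and $\lambda_N$ of order $\tfrac{\gamma}{\alpha}\log\log N$. Indeed $\beta m_N$ reproduces the exponent of $N$ and the $(\log N)^{-3\beta/(2\alpha)}$ factor in $\wh t_N$, and the remaining factor $(\log N)^{-\gamma\beta/\alpha}$ corresponds to lowering the level by $\lambda_N=\gamma\alpha^{-1}\log\log N$. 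So $\mathsf{NT}^*(\delta)$ is the set of $r_N$-local maxima of the DGFF in the window $[m_N-\lambda_N+\beta^{-1}\log\delta,\ m_N-\lambda_N-\beta^{-1}\log\delta]$, restricted to isolated clusters. The intermediate-depth results (Biskup--Gufler--Louidor type, ``near-extremal level sets at depth $\lambda_N\to\infty$'') state that the point process
\[
\sum_{x\ \text{local max}}\delta_{h_x-m_N+\lambda_N}\otimes\delta_{h_{x+\cdot}-h_x}
\]
normalized by $\lambda_N\rme^{-\alpha\lambda_N}\asymp\kappa_N$ has the following behaviour. The first marginal is asymptotically $Z\,\rme^{-\alpha u}\dif u$ with a random $Z$, the derivative martingale, and the cluster shapes are i.i.d.\ with law $\nu$, independent of the heights. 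Theorem~\ref{thm:regular traps1} already gives tightness of $|\mathsf{NT}^*(\delta)|/\kappa_N$ above and away from $0$. We will therefore prove that the numerator sum in~\eqref{e:16.2}, divided by $\kappa_N$, equals $Z\,c\int_I t^{-\alpha/\beta-1}\dif t\cdot\sigma_\beta(\cdots)+o_P(1)$, and that the same holds with the indicator removed and $I=[\delta,\delta^{-1}]$. Taking the ratio then cancels the random $Z$, and $\sigma_\beta$ is defined as the push-forward of $\nu$ under $(\phi_y)\mapsto(\rme^{\beta\phi_y})$.

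The steps are as follows. First, change variables $u=\beta^{-1}\log t$, so that $\rme^{-\alpha u}\dif u$ becomes $\beta^{-1}t^{-\alpha/\beta-1}\dif t$, which explains the density in~\eqref{e:16.2}. The condition $\wh\tau_{x+y}/\wh\tau_x\le s_y$ is exactly $h_{x+y}-h_x\le\beta^{-1}\log s_y$, and it involves only $y\in\mathsf B_r$. Second, prove a law of large numbers for the counts. Condition on the field's coarse-grained harmonic averages at scale $N/r_N$, or alternatively use the concentric decomposition. Given this, the local configurations around distinct $r_N$-clusters, which are $N/r_N$ apart, are nearly independent. Each local configuration contributes a Bernoulli variable whose conditional mean is proportional to the local intensity, weighted by the cluster-shape probability, and this shape probability converges to $\nu(\cdot)$ uniformly. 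A second-moment (conditional variance) bound then shows that the counts divided by their conditional means tend to $1$, because the number of clusters is of order $\kappa_N\to\infty$. Third, handle the boundary effects: the continuity sets of $\nu$, the endpoints of $I$, and the exclusion of non-isolated vertices. The last is controlled by~\eqref{e:11}, and membership in $\mathsf{NT}^*$ versus being a general $r_N$-local maximum differs only by $o_P(\kappa_N)$.

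The main obstacle is the second step. We need a law of large numbers at intermediate depth $\lambda_N\to\infty$ for joint height/shape statistics, with the shape converging to a deterministic law $\nu$ and decoupled from the random normalization $Z$. We would first try to bring conditional independence in through a Gibbs–Markov decomposition at scale $N/r_N$: write $h=h^{\rm coarse}+\sum_i h^{(i)}$ with independent DGFFs $h^{(i)}$ on boxes of side $N/r_N$. Each box then holds $O(1)$ clusters in expectation, so a Chebyshev bound across the $r_N^2$ boxes, conditional on $h^{\rm coarse}$, gives concentration. The local shape law comes from the known convergence of the DGFF seen from a high local maximum, namely $h_{x+\cdot}-h_x$ converging to the pinned field $\phi$ conditioned to stay nonpositive, and this convergence is uniform over the height window.
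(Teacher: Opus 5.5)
The structure of your argument matches the paper's. You reduce to heights, change variables, define $\sigma_\beta$ as the push-forward of $\nu$, and discard non-isolated and boundary points via \eqref{e:11}. You then coarse-grain by Gibbs--Markov, show that the numerator and denominator of \eqref{e:16.2} concentrate around their conditional means given the binding field, and show that these means factor as a common random intensity times $\frac{\int_I t^{-\alpha/\beta-1}\dif t}{\int_\delta^{\delta^{-1}} t^{-\alpha/\beta-1}\dif t}\,\sigma_\beta(\prod_y(0,s_y])$. Taking the ratio finishes, as in Lemmas~\ref{lem:15} and~\ref{lem:15.5}.

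\textbf{The main gap.} The step carrying all the weight is declared known, and it is not.
\begin{itemize}
\item No theorem at depth $\frac{\gamma}{\alpha}\log\log N$ gives shapes i.i.d.\ $\nu$, independent of heights distributed as $Z\rme^{-\alpha u}\dif u$. Right after Theorem~\ref{thm:regular traps1} the paper says such a limit is non-trivial and deliberately avoids it.
\item The Biskup--Gufler--Louidor results concern level sets at depth of order $\sqrt{\log N}$, not local shapes.
\item The Biskup--Louidor convergence to $\nu$ is for local maxima at height $m_N+O(1)$. It is not uniform over a window $\Theta(\log\log N)$ below $m_N$, and it is not conditional on a coarse field.
\end{itemize}
What the ratio argument actually needs is Proposition~\ref{prop:cluster-law}. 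Freeze the binding field as a deterministic shift $\psi$ obeying \eqref{e:2.11}. Then the density that $z$ is a local maximum at height $m_{KN}+s+t$ with shape below $\omega$, under a global cap, equals $\rme^{-\alpha s}\nu(\omega)$ times the density for $s=0$, $\omega=\mathbf 0$. The error \eqref{e:28c-error} is uniform in $\psi$.

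Its proof (Lemma~\ref{lem:extrmal-asymp}) has two ingredients:
\begin{itemize}
\item the concentric decomposition plus entropic repulsion, which decouple the inner scales from the outer ones; $\nu$ enters through $\Xi^{\mathrm{in}}_{\mathfrak n-\ell}(\omega)/\Xi^{\mathrm{in}}_{\mathfrak n-\ell}(\mathbf 0)\to\nu(\omega)$;
\item the stability \eqref{eq:Xi-out-stability-additive} of the outer factor under height shifts, which produces the exact factor $\rme^{-\alpha s}$, i.e.\ the shape/height independence you assume.
\end{itemize}
You also never need $Z$. The random normalization is the conditional intensity given the coarse field, and it cancels in the ratio. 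Proving numerator$/\kappa_N\to cZ\int_I\cdots$ would be exactly the unavailable scaling limit.

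\textbf{The concentration step is under-specified.} At your scale the expected number of clusters per box is $\kappa_N/r_N^2\to0$, not $O(1)$. In any case a first-moment bound does not control the conditional variance.
\begin{itemize}
\item You need counts that are measurable box by box, such as local maxima inside a sub-box. The isolation in $\mathsf{NT}^*$ is non-local, since it inspects the annulus $\mathsf B_{N/r_N}\setminus\mathsf B_{r_N}$.
\item For box-local counts, the moments are dominated by rare configurations with an atypically high maximum. The paper therefore keeps only boxes with $h^*_{\mathsf V^i_L}\le m_N+u$ (via $\cI_{N,\eta,u}$).
\item It then bounds the second moment with the two-point estimate, Proposition~\ref{prop:two-point-estimate}. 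This gives a relative conditional variance of order $k^{-1/2}$ for $K=2^k$ boxes, which is why $K$ is fixed as $N\to\infty$ and only afterwards sent to infinity.
\end{itemize}

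\textbf{Your box scale makes the factorization harder.} With boxes of side $N/r_N$:
\begin{itemize}
\item the binding field has variance $g\log r_N\to\infty$;
\item its oscillation is not uniformly bounded over the $r_N^2$ boxes;
\item the contributing sub-box values typically exceed $m_L$ by an amount growing with $N$.
\end{itemize}
So you would need Proposition~\ref{prop:cluster-law} uniformly in growing $K$ and $M_0$, which it does not provide.

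\textbf{Normalization.} You wrote $\lambda_N\rme^{-\alpha\lambda_N}$, but this equals $\lambda_N/\kappa_N\to0$. The number of local maxima at depth $\lambda_N$ scales as $\rme^{\alpha\lambda_N}=\kappa_N$. The scale $\lambda_N\rme^{\alpha\lambda_N}$ belongs to the whole level set.
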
 

We shall also need the following $L^1$ summability of the relative depths under $\sigma_{\beta}$.
\begin{thm}[Summability of the cluster distribution]\label{thm:sumability_clusters}
Fix $\beta > \alpha$ and let $\sigma_\beta$ be as in Theorem~\ref{thm:regular traps2}. Then, we have 
\begin{equation*}
	\int \|s\|_1 \sigma_\beta(\rmd s) < \infty \,,
\end{equation*}
where $\|s\|_1$ denotes the $\ell_1$ norm of $s \in (0,1]^{\bbZ^2}$.
\end{thm}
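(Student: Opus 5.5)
We need to show $\int \sum_y s_y\,\sigma_\beta(\rmd s)<\infty$. The plan is to identify $\sigma_\beta$ with the law of the cluster seen from a local maximum of the DGFF, and then bound the mean of $\sum_y \rme^{-\beta(h_0-h_y)}$ under that law.

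\emph{Step 1 (identifying $\sigma_\beta$).} By Theorem~\ref{thm:regular traps2}, $s_y=\wh\tau_{x+y}/\wh\tau_x=\rme^{-\beta(h_x-h_{x+y})}$ for a typical normal trap-bottom $x$. So $\sigma_\beta$ is the push-forward, under $\phi\mapsto(\rme^{-\beta\phi_y})_y$, of the law $\nu$ of the local cluster $\phi=(h_x-h_{x+y})_{y\in\bbZ^2}$ seen from a typical near-maximal local maximum. This is the cluster law known from the extremal-process theory of the DGFF (Biskup--Louidor; see also \cite{biskup2024near}). Concretely, $\nu$ is the law of $\phi$ given by the pinned DGFF on $\bbZ^2\setminus\{0\}$ with drift $-2\sqrt g\,\mathfrak a(y)$ ($\mathfrak a$ the potential kernel), conditioned on $\phi\ge 0$. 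I would take this representation either from the proof of Theorem~\ref{thm:regular traps2} or by repeating that identification. Since the normal traps live at depth of order one relative to $\wh t_N$, the relevant representation is the cluster law at the intermediate level, with the conditioning on $\phi\ge0$ realised as a limit of conditioning on being a local max in $\mathsf B_{R}$ as $R\to\infty$.

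\emph{Step 2 (reduction to a tail bound).} By Tonelli,
\[
\int\|s\|_1\,\sigma_\beta(\rmd s)=\sum_{y}\int \rme^{-\beta\phi_y}\,\nu(\rmd\phi).
\]
It therefore suffices to show that $\int \rme^{-\beta\phi_y}\nu(\rmd\phi)\le C|y|^{-2-\eta}$ for some $\eta>0$. Write $\int \rme^{-\beta\phi_y}\,\rmd\nu\le \nu(\phi_y\le u)+\rme^{-\beta u}$ and choose $u=u(|y|)$ of order $c\log|y|$. Under $\nu$ the field is, heuristically, $\phi_y\approx 2\sqrt g\log|y|+\text{fluctuation}$, the fluctuation being a Gaussian of variance $g\log|y|$ conditioned to be nonnegative (a ballistic-repulsion picture). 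The key estimate is a lower-tail bound of the form
\[
\nu\bigl(\phi_y\le u\bigr)\le C(1+u)^{2}\,|y|^{-2}\,\rme^{\alpha u}\,(\log|y|)^{-3/2}\cdot(\text{entropic corrections}),
\]
which is the standard ballot-type estimate for the DGFF conditioned on positivity. Summing against $\rme^{-\beta u}$ with $\beta>\alpha$ then gives
\[
\int \rme^{-\beta\phi_y}\,\rmd\nu\lesssim \sum_{k\ge0}\rme^{-\beta k}\,\nu(\phi_y\in[k,k+1))\lesssim |y|^{-2}(\log|y|)^{-3/2}\sum_k(1+k)^2\rme^{(\alpha-\beta)k}.
\]
The factor $(\log|y|)^{-3/2}$ (or any extra $(\log|y|)^{-1-\eta}$ from the ballot estimate) makes $\sum_y|y|^{-2}(\log|y|)^{-3/2}$ converge. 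This is exactly where $\beta>\alpha$ is used.

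\emph{Main obstacle.} The crux is the sharp lower-tail estimate for $\phi_y$ under the positivity-conditioned pinned field. A crude bound giving only $|y|^{-2}$ without a logarithmic gain would produce a borderline divergent sum. I would prove this estimate by a concentric decomposition / Gibbs–Markov decomposition of the pinned DGFF along dyadic annuli around $0$, which reduces it to a random walk $S_k$ (the circle averages) with drift, conditioned to stay above a slowly varying barrier. The event $\{\phi_y\le u\}$ forces the walk at scale $k=\log_2|y|$ to be at height $O(u)$ above the barrier. The standard ballot theorem then gives probability $\asymp (1+u)^2k^{-3/2}$ times the Gaussian cost $\rme^{-2\log|y|+\alpha u}$. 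Finally, a union bound over the $O(|y|^0)$ local fluctuations around $y$ (controlled via the Gaussian tail of the harmonic-average deviation) completes the estimate.
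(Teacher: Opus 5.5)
Your route differs from the paper's, and as written it has a gap at exactly the step you flag as the crux. Everything hinges on the per-site bound $\nu(\phi_y\le u)\lesssim(1+u)^2\rme^{\alpha u}|y|^{-2}(\log|y|)^{-3/2}$. This bound is neither in the paper nor proved by you, and the mechanism you sketch would not produce it. The bound itself is plausible; heuristically the right order is $(1+u)\rme^{\alpha u}|y|^{-2}(\log|y|)^{-3/2}$. The problem is the derivation.

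Write $k=\log_2|y|$, let $W_k$ be the centred circle average of the pinned field at scale $k$, and let $h_k$ be the annulus field. Then $\phi_y\approx 2\sqrt g\log|y|+W_k+h_k(y)$; note the drift is $+\frac{2}{\sqrt g}\mathfrak a$, not $-2\sqrt g\,\mathfrak a$. Since the minimum of $h_k$ over the annulus is $-m_{2^k}+O(1)$, the constraint $\phi\ge0$ puts the barrier for $W$ at $-\tfrac34\sqrt g\log k$, not at the drift level. Suppose $W_k$ sits at height $s$ above this barrier. Then $\{\phi_y\le u\}$ requires $-h_k(y)\ge m_{2^k}+s-u$, which has probability of order $|y|^{-2}\log|y|\,\rme^{\alpha(u-s)}$, not $|y|^{-2}\rme^{\alpha u}$. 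Integrating against the density $\asymp s^2k^{-3/2}$ of the conditioned (Bessel-like) walk gives only $|y|^{-2}(\log|y|)^{-1/2}\rme^{\alpha u}$, and $\sum_y|y|^{-2}(\log|y|)^{-1/2}=\infty$.

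The missing factor $(\log|y|)^{-1}$ comes from a second ballot constraint centred at $y$. Because $\phi\ge0$ must hold throughout the annulus, $y$ has to be a near-minimiser of $h_k$ subject to a global lower constraint. This costs a factor $(1+s)(1+u)/\log|y|$, of the type in Corollary~\ref{cor-locmax}. Your ``union bound over local fluctuations around $y$'' does not capture this.

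Moreover, $\nu$ is only a limit of conditionings on $\{\phi\ge0\text{ on }\mathsf{B}_R\}$, an event of probability of order $(\log R)^{-1/2}$. So the estimate must be proved uniformly in $R$, together with a matching lower bound on that probability. Done properly, this is a two-scale estimate comparable to Proposition~\ref{prop:two-point-estimate}. Separately, your first reduction with a single $u=c\log|y|$ cannot work, since the tail bound then carries $\rme^{\alpha c\log|y|}$. Only the layer-cake sum works, and it yields $|y|^{-2}(\log|y|)^{-3/2}$ rather than $|y|^{-2-\eta}$.

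The paper avoids any site-by-site analysis of $\nu$. It realizes $\nu$ as the limit of the configuration seen from the global maximizer $X^*$ of $h$ on $\mathsf{V}_N$ (Biskup--Louidor). It then uses the trivial domination $\sum_{y\in\mathsf{B}_r}\rme^{\beta(h_{X^*+y}-h_{X^*})}\le\sum_x\rme^{\beta(h_x-h^*_{\mathsf{V}_N})}$. The expectation of the right-hand side is bounded uniformly in $N$, on a good event, by the one-point estimate of Corollary~\ref{cor-locmax}. On $\{h^*_{\mathsf{V}_N}\le m_N+u\}$, the expected number of sites with height in $m_N+\rmd t$ is $\lesssim(1+u_+)\rme^{-cu_-^2}(1+u-t)\rme^{-\alpha t}\,\rmd t$. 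Moreover, $\int_{-\infty}^u\rme^{-\beta(u-t)}(1+u-t)\rme^{-\alpha t}\,\rmd t\lesssim\rme^{-\alpha u}$ precisely because $\beta>\alpha$. Convergence for the bounded continuous functional on $\mathsf{B}_r$, followed by monotone convergence in $r$, finishes the proof.

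So the entropic repulsion your per-site bound would have to reproduce is already built into the finite-volume counting. Your approach, if completed, would give more: an explicit decay rate for $\int\rme^{-\beta\phi_y}\,\nu(\rmd\phi)$ in $|y|$. But it requires a new and delicate estimate under a singular conditioning that the proposal does not supply.
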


Finally, we must ensure that, with overwhelming probability, the distance between any vertex $x \in \mathsf{V}_{N}$, the origin $\mathbf{0}\coloneq(0,0)$, in particular, and the set of $\delta$-trap-vertices is sufficiently large. This is shown in
\begin{thm}\label{thm:distance_origin} We have 
\[ 
\limsup_{N \to \infty}\, \sup_{x \in \mathsf{V}_N} \P\bigl( d  \bigl(x,  \mathsf{T}(\delta) \bigr) \le 2 N/r_N  \bigr) = 0.\] 
\end{thm}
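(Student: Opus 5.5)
The plan is a first-moment bound: take a union bound over the ball $\mathsf{B}_{2N/r_N}(x)$ and control the probability that any single vertex there is a $\delta$-trap-vertex. The key observation is that a $\delta$-trap-vertex $y$ requires
\[
h_y \ge \beta^{-1}\log(\delta \wh t_N) = 2\sqrt g \log N - \tfrac{3+2\gamma}{2\alpha}\log\log N + \beta^{-1}\log \delta \eqqcolon m_N - c_\gamma \log\log N + O(1),
\]
where $m_N = 2\sqrt g\log N - \tfrac{3}{4}\sqrt g \log\log N$ is the centering of the DGFF maximum. Since $\sqrt g/\alpha\cdot$ constants match ($3/(2\alpha)$ versus $3\sqrt g/4$, both of order $\log\log N$), the threshold lies within $O(\log\log N)$ of $m_N$, and in any case above $2\sqrt g\log N - C\log\log N$. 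The estimate therefore reduces to showing that the ball of side $N/r_N$ contains no vertex at height $2\sqrt g\log N - C\log\log N$.

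First, I split the vertices $y$ of $\mathsf{B}_{2N/r_N}(x)\cap\mathsf{V}_N$ according to their distance to $\partial\mathsf{V}_N$. For every $y$, $\Var(h_y)=G_N(y,y)\le g\log N + O(1)$. The Gaussian tail then gives
\[
\P(h_y\ge 2\sqrt g\log N - C\log\log N)\le \exp\Bigl(-\tfrac{(2\sqrt g\log N - C\log\log N)^2}{2g\log N+O(1)}\Bigr)\le N^{-2}(\log N)^{C'} .
\]
Summing over the $O(N^2/r_N^2)$ vertices of the ball yields a bound of order $r_N^{-2}(\log N)^{C'}$. This tends to zero because $r_N = \rme^{(\log\log N)^{100(2+\gamma)}}$ grows faster than any power of $\log N$. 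The bound is uniform in $x$, and near the boundary the variance is only smaller, so the same estimate applies there too. This proves the claim.

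The only point needing care is the uniformity of the variance bound $G_N(y,y)\le g\log N+O(1)$ over all $y\in\mathsf{V}_N$, which is a standard Green function estimate with $g=2/\pi$. The algebra checking that the threshold is $2\sqrt g\log N-O(\log\log N)$ uses $\beta^{-1}\log \wh t_N = 2\sqrt g\log N - \tfrac{3+2\gamma}{2\alpha}\log\log N$, and it is immediate from \eqref{e:1.2}. I expect no real obstacle, since the polynomial-in-$\log N$ loss is absorbed by the superpolylogarithmic $r_N$.
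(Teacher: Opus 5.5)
Your proposal is correct and is essentially the paper's argument: a union bound over the $O(N^2/r_N^2)$ vertices within distance $2N/r_N$ of $x$, combined with the Gaussian tail under the uniform bound $\Var(h_y)\le g\log N+O(1)$ at the threshold $2\sqrt g\log N-O(\log\log N)$. The resulting $(\log N)^{C'}$ loss is absorbed by the super-polylogarithmic growth of $r_N$. The paper only sketches this and leaves the details to the reader; your computation supplies them.
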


The results above will be used to prove Theorem~\ref{thm:1} (see Section~\ref{s:2}).

\subsection{Interpretation and context}
\label{s:1.3}
Aging is a phenomenon in disordered systems whereby temporal correlations increase as the system gets older. A central class of models where aging is observed is that of the spin-glass type, of which our model can be seen as an instance\footnote{Henceforth, we use the term spin-glass to refer, loosely, to a thermodynamical system whose Boltzmann-Gibbs equilibrium law is governed by quenched disorder (random energy landscape), not necessarily derived from spin interactions. This is rather customary in this area, see, e.g.~\cite{derrida1981random, arous2008universality}. See also~\cite{carpentier2001glass} for a discussion on how the present setup arises from more ``physical'' models, including one of the ``pure'' spin-glass type.}
Indeed, it is not difficult to see that (conditional on $h$) the Markov process $X$ admits a limit in law given by its stationary-reversible measure:
\begin{equation}
\label{e:1.29}
	\mu_N^\beta(\{x\}) \coloneq \frac{\rme^{\beta h_x}}{\sum_{z \in \mathsf{V}_{N}} \rme^{\beta h_z}}
		\quad ; \qquad x \in \mathsf{V}_{N} \,.
\end{equation}
 Thus $X$ can be seen as Glauber Dynamics for a thermodynamical system with states $\mathsf{V}_{N}$ 
whose equilibrium law is the Boltzmann Distribution above, with $-h$ acting as the potential.

It is well known that whenever $\beta > \alpha$, with $\alpha$ as in~\eqref{e:alpha}, the measure~\eqref{e:1.29} is asymptotically carried by the extreme values of the field $h$ (the minimal energy states of the potential $-h$; cf.~\cite{BL3}). A common way to record the joint law of all extreme values of $h$ is via the so-called {\em (multi-scale) extremal process} of $h$,
\begin{equation*}
\eta_{N,\rho}\coloneq\sum_{x\in V_N}
\delta_{\,x/N}\otimes\delta_{\,h_x-m_N}\otimes\delta_{\,\{h_{x+y}-h_x\colon y\in\bbZ^2\}} 
\1_{\{h_x=\max_{z\in \rmB_\rho(x)} h_z\}}\,.
\end{equation*}
This is a point process on $[-1,1]^2 \times \bbR \times \bbR_{-}^{\bbZ^2}$ which records the {\em scaled} position, {\em centered} value and {\em relative heights} of all $\rho$-local maxima of $h$ on $\mathsf{V}_{N}$. The centering sequence $(m_N)_{N \geq 1}$, first derived in full in~\cite{BDZ16}, is given by
\begin{equation*}
m_N\coloneq 2\sqrt{g} \log N - \frac{3}{4}\sqrt{g}\,  \log \log N \,,
\end{equation*}
and captures the typical height of the maximum of the field. Here $g$ is as in~\eqref{e:alpha}.

It was shown in~\cite{BL1,BL3,BL2} that when
$\rho_N \wedge (N/\rho_N) \underset{N\to\infty}\longrightarrow \infty$, the above extremal process admits the weak limit 
\begin{equation}
\label{e:1.10i}
\eta_{N,\rho_N}
\underset{N \to \infty}\Longrightarrow \eta_\infty 
\quad; \qquad
	\eta_\infty \sim\  \text{\rm PPP}\bigl({\mathcal M}^\alpha_\infty(\rmd x)\otimes\rme^{-\alpha u}\rmd u\otimes\nu(\rmd \omega)\bigr) \,,
\end{equation}
as a (random) element of the space of Radon measures on $[-1,1]^2 \times \bbR \times \bbR_{-}^{\bbZ^2}$ equipped with the Vague Topology. 
Above, ${\rm PPP}$ stands for {\em Poisson Point Process}, $\alpha$ is as in~\eqref{e:alpha}, $\cM_\infty^\alpha$ is the {\em critical Liouville Quantum Gravity Measure} (LQGM) - a random measure on $[-1,1]^2$ (see, e.g.,~\cite{DRSV2,DRSV1}), and $\nu$ is a deterministic probability law on $\bbR_-^{\bbZ^2}$.

Writing the intensity measure in~\eqref{e:1.10i} as
\begin{equation*}
\wh{{\mathcal M}}^\alpha_\infty(\rmd x)\otimes\rme^{-\alpha (u-\frac{1}{\alpha} \log 
|{\mathcal M}^\alpha_\infty|)} \rmd u \otimes \nu(\rmd \omega) \,,
\end{equation*}
with $\wh{\cM}^\alpha_\infty$ the normalized version of $\cM^\alpha_\infty$ -- after division by its total mass $|\cM^\alpha_\infty| \equiv \cM^\alpha_\infty([-1,1]^2)$, the convergence in~\eqref{e:1.10i} can be interpreted as follows: W.h.p., the extreme values of $h$ on $\mathsf{V}_{N}$ form clusters of diameter $\Theta(1)$ and are $\Theta(N)$ apart. The local maxima of $h$ in each cluster, recentered by $m_N$, asymptotically form a PPP with an exponential intensity at rate $\alpha$ and a random global shift of size $\alpha^{-1}\log |\cM^\alpha_\infty|$. The locations of these maxima, rescaled by $N^{-1}$, are asymptotically i.i.d. and chosen from the random law $\wh{\cM}^\alpha_\infty$. Finally, the configuration of heights at vertices around and relative-to each local maximum, are also asymptotically i.i.d. and chosen according to $\nu$, which is thus henceforth referred to as  the {\em extremal cluster law}.

A consequence of the above complete asymptotic description is the convergence of the scaled equilibrium measure
$\mu_N^\beta (N\,\cdot)$ from~\eqref{e:1.29} to the law $\wh{\cM}^\beta_\infty$, which is the normalized version of the measure
\begin{equation}
\label{e:1.36}
\cM^\beta_\infty \coloneq \sum_{(x,u,\omega) \in \eta_\infty} \!\!\!\!\!\! \rme^{\beta u}\Bigl(
\sum_{y \in \bbZ^2} \rme^{\beta\omega_y}\Bigr) \delta_x  
\overset{\rmd} = \sum_{(x,t) \in \chi^\beta_\infty}  \!\!\!\! t \delta_x  
\quad ; \qquad \chi^\beta_\infty \sim \text{PPP} \bigl(\wh{\cM}^\alpha_\infty(\rmd z) \otimes \kappa_{\beta} |\cM^\alpha_\infty|\, t^{-1-\frac{\alpha}{\beta}}  \rmd t \bigr) \,.
\end{equation} 
Here $\kappa_\beta \in (0,\infty)$ is a constant and the outer sums range over all atoms of the relevant point process. In particular, the limiting statistics of energy levels (the collection of masses of $\wh{\cM}_\infty^\beta$) in equilibrium is {\em Poisson-Dirichlet} (PD). See, e.g.~\cite{BL3}.

The static picture painted above has dynamical consequences. Indeed, when viewed in equilibrium time scales, namely $\Theta(t_N(0))$ (\eqref{e:1.11} with $\gamma=0$), the dynamics $X$ admits a functional space-time scaling limit in law, given by {\em super-critical Liouville Brownian Motion} at parameter $\beta>\alpha$ (a spatial version of Kolmogorov's K-process driven by $\chi^\beta_{\infty}$). In particular, the dynamics spends (asymptotically) all of its time in the energy levels (mass points) of $\wh{\cM}_\infty^\beta$ with transitions between them occurring at (asymptotically) infinitesimal time. See in~\cite{cortines2018dynamical}.

Viewed in this context, Theorem~\ref{thm:1} thus focuses on the pre-equilibrium behavior of the dynamics, i.e. just before the system has relaxed to its stationary law~\eqref{e:1.29}. It thus demonstrates that on its way to relaxation the system ``gets stuck'' in {\em meta-stable} low energy states, from which it takes it a non-negligible time to escape. 

Aging has been studied in many other mean-field spin-glass and spin-glass-like models, with different underlying graphs and correlation structures for the random potential.
These include, e.g., Derrida's {\em Random Energy Model} (REM)~\cite{arous2003glauberI, arous2003glauberII} (i.i.d.\ energy levels on the hypercube) and the {\em $p$-spin Sherrington-Kirkpatrick Model} (SK) ($\ell^p$-type correlations on the hypercube)~\cite{Bovier13, arous2008universality}. In all of these models, the same Generalized Arcsine Law was exhibited as the limit of properly chosen two-point temporal correlation functions, at different temperature and size related time scales. 

A similar kind of aging was also shown in the 
so-called {\em Bouchaud Trap Model} (BTM)~\cite{arous2006aging} on various graphs. The latter can be seen as an effective model for aging, in which the mechanism giving rise to the Generalized Arcsine Law appears in its most distilled form. These results and others render the Generalized Arcsine Distribution a universal law for aging. An excellent survey on the subject, including a recipe for proving aging, which we partly follow here, is given in~\cite{arous2006course} and also~\cite{arous2008arcsine}.

Unlike in any of the models discussed above, the potential considered in this work has approximate logarithmic correlations. Indeed, the Green function on $\mathsf{V}_{N}$, which determines the covariances of the DGFF, obeys
\begin{equation*}
\Cov(h_x, h_y) = 
\bbG_N(x,y) = \E_x \sum_{n=0}^{\tau_{\mathsf{V}_{N}^\rmc}} \1_{y}(S_n) = -g \log \frac{\|x-y\|+1}{N} + O(1) \,,
\end{equation*}
Above, $\{S_n\}$ is a simple random walk on $\bbZ^2$, which under $\E_x$ starts from $x \in \bbZ^2$, and $\tau_{\mathsf{V}_{N}^\rmc}$ denotes its exit time from $\mathsf{V}_{N}$. These asymptotics are valid for $x,y$ in the bulk.

Logarithmically correlated fields, of which the DGFF is a canonical example, have been studied extensively over the past several decades. Here, too, a universal picture has emerged, for their extreme and large values, with features such as the clustering of extreme values, Poissonian statistics for their joint law, and the randomly shifted exponential intensity consistently appearing (see, e.g.,~\cite{biskuppims, arguin2016extrema}). 

This common extremal and large-value behavior finds expression also in properties of measures formed by taking the exponential of the field as a density w.r.t. a reference measure, in a manner similar to~\eqref{e:1.29} (in the discrete; in the continuum a limiting procedure is usually needed to make the definition mathematically sound).
Such measures naturally arise in various mathematical and physical contexts (not just as Boltzmann Distribution with respect to log-correlated potentials), and go by different names, depending on the underlying field. Prominent examples include the Gaussian Multiplicative Chaos (GMC; also known as Liouville Quantum Gravity; see~\cite{rhodes2014gaussian}) and the Mandelbrot Multiplicative Cascade (MMC; see~\cite{mandelbrot1989multifractal}). The former includes the case of the DGFF (in the limit) as treated in the present work.

For such measures, there is a critical temperature $\beta^{-1}$, below which, they are (asymptotically, in the discrete) supported on a discrete, clustered, set of the arg-extrema (or thickest points in the continuum) of the field, while above they are (asymptotically) carried by a fractal set of partial Hausdorff Dimension. In the former regime, the (limiting) measure takes the same form as in~\eqref{e:1.36}, with Poisson-Dirichlet Law for its masses (e.g.,~\cite{rhodes2014gaussian, mandelbrot1989multifractal}). When such a measure is viewed as the stationary law of a spin-glass, the sub-critical regime is often referred to as the {\em glassy} (sometimes {\em frozen}) phase, reflecting the existence of the system in one of a few dominant energy states at stationarity (see, e.g.~\cite{fyodorov2008freezing, castillo2001freezing}). This static picture was shown mathematically for various fields, including in the cases of the GMC and MMC mentioned above.

As in the case of the statics, universal dynamical features are expected for Glauber or Langevin-type dynamics of which these measures form the equilibrium law.
For in-equilibrium time scales, a {\em tunneling phenomenon} is expected, whereby the system transitions {\em instantaneously} from one meta-stable state to another. This was indeed shown in~\cite{cortines2018dynamical} in the case of the DGFF. For pre-equilibrium time scales, Meta-stable-type behavior, of the kind shown here, was predicted in~\cite{carpentier2001glass}, where the authors argue that logarithmic correlations are {\em the natural} type of correlations for the observation of such phenomena without any artificial scaling of the strength of the field or the time at which the dynamics is observed. To the best of our knowledge, Theorem~\ref{thm:1} is the first mathematical affirmation of this prediction. 

Thus, using Physics language, this work proves that for the particular choice of the DGFF as the underlying potential, Arcsine-type aging occurs for a range of pre-equilibrium time scales, throughout the glassy phase, placing spin-glasses with logarithmic potentials inside the universality class for this type of aging.

\subsection{Heuristics}
\label{s:1.4a}
Let us now give a broad explanation of the mechanism through which aging occurs in this model, elucidating on the scales which appear in Theorem~\ref{thm:1} and the role of the trapping landscape whose description was given in Subsection~\ref{s:1.2}. 

As shown in the proof, during $t_N(\gamma)$ time, the process $X$ makes 
\begin{equation}
\label{e:1.35}
\Theta(L_N) \quad ; \qquad L_N := N^2 (\log N)^{1-\gamma} \,,
\end{equation}
many jumps.
A study of the underlying jump chain, which performs a simple random walk on the torus, shows that for any fixed subset of $\mathsf{V}_{N}$ which is $r_N$-clustered in the sense of~\eqref{e:1.16a}, a fraction 
\begin{equation}
\label{e:1.37}
\Theta(\kappa_N^{-1})  
\quad ; \qquad \kappa_N := (\log N)^\gamma \,,
\end{equation}
of the clusters, essentially uniformly chosen, will be visited. Here $\kappa_N$ is as in~\eqref{e:1.12}.
 Moreover, if an $r_N$-cluster is visited, then each of the vertices in the cluster will be visited $\Theta(\log N)$ times. 

At the same time, features of the limiting statistics of extreme values, per ~\eqref{e:1.10i}, carry over qualitatively also to vertices at height $m_N - \Theta(\log \log N)$ (in fact, essentially all the way to $m_N - o(\sqrt{\log N})$. In particular, such values are still clustered, but only after removing a negligible fraction thereof and with the diameter of clusters $O(r_N)$ not $O(1)$ as before. Moreover, the number of clusters whose local maximum is (at least) $m_N-u$ is still 
\begin{equation}
\label{e:1.36a}
\Theta(\rme^{\alpha u})\,,
\end{equation}
and the law of the relative heights at vertices around them still follows the same cluster law $\nu$ as before.

This implies that the vertices which carry most of the time accumulated by the walk are those lying in clusters whose local maximum under $h$ is at
\begin{equation}
\label{e:1.38}
m_N - \frac{\gamma}{\alpha} \log \log N + \Theta(1) \,.
\end{equation}
Indeed, in view of~\eqref{e:1.36a}, the number of such clusters is 
\begin{equation}
\label{e:1.41}
\Theta(\kappa_N)
\end{equation}
and so, by~\eqref{e:1.37}, $\Theta(1)$ many such clusters will be visited. Vertices belonging to clusters with a higher maximum of the field, while potentially capable of holding the walk for a longer time, are entirely missed by the walk during the~\eqref{e:1.35} many steps it takes. Vertices belonging to clusters whose maximum is smaller than~\eqref{e:1.38}, while abundant, contribute to the accumulated time a total quantity which is negligible compared to~\eqref{e:1.11}.

Multiplying~\eqref{e:1.38} by $\beta$ and exponentiating, one gets precisely the value $\wh{t}_N$ from~\eqref{e:1.2}. Multiplying further by $\Theta(\log N)$ for the number of returns to each vertex in a cluster and observing that the sum of depths of all vertices in a cluster is finite (as shown by Theorem~\ref{thm:sumability_clusters}), we see that the total running time of the walk is indeed of the order $t_N(\gamma)$.

In relation to the classification of traps in Subsection~\ref{s:1.2.1}, we now also see that the local maxima at height~\eqref{e:1.38} correspond to trap-bottoms of normal depth. Higher local maxima and vertices of smaller height correspond to the deep trap-bottoms and (roughly) to the shallow trap-vertices, respectively. The negligible subset of vertices at level~\eqref{e:1.38} or higher which do not form clusters corresponds to the non-isolated trap-vertices. 

Now, while the relative-height configurations around the normal local maxima still follow $\nu$ (as shown by Theorem~\ref{thm:regular traps2}), the number~\eqref{e:1.41} of such maxima increases with $N$, and so some of the cluster configurations will exhibit atypical behavior. Thus, while around most of the normal local maxima, the set of vertices which are at height~\eqref{e:1.38} has a finite diameter, for a vanishing fraction of maxima this set is unusually wide. The two kinds of normal local maxima correspond to the regular and wide trap bottoms in the classification of Subsection~\ref{s:1.2.1}. Fortunately, it is the regular maxima which dominate, so that localization occurs on the $\Theta(1)$ scale.

We remark that while the number of $r_N$-local maxima at height $m_N - u$ grows exponentially as in~\eqref{e:1.36a}, the full level set at that height (without  restricting to local maxima), has size
\begin{equation*}
\Theta(u \rme^{\alpha u})\,.
\end{equation*}
The additional linear prefactor comes from the contribution to the level set of the clusters whose local maximum is at height $m_N -v$ for $v < u$. Nevertheless, due to the geometry of the range of the walk, it is not the total size of the target subset that matters, but rather the number of $r_N$-clusters it contains. Thus, most of the vertices in the $m_N-u$ level set, which come from vertices around ``deep'' local maxima, will end up being completely avoided by the walk. For more information, see~\cite{CHL1, CHL2}, where a sharp description of the extremal landscape (albeit for heights $m_N-\Theta(1)$) is derived in the analogous case of the tree.

\subsection{Open questions}
We conclude our discussion with several open questions which call for additional work. 
\subsubsection{Symmetric dynamics}
First, a more natural Glauber-type dynamics for the spin-glass law~\eqref{e:1.29} is given by 
\begin{equation}
\label{e:1.2222}
 \P \bigl( X_{t+\dif t} = y  \mid X_t = x \,,\, h \bigr) = 
\begin{cases}
\rme^{\beta (a h_y - (1-a) h_x)}  \dif t \quad & x \sim y\,, \\
0, & \text{otherwise}
\end{cases}\,,
\end{equation}
for $a \in [0,1]$. The choice $a=\frac12$ is perhaps the most natural from a physical point of view: the walk transitions from one state to another at a rate determined by the difference between the corresponding energy levels. 

The present work treats the most tractable case $a=0$. When $a \neq 0$, the underlying jump chain is no longer a simple random walk, and the mean holding time is correlated with the jump distribution. Nevertheless, using a method of Durrett and Resnick~\cite{durrett1978functional} for proving convergence of sums of dependent random variables to subordinators, Gayrard et al.~\cite{gayrard2012convergence, gayrard2013convergence} treat the case $a \neq 0$ for the BTM. This method could be applicable here as well.

\subsubsection{Shorter time scales}
The time scales $t_N(\gamma)$ for $\gamma \in (0,1)$ are, in fact, the longest possible pre-equilibrium time scales. As shown in~\cite{cortines2018dynamical}, when $\gamma = 0$ we are already in the equilibrium regime, where aging cannot occur. An interesting question therefore concerns the behavior of the system at much smaller time scales. In the case of the REM, for example, aging is exhibited over a much larger spectrum of times, with the parameter of the Generalized Arcsine Law varying according to the scale chosen. The difficulty here is that at time scales of order $o(N^2)$ the walk does not get the chance to explore the full torus, and thus the effective potential for the problem is the DGFF restricted to the range of the walk. This is a non-trivial object, which is not at all understood.

\subsubsection{Dynamics in infinite volume}
A more natural setting for the dynamics is when the underlying domain is the whole plane $\bbZ^2$. Although the DGFF cannot be defined on all $\bbZ^2$, one can take as the underlying potential the pinned DGFF on $\bbZ^2 \setminus \{0\}$, or alternatively, treat the case $a=1/2$ in~\eqref{e:1.2222} and work with the gradient field, which is well defined in infinite volume. This would be more in line with the model studied in~\cite{carpentier2001glass}, which is a more natural version of the problem. Indeed, as mentioned in that work, in this setting there is no artificial tuning between the size of the system and the time scales considered. The region explored by the walk up to a given time, and which therefore traps it, would be determined intrinsically by the walk, much as in the case of the BTM, but in the more physical context.

\subsection{Paper outline and general notation}
The remainder of the paper is organized as follows. In Section~\ref{s:2} we give a high-level proof of Theorem~\ref{thm:1}, based on the description of the trapping landscape of Subsection~\ref{s:1.2}. This proof relies on several key steps, which are first used and then proved in the rest of that section. Section~\ref{sec:3} contains the proofs of all the trapping-landscape results of Subsection~\ref{s:1.2}. These proofs rely on pointwise estimates for the near-extreme values of the DGFF, which are proved in Section~\ref{s:4}. Finally, the appendix contains auxiliary estimates for a simple random walk on the line and on the plane.

We use $C, C', c'$, etc. to denote constants, which may vary from line to line. Constants labeled with numerical subscripts, e.g.\ $C_0$, remain fixed.
 We say that the function $f(N,r,\delta)$ converges to $L$ as $N \to \infty$, then $r \to \infty$, and finally $\delta \downarrow 0$, if
 \[  \lim_{\delta \downarrow 0} \limsup_{r \to \infty} \limsup_{N \to \infty} \ \mathrm{dist}( f(N,r,\delta) , L ) = 0. \]
Here $\mathrm{dist}(\cdot,\cdot)$ denotes a suitable
metric on the space in which $f$,$L$ take values. Note that we do not require the existence of the corresponding intermediate limits.

\section{Aging} 
\label{s:2}
We start by proving the main result, assuming the trapping landscape description of Subsection~\ref{s:1.2}. The argument will use several key propositions which we state here and first use to prove the theorem, with the remainder of this section devoted to their proof. We continue to assume implicitly that $\beta > 0$ and $\gamma \in (0,1)$, and often use the abbreviation $t_N \equiv t_N(\gamma)$. As in Subsection~\ref{s:1.2} all results here hold uniformly in $\gamma$ on compact subsets of $(0,1)$.

\subsection{High-level proof}
\label{sec:high-level-proof}
In what follows, let
$Y=(Y_n)$ denote the discrete-time jump chain of $(X_t)$, i.e., the simple random walk on the torus $\mathsf{T}_N$. We write $\P_x$ for the law of the random walk started at $x$, and $\P$ for $\P_{\mathbf{0}}$. 
Given $r, \delta > 0$, let  $0= \sigma_0 < \sigma_1 < \sigma_2 < \ldots$ be stopping times defined inductively as follows
\begin{equation*}
\sigma_{i} \equiv \sigma_{i}(\delta, r) \coloneq  \min \bigl\{ j > \sigma_{i-1} :\: Y_j \in \mathsf{NT}(\delta, r) \setminus \mathsf{B}_{2r}(Y_{\sigma_{i-1}}) \bigr\} \quad , \qquad  i \ge 1.
\end{equation*} 
We shall always assume that $r \ll N/r_N$. Thus by \eqref{e:1.16a}, $(\sigma_n)_{n\ge 1}$ represents the sequence of successive visit times to new (as in different than the current one) $(\delta, r)$-normal traps, and
\begin{equation*}
\Delta \sigma_i\coloneq  \sigma_{i} - \sigma_{i-1} \,,
\end{equation*}
is the number of steps taken between these visits. The identities of visited traps are recorded using their local maxima, namely
\begin{equation}\label{eq:def-U_i}
	U_i = U_i(\delta,r)  \coloneq  \arg\!\max \{\tau_x ;\, x \in \mathsf{B}_{r}(Y_{\sigma_i}) \} \in \mathsf{NT}(\delta, r)
	\quad , \qquad i \geq 1 \,.
\end{equation}

Next we introduce a key and common player in an argument for aging, namely the \emph{Clock Process} associated with $X$. This is the process
$\mathcal{S} =  ( \mathcal{S} (n): n \geq 0)$ defined via
\begin{equation}\label{equa:def:modif:clock} 
\mathcal{S}(n) \equiv  \mathcal{S}_{N,\delta,r} (n)
    \coloneq  \sum_{ i =0}^{n}   \sum_{   \sigma_{i} \le j < \sigma_{i+1}}  \mathbb{e}_{j} \tau_{Y_{j}}  \ \text{ for }\ n \ge 0  ,
\end{equation}
where $(\mathbb{e}_{i} : i \in \mathbb{N})$ is a family of   i.i.d. standard exponential random variables, which are independent of everything else.
Thus, $\mathcal{S}(n)$ represents the total running time of $X$ up to its $(n+1)$-st visit to a $(\delta,r)$-normal trap.
The process $(\mathcal{S}(n))_{n \ge 0}$ depends on $\delta$ and $r$, but for simplicity we suppress this dependence from the notation unless it is needed.

Henceforth we shall consider the jump chain $Y$ up to the fixed discrete time:
\begin{equation*}
L_{N,m} \coloneq  m N^2 (\log N)^{1-\gamma} \,,
\end{equation*}
where $\gamma \in (0,1)$ is as in the main theorem and $m \geq 0$ is to be determined later. The total number of normal traps visited during $L_{N,m}$ steps is
\begin{equation*}
\zeta_{N,m} \equiv \zeta_{N,m} (\delta, r) \coloneq  \max\{j \ge 0 :   \sigma_j < L_{N,m} \} \,.
\end{equation*}
Then $\mathcal{S}(\zeta_{N,m}-1)$ represents the running time accumulated by $X$ up to the moment when the jump chain $Y$ reaches the last normal trap visited before $L_{N,m}$ steps.

The quantities above are ``tuned'' so that in $L_{N,m}$ steps, for any $\delta, r > 0$, the number of normal traps visited $\zeta_{N,m}$ is $O(1)$ with high probability for any $m > 0$. At the same time, for any $\theta > 0$, the total time accumulated by the continuous walk $\cS(\zeta_{N,m}-1)$ can be made to exceed $\theta t_N$ with arbitrarily high probability, provided $m$ is chosen large enough:
\begin{prop}\label{lem:cond4} 
Fix any $\theta \in (1,\infty)$. For every $\varepsilon \in (0,1)$, there exists  
$m \in \mathbb{N}$, depending only on $\varepsilon$ and $\theta$, such that 
\[
\liminf_{\delta \downarrow 0} 
\liminf_{r \to \infty} \ \liminf_{N\to \infty} \bbP_\mathbf{0} \bigl(  \zeta_{N,m} \ge \theta \delta^{-\alpha/\beta} ;\mathcal{S}(\zeta_{N,m}-1) > \theta t_N \bigr) > 1 - \varepsilon .
\]  
\end{prop}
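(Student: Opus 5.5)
We prove the two events separately and then take $m$ large: first that $\zeta_{N,m} \ge \theta\delta^{-\alpha/\beta}$ with high probability, and second that $\cS(\zeta_{N,m}-1) > \theta t_N$ with high probability. Both rest on one input, a lower bound on how many distinct $r_N$-clusters of an $r_N$-clustered set the jump chain $Y$ visits in $L_{N,m}$ steps, which is the random-walk estimate sketched in Subsection~\ref{s:1.4a}.

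\emph{Step 1: many traps are visited.} Condition on $h$ and work on the event of Theorem~\ref{thm:regular traps1}, so that $|\mathsf{RT}^*(\delta,r)| \ge \rho^-_\epsilon \delta^{-\alpha/\beta}\kappa_N$. The set $\mathsf{RT}^*(\delta,r) \subset \mathsf{IT}^*(\delta)$ has pairwise distances at least $N/r_N$. The number of steps needed to hit a fixed point of the torus from distance at least $N/r_N$ is of order $N^2\log N$, and the hitting time is approximately exponential. Thus in $L_{N,m} = mN^2(\log N)^{1-\gamma}$ steps, each bottom is hit with probability about $c\,m\,\kappa_N^{-1}$, with approximate independence across bottoms because they are mesoscopically separated.

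We make this precise with a second-moment or Poisson-approximation argument from the appendix estimates. The number of distinct regular traps hit then stochastically dominates a Poisson variable with mean $c\,m\,\rho^-_\epsilon\delta^{-\alpha/\beta}$. Choosing $m$ large gives $\zeta_{N,m} \ge \theta\delta^{-\alpha/\beta}$ with probability at least $1-\varepsilon/3$. Theorem~\ref{thm:distance_origin} ensures the start at $\mathbf{0}$ is far from all traps, so the first visit is not degenerate.

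\emph{Step 2: accumulated time.} It is enough to bound $\cS(\zeta_{N,m}-1)$ from below by the time spent at the bottoms $U_1,\dots,U_{\zeta_{N,m}-1}$ alone. Once $Y$ reaches a trap at distance $\le r$ from its bottom, the number of visits to the bottom before leaving $\mathsf{B}_{2r}$ and travelling far is, by the two-dimensional Green function on the torus at this time scale, of order $\log N$. More precisely, the local time accumulated at $U_i$ before the walk next reaches distance $N/r_N$ is $\Theta(\log N)$ with a limiting exponential law, say at least $c'\log N$ with probability bounded below.

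Each visited bottom has $\wh\tau_{U_i} \ge \delta$. Hence, with high probability, a positive fraction of the visited traps each contributes at least
\[
c'\,\delta\,\wh t_N\log N \;=\; c'\delta\, t_N .
\]
The time contributed by a visited trap counts towards $\cS(\zeta_{N,m}-1)$ only if that trap lies among $U_1,\dots,U_{\zeta_{N,m}-1}$. This is why we use $\zeta_{N,m}-1$: the last trap may be cut off, so we exclude it. Summing over about $c\,m\,\delta^{-\alpha/\beta}$ traps gives a total of order $m\,\delta^{1-\alpha/\beta} t_N$. Since $\beta>\alpha$, the exponent $1-\alpha/\beta$ is positive, so this lower bound shrinks as $\delta\downarrow0$.

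To fix this, we instead use the depth distribution from Theorem~\ref{thm:regular traps2}. The visited bottoms are essentially uniform among the normal ones, so the count of visited bottoms with $\wh\tau \in [1,2]$ is roughly Poisson with mean $c\,m\,\rho^-$, uniformly in $\delta$. Each such bottom gives time at least $c'\log N\cdot\wh t_N = c' t_N$ with probability bounded below. Taking $m$ large makes the number of such contributions exceed $\theta/c'$ with probability at least $1-\varepsilon/3$. This bound is uniform in $\delta$ and $r$, which is what the statement requires.

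\emph{Main obstacle.} The hard step is the decorrelation in Step 1: showing that, over $L_{N,m}$ steps, the visits to distinct mesoscopically separated clusters are nearly independent and nearly uniform, and that the local times at the bottoms are $\Theta(\log N)$ and asymptotically independent of which trap is visited. I would attack this by cutting the trajectory into excursions between $\mathsf{B}_{N/r_N}$-neighbourhoods. I would then use the fact that the walk mixes on the torus in $N^2$ steps, which is much less than the inter-visit time $N^2\log N\,\kappa_N$, to couple the successive entrance points with i.i.d. draws from the hitting distribution, which is asymptotically uniform over the clusters.
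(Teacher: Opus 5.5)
Your proposal is correct in outline, but it makes the time bound uniform in $\delta$ by a different route from the paper.

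\textbf{The count $\zeta_{N,m}\ge\theta\delta^{-\alpha/\beta}$.} Here the paper does essentially what your ``main obstacle'' paragraph describes. It works on the event $E^N_2(M_0;\delta)$, which controls $|\mathsf{NT}^*(\delta)|$ from both sides. On that event, Lemma~\ref{lem:approximation:sigma}(i) (Lemma~\ref{prop:hitting-time}(i) plus the strong Markov property) makes $(\chi_N\Delta\sigma_i)_{i\ge1}$ asymptotically i.i.d.\ exponential. The law of large numbers then gives $\sigma_{k_\delta}\le L_{N,m}$ for $k_\delta=\lceil 2\theta\delta^{-\alpha/\beta}\rceil$ once $m$ is large. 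Your alternative ``second-moment'' count over pairs of bottoms would not follow from the appendix estimates. Their errors are additive $o(1)$, while single-bottom hitting probabilities in $L_{N,m}$ steps are only $O(\kappa_N^{-1})$. The sequential hitting of the union of the remaining traps is what works.

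\textbf{The paper's time bound.} The paper never tracks depths across $\delta$. It reduces to $\delta_0=1/2$ using a dichotomy: every $x\in\mathsf{NT}^*(\delta_0)$ lies either in $\mathsf{NT}^*(\delta)$ or in $\mathsf{NIT}(\delta)$. Hence, off the event of hitting $\mathsf{NIT}(\delta)$ (negligible by Lemma~\ref{lem:nonreachable_traps}), $\cS_{\delta,r}(\zeta_{N,m}(\delta,r)-1)\ge\cS_{\delta_0,r}(\zeta_{N,m}(\delta_0,r)-1)$. At the single level $\delta_0$ it then combines four ingredients: the clock-increment limit (Lemma~\ref{lem:convergence:time:intervals}), the bound $\mathcal{E}_{\delta_0,r}\ge\mathcal{E}_{\delta_0,1}$, the law of large numbers, and the count bound at $\delta_0$.

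\textbf{Your time bound.} You stay at level $\delta$ and restrict to bottoms with $\wh\tau\in[1,2]$. Their number is $\Theta(\kappa_N)$ uniformly in $\delta\le1/2$ by Theorems~\ref{thm:regular traps1}--\ref{thm:regular traps2}. For each visit to such a trap you bound the time directly, via a Green-function estimate for the local time at the bottom before leaving a ball of radius of order $N/r_N$. That exit precedes $\sigma_{i+1}$, so the time counts whenever the trap is not the last one. By the strong Markov property, the successes dominate a binomial. To get the number of visits to these traps, it suffices to apply Lemma~\ref{prop:hitting-time}(i) directly to that sub-collection; the full uniform-selection statement is not needed.

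\textbf{What each route buys.} Yours avoids both the $\delta_0$-monotonicity argument and the non-reachability of $\mathsf{NIT}(\delta)$, and it shows transparently why $m$ does not depend on $\delta$. Its cost is that it needs the depth statistics of normal bottoms at every $\delta$. The paper's reduction needs only one fixed level and reuses lemmas it proves anyway for the clock process.
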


By definition, the ``jump'' $(\cS(n-1), \cS(n)]$ of $\cS$ encodes the period of running time starting from $X$'s visit to the $n$-th normal trap and ending just before its visit to the $(n+1)$-st one. We wish to claim that the occurrence of a jump over $[t_N,t_N(1+\theta)]$ for $\cS$ is a.w.h.p. equivalent to $X$ being at two vertices that are $O(1)$ apart at times $t_N$ and $(1+\theta)t_N$. The first step in this direction is to show that when $X$ is observed at a deterministic time $\Theta (t_N)$ it is w.h.p. found in a normal trap.
\begin{prop}\label{lem:only:moderate:traps:visited} Fix  $0<a<b<\infty$. 
Let $\{t_N'\}$ be a deterministic sequence such that $a t_N \le t'_N \le b t_N$ for all $N \geq 1$.
Then 
\begin{equation}\label{eq:lem:only:moderate:traps:visited}
	\lim_{\delta \downarrow 0} \,
\limsup_{r\to \infty} \, \limsup_{N\to \infty} \, \bbP_{\mathbf{0}} \bigl( X_{t_N'} \notin \mathsf{NT}(\delta,r)   \bigr)  = 0.
\end{equation}
\end{prop}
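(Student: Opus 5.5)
The plan is to use the decomposition \eqref{eq:torus-decomp}. Off the event $X_{t_N'} \in \mathsf{NT}(\delta,r)$ the walk must sit in one of $\mathsf{WT}(\delta,r)$, $\mathsf{DT}(\delta)$, $\mathsf{NIT}(\delta)$, or in $\mathsf{ST}(\delta)$ minus the normal traps. It therefore suffices to show that each of these sets is either never visited before time $t_N'$, or carries only a vanishing fraction of the time accumulated up to $t_N'$. To pass from discrete steps to continuous time, I fix $m$ via Proposition~\ref{lem:cond4} with $\theta > b$. Then, with probability at least $1-\varepsilon$, the time $t_N' \le b t_N$ is reached before $L_{N,m}$ jumps, and it is enough to control the jump chain $Y$ on $[0, L_{N,m}]$.

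\emph{Non-reachable traps.} Set $A \coloneq \mathsf{WT}(\delta,r) \cup \mathsf{DT}(\delta) \cup \mathsf{NIT}(\delta)$. By Theorem~\ref{thm:non_reachable_traps}, this set consists, w.h.p.\ (in the limit order $N \to \infty$, then $r\to\infty$, then $\delta \downarrow 0$), of at most $\epsilon\kappa_N$ balls of radius $2r_N$. For $\mathsf{NIT}$ the balls are centred at points of $\mathsf{T}(\delta)\setminus \mathsf{IT}(\delta)$; for the other two they are centred at the bottoms in $\mathsf{WT}^*$ and $\mathsf{DT}^*$. The appendix random-walk estimate for $Y$ states that, in $L_{N,m}$ steps, a simple random walk on the torus started at distance at least $N/r_N$ hits a fixed ball of radius $2r_N$ with probability $O(m\kappa_N^{-1})$. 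This holds because such a ball has logarithmic capacity comparable to $1/\log(N/r_N) \approx 1/\log N$, while $L_{N,m}/N^2 = m(\log N)^{1-\gamma}$.

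By Theorem~\ref{thm:distance_origin}, the origin is w.h.p.\ at distance more than $2N/r_N$ from $\mathsf{T}(\delta)$, so the estimate applies to every ball. A union bound, conditional on $h$, then gives $\P(Y \text{ hits } A \text{ before } L_{N,m}) \le C m \epsilon + o(1)$. One subtlety is that $\mathsf{NIT}$ balls are not $r_N$-clustered. The estimate is still fine there, because only their number, and not their separation, enters the union bound.

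\emph{Shallow vertices.} For $\mathsf{ST}(\delta)$, I use the Green-function bound for $Y$ killed at $L_{N,m}$. For any $x$ at distance at least $N/r_N$ from the origin, the expected number of visits to $x$ is at most $C L_{N,m}/N^2 = C m N^{-2}N^2(\log N)^{1-\gamma} = C m \log N/\kappa_N$. Hence
\[
\E\Bigl[\sum_{j<L_{N,m}} \mathbb{e}_j \tau_{Y_j}\1_{\{Y_j\in \mathsf{ST}(\delta)\}} \,\Big|\, h\Bigr] \le C m \frac{\log N}{\kappa_N}\, \wh t_N \sum_{x\in\mathsf{ST}(\delta)} \wh\tau_x = C m\, t_N\, \kappa_N^{-1}\sum_{x\in\mathsf{ST}(\delta)}\wh\tau_x .
\]
By Theorem~\ref{thm:sum_shallow_traps}, the right-hand side is at most $C m \epsilon\, t_N$ w.h.p. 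The vertices of $\mathsf{ST}(\delta)$ that lie inside regular traps are harmless, since they belong to $\mathsf{NT}(\delta,r)$. Markov's inequality then shows that the total time spent up to $b t_N$ in $\mathsf{ST}(\delta)\setminus \mathsf{NT}(\delta,r)$ is at most $\epsilon' t_N$ with high probability.

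\emph{From small occupation time to small probability at a fixed time.} This is the main obstacle: a small occupation time does not by itself rule out being there at the deterministic time $t_N'$. I would first average: the time spent in $B \coloneq \mathsf{ST}\setminus\mathsf{NT}$ during $[t_N'-\eta t_N, t_N']$ is small, so the set of $s$ in this window with $\P(X_s\in B) > \epsilon''$ has small Lebesgue measure. To transfer this to the fixed time $t_N'$, I would condition on $X_{t_N'} = x\in B$. Since $\wh\tau_x < \delta$, the sojourn around $x$ lasts only $O(\delta \wh t_N) \ll t_N$ before the walk departs. The time-reversal, or equivalently reversibility of $X$ with respect to $\tau$, then shows that the last visit to $B$ before time $t_N'$ is typically followed by time spent in $B$ itself or in traps, in a way that makes $\P(X_{t_N'}\in B)$ comparable to the averaged quantity. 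A cleaner route, which I would try first, is to use the clock process instead. Write $\P(X_{t_N'}\in B)$ via the excursion structure of $\cS$: the time $t_N'$ falls in a jump interval of $\cS$, and the walk sits in $B$ only during the gap between normal traps. The total time spent there is $o(t_N)$, while the law of the $\cS$-values stays spread out, because the increments are given by the depths of finitely many regular traps with a continuous limiting law (Theorems~\ref{thm:regular traps1}–\ref{thm:regular traps2}). Together these show that $t_N'$ lands in such a gap with vanishing probability. Letting $\varepsilon$, $\epsilon$ and the other auxiliary parameters tend to $0$ completes the proof.
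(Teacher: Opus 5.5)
\textbf{The gap is in the step you flag yourself.} Your reduction is fine up to there. The non-reachable part and the shallow occupation-time bound are essentially the paper's Lemmas~\ref{lem:nonreachable_traps} and~\ref{lem:sum_shallow_traps}. There is one small omission: shallow vertices within distance $N/r_N$ of $\mathbf{0}$ have Green function of order $\log N$, not $(\log N)^{1-\gamma}$, so they need a separate first-moment bound. For $\mathsf{WT}\cup\mathsf{DT}\cup\mathsf{NIT}$, ``never hit'' immediately gives the fixed-time statement. What remains is to pass from $|\{s\le 2bt_N: X_s\in B\}|=o(t_N)$ to $\P(X_{t_N'}\in B)\to 0$ for $B=\mathsf{ST}(\delta)\setminus\mathsf{NT}(\delta,r)$, and neither of your sketches does this.

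In the averaging route, conditioning on $X_{t_N'}=x\in B$ only tells you the walk entered $B$ within $O(\delta\wh t_N)$ before $t_N'$. What must be ruled out is that such entrance times (e.g.\ exits from a trap) carry excess mass near the deterministic time $t_N'$. Neither averaging over $s$ nor reversibility gives any regularity of $s\mapsto\P_{\mathbf 0}(X_s\in B)$ at that scale; the walk starts at $\mathbf 0$, not in equilibrium.

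The clock-process route rests on a false picture: $B$ is not visited only in a gap between consecutive normal traps. Between $\sigma_i$ and $\sigma_{i+1}$ the walk returns $\Theta(\log N)$ times to the current trap and passes through shallow vertices on the excursions in between. So $\{s: X_s\in B\}$ is a union of many short intervals scattered over each period. Weak convergence of $\cS(i)/t_N$ to a continuous law controls landing in a fixed interval of length $\epsilon t_N$, not in such a set. For that you need a random time with conditional density $O(1/t_N)$ that is conditionally independent of the subsequent path. The clock values do not obviously provide this: given the jump chain, the time at a trap bottom is Gamma with shape $\Theta(\log N)$, whose density is of order $\sqrt{\log N}/t_N$. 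The paper notes that an earlier version of its own proof failed for lack of exactly such a density bound.

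\textbf{The missing idea is to construct such a time directly.} Let $U$ be the first $\eta$-normal trap bottom hit, $\mathbb{V}$ an independent uniform variable, and $T(\eta,\mathbb{V})=\Psi(\eta\mathbb{V}\tau_U\log N)$, where $\Psi$ is the inverse local time at $U$.
\begin{itemize}
\item $T(\eta,\mathbb{V})$ is a stopping time for the filtration enlarged by $(\tau,\mathbb{V})$. By the strong Markov property, the set of times after $T(\eta,\mathbb{V})$ spent outside $\mathsf{NT}(\delta,r)$ is conditionally independent of $T(\eta,\mathbb{V})$ given $(\tau,U)$.
\item Since $\int_0^\infty\ind{\Psi(v)\in A}\,\rmd v\le|A|$ for Borel $A$, the conditional law of $T(\eta,\mathbb{V})$ has density at most $(\eta\tau_U\log N)^{-1}\le\eta^{-2}t_N^{-1}$. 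The last inequality uses $\wh\tau_U\ge\eta$.
\item On $\{T(\eta,\mathbb{V})\le at_N\}$, the event $X_{t_N'}\notin\mathsf{NT}(\delta,r)$ says that $T(\eta,\mathbb{V})$ lies in a set of Lebesgue measure at most $|\{s\le 2bt_N: X_s\notin\mathsf{NT}(\delta,r)\}|$.
\end{itemize}
Together these give
$\P(X_{t_N'}\notin\mathsf{NT}(\delta,r))\le\eta^{-2}\varepsilon+\P(T(\eta,\mathbb{V})>at_N)+\P(|\{s\le 2bt_N: X_s\notin\mathsf{NT}(\delta,r)\}|>\varepsilon t_N)$.
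Your occupation-time estimates handle the last term. The middle term vanishes as $\eta\downarrow 0$, because the time spent in the first $\eta$-normal trap is asymptotically $\mathcal{E}_{\eta,r}t_N$, which tends to $0$ in probability. One lets $\varepsilon\downarrow 0$ before $\eta\downarrow 0$.
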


By construction, normal traps are $N/r_N$ apart. It thus follows from the proposition that being $O(1)$ apart at times $t_N$ and $t_N(1+\theta)$ is a.w.h.p. equivalent to being in the {\em same} normal trap at these two times, and that the latter is a.w.h.p. implied by a jump over $[t_N,t_N(1+\theta)]$ by the clock process. Indeed, during a jump, no new traps are visited. To prove the opposite, namely that being in the same trap at two different times must imply a jump over the corresponding interval, we must rule out the case of a revisit to an already visited trap. This is the subject of
\begin{prop}
\label{p:2.3}
For any fixed $\delta \in (0,1)$, $r \ge 1$, and $m \ge 0$, we have 
\begin{equation*}
\lim_{N \to \infty} \, \P \bigl(\exists \, 1 \le i < j \le \zeta_{N,m} :\:  U_i =  U_{j} \bigr) = 0 .  
\end{equation*}
\end{prop}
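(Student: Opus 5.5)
We bound the probability that some visited normal trap is revisited after the walk has left it and travelled to a different normal trap. The plan uses the following facts: the trap set is $r_N$-clustered with clusters $N/r_N$ apart; by Theorem~\ref{thm:regular traps1} there are only $O(\kappa_N)$ normal traps; and a walk of $L_{N,m}$ steps hits only $O(1)$ of them. We condition on the field $h$ and work on the event $G_N$ that $|\mathsf{NT}^*(\delta)|\le \rho^+\delta^{-\alpha/\beta}\kappa_N$, which has probability at least $1-\epsilon$ by Theorem~\ref{thm:regular traps1}. Since $\zeta_{N,m}$ is tight (this should come from the same estimates as Proposition~\ref{lem:cond4}), we may also restrict to $\zeta_{N,m}\le K$ at cost $o_K(1)$.

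The event $U_i=U_j$ with $i<j$ requires the following. At time $\sigma_i$ the walk is within distance $r$ of a trap bottom $U_i$. It then reaches $\mathsf{NT}(\delta,r)\setminus \mathsf{B}_{2r}(Y_{\sigma_i})$. Since normal traps are either within $r_N$ of each other (the same cluster, and hence the same bottom, because each $r_N$-cluster has a unique bottom) or at least $N/r_N$ apart, reaching a new trap forces the walk to exit $\mathsf{B}_{N/(2r_N)}(U_i)$. Afterwards it must return to $\mathsf{B}_r(U_i)$ within $L_{N,m}$ steps. By the strong Markov property at the exit time, it therefore suffices to bound
\begin{equation*}
p_N \coloneq \sup_{u}\ \sup_{z\notin \mathsf{B}_{N/(2r_N)}(u)} \P_z\bigl(\text{$Y$ hits } \mathsf{B}_r(u) \text{ before time } L_{N,m}\bigr),
\end{equation*}
and then take a union bound over $i<j\le K$. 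This gives $\P(\exists\, i<j\le \zeta_{N,m}: U_i=U_j,\, \zeta_{N,m}\le K)\le K^2 p_N$.

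The main step is to estimate $p_N$ with a standard two-dimensional SRW hitting estimate on the torus (the appendix-type estimates). On the torus of side $\asymp N$, starting at distance $\ge N/(2r_N)$ from a ball of fixed radius $r$, the probability to hit the ball within time $T=L_{N,m}$ is bounded by the expected number of visits divided by the Green's function at the origin. Up to time $T\gg N^2$, the expected number of visits to the ball is about
\begin{equation*}
r^2\Bigl(\frac{T}{N^2}+ \log\frac{N}{\|z-u\|}\Bigr)\lesssim r^2\bigl(m(\log N)^{1-\gamma}+\log r_N\bigr).
\end{equation*}
Meanwhile the number of returns to the ball before leaving $\mathsf{B}_{N}$ is of order $\log N$, so a hit produces an expected number of visits of order $\log N$. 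Hence
\begin{equation*}
p_N\lesssim \frac{m(\log N)^{1-\gamma}+(\log\log N)^{100(2+\gamma)}}{\log N}\to 0,
\end{equation*}
using $\gamma>0$ and $\log r_N=(\log\log N)^{100(2+\gamma)}=o(\log N)$. This computation is the main obstacle: both the mixing contribution $T/N^2$ and the near-field $\log(N/\mathrm{dist})$ contribution must be $o(\log N)$, and the choice of $r_N$ and $\gamma>0$ is exactly what ensures this.

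If tightness of $\zeta_{N,m}$ is not yet available, we can instead do the union bound over the pairs of traps directly. The expected number of ordered pairs of distinct visits (trap, later return) is at most $\sum_u \P(\text{hit } u)\, p_N$. Here $\sum_u \P(\text{hit } \mathsf{B}_r(u))=O(\kappa_N\cdot\kappa_N^{-1})=O(1)$ by the same hitting estimate: a given far trap is hit with probability of order $(\log N)^{1-\gamma}/\log N=\kappa_N^{-1}$, and there are $O(\kappa_N)$ traps on $G_N$. The starting trap cannot be at distance less than $2N/r_N$ from $\mathbf{0}$ by Theorem~\ref{thm:distance_origin}. Multiplying by $p_N\to0$ gives the claim as $N\to\infty$, followed by $\epsilon\downarrow0$.
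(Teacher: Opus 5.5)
Your proof is correct, but it takes a different route from the paper's. In the paper, Proposition~\ref{p:2.3} is just part (iii) of Lemma~\ref{lem:approximation:sigma}, and it is derived softly. Part (ii) of that lemma says that the depths $\wh{\tau}_{U_1},\dots,\wh{\tau}_{U_l}$ of the successively visited traps are asymptotically i.i.d.\ copies of $w^*_\delta$, whose law has no atoms. Since a revisit forces the closed condition $\wh{\tau}_{U_i}=\wh{\tau}_{U_j}$, the portmanteau theorem gives $\P(\exists\, i<j\le l :\, U_i=U_j)\to 0$ for each fixed $l$. The tightness bound \eqref{eq:zeta-N-k} for $\zeta_{N,m}$ then comes from part (i) (asymptotically exponential inter-arrival times at rate $\chi_N$) and the law of large numbers.

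You instead give a direct strong-Markov first-moment bound. After moving to a different trap, the walk is at distance at least $N/(2r_N)$ from $U_i$, and from there re-entering $\mathsf{B}_r(U_i)$ within $L_{N,m}$ steps has probability $O(m\kappa_N^{-1})$. This is made rigorous exactly as in the proof of Lemma~\ref{lem:nonreachable_traps}, using Lemma~\ref{lem:est_laplace}(i) with $\lambda_N\asymp\kappa_N/m$ and Markov's inequality. Your visits-over-Green-function heuristic also works, via Lemma~\ref{eq:heat-kernal-torus}. Your second variant sums $\P(\text{hit } \mathsf{B}_r(u))\, p_N$ over $u\in\mathsf{NT}^*(\delta)$. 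It needs only the upper bound in Theorem~\ref{thm:regular traps1} and Theorem~\ref{thm:distance_origin}, and removes the need for tightness of $\zeta_{N,m}$ altogether.

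Your route is therefore more elementary and quantitative: it yields an explicit rate and uses neither Theorem~\ref{thm:regular traps2} nor Lemma~\ref{prop:hitting-time}(ii). The paper's route costs nothing extra, given the machinery it needs anyway for the clock process, but it is purely qualitative and rests on that heavier landscape input.

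One small correction: Proposition~\ref{lem:cond4} bounds $\zeta_{N,m}$ from below, not above. For your first variant, the tightness you need is \eqref{eq:zeta-N-k}; your second variant makes it unnecessary.
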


With the last three propositions at hand we can replace the target event $\|X_{t_N(1+\theta)}- X_{t_N}\| < r$ in the theorem with the event that the clock process has a jump over $[t_N, t_N(1+\theta)]$. A key result here is that under the appropriate limiting procedure $\cS$ tends to a stable subordinator, for which the probability of such a jump is well understood and expressed in terms of the Generalized Arcsine law. We thus state the final and most substantial ingredient in the proof of the theorem.

\begin{thm}[Convergence of the clock process]
\label{p:2.4}
Let $\mathscr{Y}$ be a standard
$\alpha/\beta$-stable subordinator.  For $\delta \in (0,1)$, $r \ge 1$ and $N \ge 1$, define the rescaled clock process  
\[  {\mathscr{Y}}_{N,\delta,r}(u):= \frac{1}{t_N} \mathcal{S}_{N,\delta,r}(  \lfloor  u \delta^{-\frac{\alpha}{\beta}}   \rfloor  ) \quad \text{ for all } \quad u \ge 0. \] 
Then there exists a constant $c_\beta \in (0,\infty)$ such that,   
as $N \to \infty$,
followed by $r \to \infty$
and then $\delta \downarrow 0$,
the sequence
$(c_{\beta}\mathscr{Y}_{N,\delta, r})$
converges weakly to $\mathscr{Y}$ in the
Skorokhod space $D[0,\infty)$ equipped with the $J_1$ topology.
\end{thm}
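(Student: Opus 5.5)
The plan is to follow the standard recipe of \cite{arous2006course}: prove convergence of the finite-dimensional distributions of the increments of $\mathscr{Y}_{N,\delta,r}$ to independent stable increments, and then upgrade to $J_1$ convergence using monotonicity of the clock process. The first step is to decompose each jump $\mathcal{S}(i)-\mathcal{S}(i-1)$ into two parts: the time spent inside the normal trap $\mathsf{B}_r(U_i)$ during the excursion from $\sigma_i$ to $\sigma_{i+1}$, and the time spent elsewhere. The latter consists of time in shallow, non-isolated, wide and deep trap-vertices. We will show it is negligible after division by $t_N$, a.w.h.p. as $N\to\infty$, then $r\to\infty$, then $\delta\downarrow0$. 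The argument combines two ingredients. First, the jump chain visits a given $r_N$-clustered set only through a fraction $\Theta(\kappa_N^{-1})$ of its clusters, and each vertex in a visited cluster is visited $O(\log N)$ times in expectation. Second, Theorems~\ref{thm:non_reachable_traps} and~\ref{thm:sum_shallow_traps} bound the relevant sums by $o(\kappa_N)$. Together these give an $o(\log N\cdot \wh t_N)=o(t_N)$ expected contribution over $L_{N,m}$ steps. Proposition~\ref{p:2.3} also lets us treat the $U_i$ as distinct traps.

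The second step treats the main term, namely the time accumulated at $\mathsf{B}_r(U_i)$ before $Y$ leaves $\mathsf{B}_{2r}(Y_{\sigma_i})$ and reaches a new normal trap. Conditionally on $h$, the number of visits to each $y\in \mathsf{B}_r(U_i)$ before hitting a set at distance $\geq N/r_N$ is a local time of the two-dimensional walk on the torus. The plan is to show, using the appendix estimates, that these local times, divided by $\log N$, converge jointly to $c\,G$ times a common $\mathrm{Exp}(1)$ variable, where $G$ is the escape-type Green's function normalization (roughly $\frac{2}{\pi}\log(N/r)$ relative to the torus). The holding times $\mathbb{e}_j\tau_y$ then sum by the law of large numbers over many visits. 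Hence the $i$-th jump divided by $t_N$ is asymptotically $c\,\mathbb{e}\,\wh\tau_{U_i}\sum_{y\in\mathsf{B}_r}\wh\tau_{U_i+y}/\wh\tau_{U_i}$. Next, the identity of the normal trap hit is essentially uniform among $\mathsf{NT}^*(\delta)$, because the walk mixes on the torus between visits (Theorem~\ref{thm:distance_origin} handles the start). Its depth-and-cluster profile is therefore asymptotically distributed as the empirical measure in Theorem~\ref{thm:regular traps2}, namely the density $\propto t^{-1-\alpha/\beta}$ on $[\delta,\delta^{-1}]$ times $\sigma_\beta$. Theorem~\ref{thm:regular traps1} ensures that regular traps dominate, so the clusters have bounded radius $r$.

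The third step establishes the limit. The jumps become asymptotically i.i.d. with law $Z_\delta=\mathbb{e}\,T\,W$, where $T$ has density $\propto t^{-1-\alpha/\beta}$ on $[\delta,\delta^{-1}]$ and $W=\sum_y s_y$ under $\sigma_\beta$. By Theorem~\ref{thm:sumability_clusters}, $\mathbf E W<\infty$, and $\mathbf E\mathbb{e}^{\alpha/\beta}<\infty$. Computing the Laplace transform of $\sum_{i\le u\delta^{-\alpha/\beta}}Z_\delta$ gives $\exp(-u\,c'\lambda^{\alpha/\beta}\,\mathbf E[(\mathbb{e}W)^{\alpha/\beta}](1+o_\delta(1)))$ as $\delta\downarrow0$. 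The constant $c_\beta$ absorbs all these factors; the cutoff at $\delta^{-1}$ contributes vanishing mass $\delta^{\alpha/\beta}$ per unit $u$, and the lower cutoff contributes small jumps whose total is $O(\delta^{1-\alpha/\beta})\to0$, using $\beta>\alpha$. Finally, $J_1$ convergence follows from finite-dimensional convergence of an increasing process with i.i.d.-type increments, via tightness (e.g.\ Skorokhod's theorem for random walks in the domain of attraction of a stable law, applied after a coupling).

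The main obstacle will be the asymptotic independence and uniformity of the successive trap visits, jointly with the local-time limit. The law of $(U_i)$ depends on $h$ through the random set $\mathsf{NT}^*(\delta)$, so we must show that, quenched, the hitting distribution of the $r_N$-clustered set from far away is close to uniform. This needs a uniform estimate on hitting probabilities of balls of radius $r$ at mutual distances $\geq N/r_N$ on the torus, where harmonic measure is approximately uniform up to $1+O(\log r_N/\log N)$ errors. I would first try to prove this through a last-exit decomposition combined with the appendix estimates.
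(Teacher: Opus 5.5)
Your plan follows essentially the paper's route. You reduce $\mathcal{S}$ to the time spent in normal traps (Proposition~\ref{prop:approximation:clock:process}). You identify each increment asymptotically as $\frac{2}{\pi}\,\mathbb{e}\,\wh{\tau}_{U_i}\mathcal{W}_r$, using a common geometric local time of mean $\sim\frac{2}{\pi}\log N$ together with an asymptotically uniform choice of the next trap and Theorem~\ref{thm:regular traps2} (Lemmas~\ref{lem:approximation:sigma} and~\ref{lem:convergence:time:intervals}). You then let $r\to\infty$ via Theorem~\ref{thm:sumability_clusters} and $\delta\downarrow0$ via a Laplace-transform computation plus tightness. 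The paper differs only in tools: it uses Aldous' criterion instead of Skorokhod's triangular-array theorem, and it proves the uniform-hitting input (Lemma~\ref{prop:hitting-time}) via the Aldous--Brown exponential approximation, Matthews' bound, mixing and a Laplace-transform comparison rather than a last-exit decomposition; these are interchangeable.

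One correction to your first step concerns deep, wide and non-isolated traps. They cannot be handled by an expected-time bound: deep depths are unbounded, and the top cluster alone has $\wh{\tau}\approx(\log N)^{\gamma\beta/\alpha}\gg\kappa_N$, so its quenched expected contribution is $\gg t_N$. You must instead use the count bounds of Theorem~\ref{thm:non_reachable_traps} to show these sets are not hit within $L_{N,m}$ steps (Lemma~\ref{lem:nonreachable_traps}). You also need Proposition~\ref{lem:cond4}, which guarantees that $\lfloor T\delta^{-\alpha/\beta}\rfloor$ trap visits occur within $L_{N,m}$ steps, so that this step-indexed error bound transfers to the visit-indexed clock.
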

  
We can now give the proof.
\begin{proof}[Proof of Theorem~\ref{thm:1}]
We will equivalently show the convergence in~\eqref{equation:thm:1} with $t_N = t_N(\gamma)$ uniformly in $\gamma$ on compacts. We recall that such uniformity holds for all other statements in this subsection, and note that this will also be the case, implicitly, for all the limits in the proof to follow.

Fix $\theta>0$.
Define the good event $G^{N}_{\star}(m;\delta,r)$
as the intersection of the following:
\begin{align*}
	G^{N}_{1}( \delta,r) & \coloneq  \{ X_{  t_N} \in   \mathsf{NT}(\delta,r) \, , \  X_{(1+\theta) t_N} \in   \mathsf{NT}(\delta,r) \} , \\
	G^{N}_{2}(m;\delta,r) & \coloneq \{ \mathcal{S}(\zeta_{N,m}) \ge 2(1+\theta) t_N \} \cap  \{ \text{ for all } 1 \le i < j \le \zeta_{N,m}, U_i \neq U_{j} \} .
\end{align*} 
Combining Propositions~\ref{lem:cond4},  \ref{lem:only:moderate:traps:visited} and  \ref{p:2.3} we have
 \begin{equation}\label{eq:Good-event-star}
	 \lim_{m \to \infty} \limsup_{\delta \downarrow 0} \,
	\limsup_{r\to \infty} \, \limsup_{N\to \infty} \P\bigl( G^{N}_{\star}(m;\delta,r) ^{c}\bigr) = 0 .  
 \end{equation}
 
 \noindent\underline{\emph{Step 1. }}
We first demonstrate that the event
$\{|X_{t_N} - X_{t_N(1+\theta)}| \le 2 r \}$
is asymptotically equivalent to
\begin{equation*}
	 \mathtt{ID}_{N }(\delta,r)  \coloneq  \{ \exists x \in \mathsf{NT}(\delta,r) \text{ s.t. }   d(X_{t_N} ,x ) \le r, d( X_{t_N(1+\theta)},   x ) \le r \} .
\end{equation*} 
Clearly we have $  \mathtt{ID}_{N }(\delta,r) \subset \{   |X_{t_N} - X_{t_N(1+\theta)}| \le 2 r \} $. Conversely, because the distance between any
two distinct points in $\mathsf{NT}(\delta,r)$
is at least $N/r_N \gg  r$ by \eqref{e:1.16a},
the intersection of $\{ |X_{t_N} - X_{t_N(1+\theta)}| \le 2 r \}$
and $G^{N}_{1}(\delta,r)$ implies $\mathtt{ID}_{N }(\delta,r)$.
Thus, we obtain:
\begin{equation*}
	\{   |X_{t_N} - X_{t_N(1+\theta)}| \le 2 r \} \setminus   \mathtt{ID}_{N }(\delta,r) \subset  G^{N}_{1}( \delta,r)^{c} .
\end{equation*}  
The probability of the latter goes to zero in the desired limits by~\eqref{eq:Good-event-star}.

\noindent\underline{\emph{Step 2. }}
Next, with $\mathrm{Im}(f)$ denoting the image of the function $f$, we wish to establish that 
\begin{equation}\label{eq:Id-VS-Clock-image}
	 \lim_{\delta\downarrow 0} \, \limsup_{r\to \infty} \, \limsup_{N\to \infty}  |
\P ( \mathtt{ID}_{N }(\delta,r)  ) -   \P \bigl( \mathrm{Im}( t_N^{-1} \mathcal{S}) \cap  [1,1+\theta] = \emptyset  \bigr) | = 0 \,.
\end{equation}  
To this end, we rewrite the target event $\{\mathrm{Im}( t_N^{-1} \mathcal{S}) \cap  [1,1+\theta] = \emptyset\}$ as  
\[ 
  \bigl\{  \exists k \ge 0\text{ s.t. }   \mathcal{S}(k) < t_N < (1+\theta) t_N < \mathcal{S}(k+1)   \bigr\} \cup \{ \mathcal{S}(0) > (1+\theta) t_N \} .
\]
By the definition of the clock process   $\mathcal{S}$ in \eqref{equa:def:modif:clock}, the first event means that, within the time window $[t_N,(1+\theta)t_N]$, the walk $X_t$ does not find a different normal trap from the last one it visited. Since $G^{N}_{1}( \delta,r) \cap \{\mathcal{S}(0) > (1+\theta) t_N\}= \emptyset $, we thus get 
\begin{equation}
		\label{eq:Clock-image-Id}
	  \{ \mathrm{Im}( t_N^{-1} \mathcal{S}) \cap  [1,1+\theta] = \emptyset \} \cap G^{N}_{1}( \delta,r) \subset  \mathtt{ID}_{N }(\delta,r). 
\end{equation}
Conversely, the occurrence of
$\mathtt{ID}_{N }(\delta,r) \cap G^{N}_{2}(m;\delta,r)$
implies that $X_{t_N}$ and $X_{(1+\theta) t_N}$
occupy the same normal trap
(say, centered at $U_i$).
Moreover, until time $2(1+\theta) t_N$,
the walk does not come back to $\mathsf{B}_{r}(U_{i})$ 
after discovering a different normal trap.
Thus:
\begin{equation}
	  \label{eq:Id-Clock-image}
	 \mathtt{ID}_{N }(\delta,r) \cap   G^{N}_{2}(m;\delta,r) \subset  \{ \mathrm{Im}( t_N^{-1} \mathcal{S}) \cap  [1,1+\theta] = \emptyset \}.
\end{equation} 
Combining \eqref{eq:Clock-image-Id} and \eqref{eq:Id-Clock-image} with \eqref{eq:Good-event-star}, the claim \eqref{eq:Id-VS-Clock-image} follows immediately.

\smallskip
\noindent\underline{\emph{Step 3. }}
Using the first two steps, 
it thus remains to show
\begin{equation}
\label{e:2.13}
 \lim_{\delta\downarrow 0} \, \limsup_{r\to \infty} \, \limsup_{N\to \infty}  \Big|
\P \bigl( \mathrm{Im}( t_N^{-1} \mathcal{S}) \cap  [1,1+\theta] = \emptyset \bigr)- \mathrm{Asl}_{\alpha/\beta}\bigl(1/(1+\theta)\bigr) \Big| = 0 .
\end{equation}
Observe that  the images of ${t_{N}^{-1}}\cS$ and $\mathscr{Y}_{N,\delta,r}$ are the same. It thus follows from  Theorem~\ref{p:2.4}   that the first probability in \eqref{e:2.13} tends in the stated limits to
\begin{equation*}
	  \P \bigl( \mathrm{Im}(\mathscr{Y} ) \cap  [c_{\beta},c_{\beta}(1+\theta)] = \emptyset  \bigr) \,,
\end{equation*}
where $\mathscr{Y}$ is the standard stable subordinator with index $\alpha/\beta$. Indeed, the discontinuity set of 
the functional 
$f \mapsto \ind{ \mathrm{Im}(f) 
\cap [a,b] = \emptyset}$ on 
$D[0,\infty)$ is contained 
in the set of trajectories 
that hit or asymptotically 
approach the boundary $\{a,b\}$. 
Since this occurs with probability 
zero for the subordinator $\mathscr{Y}$ 
(e.g., from the 
Arcsine Law; see below), 
the claim follows from the 
generalized continuous mapping theorem.

The classical Arcsine Law for L\'evy processes (see e.g.~\cite[Theorem 6 in Section III.3]{Ber96})  yields that the probability 
that a stable subordinator of index 
$\lambda \in (0,1)$ does not 
intersect the interval $[a,b]$ is 
given by
\begin{equation*}
    \mathrm{Asl}_{\lambda}(a/b).
\end{equation*}
This identifies the limiting probability 
with the right-hand side of~\eqref{e:2.13}, and 
completes the proof.
\end{proof}

\subsection{Key dynamical lemmas}
\label{s:2.2}
Our next task is to prove the high-level steps in the proof of the main theorem, namely Propositions~\ref{lem:cond4},  \ref{lem:only:moderate:traps:visited}, \ref{p:2.3} and,  most importantly, Theorem~\ref{p:2.4}. The proofs of all these statements rely on several key results concerning the dynamics of the process $X$ and its relation to the trapping landscape given by the DGFF.  We gather these essential properties below, beginning with a result showing that, with high probability, the walk $(Y_n)$ avoids deep, wide, or non-isolated traps during the first $L_{N,m}$ steps.

\begin{lem}\label{lem:nonreachable_traps} Let $H_{\mathsf{DT}(\delta)}$, $H_{\mathsf{WT}(\delta,r)}$ and $H_{\mathsf{NIT}(\delta)}$ denote the hitting times of the sets $ \mathsf{DT}(\delta)$, $\mathsf{WT}(\delta,r)$ and $\mathsf{NIT}(\delta)$ by the discrete-time random walk $(Y_n)$, respectively. Then  for all $m \in \mathbb{N}$, we have 
\begin{equation}\label{equa:DT:lem:nonreachable_traps}
\lim_{\delta \to 0} \ \limsup_{N\to \infty} \bbP_\mathbf{0} \left( H_{{\mathsf{DT}(\delta)}} \leq L_{N,m}  \right) =0 .
\end{equation}
Moreover, for wide and non-isolated traps, we have for any $m \in \mathbb{N}$ and $\delta > 0$, 
\begin{equation*}
 \lim_{r\to \infty} \ \limsup_{N\to \infty} \bbP_\mathbf{0} \left( H_{{\mathsf{WT}(\delta,r)}} \leq L_{N,m}  \right) =0 \, , 
 \, \text{ and } \,
 \lim_{N\to \infty} \bbP_\mathbf{0} \left( H_{\mathsf{NIT}(\delta)} \leq L_{N,m}  \right) =0 .
\end{equation*}
\end{lem}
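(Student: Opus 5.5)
The proof will combine the counting bounds of Theorem~\ref{thm:non_reachable_traps} with a random-walk estimate for the jump chain $Y$. The basic input is a \emph{hitting estimate for clustered targets}. There is a constant $C<\infty$ such that, for any deterministic set $A \subset \mathsf{V}_N$ with $d(\mathbf{0},A) \ge N/r_N$ that is a union of $k$ balls of radius at most $3r_N$ (centred at points $x_1,\dots,x_k$), we have
\begin{equation*}
\P_{\mathbf 0}\bigl(H_A \le L_{N,m}\bigr) \le C\, m\, k\, \kappa_N^{-1}\bigl(1+o(1)\bigr).
\end{equation*}
By a union bound it suffices to treat a single ball $\mathsf{B}_{3r_N}(x_i)$. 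The expected number of visits of $Y$ to this ball during $L_{N,m}$ steps is $\lesssim L_{N,m} r_N^2/N^2$. A visit to the ball yields, by the two-dimensional Green function, an expected number of further visits of order $r_N^2\log(N/r_N)\cdot(\text{const})$. More precisely, the hitting probability within time $T\gg N^2\log N$ of a ball of radius $\rho$ on the torus is $\asymp T/(N^2\log(N/\rho))$. Here $\log(N/\rho)=\log N(1-o(1))$ because $\log r_N = (\log\log N)^{O(1)} = o(\log N)$. This gives $\asymp m (\log N)^{1-\gamma}/\log N = m\kappa_N^{-1}$. For times $\le N^2\log N$ the same bound holds up to a constant. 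These are the auxiliary planar random-walk estimates promised in the appendix, and I will take them from there.

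Since $Y$ is independent of $h$, we condition on the field and apply the estimate to the random but $h$-measurable targets. We also intersect with the event of Theorem~\ref{thm:distance_origin}, which puts the origin at distance $>2N/r_N$ from $\mathsf{T}(\delta)$ and hence from every trap considered, since all of them are within $2r_N$ of $\mathsf{T}(\delta)$. The three cases are then handled as follows.

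\emph{Deep traps.} $\mathsf{DT}(\delta)$ is a union of $|\mathsf{DT}^*(\delta)|$ balls of radius $2r_N$. By \eqref{e:10}, with probability close to $1$ as $\delta\downarrow0$ we have $|\mathsf{DT}^*(\delta)|\le \epsilon\kappa_N$. The hitting probability is then $\le Cm\epsilon$, and we let $\epsilon\downarrow0$.

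\emph{Wide traps.} This case is identical, using \eqref{e:12} with $r\to\infty$.

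\emph{Non-isolated traps.} $\mathsf{NIT}(\delta)$ is covered by the $|\mathsf{T}(\delta)\setminus\mathsf{IT}(\delta)|$ balls of radius $2r_N$ around these points. By \eqref{e:11}, their number is $\le\epsilon\kappa_N$ w.h.p. for every $\epsilon$, so the hitting probability tends to $0$ as $N\to\infty$.

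The main obstacle is the single-ball hitting bound uniformly over centres. Two points need care. First, we need the correct $\log N$ denominator, which requires $r_N=N^{o(1)}$. Second, we must control the initial phase before $Y$ mixes on the torus. I plan to handle this by splitting time into the first $N^2$ steps and the rest. For the first part, Theorem~\ref{thm:distance_origin} gives distance $\ge N/r_N$, and the probability of hitting a ball of radius $3r_N$ at that distance within $N^2$ steps is $O(\log r_N/\log N)$ per ball. Multiplied by $\epsilon\kappa_N$ balls, this is too large. So instead I would bound the whole walk uniformly via the Green function on the torus killed at an independent time of order $L_{N,m}$, which gives $\sup_x \P_x(\text{hit ball}) \lesssim m\kappa_N^{-1} + O(\log r_N/\log N)$ summed appropriately.

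If this summation fails, the fallback is an expected-occupation argument. The expected time spent in $A$ is $\le k\cdot L_{N,m} r_N^2 N^{-2}$ plus a start-up term. We then divide by the minimal expected occupation given a hit, which is $\gtrsim r_N^2\log N$ up to $\log r_N$ factors. This yields the bound $k\, m(\log N)^{-\gamma}$, which is what we need.
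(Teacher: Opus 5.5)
Your proposal follows the paper's proof: condition on $h$, restrict to the good event from Theorem~\ref{thm:non_reachable_traps} and Theorem~\ref{thm:distance_origin}, union-bound over the at most $\epsilon\kappa_N$ balls of radius $2r_N$, and bound each single-ball hitting probability by $O(m\kappa_N^{-1})$. The paper obtains that per-ball bound from Markov's inequality $\P_{\mathbf 0}(H\le L_{N,m})\le \rme\,\E_{\mathbf 0}[\rme^{-H/L_{N,m}}]$ combined with Lemma~\ref{lem:est_laplace}(i) at $\lambda_N\asymp\kappa_N/m$.

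Two of your side remarks need correcting, though the argument does not depend on them. First, the relevant regime is $L_{N,m}\ll N^2\log N$, not $\gg$. Second, your worry about the start-up phase is unfounded: the $O(\log r_N/\log N)$ per-ball term, summed over $\epsilon\kappa_N$ balls, is $O(\epsilon(\log N)^{\gamma-1}\log r_N)=o(1)$ because $\gamma<1$. This is exactly the condition $\lambda_N\ll\log N/\log r_N$ in that lemma, so no time-splitting or fallback argument is needed.
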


The remaining   non-regular traps are shallow traps. While these are certainly visited within the designated number of steps, their nature ensures that the total time spent in them  by $X$ is negligible on the scale of $t_N$. Recall \eqref{equa:def:modif:clock}.

\begin{lem}\label{lem:sum_shallow_traps} For all $\varepsilon > 0$ and $m \in \mathbb{N}$, we have 
\[ \lim_{\delta \downarrow 0} \,
 \limsup_{N\to \infty} \bbP_{\mathbf{0}} \Bigl( \sum_{ j =0}^{ L_{N,m} } \mathbb{e}_{j} \tau_{Y_j} \ind{ Y_j \in \mathsf{ST}(\delta) } > \varepsilon t_N \Bigr) 
= 0.
\]  
\end{lem}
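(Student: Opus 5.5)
We condition on the field $h$ and work with the quenched expectation over the walk. By Markov's inequality applied to the exponentials (which are independent, with mean one), it suffices to control
\[
\E_{\mathbf 0}\Bigl[\sum_{j=0}^{L_{N,m}} \mathbb{e}_j \tau_{Y_j}\ind{Y_j\in\mathsf{ST}(\delta)} \,\Big|\, h\Bigr]
= \sum_{x\in \mathsf{ST}(\delta)} \tau_x \, \E_{\mathbf 0}\bigl[\ell_{L_{N,m}}(x)\bigr],
\]
where $\ell_L(x)$ denotes the number of visits of the jump chain $Y$ to $x$ during its first $L$ steps. Since $\tau_x=\wh t_N\,\wh\tau_x$ and $t_N=\wh t_N\log N$, the target bound $\varepsilon t_N$ becomes $\varepsilon\,\wh t_N\log N$. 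It therefore suffices to show that, on an event of high probability for $h$,
\[
\sum_{x\in\mathsf{ST}(\delta)}\wh\tau_x\,\E_{\mathbf 0}\ell_{L_{N,m}}(x)\le \varepsilon^2\log N .
\]

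The key input is a uniform bound on expected local times of simple random walk on the torus $\mathsf T_N$ (from the appendix). For $x$ at distance at least $N/r_N$ from the starting point,
\[
\E_{\mathbf 0}\ell_{L}(x)\le C\Bigl(\frac{L}{N^2}+\log N\cdot \P_{\mathbf 0}(H_x\le L)\Bigr),
\]
and in fact $\E_{\mathbf 0}\ell_L(x)\le C L/N^2 + C\log(N)\,\bigl(L/(N^2\log N)\bigr)$ uniformly in such $x$. With $L=L_{N,m}=mN^2(\log N)^{1-\gamma}$ this gives
\[
\E_{\mathbf 0}\ell_{L_{N,m}}(x)\le C_m (\log N)^{1-\gamma}=C_m\,\frac{\log N}{\kappa_N}.
\]
This is the heuristic "fraction $\kappa_N^{-1}$ visited, each visited vertex visited $\Theta(\log N)$ times", in averaged form. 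It can be derived by writing $\ell_L(x)$ as a sum over excursions: the Green function on the torus up to time $L$ is $L/|\mathsf T_N| + O(\log N)$ for separated points, and the $O(\log N)$ term only contributes once. More carefully, $\sum_{k\le L} p_k(\mathbf 0,x)\le L/(4N^2)+C\log(N/\|x\|)$ by the local CLT on the torus, and for $\|x\|\ge N/r_N$ the log term is $O(\log r_N)=o(\log N)$. This bound is in fact even better than $C_m (\log N)^{1-\gamma}$. For $x$ close to the origin, Theorem~\ref{thm:distance_origin} shows that w.h.p. no $\delta$-trap-vertex lies within $2N/r_N$. However, shallow vertices there have $\wh\tau_x<\delta$, and a crude bound is needed for them: their total contribution is $\sum_{\|x\|\le N/r_N}\tau_x\,\E\ell(x)$. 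We handle this by a first-moment bound over the field, using $\E\tau_x\le \rme^{\beta^2 g\log N}$, which is polynomially smaller than $\wh t_N$ for $\beta>\alpha$ after multiplying by $(N/r_N)^2\log N$ visits. This should be checked; if it fails, we bound $\wh\tau_x<\delta$ and use the sum of local times near the origin, $\sum_{\|x\|\le N/r_N}\E\ell(x)\le L\,\P(\text{in ball})$, which is only useful combined with the fact that the shallow vertices there are sparse.

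The main step then combines the uniform local-time bound with Theorem~\ref{thm:sum_shallow_traps}:
\[
\sum_{x\in\mathsf{ST}(\delta)}\wh\tau_x\,\E_{\mathbf 0}\ell_{L_{N,m}}(x)\le C_m\frac{\log N}{\kappa_N}\sum_{x\in\mathsf{ST}(\delta)}\wh\tau_x\le C_m\epsilon'\log N
\]
with probability tending to one as $N\to\infty$ then $\delta\downarrow0$. Choosing $\epsilon'=\varepsilon^2/C_m$ and applying Markov's inequality conditionally on $h$ gives the lemma. The main obstacle is the uniform local-time estimate $\sup_x\E_{\mathbf 0}\ell_{L_{N,m}}(x)\le C_m(\log N)^{1-\gamma}$, including the vertices near the starting point. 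I would first try to obtain it from the torus heat-kernel bound $p_k(\mathbf 0,x)\le C/k\wedge C/N^2+\ldots$. The near-origin contribution is then at most $\sum_{k\le N^2}C/k = O(\log N)$ per vertex, which is too big only if a shallow vertex near the origin has non-negligible depth. Theorem~\ref{thm:distance_origin} rules this out up to depth $\delta$, and a separate sparse-count bound on $\sum_{\|x\|\le 2N/r_N}\wh\tau_x$ (via the first moment, which is $o(1)$ since the box is a vanishing fraction of $\mathsf V_N$ at scale $\kappa_N$) handles the remainder.
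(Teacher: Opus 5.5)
Your skeleton is the paper's. You condition on $h$ and use Markov's inequality to reduce to the quenched mean $\frac{1}{\log N}\sum_{x\in\mathsf{ST}(\delta)}\wh\tau_x\,\mathbb{G}^N_{L_{N,m}}(\mathbf 0,x)$. You bound the Green function by $C_m(\log N)^{1-\gamma}$ for $|x|>N/r_N$; your local-CLT estimate is exactly \eqref{eq:upper:green}. You then apply Theorem~\ref{thm:sum_shallow_traps} to those $x$. That part is fine. The near-origin part, however, is not established.

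First, the uniform estimate $\sup_x\E_{\mathbf 0}\ell_{L_{N,m}}(x)\le C_m(\log N)^{1-\gamma}$ that you call the main obstacle is false, since already $\E_{\mathbf 0}\ell_{L_{N,m}}(\mathbf 0)\asymp\log N$. So the split at $|x|=N/r_N$ is forced. For $|x|\le N/r_N$ all you have is $\mathbb{G}^N_{L_{N,m}}(\mathbf 0,x)\le C_m\log N$, so you need $\sum_{x\in\mathsf{ST}(\delta),\,|x|\le N/r_N}\wh\tau_x\to0$ in probability.

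Your explicit attempt via $\E\tau_x$ fails for every $\beta>\alpha$. In the bulk $\E\tau_x=\rme^{\beta^2\Var(h_x)/2}\asymp N^{\beta^2g/2}$, and since $\alpha\sqrt g=2$,
\[
\frac{(N/r_N)^2\,N^{\beta^2 g/2}}{\wh t_N}=N^{\frac g2(\beta-\alpha)^2+o(1)}\longrightarrow\infty .
\]
The untruncated exponential moment is carried by heights $h_x\approx\beta g\log N$, which for $\beta>\alpha$ lie far above $m_N$ and are essentially never realized. For the same reason, the ``first moment of $\sum_{\|x\|\le 2N/r_N}\wh\tau_x$'' in your last paragraph is not $o(1)$ as stated. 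Theorem~\ref{thm:distance_origin} does not repair this: what is needed is the restriction $\wh\tau_x<\delta$ inside the expectation. That restriction is already part of the definition $\mathsf{ST}(\delta)\subset\mathsf{T}((0,\delta))$, so the theorem is not needed at all.

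The missing step is the truncated first moment. By \eqref{e:28b}, $\wh\tau_x=\rme^{\beta(h_x-m_N-s_N)}$, and the Gaussian density satisfies $\P(h_x\in m_N+s_N+\dif u)\lesssim N^{-2}(\log N)^{1+\gamma}\rme^{-\alpha u}\dif u$ for $u\le 0$. Hence
\[
\E\Bigl[\sum_{x\in\mathsf{ST}(\delta),\,|x|\le N/r_N}\wh\tau_x\Bigr]\le\sum_{|x|\le N/r_N}\int_{-\infty}^{\beta^{-1}\log\delta}\rme^{\beta u}\,\P\bigl(h_x\in m_N+s_N+\dif u\bigr)\lesssim\frac{(\log N)^{1+\gamma}}{r_N^{2}}\,\frac{\delta^{1-\alpha/\beta}}{\beta-\alpha}\longrightarrow0 ,
\]
where $\beta>\alpha$ gives integrability as $u\to-\infty$.

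This is of size $\kappa_N\log N\cdot r_N^{-2}$, not the $\kappa_N\cdot r_N^{-2}$ your heuristic suggests. The crude first moment, without the barrier events used in Theorem~\ref{thm:sum_shallow_traps}, loses a factor $\log N$, but $r_N^{-2}$ beats any power of $\log N$. Markov's inequality then makes the near-origin contribution to the quenched mean $o(1)$ with high probability. Combined with your far-field bound, this completes the argument exactly as in the paper.
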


The last two lemmas imply that we can restrict our attention to $\delta$-normal traps. We shall show that, after properly scaling the number of steps by the quantity
\begin{equation*}
	\chi_N = \chi_N(\delta )
	 \coloneq \frac{\pi}{2} \frac{| \mathsf{NT}^*(\delta)|}  {  |\mathsf{T}_N| \log N } \,,
\end{equation*}   
such  $\delta$-normal traps are encountered by $Y$ in the manner of a Poisson process. Moreover, letting
\begin{equation*}
\bar{\tau}_{x}(y) \coloneq \frac{\tau_{x+y}}{\tau_x} \quad ; \qquad 
x \in \mathsf{NT}^*(\delta) \,,\,\,
y \in \mathbb{Z}^2\,,
\end{equation*}
represent the ``depth'' of the vertices in the normal-trap relative to its local maximum, the 
joint law of $(\wh{\tau}_x, \bar{\tau}_x) \in \bbR_+ \times \bbR_+^{\bbZ_2}$ as encountered by $Y$ will tend to a limit in law, which is i.i.d. across different traps.

To describe the limiting law,  let $w^*_{\delta}$ and $\bar{w} $   be independent random variables taking values in $[\delta,\delta^{-1}]$ and $(0,1]^{\mathbb{Z}^2}$, respectively, with laws given by
\begin{equation}
\label{e:2.21}
\P( w^*_{\delta}  > u ) = \frac{u^{-\alpha/\beta} - \delta^{\alpha/\beta}}{\delta^{-\alpha/\beta}-\delta^{\alpha/\beta}}   \,, \ \text{ and } \quad 
\P (\bar{w}  \in A) =  \sigma_{\beta}  (   A )\,,
\end{equation}
where $u \in [\delta, \delta^{-1}]$ and $A \subset (0,1]^{\mathbb{Z}^2}$ is a measurable set.
Above, $\sigma_{\beta}$ is the probability measure from Theorem~\ref{thm:regular traps2}. We shall thus show:
\begin{lem}\label{lem:approximation:sigma} Fix $\delta \in (0,1/2]$ and $r \ge 1$.
The following assertions hold: 
\begin{enumerate}[(i)]
    \item As $N \to \infty$, the sequence $(\chi_N \Delta \sigma_i)_{i \ge 1}$ 
    converges in distribution to a sequence of i.i.d. standard
	exponential random variables.
    \item As $N \to \infty$,  the sequence 
    $(\wh{\tau}_{U_{i}},   \bar{\tau}_{U_i} |_{ \mathsf{B}_r} )_{i \ge 1}$ 
    converges in distribution to a sequence of i.i.d. copies of $(w^*_\delta, \bar{w}|_{\mathsf{B}_r})$. 
\item  For   any $m \in \mathbb{N}$,  
$    \P   (\exists 1 \le i < j \le \zeta_{N,m} :\:  U_i =  U_{j}  )    $ converges to zero as $N \to \infty$
\end{enumerate}   
\end{lem}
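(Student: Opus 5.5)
The plan is to condition on the field $h$ and work on a high-probability event for $h$ on which the landscape is well behaved, so that the statement reduces to a quenched statement about a simple random walk on $\mathsf{T}_N$ hitting a fixed $r_N$-clustered target. Concretely, let $\mathcal{G}_N$ be the event that $\mathsf{NT}^*(\delta)$ is $r_N$-clustered with clusters at mutual distance at least $N/r_N$ (this holds by construction and \eqref{e:1.16a}), that $|\mathsf{NT}^*(\delta)|/(\delta^{-\alpha/\beta}\kappa_N)\in[\rho^-_\epsilon,\rho^+_\epsilon]$ (Theorem~\ref{thm:regular traps1}), that the empirical law in Theorem~\ref{thm:regular traps2} is $\epsilon$-close to its limit for a fixed finite family of intervals $I$ and vectors $s$ supported in $\mathsf{B}_r$, and that $d(\mathbf{0},\mathsf{T}(\delta))>2N/r_N$ (Theorem~\ref{thm:distance_origin}). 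Then $\P(\mathcal{G}_N)\to1$ after $\epsilon\downarrow0$. Note also that $\chi_N \asymp \kappa_N/(N^2\log N)$, so $\chi_N^{-1}\asymp N^2(\log N)^{1-\gamma}$, which is the scale $L_{N,1}$.

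The core quenched input is a uniform hitting estimate for SRW on the 2D torus. For a target $A$ made of $K$ clusters, each of diameter at most $r$ (we only need to hit $\mathsf{B}_r$ around a trap-bottom), pairwise $N/r_N$ apart, with $K\to\infty$ and $K=o(N^2/\log N)$, and for a start point $x$ at distance at least $N/r_N$ from $A$ (or at the exit of $\mathsf{B}_{2r}$ around a previous cluster), I would show the following. The time $H_A$ satisfies $\tfrac{\pi}{2}\tfrac{K}{|\mathsf{T}_N|\log N}H_A\Rightarrow\mathrm{Exp}(1)$. Moreover, the hit cluster is asymptotically uniform among the $K$ clusters, jointly with the exponential time and asymptotically independent of it. The constant $\pi/2$ arises from the capacity of a finite set when viewed from distance $N$: the escape probability from $\mathsf{B}_r$ to distance $N/r_N$ is $\sim \tfrac{\pi}{2\log(N/r_N)}\cdot$, and $\log(N/r_N)\sim\log N$ since $\log r_N=o(\log N)$. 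Finiteness of each cluster then does not affect the constant to leading order.

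The argument is standard. First, the walk mixes on the torus in $O(N^2)\ll\chi_N^{-1}$ steps. Second, each cluster is hit within the mixing time with probability $o(1)$ uniformly, by the Green function estimate $\asymp \log(N/r_N)^{-1}\cdot$ (local time), using separation $N/r_N$. Third, a regeneration argument over blocks of length $N^2\log\log N$ yields the exponential law with rate equal to the sum of the individual equilibrium hitting rates $\tfrac{\pi}{2}\tfrac{1}{|\mathsf{T}_N|\log N}(1+o(1))$, each asymptotically identical; this gives uniformity. I expect this step to be the main technical obstacle. The difficulty is getting uniformity over up to $\kappa_N\,\delta^{-\alpha/\beta}$ clusters and controlling the starting position right after leaving a cluster. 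The walk may return to the just-visited cluster before mixing, but by definition of $\sigma_i$ that does not count (it stays in $\mathsf{B}_{2r}$ of the previous point only if it re-enters the same trap), and returns after mixing are covered by the uniform law.

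Applying this iteratively via the strong Markov property at each $\sigma_i$ gives (i): each increment, started after exiting $\mathsf{B}_{2r}(Y_{\sigma_{i-1}})$, spends $o(\chi_N^{-1})$ steps before mixing and then has the exponential law, giving asymptotically i.i.d. exponentials. It also gives uniformity of $U_i$ over $\mathsf{NT}^*(\delta)$ given the past. Part (ii) then follows because the empirical distribution of $(\wh\tau_x,\bar\tau_x|_{\mathsf{B}_r})$ over $x\in\mathsf{NT}^*(\delta)$ converges in probability, by Theorem~\ref{thm:regular traps2}, to the law of $(w^*_\delta,\bar w|_{\mathsf{B}_r})$. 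Indeed, the normalized density $t^{-\alpha/\beta-1}$ on $[\delta,\delta^{-1}]$ is exactly the law in \eqref{e:2.21}. Sampling uniformly (asymptotically without repetition) from such a population yields i.i.d. draws. For (iii): by (i), $\zeta_{N,m}$ is tight, since $L_{N,m}\chi_N=O(m\delta^{-\alpha/\beta})$ on $\mathcal{G}_N$. Each $U_j$ is near-uniform among $|\mathsf{NT}^*(\delta)|\asymp\kappa_N\to\infty$ bottoms given the past, apart from the excluded immediate return. Hence $\P(U_j=U_i)=O(\kappa_N^{-1})$, and a union bound over $O(1)$ pairs, after truncating $\zeta_{N,m}\le M$, finishes the proof.
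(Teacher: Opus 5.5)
Your proposal is correct and follows essentially the paper's route: work quenched on a high-probability landscape event (from Theorems~\ref{thm:regular traps1}, \ref{thm:regular traps2} and \ref{thm:distance_origin}), then apply the torus hitting estimates of Lemma~\ref{prop:hitting-time} (an exponential law at rate $\frac{\pi}{2}|A|/(|\mathsf{T}_N|\log N)$ and a proportional hitting distribution) to $A=\mathsf{NT}^*(\delta)\setminus\{U_{i-1}\}$ at each $\sigma_{i-1}$ via the strong Markov property, and read off (ii) from the empirical cluster law. The differences are minor: the paper proves the hitting lemma via Aldous--Brown, Matthews' bound and mixing rather than block regeneration, and it deduces (iii) from (ii) via the atomlessness of $w^*_\delta$ plus tightness of $\zeta_{N,m}$, instead of your direct bound on $\P(U_j=U_i)$. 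Note also that your stated range $K=o(N^2/\log N)$ for the hitting lemma is too generous, since the estimate needs the mean hitting time to dominate the $O(N^2)$ mixing time, e.g.\ $|A|\le(\log N)^{\gamma'}$ with $\gamma'<1$ as in $\mathscr{K}^N_{\gamma'}$; this does not affect your argument, which only uses $K\asymp\kappa_N$.
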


Lastly, we claim that not only does the joint law of the depths at vertices of encountered normal trap converge weakly to a limit, but also the entire time spent by the walk in those traps. This amounts to showing that the joint number of visits to vertices in such traps obeys a limit in law, after scaling, and that these local time configurations at different traps are asymptotically independent. To this end, for $i \geq 0$ and $r$,$\delta$ as before, we let
\begin{equation*} 
\Delta \mathcal{S}^{\star}(i) \equiv
\Delta \mathcal{S}_{N,\delta,r}^{\star}(i) 
 \coloneq  \sum_{ \sigma_{i} \le j < \sigma_{i+1}}  \mathbb{e}_{j} \tau_{Y_j} \ind{Y_j \in \mathsf{NT}(\delta,r) } \,,
\end{equation*} 
denote the total time spent in $(\delta,r)$-normal traps between the $i$-th and $i+1$-st visit to such traps. For the limiting law, we set
\begin{equation*}
  \mathcal{E}_{\delta,r} \coloneq
 \mathbb{e} \times \Bigl( \frac{2}{\pi}w^*_{\delta} \,  \mathcal{W}_r \Bigr) 
  \,, \quad \text{and} \quad
 \mathcal{W}_{r} \coloneq \sum_{x \in \mathsf{B}_{r} } \ \bar{w}_x \,, \  \text{ for } r \in \mathbb{N} \cup \{ \infty \}\,,
\end{equation*}
with $w^*_\delta, \bar{w}$ as in~\eqref{e:2.21}, and $\mathbb{e}$ an independent standard exponential random variable.
We thus have the following lemma.
\begin{lem}\label{lem:convergence:time:intervals}  Fix $\delta \in (0,1)$ and $r \ge 1$. 
As $N \to \infty$,  the sequence 
\[ \Bigl(  \frac{1}{t_N} \Delta \mathcal{S}^{\star}_{N,\delta,r}  (j)    :j  \ge 1   \Bigr)   \]  converges in distribution to  a sequence $(\mathcal{E}^{(j)}_{\delta,r})_{j \ge 1}$ of i.i.d. copies of
$\mathcal{E}_{\delta,r}$.
\end{lem}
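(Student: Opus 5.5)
The plan is to condition on the field $h$ and on the sequence of visited trap bottoms $(U_i)$, and to analyse the local-time configuration of the jump chain $Y$ inside each ball $\mathsf{B}_r(U_j)$ between $\sigma_j$ and $\sigma_{j+1}$.

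\emph{Decomposition of one increment.} Write
\[
\Delta\mathcal{S}^\star(j)=\sum_{y\in\mathsf{B}_r}\tau_{U_j+y}\sum_{k=1}^{\ell_j(y)}\mathbb{e}^{(j,y)}_k ,
\]
where $\ell_j(y)$ is the number of visits of $Y$ to $U_j+y$ during $[\sigma_j,\sigma_{j+1})$. This formula presupposes that the only normal trap visited in that window is the one at $U_j$, which holds since normal traps are $N/r_N$ apart and $\sigma_{j+1}$ is by definition the first entrance into a different one. Dividing by $t_N=\wh t_N\log N$ gives
\[
\frac{\Delta\mathcal{S}^\star(j)}{t_N}=\wh\tau_{U_j}\sum_{y\in\mathsf{B}_r}\bar\tau_{U_j}(y)\,\frac{1}{\log N}\sum_{k\le \ell_j(y)}\mathbb{e}_k^{(j,y)}.
\]
By Lemma~\ref{lem:approximation:sigma}(ii), the prefactors $(\wh\tau_{U_j},\bar\tau_{U_j}|_{\mathsf{B}_r})_j$ converge jointly to i.i.d.\ copies of $(w^*_\delta,\bar w|_{\mathsf{B}_r})$. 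It therefore suffices to show two things:
\begin{enumerate}[(a)]
\item $\frac{1}{\log N}\sum_{k\le\ell_j(y)}\mathbb{e}_k^{(j,y)}\to\frac{2}{\pi}\mathbb{e}^{(j)}$ in probability-jointly, with a single exponential $\mathbb{e}^{(j)}$ common to all $y\in\mathsf{B}_r$;
\item the variables $\mathbb{e}^{(j)}$ are i.i.d.\ across $j$ and independent of the depths.
\end{enumerate}
Continuous mapping then gives the claim, since $\mathsf{B}_r$ is a finite set.

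\emph{Local time at a single trap.} Entering $\mathsf{B}_r(U_j)$ at time $\sigma_j$, the walk repeatedly returns to the ball, performing excursions of radius up to order $N$. It leaves for good, in the sense relevant here, only when it reaches distance about $N/r_N$ or hits a new normal trap. By standard 2D random walk estimates on the torus (the appendix), each return excursion started from $\partial\mathsf{B}_{2r}$ escapes to distance $N/r_N$ before returning with probability $\sim \frac{\pi}{2\log N}\cdot(1+o(1))$; here one uses $\log(N/r_N)\sim\log N$. Hence the number of returns is approximately geometric with mean $\frac{2}{\pi}\log N$. Within each return the visits to individual $y\in\mathsf{B}_r$ are $O(1)$ with finite moments. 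A law of large numbers over the $\Theta(\log N)$ returns therefore gives $\ell_j(y)/\log N\to \frac{2}{\pi}\mathbb{e}^{(j)}$ simultaneously for all $y\in\mathsf{B}_r$. The factor $2/\pi$ matches the Green function asymptotics $g\log N$ of the walk killed at distance $N$.

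Once the walk is at distance $N/r_N$, the subsequent return probability to $\mathsf{B}_r(U_j)$ before hitting a new trap is $o(1)$. This is Lemma~\ref{lem:approximation:sigma}(iii), and it also follows from the mixing-time comparison: the torus mixes in $N^2$ steps while $\Delta\sigma\sim\chi_N^{-1}\gg N^2$. Summing the exponential clocks over $\Theta(\log N)$ visits introduces no extra randomness in the limit, again by the law of large numbers.

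\emph{Independence across traps.} Apply the strong Markov property at the times when $Y$ first reaches distance $N/r_N$ from $U_j$. The escape counts at trap $j$ depend only on the path between $\sigma_j$ and that escape time. The identity and depth of the next trap depend, up to $o(1)$ in total variation, only on the future after mixing, because the walk mixes over the torus before finding a new trap, as in the proof of Lemma~\ref{lem:approximation:sigma}. Hence the joint law factorises in the limit.

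\emph{Main obstacle.} The key difficulty is the uniform-in-entry-point estimate that the escape probability from $\mathsf{B}_{2r}(U_j)$ to distance $N/r_N$ is $\frac{\pi}{2\log N}(1+o(1))$ on the torus. This must hold uniformly over trap locations, which lie away from the origin by Theorem~\ref{thm:distance_origin}. It must also be combined with the requirement that the geometric count is not disturbed by the other traps within distance $r_N$. The latter is ruled out because the cluster is isolated: there are no $\delta$-trap-vertices in $\mathsf{B}_{N/r_N}\setminus\mathsf{B}_{r_N}$.
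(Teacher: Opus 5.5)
Your architecture is the paper's: the same decomposition \eqref{eq:fml-d-sn}, the depths handled by Lemma~\ref{lem:approximation:sigma}(ii), and the key claim that every site of $\mathsf{B}_r(U_j)$ receives $\frac{2}{\pi}\,\mathbb{e}\,\log N\,(1+o(1))$ visits with a single common $\mathbb{e}$. The routes differ in two places.

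\emph{Local times.} The paper does not decompose into excursions. It compares each $\mathbb{L}^n_x$ with the local time $\mathbb{L}^n_{\mathbf y}$ at the entrance vertex, using the exact second-moment identity \eqref{eq:local-time-covariance} and the flatness $\mathbb{G}^N_{\sigma_1}(x,x')=\frac{2}{\pi}\log N\,(1+o(1))$ on the ball (Lemma~\ref{l:2.13}). It then uses that $\mathbb{L}^n_{\mathbf y}$ is exactly geometric, so the conditional Laplace transform \eqref{eq:laplace-ly} is explicit and no law of large numbers for the clocks is needed. The escape-and-no-return analysis you do by hand is packaged inside Lemma~\ref{l:2.13}, whose killing time is $\sigma_{j+1}$ itself. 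Your excursion/LLN argument is the pathwise version of the same fact.

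\emph{Independence.} The paper conditions only on $\mathscr{F}^Y_{\sigma_n}$ and cites Lemma~\ref{lem:approximation:sigma}(ii). Your strong Markov decomposition at an escape time is more explicit. It makes clear a point the paper passes over quickly: $\Delta\mathcal{S}^\star(j)$ and the identity of $U_{j+1}$, both functions of the path on $[\sigma_j,\sigma_{j+1}]$, asymptotically decouple.

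Three points need correcting for your version to go through.

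First, $\frac{\pi}{2\log N}$ is the escape probability $1/\mathbb{G}(v,v)$ only for excursions from a single vertex $v$ (e.g.\ $Y_{\sigma_j}$). For excursions from $\partial\mathsf{B}_{2r}$ back to $\mathsf{B}_r$ it is still of order $1/\log N$, but with a different, geometry-dependent constant in place of $\pi/2$. With vertex excursions, Wald's identity gives $\mathbb{G}(v,y)/\mathbb{G}(v,v)$ as the mean number of visits to $y$ per excursion, with $\mathbb{G}$ the Green function killed at $\sigma_{j+1}$. You need this ratio to tend to $1$ for every $y\in\mathsf{B}_r$, which is again the flatness in Lemma~\ref{l:2.13}. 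Knowing only that per-excursion counts are $O(1)$ gives limits $c_y\frac{2}{\pi}\mathbb{e}$ with unidentified $c_y$.

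Second, the no-return claim is not Lemma~\ref{lem:approximation:sigma}(iii). That lemma concerns $U_i=U_{i'}$ for $i<i'$, and a return to $\mathsf{B}_r(U_j)$ before $\sigma_{j+1}$ creates no new index. Also, $\chi_N^{-1}\gg N^2$ only guarantees mixing. What is needed is that on $E^N_2(M_N,\delta)$ there are $\asymp\kappa_N\to\infty$ competing traps. Then, from far away, the old ball is hit first with probability $O(1/\kappa_N)$ by Lemma~\ref{prop:hitting-time}(ii). Together with an $O(\log r_N/\log N)$ bound for short-range returns, this is exactly the argument in the proof of Lemma~\ref{l:2.13}.

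Third, stop at distance $N/r_N^3$ rather than $N/r_N$. Another trap may lie just beyond distance $N/r_N$ from $U_j$, and Lemma~\ref{prop:hitting-time} requires the starting point to be at distance at least $N/r_N^2$ from the remaining traps.
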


\subsection{Convergence of the clock process}
With the lemmas of the previous subsection at our disposal we can easily prove the convergence of the clock process to a standard stable subordinator. To this end, we first consider a version of this process where only the times spent in $(\delta,r)$-normal traps are accumulated,
\begin{equation*}
 \mathcal{S}^{\star} (n) \equiv 
 \mathcal{S}^{\star}_{N,\delta,r} (n) \coloneq \sum_{ i =0}^{ n }\Delta \mathcal{S}^{\star}(i)  = \sum_{ i =0}^{ n }  \sum_{ \sigma_{i} \le j < \sigma_{i+1}} \mathbb{e}_{j} \tau_{Y_j} \ind{ Y_j \in  \mathsf{NT}(\delta,r)} 
\quad , \qquad n \geq 0 \,.
\end{equation*} 
Trivially $ \mathcal{S}^{\star} (n) \leq  \mathcal{S} (n)$.  
The following lemma shows that the modified and original processes are uniformly $o(t_N)$ apart, in the first $\zeta_{N,m}$ visits to regular traps.
\begin{prop}\label{prop:approximation:clock:process} For all $\varepsilon > 0$ and $m \in \mathbb{N}$,    we have 
\[
\lim_{\delta \downarrow 0} \,
\limsup_{r \to \infty} \, \limsup_{N \to \infty} \bbP  \Bigl( \sup_{0\leq i \leq \zeta_{N,m}-1}  \{\mathcal{S}_{N,\delta,r}(i) - \mathcal{S}_{N,\delta,r}^{\star} (i)  \} > \varepsilon t_N \Bigr)  = 0.
\]  
\end{prop}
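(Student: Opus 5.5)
The difference $\mathcal{S}(i)-\mathcal{S}^{\star}(i)$ is nondecreasing in $i$. It equals the total holding time accumulated at vertices outside $\mathsf{NT}(\delta,r)$ during the steps $j<\sigma_{i+1}$. The supremum over $0\le i\le \zeta_{N,m}-1$ is therefore attained at $i=\zeta_{N,m}-1$. Since $\sigma_{\zeta_{N,m}}<L_{N,m}$ by definition of $\zeta_{N,m}$, it is bounded by
\begin{equation*}
\sum_{j=0}^{L_{N,m}} \mathbb{e}_j\,\tau_{Y_j}\,\ind{Y_j\notin \mathsf{NT}(\delta,r)} \,.
\end{equation*}
So it suffices to show that this sum exceeds $\varepsilon t_N$ with probability tending to $0$ in the stated limits.

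We use the decomposition \eqref{eq:torus-decomp}. Since $\mathsf{RT}(\delta,r)\subset\mathsf{NT}(\delta,r)$, every vertex outside $\mathsf{NT}(\delta,r)$ lies in $\mathsf{WT}(\delta,r)\cup\mathsf{DT}(\delta)\cup\mathsf{NIT}(\delta)\cup\mathsf{ST}(\delta)$. We split the sum accordingly.

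On the event $\{H_{\mathsf{DT}(\delta)}\wedge H_{\mathsf{WT}(\delta,r)}\wedge H_{\mathsf{NIT}(\delta)}>L_{N,m}\}$, the terms with $Y_j$ in the first three sets all vanish. By Lemma~\ref{lem:nonreachable_traps}, the complement of this event has probability tending to $0$ in the limit $N\to\infty$, then $r\to\infty$, then $\delta\downarrow 0$. This is because each of the three hitting statements holds under a limit order at least as strong as the one required here: the $\mathsf{NIT}$ statement holds for each fixed $\delta$, the $\mathsf{WT}$ statement for each fixed $\delta$ after $r\to\infty$, and the $\mathsf{DT}$ statement does not depend on $r$. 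What remains is the contribution of $\mathsf{ST}(\delta)$. This set does not depend on $r$, and Lemma~\ref{lem:sum_shallow_traps} shows that its contribution exceeds $\varepsilon t_N$ with probability tending to $0$ as $N\to\infty$ followed by $\delta\downarrow0$. A union bound concludes the proof.

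The only subtle point is bookkeeping. We need to check that the decomposition \eqref{eq:torus-decomp} really covers $\mathsf{V}_N\setminus\mathsf{NT}(\delta,r)$. We also need to make sure that vertices of $\mathsf{NT}(\delta,r)$ which lie in, for example, $\mathsf{ST}(\delta)$ cause no problem. They do not, since we only need an upper bound on the complement, and overlaps only make that bound weaker.

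The torus/box distinction also deserves a remark. $Y$ lives on $\mathsf{T}_N$, whose vertex set is that of $\mathsf{V}_N$, so no issue arises. Uniformity in $\gamma$ on compacts is inherited from the cited lemmas. The substantive input is entirely contained in Lemmas~\ref{lem:nonreachable_traps} and~\ref{lem:sum_shallow_traps}, so I expect no real obstacle beyond this reduction.
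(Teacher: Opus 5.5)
Your proposal is correct and follows the paper's own proof essentially verbatim. Both reduce by monotonicity to the endpoint $i=\zeta_{N,m}-1$, use \eqref{eq:torus-decomp} to bound the difference by the $\mathsf{ST}(\delta)$ contribution on the event that $\mathsf{DT}(\delta)$, $\mathsf{WT}(\delta,r)$, $\mathsf{NIT}(\delta)$ are not hit before $L_{N,m}$, and finish with a union bound via Lemmas~\ref{lem:nonreachable_traps} and~\ref{lem:sum_shallow_traps}. Your extra bookkeeping is accurate and only makes explicit what the paper leaves implicit: that $\sigma_{\zeta_{N,m}}<L_{N,m}$, and that each cited limit is compatible with the order $N\to\infty$, then $r\to\infty$, then $\delta\downarrow 0$.
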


\begin{proof}[Proof of Proposition \ref{prop:approximation:clock:process}] By monotonicity, it suffices to verify the statement for the endpoint $\zeta_{N,m}$. Observe that whenever $\{L_{N,m} < H_{\mathsf{DT}(\delta)} \wedge H_{\mathsf{WT}(\delta,r)} \wedge H_{\mathsf{NIT}(\delta)} \} $ occurs,  
	the decomposition \eqref{eq:torus-decomp} implies 
\[ \mathcal{S}(\zeta_{N,m}-1) - \mathcal{S}^{\star} (\zeta_{N,m}-1)  \le \sum_{j=0}^{L_{N,m}} \mathbb{e}_{j} \tau_{Y_j} \ind{ Y_j \in \mathsf{ST}(\delta)}. \] 
Thus the probability $\bbP_{\mathbf{0}} (   \mathcal{S}(\zeta_{N,m}-1) - \mathcal{S}^{\star} (\zeta_{N,m}-1)   > \varepsilon t_N  ) $ is bounded above by
\begin{equation*}
 \sum_{i = \mathsf{DT}(\delta), \mathsf{WT}(\delta,r)
 , \mathsf{NIT}(\delta)}\bbP_\mathbf{0}  ( H_{i} \leq L_{N,m}  ) +  \bbP_{\mathbf{0}} \Bigl( \sum_{ j =0}^{ L_{N,m} } \mathbb{e}_{j} \tau_{Y_j} \ind{ Y_j \in \mathsf{ST}(\delta) } > \varepsilon t_N \Bigr) .
\end{equation*}
The claimed result then follows directly from Lemmas \ref{lem:sum_shallow_traps} and \ref{lem:nonreachable_traps} by
first sending $N \to \infty$, then $r \to \infty$, and finally $\delta \to 0$. This completes  the proof.
\end{proof}

Next, we show that the modified clock process converges to the desired subordinator after a proper space-time rescaling.

\begin{prop}
\label{lem:convergence:stable}
For $\delta \in (0,1)$ and $r \in \mathbb{N}$, and  $N \geq 1$, we define the
scaled process 
\begin{equation*}
	\mathscr{Y}^{\star}_{N,\delta,r}(u) \coloneq \frac{1}{t_N} \mathcal{S}_{N,\delta,r}^{\star} ( \lfloor u \delta^{-\frac{\alpha}{\beta}} \rfloor ) \ \text{ for } \ u \ge 0.
\end{equation*} 
Then, there exists a
constant $c_{\beta}>0$
such that, as $N \to \infty$,
followed by $r \to \infty$
and then $\delta \downarrow 0$,
the sequence
$(c_{\beta}\mathscr{Y}^{\star}_{N,\delta, r})$
converges weakly to a standard
$\alpha/\beta$-stable subordinator
$\mathscr{Y}$ in the
Skorokhod space $D[0,\infty)$. 
\end{prop}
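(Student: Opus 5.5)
Fix $\delta$ and $r$. By Lemma~\ref{lem:convergence:time:intervals}, the increments $t_N^{-1}\Delta\mathcal{S}^\star(j)$, $j\ge1$, converge jointly in law as $N\to\infty$ to i.i.d.\ copies of $\mathcal{E}_{\delta,r}$. The $j=0$ term needs separate treatment: before $\sigma_1$ the walk is not in a normal trap except at time $0$. Theorem~\ref{thm:distance_origin} shows that $\mathbf 0\notin\mathsf{NT}(\delta,r)$ w.h.p., so $\Delta\mathcal{S}^\star(0)=0$ w.h.p. Joint convergence of finitely many increments gives convergence of finite-dimensional distributions of $\mathscr{Y}^\star_{N,\delta,r}$ to those of the random walk
\[
Z_{\delta,r}(u)\coloneq\sum_{j=1}^{\lfloor u\delta^{-\alpha/\beta}\rfloor}\mathcal{E}^{(j)}_{\delta,r}.
\]
Since these are nondecreasing pure-jump processes with finitely many jumps on compacts, fidi convergence upgrades to $J_1$ convergence. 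One way is to note that the map from finitely many increments (with $\lfloor u\delta^{-\alpha/\beta}\rfloor$ bounded on $[0,T]$) to the step path is continuous into $D[0,T]$. So the limit $N\to\infty$ is $Z_{\delta,r}$.

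Next send $r\to\infty$. Since $\mathcal{W}_r\uparrow\mathcal{W}_\infty$ monotonically and $\E\mathcal{W}_\infty=\int\|s\|_1\sigma_\beta(\rmd s)<\infty$ by Theorem~\ref{thm:sumability_clusters}, we get $\mathcal{E}_{\delta,r}\to\mathcal{E}_{\delta}\coloneq\mathbb{e}\,\tfrac{2}{\pi}w^*_\delta\mathcal{W}_\infty$ a.s.\ under the natural coupling. Hence $Z_{\delta,r}\to Z_\delta$ in $D$, since finitely many jumps on compacts each converge.

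Finally, $\delta\downarrow0$ is the standard triangular-array limit to a stable subordinator, which I expect to be the main computation. By~\eqref{e:2.21}, $\P(w^*_\delta>u)\approx\delta^{\alpha/\beta}u^{-\alpha/\beta}$ for $u\in[\delta,\delta^{-1}]$. With $n_\delta=\delta^{-\alpha/\beta}$ summands per unit time, the Laplace exponent is
\[
n_\delta\bigl(1-\E e^{-\lambda\mathcal{E}_\delta}\bigr)
=\frac{\alpha/\beta}{1-\delta^{2\alpha/\beta}}\int_\delta^{\delta^{-1}}\E\bigl[1-e^{-\lambda u\,\mathbb{e}\frac2\pi\mathcal{W}_\infty}\bigr]u^{-1-\alpha/\beta}\,\rmd u .
\]
The integral converges as $\delta\downarrow0$ to $\frac{\alpha}{\beta}\int_0^\infty\E[1-e^{-\lambda uV}]u^{-1-\alpha/\beta}\rmd u=\Gamma(1-\tfrac\alpha\beta)\,\E[V^{\alpha/\beta}]\,\lambda^{\alpha/\beta}$, where $V=\mathbb{e}\frac2\pi\mathcal{W}_\infty$. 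The moment $\E V^{\alpha/\beta}<\infty$ since $\alpha/\beta<1$ and $\E\mathcal{W}_\infty<\infty$. For the domination, near $0$ use $1-e^{-x}\le x$ together with $\E V<\infty$; near $\infty$ use the bound $1$.

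Choose $c_\beta=(\Gamma(1-\alpha/\beta)\E V^{\alpha/\beta})^{-\beta/\alpha}$. Then $c_\beta Z_\delta$ has i.i.d.\ increments whose Laplace exponents converge to $\lambda^{\alpha/\beta}$. The triangular-array convergence theorem for nondecreasing processes with i.i.d.\ increments (Skorokhod, or Jacod–Shiryaev) gives $J_1$ convergence to the standard $\alpha/\beta$-stable subordinator $\mathscr{Y}$.

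The iterated limit statement then follows by combining the three stages, using a metric for weak convergence on $D[0,\infty)$ and the triangle inequality. The delicate points are the justification of $J_1$ tightness at the first stage and the dominated convergence at the last. Neither seems serious, given the finiteness of $\E\mathcal{W}_\infty$.
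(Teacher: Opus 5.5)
Your proof is correct and has the same three-stage architecture as the paper's. The stages are: $N\to\infty$ via Lemma~\ref{lem:convergence:time:intervals} and continuity of the step-path map, whose jumps sit at the deterministic times $k\delta^{\alpha/\beta}$; then $r\to\infty$ via $\mathcal{W}_r\uparrow\mathcal{W}_\infty<\infty$; then $\delta\downarrow0$. Your $c_\beta$ coincides with the paper's, because the exponential factor in $V$ is independent of $\mathcal{W}_\infty$.

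The genuine difference is in the last stage. You compute $\delta^{-\alpha/\beta}\bigl(1-\E e^{-\lambda\mathcal{E}_\delta}\bigr)$ directly against the explicit density of $w^*_\delta$. You then invoke the classical theorem for random walks with i.i.d.\ steps in a triangular array (Skorokhod; see e.g.\ Kallenberg's \emph{Foundations of Modern Probability} or Jacod--Shiryaev, Ch.~VII): convergence in law of the time-$1$ marginal to $\mathscr{Y}(1)$ already implies $J_1$ convergence to the L\'evy process $\mathscr{Y}$. The paper instead derives the tail asymptotics \eqref{eq:E-delta-asymp} and the uniform bounds \eqref{eq:E-delta-tail}. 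It gets fidi convergence from the Laplace transform and independence of increments, and proves tightness by hand via Aldous' stopping-time criterion. Your route is shorter and reduces everything model-specific to a single Laplace-exponent limit; the paper's is self-contained and in effect reproves the special case of that theorem it needs.

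One small correction at the first stage concerns $\Delta\mathcal{S}^\star(0)$. By the definition of $\sigma_1$, the walk may sit in normal-trap vertices inside $\mathsf{B}_{2r}(\mathbf 0)$ before $\sigma_1$, so $\mathbf 0\notin\mathsf{NT}(\delta,r)$ is not the fact you need. What you need, and what Theorem~\ref{thm:distance_origin} gives, is $d(\mathbf 0,\mathsf{T}(\delta))>2N/r_N>3r$ w.h.p. Hence $\mathsf{NT}(\delta,r)\cap\mathsf{B}_{2r}(\mathbf 0)=\emptyset$, and so $\Delta\mathcal{S}^\star(0)=0$.
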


\begin{proof} 
 For fixed $\delta$ and $r$, since $r\ll N/r_N$, Theorem~\ref{thm:distance_origin},
 implies that
$  \P (\Delta\mathcal{S}^{\star}(0)=0 ) \to 1$ as $N\to\infty$.  
Moreover, on every compact time interval and for fixed $\delta$, the
processes $ \mathscr{Y}^{\star}_{N,\delta,r}$ have only finitely many jumps, occurring at the same
deterministic times. Hence  the asymptotic independence and identical distribution of the increments $t_N^{-1} \Delta \mathcal{S}^{\star} (j) : j\ge 1 $, as stated in Lemma~\ref{lem:convergence:time:intervals}, together with the continuous mapping theorem imply that,
  as $N \to \infty$,  $ \mathscr{Y}^{\star}_{N,\delta,r}$ converges weakly in the Skorokhod space $D[0,\infty)$  to the process
\[
    \mathscr{Y}^{\star}_{\delta,r}(u) \coloneq
    \sum_{1 \le i \leq u \delta^{- {\alpha}/{\beta}}} \mathcal{E}^{i}_{\delta,r}  \, , \ u \ge 0\,,
\]
where $\mathcal{E}^{i}_{\delta,r}$ are i.i.d. copies of $\mathcal{E}_{\delta,r}= 
  \frac{2}{\pi} w^*_{\delta}\, \mathcal{W}_r \,  \mathbb{e} $, defined in Lemma~\ref{lem:convergence:time:intervals}.

  Since   $\mathcal{W}_r \coloneq \sum_{x \in \mathsf{B}_r} \bar{w}_x$   converges to $ \mathcal{W}_{\infty} \coloneq \sum_{x \in \mathbb{Z}^2} \bar{w}_x$ almost surely and in $L^1$ as $r \to \infty$ by Theorem \ref{thm:sumability_clusters},   another
application of the continuous mapping theorem   shows that, the process $ \mathscr{Y}^{\star}_{\delta,r}$ converges weakly in the Skorokhod space $D[0,\infty)$ to  
\[
  \mathscr{Y}^{\star}_{\delta}(u) \coloneq
    \sum_{1 \le i \leq u \delta^{- {\alpha}/{\beta}}} \mathcal{E}^{i}_{\delta}  \, , \ u \ge 0\,,
\]
where $\mathcal{E}^{i}_{\delta }$ are i.i.d. copies of $\mathcal{E}_{\delta }= 
  \frac{2}{\pi} w^*_{\delta} \, \mathcal{W}_{\infty} \,  \mathbb{e} $. 

  Therefore, the desired result follows immediately once we  show that
$(c_{\beta}\mathscr{Y}^{\star}_{\delta})$
converges weakly to $\mathscr{Y}$ as $\delta\downarrow0$, where 
\[
c_{\beta}
\coloneq
 \Bigl( 
 \Gamma\bigl(1- \frac{\alpha}{\beta} \bigr)
 \E \Bigl[ 
    \bigl( \frac{2}{\pi}\mathcal{W}_{\infty}  \bigr)^{\alpha/\beta}
  \Bigr]
 \E  \bigl[   \mathbb{e}^{\alpha/\beta} \bigr]
 \Bigr) ^{-\beta/\alpha}.
\]
The constant $c_{\beta}$ is finite and positive because
$\mathcal{W}_{\infty}\in L^1$ by
Theorem~\ref{thm:sumability_clusters} and $\alpha/\beta\in(0,1)$.

We first establish the finite dimensional convergence.
Recall the distribution of $w_{\delta}^{*}$ given by~\eqref{e:2.21}.
Conditioning on $\mathcal{W}_{\infty}$ and $\mathbb e$, and applying
the dominated convergence theorem, we obtain that,
\begin{equation}
\label{eq:E-delta-asymp}
\lim_{\delta \downarrow 0} \, \delta^{-\alpha/\beta}
 \P(c_{\beta}\mathcal{E}_{\delta}>x)
 = 
 \frac{x^{-\alpha/\beta}}
      {\Gamma(1-\alpha/\beta)} \  \text{ for every } x >0 .
\end{equation}  
Furthermore, we can find a constant $C>0$ depending only on $\beta$ such that 
\begin{equation}
	\label{eq:E-delta-tail}  
 \delta^{-\alpha/\beta}
 \P(c_{\beta}\mathcal{E}_{\delta}>x)
 \le Cx^{-\alpha/\beta} \quad \text{and} \quad   \E \bigl[ 
   c_{\beta}\mathcal{E}_{\delta}
   \ind{c_{\beta}\mathcal{E}_{\delta}\le x} \bigr] 
 \le
 C\delta^{\alpha/\beta}x^{1-\alpha/\beta}   
\end{equation}
 for all $ 0<\delta\le1/2$, $x > 0$. 
Indeed, conditionally on
$\frac{2}{\pi}\mathcal{W}_{\infty}\mathbb e=z$, the first expression
is bounded above, up to a multiplicative constant, by  $(z/x)^{\alpha/\beta}$, which is
integrable.    
For every $\lambda>0$, Fubini's theorem and
\eqref{eq:E-delta-asymp} yield 
 \[\lim_{\delta \downarrow 0} \delta^{-\alpha/\beta}
 \bigl( 
  1-\E \bigl[ 
     \rme^{-\lambda c_{\beta}\mathcal{E}_{\delta}}
  \bigr]
  \bigr)
 = \lim_{\delta \downarrow 0}  \int_{0}^{\infty} \lambda \,e^{-\lambda x} \, \delta^{-\frac{\alpha}{\beta}} \, \P( c_{\beta} \mathcal{E}_{\delta} > x ) \dif x =\lambda^{\alpha/\beta} . \]
 Thus we obtain 
\begin{equation} 
 \lim_{\delta \downarrow 0} \log\E \bigl[ 
   \rme^{-\lambda
   c_{\beta}\mathscr{Y}^{\star}_{\delta}(u)}
 \bigr]  
 = \lim_{\delta \downarrow 0}
 \lfloor u\delta^{-\alpha/\beta}\rfloor
 \log\E \bigl[  
   \rme^{-\lambda c_{\beta}\mathcal{E}_{\delta}}
\bigr]  
=
 -u\lambda^{\alpha/\beta}. \label{eq:stable-laplace-limit}
\end{equation} 
Since the sums defining
$\mathscr{Y}^{\star}_{\delta}$ have independent increments, the same
calculation for disjoint blocks proves convergence of  
finite-dimensional distributions to those of the  
$\alpha/\beta$-stable subordinator $\mathscr{Y}$.

It remains to prove tightness. We apply  Aldous' tightness
criterion. Fix $T,\varepsilon>0$, let
$\vartheta_{\delta}\le T$ be a stopping time for the natural
filtration of $\mathscr{Y}^{\star}_{\delta}$, and let
$0\le s\le\eta$. The interval
$(\vartheta_{\delta},\vartheta_{\delta}+s]$ contains at most
$\lceil\eta\delta^{-\alpha/\beta}\rceil+1$ new summands. Independence and 
\eqref{eq:E-delta-tail}
give 
\begin{align*}
 &\P \bigl( 
   c_{\beta}\mathscr{Y}^{\star}_{\delta}
      (\vartheta_{\delta}+s)
   -
   c_{\beta}\mathscr{Y}^{\star}_{\delta}
      (\vartheta_{\delta})
   >\varepsilon  \bigr)\\
 &\quad \le
 \bigl(\lceil\eta\delta^{-\alpha/\beta}\rceil+1\bigr)
 \Bigl\{
   \P(c_{\beta}\mathcal{E}_{\delta}>\varepsilon/2)
 +
   \frac{1}{\varepsilon}
   \E \bigl[  
     c_{\beta}\mathcal{E}_{\delta}
     \ind{c_{\beta}\mathcal{E}_{\delta}\le\varepsilon/2} \bigr]
 \Bigr\}  \lesssim _{\varepsilon} 
 \bigl(\eta+2\delta^{\alpha/\beta}\bigr).
\end{align*} 
Thus Aldous' stopping-time condition (\cite[Equation (A)]{Aldous1978}) holds.  
Moreover, since the paths
are non-decreasing,  the variables $\{\sup_{0\le u\le T}\mathscr{Y}^{\star}_{\delta}(u)=\mathscr{Y}^{\star}_{\delta}(T)\}_{\delta>0}$   are
tight by~\eqref{eq:stable-laplace-limit}. Aldous' 
criterion~\cite[Theorem 1]{Aldous1978}   gives  tightness in
$(D[0,T],J_1)$. Since $T>0$ is arbitrary, combining tightness with the
finite-dimensional convergence proves
$c_{\beta}\mathscr{Y}^{\star}_{\delta}
 \Rightarrow\mathscr{Y}$
 in $(D[0,\infty),J_1)$. 
This completes the proof.
\end{proof}

The proof of Theorem~\ref{p:2.4} is now straightforward.
\begin{proof}[Proof of Theorem~\ref{p:2.4}] Since the sample paths of
$\mathscr{Y}$
are continuous in probability,
it suffices to verify the
weak convergence of
$\mathscr{Y}_{N,\delta,r}|_{[0,T]}$
to
$\mathscr{Y}|_{[0,T]}$
in the Skorokhod space
$D[0,T]$ for
every $T>0$.

By Propositions~\ref{prop:approximation:clock:process} and~\ref{lem:cond4}, the $L^\infty$-distance between
$\mathscr{Y}_{N,\delta,r}|_{[0,T]}$ and $\mathscr{Y}^{\star}_{N,\delta,r}|_{[0,T]}$ converges to zero in probability. Indeed, we have 
\begin{multline}
\P \Bigl(\sup_{0\le u \le T} |\mathscr{Y}_{N,\delta,r}(u)-\mathscr{Y}^{\star}_{N,\delta,r}(u)|> \epsilon \Bigr) \le \P \bigl( \zeta_{N,m} \le 2T \delta^{-\alpha/\beta}  \bigr)
\\
 + \bbP  \Bigl( \sup_{0\leq i \leq \zeta_{N,m}-1}  \{\mathcal{S}_{N,\delta,r}(i) - \mathcal{S}_{N,\delta,r}^{\star} (i)  \} > \varepsilon t_N \Bigr) .
\end{multline}
Letting first $N \to \infty$, then $r \to \infty$, then $\delta \downarrow 0$, and finally $m \to \infty$, we obtain the desired approximation.

Since
$\mathscr{Y}^{\star}_{N,\delta,r}|_{[0,T]}$
converges weakly to
$\mathscr{Y}|_{[0,T]}$ in  
$D[0,T]$
by Proposition \ref{lem:convergence:stable},
the weak convergence of
$\mathscr{Y}_{N,\delta,r}|_{[0,T]}$
to
$\mathscr{Y}|_{[0,T]}$ 
immediately follows. This completes the proof. 
\end{proof}

\subsection{Proofs of key dynamical lemmas} 
We now turn to the proofs 
of the dynamical lemmas 
from Subsection~\ref{s:2.2}. 
The remaining Propositions~\ref{lem:cond4}--\ref{p:2.3}   
will be established in the next subsection.
\subsubsection{Simple random walk input}
\label{s:2.4.1}
We shall need the following input from the theory of the simple random walk on the torus. 
These statements are similar  to those in \cite[Section 4]{arous2006aging}; however, the formulation given here is slightly stronger---for instance, Lemma \ref{prop:hitting-time} covers an enlarged range of the parameter $\gamma$. While our proofs are different  and more streamlined, these statements 
are auxiliary to the main argument. 
We therefore relegate their proofs 
to Appendix~\ref{apx:2dsrw}.

The main objects of study are hitting times.   For  a subset $A$ of $\mathsf{T}_{N}$, we denote by $H(A)$ the first hitting time of $A$ by the jump chain $Y$:
\begin{equation*}
	H(A)\coloneq  \inf\{ n \ge 0: Y_n \in A \} ,
\end{equation*}
and abbreviate $H(x) \equiv H(\{x\})$. The first lemma states, among other things, that hitting times of far away vertices are asymptotically exponential after scaling.
\begin{lem} \label{lem:est_laplace}  
The following assertions hold:
\begin{enumerate}[(i)]
		\item   Let $\lambda_N$ be any sequence with $1\ll \lambda_N \ll \frac{\log N }{\log r_N}$. Then
\begin{equation} \label{equa:lem:est_laplace}
\lim_{N\to \infty} \sup_{0 \le \rho \le r_N^2} \ \sup_{x ,y \in \mathsf{T}_N, d(x,y) \ge N/r^{2}_N} \   \Big|\lambda_N \bbE_{x} \left[ \rme^{ - \lambda_N H( \mathsf{B}_{\rho} (y))/( |\mathsf{T}_N|\log N)}  \right] - \frac{\pi}{2} \Big| = 0 . 
\end{equation}
	\item We have 
\begin{equation}\label{eq:mean-hitting}
	 \limsup_{N \to \infty}  \sup_{x \neq y \in \mathsf{T}_N } \sup_{0 \le \rho  \le d(x,y)} \frac{ | \E_{x}[ H( \mathsf{B}_{\rho}(y) ) ] - \frac{2}{\pi} |\mathsf{T}_N| \log[ 1+d(x,y)]  | }{ N^2 [1 \vee \log(1+\rho)]}  < \infty . 
\end{equation}  
\end{enumerate} 
\end{lem}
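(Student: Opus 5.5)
We prove the two parts separately, both via the Green function of the simple random walk on the torus and the standard last-exit/renewal decomposition. Throughout, let $G_N$ be the Green function of $Y$ killed at an independent geometric time with mean $\lambda_N^{-1}|\mathsf{T}_N|\log N$. Write the Laplace transform $\E_x[\rme^{-sH(A)}]$ with $s=\lambda_N/(|\mathsf{T}_N|\log N)$ as the probability that $Y$ hits $A=\mathsf{B}_\rho(y)$ before the geometric killing time.

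\emph{Part (i).} We use the identity
\[
\E_x\bigl[\rme^{-sH(A)}\bigr]=\frac{\sum_{z\in A}G_N(x,z)\,\mu_A(z)}{\sum_{z\in A}G_N(z',z)\,\mu_A(z)}\,,
\]
which follows from a last-exit decomposition with $\mu_A$ an appropriate (killed) equilibrium measure. More simply, $G_N(x,y)=\E_x[\rme^{-sH(y)}]\,G_N(y,y)$ for points; for balls we average over the harmonic measure. By spectral decomposition on the torus,
\[
G_N(x,z)=\frac{1}{1-\rme^{-s}}\,\frac{1}{|\mathsf{T}_N|}+g_{\mathrm{tor}}(x,z)+o(\cdot),
\]
where $\frac{1}{s|\mathsf{T}_N|}=\frac{\log N}{\lambda_N}$ and $g_{\mathrm{tor}}$ is the centered torus Green function, which equals $-\frac{2}{\pi}\log d(x,z)+O(1)$ (up to an additive constant). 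The condition $\lambda_N\ll\log N/\log r_N$ ensures that the spectral correction from eigenvalues near $s$ is negligible. For $d(x,y)\ge N/r_N^2$ the numerator is $\frac{\log N}{\lambda_N}+O(\log r_N)$, which is $\frac{\log N}{\lambda_N}(1+o(1))$ since $\lambda_N\log r_N\ll\log N$. For $\rho\le r_N^2$ the denominator, which is the capacity-type normalization of the ball, is $\frac{\log N}{\lambda_N}+\frac{2}{\pi}\log(N/\rho)+O(1)=\frac{\log N}{\lambda_N}+\frac{2}{\pi}\log N(1+o(1))$, again because $\log\rho\le 2\log r_N\ll\log N$. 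Hence
\[
\lambda_N\E_x[\rme^{-sH}]=\frac{\lambda_N}{1+\frac{2}{\pi}\lambda_N(1+o(1))}\to\frac{\pi}{2},
\]
using $\lambda_N\to\infty$. This gives the claimed uniformity.

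\emph{Part (ii).} This is the $s\to0$ version of the same computation. We use $\E_x[H(A)]=|\mathsf{T}_N|\bigl(\bar g(A)-\bar g(x,A)\bigr)$-type formulas, i.e.\ the commute-time/potential-kernel representation on the torus: $\E_x H(y)=|\mathsf{T}_N|\,(a_N(x-y)$ normalized$)$, where $a_N$ is the torus potential kernel, with $a_N(z)=\frac{2}{\pi}\log(1+|z|)+O(1)$ uniformly. For the ball $\mathsf{B}_\rho(y)$ we average over its harmonic measure from $x$ and subtract the expected hitting time of the ball's interior effect, which contributes $\frac{2}{\pi}|\mathsf{T}_N|\log(1+\rho)+O(N^2)$. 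This yields error $O(N^2[1\vee\log(1+\rho)])$.

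The main obstacle is the uniform torus potential-kernel asymptotics with $O(1)$ error across all scales, including near $N$, where the torus geometry matters. We would derive this from the Fourier representation together with the known $\bbZ^2$ potential-kernel expansion, handling the zero mode separately.
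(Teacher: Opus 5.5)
Your outline is correct. For part (ii) it is essentially the paper's proof. The paper uses the identity $\E_{\mathbf 0}[H(x)]=|\mathsf{T}_N|\,(Z_N(\mathbf 0)-Z_N(x))$, where $Z_N(x)=\sum_{k\ge 0}[P^{\mathsf{T}_N}_k(\mathbf 0,x)-|\mathsf{T}_N|^{-1}]$. It then applies the strong Markov property at $H(\mathsf{B}_\rho(y))$, which changes the mean by at most $\max_{w\in\mathsf{B}_\rho(y)}\E_w[H(y)]=\frac{2}{\pi}|\mathsf{T}_N|\log(1+\rho)+O(N^2)$.

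For part (i) your route is different.
\begin{itemize}
\item The paper first treats a single point. It evaluates the killed Green function by summing cited heat-kernel bounds over $k\le N^2$, $N^2\le k\le\omega_N N^2$ and $k\ge\omega_N N^2$. It then passes from $y$ to $\mathsf{B}_\rho(y)$ by showing that, from $\partial\mathsf{B}_\rho(y)$, the walk hits $y$ within $N$ steps with probability $1-O(\log r_N/\log N)$.
\item You split the killed Green function $G_N$ into the zero mode $\frac{1}{(1-\rme^{-s})|\mathsf{T}_N|}=\frac{\log N}{\lambda_N}+o(1)$ plus $Z_N$. The error is $O(sN^2)=O(\lambda_N/\log N)$, because the spectral gap $\asymp N^{-2}$ is far above $s$.
\item You handle the ball in one step via the last-exit identity. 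That identity requires $\mu_A$ to be the normalized killed equilibrium measure, so that the denominator is constant on $A$. A simpler alternative suffices: the strong Markov property at $H(\mathsf{B}_\rho(y))$ gives $G_N(x,y)/G_N(y,y)\le\E_x[\rme^{-sH(\mathsf{B}_\rho(y))}]\le G_N(x,y)/\min_{w\in\mathsf{B}_\rho(y)}G_N(w,y)$. Both denominators are $\frac{2}{\pi}\log N\,(1+o(1))$ because $\lambda_N\to\infty$ and $\log\rho=o(\log N)$.
\end{itemize}
What your approach buys is a unified proof: (i) and (ii) both rest on the single estimate $Z_N(x)=\frac{2}{\pi}\log\frac{N}{1+|x|}+O(1)$. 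The paper's route for (i) avoids that estimate and uses only standard heat-kernel asymptotics.

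Two remarks. First, the hypothesis $\lambda_N\ll\log N/\log r_N$ plays no role in the spectral correction, which only needs $\lambda_N\ll\log N$. It is used solely to absorb the $O(\log r_N)$ term in your numerator.

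Second, the estimate on $Z_N$ that you defer is where the paper's proof of (ii) does its real work. The paper splits the time sum at $\ell_N^2=|\mathsf{T}_N|$. For $k\le\ell_N^2$ it compares with $\bbZ^2$ via the method of images and the local CLT; for $k\ge\ell_N^2$ it uses the spectral gap. This time-domain splitting is cleaner than the Fourier comparison you propose. There, matching the torus lattice sum to the $\bbZ^2$ potential-kernel integral at $O(1)$ precision is delicate near the $|\theta|^{-2}$ singularity and when $|x|\asymp N$, where naive Riemann-sum bounds fail.
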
 

Next, we need similar statements but for sets that are unions of balls. Accordingly, we set
\begin{equation*}
	\overline{A}^{\rho} \coloneq \bigcup_{a \in A}  \mathsf{B}_{\rho}(a) \,,
\end{equation*}   
whenever $A$ is a subset of $\mathsf{T}_N$ and $\rho > 0$. Given such an $A$ and parameters $\epsilon, \theta > 0$,  let
\begin{equation*}
	\mathrm{Sub}_{\theta,\epsilon}(A) \coloneq \Big \{ A_1 \subset A : \: A_{1} \neq \emptyset,\, \Big|\tfrac{|A_1|}{|A|} - \theta\Big| < \epsilon \Bigr\} \,,
\end{equation*}
denote the collection of subsets of $A$ whose density in $A$ is $\epsilon$-approximately $\theta$. Finally, for the set of parameters on which our estimates hold uniformly, we take
\begin{equation}
	\label{def:K-gamma-N}
\begin{split}
  \mathscr{K}_{\gamma}^N  \coloneq \Bigl\{\!  (x,A,\rho) \,:\,\,  & x \notin A \subset \mathsf{T}_N \,,\,\,\,  1\le |A| \le (\log N)^{\gamma} \,,\,\,\,  0 \le \rho \le r_N  \\
  & \min \bigl\{ d(u,v) : u \neq v\in A\cup\{x\} \bigr\}\ge  {N}/{r^2_N} \Bigr\} \,,
\end{split}
\end{equation}
with $\gamma > 0$. 
We then have the following lemma.
\begin{lem}\label{prop:hitting-time} 
For any $\gamma \in [0,1)$ the following assertions hold:
\begin{enumerate}[(i)]
	\item The rescaled hitting time converges to the  exponential distribution: 
\begin{equation}\label{eq:hit-asy-exp} \lim_{N \to \infty} \sup_{  (x,A,\rho)  \in \mathscr{K}_{\gamma}^{N} }  \sup_{t \ge 0}
	 \bigg|\,
	 \P_x \Bigl({H\bigl(  \overline{A}^{\rho} \bigr)}  > {\frac{2}{\pi}\,\frac{|\mathsf{T}_N|\log N}{|A|}} t\Bigr)-e^{-t} \,
	 \bigg|
	  = 0 .
\end{equation} 
\item Fix $\theta \in [0,1]$. Assume that  $\epsilon_{N}>0$ satisfies $ \epsilon_{N} \to 0$ as $N \to \infty$. Then  
\begin{equation}\label{eq:hit-asy-exp-2}
	 \lim_{N\to\infty}\   \sup_{  (x,A,\rho)  \in \mathscr{K}_{\gamma}^{N} }  \ \sup_{A_{1} \in \mathrm{Sub}_{\theta, \epsilon_{N}}(A)}
	 \Big|\,
	 \P_x \Bigl({H(  \overline{A}_{1}^{\rho} )}  =  {H(  \overline{A}^{\rho} )}  \Bigr)-  \theta \,
	 \Big|
	 =0.
\end{equation} 
\end{enumerate}
\end{lem}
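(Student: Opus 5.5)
The plan is to reduce both assertions to the walk started from the uniform (stationary) distribution $\pi$ on $\mathsf{T}_N$, and then to move the starting point to $x$ with a mixing argument.

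\emph{Expected hitting times and harmonic measure from $\pi$.} Fix $(x,A,\rho)\in\mathscr{K}^N_\gamma$ and write $B_a\coloneq\mathsf{B}_\rho(a)$ for $a\in A$. Let $h_b\coloneq\P_\pi\bigl(Y_{H(\overline{A}^\rho)}\in B_b\bigr)$, and set
\[
M\coloneq \tfrac{2}{\pi}|\mathsf{T}_N|\log N \,.
\]
Step 1 is to estimate the individual hitting times. Lemma~\ref{lem:est_laplace}(ii) gives
\[
\E_\pi H(B_a)=M\bigl(1+O(\varepsilon_N)\bigr) \,, \qquad \varepsilon_N\coloneq \frac{\log r_N}{\log N} \,.
\]
The same lemma, combined with the separation $d(a,b)\ge N/r_N^2$ and $\rho\le r_N$, gives the same estimate for $\E_y H(B_a)$, uniformly in $y\in B_b$ with $b\neq a$. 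For $\E_\pi$ one averages over $y$; the set of $y$ close to $a$ carries negligible $\pi$-mass.

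Step 2 is to decompose at the first entrance to $\overline{A}^\rho$. The strong Markov property gives, for every $a\in A$,
\[
\E_\pi H(B_a)=\E_\pi H(\overline{A}^\rho)+\sum_{b\neq a}\E_\pi\Bigl[\E_{Y_{H(\overline{A}^\rho)}}H(B_a)\,;\,Y_{H(\overline{A}^\rho)}\in B_b\Bigr] \,.
\]
Inserting the estimates from Step 1 yields
\[
M(1+O(\varepsilon_N))=\E_\pi H(\overline{A}^\rho)+(1-h_a)\,M(1+O(\varepsilon_N)) \,.
\]
Summing over $a$ and using $\sum_a h_a=1$ gives
\[
\E_\pi H(\overline{A}^\rho)=\frac{M}{|A|}\bigl(1+O(|A|\varepsilon_N)\bigr) \,.
\]
Feeding this back into the individual equations gives $h_a=|A|^{-1}\bigl(1+O(|A|\varepsilon_N)\bigr)$, uniformly in $a\in A$. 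The conditions $|A|\le(\log N)^\gamma$ with $\gamma<1$ and $\log r_N=(\log\log N)^{O(1)}$ ensure $|A|\varepsilon_N\to0$. This is exactly where the restriction $\gamma<1$ enters.

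\emph{Exponential law and transfer to $x$.} Step 3 applies the Aldous--Brown bound for reversible chains,
\[
\bigl|\P_\pi\bigl(H>t\,\E_\pi H\bigr)-e^{-t}\bigr|\le \frac{C\,t_{\rm rel}}{\E_\pi H} \,.
\]
The torus has $t_{\rm rel}=O(N^2)$, while $\E_\pi H(\overline{A}^\rho)\ge c N^2\log N/(\log N)^\gamma$, so the right-hand side tends to $0$. Together with Step 2 this gives~\eqref{eq:hit-asy-exp} from $\pi$. Summing $h_a$ over $a\in A_1\in\mathrm{Sub}_{\theta,\epsilon_N}(A)$ gives $|A_1|/|A|+o(1)=\theta+o(1)$, which is~\eqref{eq:hit-asy-exp-2} from $\pi$.

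Step 4 passes from $\pi$ to $x$. Run the walk from $x$ for $T_N\coloneq N^2(\log r_N)^2$ steps. After this time its law is within $o(1)$ of $\pi$ in total variation, since $T_N\gg N^2$. Meanwhile $T_N=o(M/|A|)$, so the time shift is negligible on the relevant scale. It remains to show that $\overline{A}^\rho$ is hit before $T_N$ only with probability $o(1)$. For each $a$, the walk starts at distance $\ge N/r_N^2$ from $B_a$, and a standard two-dimensional Green-function estimate bounds the probability of reaching $B_a$ within $T_N$ steps by $O(\log r_N/\log N)$. A union bound over the at most $(\log N)^\gamma$ balls gives $O(|A|\varepsilon_N)=o(1)$. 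Applying the Markov property at time $T_N$ then transfers both statements to the start at $x$, uniformly over $\mathscr{K}^N_\gamma$.

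The main obstacle is the uniformity of the errors in Steps 1 and 4. The $O(N^2\log(1+\rho))$ error in Lemma~\ref{lem:est_laplace}(ii) must be converted into a relative error $O(\varepsilon_N)$. These relative errors must stay small after being multiplied by $|A|$, both in the summed linear system and in the union bound over balls; I would track this point carefully first.
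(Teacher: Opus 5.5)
Your proof is correct. Its skeleton is the same as the paper's: Aldous--Brown from the uniform measure $\mu$, a mixing step to move the start to $x$, and a union bound over the balls to rule out an early hit. The genuine differences are in how $\E_\mu H(\overline{A}^\rho)$ is computed and in how (ii) is obtained.

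The paper estimates $\E_x H(\overline{A}^\rho)$ through Matthews' harmonic-number sandwich (Lemma~\ref{Matthews-lemma}), fed by Lemma~\ref{lem:est_laplace}(ii), and then averages over the start. This costs an extra $\log|A|$ in the relative error, giving $O(|A|\log|A|\log r_N/\log N)$. It then proves (ii) as a separate consequence of (i). It applies (i) to $A$, $A_1$ and $A_2=A\setminus A_1$, passes to Laplace transforms, and uses a strong-Markov inequality to get $\liminf \P_x(H(\overline{A}_1^\rho)>H(\overline{A}_2^\rho))\ge 1-\theta$ together with the symmetric bound.

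Your first-entrance linear system is more economical. It gives the relative error $O(|A|\log r_N/\log N)$. As a by-product it yields near-uniformity of the harmonic measure from stationarity, $\P_\mu(Y_{H(\overline{A}^\rho)}\in\mathsf{B}_\rho(a))=|A|^{-1}(1+o(1))$ uniformly in $a$. So (ii) from $\mu$ is immediate, and the same mixing transfer finishes it. The paper's route to (ii) needs nothing beyond (i) applied to three sets, but it is longer.

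Two details need repair in the write-up.

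\begin{itemize}
\item Aldous--Brown is a continuous-time statement. Either apply it to the rate-one walk and compare its hitting time (a sum of $H(\overline{A}^\rho)$ i.i.d.\ standard exponentials) with $H(\overline{A}^\rho)$ via a Chernoff bound, or cite a discrete-time analogue. In the first option, handle small $t$ with the early-hitting estimate, as in Steps~3--4 of the paper's proof.
\item With window $T_N=N^2(\log r_N)^2$, the per-ball early-hitting probability is of order $(\log r_N)^2/\log N$, not $\log r_N/\log N$. The expected occupation of the target during $T_N$ steps picks up a term $T_N/|\mathsf{T}_N|\asymp(\log r_N)^2$ from wrapping around the torus. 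This is exactly what Lemma~\ref{lem:est_laplace}(i) with $\lambda_N=\log N/(\log r_N)^2$ gives. The union bound $|A|(\log r_N)^2/\log N\to0$ still closes the argument, because $\gamma<1$.
\end{itemize}
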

  
The last piece of input concerns the Green function of $Y$. To this end, if $\vartheta$ is a stopping time for
$Y$, we let $\mathbb{G}_{\vartheta}^{N} (\cdot, \cdot)$ be the Green function for $Y$ killed at $\vartheta$, namely
\[ 
 \mathbb{G}_{\vartheta}^{N} (x, y) \coloneq \sum_{j=0}^{\infty} \P_{x}( j < \vartheta , Y_j = y )= \E_x \Bigl[  \sum_{ j = 0}^{\vartheta-1} \ind{ Y_j =y} \Bigr].
\]  
 
\begin{lem}
\label{l:2.13}
For any sequence $\omega_N $  with $1 \ll \omega_N \leq (\log N)^{\gamma}$,
\begin{equation*}
	\lim_{N \to \infty}  \sup_{  (x,A,\rho)  \in \mathscr{K}_{\gamma}^{N} , |A| \ge \omega_N }  \, \sup_{y \in \mathsf{B}_{\rho}(x) }\frac{1}{\log N} \, \big| \mathbb{G}^{N}_{H(\overline{A}^{\rho})}(x,y) - \frac{2}{\pi} \log N  \big| = 0.
\end{equation*}
\end{lem}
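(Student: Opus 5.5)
The plan is to decompose the Green function via the strong Markov property into a "local" part, namely returns to $y$ before leaving a mesoscopic ball around $x$, and a "global" part, namely excursions from that ball back to it before hitting $\overline{A}^{\rho}$. Fix an intermediate radius $R_N$ with $r_N \ll R_N \ll N/r_N^2$, for instance $R_N = N/r_N^3$. The ball $\mathsf{B}_{R_N}(x)$ contains $\mathsf{B}_\rho(x)\ni y$ and is disjoint from $\overline{A}^{\rho}$, by the separation built into $\mathscr{K}^N_\gamma$. Write $D=\mathsf{B}_{R_N}(x)$ and $\tau_D$ for its exit time.

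A first-step and last-exit decomposition gives
\[
\mathbb{G}^N_{H(\overline{A}^\rho)}(x,y) = \mathbb{G}^N_{\tau_D}(x,y) + \E_x\bigl[\mathbb{G}^N_{H(\overline{A}^\rho)}(Y_{\tau_D},y);\,\tau_D<H(\overline{A}^\rho)\bigr].
\]
Since $\tau_D<H(\overline{A}^\rho)$ always holds, the second term is an average of $\mathbb{G}^N_{H(\overline{A}^\rho)}(z,y)$ over $z\in\partial D$. I would instead organize the argument around $y$ itself, as follows.

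Let $G^*=\mathbb{G}^N_{H(\overline{A}^\rho)}(y,y)$. Then $\mathbb{G}^N_{H(\overline{A}^\rho)}(x,y)=\P_x(H(y)<H(\overline{A}^\rho))\,G^*$, and $G^*=1/\P_y(H(\overline{A}^\rho)<H^+(y))$, where $H^+(y)$ denotes the return time to $y$. It therefore suffices to show two things: first, that $\P_x(H(y)<H(\overline{A}^\rho))\to 1$ uniformly; second, that the escape probability $q:=\P_y(H(\overline{A}^\rho)<H^+(y))$ equals $(1+o(1))\frac{\pi}{2}(\log N)^{-1}$.

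For the first claim, recall that $d(x,y)\le \rho\le r_N$. Starting from $x$, the planar walk hits $y$ before leaving $\mathsf{B}_{R}(x)$ with probability at least $1-\log(1+r_N)/\log R$ up to constants. Choosing $R=N^{1/2}$, say, this is $1-O(\log r_N/\log N)\to 1$, because $\log r_N=(\log\log N)^{O(1)}$. Moreover, $\mathsf{B}_{N^{1/2}}(x)$ avoids $\overline{A}^\rho$.

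For the second claim, decompose the escape from $y$ into two steps. The walk first leaves $\mathsf{B}_{N^{1/2}}(y)$ before returning to $y$; by the standard 2D potential kernel (Appendix estimates) this has probability $\frac{\pi}{2}\cdot\frac{1}{\log N^{1/2}}\,(1+o(1))$. Starting from the boundary of $\mathsf{B}_{N^{1/2}}(y)$, the walk must then reach $\overline{A}^\rho$ before returning to $y$. The probability $p$ of the latter, uniformly over starting points, is the crux. Note that $q=\frac{\pi}{2}(1+o(1))/(\tfrac12\log N)\cdot p$, so we need $p\to\tfrac12$. I would obtain $p$ from a hitting race between $y$ and $\overline{A}^\rho$ started at mesoscopic distance, using Lemma~\ref{lem:est_laplace}(ii) and Lemma~\ref{prop:hitting-time}(i). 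The walk started at distance $N^{1/2}$ from $y$ needs expected time $\frac{2}{\pi}|\mathsf{T}_N|(\tfrac12\log N+O(1))$ to hit $y$. Conditioned on not having hit $y$, it mixes on the torus in time $O(N^2\log N)$ and then needs an asymptotically exponential time with mean $\frac{2}{\pi}|\mathsf{T}_N|\log N$, so $y$ is in effect a target with the same hitting rate, up to the factor $\tfrac12$ coming from the head start.

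It is cleaner to avoid tracking $p$ directly and argue by a competing-targets heuristic made rigorous. Since $H(\overline{A}^\rho)$ is exponential with mean $\frac{2}{\pi}|\mathsf{T}_N|\log N/|A|$, and $|A|\ge\omega_N\to\infty$, this time is $o$ of the hitting time of $y$ from the torus. The walk leaving the $N^{1/2}$-ball therefore returns to $y$ before $\overline{A}^\rho$ only if it does so within time $o(|\mathsf{T}_N|\log N)$. Using Lemma~\ref{lem:est_laplace}(i)-type Laplace estimates, the probability of this is $1-\tfrac{\log N^{1/2}}{\log N}+o(1)=\tfrac12+o(1)$, which is the 2D "return before mixing" probability $\log(N/N^{1/2})/\log N$. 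Hence $p\to \tfrac12$. Here $|A|\ge\omega_N$ is essential, since it makes $H(\overline{A}^\rho)$ short but still $\gg N^2\log N$ when $|A|\le(\log N)^\gamma$, $\gamma<1$. This is the main obstacle: showing uniformly that from $\partial\mathsf{B}_{N^{1/2}}(y)$ the walk hits $y$ within times $s$ satisfying $N^2\log\log N\ll s\ll |\mathsf{T}_N|\log N/\omega_N$ with probability $\tfrac12+o(1)$. I would attempt this via the local CLT on the torus for the Green function up to time $s$, or by comparing with the $\mathbb{Z}^2$ walk killed at an independent geometric time of mean $s$. Combining the two claims gives $\mathbb{G}^N_{H(\overline{A}^\rho)}(x,y)=q^{-1}(1+o(1))=\frac{2}{\pi}\log N(1+o(1))$ uniformly, which is the statement.
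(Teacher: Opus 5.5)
Your argument is sound in outline but takes a more laborious route than the paper, and the ``main obstacle'' you flag is created by your choice of scale. The paper keeps the radius $N/r_N^3$ that you first proposed and then dropped. Since $\log r_N=o(\log N)$, the Green function killed on leaving $\mathsf{B}_{N/r_N^3}(x)$ is already $\frac{2}{\pi}\log N\,(1+o(1))$ on $\mathsf{B}_\rho(x)$. The lower bound is therefore free by monotonicity in the killing set. The upper bound only needs the probability of returning from $\partial\mathsf{B}_{N/r_N^3}(x)$ to $x$ before $\overline{A}^\rho$ to be $o(1)$. That is a one-sided estimate at the single time $t=\omega_N^{-1/2}N^2\log N$: $\P_z(H(\overline{A}^\rho)>t)\to0$ by \eqref{eq:hit-asy-exp} because $|A|\ge\omega_N$, and $\P_z(H(x)\le t)\to0$ by the Laplace estimate behind Lemma~\ref{lem:est_laplace}(i). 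The number of excursions is then dominated by a geometric variable.

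With radius $N^{1/2}$, the local part contributes only $\frac{1}{\pi}\log N$. The returns after leaving the ball carry the other half of the answer, so you must prove $p\to\frac12$ exactly. This needs two-sided control of $\P_z(H(y)\le s)$ at two times:
\begin{itemize}
\item a short time $s'$ with $\P_z(H(\overline{A}^\rho)\le s')\to0$, e.g.\ $s'=N^2\log\log N$; this is admissible because the mean $\frac{2}{\pi}|\mathsf{T}_N|\log N/|A|$ in \eqref{eq:hit-asy-exp} is at least of order $N^2(\log N)^{1-\gamma}$;
\item a long time $s''$ with $\P_z(H(\overline{A}^\rho)>s'')\to0$, which forces $s''\gg|\mathsf{T}_N|\log N/\omega_N$, e.g.\ $s''=N^2\log N/\omega_N^{1/2}$.
\end{itemize}
So the window you state, $s\ll|\mathsf{T}_N|\log N/\omega_N$, has the wrong upper end. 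Relatedly, $H(\overline{A}^\rho)$ is $\gg N^2$, not $\gg N^2\log N$.

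The needed estimate does hold with the paper's tools. Using the discounted Green function $G(\cdot\mid\varsigma)$ from the proof of Lemma~\ref{lem:est_laplace}(i), one has $\E_z[\rme^{-\varsigma H(y)}]=G(z-y\mid\varsigma)/G(\mathbf{0}\mid\varsigma)$. Lemma~\ref{eq:heat-kernal-torus} then gives $G(\mathbf{0}\mid\varsigma)=\frac{2}{\pi}\log N\,(1+o(1))$ and $G(z-y\mid\varsigma)=\frac{1}{\pi}\log N\,(1+o(1))$ for $|z-y|\asymp N^{1/2}$, whenever $\varsigma N^2\to0$ and $\varsigma N^2\log N\to\infty$. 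Choosing such $\varsigma$ with $\varsigma s'\to\infty$ gives $\P_z(H(y)\le s')\ge\frac12-o(1)$. Choosing it with $\varsigma s''\to0$ gives $\P_z(H(y)\le s'')\le\frac12+o(1)$.

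Thus your route closes, and it gives a slightly more explicit picture of the return mechanism via the identity $G^*=1/q$. It costs an exact two-sided computation that the paper avoids by taking the mesoscopic radius close to $N$ on the logarithmic scale.
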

 
\subsubsection{Proof of dynamical lemmas}
\begin{proof}[Proof of Lemma~\ref{lem:nonreachable_traps}] 
The proofs of the three statements are essentially identical; thus, we restrict our attention to the case of  \emph{deep traps}. The remaining cases follow analogously. Note that the $\delta$-dependence in \eqref{equa:DT:lem:nonreachable_traps} stems from \eqref{e:10} in Theorem~\ref{thm:non_reachable_traps}. Observe also that neither ${\mathsf{DT}}$ nor ${\mathsf{NIT}}$ depends on $r$.

Fix  $\varepsilon >0$.  Let $ E^{N}_{1} (\delta)  \coloneq \{ |\mathsf{DT}^{*}(\delta)| \leq  \varepsilon (\log N)^\gamma \} \cap \{ d(\mathbf{0}, \mathsf{T}(\delta)) \geq 2N/r_N \}$.
Applying   \eqref{e:10} in Theorem~\ref{thm:non_reachable_traps},  and Theorem~\ref{thm:distance_origin}, we have 
  \begin{equation}\label{eq:EN-1-negli}
	\limsup_{\delta \downarrow 0}\,\limsup_{N \to \infty} \P \bigl( (E^{N}_{1}(\delta))^{c}  \bigr)  = 0. 
\end{equation}  
  
Since $\{ H_{{\mathsf{DT}(\delta)}} \leq L_{N,m} \} \subset \cup_{x \in \mathsf{DT}^{*}(\delta)} \{  H( \mathsf{B}_{2 r_N}(x) ) \leq L_{N,m}\} $,  employing the union bound, we derive
\begin{align}
\bbP_\mathbf{0} \left( H_{{\mathsf{DT}(\delta)}} \leq L_{N,m} \mid \tau   \right) 
\le |\mathsf{DT}^{*}(\delta)| \, \sup_{x \in \mathsf{DT}^{*}(\delta)} \bbP_\mathbf{0} \left(  H( \mathsf{B}_{2 r_N}(x) ) \leq L_{N,m} \mid \tau   \right).\label{eq:DT-not-rea}
\end{align} 
On $E^N_1(\delta)$ we have   $d(\mathbf{0}, \mathsf{DT}(\delta)) > N/r_N$. Applying Markov's inequality and 
part (i) of Lemma \ref{lem:est_laplace}, we obtain, for sufficiently large $N$,  
\[
\sup_{x \in \mathsf{DT}^{*}(\delta)} 
 \bbP_\mathbf{0} \left(  H( \mathsf{B}_{2 r_N}(x) ) \le L_{N,m} \ \big| \ \tau   \right) \ind{E^N_1(\delta)} 
 \lesssim \sup_{d(x,\mathbf{0})>N/r_N}   \E_{\mathbf{0}}\bigl[ e^{-   H( \mathsf{B}_{2 r_N}(x) ) / L_{N,m}} \ \big| \ \tau \bigr] \le   \frac{ \pi L_{N,m} }{ N^2 \log N}.
\]
Substituting this upper bound into \eqref{eq:DT-not-rea} yields  
\begin{align*}
	\limsup_{N \to \infty} \bbP_\mathbf{0} \left( H_{{\mathsf{DT}(\delta)}} \leq L_{N,m}  , E^N_1(\delta)  \right)  
	\leq  \limsup_{N \to \infty}
	 \frac{\pi \, \varepsilon (\log N)^{\gamma} L_{N,m}}{N^2 \log N} = \pi m  \varepsilon .
\end{align*} 
Sending $\varepsilon \downarrow 0$ and using \eqref{eq:EN-1-negli}, we obtain the desired result.
\end{proof}

\begin{proof}[Proof of Lemma \ref{lem:sum_shallow_traps}] 
Let $(\mathbb{e}_{x,i})_{ x \in \bbZ^2, i \in \mathbb{N}}$ be a family of i.i.d. standard exponential r.v.'s, independent of the field $\tau$ and the jump chain $Y$.
Let $\mathbb{L}_{x}  \coloneq  \sum_{i=0}^{L_{N,m}} \ind{Y_i = x}$ be the local time at $x$ during the first $L_{N,m}$ jumps. We then have
\[
\mathcal{Z}_N(\delta;m) \coloneq \frac{1}{t_N}\sum_{j =0}^{ L_{N,m} }  {\mathbb{e}_{j} \tau_{Y_j}} \ind{ Y_j \in \mathsf{ST}(\delta)}
\stackrel{\mathrm{law}}{=}  
\sum_{x  \in \mathsf{ST}(\delta) }  \frac{ \wh{\tau}_{x}}{\log N} \sum_{i=1}^{\mathbb{L}_{x} } \mathbb{e}_{x,i} .
\] 

 Fix $\varepsilon >0$. We claim that it suffices to prove    
\begin{equation}
	  \limsup\limits_{\delta \downarrow 0} \,  \limsup\limits_{N \to \infty}  \P  \bigl(   \bbE  [ \mathcal{Z}_N(\delta;m)  \mid \tau  ]  \ge  \eta \varepsilon \bigr) = 0 \,,
	 \label{eq:cond-Z-claim}
\end{equation}
  for any $\eta>0$. Admitting this for now, and applying  Markov's inequality, we derive
\[ 
\P \bigl(  \mathcal{Z}_N(\delta;m) > \varepsilon ;    \bbE  [  \mathcal{Z}_N(\delta;m) \mid  \tau  ] \le \eta \varepsilon  \bigr) =\E \bigl[  \P(\mathcal{Z}_N(\delta;m) > \varepsilon \mid \tau) \, \ind{    \bbE  [  \mathcal{Z}_N(\delta;m) \mid  \tau  ] \le \eta \varepsilon }  \bigr]   \le \eta . 
\]
Letting $N \to \infty$, followed by $\delta \downarrow 0$ and then $\eta \downarrow 0$, the required result then follows. 

To establish \eqref{eq:cond-Z-claim}, we integrate the preceding expression for $\mathcal{Z}_N(\delta;m)$ first with respect to $\mathbb{e}_{x,i}$ and then with respect to $Y$. This yields
\[
 \bbE \bigl[ \mathcal{Z}_N(\delta;m)  \ \big|\  \tau \bigr] = \frac{1}{\log N} \sum_{x  \in \mathsf{ST}(\delta) } \wh{\tau}_{x} \mathbb{G}_{L_{N,m}}^N (\mathbf{0},x).
\] 
We proceed as in equation~(4.42) in \cite{arous2008} (see also Lemma \ref{eq:heat-kernal-torus}) to obtain
\begin{equation}\label{eq:upper:green}
 \mathbb{G}_{L_{N,m}}^N (\mathbf{0},x) \le C_{m} (\log N)^{1-\gamma} + C_{m} \log \frac{N^2}{1+|x|^2} \le C_{m}' \log N 
\qquad \forall x \in \mathsf{T}_N, 
\end{equation}
with constants $C_{m}, C_{m}' \geq 1$ depending on $m$. Consequently, 
   for all $N$ sufficiently large,    we have 
\begin{align}
 \E[ \mathcal{Z}_N(\delta;m) \mid \tau] &
   =  \frac{1 }{\log N} \sum_{x \in \mathsf{ST}(\delta)}  \wh{\tau}_{x}\mathbb{G}_{L_{N,m}}^N (\mathbf{0},x) \notag \\
  &\ \le    C'_{m} \sum_{x \in \mathsf{ST}(\delta), |x| \le N/r_N} \wh{\tau}_x   +  \frac{  C_m'   }{(\log N)^{\gamma}} \sum_{ x \in \mathsf{ST}(\delta), |x| \ge N/r_N}  \wh{\tau}_{x} . \label{eq:cexp-0}
\end{align}  
In the second line above we used    \eqref{eq:upper:green}: for   $|x| \le N/r_N$ we  directly bounded  $ \mathbb{G}_{L_{N,m}}^N (\mathbf{0},x) $ by $C'\log N $; and 
 for $|x| > N/r_N$ we   bounded $\mathbb{G}_{L_{N,m}}^N (\mathbf{0},x)$ by  $ C  (\log N)^{1-\gamma}  $. 

On the one hand, from Theorem~\ref{thm:sum_shallow_traps} we obtain  that for any $\eta >0$,
\begin{equation}
	\label{eq:cexp-1}
	 \limsup\limits_{\delta \downarrow 0} \ \limsup\limits_{N \to \infty} \P \Bigl( \frac{  C_m'   }{(\log N)^{\gamma}} \sum_{ x \in \mathsf{ST}(\delta), |x| \ge N/r_N}  \wh{\tau}_{x}  \ge    \eta  \Bigr) = 0\,.
\end{equation} 
On the other hand, we have 
\begin{align*}
\E  \Bigl[ \sum_{x \in \mathsf{ST}(\delta), |x| \le N/r_N} \wh{\tau}_x   \Bigr] &\le \sum_{ |x| \le N/r_N}  \int_{-\infty}^{\frac{\log \delta}{\beta}} e^{ \beta u} \,  \P  \bigl(  h_x - m_N+ \frac{\gamma}{\alpha} \log\log N \in \mathrm{d} u \bigr)  \\
& \le  \sum_{ |x| \le N/r_N}  \frac{(\log N)^{1+\gamma}}{N^2} \int_{-\infty}^{\frac{\log \delta}{\beta}} e^{ \beta u} e^{-\alpha u} \,     \mathrm{d} u
\le \frac{(\log N)^{1+\gamma}}{r_N^{2}} \,  \delta^{1-\frac{\alpha}{\beta}} .
\end{align*} 
Markov's inequality then yields that for any 
$\delta>0$,
\begin{equation}
	\label{eq:cexp-2}
  \limsup\limits_{N \to \infty}  \P  \biggl( \sum_{x \in \mathsf{ST}(\delta), |x| \le N/r_N} \wh{\tau}_x    \ge \frac{1}{\log N} \bigg ) = 0.
\end{equation}   
Combining \eqref{eq:cexp-0} with \eqref{eq:cexp-2} and \eqref{eq:cexp-1}, the required result \eqref{eq:cond-Z-claim} follows. This completes the proof.
\end{proof}

Next we turn to the limiting statements.
\begin{proof}[Proof of Lemma~\ref{lem:approximation:sigma}]
 We address the three assertions sequentially.
First we introduce the good event
\begin{equation}\label{eq:def-EN2-M}
  E^{N}_{2} (M, \delta ) = \Bigl\{ M^{-1}   \le  \frac{ | \mathsf{NT}^{*}(\delta )|}{\delta^{-\alpha/\beta} (\log N)^{\gamma}}  \le  M  \Bigr\} \cap \Bigl\{ d(\mathbf{0}, \mathsf{T}(\delta)) \geq \frac{2 N}{r_N} \Bigr\}.
\end{equation}
Set $M_N=\log\log N$. 
Then from Theorems \ref{thm:regular traps1} and \ref{thm:distance_origin}, we obtain, for any $\delta \in (0,1/2]$,  
\begin{equation}
	\label{eq:EN2-M}
	  \limsup_{N \to \infty} \P\bigl( 	E^{N}_{2} (M_N, \delta )^{c}  \bigr) = 0   \ \text{ and } \ \limsup_{M \to \infty} \limsup_{N \to \infty} \P\bigl( 	E^{N}_{2} (M, \delta )^{c}  \bigr) = 0 . 
\end{equation}

\noindent\underline{\emph{Proof of (i)}. }  
Recall that the distance 
between two local maximum points $x, y \in \mathsf{NT}(\delta)^{*}$ 
in different clusters is at least $N/r_N$. 
Therefore, when $E_2^{N}(M_N, \delta )$ occurs, the tuple 
$(Y(\sigma_{i}),  \mathsf{NT}^{*}(\delta)\setminus \{U_i\}  , r)$ 
belongs to $\mathscr{K}^{N}_{(\gamma+1)/2}$ for all $i \ge 0$ 
(setting $U_{0}= \mathbf{0} $ for convenience). 
Applying Lemma~\ref{prop:hitting-time} (i) and the strong Markov property 
yields the existence of a deterministic sequence 
$\varepsilon_N \to 0$ such that
\begin{equation*}
    \sup_{j \ge 1} \sup_{t>0} 
    \left| 
    \P_{\mathbf{0}}  \bigl(   
    \chi_N \Delta \sigma_{j} > t \ \big| \ \mathscr{F}^{Y}_{\sigma_{j-1}}, \tau  \bigr)
   - e^{-  t } 
    \right| \ind{E_2^{N}(M_N, \delta )} \le \varepsilon_N.
\end{equation*}
Here, for a stopping time $\vartheta$ of $Y$, $\mathscr{F}^{Y}_{\vartheta}$ represents the $\sigma$-field generated by $(Y_{k \wedge \vartheta})_{k \ge 0}$.
Iterating this conditioning argument implies that for all $k \ge 1$,
\begin{equation}
	\label{eq:delta-sigma-cond}
 \sup_{t_{j}>0, 1 \le j \le k} 
 \left| 
 \P_{\mathbf{0}} \bigl( 
 \chi_N \Delta \sigma_{j} > t_{j}, 1 \le j \le k \ \big| \ \tau 
 \bigr) - e^{-  \sum_{j=1}^{k} t_{j} } 
 \right| \ind{E_2^{N}(M_N, \delta )} \le k \varepsilon_N.
\end{equation}
Taking expectations on both sides, sending $N \to \infty$, 
and using \eqref{eq:EN2-M}, the first assertion follows.
\smallskip 

\noindent\underline{\emph{Proof of (ii)}. } Take any measurable subset
$I \subset [\delta, \delta^{-1}]$, 
and any configuration 
$(s_y)_{y \in \mathbb{Z}^2} \in 
(0,1]^{\mathbb{Z}^2}$ 
satisfying $s_{y}=1$ 
for $y \in \{0\} \cup 
(\mathbb{Z}^2 \setminus 
\mathsf{B}_{r})$.
To establish the weak convergence and the asymptotic independence, 
it suffices to show that for each $k \ge 0$, and any bounded 
random variable $G$ measurable with respect to 
$\sigma((\wh{\tau}_{U_{i}}, \bar{\tau}_{U_i}|_{\mathsf{B}_{r}}) : 0 \leq i \leq k)$, we have 
\begin{equation}\label{eq:asy-ind}
 \limsup_{N \to \infty}	\left| \, 
	\E_{\mathbf{0}}  \Bigl[  
	G \, \ind{\wh{\tau}_{U_{k+1}} \in I, \ \bar{\tau}_{U_{k+1}} (y) \le s_{y} ,\, y \in \mathsf{B}_{r} }\Bigr] 
 - \theta \, \E_{\mathbf{0}}[G] 
		\, \right| = 0,
\end{equation}
where 
\begin{equation*}
\theta \coloneq \P( w^*_{\delta} \in I) \P( \bar{w}_y \le s_y \ , \forall\  y \in \mathsf{B}_r ) 
= \frac{\int_{I} t^{-(\alpha/\beta + 1)} \rmd t}
    {\int_{\delta}^{\delta^{-1}} t^{-(\alpha/\beta + 1)} \rmd t} 
    \times  \sigma_\beta \Bigl(\prod_{y \in \bbZ^2} (0, s_y]\Bigr). 
\end{equation*} 
 
Recall that when $E_2^{N}(M_N, \delta )$ occurs, the tuples
$(Y(\sigma_{i}),  \mathsf{NT}^{*}(\delta)\setminus \{U_i\}    , r)$,
for $i \ge 0$, belong to $\mathscr{K}^{N}_{(\gamma+1)/2}$.
We apply point (ii) in Lemma~\ref{prop:hitting-time} with 
$(x, A, \rho) = (Y(\sigma_{i}), \mathsf{NT}^{*}(\delta) \setminus \{U_i\}, r)$ 
and define 
$A_1 = \{x \in A : \wh{\tau}_x \in I,  \tau_{x+y}/\tau_x  \le s_{y} \,, \forall \, y \in \mathsf{B}_r \}$. 
According to Theorem~\ref{thm:regular traps2}, the ratio $|A_1|/|A|$ converges 
in probability to $\theta$ as $N \to \infty$.
Thus, we can find a strictly increasing sequence $(a_k)_{k \ge 1}$ such that  
  \[   \P( |A_1|/|A| - \theta | \ge  2^{-k} ) \le 2^{-k} \ \text{ for all } \ N \ge a_{k}. \] 
  Let $\varepsilon'_{N}= 2^{-k}$ if $N \in [a_{k}  , a_{k+1}) \cap \mathbb{Z}$. Then 
  $  \P(| |A_1|/|A| - \theta | > \varepsilon'_{N}) \le \varepsilon'_{N}$, and  $\varepsilon'_{N} \to 0$  as $N \to \infty$.  Let 
\[  E^{3}_{N}(\delta,r) = E^{2}_N(M_N, \delta ) \cap \{ | |A_1|/|A| - \theta | \le \varepsilon'_{N} \}. \] 
Then Lemma~\ref{prop:hitting-time} (ii)  
yields the existence of a deterministic sequence 
$\varepsilon''_N \to 0$ satisfying 
\begin{equation*}
   \sup_{k \ge 0} 
    \Bigl|   \, 
    \P_{\mathbf{0}} \bigl( 
    \wh{\tau}_{U_{k+1}} \in I, \ \bar{\tau}_{U_{k+1}} (y) \le s_{y} ,\, y \in \mathsf{B}_{r}
    \ \big| \ \mathscr{F}^{Y}_{\sigma_{k}}, \tau 
    \bigr) - \theta \, 
    \Bigr| \ind{E^3_{N}(\delta,r)} \le \varepsilon''_N.
\end{equation*} 
By taking expectations on both sides 
and sending $N \to \infty$,   \eqref{eq:asy-ind} follows immediately. 
\smallskip 

\noindent\underline{\emph{Proof of (iii)}.} 
Let $(w^{*,i}_{\delta})_{i \ge 1}$ be i.i.d. copies of $w^{*}_{\delta}$. Note that the distribution of $w^*_{\delta}$ has no atoms.  It thus follows from  assertion (ii) that for any $l \in \mathbb{N}$,
\[ \limsup_{N \to \infty} \P( \, \exists \, 1 \le i< j \le l \text{ s.t. }  U_i = U_j )  \le  \P\bigl(\, \exists \, 1 \le i< j \le l \text{ s.t. }  w^{*,i}_{\delta} = w^{*,j}_{\delta} \bigr)  = 0 . \]
Then it is sufficient to show that for any $m \in \mathbb{N}$,
\begin{equation}\label{eq:zeta-N-k}
	  \lim_{l \to \infty}\limsup_{N \to \infty} \P( \zeta_{N,m} > l ) = \lim_{l \to \infty}\limsup_{N \to \infty} \P \Bigl( \sum_{i=1}^{l+1} \Delta \sigma_{i} < m N^2 (\log N)^{1-\gamma} \Bigr) = 0 .  
\end{equation} 
Fix $M \ge 1$. Since $E^{N}_{2}(M, \delta ) \subset E^{N}_{2}(M_N, \delta )$ for large $N$,  it follows again from 
\eqref{eq:delta-sigma-cond}   that  $\{ (\chi_N \Delta \sigma_{i})_{i \ge 1}, \P( \cdot \mid  E^{N}_2(M, \delta )) \}$ converges weakly as $N \to \infty$ to a sequence $(\mathbb{e}_i)_{i \ge 1}$ of i.i.d. standard exponential r.v.'s.  
On $  E^{N}_2(M, \delta )$, we have $ \chi_N   N^2 (\log N)^{1-\gamma} \le 2 M \delta^{-\alpha/\beta} $. Thus the double limit in \eqref{eq:zeta-N-k} is bounded above by
\begin{align*}
	& \lim_{l \to \infty}   \limsup_{N \to \infty} \P\Bigl( \sum_{i=1}^{l} \chi_N \Delta \sigma_{i} < 2 m \,M \delta^{-\frac{\alpha}{\beta}}  \mid  E^{N}_2(M, \delta )  \Bigr) + \limsup_{N \to \infty} \P( (E^{N}_2(M, \delta ) )^{c} )  \\
	&\quad =\lim_{l \to \infty}     \P\Bigl( \sum_{i=1}^{l} \mathbb{e}_{i} \le 2 m \, M \delta^{-\frac{\alpha}{\beta}}    \Bigr) + \limsup_{N \to \infty} \P( (E^{N}_2(M, \delta ) )^{c} )  .
\end{align*}
The first limit vanishes according to the law of large numbers.  The desired result \eqref{eq:zeta-N-k} then follows by sending $M \to  \infty$ and applying \eqref{eq:EN2-M}.
\end{proof}
 
Lastly, we have
\begin{proof}[Proof of Lemma~\ref{lem:convergence:time:intervals}]
Let $(\mathbb{e}_{x,i}, x \in \mathsf{T}_N, i \geq 1)$ be a family of i.i.d. standard exponential r.v.'s. Let 
$\mathbb{L}_{x}^n  \coloneq  \sum_{ \sigma_{n} \le i < \sigma_{n+1}}  \ind{ Y_i = x}$ be the local time of $Y$ at $x$ during $[\sigma_{n}, \sigma_{n+1})$. For notational simplicity, we denote by $\mathbf{y} \coloneq  Y_{\sigma_{n}}$. Then we have
\begin{equation}\label{eq:fml-d-sn}
	\frac{1}{t_N} \Delta \mathcal{S}^{\star} (n)  \stackrel{\mathrm{law}}{=}
 \sum_{x \in  \mathsf{B}_{r}(U_{n})} \frac{\tau_{x}}{t_N} \sum_{i=1}^{ \mathbb{L}_{x}^n } \mathbb{e}_{x,i} = \sum_{x \in  \mathsf{B}_{r}(U_{n})} \frac{\wh{\tau}_{x}}{\log N} \sum_{i=1}^{ \mathbb{L}_{x}^n } \mathbb{e}_{x,i}\,.
\end{equation} 

\noindent\underline{\emph{Step 1.}}  
We shall show that all local times $\mathbb{L}^n_{x}$ in the sum above can be replaced by $\mathbb{L}^n_{\mathbf{y}}$:
\begin{equation}
	\label{eq:Lx-Ly}
\frac{1}{\log N} \E \biggl[ \bigg| \sum_{x \in  \mathsf{B}_{r}(U_{n})}  \sum_{i=1}^{ \mathbb{L}_{x}^n } \mathbb{e}_{x,i} - \sum_{x \in  \mathsf{B}_{r}(U_{n})}  \sum_{i=1}^{ \mathbb{L}_{\mathbf{y}}^n } \mathbb{e}_{x,i} \bigg|  \ \bigg| \ \tau ,  \mathscr{F}^{Y}_{\sigma_n} \biggr] \le \sum_{x \in  \mathsf{B}_{r}(U_{n})} \E \biggl[ \frac{|\mathbb{L}_{x}^n -\mathbb{L}_{\mathbf{y}}^n  | }{\log N}   \ \bigg| \   \tau ,  \mathscr{F}^{Y}_{\sigma_n} \biggr] \xrightarrow[N \to \infty]{\text{in prob}} 0 .
\end{equation}
We begin by claiming that
 the strong Markov property implies
\begin{equation}
	\label{eq:local-time-covariance}
	 \E [ \mathbb{L}_{x}^n  \mathbb{L}_{x'}^n \mid \tau,   Y_{\sigma_{n}}  = \mathbf{y}]  = [\mathbb{G}^{N}_{\sigma_{1}} (\mathbf{y}, x) + \mathbb{G}^{N}_{\sigma_{1}} (\mathbf{y},x') ] \mathbb{G}^{N}_{\sigma_{1}} (x, x') - \ind{x= x'}\mathbb{G}^{N}_{\sigma_{1}} (\mathbf{y},x).
\end{equation} 
We then use the asymptotic result for the Green function obtained in Lemma~\ref{l:2.13}. Conditionally on $E^{N}_2(M_N)$ defined in \eqref{eq:def-EN2-M}, we have  $\mathbb{G}^{N}_{\sigma_{1}} (x, x') =  \frac{2}{\pi} \log N + o(\log N)  $
  uniformly in $x,x' \in \mathsf{B}_{2r}(\mathbf{y})$ and $ \mathbf{y} \in \mathsf{NT}^{*}(\delta)$.  We thus obtain 
\begin{equation*}
 \E [ \mathbb{L}_{x}^n  \mathbb{L}_{x'}^n \mid \tau,   Y_{\sigma_{n}}  = \mathbf{y}]  =2 \Bigl( \frac{2}{\pi} \log N  \Bigr)^2 + o(\log N)^2   \quad  \text{ on } E^{N}_2(M_N) , 
\end{equation*}
and hence 
\[ 
 \E [  |\mathbb{L}_{x}^n -\mathbb{L}_{\mathbf{y}}^n   |^2  \mid \tau,   Y_{\sigma_{n}} = \mathbf{y}] \ind{E^N_2(M_N)} =  o(\log N)^2 .\]
By using Jensen's inequality and  \eqref{eq:def-EN2-M}, the required result \eqref{eq:Lx-Ly} follows. 
 
\smallskip
\noindent\underline{\emph{Step 2.}} 
We next show that  the conditional Laplace transform, on $ E^{N}_2(M_N)$, satisfies
\begin{equation}
 \label{eq:laplace-ly}
\bbE \biggl[  \exp \Bigl( - \lambda \sum_{x \in \mathsf{B}_r(U_{n})} \frac{\wh{\tau}_{x}}{\log N} \sum_{i=1}^{ \mathbb{L}_{\mathbf{y}}^n } \mathbb{e}_{x,i} \Bigr) \ \bigg| \   \tau ,  \mathscr{F}^{Y}_{\sigma_n}\biggr] = [1+ o_N(1)] \frac{1}{1 +  \lambda  \frac{2}{\pi} \sum_{x \in \mathsf{B}_r(U_{n})} \wh{\tau}_x } \,.
\end{equation}
Indeed, observe that, conditioned on $\tau ,  \mathscr{F}^{Y}_{\sigma_n}$ , $\mathbb{L}_{\mathbf{y}}^n \ge 1$ has a geometric distribution with mean $\mathbb{G}_{ \sigma_{1}} (\mathbf{y}, \mathbf{y})$. Thus we can rewrite the Laplace transform as
\begin{align*} 
  \sum_{k=1}^{\infty}  \bbP_{\mathbf{y}}(\mathbb{L}_{\mathbf{y}}^n = k) \Bigl( \prod_{x \in \mathsf{B}_r(U_n) } \frac{1}{1+ \lambda \wh{\tau}_x / \log N } \Bigr)^k  
&
=  \frac{ \bbP_{\mathbf{y}}(\mathbb{L}_{\mathbf{y}}^n = 1)}{  
\prod_{x \in \mathrm{B}_r(U_n)} (1+ \lambda \wh{\tau}_x / \log N ) -1 + \P
_{\mathbf{y}}(\mathbb{L}_{\mathbf{y}}^n = 1) }\,.
\end{align*}
Notice that $\prod_{x \in \mathrm{B}_r(U_n)} (1+ \lambda \wh{\tau}_x / \log N ) - 1 =\lambda  \sum \wh{\tau}_x / \log N + O (\log N)^{-2}$ as $N\to \infty$, with error bounded uniformly in $\tau$ (since $\wh{\tau}_x \le \wh{\tau}_{U_n} \le \delta^{-1}   $). Furthermore, by Lemma~\ref{l:2.13}
we have $ \bbP_{\mathbf{y}} ( \mathbb{L}_{\mathbf{y}}^n = 1) = \mathbb{G}_{\sigma_{1}} (\mathbf{y}, \mathbf{y})^{-1} = [1+o_{N}(1)] \frac{\pi} {2}(\log N)^{-1}$ when $E^{N}_2(M_N)$ holds. Then the assertion \eqref{eq:laplace-ly} follows.   
\smallskip

  \noindent\underline{\emph{Step 3.}} 
Combining \eqref{eq:fml-d-sn} with \eqref{eq:Lx-Ly}, \eqref{eq:laplace-ly} and  \eqref{eq:EN2-M}, we obtain
\begin{equation*}
	 \bigg|\bbE \biggl[  \exp \Bigl( - \frac{\lambda}{t_N} \Delta \mathcal{S}^{\star} (n)  \Bigr)  \ \bigg| \   \tau ,  \mathscr{F}^{Y}_{\sigma_n} \biggr] -  \frac{1}{1 +   \lambda \frac{2}{\pi}  \wh{\tau}_{U_n} \sum_{y \in \mathsf{B}_r} \bar{\tau}_{U_n}(y) } \bigg|\xrightarrow[N \to \infty]{\text{in prob}} 0\,.
	\end{equation*}
The   dominated convergence theorem,
  together with  Lemma \ref{lem:approximation:sigma} (ii), implies the weak convergence  and asymptotic independence of the $(\frac{1}{t_N} \Delta \mathcal{S}^{\star} (n) )$.
It only remains to show \eqref{eq:local-time-covariance}. 
Indeed we have 
\begin{equation*}
	\mathbb{L}^{n}_x\mathbb{L}^{n}_{x'}
=
\sum_{\sigma_{n} \le i\le j<\sigma_{n+1}}
\mathbf 1_{\{Y_i=x\}}\mathbf 1_{\{Y_j=x'\}}
+
\sum_{\sigma_{n} \le j\le i<\sigma_{n+1}}
\mathbf 1_{\{Y_i=x\}}\mathbf 1_{\{Y_j=x'\}}
-
\mathbf 1_{\{x=x'\}}\sum_{\sigma_{n}\le i<\sigma_{n+1}}\mathbf 1_{\{Y_i=x\}}.
\end{equation*} 
Moreover from the strong Markov property  we get 
\begin{equation*}
	 \E \Bigl[  \sum_{\sigma_{n} \le i\le j<\sigma_{n+1}}
\mathbf 1_{\{Y_i=x\}}\mathbf 1_{\{Y_j=x'\}}  \ \Big| \  \tau,   Y_{\sigma_{n}}  = \mathbf{y}   \Bigr] 
 =
\sum_{i\ge 0}
\P_{\mathbf{y}}(i<\sigma_{1},Y_i=x)\,
 \E_x \Bigl[ 
\sum_{k=0}^{\sigma_{1}-1}\mathbf 1_{\{Y_k=x'\}}
  \Bigr] 
 = \mathbb{G}^{N}_{\sigma_{1}} (\mathbf{y}, x)\mathbb{G}^{N}_{\sigma_{1}} (x, x').
\end{equation*} 
Analogous calculations for the remaining two sums yield their respective conditional expectations, which establishes \eqref{eq:local-time-covariance}. This completes the proof.
\end{proof}

\subsection{Proof of additional high-level statements}
In this subsection we prove the remaining ingredients in the high-level proof of Theorem~\ref{thm:1}.
In this Subsection,
we write $\sigma_i^{(\delta,r)}$, 
$\mathcal{S}_{\delta,r}$, and 
$\mathcal{S}^{\star}_{\delta,r}$ 
to emphasize the dependence 
of $\sigma_i$, $\mathcal{S}$, 
and $\mathcal{S}^{\star}$ 
on $(\delta, r)$, respectively.

\begin{proof}[Proof of Proposition~\ref{lem:cond4}] Recall the good event $E^{N}_2(M;\delta)$ defined in \eqref{eq:def-EN2-M}. 
 By using \eqref{eq:EN2-M} and Lemma \ref{lem:nonreachable_traps},   there exists $M_0 \ge 1$, depending only on $\varepsilon$, such that
\begin{equation}\label{eq:nor-reach-wide} 
 \liminf_{N \to \infty} \P( E^{N}_{2} (M_0; \delta) )
\ge 1- \varepsilon \ \text{ and }  \
  \limsup_{N \to \infty}
\P(  H_{\mathsf{NIT}(\delta)} \le L_{N,m} )
\le \varepsilon \quad \text{ for all } \delta \in (0,1/2]. 
\end{equation}

 \smallskip
\noindent\underline{\emph{Step 1. }}
  We first prove there exists $m$ depending only on $\varepsilon,\theta$ such that for all $\delta$ and $r$,
\begin{equation}\label{eq:zeta-delta-0}
  \liminf_{N \to \infty} \, \P \bigl(  \zeta_{N,m}(\delta,r) \ge \theta \delta^{-\frac{\alpha}{\beta}}  \bigr)   \ge 1- 2 \varepsilon .  
\end{equation}
Indeed, the proof proceeds similarly to that of \eqref{eq:zeta-N-k}. By \eqref{eq:delta-sigma-cond}, assertion (i) of Lemma~\ref{lem:approximation:sigma}
remains valid under the conditional probability  
$\P(\cdot \mid E^{N}_{2}(M_0;\delta) )$.
Setting $k_{\delta} = \lceil 2\theta \delta^{-\alpha/\beta} \rceil $,  
the law of large numbers  implies the existence of a constant $D_0$, depending only on $\theta$ and $\varepsilon$, such that 
\[  \limsup_{N \to \infty} \P\Bigl( \sigma^{(\delta,r)}_{k_\delta}  > D_0 M_0 N^2 (\log N)^{1-\gamma} \mid   E^{N}_{2}(M_0;\delta) \Bigr) \le \P\Bigl( \sum_{i=1}^{k_{\delta}}  \mathbb{e}_{i}  > \frac{\pi}{8} D_0   \delta^{-\frac{\alpha}{\beta}}   \Bigr) \le  \varepsilon .\] 
Take $m = 1+ D_0 M_0$. The above inequality and \eqref{eq:nor-reach-wide}
together yield that  the unconditional
probability that $\sigma^{(\delta,r)}_{k_{\delta}} \leq L_{N,m}$
is at least $1-2 \varepsilon$. When 
$L_{N,m} \ge \sigma^{(\delta,r)}_{k_{\delta}}$, we have
$\zeta_{N,m}(\delta,r) \ge k_{\delta} \ge 2 \theta \delta^{-\alpha/\beta} $. Our assertion  \eqref{eq:zeta-delta-0} thus follows.

\smallskip
\noindent\underline{\emph{Step 2. }} Set  $\delta_0:=1/2$.  We claim that it is now sufficient to show there exists $m \in \mathbb{N}$ such that
  \begin{equation}\label{eq:calm-delta-0} \liminf_{N\to \infty}  \P(  \mathcal{S}_{\delta_0,r} (\zeta_{N,m}(\delta_0,r)-1) \ge  \theta t_N )  \ge 1-  \varepsilon   .
  \end{equation}
 Indeed,  for every $m \ge 1$ and   every $\delta<\delta_0 $,  on the event $\{L_{N,m}<H_{\mathsf{NIT}(\delta)}\}$, we have 
\begin{equation}\label{eq:S-monotone}
	\mathcal{S}_{\delta,r}\bigl(\zeta_{N,m}(\delta,r)-1\bigr)
\ge
\mathcal{S}_{\delta_0,r}\bigl(\zeta_{N,m}(\delta_0,r)-1\bigr) .
\end{equation} 
To see this, note that while
 $\mathsf{NT}^{*}(\delta)$ is not necessarily monotone in $\delta$, the following dichotomy holds: for every
$x \in \mathsf{NT}^{*}(\delta_0)$ and every $0<\delta<\delta_0$, either
$x \in \mathsf{NT}^{*}(\delta)$ or $x \in \mathsf{NIT}(\delta)$.
Under the event $\{L_{N,m}<H_{\mathsf{NIT}(\delta)}\}$, the walk does not visit any site in $\mathsf{NIT}(\delta)$ before time $L_{N,m}$. Hence every $\delta_0$-normal trap encountered by the walk within its first $L_{N,m}$ steps is also a $\delta$-normal trap, which proves \eqref{eq:S-monotone}.  
Consequently, 
combining \eqref{eq:calm-delta-0} with \eqref{eq:nor-reach-wide} and \eqref{eq:S-monotone}, we get 
\begin{align*} 
	\liminf_{N\to \infty}  \P(  \mathcal{S}_{\delta,r} (\zeta_{N,m}(\delta,r)-1) \ge  \theta t_N )  \ge 1- \varepsilon - \limsup_{N \to \infty}
\P(  H_{\mathsf{NIT}(\delta)} \le L_{N,m} ) \ge  1- 2 \varepsilon   .
\end{align*} 
This, together with \eqref{eq:zeta-delta-0}, completes the proof of Proposition \ref{lem:cond4}.

\smallskip
\noindent\underline{\emph{Step 3. }} It remains to prove \eqref{eq:calm-delta-0}. 
We  use   the trivial bound $\mathcal{S}_{\delta_0,r}  \ge \mathcal{S}^{\star}_{\delta_0,r} $. 
Lemma~\ref{lem:convergence:time:intervals}  implies that the sequence
$( t_N^{-1} \Delta \mathcal{S}^{\star}_{\delta_0,r}(j)) _{ j \ge 1}$
converges weakly to $(\mathcal{E}^{(j)}_{\delta_0,r})_{j \ge 1}$,  which consists of  i.i.d. copies of $\mathcal{E}_{\delta_0,r} $. Since  $\mathcal{E}_{\delta_0,r} \ge  \mathcal{E}_{\delta_0,1}$, the law of large numbers yields that
there exists $k_0 \ge 1$, depending only on $\varepsilon, \theta$, 
such that 
\begin{equation*} 
 \liminf_{N \to \infty} \P(  \mathcal{S}^{\star}_{\delta_0,r} (k_0)  \ge  2 \theta t_N) \ge    \P\Bigl(   \sum_{i=1}^{k_0}  \mathcal{E}_{\delta_0,1}^{(i)} \ge 2 \theta    \Bigr) \ge  1-\varepsilon . 
\end{equation*}   
Applying \eqref{eq:zeta-delta-0} with $\delta$ and $\theta$ replaced by $\delta_0$ and $k_0$  respectively, we get 
 $ \P  (  \zeta_{N,m}(\delta_0,r) \ge 2 k_0   )  \ge 1- 2 \varepsilon$ for all large $N$. 
This implies  $ \P  ( \mathcal{S}_{\delta_0,r}( \zeta_{N,m}(\delta_0,r)-1) \ge 2 \theta t_N   )  \ge 1- 3 \varepsilon$, which completes the proof.
\end{proof}

\begin{proof}[Proof of Proposition~\ref{lem:only:moderate:traps:visited}] 
The coarse-grained idea is that $X_t$ spends most of the time in the normal traps, thus if $t_N'/t_N$ were random, for example uniformly distributed in $[a, b]$, the probability of $X_{t_N'} \in \mathsf{NT}(\delta, r)$ would be proportional to the ratio
\[
\frac{ \mathrm{Leb}(\{ t; X_t \in \mathsf{NT}(\delta,r) \}\cap [a t_N, b t_N] )  } { (b-a) t_N} \approx 1 .
\]
We now make rigorous this intuitive idea.
  
\smallskip
\noindent\underline{\emph{Step 1. }}
To proceed, we introduce an auxiliary random time.  Fix $\eta\in(0,1)$, and
let $\mathbb{V}$ a uniform random variable on $(0,1)$, independent of both the DGFF 
$(h_x)_{x \in \mathsf{V}_{N}}$ 
and the walk $(X_t)$.    Define   the hitting time
\begin{equation*}
T(\eta)  \coloneq  \inf \{ t \ge 0 :  X_t \in  \mathsf{NT}^{*}(\eta) \}  \,.
\end{equation*} 
By the notation in \eqref{eq:def-U_i}, we have  $ X_{T(\eta)}=U^{(\eta,0)}_1$,  which we write as $U$ for short.  Define 
\begin{equation*}
	 T(\eta, v)  \coloneq  \inf \Bigl\{ t > 0 :  \int_{0}^{t} \ind{X_s= U }  \dif s \ge  v \, \eta \tau_{U}   \log N    \,  \Bigr\} \ \text{ for } \ v \ge 0 .
\end{equation*}
 
For each $\delta>0$ and $r\ge 1$, define the random subsets of $\mathbb{R}$ by
\begin{align*}
	\Xi &  \coloneq  \bigl\{ t \geq T(\eta,\mathbb{V})  ; X_{ t} \notin \mathsf{NT}(\delta,r)   \bigr\} \quad \text{ and }, \\
	 \Xi'  &\coloneq   {(t_N'-\Xi_0)} \cap [0,\infty)  \quad \text{ where  } \quad
	\Xi_{0} \coloneq  \Xi-T(\eta,\mathbb{V}) .
\end{align*}
 
On the event $\{T(\eta,\mathbb{V})\le a t_N\}$, the condition 
$X_{t_N'}\notin \mathsf{NT}(\delta,r)$ is equivalent to
$t_N'\in  T(\eta,\mathbb{V})+\Xi_0$, which can be 
rewritten as   $   T(\eta,\mathbb{V}) \in \Xi'   $. This gives 
\begin{equation}
 \label{eq-tprimeN-1}
    \P ( X_{t'_N} \notin \mathsf{NT}(\delta,r)  ) \le   \P   \bigl(     T(\eta,\mathbb{V}) \in \Xi'   , T(\eta,\mathbb{V}) \le a t_N    \bigr) + \P   \bigl(    T(\eta,\mathbb{V}) \ge a t_N \bigr) . 
\end{equation}  
Furthermore, whenever  $T(\eta,\mathbb{V}) \le a t_N \le b t_N$, it follows  that 
\begin{equation}
	|(t_N'-\Xi_0) \cap [0,\infty)| =| \Xi_0 \cap  [0, t'_N ]|
  \le  | \Xi \cap [0,   t'_N + T(\eta,\mathbb{V}) ] | \le  | \Xi \cap [0,  2b t_N] |  . \label{eq:Xi-0-is-thin}
\end{equation} 
Combining \eqref{eq-tprimeN-1} and \eqref{eq:Xi-0-is-thin} yields the following upper bound for $   \P (  X_{t'_N} \notin \mathsf{NT}(\delta,r))$:
 \begin{equation}
	\label{eq:error-terms}
\P   \bigl(  T(\eta,\mathbb{V}) \in \Xi' ,\ | \Xi' |  \le \varepsilon t_N  \bigr) + \P   \bigl(    T(\eta,\mathbb{V}) \ge a t_N \bigr) + \P \bigl( | \Xi \cap [0,  2b t_N] |  > \varepsilon t_N \bigr) . 
 \end{equation}

	 \noindent
\underline{\emph{Step 2.} } To control the first term in \eqref{eq:error-terms}, we show that for all large   $N$,
\begin{equation}
	\label{eq:error-1}
	 \P   \bigl(   T(\eta,\mathbb{V}) \in \Xi' ,\ | \Xi' |  \le \varepsilon t_N \bigr) \le  \eta^{-2}\, \varepsilon.
\end{equation}

First we claim 
\begin{equation}
	 \P   \bigl(   T(\eta,\mathbb{V}) \in \Xi' ,\ | \Xi' |  \le \varepsilon t_N \bigr) 
	= \E \bigl[   \mu_{\tau, U }  (  \Xi'  )  \ind{ |\Xi'| \le \varepsilon t_N}    \bigr] , \label{eq-constraint-on-E-1}
\end{equation} 
where $\mu_{\tau, U } $ is  the conditional distribution of $  T(\eta,\mathbb{V})$ given $(\tau, U)$;  that is,    
\[  \mu_{\tau, U }  (  A  ) \coloneq \P  \bigl(  T(\eta,\mathbb{V})  \in   A   \mid \tau , U \bigr)  \quad \text{ for all Borel set } A \subset [0,\infty).  \]  
Indeed, let  
$\mathcal{G}_{t}:=\sigma(\tau,  \mathbb{V}  ) \vee \mathcal{F}^{X}_{t} $, 
where $\mathcal{F}^{X}_{t}$ is the natural filtration of $X$.
Then $T(\eta,\mathbb{V})$ is a stopping time for this enlarged filtration
$(\mathcal{G}_{t})_{t\ge 0}$.  
Moreover, by  definition we have
$X_{T(\eta,\mathbb{V})}=U$ almost surely. 
Conditionally on   $\tau$, the process $X$ is a continuous-time
Markov chain, and adjoining the independent variable $ \mathbb{V} $ to the
filtration preserves the strong Markov property. So applying the strong
Markov property at $T(\eta,\mathbb{V})$, conditionally on
$(\tau, X_{T(\eta,\mathbb{V})}=U)$, the shifted process
$(X_{T(\eta,\mathbb{V})+s})_{s\ge0}$ is independent of
$\mathcal{G}_{T(\eta,\mathbb{V})}$ and has law $\P^\tau_{U}$.
The set $\Xi_0$, and hence $\Xi'$, is a
measurable function of $(\tau,U)$ and this shifted process.
Since $T(\eta,\mathbb{V})$ is
$\mathcal{G}_{T(\eta,\mathbb{V})}$-measurable, it follows that
$\Xi'$ and $T(\eta,\mathbb{V})$ are conditionally independent given
$(\tau,U)$.  This proves \eqref{eq-constraint-on-E-1}.

We next show that 
	 \begin{equation}
	\label{eq:anti-concentration-mu}
		\mu_{\tau, U }  (  A  )  \le  \eta^{-2}  t_N^{-1} |A| \  \text{ for all Borel set  } A \subset [0,\infty).
	 \end{equation}
Plugging \eqref{eq:anti-concentration-mu} back into \eqref{eq-constraint-on-E-1} yields the desired result \eqref{eq:error-1}.  
Define the inverse local time process  
 \[  \Psi(v) \coloneq \inf  \Big\{t>0:  \int_0^t \ind{X_s=U}\dif s \ge v  \Big\} \ \text{ for } \ v \ge 0 .\] 
  Then we have 
 	$ T(\eta,\mathbb{V})
	=
	\Psi( \eta\mathbb{V} \tau_U \log N )$.  
Since $\mathbb{V}$ is uniform on $(0,1)$, for every Borel set
$A\subset[0,\infty)$,
\begin{equation}\label{eq:anti-concentration-mu-path}
	\P\bigl( T(\eta,\mathbb{V})\in A \mid \tau,X\bigr)
	=  \frac{  1}{\eta \tau_U  \log N } \int_{0}^{ \eta \tau_U  \log N} \ind{  \Psi(v) \in A  } \dif v\,.
\end{equation}
To bounding the integral from above,  observe that for any interval
$ (a,b]\subset[0,\infty)$, $  \Psi(v) \in (a,b]$ implies  $v  \in (\int_0^a \ind{X_s=U}\dif s, \int_0^b \ind{X_s=U}\dif s] $. Since $\int_a^b \ind{X_s=U}\dif s \le b-a$, it follows that
$ \int_0^{\infty}   \ind{ \Psi(v) \in (a,b]} \dif v   \le  b-a  $.
Representing open sets as countable unions of disjoint intervals,   and applying outer regularity, this inequality extends to all Borel sets $A \subset [0,\infty)$:
\[
	\int_0^{\infty}  \ind{ \Psi(v) \in A} \dif v   \le |A|  .
\]
 Returning to \eqref{eq:anti-concentration-mu-path}, and using the fact that   $\wh{\tau}_U=\tau_U/\wh{t}_N= \tau_U \log N / t_N\ge\eta$ as $U\in\mathsf{NT}^*(\eta)$,  we  obtain 
\[
\P\bigl( T(\eta,\mathbb{V})\in A \mid \tau,X\bigr) \le  \frac{1}{\eta \tau_U  \log N } |A| \le  \frac{1}{\eta^2 t_N} |A| .
\]
Since $U$ is measurable with respect to $(\tau,X)$, taking the conditional expectation given $(\tau,U)$ yields \eqref{eq:anti-concentration-mu}, which completes the proof of \eqref{eq:error-1}.
   
\smallskip 
\noindent
\underline{\emph{Step 3.}} To control the second term in \eqref{eq:error-terms},
we  prove that $T(\eta,\mathbb{V})$ is indeed negligible on the scale $t_N$:
\begin{equation}
\label{eq:T-eta-uM-small}
 \lim_{\eta\downarrow0} \, 
\limsup_{N\to\infty}
\P\bigl(T(\eta,\mathbb{V})>a t_N\bigr)=0.
\end{equation}

Let $(\mathbb{e}_i)$ be i.i.d. standard exponentials. For $r \ge 1$, we define
\[
 \mathbb{L}_U^1 \coloneq \sum_{\sigma_1^{(\eta,r)}\le j < \sigma_2^{(\eta,r)}} \ind{Y_j=U}, \qquad \mathcal{L}_U^1 \coloneq \tau_U \sum_{i=1}^{\mathbb{L}_U^1} \mathbb{e}_i,
\]
where $\mathcal{L}_U^1$ denotes the local time accumulated at the $\eta$-normal trap bottom $U$ prior to entering the $r$-neighborhood of the next $\eta$-normal trap. Consequently, on the event $\{\mathcal{L}_U^1 \ge \eta \mathbb{V} \tau_U \log N\}$, the walk attains the target local-time level before leaving for a new $\eta$-normal trap, yielding $T(\eta,\mathbb{V}) \le \mathcal{S}_{\eta,r}(1)$.
Thus we get 
\begin{equation}
\label{eq:T-by-S-plus-F}
	\P\bigl(T(\eta,\mathbb{V})>a t_N\bigr)
	\le \P(  \mathcal{L}_U^1 <  \eta \mathbb{V} \tau_U\log N  ) + 
	\P\bigl(\mathcal{S}_{\eta,r} (1)>a t_N\bigr).
\end{equation}

For the first probability, after entering $\mathsf{B}_r(U^{(\eta,r)}_{1})$, the walk hits $U^{(\eta,r)}_{1}$ (which implies $U = U^{(\eta,r)}_{1}$) before exiting $\mathsf{B}_{N/r_N}(U^{(\eta,r)}_{1})$ and hence prior to $\sigma_2^{(\eta,r)}$ , with probability $1-o_N(1)$. Given $U$ is hit, the number of visits to $U$ before $\sigma_2^{(\eta,r)}$ is geometrically distributed with mean $\mathbb{G}_{\sigma_1^{(\eta,r)}}^N(U,U) \ge c\log N$ due to \eqref{eq:lbnd-Green}, yielding
\[
    \P(\mathbb{L}_U^1 \le \eta \log N ) \le C \eta + o_N(1).
\]
Furthermore, on $\{ \mathbb{L}_U^1 > 2 \eta \log N\}$, $\mathcal{L}_U^1 < \eta \mathbb{V} \tau_U \log N$ implies $\sum_{i=1}^{ 2 \eta \log N }\mathbb{e}_i < \eta \log N$. By large deviation principal, this latter event has probability $o_N(1)$. Combining these estimates gives, for any $r \ge 1$,
\begin{equation}
\label{eq:FNM-small}
    \limsup_{N\to\infty} \P( \mathcal{L}_U^1 < \eta \mathbb{V} \tau_U \log N ) \le C \eta.
\end{equation}
  
It remains to control $\mathcal{S}_{\eta,r} (1)$.  Fix $m \ge 1$ for the moment. On the event
$\{\zeta_{N,m}\ge2\}$, it holds
$
	\mathcal{S}_{\eta,r}(1)-\mathcal{S}_{\eta,r}^\star(1)
	\le
	\sup_{0\le i\le \zeta_{N,m}-1}
	\{\mathcal{S}_{\eta,r}(i)-\mathcal{S}_{\eta,r}^\star(i)\}$.
Hence Proposition~\ref{prop:approximation:clock:process}, together with
Proposition~\ref{lem:cond4}  yields
\begin{equation}
\label{eq:S-minus-Sstar-first}
	\lim_{\eta\downarrow0}\limsup_{r\to\infty}\limsup_{N\to\infty}
	\P\bigl( \mathcal{S}_{\eta,r} (1)- \mathcal{S}_{\eta,r} ^\star(1)>a t_N/2\bigr)=0.
\end{equation}  

For fixed $\eta$ and $r$, Theorem~\ref{thm:distance_origin}
implies that
$\P(\Delta\mathcal{S}_{\eta,r}^{\star}(0)=0)\to1$. 
Lemma~\ref{lem:convergence:time:intervals} then yields 
\[
\lim_{N \to \infty}	t_N^{-1} \mathcal{S}_{\eta,r} ^\star(1)
=  \mathcal{E}_{\eta,r}
	 \coloneq 
	\mathbb{e}\,\frac{2}{\pi}w^*_\eta\mathcal{W}_r \text{ in distribution.}
\]
Letting first $r\to\infty$ and then $\eta\downarrow0$, the right-hand side
converges to zero in probability: indeed $\mathcal{W}_r\uparrow\mathcal{W}_\infty
<\infty$ a.s., while
\[
 \P(w^*_\eta>a)
 = \frac{\int_a^{\eta^{-1}} t^{-(\alpha/\beta+1)}\,\dif t}
	     {\int_\eta^{\eta^{-1}} t^{-(\alpha/\beta+1)}\,\dif t}
	\xrightarrow{\eta\downarrow0}0
	\qquad \text{for every } a>0 .
\]
Consequently, we obtain 
\begin{equation}
\label{eq:Sstar-first-small}
	\lim_{\eta\downarrow0}\limsup_{r\to\infty}\limsup_{N\to\infty}
	\P\bigl(\mathcal{S}_{\eta,r}^\star(1)>a t_N/2\bigr)=0.
\end{equation}
Combining \eqref{eq:T-by-S-plus-F}, \eqref{eq:FNM-small},
\eqref{eq:S-minus-Sstar-first} and \eqref{eq:Sstar-first-small} proves
\eqref{eq:T-eta-uM-small}.

\smallskip
\noindent
\underline{\emph{Step 4.}}
To bound the last probability in \eqref{eq:error-terms}, we return to the clock process associated with the $\delta$-normal traps. Since
 $ |\Xi\cap[0,2bt_N]|
	\le
	\int_0^{2bt_N}\ind{X_s\notin\mathsf{NT}(\delta,r)}\dif s$,
it holds for any $m \ge 1$ that   
\begin{align*} 
\P\bigl(|\Xi\cap[0,2bt_N]|>\varepsilon t_N\bigr)	\le \P\bigl(\mathcal S(\zeta_{N,m}-1)<2bt_N\bigr) +
	\P\bigl(\mathcal S(\zeta_{N,m}-1)-\mathcal S^\star(\zeta_{N,m}-1)
		>\varepsilon t_N\bigr).
\end{align*}
Applying Proposition~\ref{lem:cond4} with $\theta_0 = 2b+1$ and choosing $m$ large enough, the first term is negligible,  while the second term vanishes by Proposition~\ref{prop:approximation:clock:process}. This implies
\[
	\lim_{\delta\downarrow0}\limsup_{r\to\infty}\limsup_{N\to\infty}
	\P\bigl(|\Xi\cap[0,2bt_N]|>\varepsilon t_N\bigr)=0.
\]
Combining the estimates from Steps 2–4 with \eqref{eq:error-terms}, and sending the auxiliary parameters to their limits in the order $N \to \infty$, $r \to \infty$, $\delta \downarrow 0$, $\varepsilon \downarrow 0$, and finally $\eta \downarrow 0$,   establishes \eqref{eq:lem:only:moderate:traps:visited}.
\end{proof}


\begin{proof}[Proof of Proposition~\ref{p:2.3}]
This is exactly the statement in part (iii) of Lemma~\ref{lem:approximation:sigma}.
\end{proof}

\section{The trapping landscape}
\label{sec:3} 
In this section we provide proofs for all results in Subsection~\ref{s:1.2}. As will be shown in the next subsection, $\delta$-trap vertices correspond to vertices where the DGFF is at height $\Theta(\log \log N)$ below the typical value of its maximum $m_N$. Therefore, results concerning their statistics translate to results concerning the (near) extreme values of the free field. We thus begin this section with an account of such results. These are mostly ``local'' estimates for the marginal density of the field at one or two points subject to additional constraints. Such estimates are technical and rather standard by now and thus their proof is deferred to a later section. Instead, we first provide, in the following subsection, the more interesting arguments which use these results to prove the desired trapping landscape statements. 

\subsection {Local estimates for DGFF near-extrema}
\label{sub:4.1}
Hereafter, let $f^{*}_{\mathsf{V}} \coloneq \max_{x \in \mathsf{V}} f(x)$   for any function $f$ and subset $V$ in its domain.   Expressions such as $\P(\xi\in a+\dif t)$ are understood in the measure-theoretic sense as $\P(\xi-a\in \dif t)$ with $t$ denoting the variable of the corresponding distribution measure. 
We write   $\P( \xi \in \dif t ) \le g(t) \dif t  $ as shorthand to mean that the density $f_{\xi}$ of a random variable $\xi$  is bounded by $g$: $f_{\xi}(t) \le g(t)$.  
Finally, for any $t \ge 0$ and $K>0$, we set 
\begin{equation*}
    \rho(t) \coloneq 2^{t^{50}} \quad \text{and} \quad \rho(t;K) \coloneq \min\bigl\{ \rho(t) , \sqrt{K} \bigr\}\,. 
\end{equation*} 
The first proposition establishes an estimate for the probability that a vertex in the bulk achieves a near-maximal value while satisfying specific local and global maximum constraints.

\begin{prop}
 \label{prop:one-point-estimate} 
 There exist constants $C_{\ref{prop:one-point-estimate}}, c_{\ref{prop:one-point-estimate}} > 0$ such that for all  large $N \in \mathbb{N}$, the following holds:
  Suppose $z \in \mathsf{V}_{N}$ satisfies $\wtN  =\wtN _{z}\coloneq \mathrm{dist}(z, \partial \mathsf{V}_{N}) \ge \sqrt{N}$. 
Then, for any $t \le s \le u$ with $t, s, u \in [- 10(\log N)^{1/2 }, 10(\log N)^{1/2 }]$, we have 
 \begin{align}
\P  \bigl( h_z \in m_{\wtN } +  \mathrm{d} t ,   h^*_{z+ \mathsf{B}_{\rho(u-t;  \wtN  )   } } & \leq m_{\wtN } + s
 ,  \,    h^*_{z+ \mathsf{B}_{\wtN /2} } \leq m_{\wtN } + u \bigr) \notag\\
	&  \qquad \leq   C_{\ref{prop:one-point-estimate}}  { (  1+u_+)   \exp( -c_{\ref{prop:one-point-estimate}  } \,  u_{-}^{2}  )    (1+s-t  ) } \, {\wtN ^{-2}}  e^{-\alpha t} 
 \dif t\,. \label{e:32}    
\end{align}
\end{prop}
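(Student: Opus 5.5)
The plan is to decompose the field around $z$ via the Gibbs--Markov property and reduce to a random-walk (ballot-type) estimate along the concentric scales $\mathsf{B}_{2^{-k}\wtN}(z)$, $k=1,\dots,\log_2 \wtN$. Write $h = h^{\mathsf{B}_{\wtN/2}(z)} + \varphi$, with $\varphi$ the harmonic extension of $h$ from outside $z+\mathsf{B}_{\wtN/2}$. Since $\mathsf{B}_{\wtN/2}(z)$ sits at distance $\ge \wtN/2$ from $\partial\mathsf{V}_N$, the variance of $\varphi(z)$ is $g\log(N/\wtN)+O(1)$. Conditioning on $\varphi$ and taking it roughly constant on the inner box (fluctuations of $\varphi$ on $\mathsf{B}_{\wtN/4}(z)$ are $O(1)$ with Gaussian tails), the problem reduces to a DGFF on a box of side $\wtN$, centered at its middle point, with boundary value shifted by $\varphi(z)$. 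The factor $\wtN^{-2}e^{-\alpha t}$ is exactly what one gets from the Gaussian density of $h_z$ at $m_{\wtN}+t$ on the scale $\wtN$: $\frac{1}{\sqrt{\log \wtN}}\exp(-\frac{(m_{\wtN}+t)^2}{2g\log \wtN})\asymp \wtN^{-2}(\log \wtN)^{1}e^{-\alpha t}$. The ballot constraint must then remove the extra $\log \wtN$ factor and produce $(1+u_+)(1+s-t)$. The contribution of $\varphi$ is handled by integrating over its Gaussian law; this is legitimate since $|t|,|u|\le 10\sqrt{\log N}$ keeps all exponents in the regime where the cross-terms are $O(1)$.

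Next, condition on $h_z=m_{\wtN}+t$ and consider the concentric-circle averages $S_k$ of the field at scales $2^{-k}\wtN$. Conditionally, $S_k$ minus the linear interpolation from $0$ to $m_{\wtN}+t$ is a Gaussian bridge with nearly independent increments (the standard concentric decomposition, as in Biskup--Louidor). The event $h^*_{z+\mathsf{B}_{\wtN/2}}\le m_{\wtN}+u$ forces, via first-moment bounds on the binding field at each annulus, the bridge to stay below a barrier of the form $u + $ (logarithmic curvature terms) at every scale, up to the usual polynomial-in-$k$ fluctuations. The ballot theorem for Gaussian bridges of length $n=\log_2\wtN$ with endpoints $\approx u$ and $u-t$ (relative to the barrier) then gives probability $\lesssim (1+u_+)(1+(u-t)_+)/n$. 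The $1/n$ cancels the $\log\wtN$ surplus noted above. When $u<0$ the global-max event is itself a lower-deviation event for $h^*$ on scale $\wtN$, and the left-tail bound for the maximum yields the factor $e^{-c u_-^2}$.

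Finally, I need to replace $(1+u-t)$ by $(1+s-t)$ using the local constraint $h^*_{z+\mathsf{B}_{\rho(u-t;\wtN)}}\le m_{\wtN}+s$. The idea is that on scales $k\ge n-\log_2\rho(u-t;\wtN)$, i.e.\ the last $\approx (u-t)^{50}$ scales near $z$, the barrier tightens from $u$ to $s$. Accordingly, I apply the ballot bound with endpoint distance $1+s-t$ at the final stretch. The bridge must come down from height $\lesssim u$ to $s$ within a window of $(u-t)^{50}$ steps, and this costs only a bounded factor because the window is much longer than $(u-t)^2$. The cap by $\sqrt{\wtN}$ covers the case where this window exceeds the available scales, which is then handled directly.

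The main obstacle is this two-barrier ballot estimate: controlling the entropic repulsion and the curvature corrections uniformly in $t,s,u$ up to $10\sqrt{\log N}$, where the endpoints scale like $\sqrt n$. I would first try a direct Gaussian random-walk ballot bound with piecewise-constant barrier, absorbing the binding-field errors as $O(\log k)$ barrier corrections.
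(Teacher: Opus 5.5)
Your outline follows the paper's route. The ingredients are the same: a concentric decomposition around $z$; the Gaussian density of $h_z$ supplying $\wtN^{-2}\mathfrak n\,\rme^{-\alpha t}$; a bridge of $\mathfrak n\approx\log_2\wtN$ steps pinned at $0$ and at $m_{\wtN}+t$; and a two-barrier ballot bound. In that bound the barrier drops from $u$ to $s$ on the last $\tilde\rho=\min\{(u-t)^{50},\mathfrak n/2\}$ scales, and the drop costs $O(1)$ because $u-t\lesssim\tilde\rho^{1/2}$. The paper takes $\mathsf{D}_0=\mathsf{V}_N$, so your outer binding field is just the first increment $\varphi_1(o)$ of the walk.

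One claim is false. $\Var\varphi(z)$ is $O(1)$, not $g\log(N/\wtN)+O(1)$, since $\Var h_z=g\log\wtN+O(1)$ and $\mathsf{B}_{\wtN/2}(z)$ has size comparable to $\wtN$ (cf.\ Lemma~\ref{lem:39}(i) for $k=1$). With your variance, which can reach $\tfrac g2\log N$, integrating $\varphi$ out would not be harmless: $\E\,\rme^{\alpha\varphi(z)}$ alone would be $(N/\wtN)^2$. The restriction $|t|,|u|\le 10\sqrt{\log N}$ would not rescue this; the step works only because the variance is bounded.

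Two further steps do not work as stated.

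First, to turn the field constraint into a barrier for $S_k$ in an \emph{upper} bound, you need $\max_{\mathsf{A}_k}h-m_{2^{\mathfrak n-k}}\ge S_k-\mathrm{err}_k$. This requires a lower-tail bound on the maxima of the independent annulus fields $h_k$ (Lemma~\ref{lem:39}(iv)), not only first-moment control of the binding field. Moreover, the errors are random, so they cannot be absorbed as deterministic $O(\log k)$ corrections. They exceed $\vartheta_m(\wedge^{\mathfrak n+1}(k))$ with probability of order $\rme^{-c\log^2 m}$, which is not $o(\mathfrak n^{-1})$ for bounded $m$. So the exceptional event must be paired with a ballot factor for the weakened barrier and summed over $m$. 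The paper does this with the control variable $\mathscr M$: $\{\mathscr M=m\}\subset F_m$, which is measurable with respect to the first and last $m-1$ levels. The Markov property at $m-1$ and $\mathfrak n+2-m$ then gives a bound $\lesssim \rme^{-c\log^2 m}(m\log m)^2\mathfrak n^{-1}(1+u_+)(1+s-t)$, which is summable in $m$.

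Second, $\rme^{-cu_-^2}$ cannot come from an unconditional left-tail bound on $h^*$ multiplied by the ballot bound. Conditionally on $h_z$, both events are decreasing, hence positively correlated by FKG, so combining them directly gives only the minimum of the two bounds. The factor must come from the first steps of the walk being forced below $u$, decoupled from the bulk. In the paper it falls out of the same bookkeeping: on $\{\mathscr M=m\}$ one has $|S_2|\le 2\log m$, so the event is empty unless $\log m\ge c\,u_-$. Summing $\rme^{-c\log^2 m}$ over such $m$ yields $\rme^{-c'u_-^2}$.
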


Proposition \ref{prop:one-point-estimate} implies the following corollary, which provides upper bounds for the  probability of near-extremal values at any vertex,  
and the right tail distribution of the global maximum.

\begin{corollary}\label{cor-locmax}  
There exists   constants $C_{\ref{cor-locmax}}, c_{\ref{cor-locmax}}>0$ such that, for all large
$N\in\mathbb N$, any $z\in\mathsf{V}_N$ with
$\wtN=\mathrm{dist}(z,\partial\mathsf{V}_N)$, and any $t\le s\le u$
satisfying $|t|\le(\log N)^{1/2}$ and
$-(\log\log N)^{2}\le u\le(\log N)^{1/2}$,
\begin{align} \label{e:17}
  \P  \bigl( h_z \in m_N + \dif t
, h^*_{z+ \mathsf{B}_{\rho(u-t;  \wtN   )  } } & \leq m_{N} + s,  h^*_{\mathsf{V}_N} \leq m_N + u \bigr) \notag \\
&\qquad 
\le C_{\ref{cor-locmax}} (1+u_+)
\exp(- c_{\ref{cor-locmax}} u_-^2 )
(1+s-t)N^{-2}
\rme^{-\alpha t } \dif t .
\end{align}
Moreover for any $u \ge 0$, we have 
\begin{equation} \label{e:19}
 \P  \bigl( h^*_{\mathsf{V}_N} \geq m_N + u \bigr) \leq C_{\ref{cor-locmax}}   (u+1) e^{-\alpha u} \,. 
\end{equation} 
\end{corollary}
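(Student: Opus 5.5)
The plan is to deduce \eqref{e:17} from Proposition~\ref{prop:one-point-estimate} by recentering from $m_{\wtN}$ to $m_N$. We then obtain \eqref{e:19} by summing \eqref{e:17} over the location of the global maximiser. Throughout, recall $\sqrt g=\sqrt{2/\pi}$, so $\alpha\sqrt g=2$.

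\emph{Step 1: bulk points with $\wtN$ comparable to $N$.} Set $D\coloneq m_N-m_{\wtN}=2\sqrt g\log(N/\wtN)-\tfrac34\sqrt g\log(\log N/\log\wtN)$. Suppose $\wtN\ge \sqrt N$ and $D\le (\log N)^{1/2}$; the second condition holds roughly when $\wtN\ge N\rme^{-c\sqrt{\log N}}$. Apply \eqref{e:32} with $(t',s',u')\coloneq(t+D,s+D,u+D)$, using the inclusion $\{h^*_{\mathsf V_N}\le m_N+u\}\subset\{h^*_{z+\mathsf B_{\wtN/2}}\le m_{\wtN}+u'\}$.

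Since $u'-t'=u-t$, the radius $\rho(u-t;\wtN)$ in \eqref{e:32} is the one required in \eqref{e:17}, and $s'-t'=s-t$. The parameters stay in the admissible window because $|t|,|u|\le(\log N)^{1/2}$ and $D\le (\log N)^{1/2}$.

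For the main factor we compute
\[
\wtN^{-2}\rme^{-\alpha t'}=\wtN^{-2}\rme^{-\alpha t}\,(\wtN/N)^{4}\,(\log N/\log\wtN)^{3/2}\le C N^{-2}(\wtN/N)^2\rme^{-\alpha t}.
\]
The prefactor $(1+u'_+)\le (1+u_+)(1+D)$ grows only logarithmically in $N/\wtN$, so it is absorbed by $(\wtN/N)^2$. The Gaussian factor $\exp(-c u_-'^2)$ is not worse than $\exp(-cu_-^2)$ when $\wtN\asymp N$. For general $\wtN$ in this regime, when $u<0$ I would instead recover $\exp(-cu_-^2)$ as follows. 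Use the Gibbs--Markov property to condition on the field on the boundary of a box around $z$. This decouples the event at $z$ from the event that the maximum of $h$ over a macroscopic bulk box at distance $\Theta(N)$ from $z$ is at most $m_N+u$. The latter has probability at most $\rme^{-cu_-^2}$ by the standard lower tail of the DGFF maximum, which is itself the case $t=s=u$ of the bulk estimate.

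\emph{Step 2: points close to the boundary.} This covers $\wtN<\sqrt N$ or $D>(\log N)^{1/2}$. Here I drop all constraints except $h_z\in m_N+\dif t$ and use that $h_z$ is centered Gaussian with $\Var h_z\le g\log\wtN+C\le g(\log N-c\sqrt{\log N})$. Its density at $m_N+t$ is then at most
\[
\exp\Bigl(-\frac{(m_N+t)^2}{2g\log N-2gc\sqrt{\log N}}\Bigr)\le N^{-2}\rme^{-\alpha t}\rme^{-c'\sqrt{\log N}}.
\]
Expanding $m_N^2/(2g\log\wtN)$ gives $2\log N+2c\sqrt{\log N}$ up to lower-order terms, and the cross term produces $\rme^{-\alpha t(1+o(1))}$ with an $o(1)$ error that is harmless for $|t|\le\sqrt{\log N}$. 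The gain $\rme^{-c'\sqrt{\log N}}$ dominates $\rme^{c u_-^2}\le \rme^{c(\log\log N)^4}$, which is exactly why the lower restriction $u\ge-(\log\log N)^2$ is imposed. This yields \eqref{e:17} in all cases.

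\emph{Step 3: tail of the maximum.} For $0\le u\le(\log N)^{1/2}$, the global maximiser $z$ satisfies $h_z\in m_N+[u,\infty)$ and is simultaneously the local and the global maximum. Taking $s=u=t$ in \eqref{e:17} therefore gives
\[
\P(h^*_{\mathsf V_N}\ge m_N+u)\le\sum_{z}\int_u^{\sqrt{\log N}}C(1+t)N^{-2}\rme^{-\alpha t}\dif t+\P\bigl(h^*_{\mathsf V_N}\ge m_N+\sqrt{\log N}\bigr).
\]
The sum is at most $C(1+u)\rme^{-\alpha u}$, since the $|\mathsf V_N|\le 4N^2$ terms cancel the $N^{-2}$. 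The last term is handled by a crude union bound with the Gaussian tail, which gives $N^2\exp(-(m_N+\sqrt{\log N})^2/(2g\log N+C))\ll \rme^{-\alpha\sqrt{\log N}}$. For $u>\sqrt{\log N}$ the same union bound applies directly.

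I expect the main obstacle to be recovering the factor $\exp(-cu_-^2)$ in Step 1 for points at intermediate distance from the boundary, where the recentering shift $D$ spoils the $u_-$ dependence. The decoupling via Gibbs--Markov must be done with enough care that the conditioning on the boundary values does not destroy the $\wtN^{-2}\rme^{-\alpha t}$ density bound.
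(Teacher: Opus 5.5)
Your treatment of $u\ge0$ in the bulk (recentering \eqref{e:32} by $D$ and absorbing $1+D$ into $(\wtN/N)^2$) and of near-boundary points via the bare Gaussian density is correct and matches the paper. Two steps, however, do not go through.

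\textbf{Step~3 rests on a miscomputed union bound.} Since $m_N$ sits $\tfrac34\sqrt g\log\log N$ below $2\sqrt g\log N$, one has $N^2\exp\bigl(-(m_N+v)^2/(2g\log N)\bigr)\asymp(\log N)^{3/2}\rme^{-\alpha v-v^2/(2g\log N)}$. Even after the $(\log N)^{-1/2}$ Gaussian-tail prefactor, the first-moment bound on $\P(h^*_{\mathsf V_N}\ge m_N+v)$ is therefore of order $\log N\,\rme^{-\alpha v-v^2/(2g\log N)}$, not $\ll\rme^{-\alpha\sqrt{\log N}}$. At $v=\sqrt{\log N}$ it exceeds $(1+v)\rme^{-\alpha v}$ by a factor of order $\sqrt{\log N}$. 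It only becomes $O\bigl((1+v)\rme^{-\alpha v}\bigr)$ once $v\gtrsim\sqrt{\log N\log\log N}$. Switching to the union bound at $\sqrt{\log N}$ thus leaves \eqref{e:19} unproved in a window around and above $\sqrt{\log N}$. The range $|u|\le10\sqrt{\log N}$ allowed in \eqref{e:32} does not close this window either. The paper instead extends the diagonal one-point bound $t=s=u$ to all $u\le\log N$, where only the standard ballot estimate is needed. It uses the union bound only for $u\ge\log N$, where $\rme^{-u^2/(2g\log N)}$ absorbs every power of $\log N$.

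\textbf{The case $u<0$ with $D\to\infty$ is not proved.} For $|u|\lesssim D\lesssim u^2$, neither of your bounds yields $\rme^{-cu_-^2}$:
the recentered bound $(1+D)\rme^{-c(u+D)_-^2}(\wtN/N)^2$ is only exponentially small in $|u|$;
the bare density gives $N^{-2}\rme^{-\alpha t}\cdot\log N\,\rme^{-\alpha D/2}$.

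Your Gibbs--Markov sentence does not supply the missing factor. Given $h|_{\partial B}$ the two events are conditionally independent, but both conditional probabilities depend on the boundary data. The near one depends on it roughly through $\rme^{\alpha\Phi_z}$, with $\Phi$ the harmonic extension. So no product bound follows, and positive association goes the wrong way for an upper bound.

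To complete your route you would need three further steps. First, bound the near conditional probability via Proposition~\ref{prop:one-point-estimate} on a box of radius $\asymp\wtN$ by $\rme^{\alpha(\Phi_z+\mathrm{osc})}$ times polynomial factors, truncating as in Lemma~\ref{l:10} for the range restriction. Second, apply Cauchy--Schwarz against $\mathbf 1_{\{h^*_Q\le m_N+u\}}$, using that $\Phi_z$ and its oscillation have $O(1)$ Gaussian scale. Third, import the lower tail of the maximum over a macroscopic box from \cite{ding2013exponential}. That tail is not ``the case $t=s=u$'' of \eqref{e:17}: that case carries the global constraint, does not control a sub-box maximum, and would be circular for intermediate $z$.

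The paper avoids all of this by staying inside the concentric decomposition. Keeping $h^*_{\mathsf V_N}\le m_N+u$ constrains the outer annulus $\mathsf A_1$, where $h_1$ is centered at $m_N$. By Lemma~\ref{lem:40}, the event is then empty on $\{\mathscr M=m\}$ unless $\log m\ge cu_-$, and the tail of $\mathscr M$ gives $\rme^{-cu_-^2}$. The recentering $\Delta_{N,z}$ only shifts the ballot barrier, at a cost of $1+\Delta_{N,z}$.
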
 
  
The next proposition provides the corresponding lower bound to Proposition \ref{prop:one-point-estimate}.

\begin{prop} 
	\label{prop:one-point-lower-bnd}
 Fix parameters $\eta \in (1/2,1)$, $r \ge 1$, $M \ge 0$, and $\omega \in \mathbb{R}^{\mathbb{Z}^2}$ supported in $\mathsf{B}_{r}$ with $\omega_{\mathbf{0}} \ge 0$.
There exists $C_{\ref{prop:one-point-lower-bnd}} > 0$ depending on these parameters such that for all large $N$, any $z \in \mathsf{V}_{\eta N}$, and any $u,s,t \in [-10(\log N)^{1/2}, 10(\log N)^{1/2}]$ satisfying $u \ge \max\{s,t,0\}$ and $s \ge t-M$,
\begin{align}
\P  \bigl( h_z \in m_N +   \dif   t,  
 h_{z+y} \le h_z + \omega_y , \forall\, y \in \mathsf{B}_r ,\ & \
  h^*_{\mathsf{B}_{\sqrt{N}}(z) \setminus \{z\}} \leq m_N + s,
  h^*_{\mathsf{V}_N} \leq m_N + u \bigr) \notag \\
&\qquad \geq C_{\ref{prop:one-point-lower-bnd}}   (  1+u)     [1+(s-t)_+  ] \, N^{-2} \exp(- \alpha  t )
 \dif t\,.  \label{e:33}
\end{align} 
\end{prop}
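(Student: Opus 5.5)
We would prove Proposition~\ref{prop:one-point-lower-bnd} through the \emph{concentric decomposition} of the DGFF around $z$, reducing the event in~\eqref{e:33} to a ballot-type event for a random walk. This is the lower-bound counterpart of the argument behind Proposition~\ref{prop:one-point-estimate}.

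\textbf{Decomposition.} Let $n \approx \log_2 N$, and let $\Delta_k \coloneq \mathsf{B}_{2^k}(z)$ for $k \le n$, with $\Delta_n$ chosen so that $\Delta_n \supseteq \mathsf{V}_N$ (possible since $z \in \mathsf{V}_{\eta N}$). By the Gibbs--Markov property,
\[
h_{z+x} - h_z \tfrac{\mathbb{G}(z,z+x)}{\mathbb{G}(z,z)}
\]
decomposes into a sum over scales $k$ of independent pieces. Each piece consists of a Gaussian increment $\varphi_k$, which is harmonic-like on $\Delta_{k-1}$ and nearly constant there, plus a local fluctuation field $\chi_k$ supported on $\Delta_k \setminus \Delta_{k-1}$. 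Conditionally on $h_z = m_N + t$, the values
\[
S_k \approx h_{z+x} - (m_N+t)\tfrac{k}{n}, \qquad |x| \approx 2^{n-k},
\]
form (up to $O(1)$ and a bridge correction) a Gaussian random walk bridge of length $n$, with variance $\asymp 1$ per step, running from $0$ to $-t$ at the center. The density of $h_z$ at $m_N+t$ is
\[
\asymp \sqrt{\log N}\; N^{-2}\, \rme^{-\alpha t} \,(\log N)^{3/2}\text{-type corrections}.
\]
Once the ballot probability of order $(1+u)(1+(s-t)_+)/n$ is multiplied in, the net result is exactly $N^{-2}\rme^{-\alpha t}$. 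This is the same bookkeeping that produces the matching upper bound.

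\textbf{Reduction to a ballot event.} We choose a concave barrier
\[
\gamma_k \coloneq \bigl(k \wedge (n-k)\bigr)^{1/5}.
\]
We restrict to the following intersection of good events.
\begin{enumerate}[(a)]
\item The walk satisfies $S_k \le u - \gamma_k$ for scales $k$ corresponding to annuli outside $\mathsf{B}_{\sqrt N}(z)$, and $S_k \le s - \gamma_k$ on the inner scales down to radius $r'$, for a large fixed $r' \gg r$.
\item For every $k$, the oscillation of $\chi_k$ together with the non-constant part of the $\varphi_j$ on $\Delta_k\setminus\Delta_{k-1}$ is at most $\tfrac12\gamma_k$.
\item Inside $\mathsf{B}_{r'}(z)$, the field relative to $h_z$ lies in a fixed open set of configurations on which $h_{z+y}\le h_z+\omega_y$ for $y\in\mathsf{B}_r$ and $h_{z+y}\le h_z - 1$ otherwise.
\end{enumerate}
On this intersection, the maxima over $\mathsf{V}_N$, over $\mathsf{B}_{\sqrt N}(z)\setminus\{z\}$ and over $\mathsf{B}_r$ are all controlled as required in~\eqref{e:33}. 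The hypothesis $s \ge t-M$ ensures that the bridge is not forced to violate the inner barrier by more than $M$ at the final steps. Since $M$ is fixed, this costs only a constant, which is absorbed into $C_{\ref{prop:one-point-lower-bnd}}$.

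\textbf{Lower bounds and combination.} The ballot lower bound for a Gaussian bridge with inhomogeneous barrier is standard: the probability of staying below $u-\gamma_k$ on the outer half and below $s-\gamma_k$ on the inner half, with endpoints as above, is at least
\[
c\,(1+u)\,\bigl(1+(s-t)_+\bigr)/n
\]
uniformly in $|t|,|s|,|u| \le 10\sqrt{\log N}$. Event (c) has probability bounded below by a positive constant given the inner-scale walk values: the conditional law of the inner $\mathsf{B}_{r'}$ field is a fixed non-degenerate Gaussian, and $\omega_{\mathbf 0}\ge0$ makes the constraint at $y=\mathbf0$ trivial. Positive correlation (FKG for the DGFF, since all constraints are decreasing events given $h_z$) lets us multiply these estimates.

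\textbf{Main obstacle.} The hardest step is (b): showing that, conditionally on the walk, the fluctuation fields stay below the barrier with probability bounded away from $0$, uniformly. We would handle it by a union bound over scales, using Gaussian tails for annulus maxima in the spirit of~\eqref{e:19}. This gives
\[
\P\bigl(\text{oscillation at scale }k > \tfrac12\gamma_k\bigr) \le C\rme^{-c\gamma_k^{2}},
\]
which is summable in $k$ once the first $K$ scales at both ends are excluded. At those $2K$ boundary scales we simply pay a constant, by requiring the walk to sit a distance $K$ below the barrier, which costs at most a constant factor depending on $K$.
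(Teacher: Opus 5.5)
Your overall architecture (concentric decomposition, a two-level ballot lower bound, constant cost at the inner and outer scales) matches the paper's. The genuine gap is in step (b) and in how you combine it with (a).

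On the annulus $\mathsf{A}_k$ the field is $S_k+\sum_{j\le k}b_j(x)\varphi_j(o)+\sum_{j\le k}\chi_j(x)+h_k(x)$. The term $\sum_{j\le k}b_j(x)\varphi_j(o)$, which is part of what you call the non-constant part of the $\varphi_j$, is a deterministic function of the walk increments. So the claim that, conditionally on the walk, (b) holds with probability bounded away from $0$ uniformly is false: it fails for any path with a large increment near scale $k$. An unconditional union bound cannot repair this, because you need (b) \emph{on} the ballot event (a), whose probability is only of order $(1+u)(1+(s-t)_+)/\mathfrak{n}$.

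What is missing is an estimate of the type
\[
\P\bigl(\text{(a)}\cap\{\exists k:\,|\varphi_k(o)|>\tfrac18\gamma_k\}\,\big|\,h_z\bigr)\le\tfrac12\,\P\bigl(\text{(a)}\,\big|\,h_z\bigr).
\]
One proves it by splitting the bridge at the offending scale and applying the ballot \emph{upper} bound to both pieces. This gives a bound of order $\sum_k\rme^{-c\gamma_k^2}\mathrm{poly}(\gamma_k)\,(1+u)(1+(s-t)_+)/\mathfrak{n}$, which is summable and small.

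This is precisely the role of the control variable $\mathscr{M}$ in the paper, which contains $R^1_k=|\varphi_k(o)|$. The middle-scale event is bounded below by the pure walk event $B'_m$ minus $\P(B'_m\cap\{\mathscr{M}>m\}\mid S_{\mathfrak{n}+1}=0)$. The subtracted term is shown in \eqref{eq-M-con-1}, via the argument of Lemma~\ref{lem-put-M-in}, to be $\rme^{-c(\log m)^2}$ times the target order. Since the constant $c_*$ in \eqref{eq-M-con-2} does not depend on $m$, taking $m$ large closes the argument.

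For the walk-independent pieces $\chi_j,h_k$, which are independent of $(\varphi_j(o))_j$ and hence of the conditioned bridge, your union bound plus independence does suffice. Note, though, that the upper tail of the centered annulus maximum is exponential, $\rme^{-c\gamma_k}$, not Gaussian; it is still summable.

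Second, FKG is misapplied. Event (b) is a two-sided oscillation bound, not a decreasing event, so FKG cannot be used to multiply $\P(\text{(a)}\cap\text{(b)})$ with $\P(\text{(c)})$. The paper applies FKG, for the field conditioned on $h_o=0$, only to the three events $B^1_m,B^2_m,B^3_m$. Each of these has the form $\{h_x\le\cdot\}$, on the inner box, the middle annuli and the outer region respectively. The paper passes to the walk only inside $B^2_m$, and the inner and outer events then cost constants depending on $m$.

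Smaller points:
\begin{itemize}
\item When $s<t$, the constraint on $\mathsf{B}_{\sqrt N}(z)\setminus\{z\}$ requires $h_{z+y}\le h_z-M$, not $h_z-1$, for every $y\ne\mathbf 0$ in the inner box. This includes $y\in\mathsf{B}_r$, so use $\min\{\omega_y,-M\}$ there.
\item The density of $h_z$ at $m_N+t$ is $\asymp N^{-2}(\log N)\rme^{-\alpha t}$.
\item The decomposition must start from $\mathsf{V}_N$ itself, not from a box containing it.
\end{itemize}
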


The next proposition establishes the spatial isolation of near-maxima, showing that it is unlikely to find another high value in the annular regions surrounding a local extremum.

\begin{prop}
\label{prop:annular-high-point}
There exists a constant $C_{\ref{prop:annular-high-point}} > 0$ such that for all  large $N \in \mathbb{N}$, the following   hold. Let $z \in \mathsf{V}_N$ and set $\wtN  \coloneq \mathrm{dist}(z, \partial \mathsf{V}_{N})$.  
For all real numbers $t, s$ with absolute value at most $(\log N)^{1/2}$ and $\max\{s,t,0\} \le u    \le (\log N)^{1/2}$, we have 
\begin{align}
 \P  \bigl( h_z \in m_N +  \dif t  &,  h^*_{z+(\mathsf{B}_{\wtN /\rho(u-s;\wtN )} \setminus 
\mathsf{B}_{\rho(u-s;\wtN )}) } > m_N + s
 \,,\,\, h^*_{\mathsf{V}_N} \leq m_N + u \bigr) \notag \\
 &\qquad \leq C_{\ref{prop:annular-high-point}}  [1+\min\{(u-s)^{50}, \log N\}]^{-1/4}   \, (1+u ) (1+u-t)  \,  N^{-2}  \rme^{-\alpha t} \dif t\,. 	
 \label{e:26}
\end{align}  
Furthermore, if $s < t$, then for any integer $r \ge (1+t-s)^{4}$ and $2^{r} \le \rho(u-t ;  \wtN )$,  
 \begin{align}
	\P  \bigl( h_z = h^*_{z+\mathsf{B}_{\rho(u-t ; \wtN )}}  \in m_N +  \dif t  \,&,\,\,
h^*_{z+(\mathsf{B}_{\rho(u-s ; \wtN )} \setminus \mathsf{B}_{2^r})} > m_N + s
	\,,\,\, h^*_{\mathsf{V}_N} \leq m_N + u \bigr) \notag \\
 &\qquad \leq C_{\ref{prop:annular-high-point}} \, r^{-1/4}  \, (1+u) \, N^{-2} 
\rme^{-\alpha t } \dif t\,.
\label{e:26a}
 \end{align} 
\end{prop}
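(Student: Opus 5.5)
The plan is to reduce both bounds to the one-point estimate of Proposition~\ref{prop:one-point-estimate}/Corollary~\ref{cor-locmax} combined with a two-point decomposition of the field. The decomposition is a multiscale one (Gibbs--Markov) around $z$. Write $\rho \coloneq \rho(u-s;\wtN)$.

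\emph{Proof of~\eqref{e:26}.}

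1. Decompose the annulus $z+(\mathsf{B}_{\wtN/\rho}\setminus\mathsf{B}_{\rho})$ into dyadic annuli $A_k \coloneq z+(\mathsf{B}_{2^{k+1}}\setminus \mathsf{B}_{2^k})$, with $\log_2\rho\le k\le \log_2(\wtN/\rho)$, and apply a union bound over $k$.

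2. For a fixed $k$, condition on the harmonic average $\varphi_k$ of $h$ on $\partial(z+\mathsf{B}_{2^{k+2}})$. Inside this ball the field is $\varphi_k$ plus an independent DGFF, and outside it the field is almost independent. The event $h_z\in m_N+\dif t$ with $h^*_{\mathsf V_N}\le m_N+u$ forces the ``concentric'' random walk $S_j \coloneq$ (circle average at scale $2^{j}$ around $z$), recentered by the linear interpolation $m_N\,\tfrac{\log(N/2^j)}{\log N}$, to stay below roughly $u$ along all scales. This is a ballot-type constraint, and it is what produces the factor $(1+u)(1+u-t)N^{-2}\rme^{-\alpha t}$ in the one-point estimate.

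3. The extra event that some $x\in A_k$ has $h_x>m_N+s$ requires the local field near $x$ (at scales below $2^k$) to reach height $m_N+s$. Given the ballot constraint, this costs the following. The walk at scale $k$ typically sits $\asymp (k\wedge(\log_2\wtN-k))^{1/2}$ below the barrier, so the probability that a maximum over the $2^{2k}$ points of $A_k$ overshoots to $s$ decays like $\exp(-c\,(\text{entropic repulsion distance})^2/\ldots)$. Summing over $k$, together with the ballot-type factor $k^{-3/2}$ from pinning the walk at the intermediate scale, gives a sum over $k\ge\log_2\rho$ of order $(\log_2\rho)^{-1/2}$.

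4. Since $\log_2\rho = \min\{(u-s)^{50},\tfrac12\log_2\wtN\}$, this yields at least the factor $[1+\min\{(u-s)^{50},\log N\}]^{-1/4}$. There is room to spare, so I will aim only for the exponent $1/4$ in order to absorb polynomial losses.

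\emph{Proof of~\eqref{e:26a}.} The structure is the same, with $\rho(u-s)$ replaced by the outer scale $\rho(u-s;\wtN)$ and the inner scale $2^r$.

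1. The new ingredient is that $z$ is a local maximum over $\mathsf{B}_{\rho(u-t)}$, at height $t>s$. Hence in each annulus $A_k$ with $r\le k$ the walk must have descent at least of order $k^{1/2-\epsilon}$ below $t$ to avoid creating a point above $s$. Summing $k^{-3/2}$-type contributions over $k\ge r$ gives $r^{-1/2}$.

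2. The restriction $r\ge(1+t-s)^4$ ensures that the gap $t-s$ is negligible against this descent, so that no $(1+u-t)$ factor is lost. The constraint $2^r\le\rho(u-t;\wtN)$ ensures that the relevant annuli lie inside the range where the local-max condition can be used.

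The main obstacle is step 3 of~\eqref{e:26}, namely the two-scale estimate. One must bound, uniformly in the conditioned walk's position at scale $k$, the probability that the DGFF in a box of side $2^k$ exceeds a level set below its own typical maximum $m_{2^k}+(\text{walk value})$, and then combine this with the ballot estimate for a random walk bridge pinned at $t$ and kept below $u$. For the first part I would use the right-tail bound~\eqref{e:19} applied to the inner box. For the second I would use standard ballot theorems for Gaussian walks, with the entropic-repulsion (Bessel-3-like) lower envelope $\sim k^{1/2-\epsilon}$ as the key input. Carrying this out with decorrelation errors of the harmonic extension controlled, at $O(1)$ cost per scale, is the technical core.
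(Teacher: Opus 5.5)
Your overall architecture matches the paper's: the backbone walk of the concentric decomposition, a ballot factor producing $(1+u)(1+u-t)N^{-2}\rme^{-\alpha t}$, and entropic repulsion producing the decay. For \eqref{e:26} your union bound over dyadic annuli is affordable, because the inner scale $(u-s)^{50}$ absorbs any polynomial loss in $u-s$.

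For \eqref{e:26a}, however, the union bound over scales fails under the stated hypothesis. Under the local-max constraint, the walk's distance $w$ below the barrier $t$ at scale $k\ge r$ has, relative to the ballot probability, a Bessel-3-type density of order $w^{2}k^{-3/2}$ for $w\ll\sqrt k$. A point above $m_N+s$ in the $k$-th annulus requires $w\lesssim t-s$, and it occurs with probability bounded below once $1\ll w\le t-s-O(1)$. So the $k$-th term is of order $(1+t-s)^{3}k^{-3/2}$, not $k^{-3/2}$. Summing over $k\ge r$ gives order $(1+t-s)^{3}r^{-1/2}$. At the edge $t-s\asymp r^{1/4}$, which $r\ge(1+t-s)^4$ permits, this is $\asymp r^{1/4}$, so there is no decay at all.

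The gap $t-s$ is negligible against the typical descent $\sqrt k$, but not against the cost of the rare event. Summing over scales overcounts by a factor of order $(t-s)^2$, which is the number of scales the walk spends in the band once it enters it. What you need is a repulsion estimate linear in the band width $M$, obtained by decomposing on the first scale at which the walk enters the band, as in the proof of Lemma~\ref{lem-ent-rw}. That gives $(M+r^{a})r^{-1/2}$ times the ballot factor. With $a=1/4$ and $M=t-s$ plus lower-order corrections, $M\lesssim r^{1/4}$ yields exactly $r^{-1/4}$, which is where the exponent $4$ in the hypothesis comes from.

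Two further points in your sketch of \eqref{e:26a} need correcting.

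First, since $s<t$ we have $\rho(u-s;\wtN)\ge\rho(u-t;\wtN)$, so the annulus is not contained in the local-max ball, contrary to your remark about $2^r\le\rho(u-t;\wtN)$. On the scales between $\rho(u-t;\wtN)$ and $\rho(u-s;\wtN)$ the barrier is $u$, and the band width is $u-s=(u-t)+(t-s)$. The paper handles this piece separately with the reverse-direction bound \eqref{ent:left} at scale $\tilde\rho\asymp(u-t)^{50}\ge r$, using $u-s\lesssim\tilde\rho^{1/50}+r^{1/4}\lesssim\tilde\rho^{1/4}$. Here too only a bound linear in the band width works, since $u-s$ can be of order $r^{1/4}$ while $\tilde\rho$ is comparable to $r$.

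Second, the absence of a $(1+u-t)$ factor comes from the local-max condition, not from $r\ge(1+t-s)^4$. The barrier jumps from $t$ to $u$ only at scale $\tilde\rho$, where the walk is already about $\sqrt{\tilde\rho}\gg u-t$ below it. The resulting two-barrier correction is $1+(u-t)\tilde\rho^{-1/2}=O(1)$.
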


 
In the next proposition, we provide an upper bound on the probability of having two extreme local maxima simultaneously. This result is essential for the second moment argument used later to derive lower bounds of regular traps.
  
\begin{prop}
\label{prop:two-point-estimate} Fix any $\eta \in (1/2,1)$, and let $\eta'= {(3+\eta)}/{4}$.  
There exist constants  $C_{\ref{prop:two-point-estimate}} >0$ such that the following holds for all large $N \in \mathbb{N}$. Let $t,s,u \in \bbR$ be such that $u \in [0, (\log N)^{1/2}]$ and $t,s \in [-(\log N)^{1/2}, u]$. Then for all $x \neq y \in \mathsf{V}_{\eta  N}$     
\begin{align}
	  \P \bigl(  h^*_{x+\mathsf{B}_{\rho(u-t; N)}} & \, = h_x  \in  m_N + \dif t   ,\,
	  h^*_{y+\mathsf{B}_{\rho(u-s; N)}} = h_y  \in  m_N + \dif s   ,\,
	 h^*_{\mathsf{V}_{\eta' N}} \leq m_N + u \bigr) \notag\\
	& \quad \leq C_{\ref{prop:two-point-estimate}} \, \frac{ (1+u)^{10} e^{ \alpha u }} { N^{2} (1+|x-y|)^{2}} \,   
	\Bigl( \frac{\log N }{(\log_+|x-y| + 1)(\log_+ \frac{N}{|x-y| \vee 1} + 1)} \Bigr) ^{3/2} \rme^{-\alpha (t+s)} \dif t \dif s\,.\label{e:27} 
\end{align} 
\end{prop}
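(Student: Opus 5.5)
The plan is to decompose the field at the scale $d \coloneq |x-y|$ separating the two points. Then I would apply the one-point bound of Proposition~\ref{prop:one-point-estimate} inside two disjoint sub-boxes around $x$ and $y$, and control the common coarse field by a ballot (entropic-repulsion) estimate for the random walk of circle averages between scales $d$ and $N$.

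First, a monotonicity reduction. The events $\{h^*_{x+\mathsf{B}_{\rho}}=h_x\}$ and $\{h^*_{\mathsf{V}_{\eta' N}} \le m_N+u\}$ only get larger when $\rho$ is decreased or the set is shrunk. Hence for an upper bound I may replace $\rho(u-t;N)$ and $\rho(u-s;N)$ by $\min\{\rho(\cdot\,;N), d/4\}$, and keep only those parts of the constraints that are convenient.

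\emph{Case $d \le C$.} The right side of \eqref{e:27} is then of order $(1+u)^{10}e^{\alpha u}N^{-2}e^{-\alpha(t+s)}$. Conditionally on $h_x$ and the field away from $y$, $h_y$ has a Gaussian density with variance bounded below, so the density of $h_y$ is $O(1)$. Combining this with \eqref{e:17} for the point $x$ gives the bound $C(1+u)(1+u-t)N^{-2}e^{-\alpha t}\,\dif t\,\dif s$. Since $s \le u$, we have $e^{\alpha u}e^{-\alpha s}\ge 1$, and the polynomial factors in $u-t$ are absorbed into $(1+u)^{10}$ because $t\ge -(\log N)^{1/2}$. Strictly, $(1+u-t)$ is not bounded by a power of $1+u$ when $t$ is very negative, so here I would use the sharper factor $(1+s-t)$ from \eqref{e:17}, with $s$ replaced by the trivial local constraint.

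\emph{Case $d$ large.} I use the Gibbs--Markov property with $\mathsf{A}_x \coloneq x+\mathsf{B}_{d/4}$ and $\mathsf{A}_y \coloneq y+\mathsf{B}_{d/4}$, writing
\[
h = h^{\mathsf{A}_x}+h^{\mathsf{A}_y}+\varphi .
\]
Here $h^{\mathsf{A}_x}$ and $h^{\mathsf{A}_y}$ are independent DGFFs on the two boxes, and $\varphi$ is the harmonic extension of $h$ off $\mathsf{A}_x\cup\mathsf{A}_y$, independent of them. On $\mathsf{B}_{d/8}(x)$ the field $\varphi$ equals $\varphi_x+O(\text{Gaussian with }O(1)\text{ variance})$, and similarly near $y$. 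Moreover $\varphi_x-\varphi_y$ has $O(1)$ variance, so after a sub-Gaussian union bound I may treat both as a single value $\Phi$.

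Conditionally on $\varphi$, I apply Proposition~\ref{prop:one-point-estimate} in $\mathsf{A}_x$ with centering $m_{d}$, shift $t' = t-(\Phi-(m_N-m_d))$ and upper cap $u' = u-(\Phi-(m_N-m_d))$; the cap is inherited from $h^*_{\mathsf{V}_{\eta'N}}\le m_N+u$. This gives a density
\[
\lesssim (1+u'_+)e^{-cu'^2_-}(1+u'-t')\,d^{-2}e^{-\alpha t'} ,
\]
and the same for $y$ with $s$ in place of $t$. Multiplying, the $h^{\mathsf{A}}$-parts contribute $d^{-4}e^{-\alpha(t+s)}e^{2\alpha(\Phi-(m_N-m_d))}$ times polynomial factors in $u'$.

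It then remains to integrate against the law of $\Phi$ restricted to the event that the coarse field respects the barrier $m_N+u$ along all scales between $d$ and $N$. Writing $\Phi = m_N - m_d - w$, the Gaussian density of $\Phi$ has variance $g\log(N/d)+O(1)$. A direct computation shows that the tilt $e^{-2\alpha w}$ combined with this Gaussian yields $(d/N)^{2}e^{\alpha w}$ times the ballot probability. The ballot probability is that the concentric-circle-average random walk from scale $N$ to scale $d$, ending at $-w$, stays below $u$ (up to the usual $O(\log)$ fudge). It is bounded by
\[
(1+u)(1+u+w)\bigl/\log(N/d)^{3/2}
\]
times the local scale-$d$ factor. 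The extra $(\log N)^{3/2}/(\log d)^{3/2}$ arises from re-expressing $m_d$ against $m_N$, since $m_N-m_d$ carries $-\tfrac34\sqrt g\log(\log N/\log d)$. Integrating over $w$ against $e^{\alpha w}$ with the barrier forcing $w\ge -u$ and the one-point factors $e^{-cu'^2_-}$ converts the $w$-integral into $e^{\alpha u}(1+u)^{O(1)}$. This produces exactly the prefactor $(1+u)^{10}e^{\alpha u}N^{-2}d^{-2}\bigl(\log N/((\log d+1)(\log(N/d)+1))\bigr)^{3/2}$.

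The main obstacle is this last ballot step. I have to set up the concentric decomposition of $\varphi$ around the midpoint of $x,y$ so that the barrier $h^*_{\mathsf{V}_{\eta'N}}\le m_N+u$ genuinely transfers to an upper barrier on the random walk of circle averages, including the decorrelation between scales $d$ and $N$ and the boundary effects near $\partial \mathsf{V}_{\eta' N}$ (which is why $x,y\in\mathsf{V}_{\eta N}$ with $\eta<\eta'$). This must hold without losing more than polynomial factors in $u$. I would first try the standard random-walk-with-Gaussian-perturbations ballot theorem (as in the Biskup--Louidor concentric decomposition), accepting crude polynomial losses $(1+u)^{10}$ and the lossy factor $e^{\alpha u}$, which is precisely the slack the statement allows.
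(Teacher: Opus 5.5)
Your route is the paper's. For bounded $|x-y|$ you integrate out $h_y$ and use the one-point bound. Otherwise you split by Gibbs--Markov into boxes of size $\asymp|x-y|$ around $x$ and $y$, apply Proposition~\ref{prop:one-point-estimate} in each box conditionally on the coarse field, and control the coarse field by the concentric decomposition around the midpoint plus a ballot bound for $(S_k)_{k\le\ell}$. This is Lemmas~\ref{l:10}, \ref{l:11} and Step~4 of the paper's proof. However, two computations in your large-$d$ case fail as written.

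First, the sign of the tilt. The Gaussian density of $\Phi$ at $m_N-m_d-w$ contains $\rme^{+\alpha w}$, so together with $\rme^{-2\alpha w}$ it gives $(d/N)^2\rme^{-\alpha w}$, not $(d/N)^2\rme^{\alpha w}$. With $\rme^{\alpha w}$ the integral over $\{w\ge -u\}$ is not controlled by $\rme^{\alpha u}$ at all; it is $\int_{-u}^{\infty}\rme^{-\alpha w}(\cdots)\,\dif w\asymp\rme^{\alpha u}(\cdots)$ that produces the prefactor. (Also, the bridge ballot probability is of order $(1+u)(1+u+w)/\log(N/d)$. The remaining $\log(N/d)^{-1/2}$ is the normalization of the density of $\Phi$, which is missing from your preceding sentence, so the net power $3/2$ is right.)

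Second, you keep the one-point factor $(1+u'-t')=(1+u-t)$, and likewise $(1+u-s)$. Since $t,s$ may be as small as $-(\log N)^{1/2}$, these factors are not polynomial in $1+u$ and cannot be absorbed into $(1+u)^{10}$. The remedy is the one you already use for $d\le C$: keep the local-maximum constraint. After the split it reads $h^{\mathsf A_x}_z\le h^{\mathsf A_x}_x+(\varphi_x-\varphi_z)$, so Proposition~\ref{prop:one-point-estimate} applies with its $s$-parameter equal to $t'$ plus the oscillation of $\varphi$ on the box. This replaces $(1+u-t)$ by $1+\max_z|\varphi_z-\varphi_x|$. It is exactly how Lemma~\ref{l:10} obtains the factor $(1+|\wh S_\ell|+|\Delta|^{x,y})^2(1+u)$.

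Two smaller points also need repair. First, the common value $\Phi$ cannot simply be extracted by a sub-Gaussian union bound inside the tilted expectation, because the discrepancy is correlated with the barrier event and with the walk. The paper writes it as $(\Phi^{\mathsf D_0\mid\partial\mathsf D_\ell}-S_\ell)+\Phi^{\mathsf D_\ell\mid\partial\mathsf D^{x,y}_0}$. It bounds the first term by $C\log\mathscr M_\ell$, which is absorbed by the $\rme^{-c\log^2 m}$ decay in Lemma~\ref{l:11}. It integrates out the second term, which is independent of $(S_k)_{k\le\ell}$ and of the barrier event and has Gaussian tails.

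Second, in the near-diagonal case the $x$-event still involves $h_y$, through the global cap and through the local ball if $y$ lies in it. So after integrating out $h_y$ you must restore these constraints before invoking \eqref{e:17}. The paper assumes $t\le s$, which forces $y\notin x+\mathsf B_{\rho(u-t;N)}$ (cf.\ \eqref{e:176}). It then observes that on $\{h^*_{\mathsf V_{\eta'N}\setminus\{y\}}\le m_N+u\}$ the conditional mean of $h_y$ given the rest of the field is at most $m_N+u$, whence $\P(h_y\le m_N+u\mid\cdot\,)\ge 1/2$. With these repairs your argument goes through.
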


Finally, the following proposition establishes the asymptotic independence between the precise height of a local maximum and its local microscopic geometry. It demonstrates that
the local configuration converges to a universal distribution $\nu$, while the height fluctuations exhibit an exponential decay.  
 
\begin{prop}
\label{prop:cluster-law}
Let $\eta\in(1/2,1)$ and set $\eta'\coloneq(3+\eta)/4$. Let
 $K\ge4$, $M_0 \ge 1$ and $N \ge 1$. Set  $g_x \coloneq h_x + \psi_x  $ with  $(\psi_x)_{x \in \mathsf{V}_{N}}$ being a deterministic field, satisfying
\begin{equation}
\label{e:2.11}
\max_{x \in \mathsf{V}_{\eta' N}} |\psi_{x}| \leq 2 \sqrt{g} \log K -\frac{\log\log K}{4\alpha}  \ , \  \max_{x,y \in \mathsf{V}_{\eta' N}}{|\psi_x - \psi_y|} \le M_0\ , \
\max_{x \neq y \in \mathsf{V}_{\eta' N}} \frac{|\psi_x - \psi_y|}{|x-y|} \leq \frac{M_0  }{N} .
\end{equation}  
Fix $u, \bar{s}\ge0$, $r\ge1$, and
$\omega\in\mathbb R_-^{\mathbb Z^2}$ such that
$\omega_{\mathbf0}=0$ and
$\supp(\omega)\subset\mathsf{B}_r$.
Then
\begin{align}
&\P\Bigl(
 g_{z+y}\le g_z+\omega_y,\ \forall y\in\mathsf{B}_r,\,
 g_z=g^*_{z+\mathsf{B}_{2^\varrho}}
       \in m_{KN} +s+\dif t,\,
 g^*_{\mathsf{V}_N}\le m_{KN}+u
 \Bigr)
 \notag\\
&\quad=
 \rme^{-\alpha s}\nu(\omega)\,
 \P\Bigl(
 g_z=g^*_{z+\mathsf{B}_{2^\varrho}}
       \in m_{KN} +\dif t,\,
 g^*_{\mathsf{V}_N}\le m_{KN}+u
 \Bigr)
 +\mathcal R_{N,z}^{\omega}(t,s;\varrho)\,\dif t.
\label{e:28c}
\end{align}  
Moreover, uniformly over all deterministic fields $\psi$ satisfying
\eqref{e:2.11}, and uniformly in  $z \in \mathsf{V}_{\eta N}$, $s \in [0,\bar{s}]$ and $t \in [-2(\log\log N)^{2}, u-s]$, and  $\varrho_{N} \to \infty$ satisfying $r\le2^\varrho$ and $1\vee(u-t)^{50} \le \varrho_N \le \sqrt{\log N}$, we have 
\begin{equation}
 \lim_{N \to \infty}
\frac{
  |\mathcal R_{N,z}^{\omega}(t,s;\varrho_N)| \dif t
}{ \P\bigl(g_z\in m_{KN}+ \dif t\bigr)} \,
 \frac{\log N} {1+(u+m_{KN}-m_N-\psi_z)_+} = 0  .
\label{e:28c-error}
\end{equation}
\end{prop}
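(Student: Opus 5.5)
Our plan is to reduce \eqref{e:28c} to a statement about the ``cluster'' of $h$ around $z$ conditioned on being a high local maximum, and to show that this cluster decouples from both the exact height and the global-maximum constraint. We condition on $h_z$ and write $h_{z+y}-h_z$ as $\bar h_y + (\text{harmonic part})$. Here $\bar h$ is the DGFF on $\mathsf{V}_N\setminus\{z\}$ pinned at $z$ (with $\bar h_0=0$), and the harmonic part is $h_z(\mathfrak a(y)-1)$, which is $O(h_z\,|y|/N)$-small near $z$ on scales $\ll N$. We then split $\mathsf{B}_{2^\varrho}(z)$ from its complement using the Gibbs–Markov decomposition on a box $\mathsf{B}_{R}(z)$ with $2^{\varrho}\ll R\ll N$. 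Inside, $h$ is the DGFF on $\mathsf{B}_R(z)$ plus a binding field $\varphi$. On the event $g_z\in m_{KN}+s+\dif t$, this binding field is, by standard estimates for harmonic averages, nearly constant on $\mathsf{B}_{2^\varrho}(z)$, with fluctuation $O(2^{\varrho}/R)\cdot\sqrt{\log N}$. The same holds for $\psi$, using the Lipschitz bound in \eqref{e:2.11}. Thus on $\mathsf{B}_{2^\varrho}(z)$, to leading order, $g_{z+y}-g_z$ equals $\bar h^{R}_y$, the pinned field on $\mathsf{B}_R$, which is independent of the outside given $h_z$.

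The key step is a ``ballot/tilting'' identity. Given $h_z=m_N+t'$ and given the event that $z$ is a $2^\varrho$-local max, we show that the law of $(h_{z+y}-h_z)_{y\in\mathsf{B}_r}$ converges to $\nu$. Concretely, after subtracting the value at $z$ we write the field as the pinned DGFF $\bar h$, which converges locally as $N\to\infty$ to the pinned field on $\bbZ^2\setminus\{0\}$. The local-max condition becomes $\bar h_y\le -(\text{drift}\ \approx 2\sqrt g\,\log|y|\cdot(\text{small}))$ on the annuli. We then use the concentric decomposition, i.e.\ a random walk of averages over dyadic annuli indexed by $k\le\varrho$, with a Gaussian increment structure. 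The standard result of \cite{BL1,BL3} identifies $\nu(\omega)$ as the limit ratio of the probability that the walk stays below a curve and the local shape is $\le\omega$, to the probability that it merely stays below. The dependence on the height enters only through the curve's endpoint at scale $\varrho$. Entropic repulsion together with the condition $\varrho\ge(u-t)^{50}$ then makes the local configuration asymptotically independent of the far conditioning $g^*_{\mathsf{V}_N}\le m_{KN}+u$ and of the shift $s$. The shift by $s$ contributes only the factor $\rme^{-\alpha s}$. This comes from the Gaussian density ratio $\P(h_z\in a+s+\dif t)/\P(h_z\in a+\dif t)=\rme^{-\alpha s+o(1)}$, since $\Var h_z= g\log N+O(1)$ and $a\approx 2\sqrt g\log N$. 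We absorb the $o(1)$ using the restriction $s\le\bar s$.

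To control the error $\mathcal R$, we proceed as follows. The difference between $g$ and the ``decoupled'' model is an event of probability $o(1)$ times the main term. Using the upper bound in Proposition~\ref{prop:one-point-estimate} and Corollary~\ref{cor-locmax} with their prefactor $(1+u_+)N^{-2}\rme^{-\alpha t}$, we compare with $\P(g_z\in m_{KN}+\dif t)$, which is of order $N^{-2}(\log N)\rme^{-\alpha t}$ up to the $\psi$ shift. This yields the normalization in \eqref{e:28c-error}: the constrained probability carries the ballot factor $(1+(u+m_{KN}-m_N-\psi_z)_+)/\log N$. We sum the errors from three sources. The first is the binding-field fluctuation, handled by Gaussian tail plus choosing $R=2^{\varrho}\rme^{\sqrt\varrho}$. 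The second is ballot-probability continuity under an $o(1)$ perturbation of the barrier. The third is the convergence of the pinned field to its $\bbZ^2$ limit on $\mathsf{B}_r$. Uniformity in $z\in\mathsf{V}_{\eta N}$ and in $\psi$ follows because all estimates depend only on the bounds \eqref{e:2.11} and on the distance to the boundary.

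The main obstacle is the asymptotic independence of the local shape from the global constraint $g^*_{\mathsf{V}_N}\le m_{KN}+u$ with the required relative precision $o(1/\log N)\times$ ballot factor. We would attack it through the concentric (dyadic annuli) random-walk representation. Conditioning on the walk's value at scale $\varrho_N$, the inner part, which governs $\nu$, and the outer part, which governs the global max, become conditionally independent. We then show that the inner ballot probability converges uniformly in the endpoint within the entropic-repulsion window, whose size is polynomial in $\varrho$. This uses the ballot theorems with decorations from \cite{BL1,BL3}.
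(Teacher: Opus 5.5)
There is a genuine gap, and it sits in the decoupling your whole plan rests on.

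Conditionally on $h_z$ you have $h_{z+y}-h_z=\bar h_y-h_z\bigl(1-f(z+y)\bigr)$, where $f=G_{\mathsf V_N}(\cdot,z)/G_{\mathsf V_N}(z,z)$. Near its pole $f$ is far from Lipschitz at scale $N^{-1}$. For $z\in\mathsf V_{\eta N}$ and $|y|\ll N$ one has $G_{\mathsf V_N}(z,z)-G_{\mathsf V_N}(z+y,z)=\mathfrak a(y)+o(1)$. Combined with $h_z\approx m_N\approx 2\sqrt g\log N\approx\frac{2}{\sqrt g}G_{\mathsf V_N}(z,z)$, the harmonic part is $\approx-\frac{2}{\sqrt g}\mathfrak a(y)$. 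That is of order $\log|y|$, not $O(h_z|y|/N)$.

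So the local-maximum condition reads $\bar h_y\le\frac{2}{\sqrt g}\mathfrak a(y)+o(1)$: a positive, non-vanishing barrier. This drift is exactly what produces $\nu$; it is the barrier in the paper's $\Xi^{\mathrm{in}}_{\mathfrak n-\ell}(\omega)$. Your second paragraph's version, with a negative ``small'' drift, has the same error.

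Moreover, after the Gibbs--Markov split at $\mathsf B_R(z)$, the drift is carried by the inner field as $h^R_z(1-f^R)$, and $h^R_z=h_z-\varphi_z$ is random. So, given $h_z$ alone, the configuration in $\mathsf B_{2^\varrho}(z)$ is \emph{not} independent of the outside. The constraint $g^*_{\mathsf V_N}\le m_{KN}+u$ tilts $\varphi_z$ (the walk at scale $\log_2R$) and thereby the inner barrier. Your ``decoupled model'' and the three-source error budget in your third paragraph rest on this false independence, so they do not control $\mathcal R^{\omega}_{N,z}$.

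What is missing is a mechanism that factors the probability into an $\omega$-dependent inner term and a $(t,u)$-dependent outer term, with relative error $o(1)$. The paper obtains this in Lemma~\ref{lem:extrmal-asymp}:
\begin{enumerate}[(1)]
\item It works with the concentric walk given $S_{\mathfrak n+1}=0$, on $\{\mathscr M\le m\}$, and replaces the field on $\mathsf D_{\mathfrak n-k}$ by $\tilde h_\ell$ with barrier $\frac{2}{\sqrt g}\mathfrak a$.
\item It uses entropic repulsion (Lemma~\ref{lem-ent-rw}; this is where $\varrho\ge(u-t)^{50}$ enters) to replace all mid-range constraints by $\{S_j\le0,\ \ell\le j\le\mathfrak n-\ell\}$ with $-S_\ell,-S_{\mathfrak n-\ell}\in[\ell^{1/3},\ell^2]$.
\item The ballot estimate with error, $\frac{2}{g\log2}\frac{|S_\ell||S_{\mathfrak n-\ell}|}{\mathfrak n}\bigl(1+O(\ell^4/\mathfrak n)\bigr)$, then gives $\mathrm{Pr}\approx 2\,\Xi^{\mathrm{in}}_{\mathfrak n-\ell}(\omega)\,\Xi^{\mathrm{out}}_{N,\ell}(t,u)/((g\log2)\mathfrak n)$.
\item The ratio $\Xi^{\mathrm{in}}(\omega)/\Xi^{\mathrm{in}}(\mathbf0)\to\nu(\omega)$ by \cite[Proposition 5.8]{BL3}.
\item The bound $|\Xi^{\mathrm{out}}(t+s,u)-\Xi^{\mathrm{out}}(t,u)|\lesssim s/\mathfrak n$ is what allows replacing height $t+s$ by $t$; only then does the Gaussian density ratio supply $\rme^{-\alpha s}$.
\end{enumerate}

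Your fourth paragraph proposes an alternative: condition on the walk at scale $\varrho_N$ and show the inner ratio tends to $\nu(\omega)$ uniformly over the repulsion window. That organization is plausible, but as written it only asserts the crucial step. Proving that uniform convergence needs the same ballot-with-error factorization on the inner scales. You would also have to control the leakage of the outer $b_k\varphi_k(o)$ and $\chi_k$ terms into $\mathsf B_{2^{\varrho}}(z)$, since a single walk value does not make inner and outer exactly conditionally independent.
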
 

\subsection{Proofs of trapping landscape results}

With the local estimates from the previous subsection at hand, we now turn to the proofs of the trapping landscape results. Since it will be slightly more convenient to work directly with the field $h$, we begin by introducing the following analogues    of the sets defined in Subsection~\ref{s:1.2.1}.
Given an interval $J \subset \bbR$, a real number $\rho > 0$ and a configuration $\omega \in \bbR_{-}^{\bbZ^2}$, we define
\begin{equation*}
\begin{aligned}
\mathsf{\Gamma}(J) &\coloneq \bigl\{ x \in \mathsf{V}_{N} :\: h_x - m_N \in J \bigr\} \,, \\
\mathsf{\Gamma}_{\rho} (J) &\coloneq \bigl\{ x \in \mathsf{\Gamma}(J) :\:
	\bigl(x + (\mathsf{B}_{N/\rho} \setminus \mathsf{B}_{\rho})\bigr) \cap \mathsf{\Gamma}(J) = \emptyset \bigr\}\,, \\
\mathsf{\Gamma}_{\rho}^*(J, \omega) &\coloneq \bigl\{ x \in \mathsf{\Gamma}_{\rho} (J) :
	h_{x+y} \le  h_x + \omega_y \,, \ \forall y \in \mathsf{B}_{\rho}
		\bigr\} \,,
\end{aligned}
\end{equation*}
and abbreviate $\mathsf{\Gamma}_{\rho}^*(J, \mathbf{0})$ as $\mathsf{\Gamma}_{\rho}^*(J)$. Furthermore, we introduce the shift parameters
\begin{equation*}
	s_N \coloneq -\frac{\gamma}{\alpha} \log_+ \log_+ N \quad \text{and} \quad  s^\pm_N(\delta) \coloneq s_N \pm \frac{1}{\beta} \log \frac{1}{\delta} \quad \text{for } \delta > 0 \,.
\end{equation*}
Observing that $\wh{t}_{N} = \exp  ( \beta(m_{N}+s_{N})  )$, we have the equivalence
\begin{equation}
\label{e:28b}
h_x = m_N + s_N + s \
\Longleftrightarrow \ \wh{\tau}_x = \rme^{\beta s} \,.
\end{equation}
Consequently, the sets defined in Subsection~\ref{s:1.2.1} can be expressed in terms of $\mathsf{\Gamma}$:
\begin{equation*}
\begin{aligned}
	\mathsf{T} (\delta)& = \mathsf{\Gamma}\bigl([s^{-}_{N}(\delta),  \infty) \bigr)   \quad  \text{and} \quad 
\mathsf{T} \bigl([\delta, \delta^{-1}] \bigr) = \mathsf{\Gamma}\bigl([s^{-}_{N}(\delta), s^{+}_{N}(\delta)] \bigr) ; \\
 \mathsf{IT}  (\delta) &= \mathsf{\Gamma} _{r_N} \bigl( [s^{-}_{N}(\delta),\infty) \bigr)   \quad  \text{and} \quad     \mathsf{IT}^{*} (\delta) = \mathsf{\Gamma}^{*}_{r_N}([s^{-}_{N}(\delta),  \infty)   .
\end{aligned} 
\end{equation*}
Recall that $\kappa_N \coloneq (\log N)^{\gamma}$.
 \smallskip

\begin{proof}[Proof of Theorem~\ref{thm:non_reachable_traps}] 
 \noindent\underline{\emph{Step 1.}}  
First we prove \eqref{e:10}. Note that 
\[
\mathsf{DT}^{*}(\delta)  \subset  \mathsf{\Gamma}^{*}_{r_{N}} ([s^{-}_{N}(\delta),\infty ) ) \cap    \mathsf{\Gamma}\bigl([  s^{+}_{N}(\delta), \infty) \bigr) \subset  \mathsf{\Gamma}^{*}_{r_{N}} ([s^{+}_{N}(\delta),\infty ) ) .
\] 

Applying \eqref{e:17} in Corollary~\ref{cor-locmax} with $s=t$,  we obtain that  for any $0 \leq u \leq (\log \log N)^2$ and any $-(\log  \log N)^2 < s \leq u$,  
\begin{align} 
 \E \bigl[ \big| \mathsf{\Gamma}^*_{r_N}\bigl([s, \infty) \bigr) \big| ;\; h^*_{\mathsf{V}_N} \leq m_N + u \bigr]  
 & 
\leq
\sum_{z \in \mathsf{V}_{N}} \int_{s}^{u} \P  \bigl( h_z = h^*_{z+\mathsf{B}_{\rho(u-t)}} \in m_N +  \dif t \,,\,\, h^*_{\mathsf{V}_N} \leq m_N + u \bigr) \notag  \\
& \lesssim   \int_{s}^u (u+1)   \rme^{-\alpha t} \dif t
\lesssim (u+1) \rme^{-\alpha s}  . \label{e:34}
\end{align}
 Here, we  used   $r_N \geq \rho(u-s)$ by the choice of $r_{N}$ in \eqref{def_rn}.  
 Consequently,  combining Markov's inequality with \eqref{e:19} in Corollary~\ref{cor-locmax} implies that  for any $\kappa > 0$, 
\begin{align} 
\P  \bigl( \big|\mathsf{\Gamma}^*_{r_N}\bigl([s, \infty) \bigr)\big| > \kappa \bigr) 
& \leq
\frac{\E \bigl[\big| \mathsf{\Gamma}^*_{r_N}\bigl([s, \infty) \bigr)\big| ;\; h^*_{\mathsf{V}_N} \leq m_N + u 
\bigr]}{\kappa}
+ \P  \bigl( h^*_{\mathsf{V}_N} > m_N + u \bigr) \notag \\
& \lesssim 
(u+1) \bigl( \rme^{-\alpha s}/\kappa + \rme^{-\alpha u} \bigr) \,. \label{e:28} 
 \end{align}
Substituting $s=s^{+}_{N}(\delta)$ and $\kappa = \epsilon \kappa_N$ into \eqref{e:28} yields 
\begin{equation*}
\P  \bigl(|\mathsf{DT}^*(\delta)| > \epsilon \kappa_N \bigr) \le \P  \bigl( 
\big|\mathsf{\Gamma}_{r_N}^*\bigl([s^{+}_{N}(\delta), 
\infty) \bigr)\big| > \epsilon \kappa_N \bigr)
\lesssim (u+1) \bigl( \epsilon^{-1} \delta^{\alpha/\beta} + \rme^{-\alpha u} \bigr) \,.
\end{equation*}
For any fixed $\epsilon > 0$, the right-hand side can be made arbitrarily small by setting $u=u_{\delta}= (1/\delta)^{\frac{\alpha}{2 \beta}}$ and choosing $\delta > 0$ sufficiently small. This proves~\eqref{e:10}.
 
 \smallskip
 \noindent\underline{\emph{Step 2.}} 
We next show  that   non-isolated traps $\mathsf{T}(\delta) \setminus \mathsf{IT}(\delta)=  \mathsf{\Gamma} ([s^{-}_{N}(\delta), \infty) ) \setminus
\mathsf{\Gamma}_{r_N}  ([s^{-}_{N}(\delta), \infty)$ are negligible.
Let $0 \leq u \leq (\log \log N)^2$ and $-(\log 
\log N)^2 
< s \leq u$ as above.  
Let $\mathsf{V}_{N}^{\mathrm{bd}} \coloneq \{z \in \mathsf{V}_N : \wt{N}_{z} \le N \rho(u-s) / r_N\}$ denote the near-boundary vertices, and $\mathsf{V}_{N}^{\mathrm{bk}} \coloneq  \mathsf{V}_N  \setminus \mathsf{V}_{N}^{\mathrm{bd}} $ denotes the bulk.  

Since $\rho(u-s)=o(r_N)$, for each $z \in \mathsf{V}^{\mathrm{bk}}_N$, we have
$\rho(u-s; \wt{N}_z ) = \rho(u-s)$, and 
$\mathsf{B}_{N/r_N}\setminus\mathsf{B}_{r_N}
\subset \mathsf{B}_{\wt{N}_{z}/
\rho(u-s)}
\setminus\mathsf{B}_{\rho(u-s)}$. Applying  \eqref{e:26} in  Proposition~\ref{prop:annular-high-point}, we obtain  
\begin{align}
\E \bigl[  &\, \big|\, \mathsf{V}_N^{\mathrm{bk}}\cap  \mathsf{\Gamma} \bigl([s, \infty)\bigr)  \cap
\mathsf{\Gamma}_{r_N} \bigl([s, \infty)\bigr)^{c}  \,\big| ;\; h^*_{\mathsf{V}_N} \leq m_N + u \bigr] \notag \\
& \leq \sum_{z \in \mathsf{V}^{\mathrm{bk}}_{N}} \int_{s}^{u} \P  \bigl( h_z = m_N + \mathrm{d} t \,,\,\, h^*_{z+(\mathsf{B}_{ \wt{N}_{z} /\rho(u-s)} \setminus 
\mathsf{B}_{\rho(u-s)})} > m_N + s \,,\,\, h^*_{\mathsf{V}_N} \leq m_N + u \bigr)   \notag  \\
& \lesssim  \int_{s}^{u}  (u + 1) (u-t+1) (u-s+1)^{-4}   \rme^{-\alpha t  }  \dif t \lesssim (u + 1) (u-s+1)^{-3}  \rme^{-\alpha s}  \,.\label{e:37-old} 
\end{align}     
For the near-boundary points, we simply drop the non-isolation requirement. Thus,  Corollary \ref{cor-locmax} applied with $s=u$, and the estimate $|\mathsf{V}^{\mathrm{bd}}_N|\lesssim N^2\frac{\rho(u-s)}{r_N}$, imply that  
\begin{equation}
  \E\Bigl[
	\big|\,\mathsf{\Gamma}([s,\infty)) \cap \mathsf{V}^{\mathrm{bd}}_N
	 \,\big|\,;
	h^*_{\mathsf{V}_N}\le m_N+u\Bigr] 
  \lesssim
	\frac{\rho(u-s)}{r_N}(u+1)
	\int_s^u
	(u-t+1)\rme^{-\alpha t}\dif t  = o_N(1)\,. \label{e:37-new}
\end{equation}  
Combining the preceding two bounds \eqref{e:37-old} and \eqref{e:37-new},  applying Markov’s inequality and using the  upper tail estimate of $h^*_{\mathsf{V}_N}$ as in~\eqref{e:28}, we obtain
\begin{equation}
\label{e:38}
\P  \bigl( \big| \mathsf{\Gamma}([s,\infty)) \setminus \mathsf{\Gamma}_{r_N} \bigl([s, \infty) \bigr) \big| > 
\kappa \bigr) 
\lesssim (u + 1) \bigl[(u-s+1)^{-3} \rme^{-\alpha s}/\kappa     + \rme^{-\alpha u} \bigr] + o_N(1)  \,.
\end{equation}
Plugging in $s=s^{-}_{N}(\delta)$, $\kappa = \epsilon \kappa_N$ and $u=\log \log N$, since $ {\rme^{-\alpha s_N^{-}(\delta)}}/{\kappa_N}
=\delta^{-\alpha/\beta}$,   the right-hand side of~\eqref{e:38} tends to zero as $N\to\infty$, for every fixed $\epsilon>0$ and $\delta>0$. This proves~\eqref{e:11}.
 
 \smallskip
 \noindent\underline{\emph{Step 3.}} 
 Finally, we handle the wide traps. Note the inclusion
\[ \mathsf{WT}^{*}(\delta,r) \subset  
\left\{ z \in      \mathsf{\Gamma}_{r_N}^{*} ([s^{-}_{N}(\delta), s^{+}_{N}(\delta)  ])  :    \bigl(z+ (\mathsf{B}_{r_N} \setminus \mathsf{B}_r)\bigr) \cap \mathsf{\Gamma}([s^{-}_{N}(\delta),\infty)) \neq \emptyset   \right\} .
\]

Let $0 \leq u \leq (\log \log N)^2$ and $-(\log \log N)^2 < s < s^{+}< u$. Define  $ \mathsf{V}^{\mathrm{bk}}_N$ as before. Then since $ \rho(u-s) \le r_N$   and $\wt{N}_{z}/\rho(u-s) \ge N/r_N$, we get  
\begin{align*}
& \E \bigl[ \big|\bigl\{ z \in \mathsf{V}^{\mathrm{bk}}_N \cap \mathsf{\Gamma}^*_{r_N} \bigl([s, s^+]   \bigr) :\: 
 z + (\mathsf{B}_{r_N} \setminus \mathsf{B}_{r} ) \cap \mathsf{\Gamma}([s,\infty)) \neq \emptyset \bigr\} \big| 
 ; h^*_{\mathsf{V}_N} \leq m_N + u   \bigr]  \\
& \leq \sum_{z \in \mathsf{V}^{\mathrm{bk}}_N} \int_{s}^{s^+} \P  \bigl( h_z = h^*_{z+\mathsf{B}_{\rho(u-t)}}  \in m_N +  \dif t  \,,\,\,
h^*_{z+(\mathsf{B}_{\rho(u-s)} \setminus \mathsf{B}_{r})} > m_N + s
 \,,\,\, h^*_{\mathsf{V}_N} \leq m_N + u  \bigr) \\
&\qquad  + \sum_{z \in \mathsf{V}^{\mathrm{bk}}_N}  \int_{s}^{s^+} \P  \bigl( h_z = m_N + \mathrm{d} t \,,\,\, h^*_{z+(\mathsf{B}_{ \wt{N}_{z} /\rho(u-s)} \setminus 
\mathsf{B}_{\rho(u-s)})} > m_N + s \,,\,\, h^*_{\mathsf{V}_N} \leq m_N + u \bigr)\,.
\end{align*} 
As in \eqref{e:37-old}, the second sum  is dominated above by $(u + 1) (u-s+1)^{-3}  \rme^{-\alpha s}$. 
For the first sum, \eqref{e:26a} in Proposition~\ref{prop:annular-high-point} shows that, whenever $(1+t-s)^4 \le r$ and $2^r \le \rho(u-s^+)$, it is dominated above by  
\[  \int_{s}^{s^+}(u + 1) (t-s+1)^3 (\log r)^{-1/4}  \rme^{-\alpha t }  \dif t   \lesssim (u + 1)   (\log r)^{-1/4} (s^{+}-s+1)^3  \rme^{-\alpha s}\,.  \]

Following the same argument as in \eqref{e:28}, and combining this with \eqref{e:37-new}, we deduce that  
\begin{align*} 
&\P  \bigl(  \big|\bigl\{ x \in \mathsf{\Gamma}^*_{r_N} \bigl([s, s^+)\bigr) :\: 
x + (\mathsf{B}_{r_N} \setminus \mathsf{B}_{r}) \cap \mathsf{\Gamma} \bigl([s, \infty)\bigr) \neq \emptyset \bigr\} \big| 
	  > \kappa)  \\
& \lesssim (u + 1) \bigl(  [(\log r)^{-1/4} (s^{+}-s+1)^3  + (u-s+1)^{-3}] \rme^{-\alpha s}/ \kappa 
	+ \rme^{-\alpha u} \bigr) + o_N(1). 
\end{align*}
Taking $s = s^{-}_{N}(\delta)$, $s^+ = s^{+}_{N}(\delta)$, and $\kappa = \epsilon \kappa_N$, we obtain
\begin{equation*}
\P  \bigl( |\mathsf{WT}^{*}(\delta, r)| > \epsilon \kappa_N \bigr)  
 \lesssim (u + 1) \bigl( [ (\log r)^{-1/4}   |\log^3 (1/\delta)|+o_N(1)]\epsilon^{-1} (1/\delta)^{\alpha/\beta}   
	+ \rme^{-\alpha u} \bigr) \,.
\end{equation*}
Sending $N \to\infty$ followed by $r \to \infty$ then $u \to \infty$, the desired result follows.
\end{proof} 
\smallskip

\begin{proof}[Proof of Theorem~\ref{thm:sum_shallow_traps}]
We first establish that, for every $x \in \mathsf{ST}(\delta)$,
$\mathsf{B}_{r_N}(x) \subset \mathsf{T}([0,
\delta^{-1}])$.
Suppose, to the contrary, that there exists
$y \in  \mathsf{B}_{r_N}(x) \cap \mathsf{T}((\delta^{-1},\infty))$. Let $z = \argmax_{w \in \mathsf{B}_{r_N}(y)} h_w$.
Since $x \notin \mathsf{NIT}(\delta)$, we necessarily have
$ y,z \in \mathsf{B}_{2 r_N}(x) \cap \mathsf{T}(\delta) \subset  \mathsf{IT}(\delta)$.
If $z \in \mathsf{IT}^{*}(\delta)$, then $z \in \mathsf{DT}^{*}(\delta)$, and hence $x  \in \mathsf{DT}(\delta)$, contradicting $x \in \mathsf{ST}(\delta)$.
Otherwise, there exists $w \in \mathsf{B}_{r_N}(z) \setminus \mathsf{B}_{r_N}(y)$ such that $h_w > h_z$. This yields $w \in y+(\mathsf{B}_{N/r_N} \setminus \mathsf{B}_{r_N})$, contradicting $y \in \mathsf{IT}(\delta)$. Thus the claimed inclusion holds.

Consequently, recalling \eqref{e:28b},
we have, for any $x \in \mathsf{ST}(\delta)$, $h^{*}_{x+ \mathsf{B}_{r_N}} \le m_{N}+s^{+}_{N}(\delta)$.
Moreover, the condition $\wh{\tau}_{x} \le \delta$ implies
$h_{x} \le m_{N}+s^{-}_{N}(\delta)$. We therefore decompose the expectation according to the value of $h_x$. For any
$u \in [0, (\log \log N)^2]$, this gives
\begin{align} 
\E & \Bigl[ \sum_{x \in \mathsf{ST} (\delta)} \wh{\tau}_x ;\; h^*_{\mathsf{V}_N} \leq m_N + u \Bigr] \notag\\
&\lesssim  \sum_{j=1}^{\infty} \delta \rme^{-\beta j} \sum_{x \in \mathsf{V}_{N}}
	\int_{s^{-}_{N}(\delta)-j}^{s^{-}_{N}(\delta)-j+1} \P  \bigl( h_x \in m_N + \dif t \,,\,\,
		h^*_{x+\mathsf{B}_{r_N}} \leq m_N + s^{+}_{N}(\delta) \,,\,\, 
h^*_{\mathsf{V}_N} \leq m_N + u \bigr)  . \label{eq-split-j}
\end{align}  

We split the sum over $j$ into the three disjoint ranges
$\mathcal{J}_1 = \{j \in \mathbb{N} : j \le (\log\log N)^2\}$,
$\mathcal{J}_2 = \{j \in \mathbb{N} : (\log\log N)^2 < j \le \sqrt{\log N}/2\}$,
and $\mathcal{J}_3 = \mathbb{N} \setminus (\mathcal{J}_1 \cup \mathcal{J}_2)$.
For $i=1,2,3$, let $\Sigma_i$ denote the contribution to \eqref{eq-split-j} from indices $j \in \mathcal{J}_i$.

For $j \in \mathcal{J}_{1}$, we have $r_N \geq \rho(u-t)$ for all $t \in [s^{-}_{N}(\delta)-j,s^{-}_{N}(\delta)-j+1]$. Applying Corollary~\ref{cor-locmax} to the integrand therefore gives
\[
\Sigma_{1}  \lesssim  (u+1)  \sum_{j \in \mathcal{J}_1}
	 \delta \rme^{-\beta j} \bigl( j  - \tfrac{2}{\beta} \log \delta \bigr) \rme^{-\alpha [s^{-}_{N}(\delta) - j ]}  \lesssim (u+1) \log(1/\delta) \delta^{1- {\alpha}/{\beta}} \, \kappa_{N} . 
\]
Here we used $\rme^{-\alpha s^{-}_{N}(\delta)} = \delta^{-\alpha/\beta} \kappa_N$ and $\beta > \alpha$.

For $j \in \mathcal{J}_{2}$, we drop the condition on
$h^*_{x+\mathsf{B}_{r_N}}$ from the probability in \eqref{eq-split-j}. Corollary~\ref{cor-locmax} then yields
\[
\Sigma_{2}  \lesssim (u+1) \delta   
	\sum_{j \in \mathcal{J}_2}
		\rme^{-\beta j} \bigl(u-s^{-}_{N}(\delta) + j   \bigr) \rme^{-\alpha [s^{-}_{N}(\delta) - j  ]}   \lesssim (u+1) \log(1/\delta) \delta^{1- {\alpha}/{\beta}} \, \kappa_{N} .
\] 
In the last step, we used $ \bigl(u-s^{-}_{N}(\delta) + j   \bigr) \le  3 j + \frac{1}{\beta} \log(1/\delta)$ for $j \in \mathcal{J}_{2}$.

For $j \in \mathcal{J}_{3}$, we estimate the probability in \eqref{eq-split-j} directly by the Gaussian tail bound
$ \P ( h_{x} -m_{N} - s^{-}_{N}(\delta)  +j \in   [0,1] ) \lesssim   \exp \bigl( - \frac{(m_{N}+ s^{-}_{N}(\delta) -j )^2}{2 \Var(h_{x})}  \bigr)  $.
Since $\max\limits_{x \in \mathsf{V}_{N}}\Var(h_{x}) = g \log N + O(1) $ and $\beta> \alpha$, we obtain
\begin{align*}
 \Sigma_{3} & \lesssim  \sum_{j \ge \sqrt{\log N}/2} \delta \rme^{-\beta j}  N^2    \exp \Bigl( - \frac{(m_{N}+ s^{-}_{N}(\delta)  -j )^2}{2 g \log N }  \Bigr)    
  \lesssim \delta  \sum_{j \ge \sqrt{\log N}/2}  (\log N)^{\gamma+3/2 } \rme^{-(\beta-\alpha) j} \xrightarrow{N \to \infty}  0 .
\end{align*}  

Combining the three estimates and applying Markov's inequality as in~\eqref{e:28}, we conclude that
\begin{equation*}
\P  \Bigl( \sum_{x \in \mathsf{ST} (\delta)} \wh{\tau}_x > \epsilon \kappa_N \Bigr) 
\lesssim 
	(u+1) \bigl[ \epsilon^{-1} \delta^{1 - \frac{\alpha}{\beta}} \log (1/\delta)  + 
	\rme^{-\alpha u} \bigr] . 
\end{equation*}
Sending $N \to\infty$, then $\delta \downarrow 0$, and finally $u \to \infty$ proves the theorem.
\end{proof}
\smallskip

\begin{proof}[Proof of Theorem~\ref{thm:regular traps1}: Upper Bound] 
The derivation of the upper bound~\eqref{e:14} proceeds analogously to the proof of the upper bounds in Theorem~\ref{thm:non_reachable_traps}. By substituting $s=s^{-}_{N}(\delta)$ and $\kappa = M \delta^{-\alpha/\beta} \kappa_N$ into~\eqref{e:28}, we obtain that for all $u \in [0, (\log \log N)^2]$,
\begin{equation*}
\P  \bigl(| \mathsf{RT}^*(\delta, r)| > M \delta^{-\alpha/\beta}  \kappa_N \bigr) \leq \P  \bigl( 
\big|\mathsf{\Gamma}_{r_N}^*\bigl([s^{-}_{N}(\delta), 
\infty) \bigr)\big| > M \delta^{-\alpha/\beta}  \kappa_N \bigr)
\lesssim (u+1) \bigl( M^{-1}   + \rme^{-\alpha u} \bigr) \,.
\end{equation*}
For any fixed $\delta > 0$, the right-hand side can be made arbitrarily small by first selecting $u$ sufficiently large, and subsequently choosing $M$ large enough.
\end{proof}

Proving the lower bound for $|\mathsf{RT}^*(\delta,r)|$ in Theorem~\ref{thm:regular traps1} is rather delicate. To streamline the exposition, we extract the core difficulties into the following lemma, and first provide the proof of the theorem assuming this lemma holds.

\begin{lem}
\label{lem:16a}
Fix any $\epsilon > 0$ and $\omega \in \bbR_{-}^{\mathbb{Z}^2}$ with  $\omega_{\mathbf{0}} = 0$ and $\inf_{y \in \mathbb{Z}^2} \omega_{y}> -\infty$. Then, for all non-degenerate   interval $I \subset \mathbb{R}_+$, there exists $\rho^{-} > 0$ (depending on $\epsilon$, $\omega$ and $I$) such that , 
\begin{equation}
\label{e:65}
\liminf_{N \to \infty} \inf_{s \in [-(\log\log N)^{3/2}, -(\log\log N)^{1/2}]}
\P  \Bigl( \big| \mathsf{\Gamma}^*_{r_N}( [s, \infty), \omega)\cap \mathsf{\Gamma}(s+I) \big| \geq \rho^{-} e^{-\alpha s} \Bigr) \geq 
1-\epsilon .
\end{equation}
\end{lem}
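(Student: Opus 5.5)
The plan is a truncated second moment argument for the count
\[
Z_N \coloneq \big|\,\mathsf{\Gamma}^*_{r_N}([s,\infty),\omega)\cap\mathsf{\Gamma}(s+I)\cap \mathsf{V}_{\eta N}\,\big| ,
\]
restricted to the bulk $\mathsf{V}_{\eta N}$ for a fixed $\eta\in(1/2,1)$ and to the event $\{h^*_{\mathsf{V}_N}\le m_N+u\}$ with $u$ a large constant. By Corollary~\ref{cor-locmax}, \eqref{e:19}, the complement of this event has probability $\lesssim (1+u)e^{-\alpha u}$, which we take smaller than $\epsilon/2$. To make the moments tractable we work with a smaller counting variable $Z'_N\le Z_N$. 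It counts $z\in\mathsf{V}_{\eta N}$ with:
\begin{itemize}
\item $h_z-m_N\in s+I$;
\item $h_{z+y}\le h_z+\omega_y$ for $y\in\mathsf{B}_r$, with $r$ fixed;
\item $h_z=h^*_{z+\mathsf{B}_{\rho(u-s)}}$;
\item no vertex of $z+(\mathsf{B}_{N/\rho(u-s)}\setminus\mathsf{B}_{\rho(u-s)})$ has height $\ge m_N+s$.
\end{itemize}
For $z$ satisfying these, the extra requirements defining $\mathsf{\Gamma}^*_{r_N}$ (the constraint $h_{z+y}\le h_z+\omega_y$ on $\mathsf{B}_{r_N}\setminus\mathsf{B}_r$, and isolation on the annulus $\mathsf{B}_{N/r_N}\setminus\mathsf{B}_{r_N}$) cost only an additional defect count. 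Since $\inf\omega>-\infty$, the constraint outside $\mathsf{B}_r$ is implied by local maximality once $r$ is large, up to an event controlled by \eqref{e:26a}. The difference between the two annuli is handled by \eqref{e:26}, with $u-s\ge(\log\log N)^{1/2}\to\infty$. Both defect counts have expectation $o(e^{-\alpha s})$ by the computations \eqref{e:37-old}, \eqref{e:37-new} and the analogue of Step 3 in the proof of Theorem~\ref{thm:non_reachable_traps}. By Markov's inequality they therefore do not affect the lower bound.

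\textbf{First moment.} Apply Proposition~\ref{prop:one-point-lower-bnd} with $t\in s+I$ and a fixed $M$, taking the local-maximum condition on $\mathsf{B}_{\sqrt N}(z)\setminus\{z\}$ at level $m_N+t$. This gives
\[
\E[Z''_N;\,h^*\le m_N+u]\ \ge\ c(1+u)\,e^{-\alpha s}\int_I e^{-\alpha v}\,\dif v\ \gtrsim\ e^{-\alpha s},
\]
where $Z''_N$ is the count without the isolation requirement. Subtracting the non-isolated ones via Proposition~\ref{prop:annular-high-point} leaves $\E Z'_N\ge c_1 e^{-\alpha s}$, uniformly over $s$ in the stated range. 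The upper bound $\E Z'_N\lesssim (1+u)e^{-\alpha s}$ follows from \eqref{e:17}.

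\textbf{Second moment.} This is the main obstacle. We need
\[
\E[(Z'_N)^2;\,h^*\le m_N+u]\ \le\ C\,e^{-2\alpha s},
\]
with $C$ independent of $N$ and $s$. Pairs $x\ne y$ with $|x-y|\le\rho(u-s)$ do not both contribute, because of local maximality. Pairs with $\rho(u-s)<|x-y|<N/\rho(u-s)$ are excluded by isolation. For the remaining pairs, $|x-y|\ge N/\rho(u-s)$, we sum \eqref{e:27} of Proposition~\ref{prop:two-point-estimate}. In that range the logarithmic factor is at most $O\big((\log\rho(u-s))^{3/2}\big)=O\big((u-s)^{75}\big)$, and the sum of $N^{-2}|x-y|^{-2}$ over $x,y\in\mathsf{V}_{\eta N}$ restricted to this range is $O(\log\rho(u-s))$. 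Integrating $e^{-\alpha(t+t')}$ over $(s+I)^2$ then gives
\[
\E[(Z'_N)^2]\ \lesssim\ (1+u)^{10}e^{\alpha u}\,\mathrm{poly}(u-s)\,e^{-2\alpha s}.
\]
The polynomial factor in $u-s\approx|s|$ is the real difficulty: a crude Paley–Zygmund bound then only gives $\P\big(Z'_N\ge \tfrac12\E Z'_N\big)\gtrsim 1/\mathrm{poly}(|s|)$, not $1-\epsilon$.

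To fix this, the plan is to avoid a global Paley–Zygmund and use concentration instead. Partition $\mathsf{V}_{\eta N}$ into $K^2$ boxes of side $\eta N/K$ with $K$ large but fixed, and condition on the coarse field, i.e.\ the harmonic extension from the box boundaries. Given the coarse field, the box counts are independent, and Proposition~\ref{prop:cluster-law} (with deterministic shift $\psi$) gives their conditional means. Each conditional mean is of order $e^{-\alpha s}$ times a factor like $\sum_{\text{boxes}}(\dots)e^{\alpha\psi}$, an approximate critical derivative-martingale mass, which is positive with probability close to $1$ for $K$ large. The within-box second moment bound above, together with conditional independence across boxes, yields a conditional variance of order $o\big((\text{mean})^2\big)$ as soon as $e^{-\alpha s}\to\infty$, which holds here. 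Chebyshev's inequality then gives $Z'_N\ge\tfrac12\,\E[Z'_N\mid\text{coarse}]$ with high probability. Choosing $\rho^-$ below the $\epsilon$-quantile of the limiting coarse mass finishes the proof. If this refinement fails, the fallback is to compare directly with the convergence \eqref{e:1.10i} at the intermediate levels.
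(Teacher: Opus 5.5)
There is a genuine gap in your concentration step, and it comes from misidentifying the obstacle. Your first-moment input and the idea of a coarse-grained Chebyshev argument match the paper (Lemma~\ref{lem:16c} and the $K^2$-box decomposition), but the step that should deliver probability $1-\epsilon$ does not work as written.

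\textbf{The second moment has no loss in $|s|$.} For $|x-y|\ge N/\rho(u-s)$ you have $\log_+|x-y|\ge \log N-\log\rho(u-s)=(1-o(1))\log N$. So the logarithmic factor in \eqref{e:27} is of order $(1+\log(N/|x-y|))^{-3/2}$, not $(\log\rho)^{3/2}$. Summing over dyadic shells gives $\E[(Z'_N)^2;h^*_{\mathsf{V}_N}\le m_N+u]\lesssim(1+u)^{10}e^{\alpha u}e^{-2\alpha s}$ uniformly in $s$; this is \eqref{e:51.5}.

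\textbf{The real obstruction.} The ratio $\E[(Z'_N)^2]/(\E Z'_N)^2$ is bounded but not close to $1$, and it does not improve as $s\to-\infty$. The reason is that $e^{\alpha s}Z'_N$ has a non-degenerate random limit (a derivative-martingale-type mass).

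\textbf{Consequences for your concentration claim.} For the same reason, your claim that the conditional variance given the coarse field is $o((\mathrm{mean})^2)$ ``as soon as $e^{-\alpha s}\to\infty$'' is false for fixed $K$: each box count is itself non-concentrated. Concentration can only come from $K\to\infty$. Even then your scheme does not close with the available bounds.
\begin{itemize}
\item Keeping the global cap $h^*_{\mathsf{V}_N}\le m_N+u$ makes the cap in box $i$ equal to $u_i\approx u+\Delta_K-\varphi_i$, with $\Delta_K=m_N-m_L$ and $e^{\alpha\Delta_K}=K^4$.
\item Then \eqref{e:51.5} bounds the second moment of box $i$ roughly by $e^{\alpha u}e^{-2\alpha s}K^{-4}(1+u_i)^{10}e^{\alpha\varphi_i}$, while its mean is roughly $e^{-\alpha s}K^{-4}(1+u_i)e^{\alpha\varphi_i}$.
\item With $W_p\coloneq K^{-4}\sum_i(1+u_i)^p e^{\alpha\varphi_i}$, your Chebyshev ratio is of order $e^{\alpha u}W_{10}/W_1^2\gtrsim e^{\alpha u}/W_1$, which does not vanish.
\end{itemize}

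\textbf{What rescuing your route would need.} You would need per-box caps, a statement that no box carries a macroscopic share of the weights $e^{\alpha\varphi_i}$, and a lower bound on the limiting coarse mass (positivity of critical chaos). The last is an external input the paper never proves. Proposition~\ref{prop:cluster-law} also only yields ratios against an unspecified main term, not a lower bound of the form $\gtrsim e^{\alpha\psi}$. Your fallback via \eqref{e:1.10i} is not available either, since that convergence concerns heights $m_N-O(1)$.

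\textbf{How the paper avoids this.} It uses a cruder, self-contained device.
\begin{itemize}
\item After the Gibbs--Markov decomposition $h=\varphi+\sum_i h^i$ into boxes of side $L=N/K$, it keeps only boxes on which the binding field is pinned: $|\varphi_x-2d|\le d$ on $\mathsf{V}^i_{\eta L}$.
\item In each such box it counts $\varGamma_i=|\mathsf{\Gamma}^{\dag,i}_{r_N,\eta}([s_K,s_K+d],M_\omega)|\,\mathbf{1}_{\{h^{i,*}\le m_L\}}$, with $M_\omega=d-\inf\omega$. This uses a per-box cap at $u=0$, not the global cap.
\item The $\varGamma_i$ are i.i.d. and independent of $\varphi$. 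Lemma~\ref{lem:16c} gives $\E\varGamma_i^2\lesssim(\E\varGamma_i)^2$ with $\E\varGamma_i\gtrsim K^{-4}e^{-\alpha s}$.
\item Because $\varphi$ oscillates by at most $d$ on $r_N$-scales with high probability, every counted point lies in $\mathsf{\Gamma}^*_{r_N}([s,\infty),\omega)\cap\mathsf{\Gamma}([s,s+1])$ or is non-isolated; the non-isolated ones are negligible by \eqref{e:38}.
\item Chebyshev conditionally on $\varphi$ then only needs the number of pinned boxes to diverge. The paper shows it exceeds $K^{2-3\theta}$ with probability $1-O(1/\log K)$, using a second decomposition at scale $K^{\theta}L$, Bernstein's inequality, the Gaussian correlation inequality and a variance bound.
\end{itemize}
This gives $\rho^-\asymp K^{-2-3\theta}$ for one fixed large $K$. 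The bound is far from sharp, but it needs no derivative-martingale input.
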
 
\smallskip

 \begin{proof}[Proof of  Lower Bound in Theorem~\ref{thm:regular traps1} admitting Lemma \ref{lem:16a}]
 Take $\omega_{\mathbf{0}} = 0$ and $\omega_{x}=-2$ for $x \in \mathbb{Z}^2 \setminus \{\mathbf{0} \}$. Observe  that for all $\delta \in (0,1/2]$ and $r \ge 1$, 
\begin{align}
	 \mathsf{RT}^*(\delta,r) &= \bigl\{x \in \mathsf{\Gamma}^{*}_{r_N} ( [s_N^{-}(\delta), \infty) ) \cap \mathsf{\Gamma} ( [s_N^{-}(\delta), s_N^{+}(\delta) ] )  : h_{x+y} \le m_N+ s_N^{-}(\delta) , \forall y \in \mathsf{B}_{r_N} \setminus \mathsf{B}_{r} \bigr\}  \notag\\
	 &\supset  \bigl\{x \in \mathsf{\Gamma}^{*}_{r_N} ( [s_N^{-}(\delta), \infty) ) \cap \mathsf{\Gamma}  ( [s_N^{-}(\delta), s_N^{-}(\delta)+ \tfrac{1}{\beta} \log 2 ] )  : h_{x+y} \le h_{x}-2 , \forall y \in \mathsf{B}_{r_N} \setminus \{ \mathbf{0}\} \bigr\} \notag \\
	 &= \mathsf{\Gamma}^*_{r_N}( [s^{-}_{N}(\delta), \infty), \omega)\cap \mathsf{\Gamma}([s^{-}_{N}(\delta),s^{-}_{N}(\delta)+\tfrac{1}{\beta} \log 2]). \label{eq:RT-Ga-inclusion}
\end{align} 
Using Lemma \ref{lem:16a} with    $s=s_N^{-}(\delta)$, $I=[0,\tfrac{1}{\beta} \log 2]$  
 yields that for any $\epsilon>0$ there is $\rho^{-}=\rho^{-}_{\epsilon}$ such that 
 \[  \liminf_{N \to \infty} \P \Bigl(  |\mathsf{\Gamma}^*_{r_N}( [s^{-}_{N}(\delta), \infty), \omega)\cap \mathsf{\Gamma}([s^{-}_{N}(\delta),s^{-}_{N}(\delta)+\tfrac{1}{\beta} \log 2]) |\ge    \rho^{-}_{\epsilon} \delta^{-\alpha/\beta} \kappa_N \Bigr) \ge 1- \epsilon\,,   \]
 for all $\delta \in (0,1/2]$.
 The desired result then follows from \eqref{eq:RT-Ga-inclusion}.
\end{proof}

\begin{lem}[Gaussian oscillation bound]
\label{lem:gaussian-oscillation}
Let $T$ be a finite subset of a lattice box in $\mathbb Z^d$, $d\le4$,
of diameter at most $R$, and let $(X_t)_{t\in T}$ be a centered Gaussian
process  satisfying
 that for some
$\lambda,\sigma>0$,
\[
 \max_{t\in T}\E[X_t^2]\le\sigma^2,
 \qquad
 \bigl(\E[(X_s-X_t)^2]\bigr)^{1/2}\le \lambda  |s-t|,
 \quad s,t\in T.
\]
Let $|X|^{*}_T  \coloneq \max_{t\in T}|X_t|$. 
Then   the following assertions hold. 
\begin{enumerate}[(i)]
	\item  There exists a constant $C$ independent of $\sigma,\lambda$ such that $ \E[|X|^{*}_T]
 \le C\sigma [\log (2+\frac{\lambda R}{\sigma} )]^{1/2}$. 
 \item For every $u \ge 0$, $ \P (
  |X|^{*}_T
  >  \E[|X|^{*}_T]+u
 )
 \le 2\exp \{-\frac{u^2}{2\sigma^2} \}.$ 
\item If $\sigma\le A$ and $\lambda R/\sigma\le A$, then, for every
$b>0$, there exists  $C_{A,b}>0$ such that 
$ \E [ \exp\{ b|X|^{*}_T \}]
 \le C_{A,b}$. 
\end{enumerate}  
\end{lem}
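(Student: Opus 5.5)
We prove the three parts of Lemma~\ref{lem:gaussian-oscillation} in order, using two classical tools: Dudley's entropy bound for (i) and the Borell--TIS (Gaussian concentration) inequality for (ii); part (iii) then follows by integrating the tail from (ii). Throughout, write $d(s,t)\coloneq(\E[(X_s-X_t)^2])^{1/2}$ for the canonical pseudo-metric on $T$. By the hypotheses, $d(s,t)\le\lambda|s-t|$ and $\operatorname{diam}_d(T)\le 2\sigma$.

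For (i), fix any $t_0\in T$ and use
\[
|X|^*_T\le |X_{t_0}|+\max_{s,t\in T}|X_s-X_t| .
\]
The first term has expectation at most $\sigma$. For the second, Dudley's inequality gives
\[
\E\max_{s,t}|X_s-X_t|\le C\int_0^{2\sigma}\sqrt{\log \mathcal N(T,d,\varepsilon)}\,\dif\varepsilon .
\]
Since $d$ is dominated by $\lambda$ times the Euclidean distance, an $\varepsilon$-net for $d$ is obtained from a Euclidean $\varepsilon/\lambda$-net of a box of diameter $R$ in $\bbZ^d$. Hence $\mathcal N(T,d,\varepsilon)\le C(1+\lambda R/\varepsilon)^d$, with $d\le4$ only entering the constant. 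The substitution $\varepsilon=\sigma v$ turns the integral into
\[
\sigma\int_0^2\sqrt{d\log(C(1+\lambda R/(\sigma v)))}\,\dif v .
\]
This is at most $C\sigma[\log(2+\lambda R/\sigma)]^{1/2}$, because $\int_0^2\sqrt{\log(1/v)}\,\dif v<\infty$ and $\sqrt{a+b}\le\sqrt a+\sqrt b$. Absorbing the additive $\sigma$ into the constant, using $\log(2+x)\ge\log 2$, gives (i). The only care needed is that the constant is uniform in $\sigma$ and $\lambda$. This holds because after rescaling everything depends only on the ratio $\lambda R/\sigma$, and $T$ is finite so there are no measurability issues.

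For (ii), write
\[
|X|^*_T=\max_{t\in T}\max(X_t,-X_t),
\]
which is the supremum of the centered Gaussian process $(\pm X_t)$ indexed by $T\times\{\pm1\}$, with maximal variance at most $\sigma^2$. The Borell--TIS inequality gives
\[
\P\bigl(|X|^*_T>\E|X|^*_T+u\bigr)\le \rme^{-u^2/(2\sigma^2)} ,
\]
which is stronger than the stated bound with factor $2$. One can also argue directly via Lipschitz concentration: $|X|^*_T$ is a $\sigma$-Lipschitz function of a standard Gaussian vector, writing $X=AZ$ with row norms of $A$ at most $\sigma$.

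For (iii), under $\sigma\le A$ and $\lambda R/\sigma\le A$, part (i) gives $m\coloneq\E|X|^*_T\le CA\sqrt{\log(2+A)}\eqqcolon m_A$. Then
\[
\E\,\rme^{b|X|^*_T}\le \rme^{bm_A}\Bigl(1+\int_0^\infty b\,\rme^{bu}\,\P\bigl(|X|^*_T>m+u\bigr)\dif u\Bigr)\le \rme^{bm_A}\Bigl(1+2\int_0^\infty b\,\rme^{bu-u^2/(2A^2)}\dif u\Bigr),
\]
which is a finite constant $C_{A,b}$. The main obstacle is only the bookkeeping in (i), namely making the covering-number estimate uniform; everything else is standard.
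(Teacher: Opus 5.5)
Your proof is correct and follows the same route as the paper. Part (i) uses Dudley's entropy bound with the covering estimate $\mathcal N(T,d,\varepsilon)\le C(1+\lambda R/\varepsilon)^d$, part (ii) uses Borell--TIS for the signed process $(\pm X_t)$, and part (iii) integrates the tail from (ii); the only cosmetic difference is in (i), where the paper applies Dudley directly to the signed process indexed by $\{-1,1\}\times T$, while you split off $|X_{t_0}|$ and apply Dudley to the increments, which gives the same bound.
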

\begin{proof}
Apply Dudley's inequality to the signed process
$ \widetilde X_{(\varepsilon,t)}\coloneq \varepsilon X_t$, $
 (\varepsilon,t)\in\{-1,1\}\times T$.
Its supremum is $\max_{t\in T}|X_t|$, its maximal variance is at most
$\sigma^2$, and its canonical-metric covering numbers satisfy
$
 \mathcal N(\widetilde T,d_{\widetilde X},r)
 \le C(1+\frac{\lambda R}{r})^d$. 
Since its canonical diameter is at most $2\sigma$, Dudley's entropy
integral is bounded by
$
 C\sigma [\log(2+\frac{\lambda R}{\sigma})]^{1/2}. $
This proves (i), while Borell--TIS gives (ii).  Integrating the tail in
(ii) proves (iii).
\end{proof}

We now proceed to establish Lemma \ref{lem:16a}. 
 Our approach relies on
a truncated second-moment argument, complemented by a concentration estimate.  
For what follows, 
for any  $N \ge 1$, we denote by $n$ the unique integer satisfying  $  2^n < N \le 2^{n+1} $.  
For   $\eta \in (0,1)$,
$I \subset   \mathbb{R}$  and $M \ge 0$,  define  
\begin{equation}
\label{e:50}
\mathsf{\Gamma}^{\dag}_{r_{N}, \eta}(I, M) \coloneq
\bigl\{ z \in \mathsf{V}_{\eta N}  : h_{z} \in m_{N} + I, \,
h_y \le h_{z} - M, \forall y \in \mathsf{B}_{r_{N}}(z) \setminus \{z\}  \bigr\} \,.
\end{equation}
Note that $\mathsf{\Gamma}^{\dagger}_{r_N,1}(I,\omega) \neq \mathsf{\Gamma}^{*}_{r_N}(I,\omega) $ because $x \in \mathsf{\Gamma}^{\dagger}_{r_N,1}(I,\omega) $ may not belong to $\mathsf{\Gamma}^{*}_{r_N}(I)$.
We first give the following  moment estimates for this set:

\begin{lem}
\label{lem:16c}
Fix any $\eta \in (0,1)$ and $M \ge 0$. Then there exists $c_{\ref{lem:16c}}>0$ such that for all sufficiently large $N \ge 1$, for any $u$ in the range $[0, (\log \log N)^2]$ and $s \in [-(\log \log N)^2, -1]$,
\begin{equation}
\label{e:51}
\E \bigl[ \big| \mathsf{\Gamma}^{\dag}_{r_N, \eta} ([s,s+1],  M) \big| ;\; h^*_{\mathsf{V}_N} \leq m_N + u \bigr] 
\ge c_{\ref{lem:16c}}(u+1) \rme^{-\alpha s} \,,  
\end{equation}
and
\begin{equation}\label{e:51.5}
 \E \bigl[ \big| \mathsf{\Gamma}^{\dag}_{r_N, \eta} ([s,\infty), 0) \big|^2 ;\; h^*_{\mathsf{V}_{  N}} \leq m_N + u \bigr] 
\lesssim  (1+u)^{10} e^{\alpha u } \,  \rme^{-2\alpha s} .
\end{equation} 
\end{lem}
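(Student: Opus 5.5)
Both bounds come from expanding the expectation as a sum over $z\in\mathsf{V}_{\eta N}$ (for the first moment) or over pairs $(x,y)$ (for the second moment), and bounding each summand by the local estimates of Subsection~\ref{sub:4.1}.

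\textbf{First moment \eqref{e:51}.} Write the expectation as
\[
\sum_{z\in\mathsf{V}_{\eta N}}\int_s^{s+1}\P\bigl(h_z\in m_N+\dif t,\ h_{z+y}\le h_z-M\ \forall y\in\mathsf{B}_{r_N}\setminus\{\mathbf 0\},\ h^*_{\mathsf{V}_N}\le m_N+u\bigr).
\]
Proposition~\ref{prop:one-point-lower-bnd} controls the window $\mathsf{B}_r$ only for a fixed $r$, so the constraint on the annulus $\mathsf{B}_{r_N}\setminus\mathsf{B}_r$ has to be obtained separately. I apply Proposition~\ref{prop:one-point-lower-bnd} with $\omega_y=-M$ on $\mathsf{B}_r\setminus\{\mathbf 0\}$ and a cutoff $s'=t-M$ on $\mathsf{B}_{\sqrt N}(z)\setminus\{z\}$; the latter cutoff already forces $h_{z+y}\le h_z-M$ for $y\in\mathsf{B}_{r_N}\setminus\{\mathbf 0\}$ up to an $O(1)$ shift of $t$ inside $[s,s+1]$. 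This gives a per-point lower bound $c(1+u)N^{-2}\rme^{-\alpha t}\dif t$, since $(s'-t)_+=0$ is harmless. The hypotheses hold: $u\ge\max\{s',t,0\}$ because $s\le-1$, and all parameters lie in $[-10\sqrt{\log N},10\sqrt{\log N}]$. Summing over the $\asymp\eta^2N^2$ points of $\mathsf{V}_{\eta N}$ and integrating over $t\in[s,s+1]$ gives $c(u+1)\rme^{-\alpha s}$. In short, taking $s'=t-M$ with the ball constraint absorbs the whole $r_N$-neighbourhood, so no annulus estimate is needed.

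\textbf{Second moment \eqref{e:51.5}.} Expand $|\mathsf{\Gamma}^\dagger|^2$ as the diagonal part plus the sum over pairs $x\neq y$.

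The diagonal equals the first moment. By Corollary~\ref{cor-locmax} with $s=t$ it is $\lesssim(1+u)\int_s^u\rme^{-\alpha t}\,\dif t\lesssim(1+u)\rme^{-\alpha s}$, which is at most the target because $s\le -1$.

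For $x\neq y$ with $\|x-y\|\le r_N$, both points cannot be $r_N$-local maxima: each would need a value strictly larger than the other, since $M=0$ with strict maximality a.s. So these pairs contribute zero.

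For $\|x-y\|>r_N$, each point is an $r_N$-local maximum. I need $r_N\ge\rho(u-t;N)$ and $r_N\ge\rho(u-s;N)$, which holds since $u-t\le2(\log\log N)^2$ and $\rho(2(\log\log N)^2)=2^{2^{50}(\log\log N)^{100}}\ll r_N$ for $\gamma>0$. Then Proposition~\ref{prop:two-point-estimate} applies after replacing $h^*_{\mathsf{V}_N}\le m_N+u$ by the weaker event $h^*_{\mathsf{V}_{\eta'N}}\le m_N+u$. Integrating $\rme^{-\alpha(t+s)}$ over $t,s\ge s$ (restricted to values at most $u$) gives $\rme^{-2\alpha s}$.

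It remains to show
\[
\sum_{x\neq y\in\mathsf{V}_{\eta N}}\frac{1}{N^2(1+|x-y|)^2}\Bigl(\frac{\log N}{(\log_+|x-y|+1)(\log_+\tfrac{N}{|x-y|}+1)}\Bigr)^{3/2}=O(N^2).
\]
Group the pairs dyadically by $|x-y|\asymp2^k$. Each scale contributes $N^2\cdot2^{2k}\cdot N^{-2}2^{-2k}\bigl(n/(k(n-k+1))\bigr)^{3/2}$. Summing over $k$, the factor $\bigl(n/(k(n-k))\bigr)^{3/2}$ is $\lesssim k^{-3/2}$ for $k\le n/2$ and $\lesssim(n-k)^{-3/2}$ otherwise, so the sum is $O(1)$ rather than $O(\log N)$. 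Multiplying by $N^2$ for the choices of $x$ gives $O(N^2)$, which cancels the remaining $N^{-2}$ and yields $(1+u)^{10}\rme^{\alpha u}\rme^{-2\alpha s}$.

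\textbf{Main obstacle.} The delicate step is verifying the parameter ranges of the two-point estimate, namely $\rho(u-t;N)\le r_N$ and $t,s\in[-\sqrt{\log N},u]$. Values above $u$ cannot occur on the event $h^*\le m_N+u$, so the integral stops at $u$.
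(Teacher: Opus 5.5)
Your proposal is correct and follows essentially the paper's own argument: for \eqref{e:51} you apply Proposition~\ref{prop:one-point-lower-bnd} with the $\mathsf{B}_{\sqrt N}$-cutoff set to $t-M$, which absorbs the whole $r_N$-neighbourhood constraint and gives $(s-t)_+=0$; for \eqref{e:51.5} you split into the diagonal, handled by Corollary~\ref{cor-locmax}, and off-diagonal pairs at distance $>r_N$, handled by Proposition~\ref{prop:two-point-estimate} with a dyadic summation. Two small fixes: the displayed double sum is actually $O(1)$, not $O(N^2)$, since your per-scale count already includes the $N^2$ choices of $x$ and multiplying by $N^2$ again double-counts; and because the lemma allows any $\eta\in(0,1)$ while those propositions require $\eta\in(1/2,1)$, you should apply them with $(1+\eta)/2$ in place of $\eta$.
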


\begin{figure}[tbp]
	\centering
	\includegraphics[scale=0.5]{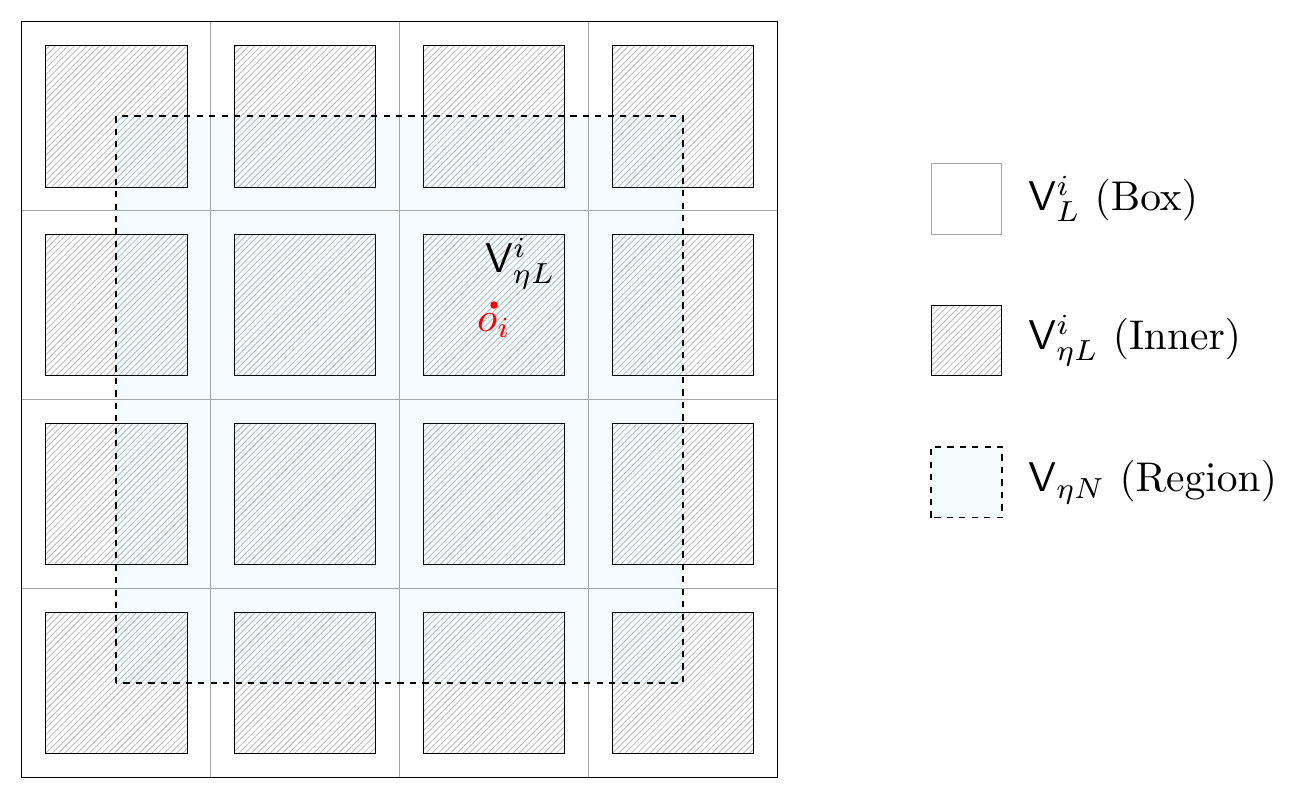}
\caption{Illustration of the decomposition of 
    $\ol{\mathsf{V}}_N$ into $K^2$ sub-boxes 
    $\ol{\mathsf{V}}^i_L$. Each hatched region
    represents an inner domain $\mathsf{V}^i_{\eta L}$.}
	\label{fig:branching}
\end{figure}

Next, fix a large integer $
K=2^{{k}}$ for some ${k} < n$. Set $L= N/K$. Since  $2N+1=K(2L+1)-(K-1)$, we can  tile $\ol{\mathsf{V}}_N = \mathsf{V}_{N} \cup \partial \mathsf{V}_{N}$ using $K^2$ translates of $\ol{\mathsf{V}}_L$, with two adjacent translates shared a column or row of vertices.
More precisely, let $o_1, \dots, o_{K^2} \in \mathsf{V}_{N}$ be such that if we set $\mathsf{V}^i_\bullet \coloneq o_i + \mathsf{V}_\bullet$, then
\begin{equation}
\label{e:57a}
\bigcup_{i=1}^{K^2} \ol{\mathsf{V}}^i_{L} = \ol{\mathsf{V}}_{N}
\ \  ; \ \ \ 
\forall i \neq j \,,\,\, \mathsf{V}^i_{L} \cap \mathsf{V}^j_{L} = \emptyset \,.
\end{equation}
See Figure \ref{fig:branching} for an illustration. 
We then decompose $h$ on $\mathsf{V}_N$ as $h = \varphi + \sum_{i=1}^{K^2} h^i$ where 
\begin{equation}
\label{e:58a}
\varphi_x \coloneq \Phi_{x}^{\mathsf{V}_{N} \mid  \cup_{j=1}^{K^2} \partial \mathsf{V}^j_L}= \E \bigl(h_x \,|\, h_y :\: y \in \cup_{j=1}^{K^2} \partial \mathsf{V}^j_L \bigr)
\  ; \    
h^i_x \coloneq \bigl( h_x - \varphi_x ) 1_{\{x \in \mathsf{V}^i_L\}}\,,
\end{equation}
for $ x \in \mathsf{V}_N $ and $i=1,\dots, K^2.$
The Gibbs--Markov property yields that,  for each $i$,
 $h^i$ is the DGFF on $  \mathsf{V}^i_L$ with zero boundary condition  
on $\partial \mathsf{V}^i_L$; 
and that these fields $(h^{i} : 1 \le i \le K^2)$ are independent of each other and also of the centered 
Gaussian field $\varphi$. 

Now, let us set
\begin{equation}
\label{e:100}
\cI_{N, \eta}  \coloneq \big \{ i = 1, \dots, K^2 :\: \mathsf{V}_L^i \subset \mathsf{V}_{\eta N} \bigr\}
\ ; \quad 
\mathsf{U}_{N,\eta} \coloneq \bigcup_{i \in \cI_{N, \eta}} \mathsf{V}_{\eta L}^i  \,.  
\end{equation}
The decomposition implies that 
\begin{equation}\label{eq-binding-1}
	\E [ 
\varphi_{x}^2] =  \E [ (h_x)^2 ]-\E [(h^{i}_{x})^2] = g \log K + O_{\eta}(1) \quad \forall x \in \mathsf{U}_{N,\eta}  .
\end{equation}
Furthermore it follows from \cite[(3.38) and (3.39)]{BL3} that  there exists a constant $C_{\eta}>0$ such that
\begin{equation}
	\label{eq-binding-2}
  \E  [ | \varphi_{x}  - \varphi_{y}  |^2 ]  = \E  [ | h_{x}  - h_{y}  |^2 ]  - \E  [ | h^{i}_{x}  - h^{i}_{y}  |^2 ]  \le C_{\eta}   \frac{|x-y|^2 }{L^2}  , \text{ for all } x ,y \in \mathsf{V}^i_{\eta L} ,\, i \in \cI_{N, \eta}.
\end{equation}

\begin{proof}[Proof of Lemma \ref{lem:16a} admitting Lemma \ref{lem:16c}] 
In this proof, we take $\eta=4/5$,
and for notation simplicity let $I=[0,1]$. Let $d:=1/8$. This argument readily extends to
general intervals. 
 
Set  $M_{\omega} \coloneq  d - \min_{y} \omega_{y} <\infty$. 
To control the binding field $\varphi$, we introduce  the quantities:
 \begin{equation*}
		|\varphi|^{*}_{i} \coloneq \max_{x \in \mathsf{V}^{i}_{\eta L}}| \varphi_x - 2d |  \  \text{ and } \  \mathrm{D}_{r_{N},\eta}(\varphi) \coloneq \max_{x \in \mathsf{U}_{N, \eta}}  \max_{y \in \mathsf{B}_{r_{N}}}|\varphi_x- \varphi_{x+y}| .  
 \end{equation*}
  
 \noindent\underline{\emph{Step 1.}} 
 We show that the event $ \{   \mathrm{D}_{r_{N},\eta}(\varphi) \leq d\}$ occurs with  high probability.   
 For pairs $(x,z)$ satisfying
$\{x,z\}\cap\mathsf{U}_{N,\eta}\ne\emptyset$ and $|x-z|\le r_N$, set
$\xi_{x,z}\coloneq\varphi_x-\varphi_z$.  Then we have 
$\mathrm{D}_{r_N,\eta}(\varphi)=\max_{x,z}|\xi_{x,z}|$. 

Apply Lemma~\ref{lem:gaussian-oscillation} to the centered Gaussian
process $(\xi_{x,z})$ indexed by these pairs, viewed as a subset of a
box in $\mathbb Z^4$. 
Using \eqref{eq-binding-2} with $\eta$ replaced by $(1+\eta)/2$,
together with the separation between distinct inner boxes, the
parameters in  Lemma~\ref{lem:gaussian-oscillation} can be taken as
$R\asymp N $, $
 \lambda\lesssim_\eta L^{-1} $, 
$ \sigma\lesssim_\eta r_N/L$. 
Consequently,
\begin{equation*}
 \E\bigl[\mathrm{D}_{r_N,\eta}(\varphi)\bigr]
 \lesssim_\eta \frac{r_N}{L}
 [\log (2+ {N}/{r_N} )]^{1/2}
 =o_N(1).
\end{equation*}
For all sufficiently large $N$,  $ \E [\mathrm{D}_{r_N,\eta}(\varphi) ]$ is at most $d/2$.
Part~(ii) of Lemma~\ref{lem:gaussian-oscillation} therefore gives
\begin{equation}
 \P\bigl(\mathrm{D}_{r_N,\eta}(\varphi)>d\bigr)
 \le 2\exp \bigl(  -c_\eta d^2 {L^2}/{r_N^2} \bigr) 
 =o_N(1).
 \label{e:56.5}
\end{equation}   
 
  \smallskip
  \noindent\underline{\emph{Step 2.}} Let $s \in [-(\log\log N)^{3/2},-(\log\log N)^{1/2}]$, and denote $s_K= s+ m_N-m_L = s + 2\sqrt{g}\log K +o_N(1)$.   
  Let $\mathsf{\Gamma}^{\dag, i}_{r_N,\eta}$ be defined as in~\eqref{e:50}, but with $m_L$, $h^i$ and $\mathsf{V}^i_{\eta L}$ in 
place of $m_N$, $h$ and $\mathsf{V}_{\eta N}$. 
 We claim that on the event $ \{   \mathrm{D}_{r_{N},\eta}(\varphi) \leq d \}$, 
 \[ 
 \text{ if }  |\varphi|^{*}_{i}\le d, \text{ then }   \mathsf{\Gamma}^{\dag,i}_{r_N, \eta}  ( [s_K,s_K+d], M_{\omega}  ) \subset  \bigl(\mathsf{\Gamma} ([s,s+1])\cap  \mathsf{\Gamma}^*_{r_N}([s, \infty), \omega)  \bigr) \cup \bigl(   \mathsf{\Gamma}([s,s+1]) \backslash \Gamma_{r_{N}}([s,\infty))\bigr) .
 \] 
To verify this, since $h_{x}-m_N-s=h^{i}_{x}-m_L-s_K+ \varphi_{x} \in [0,1] $, we have $x \in \mathsf{\Gamma}([s,s+1])$. If in addition $x \in \mathsf{\Gamma}_{r_{N}}([s,\infty)) $, then for any  $y \in \mathsf{B}_{r_{N}}\setminus \{0\}$, we have 
\begin{align*}
	h_{x+y}  =  h^{i}_{x+y} + \varphi_{x+y}  & \le   h^{i}_x - M_{\omega}   + (\varphi_{x+y}-\varphi_{x})+\varphi_{x}  \\
	& \le h^{i}_x - M_{\omega}   + d + \varphi_{x} \le   h_x+ \omega_{y}.
\end{align*}
This implies that $x $ lies in $ \mathsf{\Gamma}^*_{r_N}([s, \infty), \omega)$. Consequently,  on the event $\{  \mathrm{D}_{r_{N},\eta}(\varphi) \leq d \}$, we have 
\begin{align}
  & \big| \mathsf{\Gamma}([s,s+2]) \cap \mathsf{\Gamma}^*_{r_N}([s, \infty), \omega)\big| + \big| \mathsf{\Gamma}([s, s+2]) \setminus \mathsf{\Gamma}_{r_N}([s, \infty)) \big|  \notag \\
  &\qquad  \ge 
  \sum_{i \in \cI_{N, \eta}} \big| \mathsf{\Gamma}^{\dag, i}_{r_N, \eta} \bigl( [s_K,s_K+1],  \underline{\omega} \bigr) \big|   \ge    \sum_{i \in \cI_{N, \eta}} \varGamma_i \ind{|\varphi|^{*}_{i}  \le d }   =
  \sum_{i \in \cI^{\le 1}_{N, \eta}} \varGamma_i\,,
  \label{e:57} 
\end{align}
where for notation simplicity we denote 
\[ \varGamma_i \coloneq  | \mathsf{\Gamma}^{\dag,i}_{r_N, \eta}  ( [s_K,s_K+d],  M_{\omega}   )  | \, \mathbf{1}_{\{ h^{*i}_{V^{i}_L} \leq m_L  \}}  \quad \text{and} \quad  \mathcal{I}^{\le d}_{N, \eta} \coloneq \{ i \in | \mathcal{I}_{N, \eta} |: |\varphi|^{*}_{i}  \le d \}. 
\]
Note that $(\varGamma_i)_{i  \in \cI_{N, \eta}}$  are i.i.d.  and  independent of $\varphi$.  Let $i_0$ be any element in $\mathcal{I}_{N, \eta}$.  Conditionally on   $\varphi$,  the  Chebyshev  inequality (and the trivial bound when  $\cI_{N, \eta}^{\le d}$ is empty)   gives 
\begin{equation*}
\P  \Bigl(  \sum_{i \in \cI_{N, \eta}^{\le d} } \varGamma_i  
\leq  
 \frac{1}{2}   \E |\varGamma_{i_0} | \,  |\cI_{N, \eta}^{\le d} |
	\mid \varphi  \Bigr) 
\lesssim  \frac{1}{   |\cI_{N, \eta}^{\le d} | \vee 1} \ \frac{ \E |\varGamma_{i_0}^{2} | } {( \E [\varGamma_{i_0}  ])^2 } . 
\end{equation*}
The moment estimates \eqref{e:51} and \eqref{e:51.5} imply that the   ratio ${ \E |\varGamma_{i_0}^{2} | }/ {( \E [\varGamma_{i_0}  ])^2 }  $ is  bounded above  by a constant   
$c_{\eta,\underline{\omega}}>0$. 
Taking expectation on both sides, we obtain 
\begin{equation}
	\label{eq:sum-Gamma-i}
\P  \Bigl(  \sum_{i \in \cI_{N, \eta}^{\le d} } \varGamma_i  
\leq  
\frac{1}{2}   \E |\varGamma_{i_0} | \,  |\cI_{N, \eta}^{\le d} |
   \Bigr)   
	\le \, c_{\eta,\underline{\omega}} \E \Bigl[   \frac{1}{   |\cI_{N, \eta}^{\le d} |\vee 1 }  \Bigr] .
\end{equation}

  \noindent\underline{\emph{Step 3.}}   
It is sufficient to show that  $|\cI_{N, \eta}^{\le d} | =  \sum_{i \in \cI_{N, \eta}} \ind{|\varphi|^{*}_{i}  \le d }   $ diverges to infinity with high probability as $K \to \infty$. Note that $\ind{|\varphi|^{*}_{i}  \le d }   $  are correlated random variables.  
To extract the independence, we will decompose $\varphi$ as follows.  Fix any small constant $\theta \in (0,1/4)$. Let $ \wt{L}= K^{\theta} L = N/K^{1-\theta}$.
By a grid packing, we may choose
$\wt{\mathcal I}_{N,\eta}\subset\cI_{N,\eta}$ so that the boxes
$(\mathsf{V}_{\wt{L}}^i:i\in\wt{\mathcal I}_{N,\eta})$ are disjoint and
contained in $\mathsf{V}_{\eta N}$, with $|\wt{\mathcal I}_{N,\eta}| \asymp (N/\wt{L})^2  =K^{2-2\theta}$. Here $\mathsf{V}_{\wt{L}}^i$ is centred at
$o_i$ and contains $\mathsf{V}_L^i$.

Apply the Gibbs--Markov property first to the disjoint domains
$\mathsf{V}_{\wt{L}}^i \ ,  i\in\wt{\mathcal I}_{N,\eta}$, and then inside
each of them to $\mathsf{V}_L^i$. This gives the
exact decomposition
\begin{equation*}
 \varphi_x=\phi_x+\wt{\varphi}^{(i)}_{x} \ ,
 \  i\in\wt{\mathcal I}_{N,\eta} \ \text{ and }\  x \in \mathsf{V}^{i}_{L}\,.
\end{equation*}
Here $\phi$ is the conditional expectation of $h$ given the values of $h$ on   the
boundaries of these $\wt{L}$-boxes, while $\widetilde\varphi^{(i)}$ is
the binding field from $\mathsf{V}_{\wt{L}}^i$ to $\mathsf{V}_L^i$: that is, 
\[  (\phi_x) \overset{\mathrm{law}}=\Phi_{x}^{ \mathsf{V}_N \mid \cup_{i\in \wt{\mathcal{I}}_{N,\eta} } \partial \mathsf{V}^{i}_{\wt{L} }  } \quad   \text{ and } \quad  (\wt{\varphi}^{(i)}_x)  \overset{\mathrm{law}}= (\Phi_{x}^{\mathsf{V}^{i}_{ \wt{L} } \mid   \partial \mathsf{V}^i_L} ) .   \]
 The
fields $(\widetilde\varphi^{(i)}:
i\in\wt{\mathcal I}_{N,\eta})$ are independent and identically
distributed, and they are independent of $\phi$. Thus, 
conditioned on $\phi$,   the Bernstein inequality implies
\begin{equation}
	\label{eq:Bern-varphi-sum}
	\P\biggl[ \, \sum_{i\in\wt{\mathcal I}_{N,\eta} } \ind{|\varphi|^{*}_{i}  \le d } \le \frac{1}{2}   \sum_{i\in\wt{\mathcal I}_{N,\eta}} \P[|\varphi|^{*}_{i}  \le d \mid \phi ]  \, \Big | \, \phi \biggr] \le \exp \Bigl( - \frac{1}{100} \sum_{i} \P[|\varphi|^{*}_{i}  \le d \mid \phi ]   \Bigr)\,.
\end{equation} 
From \eqref{eq:Bern-varphi-sum} we deduce that 
\begin{equation}
	\P \Bigl( |\cI_{N, \eta}^{\le d} | \le K^{2-3\theta}  ,   \sum_{i \in \wt{\mathcal{I}}_{N,\eta} } \P[|\varphi|^{*}_{i}  \le 1 \mid \phi ] \ge 2K^{2-3\theta}  \Bigr) \lesssim e^{- K^{2-3\theta} /100} .
	\label{eq:cond-exp-large}
\end{equation}

  We further prove in Step 4 that  
\begin{equation} 	\P \Bigl( 
\sum_{i \in \wt{\mathcal{I}}_{N,\eta} } \P[|\varphi|^{*}_{i}  \le d \mid \phi ]  \le 2K^{2-3\theta} \Bigr) \lesssim \frac{1}{\log K } .
\label{eq:cond-exp-1} 
\end{equation}
 Using $\E[ \Gamma_{i_0}] \gtrsim _{\eta} e^{-\alpha s_K} \gtrsim   K^{-4} e^{-\alpha s} $ by \eqref{e:51}, and   substituting \eqref{eq:cond-exp-large}, \eqref{eq:cond-exp-1} into \eqref{eq:sum-Gamma-i} yields that  
\begin{equation}
	\label{eq:sum-Gamma-i-growing}
	\limsup_{N \to \infty} \sup_{s \in [-(\log\log N)^{3/2}, (\log\log N)^{1/2}]}  \P  \Bigl(  \sum_{i \in \cI_{N, \eta}^{\le 1} } \varGamma_i  
\leq  2 K^{-2-3\theta} e^{-\alpha s}
   \Bigr)  \lesssim\frac{1}{\log K }  \xrightarrow{K \to \infty} 0.	  
\end{equation} 
  Moreover, from   \eqref{e:38} in the proof of Theorem \ref{thm:non_reachable_traps}, we have,  for any $K>0$
\begin{equation}\label{eq:hei-s-nis} 
  \limsup_{N \to \infty}   \sup_{s \in [-(\log\log N)^{3/2}, (\log\log N)^{1/2}]} \,
\P  \Bigl( | \mathsf{\Gamma}([s, \infty)) \setminus \mathsf{\Gamma}_{r_N}([s, \infty)) |   > K^{-4} e^{-\alpha s} \Bigr)  = 0 .
\end{equation}   
Finally, combining \eqref{eq:hei-s-nis},   \eqref{eq:sum-Gamma-i-growing} with \eqref{e:57} and   \eqref{e:56.5},  
we obtain the desired  estimate \eqref{e:65} by taking $K$ sufficiently large.

\medskip   \noindent\underline{\emph{Step 4.}}  It remains to prove the claim  \eqref{eq:cond-exp-1}.  
We bound $\sum_{i \in \wt{\mathcal{I}}_{N,\eta} } \P[|\varphi|^{*}_{i}  \le d \mid \phi ] $ from below by
\[     \sum_{i \in \wt{\mathcal{I}}_{N,\eta} } \P \bigl[  |\wt{\varphi}^{(i)}_{o_{i}}  + \phi_{o_{i}} -2d| \le d/2 \mid \phi \bigr]
\, \P \bigl[ \max_{x \in \mathsf{V}^{i}_{\eta L}} |\wt{\varphi}^{(i)}_{x}-\wt{\varphi}^{(i)}_{o_{i}} | \le d/4  \mid \wt{\varphi}^{(i)}_{o_{i}}  \bigr]
\,  \mathbf{1}_{  \bigl\{ \max\limits_{x \in \mathsf{V}^{i}_{\eta L}} |\phi_x - \phi_{o_{i}}| \le d/4  , |\phi_{o_{i}}| \le \sqrt{\log K} \bigr\} }.
\]
The Green-function estimates used in \eqref{eq-binding-1}--
\eqref{eq-binding-2}, now at the scales $\wt{L},L$, give 
\begin{align}
 &\Var(\phi_{x}) = (1-\theta) g  \log K + O(1)
 \quad ,  \quad   \Var(\wt{\varphi}^{(i)}_{x})
 =\theta g\log K+O (1) \ , \ x \in  \mathsf{V}^{i}_{\eta L} \ ;
 \label{eq:new-var}\\
 &\E[(\phi_x-\phi_y)^2]
 \leq C \frac{|x-y|^2}{{\wt{L}}^2}  \quad ,  \quad 
 \E[(\wt{\varphi}^{(i)}_{x}-\wt{\varphi}^{(i)}_{y})^2]
  \leq C \frac{|x-y|^2}{L^2} \ ,  \ x,y\in\mathsf{V}_{\eta L}^i.
 \label{eq:new-increments}
\end{align} 
 
\begin{enumerate}[(1)]
	\item  By \eqref{eq:new-var} and the  independence, there is  a constant $c_{\theta}>0$,
\[  \P \bigl[  |\wt{\varphi}^{(i)}_{o_{i}}  + \phi_{o_{i}} -2d| \le d/2 \mid \phi \bigr] \ind{ |\phi_{o_{i}}| \le \sqrt{\log K} }\ge  \frac{c_{\theta} }{\sqrt{\log K}} . \] 
\item We assert that  there is a constant  $p_0>0$, independent of $N$, $K$ and $i$, such that 
\begin{equation}
	\label{eq:max-phi-i}
	\P \bigl[ \max_{x \in \mathsf{V}^{i}_{\eta L}} |\wt{\varphi}^{(i)}_{x}-\wt{\varphi}^{(i)}_{o_{i}} | \le d/4  \mid \wt{\varphi}^{(i)}_{o_{i}}  \bigr] \,  \ind{ |\wt{\varphi}^{(i)}_{o_{i}}| \le 2 \sqrt{\log K } }   \ge p_0\,.
\end{equation}
The gaussian orthogonal decomposition yields the field $Y^{(i)}$, define as  $Y_x^{(i)}\coloneq\wt{\varphi}^{(i)}_{x}- q_x \varphi_{o_i}^{(i)} $ with $  q_x\coloneq
 {\E[ \wt{\varphi}^{(i)}_{x} \wt{\varphi}^{(i)}_{o_{i}}]}/
 {\Var(\wt{\varphi}^{(i)}_{o_{i}})}$ for every  $x \in \mathsf{V}^{i}_{\eta L}$, 
is independent of $\wt{\varphi}^{(i)}_{o_{i}}$.
Moreover, from \eqref{eq:new-var},  it follows that $ \sup _{x\in\mathsf{V}_{\eta L}^i} |q_x-1|
 \leq\frac{C }{\log K}$. Since $|\wt{\varphi}^{(i)}_{x}-\wt{\varphi}^{(i)}_{o_{i}} | \le |Y^{(i)}_{x}| + |q_x-1| |\wt{\varphi}^{(i)}_{o_{i}}  |$, \eqref{eq:max-phi-i} follows immediately from the following assertion: 
There is a constant $p_0>0$, independent of $N$, $K$ and $i$, such
that
\begin{equation}
 \P\Bigl(\max_{x\in\mathsf{V}_{\eta L}^i}|Y_x^{(i)}|
             \leq d/2\Bigr)\geq p_0.
 \label{eq:new-residual-small-ball}
\end{equation}

This follows from chaining and the Gaussian correlation inequality. 
For each $j=0,\ldots, \log_{2} L$, let $\mathcal B_j$ be a partition of
$\mathsf{V}_{\eta L}^i$ into boxes whose side lengths are comparable
to $2^{-j}L$. We choose these partitions recursively: starting from
$\mathcal B_0=\{\mathsf{V}_{\eta L}^i\}$, obtain $\mathcal B_j$ by
bisecting each box in $\mathcal B_{j-1}$ in both coordinate
directions. Thus the partitions are nested, and every
$B\in\mathcal B_j$ is contained in a unique parent
$B^-\in\mathcal B_{j-1}$. 
Choose a lattice point $v_B\in B$ for every $B\in\mathcal B_j$, taking
$v_{\mathsf{V}_{\eta L}^i}=o_i$. At the terminal level, each box
contains exactly one lattice point, which is chosen as its
representative.  
Since $\sum_{j\ge1}\frac{d}{(j+4)^2}
\le \frac{d}{2}$, we have 
\[  
\bigcap_{j=1}^{\log_2 L}\Bigl\{ \forall \,  B \in \mathcal{B}_{j} \, , \,  \bigl|Y_{v_B}^{(i)}-Y_{v_{B^-}}^{(i)}\bigr|
\le \frac{d}{(j+4)^2}   \Bigr\} \subset \Bigl\{ \forall \, x \in \mathsf{V}^{i}_{\eta L} , \, |Y_x^{(i)}|
\le \frac{d}{2}  \Bigr\} . 
\]   
Since $v_B$ and $v_{B^-}$ both belong to $B^-$, we have $ 
|v_B-v_{B^-}|\le C2^{-j}L$. This implies  
$
\E [
 (Y_{v_B}^{(i)}-Y_{v_{B^-}}^{(i)} )^2
 ]\le C' 4^{-j}$ by \eqref{eq:new-increments}. 
Since $|\mathcal{B}_{j}| \asymp 4^{j}$, the union bound and Gaussian tail bound give 
\[
\P \Bigl(   \exists B \in \mathcal{B}_{j}, 
\bigl|Y_{v_B}^{(i)}-Y_{v_{B^-}}^{(i)}\bigr|
> \frac{d}{(j+4)^2}
\Bigr)
\leq C'  4^{j} \exp \Bigl\{  -c_\eta\frac{4^j}{(j+4)^4} \Bigr\}.
\]
By the Gaussian correlation inequality
\cite[Theorem 1]{Royen14},  we have 
\[
 \P \biggl(  \bigcap_{j=1}^{\log_2 L}\Bigl\{ \forall \,  B \in \mathcal{B}_{j} \, , \,  \bigl|Y_{v_B}^{(i)}-Y_{v_{B^-}}^{(i)}\bigr|
\le \frac{d}{(j+4)^2}   \Bigr\}  \biggr) \ge \prod_{j=1}^{\infty}  \P \Bigl(   \forall B \in \mathcal{B}_{j}, 
\bigl|Y_{v_B}^{(i)}-Y_{v_{B^-}}^{(i)}\bigr|
> \frac{d}{(j+4)^2}
\Bigr) . 
\]
Since $\sum_{j} 4^{j} \exp \{  -c_\eta\frac{4^j}{(j+4)^4} \}<\infty$,   
the product above  
is bounded below by  
a constant $p_0>0$, independent of 
$N$, $K$ and $i$. 
This proves \eqref{eq:new-residual-small-ball}.

\item Apply Lemma~\ref{lem:gaussian-oscillation} to the centered
Gaussian process $(\phi_x-\phi_{o_i}:x \in \mathsf{V}_{\eta L}^i)$.  
By \eqref{eq:new-increments}, its parameters may be taken as
$  R\asymp_\eta L $, $
 \lambda\lesssim_\eta\wt L^{-1} $ and $
 \sigma\lesssim_\eta L/\wt L=K^{-\theta}$. 
Parts~(i)  of Lemma~\ref{lem:gaussian-oscillation} imply
$ \E [\max_{x\in\mathsf{V}_{\eta L}^i}|\phi_x-\phi_{o_i}|]
 \lesssim_\eta K^{-\theta}$. 
Using the union bound and Lemma~\ref{lem:gaussian-oscillation}  (ii)  we get for all  large $K$,
\[
 \P \Bigl( \max_{i \in \wt{\mathcal I}_{N,\eta} } 
 \max_{x\in\mathsf{V}_{\eta L}^i}|\phi_x-\phi_{o_i}|>d/4
 \Bigr)
 \lesssim_\eta K^2 \, \exp\{-c_\eta K^{2\theta}\}.
\]

  \item   Set $
 Q_K\coloneq \sum_{i\in\wt{\mathcal I}_{N,\eta}}
 \ind{|\phi_{o_i}|\leq\sqrt{\log K}}$.   Moreover, the variance estimate \eqref{eq:new-var}   shows that
$\E Q_K\asymp | \wt{\mathcal I}_{N,\eta}|$. To bound the variance of $Q_K$, we use $\Var(Q_K) \le \E[Q_K]+\sum_{i \neq j} \Cov( \ind{|\phi_{o_i}|\leq\sqrt{\log K}}, \ind{|\phi_{o_j}|\leq\sqrt{\log K}})$. 
The  Gaussian maximal correlation inequality \cite{Geb41} yields 
\begin{equation*}
 \Big|\Cov \bigl(  
 \ind{|\phi_{o_i}|\leq\sqrt{\log K}},
 \ind{|\phi_{o_j}|\leq\sqrt{\log K}}\bigr) \Big| 
 \lesssim \frac{ \E[ \phi_{o_i} \phi_{o_j} ] }{\sqrt{\Var(\phi_{o_{i}}) }  \sqrt{\Var(\phi_{o_{j}}) } } \lesssim \frac{1+\log(N/|o_i-o_j|)}{\log K} . 
\end{equation*}
For each fixed $i$, the regular-grid geometry implies that the number of
centres whose distance from $o_i$ is between
$q \wt{L}$ and
$(q+1) \wt{L}$ is $O (q)$. Summing over
these annuli therefore gives
\begin{align*}
  \Var(Q_K) \lesssim   
  |\wt{\mathcal I}_{N,\eta}|+  \frac{ |\wt{\mathcal I}_{N,\eta}|}{\log K}
 \sum_{q \leq C N/\wt{L} }
 q \Bigl(   1+\log_+
 \frac{N}{q \wt{L} } \Bigr) 
 \lesssim   K^{2-2\theta} + 
 \frac{ K^{4-4\theta} }{\log K}.
\end{align*}
  Chebyshev's inequality gives $\P\left(Q_K\leq  \E[Q_K] /2 \right)
 \lesssim_\eta\frac1{\log K} $. 
\end{enumerate} 
 All together, the required claim \eqref{eq:cond-exp-1}  follows.
\end{proof}

\smallskip  
\begin{proof}[Proof of Lemma \ref{lem:16c}]
By the linearity of expectation, we lower bound the expectation
in~\eqref{e:51} by
\begin{align*} 
& \sum_{z \in \mathsf{V}_{\eta N}} 
\int_{s}^{s+1} \P  \bigl( h_z \in m_N +  \dif t  , h^{*}_{\mathsf{B}_{r_{N}}(z) \setminus \{z\}} \le  m_{N} + t -M , 
	  h^*_{\mathsf{V}_N} \leq m_N + u   \bigr)  \gtrsim (1+u)  e^{-\alpha s}\,,
\end{align*}
as required. Indeed, for $t \in [s,s+1]$ we have $t \le 0 \le u$, and $r_N \le \sqrt{N}$ for all sufficiently large $N$. Thus, the last inequality follows from \eqref{e:33} in Proposition \ref{prop:one-point-lower-bnd}, applied with $\eta_0=(1+\eta)/2$, $r=1$, $\omega\equiv0$, and with the parameter $s$ there set to $t-M$.

Turning to the second estimate \eqref{e:51.5},  note that $\E  [    | \mathsf{\Gamma}^{\dag}_{r_N,\eta} ( [s,\infty),0)  |^2 ; h^*_{\mathsf{V}_{N}} \leq m_N + u  ]$  can rewritten as 
\begin{equation}\label{eq-two-point-1}
  \sum_{x,y \in \mathsf{V}_{\eta N}}
	\P  \bigl( h_x = h^*_{x + \mathsf{B}_{r_N}} \geq m_N + s ,\,  h_y = h^*_{y + \mathsf{B}_{r_N}} \geq m_N + s ,\,  h^*_{\mathsf{V}_{N}} \leq m_N + u \bigr) .
\end{equation}
We decompose this sum into diagonal ($x=y$) and off-diagonal ($x \neq y$) terms.
 
For the diagonal terms, since $r_{N} \ge \rho(u-t)$, we may directly apply
the bound from Corollary~\ref{cor-locmax} to obtain 
\[ \sum_{x \in \mathsf{V}_{\eta N}}
	\P  \bigl( h_x = h^*_{x + \mathsf{B}_{r_N}} \geq m_N + s ,\,     h^*_{\mathsf{V}_N} \leq m_N + u \bigr) \lesssim (1+u) e^{-\alpha s}. \]  
 For the off-diagonal terms, note that the probability in \eqref{eq-two-point-1}
vanishes unless $|y-x| \ge r_{N}$.
Recall that $ 2^n < N \le 2^{n+1} $.  Since $|x-y|<2N\le 2^{n+2}$,  applying Proposition~\ref{prop:two-point-estimate}  with its parameter $\eta$ replaced by $\eta_0=(1+\eta)/2$  yields
\begin{align}
& \sum_{x \in \mathsf{V}_{\eta N}} \sum_{\ell=\log_2 r_N}^{n+2} \sum_{\substack{y \in \mathsf{V}_{\eta N}, \\ 2^{\ell-1}
\leq |x-y| < 2^\ell}}
	\int_{[s,u]^2}
		\P  \bigl( h_x = h^*_{x + \mathsf{B}_{\rho(u-t)}} \in m_N + \dif t ,\,  h_y = h^*_{y + \mathsf{B}_{\rho(u-t')}} \in m_N + \dif t' ,\,
	h^*_{\mathsf{V}_{N}} \leq m_N + u \bigr) \notag\\
& \lesssim (1+u)^{10} e^{\alpha u } e^{-2 \alpha s} \sum_{\ell=\log_2 r_N}^{n+2} \Bigl(
\frac{n+1}{ \ell[1\vee(n+1-\ell)]} \Bigr)^{3/2} \sum_{y \in \mathsf{B}_{2^\ell} \setminus \mathsf{B}_{2^{\ell-1}}} (1+|y|)^{-2} \lesssim (1+u)^{10} e^{\alpha u }   \rme^{-2\alpha s} \label{eq-two-point-2} .
\end{align}
Combining the estimates for the diagonal and off-diagonal contributions,  \eqref{e:51.5} follows.
\end{proof}
\smallskip

\begin{proof}[Proof of Theorem~\ref{thm:regular traps2}]  


For $\omega \in \mathbb{R}_{-}^{\mathbb{Z}^2}$ satisfying   
$\supp(\omega) \subset \mathsf{B}_r$  and $\omega_{\mathbf{0}} = 0$,  let us define 
\begin{equation*}
 \mathsf{NT}^*(\delta ; I, \omega) \coloneq  \{ x \in \mathsf{NT}^*(\delta ) :  \wh{\tau}_x \in I , h_{x+y} \le h_{x} + \omega_{y} ,\,\forall\, y \in  \mathsf{B}_r\} .
\end{equation*} 
Let $\nu$ be the cluster law in \eqref{e:1.10i} (see also \cite[Theorem 2.3]{BL3}).  We write $\nu(\omega) \coloneq \nu (\prod_{y \in \bbZ^2} (-\infty, \omega_y] )$. Define the distribution $\sigma_{\beta}$ on $(0,1]^{\mathbb{Z}^2}$ by  $\sigma_{\beta}(A)= \nu( \{(\omega_{y})_{y \in \mathbb{Z}^2}:   (e^{\beta \omega_{y}})_{y \in \mathbb{Z}^2} \in A \})$.
Equation \eqref{e:16.2} follows once we  establish   
\begin{equation} \label{eq-counting-1}  \limsup_{N \to \infty}\P\biggl( \Big|      \frac{  | \mathsf{NT}^*(\delta  ; I, \omega) | }
 { |\mathsf{NT}^*(\delta ) |}   \, 
- \, \frac{\int_{I} t^{-(\alpha/\beta + 1)} \rmd t}{\int_{\delta}^{\delta^{-1}} t^{-(\alpha/\beta + 1)} \rmd t} \nu(\omega)  \Big| > \epsilon \biggr)=0.
\end{equation}

Fix $\eta \in (0,1)$, and recall that $K=2^{{k}}$ with ${k} \in \mathbb{N}$.
We work with the decomposition introduced in \eqref{e:57a}, \eqref{e:58a}, and \eqref{e:100} (see also Figure \ref{fig:branching}). 
For any $u > 0$, define 
\begin{equation*}
	\cI_{N, \eta ,u} \coloneq  \bigl\{ i  \in \cI_{N, \eta} :  h^{*}_{ \mathsf{V}^{i}_{L} } \le m_{N}   +u \bigr\}  \ ; \ \mathsf{U}_{N,\eta,u} \coloneq \bigcup_{i \in \cI_{N, \eta,u}} \mathsf{V}_{\eta L}^i  \,.  
\end{equation*}
We will need several modifications of the set $\mathsf{\Gamma}$. 
  Accordingly,  for $I\subset \mathbb{R}$ and $\omega \in \bbR^{\bbZ^2}_{-}$ satisfying   
$\supp(\omega) \subset \mathsf{B}_r$  and $\omega_{\mathbf{0}} = 0$, we set 
\begin{equation*} 
\mathsf{\Gamma}^{\ddagger}_{\eta,{k},u}(I, \omega)  \coloneq \bigcup_{i \in \cI_{N, \eta ,u}} \bigl\{ 
	x \in \mathsf{V}_{\eta L}^i : h_x = h^*_{x+ \mathsf{B}_{r_{N}} } \in m_{N}+I ,\,  h_{x+y} \le h_x + \omega_{y} \, \forall \, y \in  \mathsf{B}_r \bigr\} \,,
 \end{equation*}
and write $\mathsf{\Gamma}^{\ddagger}_{\eta,{k},u}(I)$ as  shorthand for $\mathsf{\Gamma}^{\ddagger}_{\eta,{k},u}(I, 0)$.

Our first step is to demonstrate that $\mathsf{NT}^*$ in \eqref{eq-counting-1} can be
effectively replaced by $\mathsf{\Gamma}^{\ddagger}_{\eta,{k},u}$.
 
\begin{lem} \label{lem:15}
Fix $\delta \in (0,1/2]$ and  $\omega \in \bbR_{-}^{\bbZ^2}$ satisfying   
$\supp(\omega) \subset \mathsf{B}_r$  and $\omega_{\mathbf{0}} = 0$. Let $I \subset [\delta,\delta^{-1}]$ and set    $J \coloneq  \{ t: e^{\beta t} \in I \}$.    
Then for any  $\epsilon > 0$   
\begin{equation} \label{e:68}
 \lim_{u \to \infty} \,\limsup_{\eta \uparrow 1   }\,  \sup_{{k} \ge 1}\,  \limsup_{ N \to \infty} \, \P( \big| | \mathsf{NT}^*(\delta  ; I, \omega)
    \triangle \mathsf{\Gamma}^{\ddagger}_{\eta,{k},u}(s_N + J, \omega) \big|
   > \epsilon \kappa_N )  =0.
\end{equation}
\end{lem}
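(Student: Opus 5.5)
We compare the two sets and show that each direction of the symmetric difference is at most a small multiple of $\kappa_N$. The first step is to translate both into $\mathsf{\Gamma}$-language. By \eqref{e:28b}, $x \in \mathsf{NT}^*(\delta;I,\omega)$ means three things: $h_x \in m_N+s_N+J$; $h_{x+y}\le h_x+\omega_y$ for $y\in\mathsf{B}_r$; and $x \in \mathsf{\Gamma}^*_{r_N}([s_N^-(\delta),\infty))$. The last condition says $x$ is the $r_N$-local maximum and is $r_N$-isolated at level $s_N^-(\delta)$. For $\mathsf{\Gamma}^{\ddagger}_{\eta,k,u}(s_N+J,\omega)$ the conditions are the same local-max and $\omega$ constraints, but two changes appear. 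The isolation requirement is replaced by the restriction $x\in \mathsf{U}_{N,\eta,u}$, i.e. $x$ lies in an inner box $\mathsf{V}^i_{\eta L}$ with $h^*_{\mathsf{V}^i_L}\le m_N+u$. Since $J\subset[\beta^{-1}\log\delta,-\beta^{-1}\log\delta]$, both sets live at heights $m_N+[s_N^-(\delta),s_N^+(\delta)]$.

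The second step bounds $\mathsf{NT}^*(\delta;I,\omega)\setminus\mathsf{\Gamma}^{\ddagger}$. A point in this difference is a $\delta$-normal trap-bottom that either lies outside $\mathsf{U}_{N,\eta}$, or lies in a box $\mathsf{V}^i_L$ where $h^*_{\mathsf{V}^i_L}>m_N+u$. For the points outside $\mathsf{U}_{N,\eta}$, we apply \eqref{e:17} of Corollary~\ref{cor-locmax} with $s=t$, as in \eqref{e:34}, but summing only over $z\notin\mathsf{U}_{N,\eta}$. This set has cardinality $\lesssim(1-\eta)N^2$, uniformly in $k$, since it is a union of the outer rims of each $L$-box together with a boundary layer. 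The expected count on $\{h^*_{\mathsf{V}_N}\le m_N+u'\}$ is therefore $\lesssim(1-\eta)(u'+1)\delta^{-\alpha/\beta}\kappa_N$. Markov's inequality and \eqref{e:19} then dispose of it as $\eta\uparrow1$ and afterwards $u'\to\infty$.

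For the points in a box with a high maximum, on $\{h^*_{\mathsf{V}_N}\le m_N+u\}$ no box has $h^*_{\mathsf{V}^i_L}>m_N+u$. Hence this contribution vanishes off an event whose probability is $\lesssim(u+1)e^{-\alpha u}$ by \eqref{e:19}, and that bound goes to $0$ as $u\to\infty$ uniformly in $k,\eta,N$. This is why $u\to\infty$ is the outermost limit.

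The third step bounds $\mathsf{\Gamma}^{\ddagger}\setminus\mathsf{NT}^*(\delta;I,\omega)$. A point here is an $r_N$-local maximum at height in $m_N+s_N+J\subset m_N+[s_N^-,s_N^+]$ satisfying the $\omega$-constraint, but it fails to belong to $\mathsf{\Gamma}_{r_N}([s_N^-(\delta),\infty))$. So it is a non-isolated point of $\mathsf{T}(\delta)$, that is, a point of $\mathsf{T}(\delta)\setminus\mathsf{IT}(\delta)$. By \eqref{e:11} of Theorem~\ref{thm:non_reachable_traps}, equivalently \eqref{e:38} with $s=s_N^-(\delta)$, this set has size at most $\epsilon\kappa_N$ with probability tending to one, for each fixed $\delta$ and uniformly in $k,\eta,u$.

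I should double-check that local maximality in $\mathsf{\Gamma}^{\ddagger}$ (the condition $h_x=h^*_{x+\mathsf{B}_{r_N}}$) matches the $\mathsf{IT}^*$ requirement exactly. The definitions agree: $\mathsf{IT}^*$ asks $\wh\tau_x=\max_{\mathsf{B}_{r_N}}$, so once $x\in\mathsf{IT}(\delta)$ the two membership conditions coincide. The main obstacle I anticipate is the uniformity in $k$ in the second step: the near-boundary estimate of the inner boxes must be applied with the bound \eqref{e:17}, whose constants do not depend on the distance to $\partial\mathsf{V}_N$ beyond $\wt N$. The only concern is that $\rho(u-t;\wt N)\le r_N$, which holds since the relevant $u-t$ is $O(\log\log N)$. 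Combining the three steps gives \eqref{e:68}.
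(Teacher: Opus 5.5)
Your decomposition of the symmetric difference is exactly the paper's, and the bookkeeping in your first and third steps is correct. The step that fails is the claim that $|\mathsf{V}_N\setminus\mathsf{U}_{N,\eta}|\lesssim(1-\eta)N^2$ \emph{uniformly in $k$}.

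Your ``boundary layer'' is the union of the boxes $\mathsf{V}^i_L$ with $i\notin\cI_{N,\eta}$. It always contains every box adjacent to $\partial\mathsf{V}_N$. Such a box contains vertices with a coordinate equal to $\pm(N-1)$, and these lie outside $\mathsf{V}_{\eta N}$ as soon as $N\ge(1-\eta)^{-1}$. This ring has $\asymp N^2/K$ vertices whatever $\eta$ is. So the correct bound is $|\mathsf{V}_N\setminus\mathsf{U}_{N,\eta}|\lesssim(1-\eta+2^{-k})N^2$, which is small only when $\eta\uparrow1$ \emph{and} $k\to\infty$.

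This is not just a lossy estimate: the statement with $\sup_{k\ge1}$ is false. For $k=1$ the four boxes are quadrants, each touching $\partial\mathsf{V}_N$. Hence $\cI_{N,\eta}=\emptyset$ and $\mathsf{\Gamma}^{\ddagger}_{\eta,1,u}=\emptyset$ for every $\eta<1$ and every $u$. Take $\omega=\mathbf 0$ and $I=[\delta,\delta^{-1}]$. Then the symmetric difference is all of $\mathsf{NT}^*(\delta)$. By Theorem~\ref{thm:regular traps1}, this exceeds $\rho^-_{1/2}\delta^{-\alpha/\beta}\kappa_N$ with probability at least $1/2$ for large $N$. So for $\epsilon<\rho^-_{1/2}\delta^{-\alpha/\beta}$ the quantity in \eqref{e:68} is at least $1/2$.

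The paper's own proof has the same defect. Its bound $|\mathsf{V}_N\setminus\mathsf{U}_{N,\eta}|\lesssim(1-\eta^2)K^2L^2$ counts only the rims $\mathsf{V}^i_L\setminus\mathsf{V}^i_{\eta L}$ and forgets the excluded boxes.

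The repair is to replace $\sup_{k\ge1}$ by $\limsup_{k\to\infty}$, for instance the joint limit $\eta\uparrow1$, $k\to\infty$ already used in Lemma~\ref{lem:15.5}. With the corrected area bound your argument goes through unchanged. The Markov bound becomes $(1-\eta+2^{-k})(u'+1)\delta^{-\alpha/\beta}/\epsilon+(u'+1)\rme^{-\alpha u'}$. Nothing downstream is lost, since \eqref{eq-replacing-RT-dagger} and \eqref{eq-Gd-cond-vari-2} are only ever combined with Lemma~\ref{lem:15.5}, where $k\to\infty$.
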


\begin{proof}[Proof of Lemma \ref{lem:15}]  
When $\{h^{*}_{\mathsf{V}_{N}} \le m_{N} + u\}$ occurs,    we have  $\cI_{N, \eta ,u}  = \cI_{N, \eta}$. Thus from \eqref{e:19} it follows 
 \begin{equation}
	\label{eq:I-N-eta+-u}
	\lim_{u \to \infty}  \limsup_{N \to \infty}  \P(\cI_{N, \eta ,u}  \neq \cI_{N, \eta}) \le \lim_{u \to \infty}  \limsup_{N \to \infty}\P( h^{*}_{\mathsf{V}_{N}} > m_{N} + u) = 0 .
 \end{equation}

Assume that $\cI_{N, \eta ,u}  = \cI_{N, \eta}$ holds.
If $x$ belongs to the symmetric difference of the two sets in \eqref{e:68}, then necessarily either $x \in (\mathsf{T}(\delta) \setminus \mathsf{IT}(\delta) )  $  or $x \in (\mathsf{V}_{N} \setminus \mathsf{U}_{N,\eta,u})$ with $h_x = h^*_{x+ \mathsf{B}_{r_{N}} } \ge m_{N}+s^{-}_{N}(\delta)$.
Since   $|\mathsf{V}_{N} \setminus \mathsf{U}_{N, \eta}| \lesssim  (1- \eta^2) K^2 L^2 \lesssim (1-\eta^2) N^{2}$,   
by repeating the argument in \eqref{e:34}, we obtain that for all $-(\log \log N)^2 < s \leq u$,
\begin{equation*}  
\E \Bigl[ \ind{ h^*_{\mathsf{V}_N} \leq m_N + u }\sum_{ x \in \mathsf{V}_{N} \setminus \mathsf{U}_{N,\eta}} \ind{ h_{x}= h^{*}_{x+\mathsf{B}_{r_{N}}}  \ge m_{N}+s }  
 \Bigr]  \lesssim \sum_{x \in \mathsf{V}_{N} \setminus \mathsf{U}_{N, \eta}} \int_{s}^u \frac{1+u}{N^2}\,  \rme^{-\alpha t} \dif t  \lesssim (1-\eta^2) (u+1) \rme^{-\alpha s} \,.
\end{equation*}   
Plugging in $s=s^{-}_{N} (\delta)$ and by using Markov's inequality, we get 
\begin{align}  
&\limsup_{\eta \uparrow 1}  \limsup_{N \to \infty} \P \Bigl(  h^*_{\mathsf{V}_N} \leq m_N + u ,
 \sum_{ x \in \mathsf{V}_{N} \setminus \mathsf{U}_{N,\eta}} \ind{ h_{x}= h^{*}_{x+\mathsf{B}_{r_{N}}}  \ge m_{N}+s^{-}_{N}(\delta) }  > \epsilon \kappa_N /2  \Bigr) \notag \\
 & \ 
\lesssim \limsup_{\eta \uparrow 1}  \  (1-\eta^2) \epsilon^{-1} (u+1)\delta^{-\alpha/\beta}  = 0 .\label{eq:V-minus-U-eta}
\end{align}  
By Theorem \ref{thm:non_reachable_traps}, we  have  
$   \P( | (\mathsf{T}(\delta) \setminus \mathsf{IT}(\delta) ) | > \epsilon \kappa_N /2 ) = o_N(1) $.
Combining this with \eqref{eq:I-N-eta+-u} and \eqref{eq:V-minus-U-eta}, the required result \eqref{e:68} follows.
\end{proof}

Fix $\delta \in (0,1/2]$. Theorem \ref{thm:regular traps1} shows that given $\epsilon_{1} $  there exists $\rho_{\epsilon_1} > 0$  
such that for   $N$ large enough, with probability at least 
$1-\epsilon_{1}/2$, $
|\mathsf{NT}^*(\delta) | > \rho_{\epsilon_1} \kappa_N$.
Then take $\epsilon_{2} < (\epsilon_{1} \wedge \epsilon_{1} \rho_{\epsilon_1}) /4 $. From Lemma \ref{lem:15},  it follows that for  large $u$, and $\eta$ close to $1$,    
 with probability $1-\epsilon_{2}$, we have  $\big| | \mathsf{NT}^*(\delta; I, \omega)  \triangle \mathsf{\Gamma}^{\ddagger}_{\eta,{k},u}(s_N + J, \omega)  \big| \leq 
\epsilon_{2} \kappa_N  $ and   $\big|      | \mathsf{NT}^*(\delta)  \triangle \mathsf{\Gamma}^{\ddagger}_{\eta,{k},u}(s_N + J)  \big| \leq 
\epsilon_{2} \kappa_N $. Then on the intersection of these events it holds
\[ \Big|    \frac{  |\mathsf{\Gamma}^{\ddagger}_{\eta,{k},u}(s_N + J, \omega) | }
 {  |\mathsf{\Gamma}^{\ddagger}_{\eta,k,u}
 ([s_N^-(\delta),s_N^+(\delta)] ) | } -      \frac{  | \mathsf{NT}^*(\delta ; I, \omega) | }
 { |\mathsf{NT}^*(\delta) |} \Big| \le \frac{2 \epsilon_2 \kappa_N  }{ | \mathsf{NT}^*(\delta  ) |} \le \epsilon_1 . \]
We therefore conclude that for any $\epsilon_1>0$, 
\begin{equation}\label{eq-replacing-RT-dagger}
 \limsup_{u \to \infty} \,\limsup_{\eta \uparrow 1  }\, \sup_{{k} \ge 1 }  \,\limsup_{ N \to \infty} \, \P \Bigl(  \Big|    \frac{  |\mathsf{\Gamma}^{\ddagger}_{\eta,{k},u}(s_N + J, \omega) | }
 { |\mathsf{\Gamma}^{\ddagger}_{\eta,k,u}
 ([s_N^-(\delta),s_N^+(\delta)] ) | } -   \frac{  | \mathsf{NT}^*(\delta ; I, \omega) | }
 { |\mathsf{NT}^*(\delta) |}  \Big| > \epsilon_1 \Bigr)  = 0 .
\end{equation}
Consequently, in order  
to show \eqref{eq-counting-1} it now suffices to prove the following result:

\begin{lem}
\label{lem:15.5} 
Fix $\delta \in (0,1/2]$ and  $\omega \in \bbR_{-}^{\bbZ^2}$ with finite support  and satisfying $\omega_{\mathbf{0}} = 0$. Let $I \subset [\delta,\delta^{-1}]$ and set    $J \coloneq  \{ t: e^{\beta t} \in I \}$.   Then the following assertions hold: 
\begin{align} 
&\lim_{u \to \infty} \,\limsup_{   \eta \uparrow 1,   {k}  \to \infty} \limsup_{ N \to \infty} \,  \P\biggl(\bigg|\, \frac{  |\mathsf{\Gamma}^{\ddagger}_{\eta,{k},u}(s_N + J, \omega) | }
 { |  \mathsf{\Gamma}^{\ddagger}_{\eta,{k},u}( [s^{-}_{N}(\delta), s^{+}_{N}(\delta)]) |}  
-  \frac{ \E [ | \mathsf{\Gamma}^{\ddagger}_{\eta,{k},u}( s_{N}+J, \omega) |   \mid \varphi ]}{\E [ | \mathsf{\Gamma}^{\ddagger}_{\eta,{k},u}( [s^{-}_{N}(\delta), s^{+}_{N}(\delta)]) |   \mid \varphi ]}  \, \bigg| > \epsilon \biggr) = 0 ,	\label{eq-counting-2} \\
& \lim_{u \to \infty} \,\limsup_{ \eta \uparrow 1,  \, {k} \to \infty  }    \limsup_{ N \to \infty} \,  \P\biggl( \bigg|\,
  \frac{ \E [ | \mathsf{\Gamma}^{\ddagger}_{\eta,{k},u}( s_{N}+J, \omega) |   \mid \varphi ]}{\E [ | \mathsf{\Gamma}^{\ddagger}_{\eta,{k},u}( [s^{-}_{N}(\delta), s^{+}_{N}(\delta)])  |   \mid \varphi ]} -  \frac{\int_{I} t^{-(\alpha/\beta + 1)} \rmd t}{\int_{\delta}^{\delta^{-1}} t^{-(\alpha/\beta + 1)} \rmd t} \nu(\omega) \, \bigg|  > \epsilon \biggr) = 0	 . \label{eq-counting-3}
\end{align} 
\end{lem}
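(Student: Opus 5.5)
Both assertions will come from the decomposition $h=\varphi+\sum_i h^i$ of \eqref{e:58a}, conditioning on the binding field $\varphi$. Given $\varphi$, the fields $h^i$ are independent DGFFs on $\mathsf{V}^i_L$. On each inner box $\mathsf{V}^i_{\eta L}$ we use $g^i \coloneq h^i+\varphi$ restricted there. By \eqref{eq-binding-1}--\eqref{eq-binding-2}, together with a high-probability bound on $\max_i|\varphi|$ in $\mathsf{U}_{N,\eta}$ of order $\sqrt{\log K}$, the field $\psi=\varphi$ satisfies \eqref{e:2.11} with $K$ (and some $M_0$) on an event whose probability tends to one as $N\to\infty$ and then $k\to\infty$. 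Note that $m_N=m_{KL}$.

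\textbf{Step 1: proof of \eqref{eq-counting-3}.} Write $\E[|\mathsf{\Gamma}^{\ddagger}(s_N+J,\omega)|\mid\varphi]$ as a sum over $i\in\cI_{N,\eta}$ and $z\in\mathsf{V}^i_{\eta L}$ of integrals over $t\in s_N+J$ of the densities appearing in Proposition~\ref{prop:cluster-law}. Apply that proposition with $N$ replaced by $L$, $\psi=\varphi$, and the shift $s$ absorbed into $t$.

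1. Since $r_N\gg 2^{\varrho_N}$, the condition $h_z=h^*_{z+\mathsf{B}_{r_N}}$ has to be compared with $h_z=h^*_{z+\mathsf{B}_{2^{\varrho}}}$. The discrepancy consists of configurations with a high point in the annulus $\mathsf{B}_{r_N}\setminus\mathsf{B}_{2^\varrho}$. Its conditional mean is controlled as in \eqref{e:26a}, giving a factor $\varrho^{-1/4}\to0$ relative to the main term.
2. The constraint $h^*_{\mathsf{V}^i_L}\le m_N+u$ defining $\cI_{N,\eta,u}$ is exactly the global-maximum constraint in \eqref{e:28c}.
3. Formula \eqref{e:28c} factorises the $\omega$-dependence as $\nu(\omega)$. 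Its error term \eqref{e:28c-error} is negligible relative to $\P(g_z\in\cdot)$, which is of the same order as the main term by the lower bound Proposition~\ref{prop:one-point-lower-bnd}.
4. It remains to show that the $t$-dependence of the main term is asymptotically proportional to $\rme^{-\alpha t}$ on the window $s_N+[\tfrac1\beta\log\delta,\tfrac1\beta\log\delta^{-1}]$. Use \eqref{e:28c} with $\omega\equiv0$ and shift parameter $s\in[0,\bar s]$, with $\bar s=\tfrac2\beta\log\delta^{-1}$: it says the density at $t+s$ equals $\rme^{-\alpha s}$ times the density at $t$, up to the error term.
5. Integrating against the change of variables $e^{\beta t}\in I$ turns $\rme^{-\alpha t}\dif t$ into $t^{-\alpha/\beta-1}\dif t$ up to a constant, which produces the ratio in \eqref{eq-counting-3}, uniformly in $\varphi$ on the good event.

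\textbf{Step 2: proof of \eqref{eq-counting-2}.} Conditionally on $\varphi$, both numerator and denominator are sums of $|\cI_{N,\eta}|\asymp K^2$ independent contributions $X_i$, one per box. We show each concentrates around its conditional mean by a conditional second-moment (Chebyshev) bound: $\Var(\sum_i X_i\mid\varphi)=\sum_i\Var(X_i\mid\varphi)$. Proposition~\ref{prop:two-point-estimate}, applied inside $\mathsf{V}^i_L$ as in \eqref{eq-two-point-2} and after Lemma~\ref{lem:16c}, bounds each $\E[X_i^2\mid\varphi]$ by $(1+u)^{10}\rme^{\alpha u}\,\rme^{-2\alpha(s_N-\max\varphi_i)}$ times $(\text{const})/\ldots$. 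This should give a relative variance of order $C_u\,\max_i \E[X_i\mid\varphi]/\E[\sum_iX_i\mid\varphi]$. The denominator mean is of order $\delta^{-\alpha/\beta}\kappa_N\to\infty$ by Step 1 (compare Theorem~\ref{thm:regular traps1}).

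\textbf{Main obstacle.} The obstacle is that no single box may dominate. Individual boxes with large $\varphi$ (of size $2\sqrt g\log K$) can carry a proportion of the mean that does not vanish. Since the number of boxes is fixed as $N\to\infty$, I would first try to show that the conditional weights $\E[X_i\mid\varphi]$ behave like $\rme^{\alpha\varphi_{o_i}}(\ldots)$, a critical-chaos-type sum. The maximal weight fraction should then tend to zero in probability as $k\to\infty$, using the derivative-martingale-type behaviour and $\max_i\varphi\le 2\sqrt g\log K-\tfrac{3}{4}\sqrt g\log\log K+O(1)$; this is where the $(1+u)^{10}\rme^{\alpha u}$ loss must be beaten. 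This is also why the order of limits is $N\to\infty$, then $\eta\uparrow1$ and $k\to\infty$, then $u\to\infty$.
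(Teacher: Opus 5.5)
\noindent Your Step~1 is essentially the paper's argument for \eqref{eq-counting-3}, up to the corrections in the last paragraph. The genuine gap is in Step~2.

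\noindent\textbf{The gap.} Your reduction to a relative conditional variance of order $C_u\max_i\E[X_i\mid\varphi]/\E[\sum_iX_i\mid\varphi]$ requires $\E[X_i^2\mid\varphi]\lesssim C_u\,\E[X_i\mid\varphi]^2$ in every block. No such bound holds uniformly in $i$.
\begin{itemize}
\item Inside $\mathsf{V}^i_L$ the relevant field is $h^i$, measured against $m_L$. Ignoring the oscillation of $\varphi$ within the block, the threshold and the cap become $t_i\coloneq s_N+(m_N-m_L)-\varphi_{o_i}$ and $u_i\coloneq u+(m_N-m_L)-\varphi_{o_i}$. So the cap is $u_i$, not $u$.
\item The one-point estimates give $\E[X_i\mid\varphi]\asymp(1+u_i)\rme^{-\alpha t_i}$. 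Proposition~\ref{prop:two-point-estimate} gives only $\E[X_i^2\mid\varphi]\lesssim(1+u_i)^{10}\rme^{\alpha u_i}\rme^{-2\alpha t_i}$.
\item For a typical block $\varphi_{o_i}=O(\sqrt{\log K})$, hence $\rme^{\alpha u_i}=\rme^{\alpha u}K^{4+o(1)}$. The exponential growth of the second-moment ratio in the cap is expected to be sharp (Remark~\ref{rmk:two-point}).
\end{itemize}
So your $C_u$ is not uniform over blocks. Your subsequent step, that the maximal weight fraction vanishes via derivative-martingale behaviour of the binding field, would in any case be an unproved critical-chaos statement that nothing in the paper supplies.

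\noindent\textbf{The missing idea.} Average over $\varphi$ instead of controlling the conditional variance realization by realization. Write $Y_i\coloneq|\mathsf{\Gamma}^{\ddagger,i}_{\eta,k}(s_N+J,\omega)|$, so that $X_i=Y_i\ind{i\in\cI_{N,\eta,u}}$. Since the blocks are conditionally independent given $\varphi$,
\[
\E[X_i^2\mid\varphi]\le\frac{\E\bigl[Y_i^2\ind{h^*_{\mathsf{V}_N}\le m_N+u}\mid\varphi\bigr]}{\P\bigl(h^*_{\mathsf{V}_N}\le m_N+u\mid\varphi\bigr)},
\]
and the denominator is at least $\tfrac12$ with high probability once $u$ is large. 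By the tower property, the numerator summed over $i$ has expectation $\E[\sum_iY_i^2;\,h^*_{\mathsf{V}_N}\le m_N+u]$. This is a sum over pairs $x,y$ in the same block, so $\log_2|x-y|\le n-k+O(1)$, now under the global cap. Proposition~\ref{prop:two-point-estimate}, summed as in \eqref{eq-two-point-2} but only over these scales, gives
\[
(1+u)^{10}\rme^{\alpha u}\bigl(\delta^{-\alpha/\beta}\kappa_N\bigr)^2\sum_{\ell=\log_2r_N}^{n-k}\Bigl(\frac{n+1}{\ell(n+1-\ell)}\Bigr)^{3/2}\lesssim(1+u)^{10}\rme^{\alpha u}\bigl(\delta^{-\alpha/\beta}\kappa_N\bigr)^2k^{-1/2},
\]
and the diagonal $x=y$ contributes only $O(\kappa_N)$. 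By Markov's inequality, $\sum_i\E[X_i^2\mid\varphi]\le\epsilon^3\rho^2\kappa_N^2$ with high probability for fixed $u$ and large $k$. Conditional Chebyshev, combined with $\E[\sum_iX_i\mid\varphi]\ge\rho\kappa_N$ with high probability, then finishes the proof. The factor $k^{-1/2}$ expresses the rarity of two near-maxima within distance $N/K$ of each other; it is the quantitative form of your ``no block dominates''. It beats $(1+u)^{10}\rme^{\alpha u}$ simply because $k\to\infty$ before $u\to\infty$.

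\noindent\textbf{Corrections to Step~1.} Your outline matches the paper: Proposition~\ref{prop:cluster-law} in each block with $\psi=\varphi$, base level $s_N+\frac1\beta\log\delta$ and $\bar s=\frac2\beta\log\frac1\delta$, then the substitution $t=\rme^{\beta s}$. The following points need fixing.
\begin{itemize}
\item The maximum of $|\varphi|$ over $\mathsf{U}_{N,\eta}$ is of order $2\sqrt g\log K$, not $\sqrt{\log K}$. What is needed is exactly the first condition in \eqref{e:2.11}, which holds with probability $1-O((\log K)^{-1/4})$. Only the within-block oscillation is $O(\sqrt{\log K})$, and it serves as $M_0$.
\item The lower bound $\E[|\mathsf{\Gamma}^{\ddagger}_{\eta,k,u}|\mid\varphi]\ge\rho\kappa_N$, used in both parts, comes from Lemmas~\ref{lem:16a} and~\ref{lem:15} via \eqref{eq-Gd-cond-vari-2}. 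It does not come from Step~1.
\item The remainder in \eqref{e:28c-error} is small relative to $\frac{1+u_o}{\log L}\,\P(g_z\in\cdot)$, and Proposition~\ref{prop:one-point-lower-bnd} does not cover a shift $\psi$, so a pointwise comparison is not available. The paper sums the remainder over all blocks and sites, bounds its expectation by $o(\kappa_N)$ using $\E\,\rme^{\alpha\varphi_x}=O(K^2)$, and compares it with the lower bound above.
\item Your comparison of $r_N$ with $2^{\varrho}$ is unnecessary, since $\varrho_N=\log_2 r_N$ is admissible in Proposition~\ref{prop:cluster-law}.
\end{itemize}
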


\begin{proof}[Proof of \eqref{eq-counting-2} in Lemma \ref{lem:15.5}] 
It suffices to show that the following conditional probability vanishes:
\begin{equation}
	\label{eq:Gamma-dagg-devi}
 \limsup_{u \to \infty} \,\limsup_{  {k}  \to\infty}    \limsup_{ N \to \infty} \,  \P  \biggl(  \Big|
	 \frac{| \mathsf{\Gamma}^{\ddagger}_{\eta,{k},u}( s_{N}+J, \omega) |} {\E [ | \mathsf{\Gamma}^{\ddagger}_{\eta,{k},u}( s_{N}+J, \omega) | 
		\mid \varphi ]} - 1 \Big| > \epsilon  \, \Big| \,  \varphi \biggr)  = 0   \ \text{   in probability.}
\end{equation}
Then  \eqref{eq-counting-2} follows by working on the event that  ${  |\mathsf{\Gamma}^{\ddagger}_{\eta,{k},u}(s_N + J, \omega) | }$ and $
 { |  \mathsf{\Gamma}^{\ddagger}_{\eta,{k},u}( [s^{-}_{N}(\delta), s^{+}_{N}(\delta)]) |}  $ both concentrate around their conditional expectations respectively. 

 \smallskip   \noindent\underline{\emph{Step 1.}} 
For each $ i \in \cI_{N, \eta} $, set
\begin{equation*}
\mathsf{\Gamma}^{\ddagger, i}_{\eta,{k} }( s_{N}+J, \omega) \coloneq \{ 
x \in \mathsf{V}_{\eta L}^i :\: h_x = h^*_{x+ \mathsf{B}_{r_{N}} } \in  m_{N} + s_{N} + J, 
h_{x+y} \le h_x + \omega_{y} \, \forall \, y \in  \mathsf{B}_r \} .
\end{equation*} 
Observe that  $
|\mathsf{\Gamma}^{\ddagger}_{\eta,{k},u}( s_{N}+J, \omega)|  = \sum_{i \in \cI_{N, \eta}} 
|  \mathsf{\Gamma}^{\ddagger, i}_{\eta,{k}}( s_{N}+J, \omega)|\ind{i \in \cI_{N, \eta,u} }$. Conditioned on $\varphi$, the random variables   $(|  \mathsf{\Gamma}^{\ddagger, i}_{\eta,{k} }( s_{N}+J, \omega)|\ind{i \in \cI_{N, \eta,u} }: i \in \cI_{N, \eta} )$ are independent.  Chebyshev's inequality then yields 
\begin{equation}
\label{eq-Gd-cond-vari-1}
\P  \biggl[ \, \bigg|
	 \frac{| \mathsf{\Gamma}^{\ddagger}_{\eta,{k},u}( s_{N}+J, \omega) |} {\E [ | \mathsf{\Gamma}^{\ddagger}_{\eta,{k},u}( s_{N}+J, \omega) | 
		\mid \varphi ]} - 1 \bigg| > \epsilon  \,\Big| \, \varphi \biggr]
\leq
\frac{1}{\epsilon^{2}} \frac{  \sum_{i \in \cI_{N, \eta}} \E [ |  \mathsf{\Gamma}^{\ddagger, i}_{\eta,{k}}( s_{N}+J, \omega)|^2 \ind{ i \in \cI_{N, \eta,u} }  \mid \varphi ]}{( \E [ | \mathsf{\Gamma}^{\ddagger}_{\eta,{k},u}( s_{N}+J, \omega) | \mid \varphi ]  )^2} \,.
\end{equation}
 
 \smallskip   \noindent\underline{\emph{Step 2.}} 
We first show  a lower bound for the denominator in \eqref{eq-Gd-cond-vari-1}. By Markov's inequality, for any $\rho>0$ we have 
\begin{align*}
 & \P\bigl(  \E \bigl[ | \mathsf{\Gamma}^{\ddagger}_{\eta,{k},u}( s_{N}+J, \omega) | \mid \varphi \bigr] < \rho \kappa_N \bigr)
 	\le \P\bigl( \,
\P [| \mathsf{\Gamma}^{\ddagger}_{\eta,{k},u}( s_{N}+J, \omega) | < 2\rho \kappa_N \mid \varphi  ] \ge 1/2  \, \bigr) \\ 
& \quad \le 2 \P \bigl(  |\mathsf{\Gamma}^{\ddagger}_{\eta,{k},u}( s_{N}+J, \omega) |  < 2\rho \kappa_N \bigr) \\
& \quad\lesssim \P(  | \mathsf{NT}^*(\delta ; I, \omega) |
   < 4 \rho  \kappa_N ) +  \P( \big| | \mathsf{NT}^*(\delta ; I, \omega)
    \triangle \mathsf{\Gamma}^{\ddagger}_{\eta,{k},u}(s_N + J, \omega) \big|
	   > \rho \kappa_N )\,.
\end{align*} 
Note that $\mathsf{\Gamma}^*_{r_N}( [s^{-}_{N}(\delta), \infty), \omega)  \cap  \mathsf{\Gamma} ( s_N+J)   \subset   \mathsf{NT}^*(\delta ; I, \omega)  $. Then 
 Lemma~\ref{lem:16a}, and Lemma~\ref{lem:15}
 together imply that, for any given $\epsilon$,   we can find $\rho=\rho_{\epsilon,\delta,\omega}>0$  satisfying  
\begin{equation}
	\label{eq-Gd-cond-vari-2}
 \limsup_{u \to \infty} \,\limsup_{ \eta \uparrow 1 } \sup_{k \ge 1}    \limsup_{ N \to \infty}   \P\bigl(  \E \bigl[ | \mathsf{\Gamma}^{\ddagger}_{\eta,{k},u}( s_{N}+J, \omega) | \mid \varphi \bigr] < \rho \kappa_N \bigr) \le \epsilon/2 .
\end{equation}
 
 \smallskip   \noindent\underline{\emph{Step 3.}} 
Next, we derive an upper bound for the second moment appearing in \eqref{eq-Gd-cond-vari-1}.
By the Gibbs--Markov decomposition, conditional on $\varphi$, the fields $(h^{i}_{x} : x \in \mathsf{V}^{i}_{L})$ and $(h_{x} : x \in \mathsf{V}_N \setminus \mathsf{V}^{i}_{L})$ are independent. Consequently, we get 
\begin{align*}
 \sum_{i \in \cI_{N, \eta}} \E [    |  \mathsf{\Gamma}^{\ddagger, i}_{\eta,{k}}( s+J )|^2 \ind{i \in \cI_{N, \eta,u}}\mid \varphi]
 & = \sum_{i \in \cI_{N, \eta}} \frac{  \E  [   | \mathsf{\Gamma}^{\ddagger, i}_{\eta,{k}}( s+J ) |^2 \ind{i \in \cI_{N, \eta,u}}  \ind{ h^{*}_{\mathsf{V}_N \setminus \mathsf{V}^{i}_L} \le m_N +u } \mid \varphi ] }{ \P( h^{*}_{\mathsf{V}_N \setminus \mathsf{V}^{i}_L} \le m_N +u  \mid \varphi)}     \\
  &\leq \frac{1}{ \P(    h^{*}_{\mathsf{V}_{N}} \le m_{N} + u \mid \varphi)}  \sum_{i \in \cI_{N, \eta}} {  \E  [   |  \mathsf{\Gamma}^{\ddagger, i}_{\eta,{k}}( s+J ) |^2 \ind{  h^{*}_{\mathsf{V}_{N}} \le m_{N} + u } \mid \varphi ] }.
\end{align*}  
Since $\E[ \P(    h^{*}_{\mathsf{V}_{N}} > m_{N} + u \mid \varphi) ] = \P(    h^{*}_{\mathsf{V}_{N}} > m_{N} + u ) \lesssim u e^{-\alpha u}$ by Corollary \ref{cor-locmax},   Markov's inequality implies
\[  \limsup_{u \to \infty}\limsup_{N \to \infty} \, \P\bigl(\,  \P[    h^{*}_{\mathsf{V}_{N}} \le m_{N} + u \mid \varphi] < 1/2  \bigr) = 0 . \] 
Furthermore, 
following the argument
in \eqref{eq-two-point-1} and \eqref{eq-two-point-2},  
$\E \bigl[ \sum_{i \in \cI_{N, \eta}} | \mathsf{\Gamma}^{\ddagger, i}_{\eta,{k}}( s^{-}_{N}+J )|^2
\ind{h^{*}_{\mathsf{V}_{N}} \le m_{N}+u} \bigr]$ is bounded from above by
\begin{align*} 
 &   \sum_{i \in \cI_{N, \eta}}   \sum_{x,y \in \mathsf{V}^{i}_{\eta L}} \P  \bigl( h_x = h^*_{x + \mathsf{B}_{r_N}} \geq m_N + s^{-}_{N}(\delta) ,\,  h_y = h^*_{y + \mathsf{B}_{r_N}} \geq m_N + s^{-}_{N}(\delta) ,\,  h^*_{\mathsf{V}_N} \leq m_N + u \bigr) \\
 &\lesssim (1+u)^{10} e^{\alpha u } e^{-2 \alpha s^{-}_{N}(\delta)} \sum_{\ell=\log_2 r_N}^{n-{k}} \Bigl( 
\frac{n+1}{\ell(n+1-\ell)} \Bigr)^{3/2}   \lesssim (1+u)^{10} e^{\alpha u }   (\delta^{-\alpha/\beta} \kappa_{N} )^{2} \frac{1}{\sqrt{{k}}} .
\end{align*}
Therefore,  using Markov's inequality again we obtain  
\begin{equation}\label{eq-Gd-cond-vari-3}
	\limsup_{{k} \to \infty} \limsup_{N \to \infty} \P\biggl( 
\sum_{i \in \cI_{N, \eta}} {  \E  [   |  \mathsf{\Gamma}^{\ddagger, i}_{\eta,{k}}( s_{N}+J ) |^2 \ind{  h^{*}_{\mathsf{V}_{N}} \le m_{N} + u  } \mid \varphi ] } 
>  {\rho^2 \epsilon^{3}} \kappa^{2}_{N}  \biggr)  =  0.
\end{equation} 

Combining \eqref{eq-Gd-cond-vari-2} and \eqref{eq-Gd-cond-vari-3} with \eqref{eq-Gd-cond-vari-1}, we get that, on the complement of the events in  \eqref{eq-Gd-cond-vari-2} and \eqref{eq-Gd-cond-vari-3}, which has probability at least $1-\epsilon$ for large $u, {k},r,N$,
\[ \bigg|\,
	 \frac{| \mathsf{\Gamma}^{\ddagger}_{\eta,{k},u}( s_{N}+J, \omega) |} {\E [ | \mathsf{\Gamma}^{\ddagger}_{\eta,{k},u}( s_{N}+J, \omega) | 
		\mid \varphi ]} - 1 \,\bigg| \le \frac{1}{\epsilon^2} \frac{\rho^2 \epsilon^{3} \kappa^{2}_{N} }{\rho^2 \kappa_N^2}=\epsilon .\]
 Hence the desired result \eqref{eq:Gamma-dagg-devi} follows, and this completes the proof of \eqref{eq-counting-2}.  
\end{proof}

\begin{proof}[Proof of \eqref{eq-counting-3} in Lemma \ref{lem:15.5}]  
Let $G_K(\varphi)$ be the event that, for every
$i\in\cI_{N,\eta}$, the translated restriction of $\varphi$ satisfies
\eqref{e:2.11} on $\mathsf{V}_{\eta'L}^i$, with $M_0 $ replaced by $\theta_{\eta} \sqrt{\log K}$ and $N$ replaced by $L$.

 \smallskip   \noindent\underline{\emph{Step 1.}}
We claim that 
\begin{equation}
 \lim_{K\to\infty}\limsup_{N\to\infty}
 \P\bigl(G_K(\varphi)^{\rm c}\bigr)=0.
 \label{eq:G-K-M0}
\end{equation} 
To this end, we  first set $
 v_K\coloneq2\sqrt g\log K-\frac{\log\log K}{4\alpha}$ and show that 
\begin{equation}
 \limsup_{N\to\infty}
 \P \Bigl( 
 \max_{i\in\cI_{N,\eta}}
 \max_{y\in\mathsf{V}_{\eta'L}^i}|\varphi_y|>v_K
   \Bigr)
 \lesssim_\eta(\log K)^{-1/4}.
 \label{eq:binding-all-box-maximum}
\end{equation}
Fix any $i\in\cI_{N,\eta}$.
By the Gaussian orthogonal decomposition, 
the field $Z$ on $\mathsf{V}^{i}_{\eta' L}$, defined by  
$ Z_{y}= \varphi_y-q_{y} \varphi_{o_i} $ and 
$ q_{y}\coloneq
 {\E[\varphi_y\varphi_{o_i}]}/{\Var(\varphi_{o_i})}$,   is independent of $\varphi_{o_i}$.  The estimates
\eqref{eq-binding-1}--\eqref{eq-binding-2} yield
\[ 
 \sup_{y\in \mathsf{V}^{i}_{\eta' L}} |q_{y}-1|\lesssim_\eta(\log K)^{-1}, \quad \max_{y\in \mathsf{V}^{i}_{\eta' L}}  \Var(Z_{y})\lesssim_\eta1 , \quad
 \bigl(\E[( Z_y-Z_z)^2]\bigr)^{1/2}
 \lesssim_\eta   |y-z|/L.
\] 
Thus Lemma~\ref{lem:gaussian-oscillation}  with
$R\asymp_\eta L$, $\lambda\lesssim_\eta L^{-1}$ and
$\sigma\lesssim_\eta1$, shows that, for
$|Z|^{*} \coloneq\max_{y \in \mathsf{V}^{i}_{\eta' L}}|Z _{y}|$ and every fixed $b>0$, 
\begin{equation}
	\label{eq:Z-is-large}
	  \P(|Z|^{*}> v_K/2) \lesssim_\eta\rme^{-c_\eta v_K^2} \quad , \quad  \sup_{K\ge4}\limsup_{N\to\infty}
	 \max_{i\in\cI_{N,\eta}}\E[\rme^{bZ_i}]<\infty. 
\end{equation} 
Since
$
 \max_y|\varphi_y|
 \le |\varphi_{o_i}|+ O_{\eta}(1)|\varphi_{o_i}|/{\log K}  +|Z|^{*}$,
conditioning on $|Z|^{*}$ and applying the Gaussian tail probability to
$\varphi_{o_i}$ on $\{|Z|^*\le v_K/2\}$ yield
\begin{align}
 \P \Bigl(  \max_{y\in\mathsf{V}_{\eta''L}^i}|\varphi_y|>v_K, |Z|^*\le v_K/2\Bigr) 
 &\lesssim_\eta
 \frac{\sqrt{\log K}}{v_K}
 \exp \Bigl\{ -\frac{v_K^2}{2(g\log K+O_\eta(1))}  \Bigr\} \,
 \E [\rme^{C_\eta(1+Z_i)} ]
 +\rme^{-c_\eta v_K^2} \notag \\
 &\lesssim_\eta K^{-2}(\log K)^{-1/4}.
 \label{eq: phi-o--is-large}
\end{align} 
Combining \eqref{eq:Z-is-large} and \eqref{eq: phi-o--is-large} with the  union bound gives the required claim \eqref{eq:binding-all-box-maximum}. 

For the oscillation, apply  Lemma~\ref{lem:gaussian-oscillation} to
$ \varphi_y-\varphi_{o_i}$.  By
\eqref{eq-binding-2}, the parameters are again
$R\asymp_\eta L$, $\lambda\lesssim_\eta L^{-1}$ and
$\sigma\lesssim_\eta1$.  Parts~(i)--(ii), followed by a union bound,
give
\begin{equation}
 \limsup_{N\to\infty}
 \P \Bigl(  
 \max_{i\in\cI_{N,\eta}}
 \max_{x,y\in\mathsf{V}_{\eta''L}^i}
 |\varphi_x-\varphi_y|> \theta \sqrt{\log K} \Bigr) 
 \lesssim_\eta K^2\rme^{-c_\eta \theta^2  \log K}=o_K(1)\,,
 \label{eq:binding-all-box-oscillation}
\end{equation}
once $\theta=\theta_{\eta}$ is chosen sufficiently large.

Finally, since $\varphi-\varphi_{o_{i}}$ is discrete harmonic in each
$\mathsf{V}_L^i$, the standard interior difference estimate
\cite[Theorem 6.3.8]{LawL2010} gives, on the event that the maximum
inside \eqref{eq:binding-all-box-oscillation} is at most $\theta_{\eta}\sqrt{\log K}$,
\begin{equation}
 \max_{x\ne y\in\mathsf{V}_{\eta'L}^i}
 \frac{|\varphi_x-\varphi_y|}{|x-y|}
 \le\frac{C_\eta}{L}
 \max_{z\in\mathsf{V}_{\eta'L}^i}|\varphi_z-\varphi_{o_i}|
 \le\frac{C_\eta \sqrt{\log K}}{L}\,.
 \label{eq:diff-est-harmonic}
\end{equation}
The   three displays \eqref{eq:binding-all-box-maximum} \eqref{eq:binding-all-box-oscillation} and \eqref{eq:diff-est-harmonic}  verify all three conditions in \eqref{e:2.11}
and prove \eqref{eq:G-K-M0}.  As required for the use of
Proposition~\ref{prop:cluster-law}, $M_K$ is fixed when
$N\to\infty$ for each fixed $K$.

 \medskip   \noindent\underline{\emph{Step 2.}} Set 
$
 a_\delta\coloneq \frac1\beta\log\delta
 $ and $
 b_\delta\coloneq \frac1\beta\log\frac1\delta$.
For $i\in\cI_{N,\eta}$, $x\in\mathsf{V}_{\eta L}^i$ and
$v\in[a_\delta,b_\delta]$, let $p_{x}^{\omega}[v\mid\varphi]$ denote
the conditional density determined by
\begin{align*}
 \P\bigl[ h_x=h^*_{x+\mathsf{B}_{r_N}}
       \in m_N+s_N+\dif v,
       h_{x+y}\le h_x+\omega_y\ \forall y\in\mathsf{B}_r, 
     h^*_{\mathsf{V}_L^i}\le m_N+u\mid\varphi\bigr]=  p_{x}^{\omega}[v\mid\varphi]\dif v\,,
\end{align*}
and put $p_{x} [v\mid\varphi]=p_{x}^{\mathbf{0}}[v\mid\varphi]$. Moreover,
we define 
\[
 \mathcal{E}_{J,N}^{\omega}
 \coloneq \E\bigl[|\mathsf\Gamma^{\ddagger}_{\eta,k,u}
	       (s_N+J,\omega)|\mid\varphi\bigr]
 -\nu(\omega) 
	   \int_J\rme^{-\alpha(s-a_\delta)}\dif s  \, \sum_{i\in\cI_{N,\eta}}
 \sum_{x\in\mathsf{V}_{\eta L}^i}
 p_{x}[a_\delta\mid\varphi]  .\]
 We claim that for fixed parameters $K$, $\delta$, $u$,   $\omega$, 
 \begin{equation}
\lim_{N \to \infty} \, \frac{1}{\kappa_N} \, | \mathcal{E}_{J,N}^{\omega}| \, \ind{G_{K}(\varphi)} = 0 \quad  \text{ in probability. } \label{eq:full-box-mean-J} 
 \end{equation}
 
 Subtracting
the corresponding ratio of the main terms gives
\[
 \bigg| \, \frac{
  \E[|\mathsf\Gamma^{\ddagger}_{\eta,k,u}(s_N+J,\omega)|
       \mid\varphi]}
 {\E[|\mathsf\Gamma^{\ddagger}_{\eta,k,u}
       ([s_N^-(\delta),s_N^+(\delta)])|\mid\varphi]}
 -\nu(\omega)
  \frac{\int_J\rme^{-\alpha s}\dif s}
       {\int_{a_\delta}^{b_\delta}\rme^{-\alpha s}\dif s} \,\bigg|
\le 
 \frac{
 \mathcal{E}^{\omega}_{J,N}
 +
  \mathcal{E}^{\mathbf{0}}_{[a_{\delta},b_{\delta}],N}}
 {\E[|\mathsf\Gamma^{\ddagger}_{\eta,k,u}
       ([s_N^-(\delta),s_N^+(\delta)])|\mid\varphi]}\,.
\]
We then use the lower bound proved in
\eqref{eq-Gd-cond-vari-2}, with $\omega=\mathbf0$ and
$J=[a_\delta,b_\delta]$.  In the iterated limits of
Lemma~\ref{lem:15.5}, with probability tending to one ,
$
 \E\bigl[|\mathsf\Gamma^{\ddagger}_{\eta,k,u}
 ([s_N^-(\delta),s_N^+(\delta)])|\mid\varphi\bigr]$ is of order $ \kappa_N$.
Together with \eqref{eq:G-K-M0}, and \eqref{eq:full-box-mean-J}   we conclude  
\[
 \frac{
  \E[|\mathsf\Gamma^{\ddagger}_{\eta,k,u}(s_N+J,\omega)|\mid\varphi]}
 {
  \E[|\mathsf\Gamma^{\ddagger}_{\eta,k,u}
        ([s_N^-(\delta),s_N^+(\delta)])|\mid\varphi]}
  \xrightarrow{\text{in probability}} \nu(\omega)
  \frac{\int_J\rme^{-\alpha s}\dif s}
       {\int_{a_\delta}^{b_\delta}\rme^{-\alpha s}\dif s} 
\]
in the iterated limits of
Lemma~\ref{lem:15.5}.
Finally, the substitution $t=\rme^{\beta s}$ gives
$
 \int_J\rme^{-\alpha s}\dif s
 =\frac1\beta\int_I t^{-\alpha/\beta-1}\dif t$,
and the two endpoints $a_\delta,b_\delta$ are sent to
$\delta,\delta^{-1}$.  This is exactly \eqref{eq-counting-3}.

 \medskip   \noindent\underline{\emph{Step 3.}} It remains to prove \eqref{eq:full-box-mean-J}.   
Write $f_{x}^{\varphi}$ for the conditional density of $h_x$ given
$\varphi$ and $ \Delta_K\coloneq m_N-m_L$.
On $G_K(\varphi)$, apply Proposition~\ref{prop:cluster-law} in the
translated block $\mathsf{V}_L^i$, with the parameter $N$ there replaced
by $L$, and with
$\psi=\varphi$,  $t=s_N+a_\delta$, and $
 \bar{s}=b_\delta-a_\delta$. 
Since $N=KL$ and hence $m_{KL}=m_N$, the proposition gives, uniformly in
$x$ and $s\in[0,b_\delta-a_\delta]$,
\begin{equation}
 p_{x}^{\omega}[a_\delta+s\mid\varphi]
 =\rme^{-\alpha s}\nu(\omega) \, 
  p_{x}[a_\delta\mid\varphi]
  +R^{\omega}_{x,N}[s\mid\varphi],
 \label{eq:block-cluster-law}
\end{equation}
where, for each fixed $K$, there is a deterministic
$\epsilon_{N,K}\to0$ as $N\to\infty$ such that
\begin{equation}
 \ind{G_K(\varphi)}
 \sup_{0\le s\le b_\delta-a_\delta}
 |R^{\omega}_{x,N}[s\mid\varphi]|
 \le \epsilon_{N,K} \, 
 \frac{1+ u+\Delta_K +|\varphi_x|}{\log L}\,
 f_{x}^{\varphi}(m_N+s_N+a_\delta).
 \label{eq:block-cluster-error}
\end{equation}
Here we used that the convergence in \eqref{e:28c-error} is uniform over
all deterministic fields satisfying \eqref{e:2.11}.

Summing \eqref{eq:block-cluster-law} over
$i\in\cI_{N,\eta}$ and $x\in\mathsf{V}_{\eta L}^i$, and then
integrating over $q\in J-a_\delta$, gives
\[
 \mathcal E_{J,N}^{\omega}
 =
 \int_{J-a_\delta}
 \sum_{i\in\cI_{N,\eta}}
 \sum_{x\in\mathsf{V}_{\eta L}^i}
 R_{x,N}^{\omega}[q\mid\varphi]\,\dif q.
\]
Consequently, \eqref{eq:block-cluster-error} yields
\begin{align}
 \ind{G_K(\varphi)}
 \bigl|\mathcal E_{J,N}^{\omega}\bigr|
 &\le
 (b_\delta-a_\delta)\epsilon_{N,K}
 \sum_{i\in\cI_{N,\eta}}
 \sum_{x\in\mathsf{V}_{\eta L}^i}
 \frac{1+u+\Delta_K+|\varphi_x|}{\log L}\,
 f_x^\varphi(m_N+s_N+a_\delta) \notag\\
 &\lesssim_{K,\delta}  \epsilon_{N,K} \, L^{-2}\kappa_N\rme^{-\alpha\Delta_K}
 \sum_{i\in\cI_{N,\eta}}
 \sum_{x\in\mathsf{V}_{\eta L}^i}
 \bigl(1+u+\Delta_K+|\varphi_x|\bigr)
 \rme^{\alpha\varphi_x}.
 \label{eq:integrated-cluster-error}
\end{align}
Above, in the second inequality we used 
the Gaussian density estimate, the identity
$m_N=m_L+\Delta_K$, and $ 
 {f_x^\varphi(m_N+s_N+a_\delta)} 
 \lesssim_{K,\delta} (\log L )
 L^{-2}\kappa_N\rme^{-\alpha\Delta_K}
 \rme^{\alpha\varphi_x}$.
Hence the right-hand side of
\eqref{eq:integrated-cluster-error} is bounded above, up to a
constant depending only on the fixed parameters, by
\[
 \epsilon_{N,K}L^{-2}\kappa_N\rme^{-\alpha\Delta_K}
 \sum_{i\in\cI_{N,\eta}}
 \sum_{x\in\mathsf{V}_{\eta L}^i}
 \bigl(1+u+\Delta_K+|\varphi_x|\bigr)
 \rme^{\alpha\varphi_x}.
\]
Since, uniformly in $x\in\mathsf{V}_{\eta L}^i$, $\E[\rme^{\alpha\varphi_x}]=O_\eta(K^2)$, $
 \E\bigl[|\varphi_x|\rme^{\alpha\varphi_x}\bigr]
 =O_\eta(K^2\log K)$ and 
$\rme^{-\alpha\Delta_K}=O(K^{-4})$
and there are $O(K^2L^2)$ pairs $(i,x)$, the expectation of
the right-hand side of \eqref{eq:integrated-cluster-error} is
at most
\[
 C_{K,\delta,u} \, \epsilon_{N,K}\, \kappa_N
 =o_N(\kappa_N).
\]
Markov's inequality therefore gives
$
 \ind{G_K(\varphi)}
 \bigl|\mathcal E_{J,N}^{\omega}\bigr|
 =o_{\P}(\kappa_N)$
which proves \eqref{eq:full-box-mean-J}.
\end{proof}
 
\textit{Concluding the proof of Theorem~\ref{thm:regular traps2}. }
The desired result \eqref{eq-counting-1} follows immediately from the assertions in \eqref{eq-replacing-RT-dagger}, \eqref{eq-counting-2} and \eqref{eq-counting-3}. This establishes Theorem \ref{thm:regular traps2}.
\end{proof}

\begin{proof}[Proof of Theorem~\ref{thm:sumability_clusters}]
Let $X^* \coloneq \argmax _{x \in \mathsf{V}_{N}} h_x$ denote the 
location of the maximum. Define
\[ G_N \coloneq  \{   \mathrm{dist}( X^{*},  \mathsf{V}_{N}^{c} )  \ge 2 \sqrt{N},  | h^{*}_{\mathsf{V}_{N}}- m_{N} | \le \sqrt{\log\log N}  \} . \]
We shall show that $\P(G_N^c) \to 0$ as $N \to \infty$. Indeed, the tightness of $(h^{*}_{\mathsf{V}_{N}} - m_{N})_{N \ge 1}$ ensures that 
$\P(|h^{*}_{\mathsf{V}_{N}} - m_{N}| > \sqrt{\log\log N}) \to 0$. 
Furthermore, by Lemma 3.1 in \cite{LS24}, the variance 
satisfies $\max_{x \in \mathsf{V}_{N}}|\Var(h_x) - g \log \mathrm{dist}(x, \partial \mathsf{V}_N)| \le C$. 
A union bound combined with Gaussian tail estimates yields 
\begin{align*}
& \P \bigl(  \mathrm{dist}( X^{*}, \partial \mathsf{V}_{N} )  \le 2\sqrt{N},  | h^{*}_{\mathsf{V}_{N}}- m_{N} | \le \sqrt{\log\log N}  \bigr) 
\\
& \le  \sum_{x:\mathrm{dist}(x, \mathsf{V}_{N}^{c}) \le 2\sqrt{N}}  \P \bigl(h_{x} \ge  m_{N}- \sqrt{\log\log N}   \bigr) \lesssim N^{3/2} \exp \Bigl(- [1+o(1)]\frac{ (2 \sqrt{g} \log N)^2}{2 g \log \sqrt{N}} \Bigr) \xrightarrow{N \to \infty}  0 ,
\end{align*} 
which confirms $\P(G_N^c) \to 0$.

Next  we  claim that Theorem~\ref{thm:sumability_clusters} follows from the following assertion:
\begin{equation}
\label{e:42}
C_{\ref{e:42}}\coloneq \limsup_{N \to \infty} \E \Bigl[ \sum_{x : \mathrm{dist}(x,\mathsf{V}_{N}^{c}) \ge 2\sqrt{N}}   \exp \bigl( \beta(h_x -h^*_{\mathsf{V}_N}) \bigr)     \, \mathbf{1}_{{G_N}} \Bigr] < \infty \,.
\end{equation}
Indeed from Theorem 2.1 in~\cite{BL3}, it follows that, for any $r > 0$ and bounded continuous function $F$ on $\mathbb{R}_{-}^{\mathsf{B}_{r}}$, 
$
\E \bigl[ F \bigl( ( h_{X^* + y} - h_{X^*})_{y \in \mathsf{B}_{r}} \bigr)  \bigr] $ converges to $\int  F \bigl( ( \omega_{y}  )_{y \in \mathsf{B}_{r}} \bigr)   \nu(\dif \omega) 
$ as $N \to \infty$. In particular we have
\begin{equation*}
\E \bigl[ F \bigl( ( h_{X^* + y} - h_{X^*})_{y \in \mathsf{B}_{r}} \bigr) \mathbf{1}_{G_{N}} \bigr] \xrightarrow{N \to \infty}   \int  F \bigl( ( \omega_{y}  )_{y \in \mathsf{B}_{r}} \bigr)   \nu(\dif \omega) .
\end{equation*}  Since the map 
$(\omega_y)_{y \in \mathsf{B}_r} \mapsto \sum_{y \in \mathsf{B}_r} e^{\beta \omega_y}$ 
is continuous and bounded on $\mathbb{R}_{-}^{\mathsf{B}_r}$,  for any fixed $r > 0$, we have,
\begin{align*}
 \int \sum_{y \in \mathsf{B}_{r}} \rme^{\beta \omega_y}  \, \nu(\dif \omega) & =
\lim_{N \to \infty} \E \Bigl[  \sum_{y \in \mathsf{B}_{r}}\rme^{\beta(h_{X^*+y} -h_{X^*})} \mathbf{1}_{G_{N}}  \Bigr]
\\
& \leq \limsup_{N \to \infty}
\E \Bigl[ \sum_{x : \mathrm{dist}(x,\mathsf{V}_{N}^{c}) \ge 2\sqrt{N}}  \rme^{\beta(h_x -h^*_{\mathsf{V}_N})} \mathbf{1}_{G_{N}} \Bigr] 
\le C_{\ref{e:42}}.
\end{align*}
Then letting $r \to \infty$,   the monotone convergence theorem yields $
	\int \sum_{y \in \mathbb{Z}^2} \rme^{\beta \omega_y}    \nu(\dif \omega)   \le   C_{\ref{e:42}} $ as desired.

It  remains to show \eqref{e:42}. We decompose the sum 
based on the level of the field. For low values, 
 \begin{align} 
& \E \Bigl[  \sum_{x: \mathrm{dist}(x,\mathsf{V}_{N}^{c}) \ge 2\sqrt{N}} \rme^{\beta(h_x -h^*_{\mathsf{V}_N})} \mathbf{1}_{\{   h_{x } \le  m_{N} - \sqrt{\log N}\}}   \ind{ | h^{*}_{\mathsf{V}_{N}}- m_{N} | \le \sqrt{\log\log N}   }  \Bigr] \notag\\
& \leq 
e^{\beta \sqrt{\log\log N}}
\sum_{x: \mathrm{dist}(x,\mathsf{V}_{N}^{c}) \ge 2\sqrt{N}}  
 \int_{-\infty}^{-\sqrt{\log N}}   \rme^{\beta t  } \P  \bigl( h_x \in m_N + \dif t\bigr)   \lesssim  e^{\beta \sqrt{\log\log N}} 
N^2  \int_{-\infty}^{-\sqrt{\log N}}  \rme^{\beta t  } e^{- \frac{ (m_{N}+t)^2}{2 g \log N}} \dif t \notag\\
& \lesssim (\log N)^{\frac{3}{2} + o(1)}  \int_{-\infty}^{-\sqrt{\log N}}  \rme^{ (\beta - \alpha ) t  } e^{- \frac{ t^2}{2 g \log N}} \dif t  \lesssim (\log N)^{\frac{3}{2} + o(1)} e^{-(\beta-\alpha) \sqrt{\log N}} \xrightarrow{N \to \infty} 0, \label{eq:low-posi}
\end{align} 
where we used $\beta > \alpha$. For the remaining sites, applying  Corollary~\ref{cor-locmax}  yields that  for any $u$ in the range $[-\sqrt{\log\log N}, 
\sqrt{\log\log N}]$,
\begin{align*} 
& \E \Bigl[  \sum_{x: \mathrm{dist}(x,\mathsf{V}_{N}^{c}) \ge 2\sqrt{N}} \rme^{\beta(h_x -h^*_{\mathsf{V}_N})}  \mathbf1_{\{h_x\ge m_N-\sqrt{\log N}\}} \mathbf{1}_{\{h^*_{\mathsf{V}_N} \in (m_{N}+u-1, m_{N}+u] \}}  \Bigr] \\
& \leq  \sum_{x: \mathrm{dist}(x,\mathsf{V}_{N}^{c}) \ge 2\sqrt{N}}   \int_{-\sqrt{\log N}}^{u}  \rme^{\beta(t - u + 1)} \P  \bigl( h_x \in m_N + \dif t
	\,,\,\, h^*_{\mathsf{V}_N} \leq m_N + u \bigr) \\ 
& \lesssim (1+u_+) \rme^{-c_{\ref{cor-locmax}}(u^-)^2}  \int_{-\infty}^u
	 \rme^{- \beta(u-t)} \rme^{- \alpha t} (1+u-t) \dif t  \, \lesssim (1+u_+) \rme^{-c_{\ref{cor-locmax}}(u_-)^2} \rme^{-\alpha u}  .
\end{align*} 
Summing over $u \in \mathbb{Z} \cap [-\sqrt{\log\log N}, \sqrt{\log\log N}]$ 
and incorporating \eqref{eq:low-posi}, we obtain
\begin{equation*} 
\E \Bigl[   \sum_{x: \mathrm{dist}(x,\mathsf{V}_{N}^{c}) \ge 2\sqrt{N}}   \rme^{\beta(h_x -h^*_{\mathsf{V}_N})}  \ind{ | h^{*}_{\mathsf{V}_{N}}- m_{N} | \le \sqrt{\log\log N}   } \Bigr] \lesssim 1+  \sum_{u \in \mathbb{Z}} (1+u_+) \rme^{-c_{\ref{cor-locmax}}(u_-)^2} \rme^{-\alpha u}  \lesssim 1 .
\end{equation*} 
This establishes~\eqref{e:42} and hence completes the proof. 
\end{proof} 
\smallskip

\begin{proof}[Proof of Theorem~\ref{thm:distance_origin}]The desired result follows by a simple union bound and the Gaussian tail estimate, similar to \eqref{eq:low-posi}: The mean number of vertex in the smaller box   $V_{N/r_N}$ with height  exceeding $2\sqrt{g} \log N - \Theta(\log\log N)$ is   dominated by  that of $|V_{N/r_N}|$-dimensional i.i.d. center Gaussian vector  with variance $g \log N + O(1)$. The typical value of its maximum is $2\sqrt{g} \log N -  \Theta(\log r_N) $; thus typically there is no such vertex. We leave the details to the reader.
\end{proof}
 
\section{Local estimates for DGFF near-extrema}
\label{s:4} 
In this section we prove the estimates stated in Subsection~\ref{sub:4.1}. 
The proofs rely crucially on a \emph{concentric decomposition} of the DGFF 
along dyadic scales, introduced in~\cite{BL3}. We begin by briefly recalling 
this decomposition and collecting several accompanying results from~\cite{BL3} 
that will be used below.  We invite the reader to look in~\cite{BL3}, Section 3.2 in particular, for more details.
\subsection{Preliminaries: Concentric decomposition}
\label{sec:5.1} 
We begin with introducing some notation used in this section. 
We will decompose the DGFF on  $\mathsf{V}_{N}$ along a sequence of concentric boxes. Let us choose a reference point $o \in \mathsf{V}_{N}$, 
 and set 
\begin{equation}\label{eq-def-n'}
	\mathfrak{n}=\mathfrak{n}_{o}\coloneq\max \bigl\{k \ge 0:  \mathsf{B}_{2 ^{k}}(o) \subset \mathsf{V}_{N} \bigr\}\,.
\end{equation}
 Recall that $\wtN _o \coloneq \mathrm{dist}(o,\partial \mathsf{V}_{N})$ and hence 
$   2^{\mathfrak{n}_o} \le  \wtN _o \le  2^{\mathfrak{n}_o+1} $. Moreover, define 
\begin{equation*}
\mathsf{D}_0 \coloneq \mathsf{V}_N \ , \ 
\mathsf{D}_{k} \coloneq   \mathsf{B}_{2^{\mathfrak{n}-k}}(o) \ \text{ for }  1 \le k \le \mathfrak{n}-1   \ , \
\mathsf{D}_{\mathfrak{n}} \coloneq \{o\} \ \text{ and } \   \mathsf{D}_{\mathfrak{n}+1} \coloneq \emptyset .
\end{equation*}
  Observe    that we have 
$ 	\ol{\mathsf{D}}_{k} \coloneq \mathsf{D}_{k} \cup \partial \mathsf{D}_{k} \subseteq \mathsf{D}_{k-1} $ for all $ 1 \le k \le \mathfrak{n}$.
These boxes $\{\mathsf{D}_{k}\}$ depend on the reference point $o$ but we will omit this in notation for simplicity.  Let 
\begin{equation*}
	\mathsf{A}_k \coloneq \mathsf{D}_{k-1} \setminus \mathsf{D}_k \, , \ \forall \, 1 \le k \le \mathfrak{n}+1 .
\end{equation*}  

Recall that for any finite subset $\mathsf{D}$ of $\mathbb{Z}^2$, $h^{\mathsf{D}}$ denotes the DGFF on $\mathsf{D}$ with zero boundary condition on $\mathbb{Z}^2 \setminus \mathsf{D}$. Moreover set 
\begin{equation}\label{def-binding-field}
	 	\Phi^{\mathsf{D} \mid   \mathsf{D}'}_{x} \coloneq \E  \bigl[    h^{\mathsf{D}}_{x} \mid h^{\mathsf{D}}_{y}: y\in \mathsf{D'} \bigr] ,  \quad \forall \, x \in \mathbb{Z}^2.   
\end{equation}
The following decomposition of $h$ is obtained by conditioning successively on the values of $h$ at $\partial \mathsf{D}_k$.  We refer to Proposition 3.3 in~\cite{BL3} for a proof.   

\begin{prop}[Concentric decomposition]
\label{prop:38}
Let $(\varphi_{k})_{k=1}^{\mathfrak{n}+1}$  and $(h_{k})_{k=1}^{\mathfrak{n}}$ be independent of each other, both are sequences of independent random fields  such that 
\begin{enumerate}[(i)]
\item  for   $1 \le k \le \mathfrak{n}$,  $\varphi_{k} \eqd \Phi^{\mathsf{D}_{k-1} \mid \partial \mathsf{D}_{k} }$; and  $ \varphi_{\mathfrak{n}+1}  \eqd h^{\mathsf{D}_{\mathfrak{n}}} = h^{\{o\}}$; 
\item for   $1 \le k \le \mathfrak{n} $,  $h_{k} \eqd h^{\mathsf{D}_{k-1} \setminus \ol{\mathsf{D}}_{k}}$.
\end{enumerate}
Moreover define for each $1 \le k \le \mathfrak{n}+1$, and $x \in \mathbb{Z}^2$,
\begin{equation*}
	\chi_{k}(x) \coloneq \varphi_{k}(x) - \E  \bigl[\varphi_{k}(x) \mid
	\varphi_{k}(o) \bigr]  \, \text{ and } \, b_{k}(x)=\E  \bigl[ \varphi_{k}(x) - \varphi_{k}(o) \,|\, \varphi_{k}(o) = 1 \bigr] .
\end{equation*}
In particular, $\chi_{\mathfrak{n}+1} \equiv 0$ and $b_{\mathfrak{n}+1}= \delta_{o}-1$. Then the following decomposition holds:  
\begin{equation}
	\label{eq-concentric-decomposition}
\bigl( h_x=	h^{\mathsf{D}_0}_x : x \in \mathsf{D}_0 \bigr) \eqd  \Bigl( \sum_{k=1}^{\mathfrak{n}+1}[1+b_{k}(x)]\varphi_{k}(o) +   \sum_{k=1}^{\mathfrak{n}} \chi_{k}(x) + \sum_{k=1}^{\mathfrak{n}} h_{k}(x) : x \in \mathsf{D}_0 \Bigr) . 
\end{equation} 
\end{prop}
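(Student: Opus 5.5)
The statement is Proposition~\ref{prop:38}, the concentric decomposition. I would prove it by iterating the Gibbs--Markov property along the nested boxes $\mathsf{D}_0\supset\mathsf{D}_1\supset\cdots\supset\mathsf{D}_{\mathfrak{n}}=\{o\}$, and then splitting each binding field into its projection on the value at $o$ plus an independent remainder.

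\emph{Step 1: Gibbs--Markov at each scale.} Conditioning $h^{\mathsf{D}_{k-1}}$ on its values on $\partial\mathsf{D}_k$ gives
\[
h^{\mathsf{D}_{k-1}}\eqd \Phi^{\mathsf{D}_{k-1}\mid\partial\mathsf{D}_k}+h^{\mathsf{D}_{k-1}\setminus\partial\mathsf{D}_k}.
\]
The two summands are independent. Since $\ol{\mathsf{D}}_k\subseteq\mathsf{D}_{k-1}$, the DGFF on $\mathsf{D}_{k-1}\setminus\partial\mathsf{D}_k$ splits into two independent pieces, $h^{\mathsf{D}_{k-1}\setminus\ol{\mathsf{D}}_k}$ and $h^{\mathsf{D}_k}$. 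The reason is that $\partial\mathsf{D}_k$ separates $\mathsf{D}_k$ from $\mathsf{D}_{k-1}\setminus\ol{\mathsf{D}}_k$, so the Dirichlet Green function on the union is block diagonal.

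\emph{Step 2: Induction.} Applying Step 1 for $k=1,\dots,\mathfrak{n}$ gives, on $\mathsf{D}_0$,
\[
h\eqd\sum_{k=1}^{\mathfrak{n}}\varphi_k+\sum_{k=1}^{\mathfrak{n}}h_k+h^{\{o\}},
\]
with all summands independent. At the final stage, $h^{\mathsf{D}_{\mathfrak{n}}}=h^{\{o\}}=\varphi_{\mathfrak{n}+1}$, which is a Gaussian of variance $\mathbb{G}_{\{o\}}(o,o)$ supported at $o$. Here $h_k$ is extended by zero outside $\mathsf{D}_{k-1}\setminus\ol{\mathsf{D}}_k$, and $\varphi_k$ is defined on all of $\mathbb{Z}^2$ as the harmonic extension.

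\emph{Step 3: Projection onto $\varphi_k(o)$.} Each $\varphi_k$ is a centered Gaussian field, so Gaussian regression gives
\[
\E[\varphi_k(x)\mid\varphi_k(o)]=c_k(x)\,\varphi_k(o),\qquad c_k(x)=\frac{\Cov(\varphi_k(x),\varphi_k(o))}{\Var\varphi_k(o)}.
\]
By linearity, $c_k(x)=\E[\varphi_k(x)\mid\varphi_k(o)=1]=1+b_k(x)$. Hence $\varphi_k(x)=[1+b_k(x)]\varphi_k(o)+\chi_k(x)$. The remainder $\chi_k$ is independent of $\varphi_k(o)$ because the two are jointly Gaussian and uncorrelated.

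For $k=\mathfrak{n}+1$ we have $\varphi_{\mathfrak{n}+1}(x)=\delta_o(x)\varphi_{\mathfrak{n}+1}(o)$. This gives $b_{\mathfrak{n}+1}=\delta_o-1$ and $\chi_{\mathfrak{n}+1}\equiv0$.

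Substituting into Step 2 yields \eqref{eq-concentric-decomposition}. The joint independence claimed in (i)--(ii) is inherited from Step 1.

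\emph{Expected difficulty.} The only real care is bookkeeping at the boundary layers. One has to check that each $\varphi_k$ is defined on all of $\mathsf{D}_0$ (it equals $h$ on $\partial\mathsf{D}_k$ and is harmonic inside), and that the fields $h_k$ vanish where the statement requires. I would therefore verify the equality in law by matching covariances: in the sum, the covariance of $h_x,h_y$ must telescope to $\mathbb{G}_{\mathsf{V}_N}(x,y)$. Alternatively, this is exactly Proposition 3.3 of~\cite{BL3}, which may simply be cited.
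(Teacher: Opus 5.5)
Your proposal is correct and is essentially the paper's argument: the paper obtains \eqref{eq-concentric-decomposition} by conditioning successively on the values of $h$ on $\partial\mathsf{D}_k$ and defers the details to Proposition~3.3 of~\cite{BL3}. That argument is exactly your Gibbs--Markov iteration, using that $\partial\mathsf{D}_k$ separates $\mathsf{D}_k$ from $\mathsf{D}_{k-1}\setminus\ol{\mathsf{D}}_k$, followed by the Gaussian regression of each $\varphi_k$ on $\varphi_k(o)$ as in your Step~3. One point is worth stating explicitly rather than saying it is ``inherited'': joint independence across all levels follows by induction, because the level-$(k+1)$ pieces are measurable functions of $h^{\mathsf{D}_k}$, which is independent of $(\varphi_j,h_j)_{j\le k}$.
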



An immediate consequence of the decomposition~\eqref{eq-concentric-decomposition}
is the following representation of the field on the \(\ell\)-th annulus
\(\mathsf{A}_{\ell}\). Indeed, on \(\mathsf{A}_{\ell}\) we have
\(\varphi_k\equiv 0\) for \(k\ge \ell+1\) and \(h_k\equiv 0\) for
\(k\neq \ell\). Hence
\begin{equation}
\label{eq-h-annulus-k}
h_x
=
S_{\ell}
+
\sum_{k=1}^{\ell} b_k(x)\varphi_k(o)
+
\sum_{k=1}^{\ell} \chi_k(x)
+
h_{\ell}(x),
\qquad x\in \mathsf{A}_{\ell},
\end{equation}
where
\begin{equation}
\label{def-S-l}
S_{\ell}
\coloneq
\sum_{k=1}^{\ell} \varphi_k(o),
\qquad 1\le \ell\le \mathfrak{n}+1,
\qquad
S_0\coloneq 0.
\end{equation}
Moreover,  Proposition \ref{prop:38}  implies that
\[
S_{\mathfrak{n}+1}=h_o,
\ \text{ and } \
S_{\ell}
=
\Phi^{\mathsf{D}_0\mid \partial \mathsf{D}_{\ell}}_o,
\qquad 1\le \ell\le \mathfrak{n}.\footnote{If one adopts the convention \(\partial \mathsf{D}_{\mathfrak{n}+1}=\{o\}\),
then the case \(\ell=\mathfrak{n}+1\) need not be treated separately.}
\]

The following discussion explains the heuristic usefulness of the above 
decomposition. 
For simplicity, assume that \(o\in \mathsf{V}_{2^{n-1}}\), so
that \(|n-\mathfrak{n}_o|=\Theta(1)\). From~\eqref{eq-h-annulus-k}, together
with the fact that the maximum of
\(h_\ell\eqd h^{\mathsf{D}_{\ell-1}\setminus \overline{\mathsf{D}}_{\ell}}\)
is of order \(m_{2^{\mathfrak{n}-\ell+1}}+O_{\P}(1)\), we see that the
maximum of \(h\) on \(\mathsf{A}_\ell\), after centering by
\(m_{2^{\mathfrak{n}-\ell+1}}\), is well approximated---up to controllable
noise terms---by the inhomogeneous partial sum \(S_\ell\). Moreover, since
\( \Var[\varphi_k(o)]\) is bounded uniformly in \(n\) and \(k\), the process
\((S_\ell)_{\ell=0}^{\mathfrak{n}+1}\) is close to a random walk.

Conditioning on $\{h_{o} = S_{\mathfrak{n}+1} = m_N + t\}$ amounts, by the Gibbs--Markov property,  to adding a deterministic harmonic shift which, on $\mathsf{A}_\ell $ is of order $m_{2^\ell}$.  
This shift   pushes the high values of the GFF $h$ on $\mathsf{A}_{\ell}$ to level $m_{2^{n-\ell+1}}+m_{2^\ell} \simeq m_N + O(\log(\ell \wedge (n-\ell)))$, making them essentially extreme.
Thus, controlling the extreme values of $h$, conditioned on $\{h_{o} = m_N + t\}$ reduces to controlling the ``backbone'' random walk $(S_{\ell})_{\ell=0}^{\mathfrak{n}+1}$, conditioned on $S_{\mathfrak{n}+1} = m_N + t$. 
For example, requiring that $z$ is in addition a global maximum, translates into the requirement that the random walk stays (roughly) below the curve $\ell \mapsto \tfrac{\ell}{n}(m_N + t)$  given the endpoint constraint  $S_{\mathfrak{n}+1} = m_N + t$.  In this way, asymptotics for DGFF extreme events can be read off from corresponding random-walk problems, for which a rich theory is available.

In the remaining of this section, we make this heuristics precise. This involves showing that $(S_{\ell})$ is indeed a random-walk-type process, controlling the ``noise'' terms involving $b_k(x), \chi_{k}(x)$ in~\eqref{eq-h-annulus-k}, estimating the shift in mean resulting from conditioning the field to reach the order $m_N$ at $o$, i.e., $S_{\mathfrak{n}+1}=m_N+t$. Indeed we will  deal with the general case of conditioning on $\{S_{\ell} = t\}$ for any $\ell=1, \dots, \mathfrak{n}+1$.

\subsubsection{The ``shift'' field}
 Now, for $1 \le \ell \le \mathfrak{n}+1$, define the (deterministic) shift field $f_{\ell}$ by
\begin{equation}
\label{e:129}
f_{\ell}(x) : =   \E  \bigl[ h_x \mid S_{\ell} = 1 \bigr] \quad   \text{ for all } x \in \mathsf{D}_0 \setminus \mathsf{D}_{\ell} . 
\end{equation} 
Then the property of Gaussian vectors yields that  for all $t \in \bbR$, 
\begin{equation*}
\P  \bigl( (h_x)_{x \in \mathsf{D}_0 \setminus \mathsf{D}_{\ell}} \in \cdot \, \big| \, S_{\ell} = t \bigr) 
=  \P  \bigl( (h_x + t f_{\ell}(x))_{x \in \mathsf{D}_0 \setminus \mathsf{D}_{\ell}} \in \cdot \, \big| \, S_{\ell} = 0 \bigr)  \,.
\end{equation*}
Moreover, recall that  $S_{\ell}=  \Phi^{\mathsf{D}_0 \mid \partial \mathsf{D}_{\ell}}_o  $ for $1 \le \ell \le \mathfrak{n}+1$ (regarding $\partial \mathsf{D}_{\mathfrak{n}+1}=\{o\}$). We have 
\begin{align}\label{eq-f-by-Green-function}
f_{\ell}(x) &= \frac{\E  [h_x \,    \Phi^{\mathsf{D}_0 \mid \partial \mathsf{D}_{\ell}}_o   ]}{\E  [ (\Phi^{\mathsf{D}_0 \mid \partial \mathsf{D}_{\ell}}_o )^{2}] } = \frac{\E  \bigl[h _x h_o \bigr]}
{\E  [ (h^{\mathsf{D}_{0}}_o )^{2} ]  - \E  [ (h^{\mathsf{D}_{\ell}}_o )^{2} ]}  \notag \\
&=   \frac{G_{\mathsf{D}_0}(x,o)}{G_{\mathsf{D}_0}(o,o) - G_{\mathsf{D}_{\ell}}(o,o)} \quad \text{ for all } x \in \mathsf{D}_0 \setminus \mathsf{D}_{\ell} . 
\end{align} 
Here, recall that $G_\mathsf{V}$ for  $\mathsf{V} \subset \bbZ^2$ is   the discrete Green function on $\mathsf{V}$ with zero boundary conditions outside with   convention $G_{\emptyset} \equiv 0$. We define 
\[ \wedge_{m}^{n}(k) \coloneq \min\{(k-m)_+,(n-k)_+\}\,,  \]
and write  $ \wedge_{0}^{n}(k) $ as $ \wedge^{n}(k)$ for short. 
The following lemma shows that $f_{\ell}$  is rather flat on the annuli in the concentric decomposition.    

\begin{lem}
\label{lemma-3.8} 
There exists $C_{\ref{lemma-3.8}}>0$ such that  the following properties hold for the shift fields $f_{\ell}$ with $1  \le k \le \ell \le \mathfrak{n}+1$:
\begin{align}
	\max_{x \in \mathsf{A}_k} \big| f_{\ell}(x)   &- \frac{k}{\ell} \big| \le  \, \frac{C_{\ref{lemma-3.8}}}{\ell} \quad \text{ and }, \label{eq-f-asymptotic} \\
	\max_{x \in \mathsf{A}_k} \Big|  m_{2^{\mathfrak{n}}}  
  - \bigl(m_{2^{\mathfrak{n}}} &- m_{2^{ \mathfrak{n}-\ell+1}}  \bigr) f_{\ell}(x) - m_{2^{ \mathfrak{n}-k+1}}  \Big| \le  C_{\ref{lemma-3.8}} [1 + \log_+ \wedge^{\ell}(k)].  \label{eq-m-and-f}
\end{align} 
\end{lem}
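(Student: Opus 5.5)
The plan is to work directly from the Green-function representation \eqref{eq-f-by-Green-function}, so that both estimates reduce to standard asymptotics of the discrete Green function in boxes. Recall the following two facts.

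First, for a box $\mathsf{B}$ of side $\asymp R$ centred at $o$ and $x$ in its bulk, the last-exit (or potential-kernel) representation gives
\begin{equation*}
G_{\mathsf{B}}(x,o) = g\log\frac{R}{1+|x-o|}+O(1).
\end{equation*}
This holds uniformly over $x$ at distance comparable to $R$ from $\partial\mathsf{B}$. Near the boundary the same formula holds as an upper bound, and also as a lower bound once $|x-o|\le R/2$.

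Second, by the nesting of the $\mathsf{D}_k$ and the choice of $\mathfrak{n}$, we have $G_{\mathsf{D}_0}(o,o)=g\,\mathfrak{n}\log 2+O(1)$ and $G_{\mathsf{D}_\ell}(o,o)=g(\mathfrak{n}-\ell)\log 2+O(1)$ for $\ell\le\mathfrak{n}$, with $G_{\mathsf{D}_{\mathfrak{n}}}(o,o)=G_{\{o\}}(o,o)=O(1)$ and $G_{\mathsf{D}_{\mathfrak{n}+1}}=0$. Hence the denominator in \eqref{eq-f-by-Green-function} equals $g\ell\log 2+O(1)$, uniformly in $1\le\ell\le\mathfrak{n}+1$, and it is bounded below by a positive constant times $\ell$.

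For \eqref{eq-f-asymptotic}, take $x\in\mathsf{A}_k$, so that $|x-o|\asymp 2^{\mathfrak{n}-k}$ (for $k=1$, $x$ may be near $\partial\mathsf{V}_N$). When $k\ge 2$, $x$ lies in $\mathsf{D}_1$, which is well inside $\mathsf{D}_0$ since $\mathsf{B}_{2^{\mathfrak{n}}}(o)\subset\mathsf{V}_N$. The first fact then gives $G_{\mathsf{D}_0}(x,o)=g\log(2^{\mathfrak{n}}/2^{\mathfrak{n}-k})+O(1)=gk\log2+O(1)$. Dividing by $g\ell\log2+O(1)$ yields $f_\ell(x)=k/\ell+O(1/\ell)$, using $k\le\ell$ to absorb the cross term $O(k/\ell^2)$. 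For $k=1$ we only need $0\le G_{\mathsf{D}_0}(x,o)\le G_{\mathsf{D}_0}(o,o)-G_{\mathsf{D}_1}(o,o)+O(1)=O(1)$. This follows by comparing with the Green function of a box of radius $\asymp\wtN_o$ around $o$ and using monotonicity of $G$ in the domain; it gives $f_\ell(x)=O(1/\ell)$, which is the claim since $k/\ell=1/\ell$. The case $\ell=\mathfrak{n}+1$ is identical, with $G_{\emptyset}=0$.

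For \eqref{eq-m-and-f}, write $m_{2^j}=2\sqrt g\,j\log2-\frac34\sqrt g\log j+O(1)$. Substitute $f_\ell(x)=k/\ell+\epsilon$ with $|\epsilon|\le C/\ell$, and note $m_{2^{\mathfrak{n}}}-m_{2^{\mathfrak{n}-\ell+1}}=2\sqrt g(\ell-1)\log 2+O(\log\mathfrak{n})$. The linear parts then cancel up to $O(1)$:
\begin{equation*}
2\sqrt g\log2\bigl[\mathfrak{n}-(\ell-1)k/\ell-(\mathfrak{n}-k+1)\bigr]=O(1).
\end{equation*}
What remains is the logarithmic part
\begin{equation*}
-\tfrac34\sqrt g\Bigl[\log\mathfrak{n}-\tfrac{k}{\ell}\bigl(\log\mathfrak{n}-\log(\mathfrak{n}-\ell+1)\bigr)-\log(\mathfrak{n}-k+1)\Bigr],
\end{equation*}
together with the error $\epsilon\cdot O(\ell)=O(1)$ coming from the linear part.

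The main obstacle is showing that this bracket is $O(1+\log_+\wedge^\ell(k))$ uniformly. Put $a=\mathfrak{n}-\ell+1\ge1$ and write the bracket as a function of $k\in[0,\ell]$: it is the concave function $-\log(\mathfrak{n}-k+1)$ minus its chord between $k=0$ and $k=\ell$. This difference vanishes at both endpoints (up to $O(1)$, since $\log(\mathfrak{n}+1)-\log\mathfrak{n}=O(1/\mathfrak{n})$). By concavity of $\log$, the chord-minus-function gap at $k$ is at most $\log\bigl(1+\min(k,\ell-k)/a\bigr)$ plus a constant, which is $\le C[1+\log_+\wedge^\ell(k)]$. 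I would verify this gap bound by splitting into the cases $k\le\ell/2$ and $k>\ell/2$ and using $\log(1+y)\le\log_+ y+1$.

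One more error source needs care: the $O(\log\mathfrak{n})$ term from $m$ multiplied by $\epsilon=O(1/\ell)$ gives $O(\log\mathfrak{n}/\ell)$. This is not uniformly $O(1)$ when $\ell$ is small, so I would instead keep the exact expansion of $m_{2^{\mathfrak{n}}}-m_{2^{\mathfrak{n}-\ell+1}}$, namely $2\sqrt g(\ell-1)\log2+\tfrac34\sqrt g\log(a/\mathfrak{n})+O(1)$. Here $\log(\mathfrak{n}/a)=\log(1+(\ell-1)/a)\le C\ell$, so its product with $\epsilon$ is $O(1)$ as well.
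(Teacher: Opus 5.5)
You follow the paper's route: the ratio \eqref{eq-f-by-Green-function}, with numerator $gk\log 2+O(1)$ and denominator $g\ell\log 2+O(1)$ for $k\ge 2$, then an explicit expansion of $m_{2^j}$ for \eqref{eq-m-and-f}. Your concavity/chord treatment of the logarithms, including the care with $\epsilon\cdot\log(\mathfrak{n}/a)$, is a correct repackaging of the paper's split into $\frac{\ell-k}{\ell}\log_+\frac{\mathfrak n+1}{\mathfrak n+2-k}$ and $\frac{k}{\ell}\log_+\frac{\mathfrak n+2-k}{\mathfrak n+2-\ell}$. For $k\ge 2$, note that $\mathsf{D}_0=\mathsf{V}_N$ is not centred at $o$. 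So you need the off-centre asymptotic \eqref{eq-Green-asymp} rather than your centred-box fact. Its error term is $O(1)$ there, so this costs nothing.

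The step that fails is $k=1$, i.e.\ the bound $G_{\mathsf{D}_0}(x,o)=O(1)$ for $x\in\mathsf{D}_0\setminus\mathsf{D}_1$, uniformly in $o$. Monotonicity of the Green function in the domain gives an upper bound only against a domain \emph{containing} $\mathsf{V}_N$.
\begin{itemize}
\item A box around $o$ of radius $\asymp\wtN_o$ lies inside $\mathsf{V}_N$, so it only yields a lower bound.
\item The smallest box around $o$ containing $\mathsf{V}_N$ has radius $\asymp N$, and yields only $G_{\mathsf{D}_0}(x,o)\le g\log(N/\wtN_o)+O(1)$.
\end{itemize}
That is not $O(1)$ when $\wtN_o\ll N$, which is how the lemma is used: Proposition~\ref{prop:one-point-estimate} allows $\wtN\ge\sqrt N$. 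You would lose a factor $\log(N/\wtN_o)/\ell$ in \eqref{eq-f-asymptotic}. Nor does \eqref{eq-Green-asymp} rescue this case. Its error $C\,|x-o|/\mathrm{dist}(\{x,o\},\partial\mathsf{D}_0)$ is unbounded for $x\in\mathsf{A}_1$ near $\partial\mathsf{V}_N$.

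The missing idea is a comparison domain that contains $\mathsf{V}_N$ yet keeps its boundary at distance $\wtN_o$ from $o$. The paper takes the discrete half-plane $\mathsf{H}$ bounded by the line through the side of $\mathsf{V}_N$ nearest to $o$. It then uses $G_{\mathsf{H}}(x,o)=\mathfrak{a}(x-\overline{o})-\mathfrak{a}(x-o)=g\log\frac{|x-\overline{o}|}{|x-o|}+O(1)$. This is bounded because $|x-\overline{o}|\le 9|x-o|$ once $|x-o|\ge\wtN_o/4$, which holds on $\mathsf{A}_1$.

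A fix closer to your own argument is the maximum principle. $G_{\mathsf{D}_0}(\cdot,o)$ is harmonic on $\mathsf{D}_0\setminus\mathsf{D}_1$ and vanishes off $\mathsf{D}_0$. Hence its maximum over $\mathsf{A}_1$ is at most its maximum over the points of $\mathsf{D}_1$ adjacent to $\mathsf{A}_1$. These points lie in $\mathsf{A}_2$, where your (off-centre) $k\ge 2$ estimate already gives $O(1)$.
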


\begin{proof}
Firstly, by (3.8) in \cite[  Lemma 3.1]{LS24}, there exists $C>0$ such that  for all $x \in \mathsf{D}_0$
\begin{equation}\label{eq-Green-asymp}
	\Big| G_{\mathsf{D}_0}(x,o) - g \log \frac{\mathrm{dist}( \{ x,o\},  \partial \mathsf{D}_0) }{ |x-o|\vee 1}  \Big| \le C \Bigl[  1+\frac{|x-o|}{\mathrm{dist}(\{x,o\}, \partial \mathsf{D}_0)} \Bigr] . 
\end{equation}
Note that uniformly for $x \in \mathsf{D}_{1}$, $\mathrm{dist}(x,\partial \mathsf{D}_0) \asymp \mathrm{dist}(o,\partial \mathsf{D}_0) \asymp   2^{\mathfrak{n}} $; and  for $x \in \mathsf{A}_k \coloneq\mathsf{D}_{k-1} \setminus \mathsf{D}_k $ with $k \ge 2$, we have  $|x-o|  \asymp 2^{\mathfrak{n}-k}$.  Employing \eqref{eq-f-by-Green-function} and \eqref{eq-Green-asymp} we get  
\begin{equation*}
f_{\ell}(x) =  
\frac{ g \log 2^{\mathfrak{n}-(\mathfrak{n}-k)} + O (1)}
{g \log 2^{\mathfrak{n}} - g \log 2^{\mathfrak{n}-\ell}  + O (1)} = \frac{k + O(1)}{\ell+O(1)} \quad   \text{ for all } x \in \mathsf{A}_k, 
\end{equation*} 
which shows the first assertion \eqref{eq-f-asymptotic} for the case  $k \ge 2$.  

Furthermore,  we claim that there exists a constant $C>0$ such that  
\begin{equation*}
	G_{\mathsf{D}_0}(x,o) \le C \quad \text{ for all } \quad  x \in \mathsf{A}_{1}= \mathsf{D}_0 \setminus \mathsf{D}_{1} .
\end{equation*} 
Then repeating the previous argument gives the estimate \eqref{eq-f-asymptotic} in the case $k=1$. 
To this end, we can embed $\mathsf{D}_0=\mathsf{V}_{N}$ in a discrete upper half plane $\mathsf{H}$ with $\mathsf{V}_{N} \subset \mathsf{H}$ and $\mathrm{dist}(o, \partial \mathsf{H})=\mathrm{dist}(o,\partial \mathsf{V}_{N})$.  The monotonicity of the Green function yields that for $x \in \mathsf{V}_N$,
\begin{equation*}
	G_{\mathsf{D}_0}(x,o)  \le  G_{\mathsf{H}}(x,o)  = a(x-\overline{o})- a(x-o) = g \log \frac{|x-\overline{o}|}{|x-o|} + O(1)\,,
\end{equation*}
where $a$ is the potential kernel with well-known asymptotic $a(y)= g\log |y| + O(1)$ and $\overline{o}$ is the conjugate of $o$ (i.e., $\overline{o}$ is the reflection of $o$ across $\partial \mathsf{H}$). The claim   follows from the facts that $ |x-\overline{o}| \le 9 |x-o| $ for all $x \in \mathsf{H}$ with $|x-o| \ge \mathrm{dist}(o,\partial \mathsf{H})/4$.

Finally we  prove  \eqref{eq-m-and-f}. Using the estimate in \eqref{eq-f-asymptotic}, we get that  for $x \in \mathsf{A}_k$,
\begin{equation*} 
\begin{split}
(m_{2^{\mathfrak{n}}} - m_{2^{\mathfrak{n}+1-\ell} }) f_{\ell}(x)
& = \sqrt{g} \Bigl[ (2\log 2)  \ell - \frac{3}{4} (\log (\mathfrak{n}+1) - \log (\mathfrak{n}+2-\ell)) + O(1) \Bigr] \frac{k+O(1)}{\ell}    \\
& = 2\sqrt{g} (\log 2) k- \frac{3}{4} \sqrt{g}  \frac{k}{\ell} \log \frac{\mathfrak{n}+1 }{\mathfrak{n}+2 -\ell} + O(1) \,.
\end{split}
\end{equation*}
Thus the left-hand side in \eqref{eq-m-and-f} is   dominated above by 
\begin{equation*}
	1+ \big|\frac{k}{\ell}   \log \frac{\mathfrak{n}+1 }{\mathfrak{n}+2 -\ell} -   \log_+ \frac{\mathfrak{n}+1 }{\mathfrak{n}+2 -k} \big| \le \frac{\ell-k}{\ell}   \log_+ \frac{\mathfrak{n}+1 }{\mathfrak{n}+2 -k}      +  1+ \frac{k}{\ell}      \log_+ \frac{\mathfrak{n}+2-k }{\mathfrak{n}+2 -\ell} .
\end{equation*}
On the right-hand side, the first term is bounded by $ \frac{ \ell-k}{\ell} \log_+ \frac{\ell }{\ell+1 -k} \le \sup_{t \in (0,1]} t \, \log (1/t)<\infty$.  For the second term,  using the inequality $\lambda \log(1+ t) < \log(1+ \lambda t )$ for $\lambda\in(0,1), t>0 $, we have 
$\frac{k}{\ell}      \log_+ \frac{\mathfrak{n}+2-k }{\mathfrak{n}+2 -\ell}   \le \log (1+k)$.  
On the other hand, the same term is bounded by  $\log ( 1+ \ell-k ) \le 1 + \log_+(\ell-k)$. This completes the proof. 
\end{proof}

\subsubsection{The ``Back-bone'' random walk} 
For $1 \le k \le \mathfrak{n}$, define 
\begin{equation}\label{eq-centered-h}
	 \wh{h}_{k}(x)\coloneq h_{k}(x)- (m_{N} \mathbf{1}_{\{k=1\}} +m_{2^{\mathfrak{n}-k}} \mathbf{1}_{\{k \ge 2\}}) .
\end{equation} 
The next lemma controls the process 
\((S_\ell)_{\ell=0}^{\mathfrak {n}+1}\), together with the accompanying 
``noise'' terms in~\eqref{eq-h-annulus-k}.

\begin{lem}[{\cite[Lemmas 3.6--3.9]{BL3}}]
\label{lem:39} 
There exist constants $c_{\ref{lem:39} }, C_{\ref{lem:39} } > 0$ such that the following hold:
\begin{enumerate}[(i)]
	\item There are constants $0 < \sigma_{\min} < \sigma_{\max} < \infty$ with $\Var \bigl[ \varphi_{k}(o) \bigr] \in  (\sigma^2_{\min}, \sigma^2_{\max} )$  for all $ 1\le k \le \mathfrak{n}+1 $, and 
	\[
	 \Var \bigl[ \varphi_{k}(o) \bigr] \to  g \log 2  \ \text{ as  } \wedge^{\mathfrak{n}+1}(k) \to \infty. 
	\]
	\item For every $1 \le k \le \mathfrak{n}$, the function $b_k$ is discrete harmonic on $\mathbb{Z}^2 \setminus (\partial \mathsf{D}_k \cup \partial \mathsf{D}_{k-1})$ and uniformly bounded in $k$. It satisfies $b_k(x) = -1$ for all $x \in \mathbb{Z}^2 \setminus \mathsf{D}_{k-1}$ and $b_k(x) \ge -1$ for $x \in \mathsf{D}_{k-1}$. Moreover, for every $x \in \mathsf{D}_k$,
	\[
		|b_{k}(x)| \le C_{\ref{lem:39} } \, \frac{\mathrm{dist}(x,o)}{\mathrm{dist}(o, \partial \mathsf{D}_k)} .
	\]
	\item For $1 \le k \le \mathfrak{n}$ we have $\chi_{k}(x) = 0$ whenever $x \notin \mathsf{D}_{k-1}$. For   $k+1 \le r \le \mathfrak{n}$, $
		\E  \bigl[ \max_{x \in \mathsf{D}_{r}} |\chi_{k}(x)| \bigr] \le C_{\ref{lem:39} } \, 2^{-(r-k)} $.
	In addition, for every $\lambda > 0$,
	\[
	\P  \Bigl( \big| \max_{x \in \mathsf{D}_{r}} |\chi_{k}(x)| -  \E \bigl[ \max_{x \in \mathsf{D}_{r}} |\chi_{k}(x)| \bigr] \big| > \lambda\, 2^{-(r-k)} \Bigr) \le 2\, \mathrm{e}^{-c_{\ref{lem:39} } \lambda^{2}} .
	\]
	\item  Then for $1 \le k \le \mathfrak{n}$ with $\chi_0 \equiv 0$, we have 
\begin{align*}
\P  \Bigl(  \max_{x \in \mathsf{A}_{k}} \{ \chi_{k-1}(x) + \chi_{k}(x) + \wh{h}_{k}(x) \}   \in (-\infty,-u)\cup(u^2,\infty)  \Bigr)  \le C_{\ref{lem:39} }\, \mathrm{e}^{-c_{\ref{lem:39} } u^2} \ , \   \forall \, u >0. 
\end{align*} 
\end{enumerate} 
\end{lem}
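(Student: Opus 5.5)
This lemma is imported from \cite[Lemmas 3.6--3.9]{BL3}. To re-derive it, I would work directly from the Gibbs--Markov structure in Proposition~\ref{prop:38}. Throughout, I would use the Green-function asymptotics \eqref{eq-Green-asymp} and the geometric fact that $\mathsf{D}_{k-1}\supset\ol{\mathsf{D}}_k$ are concentric boxes whose side lengths differ by a factor $2$.

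For (i), I would write $\Var[\varphi_k(o)] = G_{\mathsf{D}_{k-1}}(o,o)-G_{\mathsf{D}_k}(o,o)$. This follows because $\varphi_k(o)$ is the conditional expectation of $h^{\mathsf{D}_{k-1}}_o$ given its values on $\partial\mathsf{D}_k$, and the conditional variance is $G_{\mathsf{D}_k}(o,o)$. The representation $G_{\mathsf{D}}(o,o)=\sum_{z\in\partial \mathsf{D}}H_{\mathsf{D}}(o,z)a(z-o)$, combined with $a(y)=g\log|y|+c_0+O(|y|^{-2})$, then gives $g\log 2+O(2^{-(\mathfrak{n}-k)})$ in the bulk. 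The endpoint cases $k=1$, where $\mathsf{D}_0=\mathsf{V}_N$ is not centred at $o$, and $k=\mathfrak{n}+1$, where $\mathsf{D}_{\mathfrak{n}}=\{o\}$, only give quantities bounded above and away from zero. This yields the uniform bounds $\sigma_{\min},\sigma_{\max}$, and the limit holds once $\wedge^{\mathfrak{n}+1}(k)\to\infty$. The case $k=1$ also needs $2^{\mathfrak n}\asymp \wtN_o$.

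For (ii), I would use the formula $b_k(x)=\Cov(\varphi_k(x),\varphi_k(o))/\Var\varphi_k(o)-1$. The field $\varphi_k$ vanishes off $\mathsf{D}_{k-1}$ and is the harmonic extension of its boundary values on $\partial\mathsf{D}_k$ and $\partial\mathsf{D}_{k-1}$, so $x\mapsto \Cov(\varphi_k(x),\varphi_k(o))$ is discrete harmonic off $\partial\mathsf{D}_k\cup\partial\mathsf{D}_{k-1}$; this gives the harmonicity of $b_k$ and $b_k=-1$ outside $\mathsf{D}_{k-1}$. The lower bound $b_k\ge-1$ follows from the positivity of the Green functions involved. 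Uniform boundedness would come from Harnack's inequality together with (i). The Lipschitz bound on $\mathsf{D}_k$ follows from $b_k(o)=0$ and the interior gradient estimate for harmonic functions \cite[Theorem 6.3.8]{LawL2010}.

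For (iii), I would observe that $\chi_k$ is a centred Gaussian field that is harmonic in $\mathsf{D}_k$ and vanishes at $o$. Its increments satisfy $\E[(\chi_k(x)-\chi_k(y))^2]\lesssim |x-y|^2/2^{2(\mathfrak{n}-k)}$, again by the interior gradient estimate applied to the covariance. On $\mathsf{D}_r$ this gives $\sigma\lesssim 2^{-(r-k)}$, with $\lambda R/\sigma=O(1)$, so Lemma~\ref{lem:gaussian-oscillation}(i) bounds the mean of the maximum and (ii) (Borell--TIS) gives the stated concentration.

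Part (iv) is the main obstacle. I would bound $\chi_{k-1}$ and $\chi_k$ on $\mathsf{A}_k$ by $O(1)$ plus Gaussian tails, using (iii) and a bounded-variance version of the same Dudley argument on $\mathsf{D}_{k-1}$. For the upper deviation above $u^2$, I would use the DGFF maximum tail \eqref{e:19} on the scale $2^{\mathfrak n-k}$ for $\wh h_k$; this gives $u^2e^{-\alpha u^2}\le Ce^{-cu^2}$. The hard direction is the lower tail: $\max_{\mathsf{A}_k}\wh h_k<-u$ must have probability $\le Ce^{-cu^2}$, even though $h_k$ lives on the annulus $\mathsf{D}_{k-1}\setminus\ol{\mathsf{D}}_k$. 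I would first pick a sub-box of side comparable to $2^{\mathfrak{n}-k}$ well inside the annulus and use Gibbs--Markov to write $h_k$ there as an independent DGFF plus a binding field of bounded variance. I would then invoke the known (double-)exponential lower tail of the DGFF maximum (Ding's bound), and control the binding field by Gaussian tails. The centering mismatch for $k=1$, where $m_N$ is used rather than $m_{2^{\mathfrak{n}-1}}$, is $O(1)$ because $\wtN_o\le N$, and I would only need it in the direction that is favourable for the stated bound.
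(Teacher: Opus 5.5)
Your treatment of (i)--(iii) is a reasonable reconstruction of what the paper imports from \cite{BL3}. The paper's own proof adds only the $k=1$ case of (i), via \eqref{eq-Green-asymp}, as you also do. Your plan for (iv), however, has a genuine gap.

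You propose to bound $\chi_{k-1}$ and $\chi_k$ on all of $\mathsf{A}_k$ by $O(1)$ plus Gaussian tails, and to control $\max_{\mathsf{A}_k}\wh h_k$ separately. This is false. Note that $\partial\mathsf{D}_k\subset\mathsf{A}_k$, and on $\partial\mathsf{D}_k$ the field $\varphi_k$ coincides with the DGFF on $\mathsf{D}_{k-1}$. So for $z\in\partial\mathsf{D}_k$ you get $\Var\chi_k(z)=G_{\mathsf{D}_{k-1}}(z,z)-O(1)=g(\mathfrak n-k)\log 2+O(1)$. Similarly, for $k\ge2$, $\chi_{k-1}$ has variance of order $\mathfrak n-k$ at the points of $\mathsf{A}_k$ adjacent to $\partial\mathsf{D}_{k-1}$. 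This is exactly why (iii) is stated only on $\mathsf{D}_r$ with $r\ge k+1$; there is no bounded-variance Dudley argument on $\mathsf{D}_{k-1}$. Hence $\max_{\mathsf{A}_k}(|\chi_{k-1}|+|\chi_k|)$ is not tight uniformly in $k$ and $\mathfrak n$, and your union bound for $\{\max>u^2\}$ is useless when $u^2\ll\mathfrak n-k$.

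The lower tail can still be saved. Evaluate at the argmax of $\wh h_k$ over an interior sub-box, where the $\chi$'s do have bounded variance. The upper tail cannot be saved this way. The large values of $\chi_{k-1}$ and $\chi_k$ occur exactly where $h_k$ is pinned near $0$, and you must use this cancellation. In the construction behind Proposition~\ref{prop:38}, $h^{\mathsf{D}_{k-2}}=\varphi_{k-1}+h_{k-1}+\varphi_k+h_k+h^{\mathsf{D}_k}$, and $h_{k-1}=h^{\mathsf{D}_k}=0$ on $\mathsf{A}_k$. Therefore
\[
\chi_{k-1}+\chi_k+h_k=\bigl(\varphi_{k-1}+\varphi_k+h_k\bigr)-(1+b_{k-1})\varphi_{k-1}(o)-(1+b_k)\varphi_k(o)\quad\text{on }\mathsf{A}_k,
\]
where the bracket is a DGFF on $\mathsf{D}_{k-2}$ restricted to $\mathsf{A}_k$. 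For $k=1$ this reads $\chi_1+h_1=h-(1+b_1)\varphi_1(o)$. By (i)--(ii), the subtracted terms are bounded multiples of Gaussians with bounded variance. So (iv) reduces to two-sided tail bounds for the maximum of a DGFF over a macroscopic portion of its domain.

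There are two further problems.

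First, you cannot invoke \eqref{e:19}. In the paper it is derived from Proposition~\ref{prop:one-point-estimate}, which rests on Lemmas~\ref{lem:40} and~\ref{lem:41}, and the proof of Lemma~\ref{lem:41} uses (iv) itself. Use external input instead: the exponential right tail and double-exponential left tail of \cite{ding2013exponential}, centred at $m_\cdot+O(1)$ by \cite{BDZ16}. You also need a Gibbs--Markov reduction to a box of the right scale, for instance when $k=2$, where $\mathsf{D}_0=\mathsf{V}_N$ is not centred at $o$.

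Second, your remark on $k=1$ is wrong. The difference $m_N-m_{2^{\mathfrak n-1}}$ is $O(1)$ only if $\wtN_o\asymp N$; the inequality $\wtN_o\le N$ only shows it is bounded below. In general it is of order $\log(N/\wtN_o)$. In that case a sub-box of side $\asymp 2^{\mathfrak n}$ has maximum near $m_{2^{\mathfrak n}}$. It then gives no lower-tail bound at level $m_N$, and the lower tail is precisely the unfavourable direction. What is true is that $\mathsf{A}_1=\mathsf{V}_N\setminus\mathsf{D}_1$ contains a box of side $\asymp N$, since $2^{\mathfrak n-1}\le N/2$. So $m_N$ is the correct centering for $k=1$, and with the identity above this is automatic.
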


\begin{proof} 
By adapting the arguments from \cite[Lemmas 3.6--3.9]{BL3}, we obtain assertions (ii)--(iv) and assertion (i) for $k \ge 2$. The remaining case is a direct consequence of \eqref{eq-Green-asymp} and the relation $\Var  [ \varphi_{1}(o) ] = G_{\mathsf{D}_0}(o,o) - G_{\mathsf{D}_1}(o,o) \asymp 1$.
\end{proof}
 
Next, we introduce the control variables $ \mathscr{M}_{\ell} $ as follows. 
Define for $m \ge 2$, 
\[ 
\vartheta_{m}( \wedge^{\ell}(k) ) \coloneq  \log \bigl(\max\{m,   \wedge^{\ell}(k)\} \bigr).  \]
For each $1 \le k \le \mathfrak{n}+1$,  we introduce 
\begin{align*}
	R^{1}_{k} \coloneq |\varphi_{k}(o)|, \quad R^{2}_{k}\coloneq \max_{ k+1 \le r \le \mathfrak{n}}  \, \max_{x \in \mathsf{D}_{r}} \,  2^{r-k} | \chi_{k}(x)| ,\quad  R^{3}_{k} \coloneq  \max_{x \in \mathsf{A}_k}   \bigl\{ \chi_{k-1}(x) + \chi_{k}(x) + \wh{h}_{k}(x) \bigr\}  ,
\end{align*}
with the understanding that $R^{2}_{k}=0 $ for $k \ge \mathfrak{n}$ and $\wh{h}_{\mathfrak{n}+1}\equiv 0$.
 Let  $R_{k} \coloneq \max\{ R^{1}_{k} ,R^{2}_{k} , (R^{3}_{k})_+ ^{1/2},   (R^{3}_{k})_{-}\}$.  Finally,  we define , for $1 \le \ell \le \mathfrak{n}+1$,
\begin{equation}\label{eq-control-variable}
	 \mathscr{M}_{\ell} \coloneq \min \bigl\{ m \ge 3 , m \in \mathbb{N}: \  R_k  \le   \vartheta_{m}( \wedge^{\ell}(k) ) \text{ for all }  1 \le k \le \ell \bigr\}.    
\end{equation}
By definition, either  $\mathscr{M}_{\ell} \le \frac{\ell}{2}$ or  $\mathscr{M}_{\ell} \ge  {\ell}/{2}$ and   $\max_{1 \le k \le \ell}R_k \le \log \mathscr{M}_{\ell} $.
The following lemma shows that this control variable provides a rather tight comparison between the maximum of the field in the annuli $\mathsf{A}_k$  and the corresponding increments of the above random walk.
 

\begin{lem}
\label{lem:40}
There exists a constant $C_{\ref{lem:40}}>0$ such that for all  $1 \le \ell \le  \mathfrak{n}+1$ and $ 1 \le k \le \ell $, 
\begin{align*}
   \max_{x \in \mathsf{A}_k} h_{x}  - (m_{N} \mathbf{1}_{\{k=1\}} +m_{2^{\mathfrak{n}-k}} \mathbf{1}_{\{k \ge 2\}}) &   
 \in [ S_{k}- C_{\ref{lem:40}} \, \vartheta_{\mathscr{M}_{\ell}}(\wedge^{\ell}(k))  \, , \,S_{k}+  C_{\ref{lem:40}} \, \vartheta_{\mathscr{M}_{\ell}}(\wedge^{\ell}(k))^{2} ] \, ; \\
  \max_{x \in \mathsf{D}_{k+1}} \big| \Phi^{\mathsf{D}_0 \mid \partial \mathsf{D}_{k}}_{x} - S_k \big|  &  \leq C_{\ref{lem:40}}   \,   \vartheta_{\mathscr{M}_{\ell}}(\wedge^{\ell}(k)) \,.
\end{align*} 
Moreover,   $
|S_{j}| \vee |S_{\ell} - S_{\ell-j}|  \le  j\log  (\mathscr{M}_{\ell} \vee j)$ for every $j \le  \ell/2$.  
\end{lem}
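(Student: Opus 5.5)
The plan is to read off both bounds from the decomposition \eqref{eq-h-annulus-k} on $\mathsf{A}_k$, controlling every error term by the definition \eqref{eq-control-variable} of $\mathscr{M}_\ell$. On $\mathsf{A}_k$ we have $h_x = S_k + \sum_{j\le k} b_j(x)\varphi_j(o) + \sum_{j\le k}\chi_j(x) + h_k(x)$. Subtracting the centering $m_N\mathbf 1_{\{k=1\}}+m_{2^{\mathfrak n-k}}\mathbf 1_{\{k\ge2\}}$ turns $h_k$ into $\wh h_k$. We then group $\chi_{k-1}+\chi_k+\wh h_k$, whose maximum over $\mathsf{A}_k$ is exactly $R^3_k$.

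The remaining terms are $\sum_{j\le k-2}\chi_j(x)$ and $\sum_{j\le k} b_j(x)\varphi_j(o)$, and they are uniformly small.

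For the $\chi$ terms: since $\mathsf{A}_k\subset \mathsf{D}_{k-1}$, Lemma~\ref{lem:39}(iii) gives $|\chi_j(x)|\le 2^{-(k-1-j)}R^2_j$. We then use $R^2_j\le \vartheta_{\mathscr M_\ell}(\wedge^\ell(j))$. The geometric weights make the sum at most $C\vartheta_{\mathscr M_\ell}(\wedge^\ell(k))$, because $\vartheta$ grows at most logarithmically in $|j-k|$, so $\log\max\{m,\wedge^\ell(j)\}\le \log\max\{m,\wedge^\ell(k)\}+\log(1+k-j)$, and this extra factor is absorbed by $2^{-(k-j)}$.

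For the $b$ terms: for $j\le k-1$ we have $x\in\mathsf{D}_{j}$, so Lemma~\ref{lem:39}(ii) gives $|b_j(x)|\le C\,2^{-(k-j)}$. For $j=k$ we have $|b_k|\le C$. With $R^1_j$ controlled in the same way, the total is again $\le C\vartheta$.

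Since $(R^3_k)_-\le\vartheta$ and $(R^3_k)_+\le\vartheta^2$, the first display follows for the maximum over $x\in\mathsf{A}_k$. For the lower bound we evaluate at the maximizer of $\chi_{k-1}+\chi_k+\wh h_k$.

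The second display follows the same route. The binding field satisfies $\Phi^{\mathsf{D}_0\mid\partial\mathsf{D}_k}_x=\sum_{j\le k}[(1+b_j(x))\varphi_j(o)+\chi_j(x)]$ on $\mathsf{D}_{k+1}$, by the same concentric decomposition truncated at level $k$. Hence $\Phi^{\mathsf{D}_0\mid\partial\mathsf{D}_k}_x-S_k=\sum_{j\le k}(b_j(x)\varphi_j(o)+\chi_j(x))$. On $\mathsf{D}_{k+1}$ every $\chi_j$ decays like $2^{-(k+1-j)}R^2_j$, including $j=k$, so no $R^3$ term appears and the bound is linear in $\vartheta$.

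For the last claim, $|S_j|\le\sum_{i\le j}R^1_i\le\sum_{i\le j}\log\max\{\mathscr M_\ell,i\}\le j\log(\mathscr M_\ell\vee j)$, using $\wedge^\ell(i)\le i$. Symmetrically, $|S_\ell-S_{\ell-j}|\le\sum_{i>\ell-j}R^1_i$, with $\wedge^\ell(i)\le \ell-i<j$.

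The main obstacle is bookkeeping rather than any single estimate. We must check that the weighted sums of $\vartheta_{\mathscr M_\ell}(\wedge^\ell(j))$ collapse to a constant times $\vartheta_{\mathscr M_\ell}(\wedge^\ell(k))$, which requires the comparison $\log\max\{m,a\}\le\log\max\{m,b\}+\log(1+|a-b|)$. We must also handle the edge cases $k=1$ (where the centering is $m_N$) and $k=\mathfrak n+1$ (where $\wh h\equiv0$ and $b_{\mathfrak n+1}=\delta_o-1$). These follow the same computation with trivial adjustments.
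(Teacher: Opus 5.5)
Your proposal is correct and follows the paper's proof essentially step for step. You decompose via \eqref{eq-h-annulus-k} and bound the $b_j\varphi_j(o)$ terms and the $\chi_j$ terms with $j\le k-2$ by $C\sum_{j\le k}(R^1_j+R^2_j)2^{-(k-j)}$ using Lemma~\ref{lem:39}. You then isolate $R^3_k$ and conclude from the definition of $\mathscr{M}_\ell$. The paper leaves two points implicit that you spell out: the absorption of $\sum_j 2^{-(k-j)}\vartheta_{\mathscr{M}_\ell}(\wedge^\ell(j))$ into $C\vartheta_{\mathscr{M}_\ell}(\wedge^\ell(k))$, via the $1$-Lipschitz property of $\wedge^\ell$, and the bound on $|S_j|\vee|S_\ell-S_{\ell-j}|$.
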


\begin{proof}
	It follows from 
 the decomposition \eqref{eq-h-annulus-k}   that  $\max_{x \in \mathsf{A}_k} h_{x}  - (m_{N} \mathbf{1}_{\{k=1\}} +m_{2^{\mathfrak{n}-k}} \mathbf{1}_{\{k \ge 2\}}) $ is equal to
 \begin{equation}
	\label{eq:centered-annuli-max}
  S_k + \max\limits_{x \in \mathsf{A}_k}  \Bigl\{ \sum_{j=1}^{k}   b_{j}(x)  \varphi_{j}(o)    +   \sum_{j=1}^{k-2}  \chi_{j}(x)  +  \{ \wh{h}_{k}(x) + \chi_{k}(x) + \chi_{k-1}(x) \} \Bigr\} .  
 \end{equation}
By using Lemma~\ref{lem:39} (ii) and the definitions of  $R^{i}_{k}, i=1,2,3$, \eqref{eq:centered-annuli-max} bounded from above by $C  \sum_{j=1}^{k} (R^{1}_{j}+R^{2}_{j})2^{-(k-j)}  +  (R^{3}_{k})_+ $; and \eqref{eq:centered-annuli-max} bounded from below by  $- C\,[\sum_{j=1}^{k} (R^{1}_{j}+R^{2}_{j})2^{-(k-j)}   +  (R^{3}_{k})_- ] .$
Similarly, since 
\[  \Phi^{\mathsf{D}_0 \mid \partial \mathsf{D}_{k}}_{x} - S_k= \sum_{j=1}^{k} b_{j}(x)   \varphi_{j }(o)   + \sum_{j=1}^{k}  \chi_{j}(x) \quad \text{ for }  x \in \mathsf{D}_{k}, \]
 we derive  that  $ \max\limits_{x \in \mathsf{D}_{k+1}} \big| \Phi^{\mathsf{D}_0 \mid \partial \mathsf{D}_{k}}_{x} - S_k \big| \leq C \sum_{j=1}^{k} (R^{1}_{j}+R^{2}_{j})2^{-(k-j)}  $. The desired bounds then follow  immediately from the definition of the control variable $\mathscr{M}_{\ell}$.
\end{proof}

The next lemma shows that the event that $\mathscr{M}_{\ell}$ is large has negligible probability.

\begin{lem}
\label{lem:41}
There exists constants $C_{\ref{lem:41}}, c_{\ref{lem:41}}$ and $c_0 =  {\sigma^2_{\max}}/{\sigma^{2}_{\min}}$   such that the following assertions hold.
Let $1 \leq \ell \leq \mathfrak{n}+1$, and  $t \in \bbR$. For all $1 \leq k \leq \ell$, we have 
\begin{equation}
\label{e:151}
\P  \bigl( R_k > u \mid  S_{\ell} = t \bigr)
	\leq C_{\ref{lem:41}}\exp \bigl( - c_{\ref{lem:41}} [(u -  c_0 |t|/\ell)_+]^2 \bigr) \ \text{ for all } u > 0.
\end{equation} 
Furthermore,   for each $m \ge 3$, we have  
\begin{align} 
 &\{ \mathscr{M}_{\ell} = m \} \subset \bigl\{ \exists  1 \le k \le \ell  \text{ with }  \wedge^{\ell}(k) \le m-1 \text{ s.t. }  R_k  >\log  (m-1) \bigr\}  \label{e:151.5} \\ 
 \P  \bigl(  \exists  1 \le k \le \ell  &\text{ with }  \wedge^{\ell}(k) \le m-1 \text{ s.t. }  R_k  > \log  (m-1)    \mid 
	S_{\ell} = t \bigr) \leq  C_{\ref{lem:41}} \, \rme^{- c_{\ref{lem:41}} [(\log m - c_0 |t|/\ell)_+]^2 } .\label{e:152}
\end{align}
\end{lem}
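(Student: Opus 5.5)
The plan is to prove \eqref{e:151} first and then derive \eqref{e:151.5}--\eqref{e:152} from it by a union bound. The basic device is Gaussian conditioning on $S_\ell=t$. Every variable entering $R_k$ is a functional of the Gaussian fields $\varphi_j$, $\chi_j$, $h_j$. Since $S_\ell=\sum_{j\le\ell}\varphi_j(o)$, the fields $\chi_j$ and $h_j$ are independent of $S_\ell$; indeed $\chi_j$ is independent of $\varphi_j(o)$ by construction, and the $h_j$ are independent of all the $\varphi$'s. Hence $R^2_k$ and $R^3_k$ have the same law conditionally on $S_\ell=t$ as unconditionally, and the tail bounds of Lemma~\ref{lem:39}(iii)--(iv) apply directly. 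For $R^2_k$ we combine the concentration in (iii) with a union bound over $r$, using the weights $2^{r-k}$ together with $\E[\max|\chi_k|]\le C2^{-(r-k)}$. The sum over $r$ of $2\rme^{-c\lambda^2}$ is only $\mathfrak n$ terms, so we pay a factor $\mathfrak n$. To avoid this we allow $\lambda$ to grow like $\sqrt{r-k}$, taking $\lambda=u+\sqrt{r-k}$ so that the series converges and still gives $C\rme^{-cu^2}$. For $R^3_k$, part (iv) with $u$ replaced by $\sqrt u$ for the positive part, and by $u$ for the negative part, gives the required Gaussian tail. Note that the control variable uses $(R^3_k)_+^{1/2}$ precisely for this reason.

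The only term affected by the conditioning is $R^1_k=|\varphi_k(o)|$. Given $S_\ell=t$, the variable $\varphi_k(o)$ with $k\le\ell$ is Gaussian with mean $t\,\Var\varphi_k(o)/\sum_{j\le\ell}\Var\varphi_j(o)$ and conditional variance at most $\sigma_{\max}^2$. By Lemma~\ref{lem:39}(i) the mean is bounded in absolute value by $\sigma_{\max}^2|t|/(\ell\sigma_{\min}^2)=c_0|t|/\ell$. A Gaussian tail bound then gives $\P(|\varphi_k(o)|>u\mid S_\ell=t)\le 2\exp(-[(u-c_0|t|/\ell)_+]^2/(2\sigma_{\max}^2))$. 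Combining the three pieces, with $R_k>u$ meaning that one of them exceeds its threshold, yields \eqref{e:151}.

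For \eqref{e:151.5}, suppose $\mathscr M_\ell=m\ge4$. Minimality means the defining condition fails at $m-1$, so some $k$ has $R_k>\vartheta_{m-1}(\wedge^\ell(k))$. If $\wedge^\ell(k)\ge m$, then $\vartheta_{m-1}(\wedge^\ell(k))=\vartheta_m(\wedge^\ell(k))$, which contradicts the condition holding at $m$. Therefore $\wedge^\ell(k)\le m-1$, and then $\vartheta_{m-1}=\log(m-1)$, which is the claimed inclusion. The case $m=3$ is vacuous or handled by convention.

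For \eqref{e:152}, the number of $k\le\ell$ with $\wedge^\ell(k)\le m-1$ is at most $2m$. A union bound with \eqref{e:151} at $u=\log(m-1)$ gives $2mC\rme^{-c[(\log(m-1)-c_0|t|/\ell)_+]^2}$. The main obstacle is absorbing the polynomial factor $m$. This works because when $\log m\gg c_0|t|/\ell$ the Gaussian exponent $c(\log m)^2$ dominates $\log m$. In the complementary regime, where $\log m-c_0|t|/\ell$ is bounded, we enlarge the constant and shrink $c_{\ref{lem:41}}$: write $m=\rme^{\log m}$ and use $x\le \epsilon x^2+C_\epsilon$ applied to $x=(\log m-c_0|t|/\ell)_+ + c_0|t|/\ell$. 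If the term $c_0|t|/\ell$ is unbounded, the factor $m$ cannot be absorbed this way. I would then refine the union bound using the fact that only $k$ with small $\wedge^\ell(k)$ contribute, and accept the bound as stated with constants adjusted. I expect this bookkeeping, rather than any probabilistic input, to be the delicate point.
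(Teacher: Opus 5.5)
Your overall route is the paper's. $R^2_k,R^3_k$ are independent of $S_\ell$, and only $R^1_k$ feels the conditioning, through its mean $t\sigma_k^2/V_\ell$ (with $\sigma_k^2=\Var[\varphi_k(o)]$, $V_\ell=\sum_{j\le\ell}\sigma_j^2$). Then \eqref{e:151.5} follows from minimality and \eqref{e:152} from a union bound; the paper states each of these in one line. Two of your steps, however, do not go through.

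\emph{The tail of $R^2_k$.} At scale $r$ the event is $\{2^{r-k}\max_{\mathsf D_r}|\chi_k|>C+u\}$. This needs only a deviation $u\,2^{-(r-k)}$ above the mean, so Lemma~\ref{lem:39}(iii) gives $2\rme^{-cu^2}$ per scale and nothing better. Choosing $\lambda=u+\sqrt{r-k}$ controls a different event, with the $r$-dependent threshold $C+u+\sqrt{r-k}$. Off a small set it only yields $R^2_k\le C+u+\sqrt{\mathfrak n-k}$, which is useless. The missing input is harmonicity. The field $\chi_k=\varphi_k-(1+b_k)\varphi_k(o)$ is discrete harmonic in $\mathsf D_k$ and vanishes at $o$. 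The interior difference estimate \cite[Theorem 6.3.8]{LawL2010} therefore gives $\max_{\mathsf D_r}|\chi_k|\le C2^{-(r-k)}\max_{\mathsf D_{k+1}}|\chi_k|$ for all $r\ge k+1$, so $R^2_k\le C\max_{\mathsf D_{k+1}}|\chi_k|$ deterministically. Applying (iii) at the single scale $r=k+1$ then gives the Gaussian tail with no factor $\mathfrak n$.

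\emph{The union bound in \eqref{e:152}.} Your diagnosis is correct but you leave it unresolved. Write $a=c_0|t|/\ell$ and $\log m=a+x$. The union bound gives roughly $\exp(a+x-cx^2)$, which is not $\lesssim\rme^{-c'x^2}$ uniformly: take $x=a^{1/3}$ and $a\to\infty$. The variance bounds alone cannot rescue this. Suppose $\sigma_k^2\approx\sigma^2_{\max}$ on the $\approx 2m$ relevant indices and $\approx\sigma^2_{\min}$ elsewhere, with $m\ll\ell$. Then the conditional means there are $\approx a$ and the probability tends to $1$.

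A correct argument treats the two parts of $R_k$ separately.
\begin{itemize}
\item For the unshifted part $R^2_k\vee(R^3_k)_+^{1/2}\vee(R^3_k)_-$, the bound $2mC\rme^{-c\log^2(m-1)}$ does absorb $m$.
\item For $R^1_k$, combine $\sigma_k^2\to g\log 2$ (Lemma~\ref{lem:39}(i)) with $c_0>1$. Once $\ell\ge L_0$, every $k$ with $\wedge^{\mathfrak n+1}(k)\ge K_0$ has $|t|\sigma_k^2/V_\ell\le(1-\delta)a$. These indices contribute $\lesssim m\,\rme^{-c[(\delta a+x-1)_+]^2}\lesssim\rme^{-c'x^2}$. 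The at most $2K_0$ exceptional indices, and all indices when $\ell<L_0$, cost only a constant factor.
\end{itemize}
Alternatively, in every application in the paper either $a=O(1)$, or, as in Lemma~\ref{l:11}, the regime $a\gtrsim\tfrac12\log m$ is killed by a Gaussian density factor. For $a\le\tfrac12\log m+O(1)$ your absorption argument already works.

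Finally, \eqref{e:151.5} is not vacuous at $m=3$ but false there: $\{\mathscr M_\ell=3\}$ contains the event that all $R_k$ are small. It should be stated for $m\ge4$, with $m=3$ covered by the trivial bound.
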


\begin{proof}
The inclusion relation \eqref{e:151.5} follows directly from the definition: on $\{ \mathscr{M}_{\ell} = m \}$ there must exists $k$ such that $R_{k}>\log ( \max\{ m-1, \wedge^{\ell}(k) \} )$ and  $R_{k}\le \log ( \max\{ m, \wedge^{\ell}(k) \} )$. This forces $\wedge^{\ell}(k) \le m-1$.   
The inequality \eqref{e:152} is a direct consequence of \eqref{e:151} and the union bound. So it suffices  to show  \eqref{e:151}. 

Note that $R^2_k$, $R^{3}_k$ are   independent of $S_{\ell}$.   Lemma \ref {lem:39} (ii)-(iv) then implies $\P  ( R_k^{2} \vee (R^{3}_{k})_+ ^{1/2} \vee   (R^{3}_{k})_{-} > u \mid  S_{\ell} = t ) \lesssim e^{- c_{\ref{lem:39} } u^{2}}$.
As for $R^1_k=|\varphi_{k}(o)|$,  by using Lemma~\ref{lem:39} (i) we obtain 
\begin{equation*}
\Var [\varphi_{k}(o) \,\big|\, S_{\ell} = t ] \leq \Var [\varphi_{k}(o) ] \le \sigma^2_{\max}, \ \text{ and } \ \E   [\varphi_{k}(o) \,\big|\, S_{\ell} = t ]= \frac { t \, \E  [\varphi_{k}(o)^2]}{\E  [S^2_{\ell}]} \leq  \frac{\sigma^2_{\max}}{\sigma^{2}_{\min}} \frac{|t|}{\ell} . 
\end{equation*}   
Then the   standard Gaussian tail bounds yields 
\begin{equation*}
\P  \bigl( R^1_k > u \, \big|\, S_{\ell} = t \bigr) \lesssim \exp \bigl( -   [(u - c_0|t|/\ell)_+]^2 / (2 \sigma_{\max}^2) \bigr)\,,
\end{equation*} 
as required.  
\end{proof}
 
\subsection{Preliminaries: Ballot estimates with two barriers}
\label{sub:3.1} 
The concentric decomposition reduces the analysis of the extreme values of $h$ to the study of a backbone random walk $(S_{k})$, subject to specific constraints. Specifically, our derivation leads to events in which the walk is required to stay above or below curves of small polynomial growth. Estimates of this type are typically referred to as ballot-type theorems. In this section, we adapt the approach developed in \cite[Section 4.3]{BL3} and \cite{CHL19}.

\smallskip
\noindent \textit{Notation for barriers and events.}
Assume  $0 < \sigma_{\rm min} < \sigma_{\rm max} < \infty$.  
Let $( \xi_k )_{k = 1}^{n}$ be a sequence of independent, centered Gaussian r.v.'s with variances $
\E  [ \xi_{k}^2  ] \in (\sigma^2_{\rm min}, \sigma^2_{\rm max})  $.
 We define the (inhomogeneous) random walk $(Z_k)_{k \geq 0}$ by  $
Z_k \coloneq Z_0 +  \sum_{j=1}^k \xi_j $ for all $1 \le k \le n$. 
For $x,y\in \bbR$, and $n \ge 1$, we denote the law of the random walk bridge from $x$ to $y$ in $n$ steps by $\P _{0,n}^{x, y}$, defined as  $\P (\,\cdot\, \mid Z_0 = x, Z_n = y)$.
 
Fix $a \in (0,1/2)$ and $b>0$.  For  $0\le m <n$ and $x, y \in \mathbb{R}$,  we define 
\begin{equation*}
	\mathfrak{l}_{m,n}^{x,y}(k) \coloneq    \frac{n-k}{n-m} x + \frac{k-m}{n-m} y \ \text{ and } \ \zeta_{m,n}^{a,b} (k) \coloneq b [\wedge_{m}^{n}(k)  ]^{a}  \quad  \text{ for all  } \,  m \le k \le n. 
\end{equation*}
Regard $\mathfrak{l}_{0,m}^{x_0,x_1}+\mathfrak{l}_{m+1,n}^{y_0,y_1}$ a function defined on $[0,n] \cap \mathbb{N}$.
 Given a subset $I \subset \mathbb{R}$ and two functions $g, \tilde{g}$ on $I$, we write $g \preceq_{I} \tilde{g}$ if $g(k) \le \tilde{g}(k)$ for all $k \in I \cap \mathbb{N}$. 

 \begin{figure}[tbp]
    \centering
\begin{tikzpicture}[scale=0.62, every node/.style={font=\small}]

    \pgfmathsetmacro{\nn}{12}      
    \pgfmathsetmacro{\rr}{8}       
    \pgfmathsetmacro{\xzero}{3.5}    
    \pgfmathsetmacro{\xone}{5}     
    \pgfmathsetmacro{\yzero}{4}    
    \pgfmathsetmacro{\yone}{3}     
    \pgfmathsetmacro{\aexp}{0.4}   
    \pgfmathsetmacro{\walkx}{2}    
    \pgfmathsetmacro{\walky}{1.8}  

    \draw[->] (-0.5,0) -- (\nn+1,0) node[right] {$k$};
    \foreach \k/\lbl in {0/0, \rr/r, \rr+1/r+1, \nn/n}
        \draw (\k,0.08) -- (\k,-0.08) node[below=1pt] {$\lbl$};

    \draw[RoyalBlue, dashed, thick] (0,\xzero) -- (\rr,\xone);
    \draw[BurntOrange, dashed, thick] (\rr+1,\yzero) -- (\nn,\yone);

    \draw[RoyalBlue, thick, domain=0:\rr, samples=80, smooth]
        plot (\x, {\xzero + (\xone-\xzero)/\rr*\x + (min(\x,\nn-\x))^\aexp});
    \draw[BurntOrange, thick, domain=\rr+1:\nn, samples=40, smooth]
        plot (\x, {\yzero + (\yone-\yzero)/(\nn-\rr-1)*(\x-\rr-1) + (min(\x,\nn-\x))^\aexp});

    \pgfmathsetmacro{\barLeft}{\xone + (min(\rr,\nn-\rr))^\aexp}
    \pgfmathsetmacro{\barRight}{\yzero + (min(\rr+1,\nn-\rr-1))^\aexp}
    \draw[gray, dotted] (\rr,\barLeft) -- (\rr+1,\barRight);

    \pgfmathsetmacro{\half}{\nn/2}
    \pgfmathsetmacro{\skelHalf}{\xzero + (\xone-\xzero)/\rr*\half}
    \pgfmathsetmacro{\barHalf}{\skelHalf + (min(\half,\nn-\half))^\aexp}
    \draw[<->, gray] (\half,\skelHalf) -- (\half,\barHalf)
        node[midway, right] {\scriptsize $\zeta_{0,n}^{a,b}(k)$};

    \fill (0,\xzero) circle (1.6pt) node[below left] {$x_0$};
    \fill (\rr,\xone) circle (1.6pt) node[below] {$x_1$};
    \fill (\rr+1,\yzero) circle (1.6pt) node[above] {$y_1$};
    \fill (\nn,\yone) circle (1.6pt) node[below right] {$y_0$};

    \draw[very thick, black!70]
        (0,\walkx) -- (1,2.6) -- (2,2.1) -- (3,3.0) -- (4,2.4) -- (5,3.3) --
        (6,2.7) -- (7,3.6) -- (\rr,2.9) -- (\rr+1,3.4) -- (10,2.2) -- (11,1.3) -- (\nn,\walky);
    \foreach \k/\v in {0/\walkx,1/2.6,2/2.1,3/3.0,4/2.4,5/3.3,6/2.7,7/3.6,8/2.9,9/3.4,10/2.2,11/1.3,12/\walky}
        \fill[black!70] (\k,\v) circle (1pt);
    \node[left] at (0,\walkx) {$x$};
    \node[right] at (\nn,\walky) {$y$};

    \node[RoyalBlue, anchor=west] at (0.3,7.2) {$\mathfrak{l}_{0,r}^{x_0,x_1}+\zeta$};
    \node[BurntOrange, anchor=west] at (\rr+0.5,7.2) {$\mathfrak{l}_{r+1,n}^{y_0,y_1}+\zeta$};
    \draw[very thick, black!70] (4.6,7.2) -- (5.3,7.2);
    \node[anchor=west] at (5.3,7.2) {$Z_k$};

\end{tikzpicture}
    \caption{Illustration of the barrier $\mathfrak{l}_{0,r}^{x_0,x_1}+\mathfrak{l}_{r+1,n}^{y_0,y_1}+\zeta_{0,n}^{a,b}$ appearing in Lemma~\ref{lem-two-barrier-ballot-upper-bound-rw}. The dashed segments show the piecewise-linear skeleton $\mathfrak{l}_{0,r}^{x_0,x_1}$ and $\mathfrak{l}_{r+1,n}^{y_0,y_1}$; the solid curves add the perturbation $\zeta_{0,n}^{a,b}(k)=b[\wedge_0^n(k)]^a$, which vanishes at $k=0,n$ and is largest near $k=n/2$. A trajectory of the random walk bridge $(Z_k)_{0\le k\le n}$ from $x$ to $y$ staying below the barrier is shown in gray.}
    \label{fig:barriers}
\end{figure}


\begin{lem}[Upper bound]
\label{lem-two-barrier-ballot-upper-bound-rw}
For  every  $a \in (0,1/2)$, $b>0$, and $K>0$, there exist constants $C_{\ref{lem-two-barrier-ballot-upper-bound-rw}}, c_{\ref{lem-two-barrier-ballot-upper-bound-rw}} > 0$ such that, for every tuple $(n,r; x,x_0,x_{1};y,y_0,y_1) \in \mathbb{N}^{2} \times \mathbb{R}^6$ satisfying $n \ge 3$, $n/2 \le r \le n-2$, $x \le x_0$, $y \le y_0$, and $|x-y| \le K\sqrt{n}$, we have
\begin{align}
   &  \P_{0,n}^{x,y} \Bigl( Z \preceq_{[0,n]} \mathfrak{l}_{0,r}^{x_0,x_{1}}  + \mathfrak{l}_{r+1,n}^{y_1,y_{0}} + \zeta_{0,n}^{a,b} \Bigr) \notag \\
    &\ \leq C_{\ref{lem-two-barrier-ballot-upper-bound-rw}}\frac{(1+x_0-x)(1+y_0-y)}{n} \Bigl[ 1+\frac{(x_1-y)_+}{(n-r)^{1/2}} \Bigr] \Bigl[ 1+\frac{(y_1-y)_+}{(n-r)^{1/2}} \Bigr]  \exp \Bigl( - c_{\ref{lem-two-barrier-ballot-upper-bound-rw}}\frac{(x_{1}-y)_{-}^2 + (y_{1}-y)_{-}^2 }{n-r} \Bigr). \notag
\end{align} 
\end{lem}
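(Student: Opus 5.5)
The natural strategy is to split the bridge at the kink $r$ via the Markov property, and reduce each half to a one-barrier ballot estimate with a curved (polynomially perturbed) linear barrier. Those one-barrier estimates are standard; see e.g.\ \cite[Section 4.3]{BL3} and \cite{CHL19}. We condition on the value $Z_r = w$, and, if convenient, also on $Z_{r+1}=w'$ (one extra Gaussian step). Under $\P_{0,n}^{x,y}$, the pieces $(Z_k)_{0\le k\le r}$ and $(Z_k)_{r+1\le k\le n}$ are then independent bridges. The density of $Z_r$ under the bridge is Gaussian with mean $\mathfrak{l}^{x,y}_{0,n}(r)$ and variance $\asymp r(n-r)/n \asymp n-r$, since $r\ge n/2$. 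This gives
\[
\P_{0,n}^{x,y}(\cdots)\le \int \P_{0,r}^{x,w}\bigl(Z\preceq_{[0,r]} \mathfrak{l}_{0,r}^{x_0,x_1}+\zeta\bigr)\,\P_{r,n}^{w,y}\bigl(Z\preceq_{[r+1,n]} \mathfrak{l}_{r+1,n}^{y_1,y_0}+\zeta\bigr)\, p_r(w)\,\dif w .
\]

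For the first factor, $r\asymp n$, and the barrier is the line from $x_0$ to $x_1$ plus $b[\wedge^n_0(k)]^a$. Note that $\wedge^n_0(k)$ restricted to $[0,r]$ is dominated by $\zeta^{a,b}_{0,r}$ plus a term of size $b(n-r)^a$ near the right end. The standard one-sided ballot bound then gives a factor
\[
C\,\frac{(1+x_0-x)\bigl(1+(x_1+b(n-r)^a-w)_+\bigr)}{r}.
\]
It vanishes (up to Gaussian decay) when $w > x_1 + O((n-r)^a)$. Symmetrically, the second factor on the interval $[r,n]$, of length $n-r$, gives
\[
C\,\frac{(1+y_0-y)\bigl(1+(y_1+b(n-r)^a-w)_+\bigr)}{n-r},
\]
again with Gaussian decay when $w$ exceeds $y_1$. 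The key point is to exploit that the length of the second piece is $n-r$, so that its ballot normalization is $1/(n-r)$. This is paired with the density of $w$ around $y$ on scale $\sqrt{n-r}$: since $r\ge n/2$ and $|x-y|\le K\sqrt n$, the bridge mean at time $r$ is $y+O(\sqrt{n-r}\cdot K)$ only when $n-r\asymp n$. In general we should instead use that $Z_r-y$ has variance $\asymp n-r$ and mean $O((n-r)|x-y|/n)=O(K\sqrt{n-r})$.

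Integrating, we substitute $w=y+\sqrt{n-r}\,v$. The product $(1+(x_1-w)_+)(1+(y_1-w)_+)$ is bounded by $(n-r)\bigl[1+\frac{(x_1-y)_+}{\sqrt{n-r}}+|v|\bigr]\bigl[1+\frac{(y_1-y)_+}{\sqrt{n-r}}+|v|\bigr]$. The density contributes $(n-r)^{-1/2}e^{-cv^2}$, and we also use $\dif w=\sqrt{n-r}\,\dif v$. The powers of $n-r$ then cancel against $1/(n-r)$, leaving $1/r\lesssim 1/n$ times the claimed polynomial prefactors. When $x_1<y$ or $y_1<y$, the constraint $w\le \min(x_1,y_1)+O((n-r)^a)$ forces $v\le -(x_1-y)_-/\sqrt{n-r}+o(1)$, resp.\ for $y_1$. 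The Gaussian weight then produces the factor $\exp(-c[(x_1-y)_-^2+(y_1-y)_-^2]/(n-r))$. The $(n-r)^a$ corrections are absorbed because $a<1/2$, at the cost of the constants.

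The main obstacle is the one-sided ballot input with the curved barrier $\zeta$ and an inhomogeneous-variance walk, uniformly in the endpoints, including the regime where the endpoint sits above the barrier. The latter needs a bound of the form $(1+(\text{barrier}-w)_+)$ multiplied by Gaussian decay in $(w-\text{barrier})_+$. I would first try to import this from \cite[Lemma 4.15 and surrounding]{BL3} or \cite{CHL19}. There, the bridge below $\mathfrak{l}+\zeta$ is handled by comparing with a straight barrier shifted by $O(1)$, using that the curvature $\zeta$ is sublinear with $a<1/2$, together with a Brownian-bridge coupling. The delicate bookkeeping is at the junction $k\in\{r,r+1\}$, where the two linear pieces jump. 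This is handled by conditioning on both $Z_r$ and $Z_{r+1}$ and using one Gaussian step between them.
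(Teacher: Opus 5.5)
Your proposal is correct and is essentially the paper's argument: absorb $\zeta_{0,n}^{a,b}$ into $\zeta_{0,r}$ and $\zeta_{r+1,n}$ by raising $x_1,y_1$ by $O((n-r)^a)\lesssim\sqrt{n-r}$, condition on $(Z_r,Z_{r+1})$, apply the one-sided ballot bound of Lemma~\ref{lem-ballot-rw} to the two sub-bridges, and integrate against the Gaussian law of $Z_r,Z_{r+1}$ (mean $y+O(K\sqrt{n-r})$, variance $\asymp n-r$); the paper does this last step via Cauchy--Schwarz and a truncated Gaussian second-moment bound rather than your direct substitution $w=y+\sqrt{n-r}\,v$. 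Note only that once you condition on both $Z_r$ and $Z_{r+1}$, the constraint on $[0,n]$ already forces $Z_r\le x_1+\zeta(r)$ and $Z_{r+1}\le y_1+\zeta(r+1)$, so the ``endpoint above the barrier'' regime you flag as the main obstacle never arises, and Lemma~\ref{lem-ballot-rw} as stated suffices.
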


The restriction $r \ge n/2$ is without loss of generality, by time reversal. The assumption $|x-y| \le K\sqrt{n}$ is likewise mild: by subtracting the linear interpolation between the endpoints, one may reduce to a bridge from $0$ to $0$. The resulting upper bound, however, need not be asymptotically sharp. To obtain a matching lower bound, we therefore impose the stronger assumptions below:

\begin{lem}[Sharp estimate]
\label{lem-two-barrier-ballot-asy-rw} Fix $K>0$, and let $\mathscr{H}_K$ denote the set of all tuples $(n,r; x,x_0,x_{1};y,y_0,y_{1}) \in \mathbb{N}^2 \times \mathbb{R}^{6}$ satisfying $n \ge 3$, $n/2 \le r \le n-2$, and
\begin{align*}
    0 & < x_0 - x \le K \sqrt{n}, \quad 0 < y_0 - y \le K \sqrt{n-r}, \\
    | x_{1}- x_0 | & \le K \sqrt{n}, \quad | y_{1}- y_0 | \le K \sqrt{n-r}, \quad -K \sqrt{n-r} \le x_{1} - y_{1} \le K \sqrt{n}.
\end{align*}
Then, for every  $a\in (0,1/2)$ and $b>0$, uniformly over
$(n,r;x,x_0,x_1;y,y_0,y_1)\in\mathscr{H}_K$,
\begin{equation*}
    \P_{0,n}^{x,y} \Bigl( Z \preceq_{[0,n]} \mathfrak{l}_{0,r}^{x_0,x_{1}} +\mathfrak{l}_{r+1,n}^{y_1,y_{0}} \pm \zeta_{0,n}^{a,b} \Bigr)
    \asymp_{K,a,b}  \left[ 1+\frac{(x_1-y_1)_+}{\sqrt{n-r}} \right] \frac{(1+x_0-x)(1+y_0-y)}{n}.
\end{equation*}
The implicit constants in $ \asymp_{K,a,b} $ depend on $K, a,$ and $b$.
\end{lem}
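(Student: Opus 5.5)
The plan is to prove the two-sided estimate by first collapsing the two-segment barrier into a single linear barrier plus a concave perturbation, and then invoking the standard sharp ballot theorem for Gaussian bridges with curved barriers, as in \cite[Section 4.3]{BL3} and \cite{CHL19}. The upper bound is already supplied by Lemma~\ref{lem-two-barrier-ballot-upper-bound-rw}. On $\mathscr{H}_K$ we have $|x-y|\le 3K\sqrt n$, and the Gaussian factor there is bounded by $1$. Moreover, $(y_1-y)_+\le K\sqrt{n-r}+(y_0-y)\le 2K\sqrt{n-r}$. Finally, $(x_1-y)_+\le (x_1-y_1)_++(y_1-y)_+$, so the bracket in $x_1$ becomes $\lesssim 1+(x_1-y_1)_+/\sqrt{n-r}$. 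This yields the ``$\lesssim$'' half for the barrier with $+\zeta$, and hence also for the barrier with $-\zeta$. Only the lower bound for the $-\zeta$ barrier remains.

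For the lower bound I will split the time axis at $r$ and use the Markov property of the bridge, conditioning on $Z_r=w$ and $Z_{r+1}=w'$. I restrict to the window
\[
w\in \Bigl[\min(x_1,y_1)-2\sqrt{n-r},\ \min(x_1,y_1)-\sqrt{n-r}\Bigr], \qquad |w'-w|\le 1 .
\]
The bridge density of $Z_r$ at such $w$ is $\gtrsim (n-r)^{-1/2}$, because $|w-\mathfrak{l}^{x,y}_{0,n}(r)|\lesssim_K \sqrt{n-r}$ (the endpoint scale near $r$). On the first piece $[0,r]$, the constraint is a bridge from $x$ to $w$ lying below $\mathfrak{l}_{0,r}^{x_0,x_1}-\zeta$. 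Its endpoint gaps are $x_0-x$ and $x_1-w\asymp \sqrt{n-r}+(x_1-y_1)_+$. The standard sharp ballot lower bound, with a concave barrier perturbation of order $b\,\wedge(k)^a$, $a<1/2$, gives $\gtrsim (1+x_0-x)(1+x_1-w)/r$. On the second piece $[r+1,n]$, the same bound gives $\gtrsim (1+y_1-w')(1+y_0-y)/(n-r)$.

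Multiplying these, and integrating over the window of width $\sqrt{n-r}$ against the density $(n-r)^{-1/2}$, gives
\[
\frac{(1+x_0-x)(1+y_0-y)}{n}\cdot\frac{\bigl(\sqrt{n-r}+(x_1-y_1)_+\bigr)\sqrt{n-r}}{n-r},
\]
using $r\asymp n$. This is exactly the claimed order, $\bigl[1+(x_1-y_1)_+/\sqrt{n-r}\bigr]\,(1+x_0-x)(1+y_0-y)/n$.

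I expect the main obstacle to be making the single-barrier sharp lower bound uniform when one endpoint gap is small, i.e.\ of order $1$ rather than $\sqrt n$, and when the barrier carries the $-\zeta$ correction. The route I would take first is to use entropic repulsion to push the walk to distance $\gg \wedge(k)^a$ within $O(1)$ steps near each end. Concretely, I would pay a constant cost to move from gap $1+x_0-x$ up to a fixed macroscopic gap over the first $M$ steps. I would then compare the remaining event with the linear-barrier ballot, using that $\sum_k \wedge(k)^{a}\,k^{-3/2}<\infty$ makes the curvature correction only a constant factor. A secondary technicality is that the bridge variances are inhomogeneous but bounded between $\sigma_{\min}^2$ and $\sigma_{\max}^2$. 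I would handle this by the usual time change or by a comparison with Brownian bridges through KMT coupling, as in \cite{CHL19}.
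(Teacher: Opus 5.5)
Your upper bound is derived as in the paper. Your lower-bound strategy is also the paper's: condition on $(Z_r,Z_{r+1})$ in a window of width $\asymp\sqrt{n-r}$ below the junction, apply the one-barrier ballot lower bound (Lemma~\ref{lem-ballot-rw}) to each sub-bridge, and integrate. However, the lower-bound step fails as written on part of $\mathscr{H}_K$, because you take the right endpoint gap of the first piece to be $x_1-w$. The perturbation $\zeta^{a,b}_{0,n}$ does not vanish at the junction: the event requires $Z_r\le x_1-b(n-r)^a$ and $Z_{r+1}\le y_1-b(n-r-1)^a$. Your window $[\min(x_1,y_1)-2\sqrt{n-r},\,\min(x_1,y_1)-\sqrt{n-r}]$ has $b$-independent constants, and this breaks when $n-r$ is bounded and $b$ is large (e.g.\ $n-r=2$, $b=4$). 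If $x_1\le y_1$, every $w$ in the window violates the constraint at time $r$. If $x_1>y_1$, every $w'$ with $|w'-w|\le 1$ violates it at time $r+1$. Either way the integrand is identically zero. For the same reason, Lemma~\ref{lem-ballot-rw} cannot be applied directly to the restriction of $\zeta^{a,b}_{0,n}$ to $[0,r]$, since it needs a perturbation vanishing at both ends of the sub-bridge.

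The repair is the reduction by which the paper deduces the statement from Lemma~\ref{lem-two-barrier-ballot-asy}, which has per-piece perturbations. For $b'\ge\max\{b,2\}$ one has $\zeta^{a,b}_{0,n}(k)\le\tfrac{k}{r}\,b'\zeta^{a,b}_{0,n}(r)+\zeta^{a,b'}_{0,r}(k)$ on $[0,r]$, and analogously on $[r+1,n]$. Since $b'\zeta^{a,b}_{0,n}(r)\le b'b\sqrt{n-r}$, lowering $x_1$ and $y_1$ by this amount yields per-piece perturbations $\zeta^{a,b'}_{0,r}$ and $\zeta^{a,b'}_{r+1,n}$. The tuple stays in $\mathscr{H}_{K+b'b}$, and $x_1-y_1$ is unchanged. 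Then take the window $[y_1-A\sqrt{n-r},\,y_1-B\sqrt{n-r}]$ (equivalently anchored at $\min(x_1,y_1)$) with $A>B$ large depending on the enlarged $K$. After this, the rest of your argument goes through unchanged: the density bound, the gap asymptotics $1+x_1-w\asymp\sqrt{n-r}+(x_1-y_1)_+$ and $1+y_1-w'\asymp\sqrt{n-r}$, and the final integration.

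Two small omissions should also be filled in. First, $\P(|Z_{r+1}-Z_r|\le 1\mid Z_r=w)\gtrsim_K 1$, because the conditional increment has mean $O_K(1)$ and variance bounded above and below. Second, the $\min\{\cdot,1\}$ in Lemma~\ref{lem-ballot-rw} is harmless, since both per-piece products are $O_K(1)$.

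Finally, the obstacle you single out is uniformity of the one-barrier lower bound for $O(1)$ gaps with a $-\zeta$ correction and inhomogeneous variances. That is exactly the content of Lemma~\ref{lem-ballot-rw}, which the paper takes as a standard input, so no new entropic-repulsion argument is needed.
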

 
The assumptions of Lemma~\ref{lem-two-barrier-ballot-asy-rw}
are not optimal, but they suffice for our purposes. In the
piecewise-constant barrier setting, where $x_0=x_1$ and
$y_0=y_1$, the natural condition for a matching lower bound is
typically
$(1+x_0-x)(1+y_0-y)=O(n)$
see, for example, \cite{CHL19}.
 
Our final lemma quantifies the entropic repulsion of a random walk
bridge from the upper barriers when the bridge is constrained to remain below them.

\begin{lem}[Entropic repulsion]
\label{lem-ent-rw}
For any $a \in (0,1/2)$, $b>0$, and $K>0$, there exists a constant $C_{\ref{lem-ent-rw}}$ such that for all $n \ge 3$, $n/2 \le r \le r' \le n-2$, $x \le x_0$, $y \le y_0$ with $x_0 \ge y_0$, $|x-y_0| \le K \sqrt{n}$, and $0 \le M \le K \sqrt{n-r}$, the following holds:
\begin{align}
    \P_{0,n}^{x, y} \Bigl( & \{ Z \preceq_{[0,r]} x_0 + \zeta_{0,n}^{a,b}, \ Z \preceq_{[r+1,n]} y_0+\zeta_{0,n}^{a,b} \} \cap \{ Z \preceq_{[r+1,r']} y_0 - \zeta_{0,n}^{a,b} - M \}^{c} \Bigr) \notag \\
    &\leq C_{\ref{lem-ent-rw}} \frac{(1+x_0-x)(1+y_0-y)}{n} \frac{M+(n-r')^{a}}{(n-r')^{1/2}} \Bigl[ 1+ \frac{(x_0-y_0)}{(n-r)^{1/2}} \Bigr].
    \label{ent:right}
\end{align}
Additionally, if $|y_0-y| \le K \sqrt{n-r}$ and $1 \le r'' \le n-r$, then
\begin{align}
    \P_{0,n}^{x, y} \Bigl( & \{ Z \preceq_{[0,r]} x_0 + \zeta_{0,n}^{a,b}, \ Z \preceq_{[r+1,n]} y_0+\zeta_{0,n}^{a,b} \} \cap \{ Z \preceq_{[r'',r]} x_0 - \zeta_{0,n}^{a,b} - M \}^{c} \Bigr) \notag \\
    &\leq C_{\ref{lem-ent-rw}} \frac{(1+x_0-x)(1+y_0-y)}{n} \frac{M+(r'')^{a}}{(r'')^{1/2}} \Bigl[ 1+ \frac{(x_0-y_0)}{(n-r)^{1/2}} \Bigr].
    \label{ent:left}
\end{align}
\end{lem}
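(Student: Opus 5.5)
The plan is to reduce both estimates in Lemma~\ref{lem-ent-rw} to the upper bound of Lemma~\ref{lem-two-barrier-ballot-upper-bound-rw} by decomposing at the time where the bridge first comes close to the barrier. We treat \eqref{ent:right}; \eqref{ent:left} is symmetric, via time reversal on $[0,r]$, with $r''$ playing the role of $n-r'$.

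\textbf{Step 1: decomposition.} On the complement event there is a first $k\in[r+1,r']$ with $Z_k> y_0-\zeta^{a,b}_{0,n}(k)-M$. Since the walk must also stay below $y_0+\zeta^{a,b}_{0,n}$, we get $Z_k\in(y_0-\zeta(k)-M,\,y_0+\zeta(k)]$, an interval of length $2\zeta(k)+M$. We union-bound over $k$ and condition on $Z_k=z$. By the Markov property, the bridge probability splits into a product of two pieces:
\begin{itemize}
\item a bridge from $x$ to $z$ on $[0,k]$, staying below $x_0+\zeta$ on $[0,r]$ and below $y_0+\zeta$ on $[r+1,k]$;
\item a bridge from $z$ to $y$ on $[k,n]$, staying below $y_0+\zeta$.
\end{itemize}
These are weighted by the Gaussian bridge density of $Z_k$ at $z$, which is $\lesssim (k(n-k)/n)^{-1/2}\asymp (n-k)^{-1/2}$ since $k\ge n/2$.

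\textbf{Step 2: bounding the pieces.} For the first piece we apply Lemma~\ref{lem-two-barrier-ballot-upper-bound-rw} with endpoint gap $1+y_0-z\lesssim 1+M+\zeta(k)$. This gives
\[
\frac{(1+x_0-x)(1+M+\zeta(k))}{k}\Bigl[1+\frac{x_0-y_0}{\sqrt{n-r}}\Bigr],
\]
where the bracket comes from the $x_1$-term with $x_1=x_0$, $y_1=y_0$. For the second piece, a standard single-barrier ballot bound gives $(1+M+\zeta(k))(1+y_0-y)/(n-k)$. Multiplying, integrating $z$ over the interval of length $M+2\zeta(k)$, and using $k\asymp n$, the contribution of each $k$ is
\[
\frac{(1+x_0-x)(1+y_0-y)}{n}\Bigl[1+\frac{x_0-y_0}{\sqrt{n-r}}\Bigr]\frac{(1+M+\zeta(k))^3}{(n-k)^{3/2}}.
\]
Here $\zeta(k)\asymp b(n-k)^a$. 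Summing over $k\le r'$, i.e.\ over $n-k\ge n-r'$, gives
\[
\sum_{j\ge n-r'} \frac{(M+j^a)^3}{j^{3/2}}.
\]

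\textbf{Step 3: the main obstacle.} This sum does not close directly. The cube, especially the factor $(M+j^a)^3$, is too crude: summed naively it gives $M^3(n-r')^{-1/2}+(n-r')^{3a-1/2}$, while we need the target factor $(M+(n-r')^a)/(n-r')^{1/2}$. To fix this, we would use only \emph{one} factor of $(1+M+\zeta)$ as the integration length. The two ballot factors should instead be handled by decomposing on $Z_k$ relative to the barrier and integrating the product of the two linear-in-gap ballot factors against the Gaussian density over the whole strip $(-\infty, y_0+\zeta(k)]$. With the gap $w=y_0+\zeta(k)-z$, this yields $\int_0^{M+2\zeta}(1+w)^2\,(n-k)^{-3/2}\dots$. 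The result is still too large, which suggests using \emph{first} crossing times for the barrier $y_0-\zeta-M$ dyadically, with $k$ ranging over $[n-2^{i+1},n-2^i]$. On each block we use the Gaussian localisation of the endpoint: the piece after $k$ has length $\asymp 2^i$, so $z-y$ is typically $O(2^{i/2})$, and this gives decay. Summing over blocks $2^i\ge n-r'$ then gives the geometric factor $(M+(n-r')^a)(n-r')^{-1/2}$. This careful summation, with the sub-square-root growth $a<1/2$ of the barrier curvature making the series converge, is where I expect the difficulty to be.
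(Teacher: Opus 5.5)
Your decomposition at the first crossing time is the right starting point, but there is a genuine gap. The per-$k$ estimate in Step~2 is too weak by a factor $(1+\zeta(k)+M)^2$, and the repair sketched in Step~3 does not address the cause.

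The target per-$k$ contribution is of order $\frac{(1+x_0-x)(1+y_0-y)}{n}\,\frac{\zeta(k)+M}{(n-k)^{3/2}}$. Its sum over $n-k\ge n-r'$ is $\lesssim\frac{M+(n-r')^a}{(n-r')^{1/2}}$ immediately, since $a<1/2$, so the summation is not where the difficulty lies. Your two extra factors come from the length $2\zeta(k)+M$ of the $z$-window and from the gap $1+y_0+\zeta(k)-z$ in the ballot bound for the piece on $[0,k]$. Dyadic grouping of $k$ or Gaussian localisation of $z-y$ cannot remove them, because the density of $Z_k$ is already of the correct order $(n-k)^{-1/2}$. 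They appear because on $[r+1,k]$ you only impose $Z\preceq y_0+\zeta$, discarding what makes $k$ the \emph{first} crossing: $Z_j<y_0-\zeta(j)-M$ for $r+1\le j<k$.

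The missing idea, which is how the paper argues, is to apply the Markov property at both times $k-1$ and $k$ and use the one-step overshoot.
\begin{itemize}
\item Write $Z_k=y_0+z$ with $z\in[-\zeta(k)-M,\zeta(k)]$ and $Z_{k-1}=y_0+w$ with $w\le-\zeta(k)-M$.
\item Since $\zeta$ is non-increasing on $[n/2,n]$, the first-crossing property gives $Z_j\le Z_k$ for $r+1\le j<k$. So the piece on $[0,k-1]$ is a bridge from $x$ to $y_0+w$ staying below $x_0+\zeta$ on $[0,r]$ and below the constant $y_0+z$ on $[r+1,k-1]$. Its ballot gap is $1+z-w$, not $1+\zeta(k)+M$.
\item Given $Z_k=y_0+z$, the variable $Z_{k-1}$ is Gaussian with variance $O(1)$ and mean $y_0+z+O(1)$, so its density is $\lesssim e^{-c(z-w)^2}$. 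With $\lambda=z+\zeta(k)+M$ and $\Delta=z-w\ge\lambda$,
\[
\int_{-\zeta(k)-M}^{\zeta(k)}\dif z\int_{-\infty}^{-\zeta(k)-M}e^{-c(z-w)^2}(1+z-w)^{3}\,\dif w=\int_{0}^{2\zeta(k)+M}\int_{\lambda}^{\infty}e^{-c\Delta^2}(1+\Delta)^{3}\,\dif\Delta\,\dif\lambda=O(1),
\]
where the cube absorbs the polynomial dependence on $z-w$ of the ballot bound.
\end{itemize}
Hence $Z_k$ is pinned within $O(1)$ of the lower barrier and the first ballot gap is $O(1)$. Only the factor $1+\zeta(k)-z\lesssim\zeta(k)+M$ from the piece on $[k,n]$ survives, which gives exactly the target per-$k$ contribution.

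Two smaller corrections are also needed.
\begin{itemize}
\item Lemma~\ref{lem-two-barrier-ballot-upper-bound-rw}, applied to a bridge of length $\approx k$ broken at $r$ (after rewriting $x_0+\zeta_{0,n}^{a,b}$ on $[0,r]$ as a linear barrier plus $\zeta_{0,r}^{a,b'}$), gives the bracket $1+\frac{x_0-y_0+\zeta(k)+M}{(k-r)^{1/2}}$, not $1+\frac{x_0-y_0}{\sqrt{n-r}}$. The sum over $k$ must therefore absorb the weight $(k-r)^{-1/2}$ using $M\le K\sqrt{n-r}$, and $k=r+1$ must be handled separately.
\item \eqref{ent:left} is not a literal time reversal of \eqref{ent:right}. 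There the crossing occurs on the long segment before the break at $r$, so after the last crossing time $l$ (Markov property at $l$ and $l+1$) the bridge must still pass the break. The resulting bracket factors need their own estimates, and this is where $|y_0-y|\le K\sqrt{n-r}$ is used.
\end{itemize}
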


\subsection{One point estimate: Proof of Proposition~\ref{prop:one-point-estimate} and \ref{prop:annular-high-point}}

\begin{proof}[Proof of Proposition~\ref{prop:one-point-estimate}] 
Throughout the proof, we fix the reference point $o=z$.  
Recall the definition of $\mathfrak{n}=\mathfrak{n}_o$ from \eqref{eq-def-n'} and let $\wtN =\wtN _o\coloneq\mathrm{dist}(o,\partial \mathsf{V}_N)$. 
These definitions imply $2^{\mathfrak{n}} \le \wtN  \le 2^{\mathfrak{n}+1}$. Furthermore, the assumption $\wtN  \ge \sqrt{N}$, combined with $2^{n} < N \le 2^{n+1}$, ensures that $\mathfrak{n} \in [n/2-1, n+1]$.
  
 \smallskip  
Since $|m_{\wtN }-m_{2^{\mathfrak{n}}}|=O(1)$, we may without loss of generality,  replace each occurrence of $m_{\wtN }$ in \eqref{e:32} by $m_{2^{\mathfrak{n}}}$. 
By \eqref{eq-Green-asymp}, we have $\Var(h_o) = g \log \wtN  + O(1)$. Consequently, there exists a constant $c>0$ such that, uniformly for all $|t| \le 10 \sqrt{\log N}$, 
 \begin{equation}\label{eq-ho-density}
	 \P (h_o \in m_{2^{\mathfrak{n}}}  + \dif t ) \lesssim \frac{1}{\sqrt{\mathfrak{n}} }  \exp \Bigl(   - \frac{[2 \sqrt{g} \log \wtN  - \frac{3}{2\alpha} \log \mathfrak{n} +t]^2}{2g \log \wtN } \Bigr) \lesssim  \,  \wtN  ^{-2} \times    \mathfrak{n}  \times e^{- \alpha t - c \frac{ t^2}{\log N}} \dif t.
 \end{equation}  
 Let $\tilde{\rho} \coloneq \min\{ (u-t)^{50}, \mathfrak{n}/2 \}$. Note that $\mathsf{D}_{1} \subset \mathsf{B}_{\wtN /2}(o)$ and $\mathsf{D}_{\mathfrak{n}-\tilde{\rho}}\subset\mathsf{B}_{\rho(u-t;\wtN )}(o)$. 
 Thus in view of \eqref{eq-ho-density}, it suffices for the proof of \eqref{e:32} to establish that, uniformly for  $-10 \sqrt{\log N} \le t \le s \le u \le 10\sqrt{\log N}$,
\begin{equation}
  \P \bigl( h^*_{  \mathsf{D}_{\mathfrak{n}-\tilde{\rho}}} \leq m_{2^{\mathfrak{n}}} + s , \, h^*_{ \mathsf{D}_1 } \leq m_{2^{\mathfrak{n}}} + u \mid h_o = m_{2^{\mathfrak{n}}} +t \bigr) 
  \lesssim \frac{(1+u_+)e^{-c (u_{-})^2} (1+s-t)}{\mathfrak{n}}. \label{eq-o-is-local-extreme}
\end{equation} 

We abbreviate the shift field $f_{\mathfrak{n}+1}$ defined in \eqref{e:129} by $f$, set $\hat{f} \coloneq 1- f$, and denote the control variable $\mathscr{M}_{\mathfrak{n}+1}$ from \eqref{eq-control-variable} by $\mathscr{M}$. Note that $0 \le f_x \le 1$.
Using the Gaussian property, the conditional probability in \eqref{eq-o-is-local-extreme} can be decomposed as $\sum_{m \ge 3} \mathrm{Pr}_{\eqref{eq-switch-shift}}(m)$, where
\begin{equation}\label{eq-switch-shift}
\mathrm{Pr}_{\eqref{eq-switch-shift}}(m)
\coloneq  \P  \bigl(  \mathscr{M} = m ,\
h_x \le  \hat{f}_{x} m_{2^{\mathfrak{n}}} - t f_x + u\ind{x \in \mathsf{D}_{1} \setminus \mathsf{D}_{\mathfrak{n} - \tilde{\rho} }}  +s \ind{x \in \mathsf{D}_{\mathfrak{n} - \tilde{\rho} }}     \ , \  \forall x \in \mathsf{D}_{1} 
\mid  h_o = 0 \bigr) .
\end{equation}
The  lemma below provides a useful upper bound for $\mathrm{Pr}_{\eqref{eq-switch-shift}}(m)$.

\begin{lem}\label{lem-put-M-in}
Under the same condition in Proposition~\ref{prop:one-point-estimate}, there exists a constant  $c>0$ such that 
\begin{equation}\label{eq-put-M-in}
\mathrm{Pr}_{\eqref{eq-switch-shift}}(m) \lesssim  \mathbf{1}_{\{ \log m \ge c\, u_{-} \}}  e^{-  c \log^{2} m} \,  \frac{ (1+u_+ )  (1 +s- t)  }{\mathfrak{n} }    .
\end{equation}
\end{lem}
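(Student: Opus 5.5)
We reduce the field event to a ballot-type event for the backbone walk $(S_k)$ under the bridge law, and then apply the two-barrier upper bound of Lemma~\ref{lem-two-barrier-ballot-upper-bound-rw}.

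\emph{Step 1: translation to the backbone walk.} On $\{\mathscr{M}=m\}$, Lemma~\ref{lem:40} with $\ell=\mathfrak{n}+1$ gives, on each annulus $\mathsf{A}_k$ with $1\le k\le \mathfrak{n}$,
\[
\max_{\mathsf{A}_k} h \ge m_{2^{\mathfrak{n}-k}} + S_k - C\,\vartheta_m(\wedge^{\mathfrak{n}+1}(k)).
\]
By Lemma~\ref{lemma-3.8}, the barrier $\hat f_x m_{2^{\mathfrak{n}}}-tf_x$ on $\mathsf{A}_k$ equals
\[
m_{2^{\mathfrak{n}}}-(m_{2^{\mathfrak{n}}}-m_{1})f_x - tf_x = m_{2^{\mathfrak{n}-k+1}} - t\,\tfrac{k}{\mathfrak{n}+1} + O\bigl(1+\log_+\wedge^{\mathfrak{n}+1}(k)\bigr).
\]
Hence the event in \eqref{eq-switch-shift} implies that the walk $(S_k)$, which under $h_o=0$ is a bridge from $0$ to $0$ in $\mathfrak{n}+1$ steps, satisfies
\[
S_k \le -t\,\tfrac{k}{\mathfrak{n}+1} + u\,\ind{k<\mathfrak{n}-\tilde\rho} + s\,\ind{k\ge \mathfrak{n}-\tilde\rho} + C\log\bigl(m\vee \wedge^{\mathfrak{n}+1}(k)\bigr).
\]
Since $\log(\wedge)^{}\le b\wedge^{a}$ up to constants, we absorb the $\wedge$-dependence into $\zeta^{a,b}_{0,\mathfrak{n}+1}$ and the $m$-dependence into the barrier levels by shifting both by $C\log m$. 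After subtracting the linear drift, this is exactly the setting of Lemma~\ref{lem-two-barrier-ballot-upper-bound-rw} with $r=\mathfrak{n}-\tilde\rho$:
\[
x=y=0,\qquad x_0,x_1 \approx u + C\log m,\qquad y_1,y_0 \approx s-t+C\log m,
\]
using that the drift $-tk/(\mathfrak{n}+1)$ at $k\approx \mathfrak{n}$ contributes $-t$. The condition $|x-y|\le K\sqrt n$ holds trivially.

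\emph{Step 2: the ballot bound, ignoring $\mathscr{M}$.} Applying Lemma~\ref{lem-two-barrier-ballot-upper-bound-rw} gives a bound of order
\[
\frac{(1+u_+ +\log m)(1+s-t+\log m)}{\mathfrak{n}}\Bigl[1+\frac{(u-s)_+}{\sqrt{\tilde\rho}}\Bigr]^{2}.
\]
The bracket is $O(1)$ because $\tilde\rho\ge (u-t)^{50}$ whenever $\tilde\rho<\mathfrak{n}/2$; in the complementary case it is controlled by $u\le10\sqrt{\log N}$ and $\tilde\rho=\mathfrak{n}/2$. Negative $u$ is handled as follows: if $u<0$, the barrier at $k=1$ lies at about $u+C\log m$, while the bridge starts at $0$ with an $O(1)$ step. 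So either $\log m\ge c\,u_-$, or the probability already carries a Gaussian factor $e^{-cu_-^2}$. Making this precise gives the indicator $\mathbf 1_{\{\log m\ge c u_-\}}$ (up to absorbing the alternative case into the $m=3$ term).

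\emph{Step 3: decoupling from $\{\mathscr{M}=m\}$; the main obstacle.} We still need the factor $e^{-c\log^2 m}$, which requires decoupling the ballot event from $\{\mathscr{M}=m\}$. By \eqref{e:151.5}, on $\{\mathscr{M}=m\}$ some $k$ with $\wedge(k)\le m-1$ has $R_k>\log(m-1)$, i.e.\ the bad event sits near the endpoints. We condition on $(S_j)$ for $j$ within $m$ of $0$ and of $\mathfrak{n}+1$.
\begin{itemize}
\item The components $R^2_k,R^3_k$ are independent of $S$, so these contribute an $e^{-c\log^2 m}$ factor directly.
\item For $R^1_k=|\varphi_k(o)|$ with $k$ near an endpoint, we use the bridge increment tail \eqref{e:151} together with the Markov property at times $m$ and $\mathfrak{n}+1-m$. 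We apply the ballot bound to the middle segment, with entrance and exit values bounded via Gaussian tails.
\end{itemize}
The boundary terms $(1+x_0-x)$ then become $(1+u_++\log m + |S_m|)$, and integrating against the $e^{-c\log^2m}$ tail leaves the polynomial factors in $\log m$ harmless. The delicate part is the time-inhomogeneous bridge increments of variance between $\sigma_{\min}^2$ and $\sigma_{\max}^2$ at $k=1$ and $k=\mathfrak{n}+1$, together with the $m$-dependent barrier shift. I would handle both by a union bound over the first bad index $k$, combined with Step 2 applied to the remainder.
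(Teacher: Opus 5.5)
Your overall route is the paper's. You reduce via Lemmas~\ref{lemma-3.8} and~\ref{lem:40} to the event $\{\mathscr M=m\}\cap\{S\preceq_{[2,\mathfrak n+1]}g_m\}$ under $S_{\mathfrak n+1}=0$, with $g_m(k)=u\ind{k\le\mathfrak n-\tilde\rho}+s\ind{k>\mathfrak n-\tilde\rho}-t\tfrac{k}{\mathfrak n+1}+C_0\vartheta_m(\wedge^{\mathfrak n+1}(k))$. You then decouple $\{\mathscr M=m\}$ by the Markov property near both endpoints, apply the two-barrier ballot bound to the middle bridge, and pay $e^{-c\log^2 m}$ through Lemma~\ref{lem:41}. (The event in \eqref{eq-switch-shift} only constrains $x\in\mathsf D_1$, so the barrier starts at $k=2$; your $k=1$ constraint is not available.)

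The first genuine gap is the indicator. The lemma bounds each fixed $m$, and its right-hand side vanishes when $\log m<c\,u_-$. So you must show $\mathrm{Pr}_{\eqref{eq-switch-shift}}(m)=0$ there. A Gaussian factor $e^{-cu_-^2}$ ``absorbed into the $m=3$ term'' only gives the summed estimate \eqref{eq-o-is-local-extreme}, not \eqref{eq-put-M-in}. No probabilistic argument is needed. By the definition of $\mathscr M$ (or the last assertion of Lemma~\ref{lem:40}), on $\{\mathscr M=m\}$ you have deterministically $|\varphi_1(o)|,|\varphi_2(o)|\le\log m$, hence $|S_2|\le2\log m$. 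Since $g_m(2)=u+C_0\log m+o(1)$, the event is empty unless $(C_0+2)\log m\gtrsim -u$.

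The second gap is that your decoupling only works for part of the range of $m$, and you do not say what happens elsewhere. After conditioning on $S_{m-1}$ and $S_{\mathfrak n+2-m}$, which on $\{\mathscr M=m\}$ are only controlled by $m\log m$, Lemma~\ref{lem-two-barrier-ballot-upper-bound-rw} needs $|S_{m-1}-S_{\mathfrak n+2-m}|\lesssim\sqrt{\mathfrak n}$ and a middle segment of length $\asymp\mathfrak n$. The hypothesis $|x-y|\le K\sqrt n$, trivial for the full bridge, is no longer automatic. This forces a restriction such as $m\le\mathfrak n^{1/4}$. For $m\ge\mathfrak n^{1/4}$ one instead uses $\P(\mathscr M\ge m\mid S_{\mathfrak n+1}=0)\lesssim e^{-2c\log^2m}\le\mathfrak n^{-1}e^{-c\log^2 m}$, from \eqref{e:151.5}--\eqref{e:152}.

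Even within $m\le\mathfrak n^{1/4}$, if $m>(\tilde\rho+1)/2$ the switch time $\mathfrak n-\tilde\rho$ lies inside the conditioned end segment. The middle bridge then sees only the $u$-barrier, and the ballot bound gives $(1+u_+)(1+u-t)(m\log m)^2/\mathfrak n$ rather than a factor $(1+s-t)$. You need to observe that $\tilde\rho<2m<\mathfrak n/2$ forces $\tilde\rho=(u-t)^{50}$, so $1+u-t\lesssim m$ is absorbed into $e^{-c\log^2m}$.

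Your Step~2 two-barrier computation is valid only in the remaining case $m\le(\tilde\rho+1)/2$. There the left barrier at the switch time is $x_1\approx u-t\tfrac{\mathfrak n-\tilde\rho}{\mathfrak n+1}+C\log m$, not $u+C\log m$. The bracket is still $O(1)$ because $u-t\lesssim\tilde\rho^{1/2}$ and $\tilde\rho|t|/\mathfrak n\lesssim\tilde\rho^{1/2}$. The boundary factors at the cut points are of order $m\log m$, i.e.\ polynomial in $m$ rather than in $\log m$, which is still harmless against $e^{-c\log^2 m}$.
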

 
Indeed, summing \eqref{eq-put-M-in} over $m$ yields \eqref{eq-o-is-local-extreme}. Thus, it remains to prove Lemma \ref{lem-put-M-in}. 

\begin{proof}[Proof of Lemma \ref{lem-put-M-in}]
Applying Lemma~\ref{lemma-3.8} with $\ell = \mathfrak{n}+1$, we have  
 $\max_{x \in \mathsf{A}_{k}} | m_{2^{\mathfrak{n}}}  \hat{f}_{x} - m_{2^{ \mathfrak{n}-k}} | \leq C_{\ref{lemma-3.8}} [1 + \log_+ \wedge^{\mathfrak{n}+1}(k)] 
$ and $\max_{x \in \mathsf{A}_k} \big| f_{x}    - \frac{k}{\mathfrak{n}+1} \big| \le\frac{C_{\ref{lemma-3.8}}}{\mathfrak{n}+1} $. Then  Lemma \ref{lem:40} yields that for each $2 \le k \le \mathfrak{n}+1$,
\begin{align}
h_x - m_{2^{\mathfrak{n}}} \hat{f}_x &\leq u  - t f_{x} ,\, \forall \, x \in \mathsf{A}_{k}  \Longrightarrow   \max_{x \in \mathsf{A}_k } h_{x}  - m_{2^{\mathfrak{n}-k}} \le  C_{\ref{lemma-3.8}} [1 + \log_+ \wedge^{\mathfrak{n}+1}(k)] + u - t \tfrac{k}{\mathfrak{n}} \notag\\
&\Longrightarrow S_{k} \le C_0 \,  \vartheta_{\mathscr{M}}(\wedge^{\mathfrak{n}+1}(k))  + u  - t \tfrac{k}{\mathfrak{n}+1}  \ , \label{eq-cons-S-k}  
\end{align} 
where we set $C_0 = 1+ C_{\ref{lemma-3.8}} \vee C_{\ref{lem:40}}$.  Let $g(k) \coloneq u\mathbf{1}_{\{k \le \mathfrak{n}-\tilde{\rho}\}} +s \mathbf{1}_{\{k > \mathfrak{n}-\tilde{\rho}\}} - t \frac{k}{\mathfrak{n}+1}$, and define $g_{m}(k) \coloneq g(k)+ C_0\vartheta_{m}(\wedge^{\mathfrak{n}+1}(k))$. 
We may then bound $\mathrm{Pr}_{\eqref{eq-switch-shift}}(m)$ from  above by
\begin{equation}
	\mathrm{Pr}_{\eqref{eq-ballot-S-M-m}}(m) \coloneq \P \Bigl( \{\mathscr{M} = m\} \cap \{ S \preceq_{[2,\mathfrak{n}+1]} g_m   \}   \mid S_{\mathfrak{n}+1}=0 \Bigr)  . 	\label{eq-ballot-S-M-m}
\end{equation} 

First, on the event $\{ \mathscr{M} =m\}$ with $m \ge 3$, we have $|S_{2}|  \le 2\log m$. Since $|t|/ \mathfrak{n+1}=o(1)$, if $g_m(2)=g(2)+C_0 \log m < - 2\log m$ (equivalently $(C_0+2)\log m \le -u$), then necessarily  $ \mathrm{Pr}_{\eqref{eq-ballot-S-M-m}}(s,t;m) =0 $.  
Second,  using \eqref{e:151.5} and \eqref{e:152} in Lemma \ref{lem:41} yields there is a constant $c>0$ such that  for all $ m \ge \mathfrak{n}^{1/4} $,
\begin{equation*}
	\P ( \mathscr{M}  \ge m    \mid  S_{\mathfrak{n}+1}=0) \lesssim e^{- 2c  (\log m)^2} \lesssim \frac{1}{\mathfrak{n}} e^{- c  (\log m)^2}\,,
\end{equation*}
which implies Lemma~\ref{lem-put-M-in}. 

It  remains to bound $\mathrm{Pr}_{\eqref{eq-ballot-S-M-m}}(m) $ in the regime $3\le m \le \mathfrak{n}^{1/4}$.
Define the event 
\begin{equation}\label{def:Fm}
	F_{m}\coloneq \{ \exists \, k \text{ with }  \wedge^{\mathfrak{n}+1}(k) \le m-1 \text{ s.t. }  R_k  >  \log  (m-1) \} \cap \{ |S_{m-1}|, | S_{\mathfrak{n}+1}-S_{\mathfrak{n}+2-m}| \le m \log m \}\,,
\end{equation}
which is measurable with respect to $\sigma( \{ \varphi_{k}(o), \chi_{k}, h_{k} : k \le m-1 \text{ or } k \ge \mathfrak{n}+2-m  \} )$.  
By Lemmas \ref{lem:40} and \ref{lem:41} we have $\{\mathscr{M} =m\} \subset F_{m}$. Applying    the Markov property gives 
\begin{equation}\label{eq-extrac-ballot-1}
 \mathrm{Pr}_{\eqref{eq-ballot-S-M-m}}(m) \le 
\E  \Bigl[  \P \Bigl(  S \preceq_{[m-1,\mathfrak{n}+2-m]} g_{m}       \mid S_{m-1}, S_{\mathfrak{n}+2-m}  \Bigr) \mathbf{1}_{F_{m}}  \  \big| \  S_{\mathfrak{n}+1}= 0 \Bigr] . 
\end{equation} 
We then split the analysis into two sub-cases:

\smallskip  
 \noindent \underline{\textit{Case 1: }}  $3 \le m \le (\tilde{\rho}+1)/2$.  
 We apply Lemma \ref{lem-two-barrier-ballot-upper-bound-rw} with $a=1/4$ and $b>0$ chosen so that $\vartheta_m(\wedge^{\mathfrak{n}+1}(k)) \le  \log m+\zeta^{a,b}_{m-1,\mathfrak{n}+2-m}(k)$.
On $F_m \cap \{ S_{\mathfrak{n}+1}=0\}$, we have $|S_{\mathfrak{n}+2-m} -S_{m-1}| \lesssim m \log m \lesssim \mathfrak{n}^{1/2} \lesssim (\mathfrak{n}-2m )^{1/2}$, so the hypotheses of Lemma \ref{lem-two-barrier-ballot-upper-bound-rw} are satisfied.
Combining this with the bounds $(u-t)\lesssim \tilde{\rho}^{1/2}$ and $\frac{\tilde{\rho}}{\mathfrak{n}}|t| \lesssim \tilde{\rho}^{1/2}$, we obtain, uniformly over $S_{m-1} < g_{m}(m-1)$ and $S_{\mathfrak{n}+2-m} < g_{m}(\mathfrak{n}+2-m)$,
\begin{align}
 & \P \Bigl(  S \preceq_{[m-1,\mathfrak{n}+2-m]} g_{m}       \mid S_{m-1}, S_{\mathfrak{n}+2-m}  \Bigr) \lesssim \P \Bigl(  S \preceq_{[m-1,\mathfrak{n}+2-m]} g + \zeta^{a,b}_{m-1,\mathfrak{n}+2-m}     \mid S_{m-1}, S_{\mathfrak{n}+2-m}  \Bigr)  \notag\\
&\ \lesssim  \frac{1}{\mathfrak{n}} \,  [ 1+g_{m}(m-1)  -S_{m-1}] [1+ g_{m}(\mathfrak{n}+2-m)  -S_{\mathfrak{n}+2-m}] \Bigl( 1+ \frac{ (u- \frac{\mathfrak{n}-\tilde{\rho}}{\mathfrak{n}} t)_{+}}  {(\tilde{\rho}+2-m)^{1/2}} \Bigr)^2\notag\\
&\ \lesssim  \frac{ (m \log m)^2}{\mathfrak{n} }   (1+u_+ )  (1 +s- t)  .\label{eq-extrac-ballot-2}
\end{align}
Substituting \eqref{eq-extrac-ballot-2} into \eqref{eq-extrac-ballot-1} and using $\P (F_{m} \mid S_{\mathfrak{n}+1}=0) \lesssim e^{- c \log^{2} m}$, which follows from Lemma \ref{lem:41}, we obtain \eqref{eq-put-M-in}. 
\smallskip 

\noindent \underline{\textit{Case 2: }} $ (\tilde{\rho}+1)/2 \le m \le \mathfrak{n}^{1/4}$ (if applicable). 
Here we simply bound $g(k)$ by $u - \frac{k}{\mathfrak{n}} t$. The standard ballot estimate (Lemma \ref{lem-ballot-rw}) then gives
\begin{equation}\label{eq-M-m-large-bal}
     \P \Bigl( S \preceq_{[m-1,\mathfrak{n}+2-m]} g_{m} \mid S_{m-1}, S_{\mathfrak{n}+2-m} \Bigr) \lesssim \frac{(m \log m)^2}{\mathfrak{n} } {(1+ u_+)(1 +u - t)} .
\end{equation}
Combining \eqref{eq-M-m-large-bal} with \eqref{eq-extrac-ballot-1} and the bound on $\P (F_{m} \mid S_{\mathfrak{n}+1}=0)$ yields
\begin{equation*}
	\mathrm{Pr}_{\eqref{eq-ballot-S-M-m}}(m)
\lesssim   (m \log m)^2e^{-  c \log^{2} m} \,  \frac{(1+u_+ )  (1 +u- t)}{\mathfrak{n} } \lesssim  e^{-  \frac{c}{2} \log^{2} m} \,  \frac{(1+u_+ )  }{\mathfrak{n} },
\end{equation*}
where we used the assumption $[1+(u-t)] \lesssim 1+\tilde{\rho} \lesssim m$. This establishes \eqref{eq-put-M-in}.
\end{proof}

This completes the proof of Lemma \ref{lem-put-M-in} as well as  Proposition~\ref{prop:one-point-estimate}.
\end{proof}

  \begin{proof}[Proof of Corollary \ref{cor-locmax}]
Recall from \eqref{eq-Green-asymp} that  $\Var(h_z) \le g \log   \wtN _z   + C$. Furthermore,    the trivial lower bound $\Var(h_z)=G_{\mathsf{V}_{N}}(z,z)\ge 1/4$ holds for all $z \in \mathsf{V}_{N}$.  

\smallskip
 \noindent
\underline{\emph{Step 1.} } 
 We first consider the near boundary regime where   $\wtN _z  \coloneq\mathrm{dist}(z,\partial\mathsf{V}_{N}) $ is less than $   N e^{ - d_{N} } $ with $\log\log N \ll d_{N} \ll \log N$.    Then as $N \to \infty$, the Gaussian tail estimate yields the following bound, uniform in $t \in [- (\log N)^{1/2},\infty)$: 
	\begin{align}
	& \P ( h_z \in m_N + \dif t ) \lesssim \exp\Bigl( - \frac{(m_{N}+t)^2}{2g   \log N  } \frac{\log N}{\log N- d_{N} +C } \Bigr)  \dif t \notag \\
	 &=\exp\Bigl( - \bigl[2\log N + \alpha t - \frac{3}{2} \log\log N +\frac{(t- \frac{3}{2\alpha} \log\log N)^2 }{2 g \log N} \bigr] \bigl[1+\frac{d_{N}}{\log N -d_{N}+C} \bigr] \Bigr)  \dif t \notag \\
	 & \le   N^{-2} e^{-\alpha t}  \exp\bigl(- [1+o(1)]( d_{N} + \alpha t d_{N}/\log N) -  \Theta( t^2/ \log N)\bigr)  \dif t \label{eq-bnd-h-z} . 
\end{align}
Consequently, since $(u_-)^2\le(\log\log N)^{4}$, whenever $\wtN_z\le N e^{-(\log N)^{1/4}}$ the last display is bounded by the right-hand side of \eqref{e:17}.

\smallskip
 \noindent
\underline{\emph{Step 2.}} 
Next, we consider the bulk regime where $\wtN _z \in [ N e^{ - (\log N)^{1/4} },N]$. For brevity, define the centering shift
\begin{equation*}
\Delta_{N,z}\coloneq m_N-m_{\wtN_z}
=2\sqrt g\log(N/\wtN_z)+o_N(1).
\end{equation*}
Here we apply the estimate from Proposition~\ref{prop:one-point-estimate}.
Note that $\Delta_{N,z}=o(\sqrt{\log N})$. If $u\ge0$, Proposition~\ref{prop:one-point-estimate}, applied with the shifted parameters $\Delta_{N,z}+t,\Delta_{N,z}+s,\Delta_{N,z}+u$, implies 
\begin{align}
 & \P  \bigl( h_z \in m_N + \dif t
, h^*_{z+ \mathsf{B}_{ \rho(u-t;\wt{N})}   }   \leq m_{N} + s,  h^*_{\mathsf{V}_N} \leq m_N + u \bigr)  \notag\\
&  \le  \P  \bigl( h_z \in m_{\wtN } +\Delta_{N,z}  + \dif t
 , h^*_{z+ \mathsf{B}_{\rho(u-t;\wt{N})}   }  \leq m_{\wtN } + \Delta_{N,z}+ s,  h^*_{ z+ \mathsf{B}_{\wtN /2}}  \leq m_{\wtN } +\Delta_{N,z} + u \bigr) \notag\\
 & \lesssim (  1+ \Delta_{N,z}+u )  (1+s-t  ) N^{-2} (N/\wtN )^{ 2}  e^{ - \alpha (\Delta_{N,z}+ t) } \dif t\lesssim  (   1+u )  (1+s-t  ) N^{-2}  e^{ - \alpha t } \dif t \label{eq-bnd-h-z-2} 
\end{align}
In the last inequality,  we have  used  $(1+\Delta_{N,z}+u) \le (1+\Delta_{N,z})(1+u)$, $(N/\wtN _{z})^2 = [1+o_{N}(1)] e^{\alpha \Delta_{N,z}/2}$, and   $(1+\Delta_{N,z})e^{-\alpha \Delta_{N,z}/2} \lesssim 1$. This completes the proof of \eqref{e:17} in the case $u \ge 0$.

 It remains to consider $u<0$. We claim that repeating  the proof of Lemma~\ref{lem-put-M-in}   gives 
\begin{equation}
	\label{eq:global-cap-1}
	 \P\Bigl(
	 h^*_{z+\mathsf{B}_{\rho(u-t;\wtN)}}\le m_N+s,\,
	 h^*_{\mathsf{V}_N}\le m_N+u
	 \,\Big|\,h_z=m_N+t
	 \Bigr)
	 \lesssim
	 \frac{(1+\Delta_{N,z})(1+s-t)}{\mathfrak n}
	 e^{-c u_-^2}.
\end{equation} 
Here we have to retain the constraint $h^*_{\mathsf{V}_N}\le m_N+u$ instead of replacing it by the constraint on $z+\mathsf{B}_{\wtN/2}$.    The outermost annulus $\mathsf{A}_1=\mathsf D_0\setminus\mathsf D_1$ is thus constrained at height $m_N+u$. Recall that $h_1$ is centered by $m_N$ in \eqref{eq-centered-h}. Lemma~\ref{lem:40} then implies that, on $\{\mathscr M=m\}$, the event under consideration is empty unless $\log m\ge c u_-$. Consequently the indicator in \eqref{eq-put-M-in} remains $\mathbf1_{\{\log m\ge c u_-\}}$, rather than $\mathbf1_{\{\log m\ge c(\Delta_{N,z}+u)_-\}}$. 
The only other modification to the proof of
Lemma~\ref{lem-put-M-in} is that, for $k\ge2$, replacing the centering
$m_{\wtN}$ by $m_N=m_{\wtN}+\Delta_{N,z}$ replaces the barrier
$g(k)$ there by
$ g^\Delta(k)
 =
 g(k)+\Delta_{N,z}(1-\frac{k}{\mathfrak n} )$,
up to an $o(1)$ error, uniformly in $k$.  
in the ballot estimate this costs at most a factor $C(1+\Delta_{N,z})$. Then \eqref{eq:global-cap-1} follows.
  
Moreover, \eqref{eq-ho-density}, applied with
$\Delta_{N,z}+t$ in place of $t$, yields $
 \P(h_z\in m_N+\dif t)
 \lesssim
 \wtN^{-2}\mathfrak n
 e^{-\alpha(\Delta_{N,z}+t)}\dif t$. 
Combining this with \eqref{eq:global-cap-1} and using
$\wtN^{-2} 
 =[1+o(1)]N^{-2}e^{\alpha\Delta_{N,z}/2}$,
$
 (1+\Delta_{N,z})e^{-\alpha\Delta_{N,z}/2}\lesssim1$
proves \eqref{e:17} for the case $u \in (-(\log\log N)^2,0)$. 

 \smallskip
 \noindent
\underline{\emph{Step 3.}} 
Finally, we turn to the proof of \eqref{e:19}. Setting $s=t=u$ in \eqref{e:32}, the argument used to derive \eqref{eq-bnd-h-z-2} yields that for all $z \in \mathsf{V}_{N}$ and $u \in [0, \log N]$\footnote{Although  Proposition \ref{prop:one-point-estimate},is stated for $u \lesssim \sqrt{\log N}$, the result also holds for $t \le s \le u$,  $|t|, |u| \lesssim \log N$ satisfying $(1+s-t) \asymp (1+u-t)$. In this case, the two-barrier ballot estimate in \eqref{eq-extrac-ballot-2} is not required anymore, as the standard ballot estimate in \eqref{eq-M-m-large-bal} suffices for the proof.}, 
\begin{equation*}
    \P \bigl( h_z \in m_N + \dif u, h^*_{\mathsf{V}_N} \leq m_N + u \bigr) \lesssim ( 1+u ) N^{-2} e^{ - \alpha u } \dif u.
\end{equation*}
Summing over $z$, we obtain
$\P \bigl( h^*_{\mathsf{V}_N} \in m_N + \dif u \bigr)  \lesssim ( 1+u ) e^{ - \alpha u } \dif u.$ for $u \in [0, \log N]$. 
For the tail  $u' \ge \log N$, repeating the argument in \eqref{eq-bnd-h-z} shows that
\begin{equation*}  
    \P \bigl( h^*_{\mathsf{V}_N} \ge m_N + u' \bigr) \lesssim \sum_{z \in \mathsf{V}_{N}}\P \bigl( h_z \geq m_N +u') \lesssim (\log N)^{3/2} e^{- \alpha u' - \frac{(u')^2}{2 g \log N}} \lesssim e^{-\alpha u'} .
\end{equation*}
This establishes \eqref{e:19} and concludes the proof. 
\end{proof}

\begin{rem} 
Regarding the estimate \eqref{e:32}, it is worth noting that for negative $u$, the decay is significantly faster than a Gaussian tail; indeed, a double exponential decay was established by Ding in \cite{ding2013exponential}. Furthermore, the sharp decay rate for the right tail in \eqref{e:19} should be  $ 
\P ( h^*_{\mathsf{V}_N} \geq m_N + u ) \leq C_{\ref{cor-locmax}} \min\{(1+u),\log N\} e^{-\alpha u - \frac{u^2}{2 g \log N} }$. However, as the coarser bounds provided in \eqref{e:32} and \eqref{e:19} already suffice for our applications, we do not pursue these optimal estimates in the present work.
\end{rem}

\smallskip
\begin{proof}[Proof of Proposition \ref{prop:one-point-lower-bnd}]
Analogous to  the  upper bound in Proposition~\ref{prop:one-point-estimate}, it suffices to prove that there exists a constant $c > 0$ such that for $-(\log N)^{1/2}\le t ,s \le u \le (\log N)^{1/2}$, $s \ge t-M$, and $u \ge 0$: 
\begin{align}
\mathrm{Pr}_{\eqref{eq-lower-bnd}} &\coloneq \P  \bigl(  h_{x+o} \le h_{o} +\omega_{x} \, \forall x \in \mathsf{B}_r, \,
h^*_{\mathsf{B}_{\sqrt{N}}(o)\setminus \{o\}} \leq m_N + s
	 , \, h^*_{\mathsf{V}_{N}} \leq m_N + u \,\big|\, h_o = m_N + t \bigr) \notag \\
& = \P  \bigl(  h_{x} \le (m_N+t)\hat{f}_{x} + \omega_{x-o}, \forall x \in \mathsf{B}_r (o) 
	  ; \, h_x \le  m_N  \hat{f}_{x} + s - t f_x ,\, \forall x \in \mathsf{B}_{\sqrt{N}}(o)\setminus \{o\} 
	 \, ; \notag \\
&   \quad \qquad \qquad   h_x \le m_N \hat{f}_{x} +  u - t f_x  ,\, \forall x \in \mathsf{D}_{0}
 \mid  h_o = 0 \bigr)  \ge c \,  \frac{(1+u)[1+(s-t)_+]}{\mathfrak{n}}.   \label{eq-lower-bnd}
\end{align} 
Fix a constant $m \ge 2\log_{2} r$ (to be chosen later) and decompose the above event into three parts:  
\begin{align*} 
B^{1}_{m} &\coloneq \{ h_{x}  \le \omega_{x-o}, \forall x \in \mathsf{B}_r (o)   ; \, h_x \le  m_N  \hat{f}_{x} + s - t f_x ,\, \forall x \in \mathsf{D}_{\mathfrak{n}-m} \setminus \{o\}  \} ; \\
B^{2}_{m} &\coloneq  \{ h_x - m_N \hat{f}_{x} \leq u - t f_x \:: x \in \mathsf{D}_{m} \setminus \mathsf{D}_{\mathfrak{n} /3} \, ; \, h_x - m_N \hat{f}_{x} \leq  s - t {f}_{x} \:: x \in D_{ \mathfrak{n} /3} \setminus D_{\mathfrak{n}-m}  \} ;\\ 
B^{3}_{m} &\coloneq  \{ h_x \leq  m_{N} \hat{f}_{x} -t f_{x} , \,   \forall  x \in \mathsf{D}_{0} \setminus \mathsf{D}_{m}  \} . 
\end{align*}
Using $\mathsf{B}_{\sqrt{N}}(o) \subset \mathsf{D}_{\mathfrak{n}/3}$ and $0\le \hat{f}_{x} \le 1$, we have  $
\mathrm{Pr}_{\eqref{eq-lower-bnd}}  \ge \P ( B^{1}_{m} \cap B^{2}_{m} \cap B^{3}_{m} \mid h_{o}= 0) $. 
Since the conditioned field   $(h_{x} , x \in \mathsf{V}_{N} \mid h_o=0)$ is positive associated, by the FKG inequality, we get 
\begin{equation*}
  \mathrm{Pr}_{\eqref{eq-lower-bnd}} \ge \P(B^{1}_{m} \mid h_{o} = 0) \, \P(B^{2}_{m} \mid h_{o} = 0) \, \P(B^{3}_{m} \mid h_{o} = 0) .
\end{equation*}

 \smallskip
\noindent \underline{\textit{Bound for $B^1_m$}: } Conditional on $h_o = 0$, the variables $(h_x)_{x \in \mathsf{D}_{\mathfrak{n}-m} \setminus \{o\}}$ converge weakly to a non-degenerate Gaussian vector as $N \to \infty$. Thus, for fixed $r$ and $m\ge 2 \log_{2} r$, there is a constant $c^{(1)}_{M,r,m}>0$ such that
\begin{equation*}
 \P \bigl( B^{1}_{m} \mid h_{o} = 0 \bigr) \geq \P \bigl( h_{x} \le \omega_{x-o}, \forall x \in \mathsf{B}_r (o) ; \, h_x \leq - M , \forall x \in \mathsf{B}_{2^{m}} (o) \backslash \{o\} \mid h_{o} = 0\bigr) \geq c^{(1)}_{M,r,m}.
\end{equation*}
 
\smallskip\noindent \underline{\textit{Bound for $B^3_m$}: }
Conditioned on $h_o =0$, $(h_{x})_{x \in \mathsf{V}_N} $ follows the distribution of the unconditioned field $(h_x - f_x h_0)_{x \in \mathsf{V}_{N}}$.
By \eqref{eq-f-asymptotic}, there exists $C_{1}>0$ such that $m_{N} \hat{f}_{x} -t f_{x} \ge m_{N}-   C_{1}(1+m)$ for $x \in \mathsf{D}_0 \setminus \mathsf{D}_{m}$. Also, Lemma \ref{lemma-3.8} ensures $\max_{x \in \mathsf{D}_0 \setminus \mathsf{D}_{m}} |f_{x}| \leq C_1 \frac{m}{n}$ (possibly enlarging $C_1$).
Therefore, for large $N$, there exists $c^{(2)}_{m}>0$ such that
\begin{align*}
\P  \bigl( B^{3}_{m} \mid h_o =0 \bigr)  &\geq  \P \bigl(  \max_{x \in   \mathsf{D}_{0} \setminus \mathsf{D}_{m} } [h_x - f_x h_0]\le  m_{N} -   C_{1}[1+m] \bigr)  \\
& \ge \P \bigl(  \max_{x \in  \mathsf{V}_{N}  } h^{\mathsf{V}_{N}  }_{x} \le  m_{N} -   2C_{1} [1+m] \bigr) -\P( |h_o|> \sqrt{n})  \geq c^{(2)}_{m}  .
\end{align*}
 
\smallskip\noindent \underline{\textit{Bound for $B^2_m$}: }  Employing Lemmas \ref{lemma-3.8} and \ref{lem:40}, as in \eqref{eq-cons-S-k}, on the event $\{\mathscr{M} \leq m\} \cap \{S_{\mathfrak{n}+1} = 0\}$, $B^{2}_{m}$ is implied by
\begin{align*}
B'_m \coloneq \bigl\{& \forall k \in [m, \mathfrak{n}/3], S_k \leq u - t \tfrac{k}{\mathfrak{n}} - C_0 \, \vartheta_{m}(\wedge^{\mathfrak{n}+1}(k))^2 \text{ and } \\
& \forall k \in [\mathfrak{n}/3,\mathfrak{n}-m] , S_k \leq s - t \tfrac{k}{\mathfrak{n}} - C_0 \, \vartheta_{m}(\wedge^{\mathfrak{n}+1}(k))^2 \bigr\} .
\end{align*} 
Hence, $\P (B^{2}_{m} \mid h_o =0)$ is bounded below by
\begin{equation}
\label{eq-M-con}
 \E  \Bigl[ \P  \bigl( B'_m  \mid  S_m, S_{n-m}   \bigr)   \mid S_{\mathfrak{n}+1}=0 \Bigr]
-  \P  \bigl( B'_m \cap\{\mathscr{M}  > m\}  \mid S_{\mathfrak{n}+1}=0 \bigr)  \,.
\end{equation} 
A direct adaptation of the proof of Lemma~\ref{lem-put-M-in}, using the same decomposition according to the value of $\mathscr{M}$ and the same ballot estimate, yields
\begin{equation}\label{eq-M-con-1}
	\P  \bigl( B'_m \cap\{\mathscr{M}  > m\}  \mid S_{\mathfrak{n}+1}=0 \bigr)  \le C \frac{[1+(s-t)_+] (1+u)}{\mathfrak{n}} \rme^{-c (\log m)^{2}  }  \,. 
\end{equation}
We apply Lemma \ref{lem-two-barrier-ballot-asy-rw} to provide a lower bound for the first term in \eqref{eq-M-con}. To this end, 
choose $m$ large enough such that both $s-t\frac{\mathfrak{n}-m}{\mathfrak{n}}-C_0 \log^2 m$ and 
$u- t\frac{m}{\mathfrak{n}}- C_0 \log^2 m$ are greater than $-\sqrt{m}$. Since $|t| \lesssim \mathfrak{n}^{1/2}$ and  $ 0\le u-s \le u-t+M \lesssim  \mathfrak{n}^{1/2}$,   applying
Lemma \ref{lem-two-barrier-ballot-asy-rw} with $a=1/4$ and  $b>0$ chosen such that $\vartheta_m(\wedge^{\mathfrak{n}+1}(k))^2 \le (\log m)^2 +\zeta^{a,b}_{m-1,\mathfrak{n}+2-m}(k)$; it follows    that there is constant $c_{*}>0$ independent of $m$ satisfying    
\begin{equation}\label{eq-M-con-2}
 \P \bigl( B'_m \mid S_m , S_{\mathfrak{n}-m} \bigr) \ind{ S_m , S_{\mathfrak{n}-m} \in [-2 \sqrt{m}, - \sqrt{m}] } \ge c_{*} \, \frac{[1+(s-t)_+](1+u)}{\mathfrak{n} } .
\end{equation} 
  
Furthermore, a direct density computation yields
\begin{equation}\label{eq-M-con-3}
 \P( S_m ,S_{\mathfrak{n}-m} \in [-2 \sqrt{m}, - \sqrt{m}] \mid S_{\mathfrak{n}+1} = 0) \gtrsim \min_{\lambda_{1},\lambda_{2} \in [1,2]} e^{- \frac{\lambda_{1}^2 + \lambda_{2}^2}{2 \sigma^2_{\max}} } e^{ - (\lambda_{1}-\lambda_{2})^2 \frac{m }{2 \sigma^2_{\max}(\mathfrak{n}-m)} }. 
\end{equation} 
Combining \eqref{eq-M-con}, \eqref{eq-M-con-1}, \eqref{eq-M-con-2}, and \eqref{eq-M-con-3}, we find a constant $c_{**}>0$ such that for large $m$ and   $\mathfrak{n} \gg m$
\[
 \P (B^{2}_{m} \mid h_o =0) \ge c_{**} \frac{[1+(s-t)_+](1+u)}{\mathfrak{n} } .
\] 
This establishes \eqref{eq-lower-bnd} and completes the proof of Proposition \textcolor{magenta}{\ref{prop:one-point-estimate}}.  
\end{proof}

\smallskip
\begin{proof}[Proof of Proposition~\ref{prop:annular-high-point}]
	By the estimate \eqref{eq-bnd-h-z}, it suffices to consider the case where $\wtN  \ge N e^{-(\log N)^{1/4}}$, which implies $\mathfrak{n} \asymp \log N$.
The proof relies on the following two estimates, which are analogous to those established in Proposition~\ref{prop:one-point-estimate}. Let $|t|, |s| \le 2(\log N)^{1/2}$ and $u \in [0, 2(\log N)^{1/2}]$.
\smallskip

\noindent \textbf{Claim 1.} Assume $s < u $ and let $ \hat{\rho} \coloneq \min\{ (u-s)^{50}, \mathfrak{n}/2 \}$. Then,
\begin{equation} 
 \P \bigl( h^*_{z+( \mathsf{B}_{ \wtN  /\rho(u-s;\wtN )} \setminus 
\mathsf{B}_{\rho(u-s;\wtN )})} > m_{\wtN } + s
 , h^*_{z+ \mathsf{B}_{\wtN  }/2} \leq m_{\wtN  } + u  \mid  h_z = m_{\wtN } + t   \bigr)    
  \lesssim  \frac{(1+u)(1+u-t)}{ \mathfrak{n} \, (1+\hat{\rho})^{1/4} } . \label{eq-entrop-gff}
\end{equation}  
 
\noindent \textbf{Claim 2.} Assume   $s < t$ and let $r \in [(1+t-s)^{4}, \tilde{\rho}]$, where $ \tilde{\rho} \coloneq \min\{ (u-t)^{50}, \mathfrak{n}/2 \}$. Then,
\begin{equation}
\P  \bigl(   h^*_{z+\mathsf{B}_{\rho(u-t ; \wtN )}}  \le m_{\wt{N}} +t   , \,
h^*_{z+(\mathsf{B}_{\rho(u-s ; \wtN )} \setminus \mathsf{B}_{2^r})} > m_{\wt{N}} + s  ,\, h^*_{z+ \mathsf{B}_{\wtN  }/2} \leq m_{\wtN  } + u \mid  h_z  =  m_{\wt{N}} +  t  \bigr) \lesssim \frac{(1+u)}{ \mathfrak{n}\, r^{1/4} }\,.
\label{eq-entrop-gff-2}
\end{equation}

\noindent\textit{Conclusion of the proof of Proposition~\ref{prop:annular-high-point}.}
Assuming the two claims for the moment, we combine \eqref{eq-entrop-gff} with the density estimate \eqref{eq-ho-density} to obtain
\begin{align*}
& \P  \bigl( h_z \in m_{\wtN } +  \dif t  ,  h^*_{z+(\mathsf{B}_{\wtN /\rho(u-s;\wtN )} \setminus 
\mathsf{B}_{\rho(u-s;\wtN )})} > m_{\wtN }+ s
 \,,\,\, h^*_{\mathsf{V}_N} \leq m_{\wtN }+ u \bigr) \\
 & \qquad  \lesssim  (1+\hat{\rho})^{-1/4} (1+u) (1+u-t)    \wtN ^{-2}  \rme^{-\alpha t} \dif t\,.
\end{align*}  
Recall that $\Delta_{N,z} \coloneq m_{N} - m_{\wtN_z} = 2\sqrt{g} \log(N/\wtN_{z}) + o_{N}(1)$. Substituting $\Delta_{N,z}+t$, $\Delta_{N,z}+s$, $\Delta_{N,z}+u$ for $t$, $s$, $u$ in the inequality above, and repeating the computation in \eqref{eq-bnd-h-z-2}, we arrive at the claimed inequality \eqref{e:26}. The second assertion \eqref{e:26a} follows analogously, by applying Claim~2 with the same substitution.
\medskip 

\noindent
\underline{Proof of claim \eqref{eq-entrop-gff-2}.} We adopt the notation of Proposition~\ref{prop:one-point-estimate}. Without loss of generality, we may replace $z+\mathsf{B}_{\wtN/2}$ by $\mathsf{D}_{1}$, and $z+(\mathsf{B}_{\rho(u-s;\wtN)} \setminus \mathsf{B}_{2^{r}})$ by $\mathsf{D}_{\mathfrak{n}-\hat{\rho}} \setminus \mathsf{D}_{\mathfrak{n}-r}$. By standard properties of Gaussian vectors, it then suffices to bound, under $\P(\cdot \mid h_o = t)$, the probability of the event
\begin{equation*}
    E^{h} \coloneq \Bigl\{ h_x \le   m_{2^{\mathfrak{n}}}  \hat{f}_{x} + t +(u-t)\ind{x \in \mathsf{D}_{1} \setminus  \mathsf{D}_{\mathfrak{n} - \tilde{\rho} }} , \, \forall x \in \mathsf{D}_{1} \Bigr\} \cap \Bigl\{ \max_{ \mathsf{D}_{ \mathfrak{n}-\hat{\rho}} \setminus \mathsf{D}_{\mathfrak{n}-r}} (h_x - m_{2^{\mathfrak{n}}} \hat{f}_{x}  ) \geq s \Bigr\}\,.
\end{equation*}
More precisely, \eqref{eq-entrop-gff-2} follows at once from the bound
\begin{equation}
 \P(E^{h} \mid h_o = t) =
\sum_{m \ge 2} \P ( \{ \mathscr{M} =m\}\cap E^{h} 
 \mid h_o = t ) \lesssim r^{-1/4} (1+u) \, {\mathfrak{n}}^{-1} . \label{eq-switch-shift-2}
\end{equation}

To establish \eqref{eq-switch-shift-2}, we split the sum according to the size of $m$. For $m \ge r^{1/4}$, dropping the second requirement in the definition of $E^{h}$ and adapting the argument of Lemma~\ref{lem-put-M-in} yields
\[ \P ( \{ \mathscr{M} =m\}\cap E^{h} 
 \mid   h_o = t ) \lesssim e^{-2c \log^2 m}      (1+ u)    \mathfrak{n}^{-1} \lesssim   e^{-c \log^2 m}   r^{-1/4}  (1+u) \, \mathfrak{n}^{-1} .
  \]

Suppose now that $2 \le m \le r^{1/4} \le \tilde{\rho}^{1/4}$. Following the derivation of \eqref{eq-cons-S-k}, the event $\{\mathscr{M} = m\} \cap E^{h}$ is contained in the random walk event $E^{S}_{m}$ defined as follows: with $\tilde{g}(k) \coloneq u\mathbf{1}_{\{k \le \mathfrak{n}-\tilde{\rho}\}} + t \mathbf{1}_{\{k > \mathfrak{n}-\tilde{\rho}\}}$, set
\begin{equation*} 
 E^S_{m} \coloneq \Bigl\{ \max_{1 \le k \le \mathfrak{n}+1 } [S_{k} - \tilde{g}(k) - C_0 \vartheta_{m}(\wedge^{\mathfrak{n}+1}(k))] \le 0 \ ; \ \max_{\mathfrak{n}-\hat{\rho} \le k \le \mathfrak{n}-r } [S_{k} + C_0 \vartheta_{m}(\wedge^{\mathfrak{n}+1}(k))^2] \ge s -1\Bigr\}  .
\end{equation*} 
Split the interval   in the second maximum in $E_m^S$ at $\mathfrak{n}-\tilde{\rho}$: let $E_{m,\mathrm{l}}^S$ and $E_{m,\mathrm{r}}^S$ denote the events obtained by restricting this maximum to $[\mathfrak{n}-\hat{\rho}, \mathfrak{n}-\tilde{\rho})$ and to $[\mathfrak{n}-\tilde{\rho}, \mathfrak{n}-r]$, respectively. Since $\hat{\rho} \ge \tilde{\rho}$, we have 
 \[ 
 E_m^S\subset E_{m,\mathrm{r}}^S\cup E_{m,\mathrm{l}}^S \ ,
\]
 where $E_{m,\mathrm{l}}^S$ is omitted when $\hat\rho=\tilde{\rho}$,   in which case the interval $[\mathfrak{n}-\hat{\rho}, \mathfrak{n}-\tilde{\rho})$ is empty.

Recall that $\{\mathscr{M} = m\} \subset F_{m}$, where $F_{m}$ is the event defined in \eqref{def:Fm}, and that on $F_{m} \cap \{S_{\mathfrak{n}+1} = t\}$ we have $|S_{m-1}|, |S_{\mathfrak{n}+2-m} - t| \le m \log m$. We apply Lemma~\ref{lem-ent-rw} with $a = 1/4$, choosing $b > 0$ so that $\vartheta_m(\wedge^{\mathfrak{n}+1}(k))^2 \le (\log m)^2 + \zeta^{a,b}_{m-1,\mathfrak{n}+2-m}(k)$. Applying \eqref{ent:right} to $E_{m,\mathrm{r}}^S$, with repulsion depth $t-s+O((\log m)^2)$, then gives, uniformly over all admissible values of $S_{m-1}$ and $S_{\mathfrak{n}+2-m}$,  
\[ \P\bigl(E_{m,\mathrm{r}}^S\mid S_{m-1},S_{\mathfrak n+2-m}\bigr) 
  \lesssim \frac{(1+u)(m\log m)^2}{\mathfrak n}
 \frac{(t-s)+r^{1/4}}{r^{1/2}}
 \left[1+\frac{u-t}{(\tilde{\rho}+2-m)^{1/2}}\right] \ \text{ on }\  F_{m} .\]
Since $t-s\le r^{1/4}$, $u-t \lesssim \tilde{\rho}^{1/2}$, and  $\tilde{\rho}+2-m \asymp \tilde{\rho}$, the right-hand side is dominated by $\frac{ (m \log m)^2}{\mathfrak{n} } \frac{(1+u)}{r^{1/4}}.$  
 
It remains to treat $E_{m,\mathrm{l}}^S$, which is present only when $\hat{\rho} > \tilde{\rho}$. In this case $\tilde{\rho}=(u-t)^{50}<\mathfrak n/2$.  Put $r ''= (\tilde{\rho}+2-m)/2 \asymp\tilde{\rho}$. The endpoint assumptions in \eqref{ent:left} hold as $|S_{\mathfrak n+2-m}-t|\le m\log m\lesssim\tilde{\rho}^{1/2}$ on $F_m$.  Hence \eqref{ent:left}, applied with repulsion depth $u-s+O((\log m)^2)$, yields
\begin{align*}
 \P\bigl(E_{m,\mathrm{l}}^S\mid S_{m-1},S_{\mathfrak n+2-m}\bigr)
 & \ \lesssim \frac{(1+u)(m\log m)^2}{\mathfrak n}
 \frac{(u-s)+\tilde{\rho}^{1/4}}{\tilde{\rho}^{1/2}}
 \left[1+\frac{u-t}{(\tilde{\rho}+2-m)^{1/2}}\right] \ \text{ on }\  F_{m} .
\end{align*}
Since  $ 
 u-s=(u-t)+(t-s) \lesssim \tilde{\rho}^{1/50}+ r^{1/4}\lesssim\tilde{\rho}^{1/4}$,  combining the two estimates above and using $\tilde{\rho} \ge r$, we conclude that
\[ \P \bigl( E_{m}^{S} \mid S_{m-1} , S_{\mathfrak{n}+2-m} \bigr) 
  \lesssim \frac{ (m \log m)^2}{\mathfrak{n} } \frac{(1+u)}{r^{1/4}}. \] 
Finally, using the estimate $\P (F_{m} \mid S_{\mathfrak{n}+1}=t) \lesssim e^{-2 c \log^{2} m}$ from Lemma~\ref{lem:41} and $|t| \lesssim \mathfrak{n}^{1/2}$, we obtain:
\[ 
\P ( \{ \mathscr{M} =m\}\cap E^{h} \mid h_o = t ) \le \P ( F_{m} \cap E^{S}_{m} \mid S_{\mathfrak{n}+1} = t ) \lesssim e^{-c \log^2 m} \frac{1}{r^{1/4}}   \frac{ (1+u)}{ \mathfrak{n}}\,.
\]
Summing these bounds over $m \ge 2$ yields \eqref{eq-switch-shift-2}, and hence completes the proof of Claim~2.

\smallskip 
\noindent
\underline{Proof of claim \eqref{eq-entrop-gff}.} 
The proof of \eqref{eq-entrop-gff} follows the same strategy and is in fact simpler, since it requires an entropic repulsion estimate with only a single barrier (namely, Lemma~\ref{lem-ent-rw} applied with $x_0 = y_0$); we therefore omit the details.
\end{proof}

\subsection{Two point estimate: Proof of Proposition \ref{prop:two-point-estimate}}

 Since the local maximum of the DGFF is almost surely unique, the probability in \eqref{e:26a} is nonzero only if
\begin{equation}\label{e:176}
 |x-y|  \ge \rho(u- t ;   N) \vee \rho(u-s ;   N).
\end{equation}
 Indeed, suppose $s\ge t$  so that $h_{y}\ge h_{x}$ (the other case is symmetric), the condition $h_{x}= h^{*}_{x + \mathsf{B}_{\rho(u-t;N)}}$  implies that $y \not\in x + \mathsf{B}_{\rho(u-t;N)}$.

\medskip
 \noindent
\underline{\emph{Step 1.} } We show that in the near diagonal case $|x-y| \le 10^{10}$,   \eqref{e:27} follows from  Proposition~\ref{prop:one-point-estimate}.  
 Suppose  $t\le s$, then we have  $y\notin x+\mathsf{B}_{\rho(u-t;N)}$. We claim that 
  the left-hand side of \eqref{e:27}, is bounded above by
\begin{equation}
	\label{eq:two-to-one-point-1}
	 \P( h_x=h^*_{x+\mathsf{B}_{\rho(u-t;N)}}\in m_N+\dif t,\ 
	 h^*_{ \mathsf{V}_{\eta' N\setminus\{y\}}} \le m_N+u ) \dif s . 
\end{equation}
This is because,  
conditionally on    $\cF^{ \{y\}^{c} }\coloneq  \sigma( h_{z}:  z \in \mathbb{Z}^2 \setminus \{y\} )$,   $h_y$ is Gaussian with uniformly bounded density. Moreover,      on the event $ \{ h^*_{ \mathsf{V}_{\eta' N}\setminus\{y\}} \le m_N+u \} $ all the neighbors of $y$ are at most $m_N+u$, and hence the Gibbs--Markov property gives
\begin{equation*}
 \P\bigl[h_y\le m_N+u\mid \cF^{ \{y\}^{c} } \bigr] \ge 1/2 \ \text{ on } \ \{ h^*_{ \mathsf{V}_{\eta' N} \setminus\{y\}}\le m_N+u \}.
\end{equation*}
Thus \eqref{eq:two-to-one-point-1} is bounded above by  
\begin{equation}
	\label{eq:two-to-one-point-2}
	 2 \P\bigl(h_x=h^*_{x+\mathsf{B}_{\rho(u-t;N)}}\in m_N+\dif t,
	h^*_{ \mathsf{V}_{\eta' N}}\le m_N+u\bigr) \dif s  \, \lesssim \, (1+u)N^{-2}e^{-\alpha t}\dif t\dif s.  
\end{equation}
The last inequality follows from Proposition~\ref{prop:one-point-estimate}, together with the fact $\wtN_x \asymp_{\eta}N$, and  $x+\mathsf{B}_{\wtN_x/2}\subset\mathsf{V}_{\eta' N}$.Since $|x-y| \le 100$,  $u-t,u-s = O(1)$, the right-hand side in  \eqref{eq:two-to-one-point-2} is bounded by the right-hand side of \eqref{e:27}, after adjusting the implicit constant.  This proves \eqref{e:27} in the case $|x-y| \le 100$.   

\medskip
 \noindent
\underline{\emph{Step 2.} }
We fix a small constant   $\delta \in (0,  2^{-4}(1-\eta)) $, and henceforth focus on the case $|x-y| <\delta N$. The complementary case  $|x-y| \ge \delta N$ is technically simpler and is addressed in Remark \ref{case-x-far-from-y}.

 Throughout this proof, we take the reference point  $o = (x+y)/2 \in \mathsf{V}_{\eta N}$,  and employ the concentric decomposition from Section~\ref{sec:5.1}.  
Let  $\ell = \ell_{x,y} \geq 2$ be the largest integer such that $x,y \in \mathsf{D}_{\ell+2}$. Note that such an $\ell \ge 2$ must exist, otherwise $|x-y| \ge 2^{\mathfrak{n}_o -3} \ge \mathrm{dist}(o,\partial \mathsf{V}_{N}) 2^{-4} \ge 2^{-4}(1-\eta) N  $ which contradicts the choice of $\delta$.  
By the choice of $\ell$, we have $2^{\mathfrak{n}_o - \ell -2}  \le 	|x-y| \le 2^{\mathfrak{n}_o - \ell-1} $. Combining this with \eqref{e:176}, and recalling that $ \tilde{\rho} \coloneq \min\{ (u-t)^{50} , \mathfrak{n}_o /2 \} $, $ \hat{\rho} \coloneq \min\{ (u-s)^{50} , \mathfrak{n}_o /2 \}$, we obtain
\begin{equation}\label{e:176b}
\mathfrak{n}_o - \ell \ge \tilde{\rho} \vee \hat{\rho} . 
\end{equation} 
Moreover, the bound $|x-y|\le 2^{\mathfrak n_o-\ell-1}$ together with $|x-y|\ge100$ implies that $\mathfrak n_o-\ell-4\ge4$.

Set $\ell' \coloneq \ell+4$ and define
\begin{equation*}
\mathsf{D}^x_{j}  \coloneq  x + \mathsf{B}_{2^{\mathfrak{n}_o-\ell'-j}} \ , \ \ 
\mathsf{D}^y_{j}  \coloneq  y + \mathsf{B}_{2^{\mathfrak{n}_o-\ell'-j}}  \ , \ \ 
\mathsf{D}^{x,y}_{j}  \coloneq \mathsf{D}^x_{j}  \cup \mathsf{D}^y_{j}  \text{ for } 0 \le j \le \mathfrak{n}_o- \ell'.
\end{equation*} 
See Figure \ref{fig-two-point} for an illustration. 
Observe that  $\mathsf{D}^{x,y}_{0} \subset  \mathsf{D}_{\ell+1} $. 
\begin{figure}[tp]
	\centering
	\includegraphics[scale=0.4]{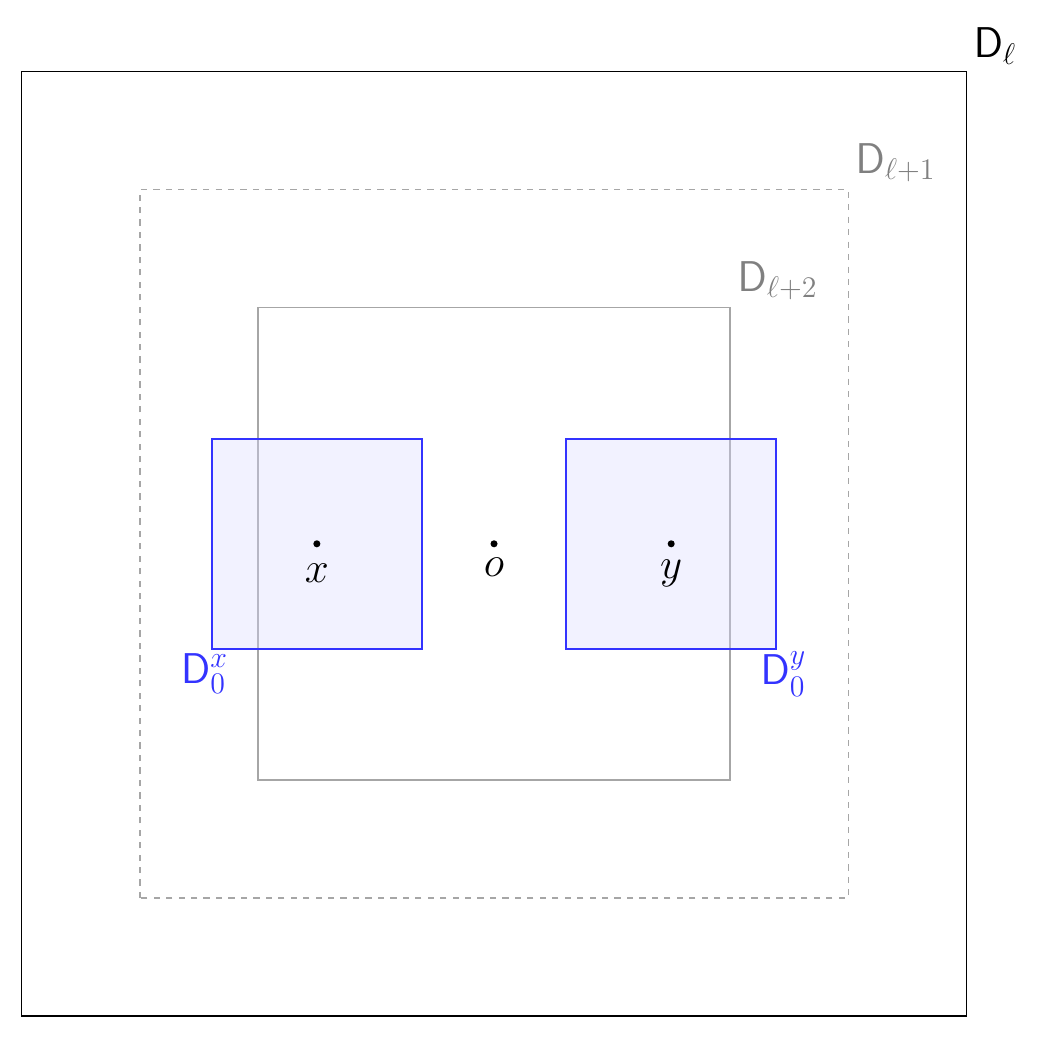}
	\caption{$\mathsf{D}^x_0$ and $\mathsf{
	D}^y_0$}\label{fig-two-point}
\end{figure}  

Next we bound the probability in~\eqref{e:27}.  
 Note that   $\{h_x = h^*_{x+\mathsf{B}_{\rho(u-t; N)}} \in m_N + \dif t, h^*_{\mathsf{V}_{\eta' N}} \leq m_N + u \}$ is included in the following event
\begin{equation*}
 A_{x,t}   \coloneq \bigl\{ h_x \in m_{N}+\dif t,   h^*_{\mathsf{D}^x_{   (\mathfrak{n}_o -\ell'- \tilde{\rho} )\vee 1} } \le m_N + t   \,,\,  h^*_{\mathsf{D}^x_{1}} \leq m_N + u \bigr\} .
\end{equation*}
 Define $A_{y,s}$  analogously, replace $(x,t,\tilde{\rho})$ by $(y,s,\hat\rho)$. Furthermore, let 
\[B_{\ell}  \coloneq \bigl\{ h^*_{\mathsf{D}_1 \setminus \mathsf{D}_{\ell}} \leq m_N +u \bigr\}\,. \]
 The choice $\eta'=(3+\eta)/4 $ implies   
$\mathsf{D}_1\subset\mathsf{V}_{(1+\eta)N/2}\subset\mathsf{V}_{\eta' N}.$
In particular, the constraint $h^*_{\mathsf{V}_{\eta' N}}\le m_N+u$ implies $B_\ell$. 
Conditioning on the $\sigma$-field $\cF^{  (\mathsf{D}^{x,y}_{0})^{c} }\coloneq  \sigma( h_{z}:  z \in \mathbb{Z}^2 \cap  (\mathsf{D}^{x,y}_{0})^{c} )$, and applying the Gibbs--Markov property, the probability in~\eqref{e:27} is bounded from above by
\begin{equation}
\label{e:182}
\mathrm{Pr}_{\eqref{e:182}} \coloneq 
\E  \Bigl[ \P  \bigl( A_{x,t} \mid \cF^{(\mathsf{D}^{x,y}_{0})^\rmc} \bigr) \P  \bigl( A_{y,s} \mid \cF^{(\mathsf{D}^{x,y}_{0})^\rmc} \bigr) \ind{B_{\ell}} \Bigr] .  
\end{equation}

\smallskip
 \noindent
\underline{\emph{Step 3.} }
To bound the conditional probabilities of $A_{x,t}$ and $A_{y,s}$ as well as the probability of $B_{\ell}$, we rely on Lemmas~\ref{l:10} and \ref{l:11} stated below (with proofs postponed to the end of this subsection). 

The Gibbs--Markov property of the DGFF yields the decomposition
\begin{align}
\label{e:180}
h_z  \,=\, \Phi^{ \mathsf{D}_{0} \mid  \partial \mathsf{D}^{x,y}_{0} }_z +  h^{\mathsf{D}^{x}_{0}}_z + h^{\mathsf{D}^{y}_{0}}_z    \, , \, z \in \mathsf{D}^{x,y}_{0}\,,
\end{align}
where  these three fields are independent and are defined in \eqref{def-binding-field}.  

The estimate in Lemma \ref{l:10}  involves both the backbone random walk $S_{\ell} = \Phi^{\mathsf{D}_0 \mid \partial \mathsf{D}_{\ell}}_o$  (see \eqref{def-S-l}), 
and the oscillations of the binding field $\Phi^{\mathsf{D}_0 \mid \partial \mathsf{D}^{x,y}_{0}}$ around $S_{\ell}$. To make this precise, we define
\begin{equation*}
\Delta^{x,y}_{z}  \coloneq    \Phi^{\mathsf{D}_0 \mid \partial \mathsf{D}^{x,y}_{0}}_z - S_{\ell}  \ , \  |\Delta|^{x,y}  \coloneq \max_{z \in \mathsf{D}^{x,y}_{1}} \big|   \Delta^{x,y}_{z}   \big|  \ \text{ and } \
\wh{S}_{\ell}  \coloneq S_{\ell} - ( m_N - m_{2^{\mathfrak{n}_o-\ell'}} ) \,.
\end{equation*}
Note that the quantities   $\Delta^{x,y}_{z}$, $|\Delta|^{x,y} $ and $S_{\ell}$ are measurable with respect to $\cF^{(\mathsf{D}^{x,y}_{0})^\rmc}$.
   
\begin{lem}\label{l:10}
	There exists a constant $C_{\ref{l:10}}>0$ depending only on $\eta$ such that  almost surely 
\begin{equation}
	\label{e:187}
\P  \bigl(A_{x,t} \mid \cF^{(\mathsf{D}^{x,y}_{0})^\rmc} \bigr)
\leq C_{\ref{l:10}} \, 4^{- (\mathfrak{n}_o-\ell')} \, 
(1+|\wh{S}_l|+|\Delta|^{x,y})^2 \,\rme^{ \alpha |\Delta|^{x,y}+\alpha \wh{S}_l} \, (1+u)\,  \rme^{-\alpha t}  \dif t  .
\end{equation}
An identical statement holds for $A_{y,s}$  with $y,s$ replacing $x,t$. 
\end{lem}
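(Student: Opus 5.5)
The plan is to reduce Lemma~\ref{l:10} to the one-point estimate Proposition~\ref{prop:one-point-estimate}, applied in the box $\mathsf{D}^x_0$ to the independent field $h^{\mathsf{D}^x_0}$. The only other ingredient is that the binding field $\Phi^{\mathsf{D}_0\mid\partial\mathsf{D}^{x,y}_0}$ is almost constant on $\mathsf{D}^x_1$.

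\textbf{Step 1: freeze the binding field.} By the decomposition \eqref{e:180}, for $z\in\mathsf{D}^x_0$ we have
\[
h_z=S_\ell+\Delta^{x,y}_z+h^{\mathsf{D}^x_0}_z .
\]
Here $S_\ell+\Delta^{x,y}_z$ is $\cF^{(\mathsf{D}^{x,y}_0)^\rmc}$-measurable, and $h^{\mathsf{D}^x_0}$ is independent of this $\sigma$-field. Set $L'\coloneq2^{\mathfrak{n}_o-\ell'}$, so that $\mathsf{D}^x_0=x+\mathsf{B}_{L'}$ and $m_N=m_{L'}+(S_\ell-\wh S_\ell)$. On $\mathsf{D}^x_1$, where $|\Delta^{x,y}_z|\le|\Delta|^{x,y}$, the three conditions defining $A_{x,t}$ then imply
\[
h^{\mathsf{D}^x_0}_x\in m_{L'}+t-\wh S_\ell-\Delta^{x,y}_x+\dif t,\qquad
h^{\mathsf{D}^x_0,*}_{\mathsf{D}^x_{j_0}}\le m_{L'}+t-\wh S_\ell+|\Delta|^{x,y},\qquad
h^{\mathsf{D}^x_0,*}_{\mathsf{D}^x_1}\le m_{L'}+u-\wh S_\ell+|\Delta|^{x,y},
\]
where $j_0=(\mathfrak{n}_o-\ell'-\tilde\rho)\vee1$. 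The conditional probability of $A_{x,t}$ is therefore bounded by an unconditional probability for a zero-boundary DGFF on a box of side $L'$ centered at $x$.

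\textbf{Step 2: apply the one-point estimate.} Put $t'=t-\wh S_\ell-\Delta^{x,y}_x$, $s'=t-\wh S_\ell+|\Delta|^{x,y}$ and $u'=u-\wh S_\ell+|\Delta|^{x,y}$, so that $t'\le s'\le u'$ and $s'-t'\le2|\Delta|^{x,y}$.
\begin{itemize}
\item The point $x$ sits at the center of $\mathsf{D}^x_0$, so $\wtN=L'$ and $x+\mathsf{B}_{L'/2}=\mathsf{D}^x_1$.
\item The radius $2^{\tilde\rho}$ of $\mathsf{D}^x_{j_0}$ dominates $\rho(u'-t';L')$ up to shifts of size $O(|\wh S_\ell|+|\Delta|^{x,y})$. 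If it does not, drop that constraint and use Corollary~\ref{cor-locmax} instead; this also accounts for $\tilde\rho$ being capped by $\mathfrak n_o/2$ rather than by the local scale.
\end{itemize}
Proposition~\ref{prop:one-point-estimate} then gives the bound
\[
C\,(1+u'_+)(1+s'-t')\,(L')^{-2}e^{-\alpha t'}\dif t
\;\le\; C\,4^{-(\mathfrak{n}_o-\ell')}(1+u)(1+|\wh S_\ell|+|\Delta|^{x,y})^2\,e^{\alpha\wh S_\ell+\alpha|\Delta|^{x,y}}e^{-\alpha t}\dif t ,
\]
using $1+u'_+\le(1+u)(1+|\wh S_\ell|+|\Delta|^{x,y})$. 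This is \eqref{e:187}. The constant depends only on $\eta$, through the requirement that $\mathsf{D}^x_0$ lies well inside $\mathsf{V}_N$.

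\textbf{Main obstacle: parameter ranges.} The hard part is making sure the parameters stay in the range where the proposition applies.
\begin{itemize}
\item Proposition~\ref{prop:one-point-estimate} needs $|t'|,|u'|\le10(\log L')^{1/2}$ and $\wtN\ge\sqrt{\cdot}$ of the ambient side. When $|\wh S_\ell|$ or $|\Delta|^{x,y}$ is large, or $L'$ is small, this fails.
\item In that regime I would not use the ballot estimate at all. Instead, bound by the Gaussian density of $h^{\mathsf{D}^x_0}_x$, whose variance is $g\log L'+O(1)$; as in \eqref{eq-ho-density} this gives $(L')^{-2}\log L'\,e^{-\alpha t'}$ times a Gaussian correction.
\item The extra factor $\log L'$ must be absorbed by $(1+|\wh S_\ell|+|\Delta|^{x,y})^2$ or by the correction term. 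This needs checking case by case, noting that the lemma is used with $L'\ge2^4$.
\item A further case: if $u'<0$ is very negative, use the factor $e^{-cu_-^2}$ to discard it.
\end{itemize}
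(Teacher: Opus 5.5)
This is essentially the paper's proof. The paper also freezes $\wh S_\ell$ and $\Delta^{x,y}$ via \eqref{e:180} and applies Proposition~\ref{prop:one-point-estimate} to $h^{\mathsf D^x_0}$ at the shifted levels when $|\wh S_\ell|+|\Delta|^{x,y}\le(\mathfrak n_o-\ell')^{1/2}$. Otherwise it keeps only the Gaussian density of $h^{\mathsf D^x_0}_x$ and absorbs the factor $\mathfrak n_o-\ell'\asymp\log L'$ into $(|\wh S_\ell|+|\Delta|^{x,y})^2$; this is exactly the case split you leave to be checked.

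One caution concerns your fallback of dropping the constraint on $\mathsf D^x_{j_0}$ and invoking Corollary~\ref{cor-locmax}. That would bring in a factor $1+u'-t'\ge 1+u-t$, which is absent from \eqref{e:187}, and $u-t$ can be large. So when $\mathsf D^x_{j_0}$ is smaller than $x+\mathsf B_{\rho(u'-t';L')}$, rerun the two-barrier ballot step of Proposition~\ref{prop:one-point-estimate} with split point $\approx\tilde\rho$ instead. This costs only an extra polynomial factor in $1+|\Delta|^{x,y}$ (possibly a higher power than in \eqref{e:187}), which is harmless where the lemma is used; the paper passes over this point silently.
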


\begin{lem}
\label{l:11} Set $w_{n}(\ell) \coloneq \frac{\ell(n-\ell)}{n}$. There exists a constant $c>0$ such that 
for all $v \in \bbR$ and $m \ge 2$,  
\begin{equation}\label{e:201}
 \mathrm{Pr}_{\eqref{e:201}}(v;m) \coloneq \P  \bigl( B_{\ell} \, , \, \mathscr{M}_{\ell} = m  \, , \, \wh{S}_{\ell} \in \dif v \bigr)  \lesssim    \rme^{-c \log^2 m}  \,\frac{ (1+u)^2(1+|v|)^{2}  }{  4^{\ell} \, [ w_n(\ell)]^{3/2}}
 \rme^{-\alpha v} 
  \,\ind{v \le u+C_{\ref{lem:40}}  \log  m} \dif v .
\end{equation} 
\end{lem}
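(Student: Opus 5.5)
The plan is to follow the template of Lemma~\ref{lem-put-M-in}. I would reduce the event $B_\ell\cap\{\mathscr{M}_\ell=m\}$ to a ballot-type event for the backbone walk $(S_k)_{k\le \ell}$, and then combine three ingredients: the Gaussian density of $S_\ell$ at the prescribed height, a ballot estimate for the bridge conditioned on $S_\ell$, and the tail bound for $\mathscr{M}_\ell$ from Lemma~\ref{lem:41}.

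\emph{Step 1: reduction to the backbone walk.} On $\{\mathscr{M}_\ell=m\}$, the lower bound in Lemma~\ref{lem:40} applies to each annulus $\mathsf{A}_k$ with $1\le k\le \ell-1$. Since $\mathsf{A}_k\subset \mathsf{D}_1\setminus\mathsf{D}_\ell$, the event $B_\ell$ forces
\[
S_k\le u+\bigl(m_N-m_{2^{\mathfrak{n}_o-k}}\bigr)+C_{\ref{lem:40}}\,\vartheta_m(\wedge^\ell(k)),\qquad 2\le k\le \ell-1 .
\]
For $k=1$ the centering is $m_N$ itself, which gives $S_1\le u+C\log m$. I would then write $m_N-m_{2^{\mathfrak{n}_o-k}}$ as the linear interpolation between $0$ and $m_N-m_{2^{\mathfrak{n}_o-\ell'}}$, plus a correction of size $O(1+\log_+\wedge^\ell(k))$, in the same way as \eqref{eq-m-and-f}. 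This correction is absorbed into the $\zeta^{a,b}$ perturbation, with $a=1/4$ as in the earlier proof. After this step, the walk $k\mapsto S_k-\tfrac{k}{\ell}(m_N-m_{2^{\mathfrak{n}_o-\ell'}})$ must stay below $u+$ a curved barrier on $[1,\ell-1]$, and it ends at $v$. The same bound at $k=\ell-1$, together with $|S_\ell-S_{\ell-1}|\le \log m$, gives $v\le u+C\log m$, which is the indicator in \eqref{e:201}.

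\emph{Step 2: density of $S_\ell$ and the ballot estimate.} By Lemma~\ref{lem:39}(i), $\Var(S_\ell)=g\ell\log 2+O(1)$. I would insert
\[
m_N-m_{2^{\mathfrak{n}_o-\ell'}}=2\sqrt g(\log 2)\ell-\tfrac34\sqrt g\log\tfrac{n}{n-\ell}+O(1)
\]
into the Gaussian density, exactly as in \eqref{eq-ho-density}. This gives
\[
\P\bigl(\wh S_\ell\in \dif v\bigr)\lesssim \ell^{-1/2}\,4^{-\ell}\bigl(\tfrac{n}{n-\ell}\bigr)^{3/2}\rme^{-\alpha v}\,\dif v .
\]
The resulting prefactor satisfies $\ell^{-1/2}(n/(n-\ell))^{3/2}=\ell\,[w_n(\ell)]^{-3/2}$. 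Conditioning on the value of $S_\ell$ turns the walk into a bridge. For $\mathscr{M}_\ell=m$, the Markov property at times $m-1$ and $\ell+1-m$, together with the event $F_m$ of \eqref{def:Fm}, plays the same role as before. Lemma~\ref{lem-two-barrier-ballot-upper-bound-rw} in its one-barrier form, or the standard ballot bound, then gives a conditional probability
\[
\lesssim \frac{(m\log m)^2\,(1+u)(1+u-v)}{\ell}\lesssim \frac{(m\log m)^2\,(1+u)^2(1+|v|)^2}{\ell}.
\]
The factor $\ell$ in the prefactor cancels against this $1/\ell$, which leaves $[w_n(\ell)]^{-3/2}$.

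\emph{Step 3: summing in $m$.} Lemma~\ref{lem:41}, with endpoint $S_\ell=O(\ell)$, only shifts the exponent by $c_0|S_\ell|/\ell=O(1)$. Hence
\[
\P\bigl(F_m\mid S_\ell\bigr)\lesssim \rme^{-2c\log^2 m},
\]
and this swallows the factor $(m\log m)^2$. Large values of $m$, say $m\ge \ell^{1/4}$, are handled crudely by this tail bound alone, exactly as in Lemma~\ref{lem-put-M-in}.

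I expect the main obstacle to be Step 1 combined with the Gaussian computation. The barrier is curved, because $m_N-m_{2^{\mathfrak{n}_o-k}}$ is not linear in $k$. The endpoint $S_\ell$ is conditioned to be large, of order $\ell$. The control variable $\mathscr{M}_\ell$ must be measured relative to this conditioned drift. All three must be reconciled so that the $\tfrac34\log$ corrections produce exactly $[w_n(\ell)]^{-3/2}$ and no extra polynomial factor. The first thing I would try is the linear-interpolation comparison modeled on Lemma~\ref{lemma-3.8}, with $\ell$ and $\ell'$ in place of $\mathfrak{n}+1$.
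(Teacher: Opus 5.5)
Your architecture matches the paper's. You reduce to the backbone walk via Lemma~\ref{lem:40}, get $\ell\,[w_n(\ell)]^{-3/2}4^{-\ell}\rme^{-\alpha v}$ from the Gaussian density of $\wh S_\ell$, apply a ballot bound on $F_m$ after conditioning on the walk near both ends, and use Lemma~\ref{lem:41} for the decay in $m$. However, the argument is not uniform in $v\in\mathbb{R}$, and it breaks in Step~3.

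\textbf{The gap.} You assert $S_\ell=O(\ell)$, so that the shift $c_0|S_\ell|/\ell$ in Lemma~\ref{lem:41} is $O(1)$. This is false on the event in question. The constraint $v\le u+C\log m$ bounds $v$ only from above, and $u$ may itself be much larger than $\ell$. Meanwhile $\{\mathscr M_\ell=m\}$ only gives $|S_\ell|\le \ell\log(m\vee\ell)$. When $c_0|S_\ell|/\ell\ge\log m$, Lemma~\ref{lem:41} gives no decay in $m$ at all. This is a real obstruction, not a technicality. Conditioning on $\wh S_\ell=v$ with $|v|/\ell\gg1$ gives the increments $\varphi_k(o)$ conditional mean of order $|v|/\ell$. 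Then $\mathscr M_\ell\ge \rme^{c|v|/\ell}$ with high conditional probability, so the factor $\rme^{-c\log^2m}$ cannot come from the conditional law. Your crude treatment of $m\ge\ell^{1/4}$ rests on the same false claim.

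\textbf{The fix.} The missing decay must come from the rarity of such $v$ itself, i.e.\ from the quadratic term in the Gaussian density that you discarded in Step~2. The paper keeps it:
\[
\P(\wh S_\ell\in\dif v)\lesssim \ell\,[w_n(\ell)]^{-3/2}4^{-\ell}\rme^{-\alpha v-cv^2/\ell}.
\]
It then drops $B_\ell$, multiplies by Lemma~\ref{lem:41}'s bound $\rme^{-c[\log m-|v|/\ell-C]_+^2}$, and uses the elementary inequality
\[
v^2/\ell+[\log m-|v|/\ell-C]_+^2\ge \tfrac14\log^2m-C',
\]
checked separately for $|v|/\ell<\tfrac12\log m$ and $|v|/\ell\ge\tfrac12\log m$. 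The resulting crude bound $\ell\,[w_n(\ell)]^{-3/2}4^{-\ell}\rme^{-\alpha v}\rme^{-c'\log^2m}$ already gives \eqref{e:201} in two regimes:
\begin{itemize}
\item when $|v|\ge\sqrt\ell$, since $\ell\lesssim(1+|v|)^2$;
\item when $m\ge\sqrt\ell$, since $\ell\,\rme^{-c'\log^2 m}\lesssim \rme^{-c''\log^2m}$.
\end{itemize}
The ballot argument is therefore needed only for $|v|\le\sqrt\ell$ and $m\le\sqrt\ell$. There $|S_\ell|/\ell=O(1)$, and your Steps~2--3 are valid as written.

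\textbf{A minor point in Step~1.} $\mathsf{A}_1=\mathsf{D}_0\setminus\mathsf{D}_1$ is not contained in $\mathsf{D}_1\setminus\mathsf{D}_\ell$, so $B_\ell$ says nothing about $S_1$. Conversely, $\mathsf{A}_\ell$ is contained in $\mathsf{D}_1\setminus\mathsf{D}_\ell$, so the indicator $v\le u+C\log m$ follows directly from $k=\ell$. This does not affect the argument.
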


\begin{proof}[\underline{Step 4. Proof of Proposition~\ref{prop:two-point-estimate} admitting Lemmas \ref{l:10} and \ref{l:11}}]
	It follows from   Lemma \ref{l:10}   that 
\begin{equation}
\label{e:187.1}
\mathrm{Pr}_{\eqref{e:182}}  \lesssim   (1+u)^2 
 4^{-2(\mathfrak{n}_o-\ell)}
\rme^{-\alpha (t+s)} \dif t  \rmd s \times 
\E  \bigl[  \rme^{2\alpha |\Delta|^{x,y}}  \rme^{2\alpha \wh{S}_l} (1+|\wh{S}_l|)^{4}(1+|\Delta|^{x,y})^{4} \, \ind{B_\ell} \bigr] \,.
\end{equation}  
To get rid of the term $|\Delta|^{x,y}$, 
by the Gibbs--Markov property again we may write
\begin{equation*}
\Delta^{x,y}_{z} =  \Phi^{ \mathsf{D}_0  \mid \partial \mathsf{D}^{x,y}_{0}}_{z} - S_{\ell} = \Phi^{ \mathsf{D}_0 \mid   \partial \mathsf{D}_{\ell}  }_z - S_{\ell}  + \Phi^{    \mathsf{D}_{\ell} \mid   \partial \mathsf{D}^{x,y}_{0} }_z 
\text{ for }
z \in \mathsf{D}^{x,y}_{0} \subset \mathsf{D}_{\ell+1}\,,
\end{equation*}
where  $ \Phi^{ \mathsf{D}_0  \mid \partial \mathsf{D}_{\ell}}  $ and $ \Phi^{  \mathsf{D}_{\ell} \mid   \partial \mathsf{D}^{x,y}_{0} }  $ are independent.  Then the  triangle inequality gives 
\begin{equation*}
|\Delta|^{x,y} \leq \max_{z \in \mathsf{D}_{\ell+1}} \big| \Phi^{ \mathsf{D}_0 \mid   \partial \mathsf{D}_{\ell}  }_z - S_{\ell} \big| + \max_{z \in \mathsf{D}_{\ell+1}} | \Phi^{    \mathsf{D}_{\ell} \mid   \partial \mathsf{D}^{x,y}_{0} }_z  |.
\end{equation*}   
Since $\max_{z \in \mathsf{D}_{\ell+1}} | \Phi^{    \mathsf{D}_{\ell} \mid   \partial \mathsf{D}^{x,y}_{0} }_z  |$ has a Gaussian right tail with uniform constants\footnote{This follows from the Borell--TIS inequality together with Dudley’s entropy bound, similarly to the derivation of~\eqref{e:56.5}.} and is independent of the other terms, 
we can integrate it out from the expectation in~\eqref{e:187.1} at the cost of multiplicative constant.   
Moreover,  Lemma~\ref{lem:40} yields
$ \max_{z \in \mathsf{D}_{\ell+1}} \big| \Phi^{ \mathsf{D}_0 \mid   \partial \mathsf{D}_{\ell}  }_z - S_{\ell} \big|  \leq  C_{\ref{lem:40}}\log \mathscr{M}_{\ell}$. 
Plugging these  into \eqref{e:187.1} yields  
\begin{equation}
\label{e:192}
\mathrm{Pr}_{\eqref{e:182}}  \lesssim 
 (1+u)^{2} 
 4^{-2(\mathfrak{n}_o-\ell)}
\rme^{-\alpha (t+s)} \dif t  \rmd s \times 
\E  \bigl[  \rme^{ C \log \mathscr{M}_{\ell}} \rme^{2\alpha \wh{S}_l} (1+|\wh{S}_l|)^{4} (1+\log \mathscr{M}_{\ell})^{4} \,\ind{B_{\ell}} \bigr] \,.
\end{equation}
Lemma~\ref{l:11} bounds the expectation $ \E  [ \rme^{ C \log \mathscr{M}_{\ell} + 2\alpha \wh{S}_l} (1+|\wh{S}_l|)^{4} (1+\log \mathscr{M}_{\ell})^{4} \,\ind{B_{\ell}}  ]$ by
\begin{align}  
 &\sum_{m=2}^\infty    \rme^{-c \log^2 m+(C +4)\log m}  \int_{ -\infty}^{u+C_{\ref{lem:40}}  \log  m}
 \,\frac{ (1+u)^2(1+|v|)^{6} }{  4^{\ell} \, [ w_n(\ell)]^{3/2}}
 \rme^{ \alpha v} 
  \, \dif v  \,. \notag \\
 &\lesssim \frac{ (1+u)^{8} \rme^{\alpha u} }{  4^{\ell} \, [ w_n(\ell)]^{3/2}} \sum_{m=2}^{\infty}   \rme^{-c \log^2 m + C' \log m} \lesssim  \frac{ (1+u)^{8} \rme^{\alpha u}  }{  4^{\ell} \, [ w_n(\ell)]^{3/2}}. \label{e:194}
\end{align}
 Finally recall that $\mathfrak{n}_o= n +O_{\eta}(1)$ so that  $4^{\mathfrak{n}_{o}} \asymp N^{2}$; by the choice of $\ell$ we have  $4^{\mathfrak{n}_o-\ell} \asymp [1+|x-y|]^{2}$. Moreover $w_{n}(\ell)=  {\ell(n-\ell)}/{n} \asymp  {\ell(\mathfrak{n}_o-\ell)}/{n} \asymp (\log_+ |x-y|)(\log_+ \frac{N}{|x-y|} )/ \log N $.  The   desired result then follows by combining  \eqref{e:192} and \eqref{e:194}. 
\end{proof}

\begin{rem}
\label{rmk:two-point} 
It is worth emphasizing that while this proposition is sufficient for our current needs, it is likely not optimal. We expect that the pre-factor $(1+u)^{10}e^{\alpha u}$ in \eqref{e:27}  can be sharpened to a term of the form $(1+u)^{c} e^{\alpha u}$ for some $c \in (0,1)$. For a comparable result in the context of branching Brownian motion, see \cite[Lemma A.2]{HLW25}, where the exponent is $c=3/4$.
\end{rem}

\begin{rem}\label{case-x-far-from-y}
In the regime $|x-y| \ge \delta N$, the argument is simpler because Lemma~\ref{l:11} is not required.   The proof follows by a straightforward adaptation of the argument above and is therefore omitted.
\end{rem}
  
It remains to state and prove Lemma~\ref{l:10} and~\ref{l:11} on which the above proof relies.
 
\begin{proof}[\underline{Step 5. Proof of Lemma \ref{l:10}}] 
 For simplicity, we write $\check{h}$ for $h^{\mathsf{D}^{x}_0}$; and $\check{N}= 2^{\mathfrak{n}_o-\ell'}$. Then \eqref{e:180} becomes 
	\begin{equation}\label{eq-decom-h-on-D-x} 
 h_z  = \check{h}_{z} + \Delta^{x,y}_{z} +  \wh{S}_{\ell}+  (m_{N}-m_{\check{N}})  \, ,  \ z \in \mathsf{D}^{x}_{0}\,.
\end{equation}
  Conditioned on $\cF^{(\mathsf{D}^{x,y}_{0})^\rmc}$, the field $\check{h}$ follows the law of the DGFF on a box of side length $2 \check{N}+1$, while $\wh{S}_{\ell}$ and $\Delta^{x,y}_{z}$ act as constants. Hence we may apply Proposition \ref{prop:one-point-estimate}.    
  In light of~\eqref{eq-decom-h-on-D-x},  we have  $A_{x,t} \subseteq \check{A}_{x,t}$, where   $\check{A}_{x,t}$ is defined as: 
 \begin{equation*}
 \bigl\{  \check{h}_x \in  m_{\check{N}} - \wh{S}_{\ell}-\Delta^{x,y}_{x}+\dif t  \,,\, 
	 \check{h}^*_{\mathsf{D}^x_{   (\mathfrak{n}_o -\ell'- \tilde{\rho} )\vee 1} } \le  m_{\check{N}} + |\wh{S}_{\ell}| + |\Delta|^{x,y} + t+1 \, , \, 
 \check{h}^*_{\mathsf{D}^x_{1}} \leq m_{\check{N}} +| \wh{S}_{\ell}| + |\Delta|^{x,y} + u  \bigr\} .
 \end{equation*} 
 We now verify the requirements for Proposition \ref{prop:one-point-estimate}. First, note that $\mathrm{dist}(x,\partial \mathsf{D}^{x}_{0})=\check{N}$.
Furthermore, by the  assumption $-(\log N)^{1/2} \le t \le u$ and $ 0 \le u < (\log N)^{1/2}$, \eqref{e:176b}, and the scaling $\mathfrak{n}_o \asymp \log N $,    we have:
\begin{equation*}
 u- t \lesssim 1+\tilde{\rho}^{1/2} \lesssim (\mathfrak{n}_o- \ell' )^{1/2} \lesssim (\log \check{N})^{1/2} .
\end{equation*}   
However, the bound required for the shift term $\wh{S}_{\ell}-\Delta^{x,y}_{z}+  t$ is not guaranteed to hold. We therefore distinguish between two cases: 
\begin{enumerate}[(1)]
	\item If $|\wh{S}_{\ell}|+|\Delta^{x,y}| \leq (\mathfrak{n}_o - \ell')^{1/2}$, then since $|t| \le u-t \lesssim (\mathfrak{n}_o - \ell)^{1/2}$,  the conditions of Proposition \ref{prop:one-point-estimate} are satisfied. Applying the proposition yields: 
	\begin{equation*}
	\P  \bigl( \check{A}_{x,t} \mid \cF^{(\mathsf{D}^{x,y}_{0})^\rmc} \bigr)
\lesssim  \check{N}^{-2}
  \rme^{-\alpha t}  \rme^{ \alpha |\Delta|^{x,y}} \rme^{\alpha \wh{S}_l} (1+|\wh{S}_l|+|\Delta|^{x,y})(1+u)  \dif t  \,. 
	\end{equation*}  
	\item 
	If $|\wh{S}_{\ell}|+|\Delta^{x,y}| \geq (\mathfrak{n}_o - \ell')^{1/2}$, we employ a crude upper bound by relaxing the maximum constraints and considering only the   density $\{\check{h}_x \in m_{\check{N}} - \wh{S}_{\ell}-\Delta^{x,y}_{z}+\dif t \}$. Analogously to \eqref{eq-ho-density}, we obtain:  
	\begin{equation*}
	\P  \bigl( \check{A}_{x,t} \mid \cF^{(\mathsf{D}^{x,y}_{0})^\rmc} \bigr)
\lesssim (\mathfrak{n}_o - \ell') \check{N}^{-2}
  \rme^{-\alpha t} \rme^{ \alpha |\Delta|^{x,y}}\rme^{\alpha \wh{S}_l}   \dif t  \le
 (|\wh{S}_l|+|\Delta|^{x,y})^2 \check{N}^{-2}
  \rme^{-\alpha t} \rme^{ \alpha |\Delta|^{x,y}}\rme^{\alpha \wh{S}_l}   \dif t\,.
	\end{equation*} 
 \end{enumerate}   
Combining these two cases establishes the claimed result \eqref{e:187}. The same argument holds for $A_{y,s}$, replacing $x,t$ with $y,s$.
\end{proof}

\begin{proof}[\underline{Step 6. Proof of Lemma \ref{l:11}}] 
According to Lemma~\ref{lem:40}, on the event $\{\mathscr{M}_{\ell}=m\}$ we have  $   h^{*}_{\mathsf{A}_{k}}   - m_{2^{\mathfrak{n}_o- k} } \ge  S_{k} - C_{\ref{lem:40}} \log (\max\{m ,\wedge^{\ell}(k)\})$.
Combining this with the constraint $h^{*}_{\mathsf{D}_{1}\setminus \mathsf{D}_{\ell}} \le m_{N}+u$, we derive that
\begin{equation*}
    S_{k}  - (m_{N} -m_{2^{\mathfrak{n}_o - k}} )  \le u + C_{\ref{lem:40}} \log ( \max\{ m  , \wedge^{\ell}(k) \} ) \ \text{ for any } 2 \le k \le \ell.
\end{equation*}
In particular, taking $k=\ell$, this implies that  $\mathrm{Pr}_{\eqref{e:201}}(v;m)$ is non-zero only if $v \le u+ C_{\ref{lem:40}} \log m$.

Next, we write $
 \mathrm{Pr}_{\eqref{e:201}}(v;m) $   as $ \P  ( B_{\ell} \,,\, \mathscr{M}_{\ell} =m \mid \wh{S}_{\ell} = v ) \,
 \P  (\wh{S}_{\ell} \in \dif v ) $.
Since  
$\mathfrak{n}_o =n+O_{\eta}(1)$ and $\ell'=\ell+4$, we have $(m_N - m_{2^{\mathfrak{n}_o-\ell'}}) = m_{2^{\ell}} + \frac{3}{2 \alpha} \log w_{n}(\ell) + O_{\eta}(1)$. Moreover, $S_{\ell}$ is a centered Gaussian with variance  $\E  [S_{\ell}^2] = g \log 2^{\ell} + O(1)$. Using the standard Gaussian density, we obtain:
\begin{equation}
\P  \bigl(\wh{S}_{\ell}  \in  \dif v \bigr) 
 = \P  \Bigl(S_{\ell} \in  m_{2^{\ell}}  + \frac{3}{2 \alpha} \log w_{n}(\ell) + O(1) + \dif v \Bigr)  \lesssim \ell \, [w_n(\ell)]^{-3/2} \, 4^{-\ell}  \rme^{-\alpha v - c v^2/{\ell}} \,.   \label{eq-hatS-v}
\end{equation} 
  
We then obtain a uniform upper bound by temporarily discarding the event $B_{\ell}$ in the definition of $\mathrm{Pr}_{\eqref{e:201}}(v;m)$ and applying Lemma~\ref{lem:41} directly: 
\begin{align}
 \mathrm{Pr}_{\eqref{e:201}}(v;m) &\le  \P \bigl( 
  \mathscr{M}_{\ell} = m \mid \wh{S}_{\ell} =v \bigr) \P  \bigl(\wh{S}_{\ell}  \in  \dif v \bigr)  \notag\\
  & \lesssim \frac{ \ell\, \rme^{-\alpha v }   }{[w_n(\ell)]^{ 3/2}4^{ \ell}}   \, \rme^{- c v^2/{\ell}} \rme^{-c [ \log  m - \frac{|v|}{\ell}-C ]_+^{2}}  \lesssim \frac{\ell\,   \rme^{-\alpha v }   }{[w_n(\ell)]^{ 3/2}4^{ \ell}} \,  e^{-c' \log^2 (m)} . \label{eq:tem-bnd}
\end{align}
Here we  used the elementary inequality $  v^2/{\ell} +[\log  m - \frac{|v|}{\ell}-C ]_+^{2} \ge \frac{1}{4} \log^{2} (m)-\frac{1}{2} C^2$, which can be verified by considering separately the cases $\tfrac{|v|}{\ell} < \tfrac{1}{2} \log m$ and $\tfrac{|v|}{\ell} \ge  \tfrac{1}{2} \log m$. This estimate allows us to conclude \eqref{e:201} in two specific regimes:  
\begin{enumerate}[(1)]
    \item If $|v| \ge \sqrt{\ell}$, using $\ell \lesssim (1+|v|)^2$ in \eqref{eq:tem-bnd} yields \eqref{e:201}.
    \item If $m \ge \sqrt{\ell}$, \eqref{e:201} follows since $\ell \, \rme^{-c' \log^{2} m} \lesssim \rme^{-c'' \log^{2} m}$.
\end{enumerate}

It remains to handle the case $|v| \le \sqrt{\ell}$ and $ 2 \le m \le \sqrt{\ell}$. We proceed as in the proof of Proposition \ref{prop:one-point-estimate} and   claim that
\begin{equation}\label{e:165} 
\mathrm{Pr}_{\eqref{e:165} }(m,v) \coloneq
\P \bigl( h^*_{\mathsf{D}_1 \setminus \mathsf{D}_{\ell}} \leq m_N + u    \ , \ \ 
  \mathscr{M}_{\ell} = m \mid \wh{S}_{\ell} =v \bigr) 
\lesssim  \frac{ (1+u)  [1+ (u-v)_+]   }{\ell} \rme^{-c \log^2 m}.
\end{equation}  
To show this, for $K \ge 0$ we define the event 
 $F_{m}(K)\coloneq \{ \, \exists \, k  \text{ s.t. } \wedge^{\ell}(k) \le m-1 ,  R_k +K  >  \log  (m-1) \} \cap \{ |S_{m}|, |S_{\ell}-S_{\ell-m}| \le (1+K)m \log m \}$.  
By Lemmas~\ref{lem:40} and \ref{lem:41} we have $\{\mathscr{M}_{\ell}=m\} \subset F_{m}(0)$. Hence,
 \[
\mathrm{Pr}_{\eqref{e:165} } \le \P \bigl( F_{m}(0) , S_{k}  - (m_{N} -m_{2^{\mathfrak{n}_o - k}} )  \le u + C_{\ref{lem:40}} \log ( \wedge^{\ell}(k)  ) \ \text{ for all }  m \le k\le \ell-m  \mid \wh{S}_{\ell} = v \bigr) .
 \]
We may replace the condition $\{\wh{S}_{\ell} = v \}$ by $\{ S_{\ell}= v\}$, since   $ \mathrm{Law}( \{S_{k}\}_{k=1}^{\ell} | \wh{S}_{\ell}= v )= \mathrm{Law} ( \{ S_{k} \}_{k=1}^{\ell} + \frac{\E[S_{k}^2]}{\E[S_{\ell}^2]  }(m_{N}-m_{2^{\mathfrak{n}-\ell'}}) | S_{\ell}= v ) $.
By Lemma~\ref{lem:39} (i) we have 
$\frac{\E[ S_{k}^2 ]}{\E[S_{\ell}^2]} = \frac{k+O(1)}{\ell} $.    Applying Lemma \ref{lemma-3.8}, we find   $ |\frac{k+O(1)}{\ell}[m_{N}-m_{2^{\mathfrak{n}-\ell'}}]- (m_{N}-m_{2^{\mathfrak{n}-k}})| \lesssim \log (\wedge^{\ell}(k))$. 
Moreover, $\mathrm{Law}(\{|\varphi_{k}(o)|\}_{k=1}^{\ell} | \wh{S}_{\ell}= v )$ is stochastically dominated by $ \mathrm{Law}(  \{  |\varphi_{k}(o) |+ C \}_{k=1}^{\ell} \mid S_{\ell}= v   )$ for a sufficiently large constant $C>0$. Combining these observations, we obtain  
\begin{align*}
 \mathrm{Pr}_{\eqref{e:165} }  
 &\leq \P \bigl( F_{m}(C) , S_{k}   \le u + C  \log ( \wedge^{\ell}(k)  ) \ \text{ for all }  m \le k\le \ell-m  \mid S_{\ell} = v \bigr) \\
 & \leq \E\bigl[  \P(   S_{k}   \le u + C  \log ( \wedge^{\ell}(k)  )  \mid S_{m}, S_{\ell -m})  \ind{F_{m}(C)} \mid S_{\ell}= v\bigr] \lesssim \frac{ (1+u)  [1+ (u-v)_+]   }{\ell} \rme^{-c \log^2 m}\,,
\end{align*} 
as claimed. 
Here,  we  applied the classical Ballot theorem (Lemma~\ref{lem-ballot-rw}) to bound the inner probability and used Lemma~\ref{lem:41},   with the assumptions $|v|\le\sqrt{\ell}, m\le\sqrt{\ell}$, to get
$\P(F_m(C)\mid S_\ell=v)\lesssim e^{-c\log^2 m}$.

Finally, combining the bound \eqref{e:165} with the density estimate \eqref{eq-hatS-v} establishes \eqref{e:201} for the remaining regime. This completes the proof.
\end{proof}

\subsection{Asymptotic statements: Proof of Proposition \ref{prop:cluster-law}}  
Throughout the proof we take the reference point $o=z$.
 Let $\Delta_{K} \coloneq m_{KN}-m_{N}=2\sqrt{g}\log K +o_{N}(1)$.  Define the shifted levels $t_{o} \coloneq t + \Delta_{K}-\psi_{o}$ and   $u_o \coloneq u+\Delta_K-\psi_o$. Note that the condition $g_{o}= m_{KN}+t$ is equivalent to  $h_{o}=m_{N}+t_{o}$. We define 
 \begin{equation*} 
\mathrm{Pr}_{\eqref{eq:g-locmax-w}} (t,u;\omega)   \coloneq \P  \bigl(  g_{o+y} \le g_{o} + \omega_{y} , \forall y \in \mathsf{B}_{r},   g^*_{ \mathsf{D}_{\mathfrak n-\varrho}} \le m_{KN} + t ,  g^*_{\mathsf{V}_{N}} \leq m_{KN} + u \mid  g_o =   m_{KN} +   t \bigr)\,.   \label{eq:g-locmax-w}
 \end{equation*} 
Since $o\in\mathsf{V}_{\eta N}$ and $\eta' >(1+\eta)/2$, for all sufficiently large $N$, we have $1\le\mathfrak n-\varrho\le\mathfrak n-1$ and $  \mathsf{B}_{2^\varrho}(o)=\mathsf{D}_{\mathfrak n-\varrho}\subset\mathsf{D}_1
 \subset\mathsf{V}_{(1+\eta)N/2}\subset\mathsf{V}_{\eta'N}$.
 Recalling from \eqref{e:2.11} that
 $\max_{x\in\mathsf{V}_{\eta'N}}|\psi_o-\psi_x|\le M_0$, we have, for every $x\in\mathsf{V}_{\eta'N}$, $
 \{h_x\le m_N+t_o-M_0\}\subset\{g_x\le m_{KN}+t\}
 \subset\{h_x\le m_N+t_o+M_0\}$,
 and the analogous inclusions hold with $(t,t_o)$ replaced by $(u,u_o)$.
 Consequently,  Propositions \ref{prop:one-point-estimate} implies the bound  
 \begin{equation} 
\mathrm{Pr}_{\eqref{eq:g-locmax-w}} (t,u;\omega)  
 \lesssim_{\eta, M_0, \omega} \frac{(1+u_o)}{\mathfrak{n}}\,. \label{eq:g-locmax-w-bnd}
 \end{equation}

Let   $\mathfrak{a}$ denote the potential kernel associated to the simple random walk on $\mathbb{Z}^2$ (see e.g. \cite[Section 4.4]{LawL2010} for precise definition).
For $\ell \ge 1$, define 
\begin{equation*}
	\tilde{h}_{\ell}(x) \coloneq  \sum_{j=\mathfrak{n}-\ell+1}^{\mathfrak{n}+1} b_{j}(x) \varphi_{j}(o) + \sum_{j=\mathfrak{n}-\ell+1}^{\mathfrak{n}} \chi_{j}(x) + \sum_{j=\mathfrak{n}-\ell+1}^{\mathfrak{n}} h_{j}(x) , \, x \in \mathsf{D}_{\mathfrak{n}-\ell} .
\end{equation*}
In particular $\tilde{h}_{\ell}\overset{\mathrm{d}}{=}  h^{\mathsf{B}_{2^{\ell}}(o)} - h^{\mathsf{B}_{2^{\ell}}(o)}_{o} $. 
 We define the  quantities
\begin{align*}
&  \Xi^{\mathrm{in}}_{\mathfrak{n}-\ell}(\omega) 
\coloneq \E\Bigl[|S_{\mathfrak{n}+1}-  S_{\mathfrak{n}-\ell}|
 \mathbf{1}_{\{ \tilde{h}_{\ell}(x) \le \tfrac{2}{\sqrt{g}} \mathfrak{a}(x-o) + \omega_{x-o} \text{ in } \mathsf{B}_{2^{r}}(o) \}} 
    \mathbf{1}_{\{ \tilde{h}_{\ell}(x) \le \tfrac{2}{\sqrt{g}} \mathfrak{a}(x-o)  \text{ in } \mathsf{B}_{2^{\ell}}(o) \}}
    \mathbf{1}_{\{ S_{\mathfrak{n}+1}- S_{\mathfrak{n}-\ell}\in[\ell^{1/3},\, \ell^2]\}}     \Bigr]  ; \\
	& \Xi^{\mathrm{out}}_{N,\ell}(t,u) \coloneq \E\Bigl[
|S_{\ell}|\ind{-S_{\ell}\in[\ell^{1/3},\ell^2]}
\ind{h_x\le[1-f_{\mathfrak {n}+1}(x)](m_N+t_o)+\psi_o-\psi_x+u-t,
	\ \forall x\in \mathsf{V}_{N}\setminus\mathsf{D}_\ell}
\,\Big|\,S_{\mathfrak {n}+1}=0\Bigr]. 
\end{align*} 
The following lemma provides the key asymptotic expansion for  $\mathrm{Pr}_{\eqref{eq:g-locmax-w}} (t,u;\omega) $.

\begin{lem}\label{lem:extrmal-asymp}
Under the hypotheses of  Proposition~\ref{prop:cluster-law}, for every $\epsilon>0$, there exists 
$k_0\ge 1$ such that for all  $k_0 \le \ell \le n^{1/10}$,  $z \in \mathsf{V}_{\eta N}$, $t \in [-2(\log\log N)^{2}, u]$, and $ k_0\vee(u-t)^{50} \le \varrho \le \sqrt{\log N}$,  we have: 
\begin{equation}\label{eq-neigh-g}
 \Big| \, \mathrm{Pr}_{\eqref{eq:g-locmax-w}} (t,u;\omega)  - 
 \frac{2  \Xi^{\mathrm{in}}_{\mathfrak n-\ell}(\omega)\,\Xi^{\mathrm{out}}_{N,\ell}(t,u)}{(g \log 2)\mathfrak{n}} \, \Big| \le \frac{\epsilon (1+u_o)}{n} . 
\end{equation} 
Moreover,  for every fixed $0<\bar{s}<\infty$ and $\ell$, uniformly over the
admissible $t$ and $s \in[0,\bar{s}]$,
\begin{equation}
\big|\Xi^{\mathrm{out}}_{N,\ell}(t+s,u)
-\Xi^{\mathrm{out}}_{N,\ell}(t,u)\big|
\lesssim_{\ell, \bar{s}} \frac{s}{\mathfrak n} . 
\label{eq:Xi-out-stability-additive}
\end{equation}  
\end{lem}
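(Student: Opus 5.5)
We prove Lemma~\ref{lem:extrmal-asymp} by the ``inner/outer'' decoupling that underlies the extremal cluster asymptotics of~\cite{BL3}, run through the concentric decomposition around $o=z$ with splitting scale $\ell$.

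\emph{Step 1: rewrite the probability.} Condition on $h_o=m_N+t_o$ and write $h=t_o f+h^{0}$, where $f=f_{\mathfrak n+1}$ and $h^0$ is the field conditioned on $h_o=0$. We split the constraints in $\mathrm{Pr}_{\eqref{eq:g-locmax-w}}$ at the annulus $\mathsf{D}_{\mathfrak n-\ell}$.
\begin{itemize}
\item Inside $\mathsf{D}_{\mathfrak n-\ell}=\mathsf{B}_{2^\ell}(o)$, the field relative to $h_o$ equals $\tilde h_\ell+(S_{\mathfrak n-\ell}-S_{\mathfrak n+1})$ up to the $b_j,\chi_j$ terms with $j\le \mathfrak n-\ell$. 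Lemma~\ref{lem:39}(ii),(iii) makes these $O(2^{-(\mathfrak n-\ell)}\cdot 2^{\ell})$-negligible there.
\item The local-maximum constraint on $\mathsf{D}_{\mathfrak n-\varrho}\setminus\mathsf{D}_{\mathfrak n-\ell}$, and its $\omega$-refinement on $\mathsf{B}_r$, become ballot-type constraints for the reversed walk $S_{\mathfrak n+1}-S_{\mathfrak n+1-k}$. The shift $\tfrac{2}{\sqrt g}\mathfrak a$ arises as the limit of $(m_N+t_o)(1-f)$ near $o$, by \eqref{eq-f-by-Green-function} and the potential-kernel asymptotics.
\item Outside $\mathsf{D}_\ell$ the constraint $g^*_{\mathsf V_N}\le m_{KN}+u$ is exactly the indicator in $\Xi^{\mathrm{out}}$.
\item The middle range $\ell\le k\le \mathfrak n-\ell$ carries both ceilings: $u$ for $k\le \mathfrak n-\varrho$ and $t$ beyond. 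Since $\varrho\ge(u-t)^{50}$, entropic repulsion (Lemma~\ref{lem-ent-rw}) keeps the walk far below both ceilings, so only the single ceiling $\approx u-tk/\mathfrak n$ matters.
\end{itemize}

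\emph{Step 2: Markov splitting.} Condition on $(S_\ell,S_{\mathfrak n-\ell})$ together with the fields $\varphi_k,\chi_k,h_k$ for $k\le\ell$ and $k>\mathfrak n-\ell$. By independence in Proposition~\ref{prop:38}, the probability factorizes into
\[
\text{(outer event)}\times\text{(bridge ballot probability on }[\ell,\mathfrak n-\ell])\times\text{(inner event)}.
\]
For the bridge factor we use Lemma~\ref{lem-two-barrier-ballot-asy-rw}, sharpened to an asymptotic via the standard statement that a Gaussian bridge of length $\sim\mathfrak n$ from depth $a$ to depth $b$ below a line, with $a,b\in[\ell^{1/3},\ell^2]$, stays below with probability $(1+o(1))\,2ab/(\sigma^2\mathfrak n)$. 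Here $\sigma^2\to g\log2$ by Lemma~\ref{lem:39}(i), and the $O((\log k)^2)$ barrier curvature is absorbed once $\ell\ge k_0$. Integrating $a=|S_\ell|$ against the outer event and $b=|S_{\mathfrak n+1}-S_{\mathfrak n-\ell}|$ against the inner event gives the product $2\,\Xi^{\mathrm{in}}\Xi^{\mathrm{out}}/((g\log2)\mathfrak n)$.

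\emph{Step 3: error terms.} The contributions from $|S_\ell|$ or $|S_{\mathfrak n+1}-S_{\mathfrak n-\ell}|$ outside $[\ell^{1/3},\ell^2]$ are bounded using Lemma~\ref{lem-two-barrier-ballot-upper-bound-rw} and Gaussian tails. The contribution of large control variable $\mathscr M$ is bounded via Lemma~\ref{lem:41}. Each of these is at most $\epsilon(1+u_o)/n$, matching the scale of \eqref{eq:g-locmax-w-bnd}, once $\ell\ge k_0$. The replacement of the noise terms by their limits costs $o_\ell(1)$ times the same scale.

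\emph{Step 4: stability \eqref{eq:Xi-out-stability-additive}.} Shifting $t\mapsto t+s$ changes the ceiling on $\mathsf V_N\setminus\mathsf D_\ell$ by $s f_{\mathfrak n+1}(x)$, which is $O(s\ell/\mathfrak n)$ there by \eqref{eq-f-asymptotic}. Under the conditioning the density of the relevant maxima near the ceiling is bounded, by the Gaussian local estimates underlying Proposition~\ref{prop:one-point-estimate}, so the change in the expectation is $O_{\ell,\bar s}(s/\mathfrak n)$.

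The main obstacle is Step 2: obtaining an exact asymptotic rather than $\asymp$ for the inhomogeneous bridge with curved barriers, uniformly in $t,u,\varrho$. Our first attempt would be to couple with a homogeneous walk of variance $g\log2$ on the bulk indices and invoke the known sharp ballot asymptotic for random walk bridges.
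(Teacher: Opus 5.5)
\textbf{There is a gap in your Step 1.} Your overall architecture matches the paper: concentric decomposition at $o=z$, outer/middle/inner split at scale $\ell$, a sharp $2ab/((g\log2)\mathfrak n)$ ballot asymptotic for the middle bridge, entropic repulsion, and truncation of $\mathscr M$. The problem is that you claim the terms $b_j(x)\varphi_j(o)$ and $\chi_j(x)$, $j\le\mathfrak n-\ell$, are negligible on all of $\mathsf{D}_{\mathfrak n-\ell}=\mathsf{B}_{2^\ell}(o)$. This is false near $\partial\mathsf{B}_{2^\ell}(o)$. Lemma~\ref{lem:39}(ii) only gives $|b_j(x)|\lesssim |x-o|/2^{\mathfrak n-j}$, which is of order one for $j=\mathfrak n-\ell$ and $|x-o|\asymp 2^\ell$, and $\chi_{\mathfrak n-\ell}$ is also an order-one fluctuation there. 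The correct estimate is \eqref{eq:h-vs-htilde}: $|h_x-\tilde h_\ell(x)|\lesssim_m(\log\ell)2^{-(\ell-j)}$ on $\mathsf{D}_{\mathfrak n-j}$, which is small only when $j\ll\ell$.

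Consequences:
\begin{itemize}
\item The constraint $\{\tilde h_\ell\le\tfrac{2}{\sqrt g}\mathfrak a \text{ on } \mathsf{B}_{2^\ell}(o)\}$ inside $\Xi^{\mathrm{in}}_{\mathfrak n-\ell}(\omega)$ cannot be obtained from the true constraint by a noise estimate.
\item Your Step 2 factorization needs the inner event to be measurable with respect to the scales $>\mathfrak n-\ell$, so it is not justified as written.
\item Even where the noise is small, ``replacing the noise by its limit costs $o_\ell(1)$'' needs an argument. A perturbation of size $\delta$ changes the indicator only if some site lies within $\delta$ of its barrier, and you must show this has probability $o(1)$ \emph{relative to} the $O((1+u_o)/\mathfrak n)$ main term, under the singular conditioning.
\end{itemize}

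\textbf{The missing idea is a second inner scale $k$ with $4r\le 2k\le\ell$.}
\begin{itemize}
\item On $\mathsf{D}_{\mathfrak n-k}$ the error is $O(2^{-(\ell-k)}\log\ell)$. The paper swaps the true indicator for the $\tilde h_\ell$-indicator there using the strong FKG property (\cite[Lemma 4.22]{BL3}). Conditionally on all constraints, some $x\in\mathsf{D}_{\mathfrak n-k}$ is within $\delta$ of its barrier with probability $\lesssim 4^k\delta$, and this is multiplied by \eqref{eq-o-is-local-extreme}.
\item On the scales $[\ell,\mathfrak n-k]$, which include the outer part of $\mathsf{B}_{2^\ell}(o)$ where your claim fails, both the true and the $\tilde h_\ell$-based indicators lie between a strong and a weak random-walk barrier event, $G^{\mathrm s}_{\ell,\mathfrak n-k}$ and $G^{\mathrm w}$. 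Their barriers differ by $O(\vartheta_m^2)$, respectively by $C'[\wedge^{\mathfrak n+1}(\cdot)]^{1/4}$. So they can be exchanged for the flat barrier $\{S_j\le0,\ \ell\le j\le\mathfrak n-\ell\}$, keeping the $\tilde h_\ell$-constraint on $[\mathfrak n-\ell,\mathfrak n-k]$. The cost is $\P(G^{\mathrm w}\setminus G^{\mathrm s})\lesssim k^{-1/4}(1+u_o)/\mathfrak n$ by Lemma~\ref{lem-ent-rw}.
\end{itemize}
This $k$-dependent repulsion bound is where the $\epsilon$ comes from. Once the barrier is flat, the sharp bridge asymptotic is just \cite[Lemma 5.6]{BL3}, with relative error $O(\ell^4/\mathfrak n)$. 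So the ``main obstacle'' you single out is not the real difficulty; your phrase ``curvature absorbed once $\ell\ge k_0$'' is exactly the step that needs the argument above. You also need to remove the residual dependence of the inner increments on $S_\ell$ under the bridge conditioning; the paper does this via the $\varphi^v$ coupling.

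\textbf{Step 4 rests on an unproved estimate.} It requires a bounded density, uniform in $N$, for the maximum of the $h_o$-conditioned outer field relative to a non-flat ceiling, and Section~\ref{s:4} does not provide this. The paper avoids it. Only $(\varphi_j(o))_{j\le\ell}$ feels $S_{\mathfrak n+1}$, and its conditional laws given $S_{\mathfrak n+1}=s$ and given $S_{\mathfrak n+1}=0$ differ by a mean shift of order $O_\ell(s/\mathfrak n)$. Hence the Kullback--Leibler divergence is $O_\ell(s^2/\mathfrak n^2)$, and Pinsker's inequality gives total variation $O_\ell(s/\mathfrak n)$ for the whole outer field. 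Adding the shift $sV_\ell/V_{\mathfrak n+1}$ of the weight $|S_\ell|$ yields \eqref{eq:Xi-out-stability-additive}.
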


\begin{proof}[Proof of Proposition~\ref{prop:cluster-law} admitting
Lemma~\ref{lem:extrmal-asymp}]
For fixed $K\ge4$, \eqref{e:2.11} gives
$u_o\ge u+(\log\log K)/(4\alpha)+o_N(1)>0$ for all large $N$.
Applying Lemma~\ref{lem:extrmal-asymp} at $(t+s,\omega)$ and
$(t,\mathbf0)$, and then using
\eqref{eq:Xi-out-stability-additive}, gives, for all sufficiently large
fixed $\ell$,
\begin{align*}
&\Big|
 \mathrm{Pr}_{\eqref{eq:g-locmax-w}}(t+s,u;\omega)
 -\frac{\Xi^{\mathrm{in}}_{\mathfrak n-\ell}(\omega)}
       {\Xi^{\mathrm{in}}_{\mathfrak n-\ell}(\mathbf0)}
  \mathrm{Pr}_{\eqref{eq:g-locmax-w}}(t,u;\mathbf0)
 \Big| \\
&\qquad\lesssim_\omega
 \epsilon\frac{1+u_o}{n}
 +\frac{2\Xi^{\mathrm{in}}_{\mathfrak n-\ell}(\omega)}
       {(g\log2)\mathfrak n}
  \left|\Xi^{\mathrm{out}}_{N,\ell}(t+s,u)
        -\Xi^{\mathrm{out}}_{N,\ell}(t,u)\right|  \lesssim_{\ell,\bar s }
 \epsilon\frac{1+u_o}{n}+\frac1{\mathfrak n^2},
\end{align*}
uniformly in the parameters of the proposition. Note that the ratio 
$\Xi^{\mathrm{in}}_{\mathfrak n-\ell}(\omega)/
\Xi^{\mathrm{in}}_{\mathfrak n-\ell}(\mathbf0)$ depends only on $\ell$ and  by
\cite[Proposition 5.8]{BL3}, it tends to
$\nu(\omega)$ as $\ell\to\infty$.  Combining this with
\eqref{eq:g-locmax-w-bnd}, and choosing first $\epsilon$ small and then
$\ell$ large, yields 
\begin{equation}\label{eq:Pr-asymp}
	 \mathrm{Pr}_{\eqref{eq:g-locmax-w}}(t+s,u;\omega)
	 =\nu(\omega)\mathrm{Pr}_{\eqref{eq:g-locmax-w}}(t,u;\mathbf0)
	 +o_N(1)\frac{1+(u_o)_+}{\log N}.
\end{equation} 
Here and below, the error $o_N(1)$ is uniform in the parameters of the
proposition.

Since $\Var(g_z)=g\log N+O_\eta(1)$ uniformly in
$z\in\mathsf{V}_{\eta N}$, a direct Gaussian-density calculation gives
\[
 \frac{\P(g_z\in m_{KN}+t+s+\dif t)}
      {\P(g_z\in m_{KN}+t+\dif t)}
 =\rme^{-\alpha s}[1+o_N(1)].
\]
This holds uniformly for $s\in[0,\bar s]$ and the admissible $t$.
When $\omega=\mathbf0$, the local condition in
$\mathrm{Pr}_{\eqref{eq:g-locmax-w}}(t,u;\mathbf0)$ is
$g_{z+y}\le g_z$ for all $y\in\mathsf{B}_r$.  This is implied by
$g^*_{z+\mathsf{B}_{2^\varrho}}\le g_z$, because
$r\le2^\varrho$.  
Consequently,  
multiplying the preceding conditional asymptotic identity in \eqref{eq:Pr-asymp} by
$\P(g_z\in m_{KN}+s+\dif t)$ and using the density ratio above
therefore proves \eqref{e:28c} and \eqref{e:28c-error}.
\end{proof}

As before we abbreviate   $f_{\mathfrak{n}+1}$ defined in \eqref{e:129} by $f$, set $\hat{f} \coloneq 1- f$, and denote  $\mathscr{M}_{\mathfrak{n}+1}$  by $\mathscr{M}$. 

\begin{proof}[Proof of Lemma \ref{lem:extrmal-asymp}] 
The properties of Gaussian vectors imply that the conditional field $(g_{x}) \mid g_o=m_{KN}+t$ has the same distribution as $(h_{x}+ f_{x}(m_{N}+t_{o})+\psi_{x}) \mid h_o=0$. Note that  $\mathsf{D}_{\mathfrak n-\varrho}=o+\mathsf{B}_{2^\varrho}$. For each $0 \le j\le \mathfrak{n} $,  define 
 \begin{equation*}
 \mathscr{X}_{j} \coloneq \ind { h_{x} \le \hat{f}_{x} (m_{N}+t_{o})+\psi_{o}-\psi_{x}+ (u-t)\ind{x\in \mathsf{D}_{0}\setminus\mathsf{D}_{\mathfrak n-\varrho}} \,,\ 
	\forall x \in  \mathsf{D}_{j} \backslash \mathsf{D}_{j+1}    } \ \text{ and } \  \mathscr{X}_{j_{1},j_{2}} = \prod_{j=j_{1}}^{j_{2}-1}  \mathscr{X}_{j}  .
 \end{equation*}
 Moreover, define for each $k \ge r$, 
$\mathscr{X}^{\omega}_{\mathfrak{n}-k,\mathfrak{n}+1}  \coloneq \mathscr{X}_{\mathfrak{n}-k,\mathfrak{n}+1}   \ind { h_{x} \le \hat{f}_{x} (m_{N}+t_{o})+\psi_{o}-\psi_{x} + \omega_{x-o}  , \forall \, x   \in  \mathsf{B}_r(o) }$. Then for $k \ge r$, we can express the probability of interest as:
  \begin{equation*}
	\mathrm{Pr}_{\eqref{eq:g-locmax-w}} (t,u;\omega) = \E[ \mathscr{X}_{0,\mathfrak{n}-k} \mathscr{X}^{\omega}_{\mathfrak{n}-k,\mathfrak{n}+1}  \mid S_{\mathfrak{n}+1}=0 ]  .
  \end{equation*}
Thanks to  Lemma \ref{lem-put-M-in}, there exists a large integer $m$, depending only on $M_0$ and $\epsilon$,  satisfying
 \begin{equation}
  \big| \E [  \mathscr{X}_{0,\mathfrak{n}+1} \ind{\mathscr{M}  \ge m} \mid  S_{\mathfrak{n}+1} =0 ]  \big| 
 \lesssim_{M_0} \frac{ (1+u_o)}{\mathfrak{n}} e^{-c \log^{2} m } \le \frac{\epsilon (1+u_o)}{n} \label{eq-F-exclude-large-M}  .
 \end{equation} 
 We fix this $m$ and we will always work on the event  $\{\mathscr{M} \le m\}$ in the remaining proof. 
  
\smallskip
\noindent
\underline{\textit{Step 1.}}  Fix $k,\ell \in \mathbb{N}$ such that $\ell \ge 2k \ge 4r$. We define the indicator functions relative to $\tilde{h}_{\ell}$:
 \begin{equation*}
	\tilde{\mathscr{X}}_{j} \coloneq \ind { \tilde{h}_{\ell}(x) \le  \frac{2}{\sqrt{g}} \mathfrak{a}(x-o)  \forall x \in \mathsf{D}_{j} \backslash \mathsf{D}_{j+1}  }\ \text{ and } \  \tilde{\mathscr{X}}_{j_{1},j_{2}} = \prod_{j=j_{1}}^{j_{2}-1}  \tilde{\mathscr{X}}_{j}  .
 \end{equation*} 
Set $\tilde{\mathscr{X}}^{\omega}_{ \mathfrak{n}-k,\mathfrak{n}+1}  \coloneq   \tilde{\mathscr{X}}_{ \mathfrak{n}-k , \mathfrak{n}+1} \ind{ \tilde{h}_{\ell}(x) \le \frac{2}{\sqrt{g}} \mathfrak{a}(x-o) + \omega_{x-o} \,, \forall x \in \mathsf{D}_{\mathfrak{n}-r}}$.
We claim that for each fixed $k$, provided $\ell$ is sufficiently large, then the following bound holds for all large  $n$
\begin{equation}\label{eq-replace-in}
 E_{\eqref{eq-replace-in}} \coloneq \E  \bigl[    \mathscr{X}_{0,\mathfrak{n}-k}  \big|\mathscr{X}^{\omega}_{\mathfrak{n}-k,\mathfrak{n}+1} - \tilde{\mathscr{X}}^{\omega}_{\mathfrak{n}-k,\mathfrak{n}+1}   \big|   \ind{\mathscr{M}  \le m}  \mid  S_{\mathfrak{n}+1} =0 \bigr] \le \frac{\epsilon(1+u_o)}{  n} .
\end{equation}
To verify this, note that   as $n \to \infty$, we have uniformly for $x \in \mathsf{D}_{\mathfrak{n}-k}$, 
\[  \hat{f}_{x} (m_{N}+t_o) + \psi_{o}-\psi_x = [1+o(1)] \frac{2}{\sqrt{g}}    [G_{\mathsf{D}_0}(o,o)-G _{\mathsf{D}_0}(x,o)  ] = [1+o(1)] \frac{2}{\sqrt{g}}\mathfrak{a}(x-o). \] 
Additionally, on  $\{S_{\mathfrak{n}+1}=0\} \cap \{ \mathscr{M} \le m \}$,  Lemma \ref{lem:39} and the definition \eqref{eq-control-variable} imply 
\begin{equation}\label{eq:h-vs-htilde}
	| h_{x} - \tilde{h}_{\ell} (x)|  \le \sum_{i=1}^{\mathfrak{n}-\ell} |b_{i}(x)| |\varphi_{i}(o)| + \sum_{i=1}^{\mathfrak{n}-\ell} |\chi_{i}(x) | \lesssim_{m} (\log \ell)  2^{-(\ell-j)} , \  \text{ for } x \in \mathsf{D}_{\mathfrak{n}-j}  \text{ with } j \le \ell .
\end{equation}
Consequently,  given any $\delta>0$ and $k$,  provide $\ell$, $N$ sufficiently large, the difference $\mathscr{X}^{\omega}_{\mathfrak{n}-k,\mathfrak{n}+1} - \tilde{\mathscr{X}}^{\omega}_{\mathfrak{n}-k,\mathfrak{n}+1}   $ is nonzero   only if one of the following conditions is satisfied: 
(a) ${\mathscr{X}}^{\omega}_{ \mathfrak{n}-k , \mathfrak{n}+1} =1$ and there exists $x \in \mathsf{D}_{\mathfrak{n}-k}$ with $h_{x} \ge \hat{f}_{x} (m_{N}+t_{o})+\psi_{o}-\psi_{x}+\omega_{x-o} - \delta$. (b) $\tilde{\mathscr{X}}^{\omega}_{ \mathfrak{n}-k , \mathfrak{n}+1} =1$ and there is $x \in \mathsf{D}_{\mathfrak{n}-k}$ with $\tilde{h}_{\ell}(x) \ge \frac{2}{\sqrt{g}} \mathfrak{a}(x-o) +\omega_{x-o}- \delta$. This leads to the bound:
 \begin{align}
 E_{\eqref{eq-replace-in}} 
 & \leq  \E  \bigl[    \mathscr{X}_{0,\mathfrak{n}-k}     \tilde{\mathscr{X}}^{\omega}_{\mathfrak{n}-k,\mathfrak{n}+1} \ind{ \exists\, x \in \mathsf{D}_{\mathfrak{n}-k}, \tilde{h}_{\ell}(x) \ge \frac{2}{\sqrt{g}} \mathfrak{a}(x-o) +\omega_{x-o}-\delta   }    \mid  h_o=0 \bigr] \notag \\
  & \ +    \E  \bigl[    \mathscr{X}_{0,\mathfrak{n}-k}     \mathscr{X}^{\omega}_{\mathfrak{n}-k,\mathfrak{n}+1}\ind{ \exists\, x \in \mathsf{D}_{\mathfrak{n}-k}, h_{x} > \hat{f}_{x} (m_{N}+t_{o})+\psi_{o}-\psi_{x}+\omega_{x-o} -\delta }   \mid  h_o=0 \bigr]\,. \label{eq:FKGerror}
 \end{align}  

Since the DGFF satisfies the strong FKG property, by applying \cite[Lemma 4.22]{BL3}, we obtain:
\begin{align*}
	 & \P  \bigl(   \exists\, x \in \mathsf{D}_{\mathfrak{n}-k},  \tilde{h}_{\ell}(x) \ge \tfrac{2}{\sqrt{g}} \mathfrak{a}(x-o) + \omega_{x-o} -\delta    \mid  h_o=0 , \mathscr{X}_{0,\mathfrak{n}-k} \tilde{\mathscr{X}}^{\omega}_{\mathfrak{n}-k,\mathfrak{n}+1}     =1 \bigr)  \\
	 & \le \sum_{x \in \mathsf{D}_{\mathfrak{n}-k} \setminus \{o\}} \P\bigl( \tilde{h}_{\ell}(x) \ge \tfrac{2}{\sqrt{g}} \mathfrak{a}(x-o) + \omega_{x-o} -\delta  \mid \tilde{h}_{\ell}(x) \le \tfrac{2}{\sqrt{g}} \mathfrak{a}(x-o) +\omega_{x-o}\bigr) \lesssim 4^{k} \delta .
\end{align*}
The last inequality   holds because  $\inf_{k,\ell \ge 2k}\inf_{x \in \mathsf{D}_{\mathfrak{n}-k} \setminus \{o\} } \Var(\tilde{h}_{\ell}(x)  )>0$. 
Thus, provided $\delta$ is chosen sufficiently small (depending on $k$ and $\epsilon$), we have:
\begin{align*}
	& \E  \bigl[    \mathscr{X}_{0,\mathfrak{n}-k}     \tilde{\mathscr{X}}^{\omega}_{\mathfrak{n}-k,\mathfrak{n}+1} \ind{ \exists\, x \in \mathsf{D}_{\mathfrak{n}-k}, \tilde{h}_{\ell}(x) \ge \frac{2}{\sqrt{g}} \mathfrak{a}(x-o) +\omega_{x-o} -\delta   }    \mid  h_o=0 \bigr]  \lesssim 4^{k} \delta 	\, \E  \bigl[    \mathscr{X}_{0,\mathfrak{n}-k}     \tilde{\mathscr{X}}_{\mathfrak{n}-k,\mathfrak{n}+1}  \mid  h_o=0 \bigr] \\
&\qquad \lesssim 4^{k} \delta 	 \E  \bigl[    \mathscr{X}_{0,\mathfrak{n}-k}       \ind { h_{x} \le  \hat{f}_{x} (m_{N}+t_{o})+\psi_{o}-\psi_{x} + 1, \forall \, x   \in \mathsf{D}_{\mathfrak{n}-k}  }   \mid  h_o=0 \bigr] \lesssim \frac{\epsilon (1+u_o)}{ 2n} .
\end{align*}  
Here the last inequality follows from the conditional  
estimate \eqref{eq-o-is-local-extreme}. 
Applying a similar argument to the second expectation in  \eqref{eq:FKGerror} yields that it is also bounded by $ \frac{\epsilon (1+u_o)}{ 2 n} $ thereby establishing \eqref{eq-replace-in}.

\smallskip 
\noindent
\underline{\textit{Step 2.}}
Recall that  $k,\ell \in \mathbb{N}$ are chosen such that $\ell \ge 2k \ge 4r$.
We define:
\begin{equation*}
\tilde{\mathscr{S}}_{\ell,\mathfrak{n}-k}
\coloneq
\ind{-S_{\ell}\in[\ell^{1/3},\ell^2]}
\ind{S_j\le0,\ \forall\,\ell\le j\le\mathfrak n-\ell}
\ind{-S_{\mathfrak n-\ell}\in[\ell^{1/3},\ell^2]}
\tilde{\mathscr X}_{\mathfrak n-\ell,\mathfrak n-k}.
\end{equation*} 
   We claim that provided that $k$ (depending on $ m,\epsilon$) subsequently $\ell$ large enough,   
\begin{equation}\label{eq-replace-mid}
	E_{\eqref{eq-replace-mid}} \coloneq \E  \bigl[      {\mathscr{X}}_{ 0,\ell } \big| {\mathscr{X}}_{ \ell,\mathfrak{n}-k } - \tilde{\mathscr{S}}_{ \ell,\mathfrak{n}-k }  \big|\tilde{\mathscr{X}}_{ \mathfrak{n}-k , \mathfrak{n}+1}    \ind{\mathscr{M}  \le m}  \mid  S_{\mathfrak{n}+1} =0 \bigr] \le \frac{\epsilon (1+u_o)}{  n} .
\end{equation}   
Indeed, this estimate follows from the entropic repulsion of a random walk conditioned to stay negative.
 Proceeding as in \eqref{eq-cons-S-k},  Lemmas \ref{lemma-3.8} and \ref{lem:40} imply the existence of a constant $C>0$ (depending on $M_0$) satisfying the following inclusions:
  \begin{align*} 
  & \{ S_{j} \le - C \vartheta_{m}(\wedge^{\mathfrak{n}+1} (j) )^{2}+  u_{o} \ind{j < \mathfrak{n}-\varrho} +   t_{o}  \ind{j\ge \mathfrak{n}-\varrho} - t_{o} \tfrac{j}{\mathfrak{n}}  \} \\
	& \subset   \{  \mathscr{X}_{j}=1 \} \subset \{ S_{j} \le C \vartheta_{m}(\wedge^{\mathfrak{n}+1} (j) )  +  u_{o} \ind{j < \mathfrak{n}-\varrho} +   t_{o}  \ind{j\ge \mathfrak{n}-\varrho} - t_{o} \tfrac{j}{\mathfrak{n}}  \}   \quad \text{ for all } 1 \le j \le \mathfrak{n}. 
 \end{align*} 
 Furthermore, combining Lemma~\ref{lem:40} with
$\mathfrak a(x-o)=g\log|x-o|+O(1)$ shows that, after increasing
$C$ if necessary, the condition
$S_j\le-C\vartheta_m(\wedge^{\mathfrak {n}+1}(j))^2$ is sufficient
to ensure that
$h_x\le \frac{2}{\sqrt g}\mathfrak a(x-o)$ for every
$x\in\mathsf{D}_j\setminus\mathsf{D}_{j+1}$, and the latter
event implies
$S_j\le C\vartheta_m(\wedge^{\mathfrak {n}+1}(j))$.
  Using \eqref{eq:h-vs-htilde} and the fact that
$
\vartheta_m (\wedge^{\mathfrak {n}+1}(j) )^2
2^{-(\mathfrak n-j)}
\ge
(\log\ell)2^{-\ell} 
$ for $\mathfrak n-\ell\le j\le\mathfrak n$,  
, we deduce that when $S_{\mathfrak{n}+1}=0$, 
 \begin{align*}
	 \{    S_{ j} \le   - C   \vartheta_{m}(\wedge^{\mathfrak{n}+1} (j) )^{2} \}  
	  \subset \{ \tilde{\mathscr{X}}_{j}=1  \} \subset \{   S_{ j} \le     C   \vartheta_{m}(\wedge^{\mathfrak{n}+1} (j) ) ^{2}  \}  \ \text{ for all } \   j \ge \mathfrak{n}- \ell.
 \end{align*}   
  
By taking a sufficiently large constant $C'$ (depending on $m$ and $M_0$), we have   
$ C   \vartheta_{m}(\wedge^{\mathfrak{n}+1} (j) )^{2} \le C'   [\wedge^{\mathfrak{n}+1} (j)]^{1/4}   +   t_{o} \tfrac{\mathfrak{n}-j}{\mathfrak{n}}   $ for $j \in [\mathfrak{n}-\varrho,\mathfrak{n}]$.  Thus whenever  ${\mathscr{X}}_{ 0,\ell }  {\mathscr{X}}_{ \ell,\mathfrak{n}-k } \tilde{\mathscr{X}}_{ \mathfrak{n}-k,\mathfrak{n}+1} =1$ or ${\mathscr{X}}_{ 0,\ell }   \tilde{\mathscr{S}}_{ \ell,\mathfrak{n}-k } \tilde{\mathscr{X}}_{ \mathfrak{n}-k , \mathfrak{n}+1} =1$, 
the following "weak" event must occur:
 \[ G^{\mathrm{w}} \coloneq \{ S_{j} \le C'   [\wedge^{\mathfrak{n}+1} (j)]^{1/4}   +  u_{o}\ind{j < \mathfrak{n}-\varrho} +   t_{o} \ind{j\ge \mathfrak{n}-\varrho} - t_{o} \tfrac{j}{\mathfrak{n}} , \, \forall 1 \le j \le \mathfrak{n} \}.  \]  
Conversely,   assume that $ {\mathscr{X}}_{ \ell,\mathfrak{n}-k } = 0$ or $ \tilde{\mathscr{S}}_{ \ell,\mathfrak{n}-k }  = 0$. Since when $\mathscr{M} \le m$, from Lemma \ref{lem:40} it follows $-S_{\ell}    \le \ell \log \ell \le \ell^2$ for large $\ell$. Provided $k,\ell$ is large we then have $-C'   [\wedge^{\mathfrak{n}+1} (j)]^{1/4}/2   +  u_{o} \le 0$ for $j \in [\ell,\mathfrak{n}-\varrho)$ and $ -C   \vartheta_{m}(\wedge^{\mathfrak{n}+1} (j) )^{2} \ge -C'   [\wedge^{\mathfrak{n}+1} (j)]^{1/4}   +   t_{o} \tfrac{\mathfrak{n}-j}{\mathfrak{n}}   $ for $j \in [\mathfrak{n}-\varrho,\mathfrak{n}-k]$. 
Thus the random walk must fail to  make the  following "strong" event happen:
 \[
 G^{\mathrm{s}}_{\ell,\mathfrak{n}-k} \coloneq \{ S_{j} \le -C'   [\wedge^{\mathfrak{n}+1} (j)]^{1/4}   +  u_{o}\ind{j < \mathfrak{n}-\varrho} +   t_{o} \ind{j\ge\mathfrak{n}-\varrho} - t_{o} \tfrac{j}{\mathfrak{n}} ,  \forall \ell \le j \le \mathfrak{n}-k \}.  
  \]
Recall that $(S_j)_{j=0}^{\mathfrak{n}+1}$ is a Gaussian vector and $ \frac{\E[S_{j}^2]}{\E[S_{\mathfrak{n}+1}^2]}= \frac{j+O(1)}{\mathfrak{n}}$.  Applying  Lemma \ref{lem-ent-rw} with $a=1/4$ and $b=C'$ we obtain
\begin{align*} 
E_{\eqref{eq-replace-mid}}  & \le  \P \bigl(   G^{\mathrm{w}} \setminus G^{\mathrm{s}}_{\ell,\mathfrak{n}-k}  \mid  S_{\mathfrak{n}+1} =0\bigr)  \\
& \lesssim \P_{0,\mathfrak{n}+1}^{0,t_o} \bigl( \{  S \preceq_{[0,\mathfrak{n}+1]} u_o + C' [\wedge^{\mathfrak{n}+1} (\cdot)]^{1/4} \} \cap \{ S \preceq_{[k,\mathfrak{n}-k]} t_o - C' [\wedge^{\mathfrak{n}+1} (\cdot)]^{1/4} \}^{c} ) \\
& \lesssim \frac{(1+u_{o})}{\mathfrak{n}} \frac{1}{k^{1/4}}   \Bigl[ 1+ \frac{u-t}{ \varrho^{1/2}} \Bigr]  \le \frac{\epsilon (1+u_o)}{\mathfrak{n}} ,
\end{align*}    
provided that $k$ is sufficiently large.

\smallskip
 \noindent
\underline{\textit{Step 3.}} As in \eqref{eq-F-exclude-large-M}, adapting the argument in  Lemma \ref{lem-put-M-in} yields $ \E   [   \mathscr{X}_{ 0,\ell }  \tilde{\mathscr{S}}_{ \ell,\mathfrak{n}-k }   \tilde{\mathscr{X}} _{ \mathfrak{n}-k , \mathfrak{n}+1}  \ind{\mathscr{M} > m}  \mid  S_{\mathfrak{n}+1} =0  ]  \lesssim_{M_0} \frac{ (1+u_o)}{\mathfrak{n}} e^{-c \log^{2} m } \le \frac{\epsilon (1+u_o)}{\mathfrak{n}} $.   Combining this with the estimates \eqref{eq-replace-in} and \eqref{eq-replace-mid}, we obtain:  
\begin{equation}\label{eq:after-replace}
	\big| \mathrm{Pr}_{\eqref{eq:g-locmax-w}} (t,u;\omega) -   \E  \bigl[   \mathscr{X}_{ 0,\ell }    \ind{- S_{\ell}, - S_{\mathfrak{n}-\ell} \in [\ell^{1/3},\ell^2 ] }  \,  \ind{ S_{j} \le 0  ,\,\forall\, \ell \le j \le \mathfrak{n}-\ell}  \, \tilde{\mathscr{X}}^{\omega}_{ \mathfrak{n}-\ell , \mathfrak{n}+1}   \mid  S_{\mathfrak{n}+1} =0 \bigr] \big| \le \frac{3\epsilon (1+u_o)}{\mathfrak{n}} . 
\end{equation}
By the concentric decomposition (Proposition \ref{prop:38}), conditioned on $S_{\ell}$ and $S_{\mathfrak{n}-\ell}$ and the event $\{S_{\mathfrak{n}+1}=0\}$,  the “outer” field $\{h_{x}:x \in \mathsf{D}_0 \setminus \mathsf{D}_{\ell} \}$, the ``middle'' random walk bridge $\{S_{\ell}, S_{\ell+1}, \cdots, S_{\mathfrak{n}-\ell}\}$, and the ``inner" field $\{\tilde{h}_{\ell}(x): x\in \mathsf{D}_{\mathfrak{n}-\ell}\}$ are independent. The Ballot estimate with error terms (see \cite[Lemma 5.6.]{BL3})   yields that
there is a constant $c>0$ such that for  all $- S_{\ell}, - S_{\mathfrak{n}-\ell} \in [\ell^{1/3},\ell^2 ]$, 
\begin{equation*}
	\Big| \P \Bigl( \bigcap_{j=\ell+1}^{\mathfrak{n}-\ell} \{S_j \leq 0\} \mid  S_{\ell}, S_{\mathfrak{n}-\ell}  \Bigr) - \frac{2}{g \log 2} \frac{|S_{\ell}|| S_{\mathfrak{n}-\ell}|}{\mathfrak{n}} \Big| \leq c \frac{\ell^4}{\mathfrak{n}} \frac{S_{\ell} S_{\mathfrak{n}-\ell}}{\mathfrak{n}}  . 
\end{equation*}  
Plugging this in the  expectation in \eqref{eq:after-replace}  and using independence  we find:
\begin{equation}
	\Big| \mathrm{Pr}_{\eqref{eq:g-locmax-w}} (t,u;\omega) -  \frac{2   }{(g \log 2) \mathfrak{n}}  \, \E \bigl[   \mathscr{X}_{ 0,\ell }    \ind{- S_{\ell}, - S_{\mathfrak{n}-\ell} \in [\ell^{1/3},\ell^2 ] } |S_{\ell}| \, | S_{\mathfrak{n}-\ell}|    \tilde{\mathscr{X}}^{\omega}_{ \mathfrak{n}-\ell , \mathfrak{n}+1}   \mid  S_{\mathfrak{n}+1} =0 \bigr] \Big| \le \frac{3\epsilon (1+u_o)}{\mathfrak{n}}\,. \label{eq:to-be-decouple}
\end{equation}

Next we decouple the scales $j \le \ell$ from the scales $j \ge \mathfrak{n}-\ell$.
Conditionally on $S_{\ell} = -v $ with $v \in [\ell^{1/3},\ell^2]$ in the expectation above, we have  
\begin{align*}
	& \E \bigl[   | S_{\mathfrak{n}-\ell}|    \ind{  - S_{\mathfrak{n}-\ell} \in [\ell^{1/3},\ell^2 ] } \, \tilde{\mathscr{X}}^{\omega}_{ \mathfrak{n}-\ell , \mathfrak{n}+1}   \mid S_{\ell} = -v, S_{\mathfrak{n}+1} =0 \bigr]  \\
	& = 	\E \bigl[   | S_{\mathfrak{n}+1} -S_{\mathfrak{n}-\ell}|     \ind{  S_{\mathfrak{n}+1}- S_{\mathfrak{n}-\ell} \in [\ell^{1/3},\ell^2 ] }  \, \tilde{\mathscr{X}}^{\omega}_{ \mathfrak{n}-\ell , \mathfrak{n}+1}   \mid S_{\ell}=-v, S_{\mathfrak{n}+1} =0 \bigr] \\
	&=  	\E \bigl[   | S_{\mathfrak{n}+1}^{v} -S^{v}_{\mathfrak{n}-\ell}|   \ind{   S_{\mathfrak{n}+1}^{v}-S_{\mathfrak{n}-\ell}^{v} \in [\ell^{1/3},\ell^2 ] } \, \tilde{\mathscr{X}}^{v,\omega}_{ \mathfrak{n}-\ell , \mathfrak{n}+1}     \bigr] ,
\end{align*} 
where in the last equality, 
note that the law of $\{\varphi_{j}(o)\}_{j=\ell+1}^{  \mathfrak{n}+1}$   conditionally on  $S_{\ell}=- v, S_{\mathfrak{n}+1}=0$ is the same  as the unconditional distribution of  $\{\varphi^{v}_{j}(o) \}$ with ${\varphi}^{v}_{j}(o) \coloneq  \varphi_{j}(o) -  \frac{ \E[ \varphi_{j}(o)^2 ]}{\Var(S_{\mathfrak{n}+1}-S_{\ell}) } (S_{\mathfrak{n}+1}-S_{\ell})  +   \frac{ \E[ \varphi_{j}(o)^2 ]}{\Var(S_{\mathfrak{n}+1}-S_{\ell}) } v $; and $S^{v}$, $\tilde{h}_{\ell}^{v}$  $\tilde{\mathscr{X}}^{v,\omega}_{ \mathfrak{n}-\ell , \mathfrak{n}+1}$ are the corresponding process defined via $\{{\varphi}^{v}_{j}\}_{j=\ell+1}^{\mathfrak{n}+1}, \{ \chi_{j}\}$, $\{h_{j}\}$. 

Observe that  on $\{  |S_{\mathfrak{n}+1}-S_{\ell}| \le \mathfrak{n}^{2/3} \}$,   $\max_{j \ge \mathfrak{n}-\ell} |\varphi^{v}_{j}(o)- \phi_{j}(o)|$ is  bounded by $ O(1)\frac{1}{\mathfrak{n}^{1/3}}$. Hence  both  $\max_{x \in \mathsf{D}_{\mathfrak{n}-\ell}}|\tilde{h}_{\ell}^{v}(x)-    \tilde{h}_{\ell}(x)|$,  and     $||S^v_{\mathrm{n}+1}-S^v_{\mathrm{n}-\ell}|-|S_{\mathrm{n}+1}-S_{\mathfrak{n}-\ell}||$ are bounded by $ O_{\ell}(1)\frac{1}{\mathfrak{n}^{1/3}}$. Then repeating the argument in  Step 1, (or see also the proof of Proposition 5.2. in \cite{BL3}) we obtain, for any $\epsilon$, $v \in [\ell^{1/3},\ell^2]$, provided $N$ is sufficiently large
\begin{equation*}
	\Big |\E \bigl[   | S_{\mathfrak{n}+1}^{v} -S^{v}_{\mathfrak{n}-\ell}|   \ind{   S_{\mathfrak{n}+1}^{v}-S_{\mathfrak{n}-\ell}^{v} \in [\ell^{1/3},\ell^2 ] } \, \tilde{\mathscr{X}}^{v,\omega}_{ \mathfrak{n}-\ell , \mathfrak{n}+1}     \bigr]  - \E \bigl[   | S_{\mathfrak{n}+1} -S_{\mathfrak{n}-\ell}|   \ind{   S_{\mathfrak{n}+1}-S_{\mathfrak{n}-\ell} \in [\ell^{1/3},\ell^2 ] } \, \tilde{\mathscr{X}}^{\omega}_{ \mathfrak{n}-\ell , \mathfrak{n}+1}     \bigr] \Big| \le \epsilon .
\end{equation*}
The second expectation is exactly $\Xi^{\mathrm{in}}_{\mathfrak{n}-\ell}(\omega)$. Plugging this back into \eqref{eq:to-be-decouple}  and recalling that $ \Xi^{\mathrm{out}}_{N,\ell}(t,u) =\E [   \mathscr{X}_{ 0,\ell }    \ind{- S_{\ell} \in [\ell^{1/3},\ell^2]} |S_{\ell}|   \mid  S_{\mathfrak{n}+1} =0 ]$, the desired result \eqref{eq-neigh-g} follows.

\medskip
 \noindent
\underline{\textit{Step 4.}}
It remains to show  \eqref{eq:Xi-out-stability-additive}.  
Let $V_{\ell}:= \Var(S_{\ell})$.
Note that  the conditioned law of $\{
 (h_x)_{x\in\mathsf{V}_{\eta'N}\setminus\mathsf{D}_\ell},S_\ell \} $ given 
 $S_{\mathfrak {n}+1}=s $, is the same as   the conditioned law of 
$\{ (h_x+s f_{\mathfrak {n}+1}(x))_{x\in\mathsf{V}_{\eta'N}\setminus\mathsf{D}_\ell},
S_\ell+\frac{ s V_{\ell}}{V_{\mathfrak{n}+1}}\}$ given $S_{\mathfrak {n}+1}=0$. Hence  
the left-hand side in \eqref{eq:Xi-out-stability-additive} is bounded above by 
\begin{align}
	& \E\Bigl[
\Big|\,
\bigl|S_\ell- \frac{sV_{\ell}}{V_{\mathfrak{n}+1}} \bigr|
\ind{-S_\ell+ \frac{sV_{\ell}}{V_{\mathfrak{n}+1}}
\in[\ell^{1/3},\ell^2]}
-|S_\ell|\ind{-S_\ell\in[\ell^{1/3},\ell^2]}
\, \Big| \, 
\Big| \,  
S_{\mathfrak {n}+1}=s  \Bigr] \label{eq:outer-weight-shift} \\
&  \quad  + \Big| \E\Bigl[
|S_{\ell}|\ind{-S_{\ell}\in[\ell^{1/3},\ell^2]}
\ind{h_x\le[1-f_{\mathfrak {n}+1}(x)](m_N+t_o)+\psi_o-\psi_x+u-t,
	\ \forall x\in \mathsf{V}_{N}\setminus\mathsf{D}_\ell}
\,\Big|\,S_{\mathfrak {n}+1}=s\Bigr] -\Xi^{\mathrm{out}}_{N,\ell}(t,u) \Big|
\label{eq:outer-barrier-shift} .
\end{align} 

For $1\le i,j\le\ell$,   we have
\[  \mathrm{Cor} [\varphi_i(o)\varphi_j(o)\mid S_{\mathfrak {n}+1}=s ] =\delta_{ij}\Var(\varphi_i(o))
-\frac{\Var(\varphi_i(o))\Var(\varphi_j(o))}
{\Var(S_{\mathfrak {n}+1})}. \]  
For fixed $\ell$, the conditional variance of $S_\ell$ is bounded away from zero and infinity, and hence its conditional density is uniformly bounded. Therefore we obtain \eqref{eq:outer-weight-shift} is dominated above by $\frac{s V_{\ell}}{V_{\mathfrak{n}+1}} =\ell O ( \frac{s}{\mathfrak{n}})$.  

For fixed $\ell$, the inverse of the conditional covariance matrix  of $(\varphi_i(o))_{i=1}^{\ell}$ has uniformly bounded norm. The formula for the relative entropy of Gaussian vectors with the same covariance   gives
\[ D_{\mathrm{KL}}\Bigl(
\mathrm{Law}\bigl((\varphi_j(o))_{j\le\ell}\mid S_{\mathfrak {n}+1}=s\bigr)
\,\Big\|\,
\mathrm{Law}\bigl((\varphi_j(o))_{j\le\ell}\mid S_{\mathfrak {n}+1}=0\bigr)
\Bigr)
\lesssim_\ell\frac{s^2}{\mathfrak n^2}\,.
 \]
 Pinsker's inequality implies the total variation distance  is  of order $O({s}/{\mathfrak n})$. 
 All other variables in the first $\ell$ levels are independent of $S_{\mathfrak {n}+1}$, and the outer field is measurable with respect to these levels. Hence    the total-variation distance between the conditioned law of $ \{
 (h_x)_{x\in\mathsf{V}_{\eta'N}\setminus\mathsf{D}_\ell},S_\ell \}  $  given $S_{\mathfrak{n}+1} =s$ or $S_{\mathfrak{n}+1} =0$ is also of order $O(s/\mathfrak{n})$. This implies  \eqref{eq:outer-barrier-shift} is bounded by $\ell^2 O(s/\mathfrak{n})$, which proves \eqref{eq:Xi-out-stability-additive}.
\end{proof}


\appendix
\section{Random walk estimates}
\label{sec:RWEstimates_Proofs}

\subsection{Random walk on the real line}

In this section we provide some random walk estimates which are needed in Section~\ref{sub:3.1}.  We use the same notation as there. Furthermore, we  set   \[V_{k} \coloneq  \sum_{j=1}^{k} \E[\xi_{j}^2] \quad \text{for} \quad 1 \le k \le n  \]
and 
\[  U(x) \coloneq 1+ x_+ \quad  \text{for} \quad  x \in \mathbb{R}. \]
In the following, all the constants depend on $(\sigma^2_{\min},\sigma^{2}_{\max})$.

\begin{lem}[Standard ballot estimate]
	\label{lem-ballot-rw}
 Fix an $a\in (0,1/2)$, $b>0$.   
Then there exists constant $C_{\ref{lem-ballot-rw}}$ depending  on $a,b$ such that for all  $n \ge 1$ and for all real numbers $x_0, x, y_0 ,y $,  
\begin{equation*} 
\P_{0,n}^{x, y} 
	\Bigl(   Z \preceq_{[1,n-1]} \mathfrak{l}_{0,n}^{x_0,y_0} + \zeta_{0,n}^{a,b} \Bigr) 
\leq C_{\ref{lem-ballot-rw}} \min\Bigl\{  \frac{U(x_0-x)U(y_0-y)}{n} ,1 \Bigr\} \,.
\end{equation*}
Moreover,  for any $K>0$ there exists constant $c_{\ref{lem-ballot-rw}}$ depending only on $a,b $, $K$   such that for all  $x_0 \ge x-
K, y_0 \ge y-K$,  
\begin{equation*}
\P_{0,n}^{x, y} 
	\Bigl(    Z \preceq_{[1,n-1]} \mathfrak{l}_{0,n}^{x_0,y_0} - \zeta_{0,n}^{a,b}   \Bigr) 
\geq c_{\ref{lem-ballot-rw}} \min\Bigl\{\frac{U(x_0-x)U(y_0-y)}{n} ,1 \Bigr\}  \,.
\end{equation*}
\end{lem}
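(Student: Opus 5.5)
The plan is to reduce both bounds to the Gaussian random walk bridge via a Brownian-bridge (or Skorokhod/KMT-free) comparison, using the classical ballot theory for walks with independent Gaussian increments of bounded variance. First I will reparametrize time by the variance clock $V_k$: since $\E[\xi_j^2]\in(\sigma^2_{\min},\sigma^2_{\max})$, the bridge $Z$ under $\P^{x,y}_{0,n}$ has the law of a Brownian bridge $W$ from $x$ to $y$ over $[0,V_n]$ sampled at the times $V_k$, and $V_k\asymp k$, $V_n-V_k\asymp n-k$. Subtracting the linear interpolation $\mathfrak l^{x,y}_{0,n}$ (replaced, up to bounded error absorbed into $U$, by the interpolation in the $V$-clock), the event becomes that a bridge from $0$ to $0$ stays below the curve $\mathfrak l^{x_0-x,y_0-y}_{0,n}\pm\zeta^{a,b}_{0,n}$ at the sampling times.

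For the upper bound, I will first treat the case where $x_0-x,y_0-y\ge 0$ (otherwise enlarge the barrier so that $U$ is unchanged up to constants; if $x_0<x$ the probability is trivially bounded by the case $x_0=x$ up to constants, and the bound $\le C$ is trivial). Split $[0,n]$ into $[0,n/2]$ and $[n/2,n]$, and condition on $Z_{n/2}$. On each half, the barrier $\zeta^{a,b}$ grows like $k^a$ with $a<1/2$; the standard one-sided estimate for an unbridged walk with a concave sub-diffusive perturbation of the barrier gives $\P(Z_k\le x_0-x+bk^a,\ k\le n/2,\ Z_{n/2}\in \dif w)\lesssim U(x_0-x)U(\,\cdot\,)n^{-3/2}$-type local bounds (proved by a dyadic decomposition in $k$: on the event that the walk first exceeds $\pm 2^{ja}$-shifted lines at scale $2^j$, pay a Gaussian factor beating the polynomial growth, summable since $a<1/2$). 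Gluing the two halves by the Chapman–Kolmogorov integral over $w$ against the bridge density yields $U(x_0-x)U(y_0-y)/n$.

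For the lower bound, I restrict to the subevent that the walk stays below $-\zeta^{a,b}$ minus the line, and use the Gaussian FKG/monotonicity: first force the walk in the first $k_0$ and last $k_0$ steps (fixed $k_0$ depending on $K,a,b$) to drop to depth $\asymp \sqrt{k_0}+U$ below the barrier with probability bounded below (using $x_0\ge x-K$), then apply the Brownian-bridge lower ballot bound for staying below a line on the middle stretch, correcting for the $-k^a$ drift by the same dyadic argument: the probability that a bridge started at depth $d$ below a line ever comes within $k^a$ of it after time $k\ge k_0$ is small relative to the main term, uniformly, for $k_0$ large. This gives $\gtrsim\min\{U(x_0-x)U(y_0-y)/n,1\}$.

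The main obstacle is the uniformity of the curved-barrier correction (the $\pm\zeta^{a,b}$ term) for arbitrary endpoint heights, particularly the lower bound when $x_0-x$ is of order $\sqrt n$ or larger, where the min with $1$ kicks in; I would handle it by case splitting on whether $U(x_0-x)U(y_0-y)\ge n$, in which case a direct Brownian bridge estimate (the barrier is macroscopically above) gives a constant lower bound.
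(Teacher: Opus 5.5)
\textbf{The gap: the clock mismatch in your reduction.} The step that fails is your reduction to a $0$-to-$0$ bridge. Under $\P^{x,y}_{0,n}$ the mean of $Z_k$ is $\mu_k=\frac{V_n-V_k}{V_n}x+\frac{V_k}{V_n}y$. After subtracting it, the centred bridge faces the barrier
\[
\mathfrak l^{x_0-x,\,y_0-y}_{0,n}(k)\pm\zeta^{a,b}_{0,n}(k)+e(k),\qquad e(k)\coloneq (y-x)\Bigl(\frac kn-\frac{V_k}{V_n}\Bigr).
\]
Only $\E[\xi_k^2]\in(\sigma^2_{\min},\sigma^2_{\max})$ is assumed, so $|k/n-V_k/V_n|$ can be of order $\min\{k,n-k\}/n$. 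Hence $e$ can be of size $\asymp|y-x|$ in the bulk. It is not a bounded error, and it has nothing to do with $U(x_0-x)$ or $U(y_0-y)$. (For constant variances $e\equiv0$ and your reduction is exact.)

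In fact no argument can absorb $e$ under the hypotheses as written. Take $\E[\xi_k^2]=\sigma_1^2$ for $k\le n/2$ and $\sigma_2^2>\sigma_1^2$ for $k>n/2$, so that $k/n-V_k/V_n=c\min\{k,n-k\}/n$ with $c>0$. Take $x_0=x=0$ and $y_0=y=D$.
\begin{itemize}
\item The barrier lies at least $cD\min\{k,n-k\}/n$ above $\E Z_k$, while $\Var Z_k\le\sigma_{\max}^2\min\{k,n-k\}$. So the crossing probability is at most $2\sum_{m\ge1}\rme^{-c^2D^2m/(2\sigma_{\max}^2n^2)}$.
\item For $D=n^2$ the event therefore has probability $1-o(1)$, whereas the claimed upper bound is $C/n$ (since $U(0)=1$).
\item With the variance profile reversed, the barrier sits at least $cD/2$ below $\E Z_{n/2}$. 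This destroys the lower bound once $D\gg\sqrt{n\log n}$.
\end{itemize}
So the lemma needs the extra hypothesis $|x-y|\le K'\sqrt n$, with constants depending on $K'$. That is the regime in which the paper actually uses it: the endpoints are within $O(\sqrt n)$ of each other, or are integrated against their Gaussian density. Compare the hypothesis $|x-y|\le K\sqrt n$ in Lemmas~\ref{lem-two-barrier-ballot-upper-bound-rw} and~\ref{lem-ent-rw}, and the means $\mu_r,\mu_{r+1}$ in Lemma~\ref{lem-two-barrier-ballot-upper-bound}. The paper's own two-citation proof is also silent on this $k$-clock versus $V$-clock mismatch. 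It takes the Brownian estimate of \cite{CHL19}, where the barrier is linear in the Brownian clock, and transfers it via \cite[Lemma 4.15]{BL3}. The real obstacle is uniformity in $|x-y|$, not in $x_0-x$ as your last paragraph suggests.

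\textbf{What remains even under $|x-y|\le K'\sqrt n$.} You still owe an argument for $e$. Now $|e(k)|\lesssim K'\min\{k,n-k\}/\sqrt n$, which is of diffusive size $\asymp K'\sqrt n$ at the macroscopic scale.
\begin{itemize}
\item It is not of the $\zeta^{a,b}$ type ($a<1/2$) that your dyadic comparison is built for.
\item In the lower bound it is not a small correction either. Conditioned to stay below the line, the bridge is within $K'\sqrt n$ of it in the bulk with probability of order one.
\end{itemize}
One way to close this is to split scales. On the $O(\log K')$ macroscopic dyadic scales, $e$ costs only a $K'$-dependent constant factor, e.g.\ via your gluing at $n/2$ with piecewise-linear barriers. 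On the remaining scales, $\sum_j 2^{-j/2}\max_{\min\{k,n-k\}\le 2^j}|e(k)|$ is small, so a perturbative comparison applies. Alternatively, treat $e$ as a change of drift via Cameron--Martin/Girsanov, as \cite{CHL19} does for Brownian motion.

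A smaller point: in the curved-barrier comparison, the terms at scale $2^j$ are of order $2^{j(a-1/2)}$. They come from ballot survival probabilities, not from a Gaussian factor; exceeding a line shifted by $2^{ja}\ll 2^{j/2}$ at scale $2^j$ is not a large-deviation event.

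\textbf{Comparison with the paper.} With these repairs your route is a legitimate self-contained alternative to the paper's citation-based proof. For the upper bound you use direct discrete chaining and gluing at $n/2$. For the lower bound you force an initial and final descent and then apply the Brownian line-ballot; the embedding $Z_k=W(V_k)$ makes the continuous-time event a sub-event of the discrete one. The paper's route instead performs the curved-barrier comparison once, for Brownian motion, by a change of measure, and needs only a Brownian-to-walk transfer.
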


\begin{proof}[Proof of Lemma \ref{lem-ballot-rw}]  
 For the  Brownian motion case, the proof  can be found in \cite[Proposition 2.1.]{CHL19}, using Girsanov--Cameron--Martin change of measure technics.  
 To transfer the result of  Brownian motion to $(Z_k)$, one can then apply \cite[Lemma 4.15]{BL3}. 
\end{proof}

\begin{lem}\label{lem-two-barrier-ballot-upper-bound} Assume that $(n,r; x,x_0,x_{1};y,y_0,y_1) \in \mathbb{N}_{+}^2 \times \mathbb{R}^{6}$ satisfies  $
 n \ge 4 $, $ 2  \le r \le n-2 $, $ x< x_0 $  and $ y<y_0  $. Define $
 \mu_{k} \coloneq \frac{V_n-V_k}{V_n} x + \frac{V_k}{V_n} y $ for $1 \le k \le n$.
 Then for any $a \in (0,1/2),b>0$ there exist   constants $C_{\ref{lem-two-barrier-ballot-upper-bound}} >0 $ such that uniformly in $(n,r; x,x_0,x_{1};y,y_0,y_1) $, 
 \begin{align}
  & \P _{0,n}^{x,y}\Bigl( Z \preceq_{[0,r]} \mathfrak{l}_{0,r}^{x_0,x_1} + \zeta_{0,r}^{a,b}  ,  Z \preceq_{[r+1,n]} \mathfrak{l}_{r+1,n}^{y_1,y_0} + \zeta_{r+1,n}^{a,b}       \Bigr)  \notag \\
  &  \leq C_{\ref{lem-two-barrier-ballot-upper-bound}}   \frac{U(x_0-x)U(y_0-y)}{n} \Bigl[ 1+\frac{(x_1-\mu_r)_+}{\wedge^{n}(r)^{1/2}} \Bigr] \Bigl[ 1+\frac{(y_1-\mu_{r+1})_+}{\wedge^{n}(r)^{1/2}} \Bigr] \, 
 \exp\Bigl\{    -\frac{(x_{1}-\mu_r)_{-}^2 + (y_{1}-\mu_{r+1})_{-}^2 }{4\wedge^{n}(r)} \Bigr\}\,.  \label{eq-two-barrier-upp-1}
\end{align}  
 In particular, if $|x-y| \le K\sqrt{n}$, then one may replace both $\mu_r$ and $\mu_{r+1}$ by $y$ in \eqref{eq-two-barrier-upp-1}, after adjusting the constants by factors depending only on $K$. 
\end{lem}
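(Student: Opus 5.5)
Since $Z$ under $\P^{x,y}_{0,n}$ is a Gaussian bridge, we first subtract the mean path $\mu_k$. Write $Z_k=\mu_k+W_k$, where $W$ is the centered inhomogeneous bridge from $0$ to $0$. The barrier constraints then become $W\preceq_{[0,r]} \mathfrak{l}^{x_0-x,\,x_1-\mu_r}_{0,r}+\zeta_{0,r}^{a,b}+\text{(linear correction)}$, and similarly on $[r+1,n]$. Here the correction comes from the difference between $\mu_k$ and the linear interpolation of its endpoint values on each piece. Because $V_k$ is comparable to $k$ with constants depending only on $\sigma_{\min},\sigma_{\max}$, this correction is bounded by $C\,\wedge^n(k)$ times a slope. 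It is cleanest to absorb it by working in the time change $k\mapsto V_k$, where $\mu$ is exactly linear. With this reduction it suffices to treat $x=y=0$, with $x_0,y_0>0$ and $x_1,y_1$ replaced by $x_1-\mu_r$ and $y_1-\mu_{r+1}$.

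Next we condition on the two middle values $(W_r,W_{r+1})=(w,w')$. By the Markov property, the bridge splits into three independent pieces: a bridge on $[0,r]$ from $0$ to $w$, the single step $w\to w'$, and a bridge on $[r+1,n]$ from $w'$ to $0$. Constraining the left bridge to stay below $\mathfrak{l}^{x_0,x_1'}_{0,r}+\zeta^{a,b}_{0,r}$ forces $w\le x_1'+O(1)$. By the upper bound in Lemma~\ref{lem-ballot-rw}, the conditional probability of this event is at most $C\,U(x_0)\,U(x_1'-w)/r$, with the minimum with $1$ kept. The right piece similarly gives $C\,U(y_0)\,U(y_1'-w')/(n-r)$. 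The density of $(W_r,W_{r+1})$ under the bridge is Gaussian with variance of order $\wedge^n(r)$ and bounded increment variance. We are left to integrate
\[
\frac{U(x_0)U(y_0)}{r(n-r)}\int U(x_1'-w)\,U(y_1'-w')\,\frac{1}{\sqrt{\wedge^n(r)}}e^{-w^2/(2c\wedge^n(r))}\,p(w'-w)\,\dif w\,\dif w'
\]
over $w\le x_1'+C$, $w'\le y_1'+C$. Note that $r(n-r)\asymp n\wedge^n(r)$.

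The integral is handled in two cases. If $x_1',y_1'\ge0$, the integral is at most of order $\wedge^n(r)\,[1+x_1'/\sqrt{\wedge^n(r)}][1+y_1'/\sqrt{\wedge^n(r)}]$, using $\int U(x_1'-w)\phi\lesssim \sqrt{\wedge}+x_1'$ and the analogous bound for the second factor. If instead $x_1'$ or $y_1'$ is negative, the constraint $w\le x_1'+C$ (respectively $w'\le y_1'+C$, with $w'-w$ having a Gaussian tail) puts $w$ in the Gaussian tail. This produces the factor $\exp\{-(x_1')_-^2/(2c\wedge^n(r))\}$. The polynomial factors $U(x_1'-w)$ are absorbed by slightly weakening the constant; this is where the $1/4$ in the exponent comes from, after fixing the variance normalization $V$. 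Dividing by $n\wedge^n(r)$ gives \eqref{eq-two-barrier-upp-1}.

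The main obstacle is bookkeeping rather than any new idea. Specifically, we must ensure that the $\zeta$ perturbations and the nonlinearity of $\mu$ relative to the piecewise-linear barriers are uniformly absorbed by Lemma~\ref{lem-ballot-rw} in the inhomogeneous time scale. We must also handle the degenerate regimes $r$ or $n-r$ small (down to $2$), where the minimum with $1$ in the ballot bound must be used.

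For the final remark, assume $|x-y|\le K\sqrt n$. Then $|\mu_r-y|\le |x-y|(V_n-V_r)/V_n\lesssim K\sqrt{n}\,(n-r)/n$. When $r\ge n/2$ this is $\lesssim K\sqrt{n-r}\asymp K\sqrt{\wedge^n(r)}$, and the same holds for $\mu_{r+1}$. Replacing $\mu$ by $y$ therefore changes the bracket factors only by constant factors, and changes the Gaussian exponent only by a quantity controlled by $(a-b)^2\ge a^2/2-b^2$. Both changes cost only $K$-dependent constants. When $r<n/2$, the same argument applies after time reversal.
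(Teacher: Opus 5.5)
Your core step matches the paper's proof: condition on $(Z_r,Z_{r+1})$, apply Lemma~\ref{lem-ballot-rw} to the two sub-bridges, and integrate against the Gaussian law of the middle pair. The paper does this last integral by Cauchy--Schwarz and a truncated Gaussian second-moment bound, which avoids the joint density of $(W_r,W_{r+1})$.

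\textbf{The reduction to $x=y=0$ fails.} The time change $k\mapsto V_k$ makes $\mu$ linear but makes $\mathfrak{l}^{x_0,x_1}_{0,r}$ nonlinear, so it only moves the discrepancy. On $[0,r]$ that discrepancy is $\frac{y-x}{V_n}\bigl(V_k-\frac{k}{r}V_r\bigr)$. For non-constant variances it can be as large as $c\,|x-y|\min(k,r-k)/n$, with sign set by the convexity of $k\mapsto V_k$. Since $\zeta^{a,b}$ is only of order $n^{a}$ in the bulk, it cannot absorb this unless $|x-y|=O(n^{a})$. Already for $|x-y|\asymp\sqrt n$ the discrepancy is as large as the bridge fluctuations. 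The point you defer to bookkeeping in your last paragraph is exactly where the argument breaks.

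\textbf{In the stated generality the inequality is false.} Take $r=\lfloor n/2\rfloor$, and on each of the two pieces take variances $\sigma_+^2$ followed by $\sigma_-^2<\sigma_+^2$, both in $(\sigma^2_{\min},\sigma^2_{\max})$. Take $x=0$, $x_0=1$, $y=-n^2$, $y_0=y+1$, $x_1=\mu_r$, $y_1=\mu_{r+1}$. The right side of \eqref{eq-two-barrier-upp-1} is $4C_{\ref{lem-two-barrier-ballot-upper-bound}}/n$. Since $V$ is concave on each piece with this two-level profile, the barrier exceeds $\mu_k$ by at least $cn$ for every $k\notin\{0,r,r+1,n\}$. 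Hence the left side is at least $\P(W_r\le0,\,W_{r+1}\le0)-o(1)\ge\frac14-o(1)$. Lemma~\ref{lem-ballot-rw}, read for arbitrary endpoints, fails in the same way.

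\textbf{What can be salvaged.} The applications (Lemmas~\ref{lem-two-barrier-ballot-upper-bound-rw} and~\ref{lem-two-barrier-ballot-asy}) only need $|x-y|\le K\sqrt n$ and $r\ge n/2$, with $K$-dependent constants. In that regime, follow the paper and do not recenter the path. Apply the ballot bound to the sub-bridges of $Z$ itself, so that $\mu$ enters only through the law of $(Z_r,Z_{r+1})$. Your integral, with $w=Z_r-\mu_r$, is then the right computation. You must still treat separately, via Gaussian tails, the atypically large values of $W_r$ and $W_{r+1}$, where the sub-bridge endpoints are far apart.

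\textbf{The time-reversal remark.} Reversal swaps $x$ and $y$, so for $r<n/2$ it justifies replacing $\mu$ by $x$, not by $y$. Replacing by $y$ genuinely fails there. Take constant variances, $r=2$, $x=x_1=0$, $x_0=y_1=1$, $y=K\sqrt n$, $y_0=y+1$. The probability is of order $1/n$, while the factor $\exp\{-(x_1-y)_-^2/(4\wedge^n(r))\}$ alone is $e^{-K^2n/8}$.
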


\begin{proof} 
By time reversal, we may assume without loss of generality that \(n/2 \le r \le n-2\). Denote the probability in \eqref{eq-two-barrier-upp-1}   by  $\mathrm{Pr}_{\eqref{eq-two-barrier-upp-1}}$. 
Conditioning first on $Z_r$ and then on $Z_{r+1}$, and applying
Lemma~\ref{lem-ballot-rw} to the bridges on $[0,r]$ and $[r+1,n]$,
respectively, gives
\begin{align}
\mathrm{Pr}_{\eqref{eq-two-barrier-upp-1}}
&\lesssim
\frac{  {U(x_0-x)U(y_0-y)}  }{r (n-r-1)}
\E_{0,n}^{x,y}\bigl[  U(x_1-Z_{r})\ind{Z_r \le x_1} U(y_1-Z_{r+1})\ind{Z_{r+1} \le y_1}  \bigr].
\label{eq-two-barrier-upper-new-1}
\end{align}  
We use the elementary Gaussian estimate that, for a standard normal distributed r.v. $G$ and fixed constant $s_0>0$, there exists constant $ C$ depending only on $s_0$ such that 
\begin{equation}
\E\bigl[ U(x- s G)^2 \ind{s G \le x} \bigr]
\le C s^2 \Bigl(  1+\frac{x_+}{s}  \Bigr)^2
\exp \Bigl( -\frac{x_-^2}{2s^2}  \Bigr)  \quad \forall \, s \ge s_0, x \in \mathbb{R}. 
\label{eq-truncated-gaussian-second-moment}
\end{equation}
Indeed this claim follows directly by   splitting into
$x\ge0$ and $x<0$ and using Gaussian tail estimate.

Under $\P_{0,n}^{x,y}$, the variable  $Z_k$, $k \ge n/2$  has mean 
$\mu_k$   and variance 
$
s_k^2=\frac{V_k(V_n-V_k)}{V_n}\asymp n-k$. 
  By Cauchy--Schwarz and
\eqref{eq-truncated-gaussian-second-moment}, we obtain
\begin{align*}
&\E_{0,n}^{x,y}\bigl[  U(x_1-Z_{r})\ind{Z_r \le x_1} U(y_1-Z_{r+1})\ind{Z_{r+1} \le y_1}  \bigr] \\ 
&\lesssim (n-r)
\Bigl(1+\frac{(x_1-\mu_r)_+}{\sqrt {n-r}}\Bigr)
\Bigl(1+\frac{(y_1-\mu_{r+1})_+}{\sqrt {n-r}}\Bigr)  
\ 
\exp\Bigl(-\frac{(x_1-\mu_r)_-^2+(y_1-\mu_{r+1})_-^2}{4(n-r)}\Bigr).
\end{align*}
Since $r(n-r)\asymp n(n-r)$, substituting this into
\eqref{eq-two-barrier-upper-new-1} proves \eqref{eq-two-barrier-upp-1}.

Finally, if $|x-y|\le K\sqrt n$, then for all  $n/2 \le k \le n$ we have 
\[
|\mu_k-y|= \frac{V_n-V_k}{V_n}|x-y|\lesssim_K\frac{n-k}{\sqrt n}\lesssim  \sqrt {n-k}\,.
\]
The last assertion follows by absorbing the  shifts $\frac{\mu_{r}-y}{\sqrt{n-r}}$, $\frac{\mu_{r+1}-y}{\sqrt{n-r}}$ into the constants in
the polynomial and Gaussian factors above. This completes the proof.
\end{proof}

\begin{lem} 
\label{lem-two-barrier-ballot-asy} Given $K>0$, let $\mathscr{H}_{K}$ denote the set of all tuples $(n,r; x,x_0,x_{1};y,y_0,y_{1}) \in \mathbb{N}^2 \times \mathbb{R}^{6}$ satisfying $n \ge 4$, $n/2 \le r \le n-2$, and
\begin{align*}
    0 & < x_0 - x \le K \sqrt{n}, \quad 0 < y_0 - y \le K \sqrt{n-r}, \\
    | x_{1}- x_0 | & \le K \sqrt{n}, \quad | y_{1}- y_0 | \le K \sqrt{n-r}, \quad -K \sqrt{n-r} \le x_{1} - y_{1} \le K \sqrt{n}.
\end{align*}
Then, for each $K>0$, $a\in (0,1/2)$, and $b>0$, and  for any tuple in $\mathscr{H}_{K}$,
\begin{equation}
 \P _{0,n}^{x,y}\Bigl( Z \preceq_{[0,r]} \mathfrak{l}_{0,r}^{x_0,x_1} \pm \zeta_{0,r}^{a,b}  ,  Z \preceq_{[r+1,n]} \mathfrak{l}_{r+1,n}^{y_1,y_0} \pm \zeta_{r+1,n}^{a,b}       \Bigr)  
    \asymp_{K,a,b}  \left[ 1+\frac{(x_1-y_1)_+}{\sqrt{n-r}} \right]  \frac{U(x_0-x)U(y_0-y)}{n}\,.
    \label{eq-two-barrier-ballot-asy}
\end{equation}
The implicit constants in $ \asymp_{K,a,b} $ depend on $K, a,$ and $b$.
\end{lem}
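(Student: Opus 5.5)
The plan is to prove the upper and lower bounds in \eqref{eq-two-barrier-ballot-asy} separately, reducing each to the single-barrier estimate of Lemma~\ref{lem-ballot-rw} by conditioning on the values $(Z_r,Z_{r+1})$ at the junction where the barrier switches lines. Throughout, the hypotheses defining $\mathscr{H}_K$ put $x$, $x_0$, $x_1$ within $O(\sqrt n)$ of each other, and $y$, $y_0$, $y_1$ within $O(\sqrt{n-r})$ of each other. Hence the Gaussian mean $\mu_k$ of $Z_k$ under the bridge satisfies $|\mu_k - y|\lesssim_K \sqrt{n-k}$ for $k\ge n/2$, and its variance is $s_k^2\asymp n-k$.

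\emph{Upper bound.} Since $-\zeta\le\zeta$, it suffices to treat the $+\zeta$ case. This is exactly Lemma~\ref{lem-two-barrier-ballot-upper-bound}. Under $\mathscr{H}_K$, the factors $(x_1-\mu_r)_\pm/\sqrt{n-r}$ and $(y_1-\mu_{r+1})_\pm/\sqrt{n-r}$ are controlled as follows:
\begin{itemize}
\item $(y_1-\mu_{r+1})_+/\sqrt{n-r}=O_K(1)$, because $|y_1-y|\le 2K\sqrt{n-r}$;
\item $(x_1-\mu_r)_+/\sqrt{n-r}\le 1+(x_1-y_1)_+/\sqrt{n-r}+O_K(1)$.
\end{itemize}
Dropping the exponential factor (which is at most $1$) then gives the claimed upper bound.

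\emph{Lower bound.} Here we take the $-\zeta$ barrier; the $+\zeta$ case follows by monotonicity.
\begin{enumerate}
\item Restrict to the window $W=\{Z_r\in [x_1-A\sqrt{n-r},\,x_1-\sqrt{n-r}],\ Z_{r+1}\in[y_1-2\sqrt{n-r},\,y_1-\sqrt{n-r}]\}$, where $A=A_K$ is a large constant.
\item By the Markov property of the bridge, conditionally on $(Z_r,Z_{r+1})$ the left segment on $[0,r]$ and the right segment on $[r+1,n]$ are independent bridges. Apply the lower bound in Lemma~\ref{lem-ballot-rw} to each of them. The left bridge has length $r\asymp n$ and endpoint gaps $x_0-x$ and $x_1-Z_r\in[\sqrt{n-r},A\sqrt{n-r}]$. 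The right bridge has length $n-r-1$ and endpoint gaps $y_1-Z_{r+1}\asymp\sqrt{n-r}$ and $y_0-y$. Together these give a conditional probability of at least
\[
c\,\frac{U(x_0-x)\,(x_1-Z_r)}{n}\cdot\frac{\sqrt{n-r}\,U(y_0-y)}{n-r}
\]
(the minimum with $1$ in Lemma~\ref{lem-ballot-rw} is harmless here, because the gaps are $O(\sqrt{\text{length}})$). One should check that Lemma~\ref{lem-ballot-rw} tolerates the barrier $\zeta_{0,n}^{a,b}$ restricted to subintervals, which differs from $\zeta_{0,r}^{a,b}$ and $\zeta_{r+1,n}^{a,b}$ only by an $O((n-r)^a)$ shift near the junction; this shift is absorbed by the room $\sqrt{n-r}$ built into $W$.
\item Average over $W$. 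The joint density of $(Z_r,Z_{r+1})$ on $W$ is $\gtrsim_K (n-r)^{-1/2}$ times a bounded-below one-step transition density; the one-step density is only needed when $n-r$ is small, and the analysis is uniform. The weight $x_1-Z_r$ contributes $\asymp \sqrt{n-r}\,[1+(x_1-y_1)_+/\sqrt{n-r}]$. The key point is that when $x_1-y_1\gg\sqrt{n-r}$, the window for $Z_r$ should be centred at $\mu_r\approx y$ rather than at $x_1$, so that $x_1-Z_r\approx x_1-y_1$ on a set of probability $\asymp 1$. The window $W$ must therefore be adapted to this case.
\end{enumerate}
Multiplying the pieces gives $\gtrsim [1+(x_1-y_1)_+/\sqrt{n-r}]\,U(x_0-x)U(y_0-y)/n$.

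The main obstacle is this case split in the lower bound. When $x_1-y_1\ge\sqrt{n-r}$, the window for $Z_r$ sits near $\mu_r$ rather than near $x_1$. When $x_1-y_1$ is negative, down to $-K\sqrt{n-r}$, the window must be pushed below $y_1$, and we need the Gaussian cost of doing so to be $O_K(1)$; this holds because all the relevant offsets are $O_K(\sqrt{n-r})$.
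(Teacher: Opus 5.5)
Your upper bound is the paper's argument: Lemma~\ref{lem-two-barrier-ballot-upper-bound} for the $+\zeta$ barrier, with $\mu_r,\mu_{r+1}$ replaced by $y$ at $K$-dependent cost, then monotonicity. Your lower-bound scheme is also the paper's: condition on $(Z_r,Z_{r+1})$, apply the lower bound of Lemma~\ref{lem-ballot-rw} to the two sub-bridges, and integrate over a window. However, the window $W$ you write down fails, and you never specify a replacement, so the lower bound is not proved as it stands.

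Under $\P^{x,y}_{0,n}$, $Z_r$ is Gaussian with mean $\mu_r$ satisfying $|\mu_r-y_1|\lesssim_K\sqrt{n-r}$ and variance $\asymp n-r$. Conditionally on $Z_r$, the step $Z_{r+1}-Z_r$ has $O(1)$ mean and variance. So the joint density of $(Z_r,Z_{r+1})$ is bounded below only on a strip $|Z_{r+1}-Z_r|=O(1)$, and the two intervals defining $W$ must overlap on a length $\asymp\sqrt{n-r}$ near $\mu_r$. Your $W$ pins $Z_r$ below $x_1$ and $Z_{r+1}$ below $y_1$, which fails in both directions:
\begin{itemize}
\item Once $x_1-y_1$ is a large multiple of $\sqrt{n-r}$, $\P(W)$ decays like $\exp(-c(x_1-y_1)^2/(n-r))$, both from $Z_r$ and from the forced one-step jump. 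Even ignoring this, the weight $x_1-Z_r$ on $W$ is only $\asymp\sqrt{n-r}$, so the factor $1+(x_1-y_1)_+/\sqrt{n-r}$ is lost.
\item If $x_1-y_1=-K\sqrt{n-r}$ with $K>1$, the two intervals are disjoint and separated by $(K-1)\sqrt{n-r}$. Then $\P(W)$ is exponentially small in $n-r$.
\end{itemize}
You sense both problems in your last paragraph, but you conclude that a case split is required and do not supply it.

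No case split is needed; the missing step is a single window that works on all of $\mathscr{H}_K$. Take $A>B>K+2$ and $I=[y_1-A\sqrt{n-r},\,y_1-B\sqrt{n-r}]$, and restrict to $\{Z_r\in I,\ |Z_{r+1}-Z_r|\le 1\}$. By the Gaussian facts above, this event has probability $\gtrsim_K 1$.
\begin{itemize}
\item On this event, $y_1-Z_{r+1}\asymp_K\sqrt{n-r}$, using $n-r\ge 2$.
\item Since $x_1-y_1\ge -K\sqrt{n-r}$ and $B>K+2$, you get $x_1-Z_r\ge 2\sqrt{n-r}$, and in fact $1+x_1-Z_r\asymp_K\sqrt{n-r}+(x_1-y_1)_+$. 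The positive part of $x_1-y_1$ is collected automatically because $Z_r$ sits near $y_1$, not near $x_1$.
\item All sub-bridge gaps are $O_K$ of the square root of the corresponding length, so the minima with $1$ in Lemma~\ref{lem-ballot-rw} are inactive. Integrating over the window and using $r\asymp n$ gives the claim.
\end{itemize}

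Separately, your worry about restricting $\zeta^{a,b}_{0,n}$ to subintervals does not arise here. In this lemma the barriers are already $\mathfrak{l}^{x_0,x_1}_{0,r}-\zeta^{a,b}_{0,r}$ and $\mathfrak{l}^{y_1,y_0}_{r+1,n}-\zeta^{a,b}_{r+1,n}$, exactly the form Lemma~\ref{lem-ballot-rw} requires on each sub-bridge. That comparison is needed only later, when Lemma~\ref{lem-two-barrier-ballot-asy-rw} is deduced from this one.
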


\begin{proof}
The
assumptions defining $\mathscr H_K$ imply
\begin{equation*}
|x-y|\lesssim_K\sqrt n,
\qquad
|y_1-y|\lesssim_K\sqrt{n-r},
\qquad
(x_1-y_1)_-\lesssim_K \sqrt{n-r}. 
\end{equation*}
Consequently,  we have 
$
1+\frac{(x_1-y)_+}{\sqrt {n-r}}
\asymp_K 1+\frac{(x_1-y_1)_+}{\sqrt{n-r}}$ and $
1+\frac{(y_1-y)_+}{\sqrt{n-r}} \asymp_K 1 $.
The upper bound in \eqref{eq-two-barrier-ballot-asy}, for the plus sign,
now follows from Lemma~\ref{lem-two-barrier-ballot-upper-bound}. By
monotonicity, this is also an upper bound for the choice of the minus sign.

It remains to prove the lower bound for the minus sign. Choose constants
$A>B>K+2$, depending only on $K$, and set
$
I\coloneq [y_1-A\sqrt{n-r},\,y_1-B\sqrt{n-r}]$. 
Since $\Var_{0,n}^{x,y}(Z_r)\asymp n-r$ and
$|y_1-\mu_r|\lesssim_K\sqrt{n-r}$, the Gaussian density formula gives
\[
\P_{0,n}^{x,y}(Z_r\in I)\gtrsim_K1.
\]
Moreover, conditionally on $Z_r=z \in I$ and $Z_n=y$, the increment
$Z_{r+1}-Z_r$ is Gaussian with mean and variance
\[
\frac{\E[\xi_{r+1}^2]}{V_n-V_r}(y-z)
\quad\text{and}\quad
\E[\xi_{r+1}^2]
\Big(1-\frac{\E[\xi_{r+1}^2]}{V_n-V_r}\Big),
\]
respectively. Uniformly for $z\in I$, the mean is bounded in absolute value
by a constant depending only on $K$, while the variance is bounded above
and below by positive constants. Thus
\begin{equation}
\P_{0,n}^{x,y}( Z_r\in I,\ |Z_{r+1}-Z_r|\le1 )\gtrsim_K1.
\label{eq-sharp-two-barrier-interface-event}
\end{equation}

For $ z\in I$ and $|w -z |\le 1$, our choice of $A$ and $B$, together with
$x_1-y_1\ge-K\sqrt{n-r}$, yields
\begin{equation}
1+x_1-z\asymp_K \sqrt{n-r}+ (x_1-y_1)_+,
\qquad
1+y_1-w\asymp_K\sqrt{n-r}.
\label{eq-sharp-two-barrier-interface-gaps}
\end{equation}
In particular, $z\le x_1$ and $w\le y_1$. Conditional on
$(Z_r,Z_{r+1})=(z,w)$, Lemma~\ref{lem-ballot-rw}, applied to the two
sub-bridges, gives
\begin{align*}
&\P_{0,n}^{x,y}\Bigl(
Z\preceq_{[0,r]}\mathfrak l_{0,r}^{x_0,x_1}-\zeta_{0,r}^{a,b},\ 
Z\preceq_{[r+1,n]}\mathfrak l_{r+1,n}^{y_1,y_0}-\zeta_{r+1,n}^{a,b}
\,\Big|\,Z_r=z,Z_{r+1}=w\Bigr)
\\
&\quad\gtrsim_{K,a,b}
\frac{U(x_0-x)U(x_1-z)}{r}
\frac{U(y_0-y)U(y_1-w)}{n-r-1}.
\end{align*}
Here the minima in Lemma~\ref{lem-ballot-rw} can be removed because both
displayed products are bounded above by constants depending only on $K$;
this follows from the defining assumptions of $\mathscr H_K$ and
\eqref{eq-sharp-two-barrier-interface-gaps}. Integrating over $z \in I,w \in [z-1,z+1]$ and using
\eqref{eq-sharp-two-barrier-interface-event}--\eqref{eq-sharp-two-barrier-interface-gaps},
we obtain
\begin{align*}
&\P_{0,n}^{x,y}\Bigl(
Z\preceq_{[0,r]}\mathfrak l_{0,r}^{x_0,x_1}-\zeta_{0,r}^{a,b},\ 
Z\preceq_{[r+1,n]}\mathfrak l_{r+1,n}^{y_1,y_0}-\zeta_{r+1,n}^{a,b}
\Bigr)
 \gtrsim_{K,a,b}
\frac{ U(x_0-x)U(y_0-y) }{n} \Bigl[ 
 1+\frac{(x_1-y_1)_+}{\sqrt{n-r}}  \Bigr],
\end{align*}
where we used $r\asymp n$. This proves the lower bound
for the minus sign. The lower bound for the plus sign follows by
monotonicity, completing the proof.
\end{proof}

\noindent\textbf{Proof of Lemmas in  Section~\ref{sub:3.1}}
   
\begin{proof}[Proof of Lemmas \ref{lem-two-barrier-ballot-upper-bound-rw} and \ref{lem-two-barrier-ballot-asy-rw}]
 Notice that we can choose $b'$ large, depending only on $a,b$ such that 
	\begin{equation*}
		 \zeta_{0,n}^{a,b}(k) \le    b'\zeta_{0,n}^{a,b}(r)  +\zeta_{0,r}^{a,b'}(k) \ \forall k \in [0,r]\,;
	\end{equation*}
	similarly  $\zeta_{0,n}^{a,b}(k) \le    b'\zeta_{0,n}^{a,b}(r)  +\zeta_{r+1,n}^{a,b'}(k)$ for all $   k \in [r+1,n]$. Note that  $\zeta_{0,n}^{a,b}(r) \lesssim \sqrt{n-r}$,  thus    Lemma \ref{lem-two-barrier-ballot-upper-bound-rw} follows from Lemma \ref{lem-two-barrier-ballot-upper-bound} by replacing $x_1$, $y_1$  with $x_1 +b'\zeta_{0,n}^{a,b}(r)$ and $y_1 +b'\zeta_{0,n}^{a,b}(r)$ respectively. Similarly Lemma \ref{lem-two-barrier-ballot-asy-rw} immediately follows from \ref{lem-two-barrier-ballot-asy}.
\end{proof}

\begin{proof}[Proof of Lemma \ref{lem-ent-rw}] For simplicity write $\zeta =\zeta_{0,n}^{a,b}$ throughout this proof. 

	\smallskip
 \noindent   
\underline{Proof of \eqref{ent:right}.} 
Define   $\tau \coloneq \min\{k \ge r+1: Z_k +  \zeta (k) \ge y_0-M\}$. Observe that 
\[ \{Z \preceq_{[r+1,r']} \mathfrak{l}_{r+1,n}^{y_0,y_{0}} -\zeta-M \}^{c} \subset \{ \tau \le r'\}. \]
  On the event $\{\tau=l\}$, we necessarily have $Z_{l} +   \zeta (l) \ge y_0-M$ and  $Z_{k} +  \zeta (k) < y_0-M$ for $r+1 \le k < l$. 
Define for each $r+1 \le l \le r'$
\begin{equation}
\mathrm{Pr}_{\eqref{eq-entro-prob}} (l)  \coloneq  \P_{0,n}^{x, y} 
\Bigl( Z \preceq_{[0,r]} x_0 +  \zeta  ; Z \preceq_{[r+1,n]} \le y_0+  \zeta  ;  \tau  = l  \Bigr)  \label{eq-entro-prob}.
	\end{equation} 
It suffices to bound $\sum_{l=r+1}^{r'}\mathrm{Pr}_{\eqref{eq-entro-prob}} (l)$. 
 
Consider the case $l>r+1$ first. When $\tau=l$, for every $r+1 \le k < l$, we have $Z_{k} \le Z_{l} + \zeta(l)-\zeta(k) \le Z_{l}$ since   $r\ge n/2$. 
Applying the Markov property  at times $l-1$ and $l$ yields 
\begin{align}
\mathrm{Pr}_{\eqref{eq-entro-prob}} (l)  
& \le \int_{- \zeta(l)-M }^{\zeta(l)} \P_{0, n}^{x,y} (  Z_{l} - y_0 \in \dif z )
 \int_{-\infty}^{- \zeta(l)-M} \P_{0,l}^{x,y_0+z} ( Z_{l-1} - y_0 \in \dif w) 
\notag \\
 & \quad \times \P_{l, n}^{y_0+z,y} \Bigl(  Z \preceq_{[l,n]} y_0+ \zeta  \Bigr) 
	 \, \P_{0,l-1}^{x, y_0+w} 
 \Bigl( Z \preceq_{[0,r]} x_0+\zeta, 
	   Z \preceq_{[r+1,l-1]}   y_0+z  \Bigr)  . \label{eq-ballot-bd-1}
 \end{align}  

We now estimate the two bridge densities appearing in the integrand. Since $\Var_{0,n}^{x,y} (Z_{l}) \asymp {n-l}$, we have $ \P_{0,n}^{x,y} ( Z_{l} - y_0 \in \dif z) \lesssim \frac{1}{\sqrt{n-l}} \dif z$. Moreover, we claim that there exists  constants $C,c>0$ such that  for all large $n$ and any $w < - \zeta(l)-M < z < \zeta(l) $, 
  \begin{equation}\label{eq:overshoot}
	\P_{0,l}^{x,y_0+z} (  Z_{l-1} - y_0 \in \dif w )  \leq C \exp( - c |z-w|^2) \dif w .
 \end{equation}
Note that  $\Var_{0,l}^{x,y_0+z}  (Z_{l-1})\asymp 1$,  and  $\E_{0, l}^{x,y_0+z} [Z_{l-1}]= y_0+z+ \frac{x-(y_0+z)}{l}=y_0+z+O(1)$. 
Hence, we have $|y_0+w-\E_{0,l}^{x,y_0+z} [Z_{l-1}] | \ge c' (z-w)$ for all sufficiently large $z-w$. Estimate \eqref{eq:overshoot} therefore follows directly from the Gaussian density formula. 


Next, we bound the two probabilities in \eqref{eq-ballot-bd-1}. 
Note that there exists a constant  $b '$ depending only on $b $ such that 
\begin{equation}
	 x_0 +   \zeta_{0,n}^{a,b}(k) \le \mathfrak{l}_{0,r}^{x_0,x_0 +b'+\zeta(r) } (k) + \zeta_{0,r}^{a,b'}(k) \,, \, \forall \, k \in [0,r] \cap \mathbb{N} .   \label{eq:linear-inter}
\end{equation}
Thus, we  apply  Lemma~\ref {lem-two-barrier-ballot-upper-bound} and derive  that, 
 for all $ -  \zeta(l)-M \le z \le  \zeta(l)$,   
\begin{align} 
	&  \P_{0,l-1}^{x, y_0+w} 
  \Bigl(   Z \preceq_{[0,r]} x_0+\zeta, 
	   Z \preceq_{[r+1,l-1]}   y_0+z \Bigr)  \notag\\
&\ \le  \P_{0,l-1}^{x, y_0+w} 
 \Bigl( Z \preceq_{[0,r]} \mathfrak{l}_{0,r}^{x_0,x_0 +b'+\zeta(r) }  + \zeta_{0,r}^{a,b'} ,
	   Z \preceq_{[r+1,l-1]}   y_0+z  \Bigr)   \notag\\
&\ \lesssim  \frac{(1+x_0-x)(1+z-w)}{n} \Bigl[ 1+ \frac{(x_0+  \zeta(r)- \E_{0,l-1}^{x, y_0+w}[Z_r ])_+ }{\sqrt{l-r}} \Bigr] \Bigl[ 1+ \frac{(y_0+z- \E_{0,l-1}^{x, y_0+w}[Z_{r+1} ])_+}{\sqrt{l-r}} \Bigr] .  \label{eq:bal-2-r}
\end{align}  
Note that $|\E_{0,l-1}^{x, y_0+w}[Z_r ] -  (y_0+z) | \le  O(\frac{l-r}{l}) |x-y_0-z| +O( z-w )$.  By our assumption, we have 
$|x-y_0| \lesssim \sqrt{n} \lesssim \sqrt{l}$, and $|z| \le \zeta(l)+M \lesssim \sqrt{n} \lesssim \sqrt{l}$. Thus $  O(\frac{\sqrt{l-r}}{l}) |x-y_0-z|= O(1)$.  
Combining with  the fact  $0\le \zeta(r) - \zeta(l)\le \zeta(l-r) \lesssim \sqrt{l-r}$, we obtain that 
\eqref{eq:bal-2-r} is dominated from above by 
 \begin{equation}
	\frac{(1+x_0-x)}{n} \Bigl[ 1+ \frac{x_0-y_0  +\zeta(l) + M  }{\sqrt{l-r}} \Bigr] (1+z-w)^3. \label{eq:bal-2-r-0new}
 \end{equation} 
Similarly, the standard Ballot estimate (Lemma \ref{lem-ballot-rw}) implies 
\begin{equation}
	  \label{eq:bal-2-r-3}
\P_{l, n}^{y_0+z,y} \Bigl(  Z \preceq_{[l,n]} y_0+ \zeta  \Bigr)  \lesssim \frac{[1+ \zeta(l)-z] (1+y_0-y) }{n-l} \lesssim  [\zeta(l)+M]\frac{ (1+y_0-y) }{n-l}. 
\end{equation}

Plugging  inequalities \eqref{eq:bal-2-r-0new} and \eqref{eq:bal-2-r-3} back into  \eqref{eq-ballot-bd-1}, 
we  obtain that for every  $r+1<l \le r'$, 
\begin{equation}
	  \mathrm{Pr}_{\eqref{eq-entro-prob}} (l) \lesssim \frac{U(x_0-x)U(y_0-y)}{n} \, \frac{ \zeta(l)+M }{(n-l)^{3/2}}  \, \Bigl[ 1+ \frac{x_0-y_0  +\zeta(l) + M  }{(l-r)^{1/2}} \Bigr]  \ I_{l} , \label{eq:bnds-Pr-l}
\end{equation}  
where 
\begin{align*}
I_{l}\coloneq \int_{-   \zeta(l)-M }^{  \zeta(l)} \dif z \,   \int_{-\infty}^{ - \zeta(l)-M} \dif w \,     e^{-c|z-w|^2}     ( 1+ z-w  )^3   
=\int_0^{2\zeta(l)+M}
\int_\lambda^\infty
e^{-c\Delta^2}(1+\Delta)^3
\dif \Delta\dif \lambda =O(1). 
\end{align*}
Here in the second equality we made the change of variables $\lambda=z+\zeta(l)+M$ and $\Delta=z-w$. 

Summing \eqref{eq:bnds-Pr-l}  from $l=r+2$ to $r'$ we have 
\begin{align*}
& \sum_{l=r+2}^{r'} \, \frac{ \zeta(l)  }{(n-l)^{3/2}}    \lesssim  \frac{1}{(n-r')^{\frac{1}{2}-a}}  \, ;
\sum_{l=r+2}^{r'} \frac{ (x_0-y_0+ M) \zeta(l) }{(n-l)^{3/2}(l-r)^{1/2}}  \lesssim \frac{x_0-y_0+ M}{(n-r')^{1/2-a}(n-r)^{1/2}} ; \\
& \sum_{l=r+2}^{r'} \frac{ \zeta(l)^2 }{(n-l)^{3/2}(l-r)^{1/2}}  \lesssim
\frac{
1+\ind{a=1/4}
\log\left(\frac{n-r}{n-r'}\right)}
{(n-r)^{\frac12\wedge(1-2a)}
 (n-r')^{(\frac12-2a)_+}}
 \ll \frac{1}{(n-r')^{\frac{1}{2}-a}}; \\
& \sum_{l=r+2}^{r'} \frac{M}{(n-l)^{3/2}} \lesssim \frac{M}{(n-r')^{1/2}} \	\text{ and } \
\sum_{l=r+2}^{r'} \frac{M(x_0-y_0+M)}{(n-l)^{3/2} (l-r)^{1/2} } \lesssim \frac{M(x_0-y_0+M)}{(n-r')^{1/2}(n-r)^{1/2}} .
\end{align*}  

When $r'\geq r+1$, conditioning on $Z_{r+1}$ in the boundary case
$l=r+1$ and repeating the same barrier comparison and ballot estimates
as above give,  
\[ 
\mathrm{Pr}_{\eqref{eq-entro-prob}}(r+1)
\lesssim_K
\frac{U(x_0-x)U(y_0-y)}{n}
\frac{M+(n-r)^a}{(n-r)^{1/2}}
 \Bigl( 1+\frac{x_0-y_0}{(n-r)^{1/2}}   \Bigr) . 
\]
Combining this with the preceding sum proves \eqref{ent:right}.

\smallskip
 \noindent
\underline{Proof of \eqref{ent:left}.}   
Since $x_0 \ge y_0 $ we can assume without loss of generality that $r \ge 4n/5 $.    
Indeed, if $r<4n/5$, replace $r$ and $r''$ by
$
\widetilde r\coloneq  \lceil\frac{4n}{5} \rceil,$ and $
\widetilde r''\coloneq r''\wedge(n-\widetilde r)$. 
Since $x_0\ge y_0$, these replacements can only enlarge the event on
the left-hand side of \eqref{ent:left}. Moreover, since $r\ge n/2$
and $r''\le n-r$, we have $
n-\widetilde r\asymp n-r
$ and $
\widetilde r''\asymp r''$.
Thus the assumptions and the right-hand side of \eqref{ent:left}
change only by constant factors.

Define  $\wh{\tau}= \max\{ r'' \le k \le r : Z_k + \zeta(k) > x_0-M \}$. On the event $\{ Z \preceq_{[r'',r]} x_0 - \zeta - M \}^{c}$,   there exists an index $r'' \le l  \le r$ such that $\wh{\tau}= l$. In this case, it holds 
\[  Z_{l} + \zeta(l) \ge x_0 - M \text{ and } Z_k + \zeta(k) \le Z_{l} + \zeta(l) \text{ for all } l  < k \le r .  \]
As in the previous proof, set 
\begin{equation}
	\mathrm{Pr}_{\eqref{eq-ballot-bd-2}} (l)   \coloneq  \P_{0,n}^{x, y} 
\bigl( Z \preceq_{[0,r]} x_0 +  \zeta  ; Z \preceq_{[r+1,n]} \le y_0+  \zeta  ;  \wh{\tau}  = l  \bigr)\,, \label{eq-ballot-bd-2}
\end{equation}
and it suffices to control $\sum_{l=r''}^{r}\mathrm{Pr}_{\eqref{eq-ballot-bd-2}} (l)$.

Assume $r'' \le l < r$ first. Applying the Markov property at times $l$ and $l+1$ yields
\begin{align*}
\mathrm{Pr}_{\eqref{eq-ballot-bd-2}} (l)    
& \le \int_{- \zeta(l)-M }^{\zeta(l)} \P_{0, n}^{x,y} (  Z_{l} - x_0 \in \dif z ) \int_{-\infty}^{- \zeta(l+1)-M} \P_{l,n}^{x_0+z,y} ( Z_{l+1} - x_0 \in \dif w) 
\notag \\
 & \qquad \times \P_{0,l}^{x,x_0+z} \Bigl(  Z \preceq_{[0,l]} x_0+  \zeta  \Bigr) 
	 \, \P_{l+1,n}^{x_0+w, y} 
 \Bigl(  Z \preceq_{[l+1,r]} x_0+\zeta \wedge (z+\zeta(l)-\zeta), 
	   Z \preceq_{[r+1,n]}   y_0+\zeta  \Bigr)  .
 \end{align*}  
  We estimate the terms in the integrand as follows:
  \begin{enumerate}[(i)]
	\item   $\P_{0, n}^{x,y} (  Z_{l} - x_0 \in \dif z )  \P_{l,n}^{x_0+z,y} ( Z_{l+1} - x_0 \in \dif w)   \lesssim \frac{1}{\sqrt{\wedge^{n}(l)}} e^{-c|z-w|^2} \dif z \dif w$.
  \item The standard Ballot estimate (Lemma \ref{lem-ballot-rw}) implies
     \[ \P_{0,l}^{x,x_0+z} (  Z \preceq_{[0,l]} x_0+ \zeta  ) \lesssim \frac{(1+x_0-x)(1+\zeta(l)-z)}{l} \lesssim [\zeta(l)+M]\frac{U(x_0-x) }{l}\,. \]
	\item  
 Using a similar middle step as in \eqref{eq:linear-inter}, Lemma~\ref {lem-two-barrier-ballot-upper-bound} implies that we can dominate the probability $\P_{l+1,n}^{x_0+w, y} 
 (  Z \preceq_{[l+1,r]} x_0+\zeta \wedge (z+\zeta(l)-\zeta), 
	   Z \preceq_{[r+1,n]}   y_0+\zeta   ) $  from above by  
\begin{equation}
	\label{eq:a.2-bnd-2}
	 \Bigl[ 1+\frac{
	\bigl(x_0+ \mathfrak{b}_l(z) -
	\E_{l+1,n}^{x_0+w,y}[Z_r]\bigr)_+
	}{\sqrt{\wedge_l^n(r)}} \Bigr] 
	\Bigl[ 1+\frac{
	\bigl(y_0+\zeta(r)-
	\E_{l+1,n}^{x_0+w,y}[Z_{r+1}]\bigr)_+
	}{\sqrt{\wedge_l^n(r)}} \Bigr] \frac{U(z-w)U(y_0-y)}{n-l} . 
\end{equation} 
Here, $\mathfrak{b}_l(z) \coloneq \zeta(r) \wedge( z+ \zeta(l)-\zeta(r))$.    
\item We claim that the first factor in \eqref{eq:a.2-bnd-2} is dominated above by  
$(1+|z-w|)
 (
1+\frac{x_0-y_0}{\sqrt{n-r}}
 )$. 
 
Suppose first that $r-l\le n-r$. Since
$|\zeta(l)-\zeta(r)| 
\lesssim\sqrt{r-l}$, we have 
$\mathfrak{b}_l(z)  \le z+C\sqrt{r-l}$.
Moreover,
$x_0-\E_{l+1,n}^{x_0+w,y}[Z_r]
=-w+\frac{V_r-V_{l+1}}{V_n-V_{l+1}}(x_0+w-y)$. Since $w\le0$,   we obtain
$(x_0+\mathfrak{b}_l(z) -
\E_{l+1,n}^{x_0+w,y}[Z_r])_+
\lesssim
|z-w|+\sqrt{r-l}
+\frac{r-l}{n-l}(x_0-y_0+y_0-y)  $. Combining with 
$\wedge_l^n(r)=r-l$,
$\frac{r-l}{n-l}\frac1{\sqrt{r-l}}
\lesssim\frac1{\sqrt{n-r}}$, and
$y_0-y\le K\sqrt{n-r}$, the desired estimate follows.

Suppose next that $r-l>n-r$. In this case
$\mathfrak{b}_l(z) \le\zeta(r)$ and
$\wedge_l^n(r)=n-r$. Writing
$\E_{l+1,n}^{x_0+w,y}[Z_r]
=y+\frac{V_n-V_r}{V_n-V_{l+1}}(x_0+w-y)$,
we get
$(x_0+\mathfrak{b}_l(z) -
\E_{l+1,n}^{x_0+w,y}[Z_r])_+
\lesssim
(x_0-y_0+y_0-y)+\zeta(r)
+\frac{n-r}{n-l}(-w)$.
Since
$-w\le |z-w|+\zeta(l)+M$,
$\zeta(r)\lesssim\sqrt{n-r}$,
$M\le K\sqrt{n-r}$, and
$\frac{\sqrt{n-r}}{n-l}\zeta(l)
\lesssim1$,
the required estimate follows again.

\item  We show that the second factor  in \eqref{eq:a.2-bnd-2} is dominated by $(1+|z-w|)(1+
\frac{\zeta(r)}{\sqrt{\wedge_l^n(r)}}
+
\frac{n-r}{n-l}
\frac{\zeta(l)+M}{\sqrt{\wedge_l^n(r)}})$.  
Indeed,  
$y_0 -\E_{l+1,n}^{x_0+w,y}[Z_{r+1}]
= \frac{V_{r+1}-V_{l+1}}{V_n-V_{l+1}}(y_0-y)
-\frac{V_n-V_{r+1}}{V_n-V_{l+1}}(x_0-y_0+w)
 $.
Since $x_0-y_0\ge0$,   and 
$-w \le |z-w|+\zeta(l)+M$.
It follows that
$ (y_0+\zeta(r)-\E_{l+1,n}^{x_0+w,y}[Z_{r+1}] )_+
\lesssim \zeta(r)
+\frac{n-r}{n-l} (|z-w|+\zeta(l)+M )
+\frac{r-l}{n-l}(y_0-y)$. Combining with 
$\frac{n-r}{n-l}\frac1{\sqrt{\wedge_l^n(r)}}\lesssim1$ and
$\frac{r-l}{n-l}\frac1{\sqrt{\wedge_l^n(r)}}
\lesssim\frac1{\sqrt{n-r}}$,  $y_0-y\le K\sqrt{n-r}$,   the 
claim follows.  
 \end{enumerate}

Substituting the preceding estimates and integrating first in $w$ and
then in $z$, as in the calculation of $I_l$ in \eqref{eq:bnds-Pr-l}, shows that  
\begin{align*}
\mathrm{Pr}_{\eqref{eq-ballot-bd-2}} (l) \lesssim &\frac{U(x_0-x)U(y_0-y)}
{l(n-l)\sqrt{\wedge^n(l)}}
(\zeta(l)+M)
\left(1+\frac{x_0-y_0}{\sqrt{n-r}}\right)
\left[
1+\frac{\zeta(r)}{\sqrt{\wedge_l^n(r)}}
+\frac{n-r}{n-l}
 \frac{\zeta(l)+M}{\sqrt{\wedge_l^n(r)}}
\right].
\end{align*}
It remains to use
\begin{align*}
 \sum_{l=r''}^{r-1}
\frac{\zeta(l)+M}{l(n-l)\sqrt{\wedge^n(l)}}
\left[
1+\frac{\zeta(r)}{\sqrt{\wedge_l^n(r)}}
+\frac{n-r}{n-l}
 \frac{\zeta(l)+M}{\sqrt{\wedge_l^n(r)}}
\right] \lesssim 
\frac1n\frac{M+(r'')^a}{\sqrt{r''}}.
\end{align*}
\begin{enumerate}[(1)]
	\item  For the term without the two fractions, we have 
\begin{align*}
 \sum_{l=r''}^{r-1}
\frac{\zeta(l)+M}{l(n-l)\sqrt{\wedge^n(l)}} \lesssim
\frac1n\sum_{l=r''}^{n/2}\frac{l^a+M}{l^{3/2}}
+\frac1n\sum_{l=n/2}^{r-1}
\frac{(n-l)^a+M}{(n-l)^{3/2}}  
\lesssim
\frac1n\frac{M+(r'')^a}{\sqrt{r''}}.
\end{align*}
	\item 
For the term containing $\zeta(r)$, if $r-l\geq n-r$, then
$\frac{\zeta(r)}{\sqrt{\wedge_l^n(r)}} \lesssim 1.$
If $1\leq r-l<n-r$, then $l\asymp n$, $n-l\asymp n-r$, and
$\zeta(l)\asymp(n-r)^a$, so
\begin{align*}
 \sum_{r-(n-r)<l<r}
\frac{\zeta(l)+M}{l(n-l)\sqrt{\wedge^n(l)}}
\frac{\zeta(r)}{\sqrt{\wedge_l^n(r)}} 
 \lesssim
\frac{M+(n-r)^a}{n\sqrt{n-r}}
(n-r)^{a-1/2}
\lesssim
\frac1n\frac{M+(r'')^a}{\sqrt{r''}}.
\end{align*}
\item 
The remaining term satisfies the desired inequality. 
Indeed, for $r-l\leq n-r$, we use
$\zeta(l)+M\lesssim\sqrt{n-r}$ to obtain
\begin{align*}
 \sum_{r-(n-r)\leq l<r}
\frac{(n-r)(\zeta(l)+M)^2}
{l(n-l)^2\sqrt{\wedge^n(l)}\sqrt{\wedge_l^n(r)}} 
 \lesssim
\frac{(M+(n-r)^a)^2}{n(n-r)^{3/2}}
\sum_{l=1}^{n-r}l^{-1/2} 
\lesssim
\frac{M+(n-r)^a}{n\sqrt{n-r}} .
\end{align*}
For $r-l>n-r$, the interpolation factor yields
\begin{align*}
\sum_{\substack{r''\leq l<r\\r-l>n-r}}
\frac{(n-r)(\zeta(l)+M)^2}
{l(n-l)^2\sqrt{\wedge^n(l)}\sqrt{\wedge_l^n(r)}} 
& \lesssim
\frac{\sqrt{n-r}}{n^2}
\sum_{l=r''}^{n/2}\frac{(l^a+M)^2}{l^{3/2}}
+\frac{\sqrt{n-r}}{n}
\sum_{l=n/2}^{r-(n-r)}
\frac{((n-l)^a+M)^2}{(n-l)^{5/2}}\\
&\quad\lesssim 
\frac1n\frac{M+(r'')^a}{\sqrt{r''}},
\end{align*}
where the last inequality follows by integral comparison, using
$r''\leq n-r$, $M\leq K\sqrt{n-r}$, and $a<1/2$.  This proves the
summation estimate.
\end{enumerate}

For $l=r$, conditioning directly on $Z_r$ and applying
Lemma~\ref{lem-ballot-rw} to the remaining bridges gives
\[
\mathrm{Pr}_{\eqref{eq-entro-prob}}(r)
\lesssim 
\frac{U(x_0-x)U(y_0-y)}{n}
\frac{M+(n-r)^a}{\sqrt{n-r}}
\left(1+\frac{x_0-y_0}{\sqrt{n-r}}\right).
\]
Since $r''\leq n-r$ and $a<1/2$, this is bounded by the right-hand side
of \eqref{ent:left}.  Combining it with the preceding sum proves
\eqref{ent:left}.   
\end{proof}

\subsection{Simple random walk on the 2D torus}
\label{apx:2dsrw}

In this section we prove the random walk statements from subsection~\ref{s:2.4.1}.
Let us denote $P^{\mathsf{T}_N}_{k}(x,y)$ the transition matrix of the simple random walk on the 2D torus $\mathsf{T}_N$.
Let $\mathbf{0}=(0,0)$. 
To get rid of the periodicity of the random walk, let us denote by $q_k^{N}(x,y)\coloneq [P^{\mathsf{T}_N}_{k}(x,y) + P^{\mathsf{T}_N}_{k+1}(x,y)]/2$.
The following heat kernel estimate can be found in \cite[Lemma 4.3]{arous2008} (see also \cite[Lemma 3.1]{Cox89}).

\begin{lem}
	\label{eq:heat-kernal-torus}
	Fix an arbitrary sequence $\omega_{N} \to \infty$. Then the following assertions hold.
	\begin{enumerate}[(i)]
		\item There exists constants $C,c>0$ so that $ q^{N}_{k}(\mathbf{0},x) \le  C [\frac{1}{k} e^{-c |x|^2/k} +  \frac{1}{N^2}e^{-c N^2/k} ]$. 
	\item Uniformly in $k \le   {N^2}/{\omega_{N}}$, $x \in  \mathsf{T}_N$ and $\omega_{N}(1+|x|^{2}) \le k$, we have
\[ \pi k   \, q^N_{k}(\mathbf{0},x) = 1  + o_{N}(1). \]
	\item Uniformly in $k \ge \omega_{N} N^2$ and $x \in  \mathsf{T}_{N}$, we have
\[ {|\mathsf{T}_N|} \, q^{N}_{k}(  \mathbf{0} ,x) = 1 + o_{N}(1)\,. \]
	\end{enumerate} 
\end{lem}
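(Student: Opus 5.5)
The plan is to reduce all three assertions to Gaussian local CLT estimates for the lazy-ized walk on $\bbZ^2$, together with a method-of-images summation over the lattice $(2N-1)\bbZ^2$. The torus $\mathsf{T}_N$ is the quotient of $\bbZ^2$ by the translations $(2N-1)\bbZ^2$; we take this as given, since the wrap-around edges identify opposite inner-boundary vertices. Hence
\[
P^{\mathsf{T}_N}_k(\mathbf{0},x)=\sum_{z\in\bbZ^2}P_k(\mathbf{0},x+(2N-1)z),
\]
where $P_k$ is the simple random walk kernel on $\bbZ^2$, and the same identity holds for $q^N_k$ with the averaged kernel $\bar p_k\coloneq(P_k+P_{k+1})/2$. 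Averaging over two consecutive steps removes the parity obstruction. This is what lets us use the aperiodic local CLT $\bar p_k(\mathbf{0},y)=\frac{1}{\pi k}e^{-|y|^2/k}+O(k^{-2})$ (e.g.\ \cite[Theorem 2.3.11]{LawL2010}), together with the Gaussian upper bound $\bar p_k(\mathbf{0},y)\le Ck^{-1}e^{-c|y|^2/k}$.

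For (i), insert the Gaussian upper bound into the image sum, with $x$ taken as the representative in $\mathsf{V}_N$, so that $|x+(2N-1)z|\gtrsim N|z|$ for $z\neq0$. The $z=0$ term gives $Ck^{-1}e^{-c|x|^2/k}$. For the remaining terms we bound
\[
\sum_{z\ne0}k^{-1}e^{-cN^2|z|^2/k}.
\]
When $k\le N^2$ this is $\lesssim k^{-1}e^{-cN^2/k}\lesssim N^{-2}e^{-c'N^2/k}$. When $k\ge N^2$ we compare the sum with an integral, which gives $\lesssim k^{-1}\cdot k/N^2=N^{-2}$, and this is again consistent with the second term since $e^{-cN^2/k}\asymp 1$ there.

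For (ii), the $z=0$ term equals $\frac{1}{\pi k}(e^{-|x|^2/k}+O(1/k))=\frac{1}{\pi k}(1+o(1))$, because $|x|^2\le k/\omega_N$ and $k\ge\omega_N$. The image terms contribute $O(k^{-1}e^{-cN^2/k})=k^{-1}e^{-c\omega_N}=o(k^{-1})$ because $k\le N^2/\omega_N$. Multiplying by $\pi k$ gives (ii).

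For (iii), the local CLT is no longer the right tool, since the image sum is a Riemann sum and the error terms must be handled uniformly. Instead we use spectral decomposition on the torus. The eigenvalues of the walk are $\lambda_\theta=\frac12(\cos\theta_1+\cos\theta_2)$ with $\theta\in\frac{2\pi}{2N-1}\bbZ^2$. Because $q$ averages two consecutive steps, the multiplier is $\lambda^k(1+\lambda)/2$, and this kills the eigenvalue $-1$ (it is present only in the bipartite case; the odd side length $2N-1$ in fact already makes the torus non-bipartite). We then have
\[
\bigl||\mathsf{T}_N|q^N_k(\mathbf{0},x)-1\bigr|\le\sum_{\theta\ne0}|\lambda_\theta|^k\tfrac{|1+\lambda_\theta|}{2}.
\]
For small $\theta$, $|\lambda_\theta|\le e^{-c|\theta|^2}$, so these terms sum to $\sum_{j\ne0}e^{-ck|j|^2/N^2}=O(e^{-c\omega_N})$. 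For $\theta$ near the antipodal point, the factor $|1+\lambda_\theta|\lesssim|\theta-(\pi,\pi)|^2$ and $|\lambda_\theta|^k$ decays similarly. In the bulk, $|\lambda_\theta|\le 1-c$.

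The main obstacle is (iii): we must check carefully that the near-$-1$ eigenvalues do not spoil the uniform $o(1)$. The averaging defining $q^N_k$ is designed exactly for this, so we expect a routine but careful bookkeeping of the spectral sum.
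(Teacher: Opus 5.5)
Your proof is correct, but it does not follow the paper, because the paper gives no proof of this lemma: it imports it from \cite[Lemma 4.3]{arous2008} (see also \cite[Lemma 3.1]{Cox89}). What you supply is a self-contained derivation, and its steps check out.
\begin{itemize}
\item For (i), with $x$ taken as its representative in $\mathsf{V}_N$, one has $\|x+(2N-1)z\|_\infty\ge N\|z\|_\infty$ for $z\neq0$. The image sum is then $\lesssim k^{-1}e^{-cN^2/k}\lesssim N^{-2}e^{-c'N^2/k}$ when $k\le N^2$, and $\lesssim N^{-2}$ when $k\ge N^2$.
\item For (ii), the image terms become $O(e^{-c\omega_N})$ after multiplying by $\pi k$, and the parity-averaged local CLT handles the $z=0$ term.
\item For (iii), the frequencies near $(\pi,\pi)$ form $(\pi,\pi)+\frac{\pi}{2N-1}(2\mathbb{Z}+1)^2$. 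So both the near-$0$ and the near-$(\pi,\pi)$ sums are $O(e^{-c\omega_N})$. The bulk contributes at most $|\mathsf{T}_N|(1-c)^k\to0$; you should state this last bound explicitly rather than leave it implicit.
\end{itemize}
These are the same tools the paper uses later in the proof of Lemma~\ref{lem:est_laplace}(ii): image comparison with $P^{\mathbb{Z}^2}_k$, and the spectral representation with eigenvalues $\tfrac12(\cos\frac{2\pi j_1}{\ell_N}+\cos\frac{2\pi j_2}{\ell_N})$. Your version therefore makes the appendix independent of the external references and their conventions, such as side length and the averaged kernel $q^N_k$, at the cost of a page of routine estimates. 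It also shows where the averaging in $q^N_k$ actually matters. Since the side length $2N-1$ is odd, the torus is not bipartite, so (i) and (iii) already hold for $P^{\mathsf{T}_N}_k$ itself. Only (ii) needs the averaging, because it lives at scales $k\ll N^2$ where the walk still sees the bipartite structure of $\mathbb{Z}^2$.
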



We can now give:
\begin{proof}[Proof of part (i) in Lemma~\ref{lem:est_laplace}] 
Denote by $\varsigma_N \coloneq   \lambda_N / (|\mathsf{T}_N| \log N) $. By our assumption  then, the translation invariance  
reduces the problem to estimating $\bbE_{\mathbf{0}}[ e^{ -\varsigma_N H( \mathsf{B}_{\rho}(x) ) }]$ with $|x| \geq N/r_N^2$.

\smallskip
\noindent\underline{\textit{Step 1.}}
Let us start with the case $\rho=0$. The strong Markov property yields 
\begin{equation}
	\label{eq:hit-Green}
\bbE_{z}[\exp (-\varsigma_N  H (\mathbf{0}))  ] = G(z \mid \varsigma_N) /G(\mathbf{0} \mid \varsigma_N) ,
\end{equation}
where $G( \cdot \mid \varsigma_N)$ is the Green function of the random walk with geometric killing rate {$1-\rme^{-\varsigma_N}$} given by 
$
G(z \mid \varsigma_N) 
\coloneq  \sum_{k=0}^{\infty} \rme^{-\varsigma_N k} \, P^{\mathsf{T}_N}_{k}(\mathbf{0},z)$. 
We claim that  
\begin{align}
 G(x \mid \varsigma_N) & = [1+o_N(1)] \frac{ \log N}{\lambda_N}  \text{ uniformly in }  |x| \ge N/r^2_N, \ \text{ and } \label{eq:Green-kill-1}   \\
G(\mathbf{0} \mid \varsigma_N) &= \frac{2 \log N }{\pi } \bigl(1+ o_N(1)\bigr) . \label{eq:Green-kill-2}
\end{align}
Plugging these two estimates \eqref{eq:Green-kill-1} and \eqref{eq:Green-kill-2} back into \eqref{eq:hit-Green}, we obtain   
\begin{equation} \label{equa:lem:est_laplace-rho=0}
\lim_{N\to \infty}   \ \sup_{x :d(\mathbf{0},x) \ge N/r^{2}_N}   \Big|\lambda_N \bbE_{\mathbf{0}} \left[ \rme^{- \varsigma_N H(x) }  \right] - \frac{\pi}{2} \Big| = 0 . 
\end{equation}

\noindent\underline{\textit{Step 2.}} We now verify that two asymptotic equality \eqref{eq:Green-kill-1} and \eqref{eq:Green-kill-2}.
Let 
\[
Q(z\mid \varsigma_N)\coloneq \sum_{k=0}^{\infty}\rme^{-\varsigma_N k}q_k^N(\mathbf{0},z)
	= \frac{1+\rme^{\varsigma_N}}{2}G(z\mid \varsigma_N)
	-\frac{\rme^{\varsigma_N}}{2}\ind{z=0}.
\]
Since $\varsigma_N=o_N(1)$,  the estimates   for $Q(\cdot\mid\varsigma_N)$ imply the same estimates for $G(\cdot\mid\varsigma_N)$, with only an $O(1)$ error. Thus it is enough to prove \eqref{eq:Green-kill-1} and \eqref{eq:Green-kill-2} for $Q(\cdot \mid \varsigma_N)$.

Take a sequence $\omega_N \to \infty$ with  $\omega_N   =o(\log N/\lambda_N)$ so that $ \varsigma_N \omega_N N^2 =o_N(1)$. Using the heat kernel estimate Lemma \ref{eq:heat-kernal-torus}, we have,  uniformly in $y \in \mathsf{T}_N$
\begin{align*}
\sum_{k=  \omega_N N^2}^{\infty} \rme^{-\varsigma_N k} q_k(\mathbf{0},y) & =  [1+o_{N}(1)]  \frac{1}{|\mathsf{T}_N|} \sum_{k= \omega_N N^2}^{\infty}   \rme^{-\varsigma_N k} =   [1+o_N(1)] \frac{\log N}{\lambda_N} \, ; \ \text{ and } \\ 
\sum_{k= N^2}^{  \omega_N N^2  } \rme^{-\varsigma_N k} q_k(\mathbf{0},y) & \le   C \sum_{k=   N^2}^{  \omega_N N^2  } \bigl(  \frac{1}{k}  +  \frac{1}{N^2} \bigr)    \lesssim \log \omega_N + \omega_N   = o( \frac{\log N}{\lambda_N} ) . 
\end{align*} 
For $ |x| \geq N/r_N^2$, Lemma \ref{eq:heat-kernal-torus} yields
\begin{equation*}
\sum_{k= 0}^{  N^2} \rme^{-\varsigma_N k} q_k(\mathbf{0},x)  \leq  C      \sum_{k= 1}^{N^2 }   \frac{1}{k} e^{-c|x|^2/k} + C\le  2 C \log r_N + C' = o(\frac{\log N}{\lambda_N}) .
\end{equation*}
Furthermore
for $x= \mathbf{0}$, using Lemma \ref{eq:heat-kernal-torus} we get 
\begin{equation*}
\sum_{k= 0}^{  N^2} \rme^{-\varsigma_N k} q_k(\mathbf{0},\mathbf{0})  = [1+ o_N(1)] \sum_{k=1}^{N^2 }   \frac{1}{\pi k}  = [1+o_N(1)] \frac{2}{\pi} \log N .
\end{equation*}
All together,  the claims \eqref{eq:Green-kill-1} and \eqref{eq:Green-kill-2} are established.

\smallskip
\noindent\underline{\textit{Step 3.}} It remains to show that \eqref{equa:lem:est_laplace-rho=0} implies the required result \eqref{equa:lem:est_laplace}.    Assume $\rho\ge1$.  Clearly   $H(x) \geq H(\mathsf{B}_{\rho}(x))$. The strong Markov property yields
\begin{align*}
		\bbE_{\mathbf{0}} \bigl[\rme^{ -\varsigma_N H( \mathsf{B}_{\rho}(x) ) } \bigr] & \ge \bbE_{\mathbf{0}} \bigl[\rme^{ -\varsigma_N H(x) } \bigr] 
		\geq \bbE_{\mathbf{0}} \bigl[\rme^{ -\varsigma_N H( \mathsf{B}_{\rho}(x) ) } \bigr] \inf_{ y \in \partial \mathsf{B}_{\rho}(x) } \bbE_{y}\bigl[\rme^{ -\varsigma_N H(x) } \bigr] \\
		& \ge \bbE_{\mathbf{0}} \bigl[\rme^{ -\varsigma_N H( \mathsf{B}_{\rho}(x) ) } \bigr]  \,  \rme^{-\varsigma_N N}  \bigl[1- \,\sup_{ y \in \partial \mathsf{B}_{\rho}(x) }\bbP_y \bigl( H(x) > N \bigr) \bigr].
\end{align*}
This  reduces the problem to show  $ \,\sup_{ |y -x| \le r_N^2   }\bbP_y \bigl( H(x) > N \bigr) =o_N(1)$.
Notice that $Y_i$ does not feel the boundary conditions if $i\leq N$ and hence $\bbP_{y} \bigl( H(x) \leq N \bigr)$ is equal to the probability that the simple random walk on $\bbZ^2$ hits $\{x\}$ until time $N$. One can easily obtain from Proposition 6.4.1 in \cite{LawL2010} that  
\[
\bbP_{y} \bigl( H(x) \geq N \bigr) \lesssim \bbP_{y} \bigl( H(x) \geq H(\partial \mathsf{B}_{ N^{1/3}}(x) ) \bigr) \lesssim \frac{\log|y-x|}{\log N} \quad  \text{ for } y \in \partial \mathsf{B}_{\rho}(x).
\]
Since $\log |y-x|   \le  2\log r_N = o(\log N)$, it gives the desired lower-bound. This finishes the proof. 
\end{proof}

\begin{proof}[Proof of part (ii) in Lemma~\ref{lem:est_laplace}]  Set $\ell_N\coloneq 2N-1$, so that $|\mathsf{T}_N|=\ell_N^2$.  Let $\mu$ denote the uniform distribution on $\mathsf{T}_N$, i.e., $\mu(y) \equiv \frac{1}{|\mathsf{T}_N|}$. 
Define  
	\begin{equation*}
		Z_N(x) \coloneq  \sum_{k=0}^{\infty}[ P^{\mathsf{T}_N}_{k}(\mathbf{0},x) -  \mu(x) ]  \text{ for } x \in \mathsf{T}_N. 
 \end{equation*}
\noindent\underline{\textit{Step 1.}} We claim that  it suffices to prove  
\begin{equation}\label{hit-fund-mati-2}
	Z_N(x) =  \frac{2}{\pi} \log \frac{\ell_N}{1+|x|}  + O(1) .
\end{equation}
Indeed, it  is well-known (see e.g. \cite[Lemma 2.12]{AF14}) that 
 \begin{equation}\label{hit-fund-matix}
 \frac{1}{|\mathsf{T}_N|} \E_{\mathbf{0}} [ H(x)] = \mu(\mathbf{0})  \E_{\mathbf{0}} [ H(x)] =  Z_N(\mathbf{0})- Z_N(x) .
 \end{equation} 
Substituting \eqref{hit-fund-mati-2} into  \eqref{hit-fund-matix}, since the random walk is transitive, we get 
\begin{equation}\label{eq:rho=0}
  \sup_{x,y \in \mathsf{T}_N, x \neq y}   \big | \E_{x}[ H(y) ] - \frac{2}{\pi} |\mathsf{T}_N| \log[ 1+d(x,y)]  \big|   = O(N^2) , 
\end{equation}  
which proves \eqref{eq:mean-hitting}  for the special case $\rho=0$. If  $1 \le \rho \le d(x,y)$,   the strong Markov property implies  
\begin{equation*}
	| \E_{\mathbf{0}} [ H(x)] - 	\E_{\mathbf{0}} [ H(\mathsf{B}_{\rho}(x))] | \le \max_{y \in \mathsf{B}_{\rho}(x)}	\E_{y} [ H(x)] = \frac{2}{\pi}  |\mathsf{T}_N|   \log (1+\rho) + O(N^2)\,.
\end{equation*}
Here the last equality follows from \eqref{eq:rho=0}.
 This gives the desired  assertion (ii) in Lemma~\ref{lem:est_laplace}. 
\smallskip
 
\noindent\underline{\textit{Step 2.}} The proof of \eqref{hit-fund-mati-2} is divided into two steps. We first consider the contribution from $0 \le k \le \ell_N^2=|\mathsf{T}_N|$ in the series of \eqref{hit-fund-mati-2}.
Let us denote by $P_k^{\mathbb{Z}^2}(\mathbf{0}, x)$ the probability that simple random walk on $\mathbb{Z}^2$ is at $x$ at step $k$. Then 
\begin{align*}
  \sum_{k=0}^{ \ell_N^2}  | P^{\mathsf{T}_N}_{k}(\mathbf{0},x)  -  P_k^{\mathbb{Z}^2}(\mathbf{0}, x) | &=   \sum_{k=1}^{\ell_N^2} \sum_{y \in \mathbb{Z}^2 \setminus \{\mathbf{0}\}} P_k^{\mathbb{Z}^2}(\mathbf{0}, x+\ell_N y ) \lesssim \sum_{k=1}^{\ell_N^2} \sum_{y \in \mathbb{Z}^2 \setminus \{\mathbf{0}\}}  \frac{1}{k} e^{- c \ell_N^2/k} = O(1)\,.
\end{align*}
Moreover by local limit theorem \cite[Theorem 2.1.3]{LawL2010}, uniformly in $x \in \mathsf{T}_N$ we have  
\begin{align*}
  \sum_{k=0}^{\ell_N^2}    P_k^{\mathbb{Z}^2}(\mathbf{0}, x) &=  \sum_{k=0}^{\ell_N^2}    \frac{1}{2}[P_k^{\mathbb{Z}^2}(x)+ P_{k+1}^{\mathbb{Z}^2}(x)] +O(1)=  \sum_{k=1}^{\ell_N^2}     \frac{1}{\pi k} e^{-|x|^2/k} + O(1) \\
  &=  \frac{1}{\pi} \int_{1}^{\ell_N^2} \frac{1}{\lambda} e^{-|x|^2/\lambda} \dif \lambda  +O(1) =  \frac{1}{\pi} \int_{|x|^2/\ell_N^2}^{|x|^2} \frac{1}{\lambda} e^{-\lambda} \dif \lambda  +O(1) =  \frac{2}{\pi} \log \frac{\ell_N}{1+|x|}  + O(1). 
\end{align*}
Integrating all the previous estimates we obtain, uniformly in $x$,
\begin{equation*}
 \sum_{k=0}^{\ell_N^2}  [ P^{\mathsf{T}_N}_{k}(\mathbf{0},x) -  \mu(x) ]   =  \frac{2}{\pi} \log \frac{\ell_N}{1+|x|}  + O(1) .
\end{equation*} 

\noindent\underline{\textit{Step 3.}} Combining the first two steps, it remains to verify
\begin{equation}\label{hit-fund-mati-3}
	\sup_{x \in \mathsf{T}_N }\sum_{k= \ell_N^2}^{\infty} | P^{\mathsf{T}_N}_{k}(\mathbf{0},x) -  \mu(x) | \le   \frac{1}{|\mathsf{T}_N|} \sum_{k= \ell_N^2/2 }^{\infty} \exp(-  \Theta(k/N^2)  ) = O(1). 
\end{equation}
To this end, we denote by $(\gamma_{j})_{j=1}^{|\mathsf{T}_N|}$ the eigenvalues of $P^{\mathsf{T}_N
}$, with $\gamma_1=1$ and $1 > \gamma_j \ge -1$ for $j \ge 2$.
Using the spectral representation of reversible transition matrix (see \cite[Lemma 12.2]{LPW17}) gives the identity
\begin{equation}
	\label{eq:Spectral Representation}
	\frac{P^{\mathsf{T}_N
}_k(\mathbf{0},x)}{\mu(x)} - 1 = \sum_{j=2}^{|\mathsf{T}_N|} f_{j}(\mathbf{0}) f_{j}(x) \gamma_{j}^{k},
\end{equation} 
where $
(f_{j})$ are   eigenfunctions corresponding to $(\gamma_{j})$ satisfying $\sum_{x \in \mathsf{T}_N} f_{i}(x) f_{j}(x) \mu(x)= \ind{i=j}$.  
 Since $\ell_N=2N-1$ is odd, the walk on $\mathsf T_N$ is aperiodic.   By the example   Section 12.3.1  and Lemma 12.12 in \cite{LPW17}, $\gamma$, we have 
\begin{equation*}
\gamma_{j} = \frac{ \cos(2\pi j_1/\ell_N) + \cos(2\pi j_2/\ell_N)}{2} ;	j=(j_1,j_2), 0 \le j_1 < \ell_N,  0 \le j_2 < \ell_N\,,
\end{equation*}
and we can take $f_j$ as  product trigonometric functions and in  particular $|f_{j}(x)| \le 1$. Since $\ell_N$ is odd,  we have   $\rho_{\star}:= \max \{|\gamma_{j}| :  j \ge 2 \} =1-\Theta(N^{-2})$.  Thereby from \eqref{eq:Spectral Representation} we obtain  for all $k \ge \ell_N^2/2$, 
\begin{align*}
 \sup_{x \in \mathsf{T}_N}  \Big| \frac{P^{\mathsf{T}_N
}_k(\mathbf{0},x)}{\mu(x)}  -1 \Big| \lesssim  \sum_{j=2}^{|\mathsf{T}_N|} |\gamma_{j}|^{k} \asymp \rho_{\star}^{k} =  \exp(-  \Theta(k/N^2)  ). 
\end{align*}
This establishes \eqref{hit-fund-mati-3} and hence completes the proof.
\end{proof}

We turn to the proof of Lemma~\ref{prop:hitting-time}.
The argument will make use of the following results, the first of which asserts that the hitting time of a set $A$ is asymptotically exponential distribution if the mean hitting time of $A$ is much larger than the relaxation time of the random walk.
\begin{lem}[{\cite[Theorem 1]{AB92}}]
	\label{lem:mc-asyexp}
 Let $(Z_{t})$ be an irreducible,  reversible  continuous-time  Markov chain on a finite-state space $I$, with  generator  $Q$ and stationary distribution $\mu$.  Let  $t^{\mathrm{rel}}\coloneq  \lambda_{1}^{-1}$ be the relaxation time,  where $\lambda_1$ is the minimal non-zero eigenvalue of $-Q$.  
  Let $A$ be a  proper, non-empty subset of $I$, and let $T(A)$ be the first hitting time on $A$. Then 
	\begin{equation*}
\sup_{t>0} | \P_\mu( T(A) / \E_\mu [T(A)] > t) - e^{-t}| 
\le  
t^{\mathrm{rel}} / \E_\mu[ T(A)]\,.
	\end{equation*}
\end{lem}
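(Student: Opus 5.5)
The plan is to follow the spectral argument of Aldous and Brown. It has three ingredients: the hitting time from stationarity is a mixture of exponentials; the top mode of that mixture carries almost all of the mass; and its rate is close to $1/\E_\mu[T(A)]$. Throughout, write $E(f,f)\coloneq\langle f,-Qf\rangle_\mu$ for the Dirichlet form and $B\coloneq I\setminus A$.

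\emph{Step 1 (spectral representation).} Let $Q_B$ be the generator of $Z$ killed on entering $A$, i.e. the restriction of $Q$ to functions vanishing on $A$. By reversibility $-Q_B$ is self-adjoint and positive definite on $L^2(\mu|_B)$. Let $0<\lambda_A=\lambda^{(1)}<\lambda^{(2)}\le\dots$ be its eigenvalues, with $\mu$-orthonormal eigenfunctions $\phi_i$. Since $\P_\mu(T(A)>t)=\langle \mathbf 1_B, e^{tQ_B}\mathbf 1_B\rangle_\mu$, we get
\[
\P_\mu(T(A)>t)=\sum_i a_i\,\rme^{-\lambda^{(i)} t},\qquad a_i=\langle \mathbf 1_B,\phi_i\rangle_\mu^2\ge 0,\qquad \sum_i a_i=\mu(B).
\]
Consequently $\E_\mu[T(A)]=\sum_i a_i/\lambda^{(i)}$, and the law of $T(A)$ under $\P_\mu$ is a mixture of an atom at $0$ (of mass $\mu(A)$) and exponentials.

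\emph{Step 2 (the top mode dominates).} By Perron--Frobenius $\phi_1$ can be taken nonnegative. Extend it by $0$ on $A$. Then $E(\phi_1,\phi_1)=\lambda_A$ and $\mu(\phi_1^2)=1$. Applying the Poincaré inequality $\Var_\mu(\phi_1)\le t^{\mathrm{rel}}E(\phi_1,\phi_1)$ gives
\[
a_1=\mu(\phi_1)^2\ge 1-t^{\mathrm{rel}}\lambda_A .
\]
Hence the remaining weight satisfies $1-a_1\le t^{\mathrm{rel}}\lambda_A$, and $\P_\mu(T>t)\ge(1-t^{\mathrm{rel}}\lambda_A)\rme^{-\lambda_A t}$.

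\emph{Step 3 (comparing the rate with the mean).} From Step 1 and $\lambda^{(i)}\ge\lambda_A$ we get $\E_\mu[T(A)]\le \mu(B)/\lambda_A\le 1/\lambda_A$. From Step 2 we get $\E_\mu[T(A)]\ge a_1/\lambda_A\ge 1/\lambda_A-t^{\mathrm{rel}}$. Writing $m=\E_\mu[T(A)]$ and $\varepsilon=t^{\mathrm{rel}}/m$, this gives $\lambda_A m\in[1/(1+\varepsilon),1]$. We may assume $\varepsilon<1$, since otherwise the statement is trivial.

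\emph{Step 4 (Kolmogorov distance).} Rescale time by $m$. For the lower deviation, use $\P_\mu(T>tm)\ge(1-\varepsilon)\rme^{-\lambda_A m t}\ge(1-\varepsilon)\rme^{-t}$, where the second inequality holds because $\lambda_A m\le1$. For the upper deviation, use
\[
\P_\mu(T>tm)\le a_1\rme^{-\lambda_A m t}+(1-a_1)\rme^{-\lambda_A m t}=\rme^{-\lambda_A m t}.
\]
This exceeds $\rme^{-t}$ by at most $\sup_t\bigl(\rme^{-t/(1+\varepsilon)}-\rme^{-t}\bigr)$, which is of order $\varepsilon$ but with a constant larger than one. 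To obtain the sharp constant $1$, the plan is to use the mean constraint $\sum_i a_i/\lambda^{(i)}=m$ together with the mixture structure. Among completely monotone survival functions with mean $1$ and top-mode weight $\ge1-\varepsilon$, one expects the extremal deviation from $\rme^{-t}$ to be attained by a two-point mixture, a mass $1-\varepsilon$ exponential plus a mass $\varepsilon$ fast component. This reduction is the step I expect to be the main obstacle, since it needs a convexity argument on the mixing measure. Once it is done, the bound $\sup_t|\P_\mu(T>tm)-\rme^{-t}|\le\varepsilon$ follows by an elementary one-variable optimisation. For the application in this paper any bound $C\varepsilon$ would in fact suffice.
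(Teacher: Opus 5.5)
Your Steps 1--3 are correct. They are exactly the Aldous--Brown ingredients that the paper imports by citing \cite[Theorem 1]{AB92}; the paper gives no proof of its own. The ingredients are:
\begin{itemize}
\item complete monotonicity of $t\mapsto\P_\mu(T(A)>t)$;
\item the bound $a_1\ge 1-t^{\mathrm{rel}}\lambda_A$, from the Poincar\'e inequality applied to the principal Dirichlet eigenfunction extended by $0$ on $A$;
\item the resulting sandwich $1/\lambda_A-t^{\mathrm{rel}}\le \E_\mu[T(A)]\le 1/\lambda_A$.
\end{itemize}
So your route is the same as the cited source's.

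The problem is in Step 4, and it is a miscalculation, not a genuine gap. The quantity $\sup_{t}\bigl(\rme^{-t/(1+\varepsilon)}-\rme^{-t}\bigr)$ is \emph{smaller} than $\varepsilon$, not larger. Set $c=1/(1+\varepsilon)$, so that $(1-c)/c=\varepsilon$. Then
\[
\rme^{-ct}-\rme^{-t}=\rme^{-ct}\bigl(1-\rme^{-(1-c)t}\bigr)\le (1-c)\,t\,\rme^{-ct}\le \frac{1-c}{c\,\rme}=\frac{\varepsilon}{\rme}.
\]
The exact maximum is $(1-c)\,c^{c/(1-c)}=\varepsilon(1+\varepsilon)^{-1-1/\varepsilon}$, attained at $t=\log(1/c)/(1-c)$.

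Your upper deviation is therefore already handled: $\P_\mu(T(A)>tm)\le\mu(I\setminus A)\,\rme^{-\lambda_A m t}\le \rme^{-t/(1+\varepsilon)}$, so $\P_\mu(T(A)>tm)-\rme^{-t}\le\varepsilon/\rme$. Your lower deviation is $\rme^{-t}-\P_\mu(T(A)>tm)\le\varepsilon\,\rme^{-t}$. Together these give the lemma with constant $1$, for every $\varepsilon$; you do not even need to assume $\varepsilon<1$.

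You should delete the last part of Step 4. The extremal problem over completely monotone survival functions and the reduction to a two-point mixture are not needed. This matters because, as you stated it, that reduction was only conjectured; had it been needed, it would have been the one unproved step of your argument.
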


A second ingredient in the proof of Lemma~\ref{prop:hitting-time} is,
\begin{lem}[Matthews' bound \cite{matthews1988}]
	\label{Matthews-lemma}
	  Using the notation in Lemma~\ref{prop:hitting-time}, for $x\in \mathsf{T}_N$, set 
\begin{equation}\label{eq:max-hitting}
	f_{\max}(x) \coloneq    \max\Bigl\{\E_{y}[H(\mathsf{B}_{\rho}(a ))]:\, a \in A , y\in (\{x\}\cup \overline{A}^{\rho} ) \setminus \mathsf{B}_{\rho}(a) \Bigr\}\,,
\end{equation} 
with $f_{\min}(x)$ defined similarly replacing the $\max$ by $\min$. Let $h_m\coloneq  \sum_{i=1}^{m} \frac{1}{i}$ be the harmonic series.  With $m:= |A|$, and for any $x\in \mathsf{T}_N \setminus \overline{A}^{\rho}$, we have 
\begin{equation*}
h_m f_{\min}(x) - h_{m-1} f_{\max}(x)	\le  \E_x\bigl[H(\overline{A}^{\rho})\bigr]  \le h_m f_{\max}(x) - h_{m-1} f_{\min}(x).
\end{equation*} 
\end{lem}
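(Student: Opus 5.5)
We prove the bound by Matthews' classical random-ordering argument, applied to the \emph{cover time} of the $m$ target balls rather than directly to $H(\overline{A}^{\rho})$. Write $A=\{a_1,\dots,a_m\}$ and $\mathsf{B}^{(i)}\coloneq\mathsf{B}_\rho(a_i)$. In the setting of $\mathscr{K}^N_\gamma$ the points of $A\cup\{x\}$ are at mutual distance at least $N/r_N^2\gg 2\rho$. Hence the balls $\mathsf{B}^{(i)}$ are pairwise disjoint and $x$ lies outside all of them. Let
\[
C\coloneq\max_{i\le m} H(\mathsf{B}^{(i)}),
\]
so that $H(\overline{A}^{\rho})=\min_i H(\mathsf{B}^{(i)})\le C$. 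The plan is to prove two-sided bounds on $\E_x[C]$ and on $\E_x[C-H(\overline{A}^{\rho})]$, and then subtract.

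\emph{Step 1 (Matthews' cover bound).} We claim that for any start $y$ and any subcollection of $k$ of the balls not containing $y$, where $y$ is either $x$ or lies in some other ball of the family, the expected time to visit all $k$ balls lies in $[h_k f_{\min}(x),\,h_k f_{\max}(x)]$. To see this, take a uniform random permutation $\sigma$ of the $k$ balls, independent of the walk. Let $T_j$ be the first time by which all of $\mathsf{B}^{(\sigma(1))},\dots,\mathsf{B}^{(\sigma(j))}$ have been visited, and set $T_0=0$. The increment $T_j-T_{j-1}$ vanishes unless $\mathsf{B}^{(\sigma(j))}$ is the last among the first $j$ balls of the permutation to be hit. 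Conditionally on the walk path and on the unordered set $\{\sigma(1),\dots,\sigma(j)\}$, this event has probability exactly $1/j$. On this event, at time $T_{j-1}$ the walk sits either at $y$ or inside a ball different from $\mathsf{B}^{(\sigma(j))}$. The strong Markov property at $T_{j-1}$ then shows that the conditional expected increment is $\E_z[H(\mathsf{B}^{(\sigma(j))})]$ for some admissible $z$, which lies between $f_{\min}(x)$ and $f_{\max}(x)$ by the definition \eqref{eq:max-hitting}. Summing over $j$ gives the factor $h_k$ in both directions. The one point to check carefully is that the starting points $z$ always fall in the set $(\{x\}\cup\overline{A}^{\rho})\setminus\mathsf{B}_\rho(a)$ over which $f_{\max}$ and $f_{\min}$ are taken. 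I expect this bookkeeping to be the only delicate step.

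\emph{Step 2 (decomposition).} With $k=m$ and $y=x$, Step 1 gives $h_m f_{\min}\le \E_x[C]\le h_m f_{\max}$. At time $H\coloneq H(\overline{A}^{\rho})$ the walk lies in exactly one ball, which is thereby already covered. Hence $C-H$ is the time, started from $Y_H$, needed to visit the remaining $m-1$ balls. This remaining collection is random, but it is determined by $\mathscr{F}^Y_H$, and $Y_H$ lies in a ball not in the collection. The strong Markov property at $H$, combined with the uniformity of Step 1 over subcollections and starting points, yields
\[
h_{m-1}f_{\min}\le \E_x[C-H]\le h_{m-1}f_{\max}.
\]

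\emph{Step 3 (conclusion).} All the expectations involved are finite, since the walk is an irreducible chain on a finite state space. Writing $\E_x[H]=\E_x[C]-\E_x[C-H]$ and combining the bounds of Steps 1 and 2 gives
\[
h_m f_{\min}(x)-h_{m-1}f_{\max}(x)\le \E_x\bigl[H(\overline{A}^{\rho})\bigr]\le h_m f_{\max}(x)-h_{m-1}f_{\min}(x),
\]
which is the claim. For $m=1$ we have $h_0=0$ and both bounds reduce to the single hitting time, as they should.
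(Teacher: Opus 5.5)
Your proposal is correct and follows essentially the same route as the paper: Matthews' random-ordering bound for the cover time $\max_{a\in A}H(\mathsf{B}_\rho(a))$, then the strong Markov property at $H(\overline{A}^{\rho})$ to bound the residual cover time of the remaining $m-1$ balls, then subtraction. You invoke the disjointness of the balls (from the separation condition in $\mathscr{K}^N_\gamma$) explicitly; this is what rules out ties and gives the exact factor $1/j$ needed for the lower bound, whereas the paper relies on it only implicitly.
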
 
\begin{proof}
	Following  Matthews' argument  \cite{matthews1988} (see also \cite[Theorem 11.2]{LPW17} which deal with the expectation instead of generating function like here),  it can be verified that the ``cover time"  $\max_{a \in A} H(\mathsf{B}_{\rho}(a) )$ satisfies 
	\[ f_{\min}(x) h _{m} \le    \E_{x}\bigl[ \max_{a \in A} H(\mathsf{B}_{\rho}(a) ) \bigr] \le f_{\max}(x) h _{m}  \]
 for all $x \in \mathsf{T}_N$. Notice that conditionally on  $Y_{H(\overline{A})} = y \in \mathsf{B}_{\rho}(a^{*})$, the random variable   
 $ \max_{a \in A} H(\mathsf{B}_{\rho}(a) ) -  H(\overline{A})$ has the same distribution as $\max_{a \in A \setminus \{a_*\}} H(\mathsf{B}_{\rho}(a) )$ under  $\P_{y}$.  Thus 
 \[  f_{\min}(y) h _{m-1} \le \E_{x}\bigl[ \max_{a \in A} H(\mathsf{B}_{\rho}(a) )  - H(\overline{A}) \mid  Y_{H(\overline{A})} = y \in \mathsf{B}_{\rho}(a^{*}) \bigr]\le  f_{\max}(y) h_{m-1}\,. \]
Noting that  by definition $ f_{\min}(x) \le f_{\min}(y)\le f_{\max}(y) \le f_{\max}(x) $ since $y \in  \overline{A}$ and $x \notin \overline{A}$, then the desired result follows.
\end{proof}

We can now give,
\begin{proof}[Proof of Lemma~\ref{prop:hitting-time}] 
 In the proof we abbreviate   $\overline{A} \equiv \overline{A}^{\rho}$. Let $Z_{t}$ be the continuous-time simple random walk on $\mathsf{T}_{N}$ with jumping rate $1$. Let  
 $\mu$ denote the uniform measure on $\mathsf{T}_N$, which is stationary for $(Z_{t})$.
 
 \smallskip \noindent
\underline{\emph{Step 1.}} 
We write $T(\overline{A})$  the first hitting time on $A$ by  $Z_{t}$. Clearly $\E_{\mu}[T(\overline{A})]= \E_{\mu}[H(\overline{A})]$.   
By Corollary 12.13 and Section 12.3.1. in \cite{LPW17}, for the transition matrix $P^{\mathsf{T}_N}$ of simple random walk on $\mathsf{T}_N$, the second largest eigenvalue $\gamma_2=1- \Theta(N^{-2})$. 
 The generator of $(Z_t)$ is $P^{\mathsf T_N}-I$, and hence the smallest non-zero eigenvalue of $I-P^{\mathsf{T}_N}$ is exactly $1-\gamma_2$.  Thus there is an absolute constant $C>0$ such that  
 the relaxation time $t^{\mathrm{rel}}=(1-\gamma_2)^{-1}$ of $(Z_t)$ is  bounded by $C N^2$. 
  Applying   Lemma \ref{lem:mc-asyexp} gives 
\begin{equation}\label{eq:cont-asy-exp}
	 \sup_{t\ge 0} \left|
	 \P_{\mu}\Bigl( {T(\overline{A})}>{\E_{\mu}[H(\overline{A})]}t\Bigr)-e^{-t}
	 \right|
	 \leq C \frac{ N^2}{\E_{\mu}[H(\overline{A})]}  .
\end{equation}  

We claim that uniformly in  $(x,A,\rho)  \in \mathscr{K}_{\gamma}^{N}$
\begin{equation}\label{eq:hit-mu-A}
	 \E_{\mu}[H(\overline{A})]
	 =\frac{2}{\pi}\, |\mathsf{T}_N| \frac{\log N  }{|A|} \Bigl[ 1+ O\bigl(  \frac{|A|\log |A| \log r_N}{\log N} \bigr) \Bigr].
\end{equation} 
Note that  $|A|\log |A| \log r_N \le (\log N)^{\gamma} (\log \log N)^{1+100(\gamma+1)}=o(\log N)$.  Consequently,  substituting \eqref{eq:hit-mu-A} into \eqref{eq:cont-asy-exp} yields
\begin{equation}\label{eq:cont-asy-exp-2}
	\sup_{(x,A,\rho)  \in \mathscr{K}_{\gamma}^{N}} \sup_{t\ge 0} \left|
 \P_{\mu}\Bigl( {T(\overline{A})}>{\E_{\mu}[H(\overline{A})]}t\Bigr)-e^{-t}
 \right|
  \lesssim \frac{|A|}{\log N} . 
\end{equation} 

To show \eqref{eq:hit-mu-A}, we apply Lemma \ref{Matthews-lemma}.
Combining \eqref{eq:mean-hitting} with \eqref{eq:max-hitting}, and using the separation condition in \eqref{def:K-gamma-N}, namely that uniformly over  $(x,A,\rho)  \in \mathscr{K}_{\gamma}^{N}$, 
$\min  \{d(u,v): u \neq v\in A\cup\{x\} \}\ge  {N}/{r^2_N} $,
we obtain that
\begin{equation*}
	f_{\max}(x) = \frac{2}{\pi} |\mathsf{T}_N| \log N  + O(|\mathsf{T}_N|\log r_N) \  \text{ and } \ f_{\min}(x) = \frac{2}{\pi} |\mathsf{T}_N| \log N  - \Theta(|\mathsf{T}_N| \log r_N) .
\end{equation*} 
 It then follows from   Lemma \ref{Matthews-lemma}  that  
\begin{equation}\label{eq:hit-x-A}
	  \E_{x}[H(\overline{A})]
	 =\frac{2}{\pi}\,|\mathsf{T}_N|\log N\cdot \frac{1}{|A|}+ O( |\mathsf{T}_N|\log |A|  \log r_N)\,,
\end{equation}
uniformly in  $(x,A,\rho)  \in \mathscr{K}_{\gamma}^{N}$.
Since $\mu(\{ x: d(x,A) \le N/r_N^2 \}) = \Theta(|A|/ r_{N}^{4} )$, and $\max_{x,y \in \mathsf{T}_N} \E_{x} [H(y)] \lesssim   |\mathsf{T}_N| \log N$ by Lemma \ref{lem:est_laplace},  the claim \eqref{eq:hit-mu-A} follows directly from \eqref{eq:hit-x-A}.

 \smallskip \noindent 
\underline{\emph{Step 2.}} 
 Fix $k=k_N$ such that $ e^{k_N}= (\frac{\log N}{|A|}   )^{1/2} $. By our assumption $k_N \to \infty$ as $N \to \infty$.  
Denote  $t^{\mathrm{mix}}_N$  the mixing time of  $Y_{n}$ with total variation distance   $  2^{-k_N }$; that is  
\[ 
 t_N^{\mathrm{mix}} =t_{\mathrm{mix}}(2^{-k_N}) \coloneq  \inf \Bigl\{ n \ge 0:  \text{ for any } y \in \mathsf{T}_N, \frac{1}{2}\sum_{z\in \mathsf{T}_N} \big|\P_{y}(Y_n=z)-\mu(z)\big|\le   2^{-k_N} \Bigr\}. 
\]  
 By  \cite[Example 5.3.3]{LPW17}, we have $  t_N^{\mathrm{mix}}\leq   k_{N}  N^2$. For simplicity let us write  
\[
  a_{N}(t) \coloneq \frac{2}{\pi}\,\frac{ |\mathsf{T}_N| \log N}{|A|} t  - {t_N^{\mathrm{mix}}} \ge  c k_N N^2   \ \text{ for all } \    t \ge   k_N^{2}  \frac{|A|}{\log N} . 
\] 
The Markov property then yields that  for $t \ge   k_N^{2}  \frac{|A|}{\log N} $,
\begin{align}
  &\P_x \Bigl(H(\overline{A})> \frac{2}{\pi}\,\frac{ |\mathsf{T}_N| \log N}{|A|} t\Bigr) =\sum_{y \in \mathsf{T}_{N}}    
	 \P_{y}\bigl(H(\overline{A})> a_N (t) \bigr)\P_{x}( Y_{t_N^{\mathrm{mix}} } = y , H(\overline{A}) > t_N^{\mathrm{mix}} ) \notag\\
	&\quad \in \sum_{y \in \mathsf{T}_{N}}    
	 \P_{y}\bigl(H(\overline{A})> a_N (t) \bigr)\P_{x}( Y_{t_N^{\mathrm{mix}} } = y   ) -\Bigl[ 0 \,,\, \P_{x}(H(\overline{A}) \le t_N^{\mathrm{mix}} )  \Bigr] .  \label{eq:hA-markov} 
\end{align}  
We shall show that the probability   $\P_{x}(H(\overline A) \le t_N^{\mathrm{mix}} )$   is negligible, namely
\begin{equation}\label{eq:H-is-small-2}
\sup_{ (x,A,\rho)  \in \mathscr{K}_{\gamma}^{N} }
 \P_{x}(H(\overline{A}) \le   k_N^{2} N^2 )  \lesssim  \frac{|A|\,  (\log r_N)^2 }{\log N}   \lesssim e^{-k_N} .
\end{equation}  Then replacing $\P_{x}( Y_{t_N^{\mathrm{mix}} } = y  )$ by $\mu(y)$ in \eqref{eq:hA-markov}  and using the definition of the mixing time, 
we get, 
\begin{equation}
  \sup_{(x,A,\rho)  \in \mathscr{K}_{\gamma}^{N}} \, \sup_{t \ge k^{2}_N  {|A|}/{\log N}   }  \Big|
	  \P_x \Bigl(H(\overline{A})> \frac{2}{\pi}\,\frac{ |\mathsf{T}_N| \log N}{|A|} t\Bigr) - 
	 \P_{\mu} \bigl(H(\overline{A})>a_N (t)\bigr)
	 \Big|  
	 \lesssim 2^{-k_N} . \label{eq:mix}  
\end{equation} 

 \smallskip \noindent
\underline{\emph{Step 3.}}  We now prove \eqref{eq:H-is-small-2}. By using the union bound, we obtain 
\begin{align*}
 \P_x\left(H(\overline{A})\le   k_N^2 N^2 \right)
	& \le |A| \, \max_{x,y: d(x,y) \ge N/r^{2}_N}
	 \P_x\left(H( \mathsf{B}_{\rho} (y))\le    k_N^2 N^2 \right) \\
	 &
	 \lesssim  |A|\, \max_{x,y: d(x,y) \ge N/r^{2}_N} 
	 \E_x\left[e^{-  H( \mathsf{B}_{\rho}(y)) /  (|\mathsf T_N|(\log r_N)^2)} \right] \lesssim \frac{|A|\,  (\log r_N)^2 }{\log N}\,,
\end{align*} 
as desired.
Above in the last inequality, we have  used part (i) in Lemma~\ref{lem:est_laplace} with $\lambda_N=\frac{\log N}{(\log r_N)^2} $  and the fact $k_N=o(\log r_N)$.
Furthermore,  as a  consequence of \eqref{eq:H-is-small-2}, it suffices to show \eqref{eq:hit-asy-exp} only for $t \ge   k_N^{2} \frac{|A|}{\log N} $, i.e., 
\begin{equation}
	 \lim_{N\to\infty}\ \sup_{ (x,A,\rho)  \in \mathscr{K}_{\gamma}^{N} } \ \sup_{t \ge  k_N^{2}  \frac{|A|}{\log N}}
	 \bigg|\,
	 \P_x \Bigl({H(\overline{A})}  > {\frac{2}{\pi}\,\frac{ |\mathsf{T}_N| \log N}{|A|}} t\Bigr)-e^{-t} \,
	 \bigg|
	 =0. \label{eq:small-t} 
\end{equation}
 \smallskip \noindent 
\underline{\emph{Step 4.}} By using \eqref{eq:mix} and \eqref{eq:small-t}, to prove  \eqref{eq:hit-asy-exp}, it is sufficient to show 
\begin{equation*}
   \sup_{ (x,A,\rho)  \in \mathscr{K}_{\gamma}^{N} } \, \sup_{t \ge k^{2}_N  {|A|}/{\log N}   }   \Big| 
	 \P_{\mu} \bigl(H(\overline{A})>a_N (t)\bigr) - e^{-t}
	 \Big|  = o _N(1)\,.
\end{equation*}  
Notice that given $H(\overline{A}) $,  the conditional distribution of $T(\overline{A})$ is the same as $\sum_{i=1}^{H(\overline{A})} \mathbb{e}_{i}$ where $(\mathbb{e}_{i})_{i \ge 1}$ are i.i.d. standard exponential r.v.'s.  By  
using the Chernoff bound, we can find absolute constants $c,c'>0$ such that 
\begin{equation}\label{eq:TH-deviation}
 \sup_{t \ge k_N  {|A|}/{\log N}   } \sup_{m \ge k_N N^2  }\P_{\mu} \bigl( | T(\overline{A}) -   H(\overline{A}) |  \ge H(\overline{A}) ^{2/3} \mid  H(\overline{A}) = m \bigr)  \leq   e^{- c k_N^{1/3} N^{2/3}} \le  e^{- c' N^{2/3}}  .
\end{equation}   
When the event $\{ H(\overline{A}) >  a_N(t) , |T(\overline{A}) -   H(\overline{A}) |  \le H(\overline{A}) ^{2/3}\} $ occurs, we have 
$   T(\overline{A}) >  a_N(t) - a_N(t)^{2/3}  $. This implies  
\begin{align*}
 \P_{\mu} \bigl(  H(\overline{A}) >  a_N(t)  \bigr) 
 \le \P_{\mu} \bigl( T(\overline{A}) >  a_N(t) - a_N(t)^{2/3}    \bigr) +    e^{-c'N^{2/3}}  .
\end{align*}  
On the other hand,  we have  
\begin{align*}
 \P_{\mu} \bigl(  H(\overline{A}) >  a_N(t)  \bigr) 
  & \ge \P_{\mu} \bigl( T(\overline{A}) >  a_N(t) + a_N(t)^{2/3}    \bigr)- \P_{\mu}\bigl(  T(\overline{A}) >  a_N(t) + a_N(t)^{2/3},  H(\overline{A})    \le a_N(t) \bigr) \\
  & \ge   \P_{\mu} \bigl( T(\overline{A}) >  a_N(t) + a_N(t)^{2/3}    \bigr)- e^{-c'N^{2/3}}\,,
\end{align*} 
where the second inequality follows from  \eqref{eq:TH-deviation} and the   stochastic domination again.
Therefore it suffices to verify
\begin{equation*}
 \sup_{ (x,A,\rho)  \in \mathscr{K}_{\gamma}^{N} }  \ \sup_{t \ge k_N^2  {|A|}/{\log N}   }\big|
\P_{\mu} \bigl( T(\overline{A}) >  a_N(t) \pm a_N(t)^{2/3}    \bigr)  - e^{-t} \big| = o_N(1) .
\end{equation*} 
By using \eqref{eq:cont-asy-exp-2}, it is enough to check 
\begin{equation*}
  \sup_{ (x,A,\rho)  \in \mathscr{K}_{\gamma}^{N} }  \, \sup_{t \ge k_N^2  {|A|}/{\log N}   }\Big| \exp \Bigl(- t \frac{	a_N(t) \pm a_N(t)^{2/3}  }{ t \E_{\mu}[H(\overline{A})]}  \Bigr) - e^{-t }\Big|  = o_N(1).
\end{equation*} 
This holds as uniformly in 
  $t \ge k_N^2  {|A|}/{\log N} $ and  $(x,A,\rho)  \in \mathscr{K}_{\gamma}^{N} $, we have 
\begin{equation*}
\frac{	a_N(t) \pm a_N(t)^{2/3}  }{ t \E_{\mu}[H(\overline{A})]} =  1+ O\Bigl(  \frac{|A|\log |A| \log r_N}{\log N}  \Bigr) + O\Bigl(  \frac{ t^{\mathrm{mix}}_N }{    |\mathsf{T}_N|k_N^2}   \Bigr) + O(\frac{1}{N^{2/3}}) = 1+o_N(1).
\end{equation*} 
The desired result follows from the fact $ |e^{-(1 \pm \epsilon) t} - e^{-t}| \le t e^{-t/2} \epsilon $ for all $t > 0$ and $0<\epsilon<1/2$ by using mean value theorem. This completes the proof of \eqref{eq:hit-asy-exp}.

 \smallskip \noindent
\underline{\emph{Step 5.}} Next we show \eqref{eq:hit-asy-exp-2}. Denote by $A_0=A$ and $A_2=A \setminus A_1$. Let $\theta_1=\theta$, $\theta_2=1-\theta$ and $\theta_0=1$.
 The endpoint cases $\theta=0,1$ are covered by the same argument with the convention $H(\emptyset)=\infty$; below we write the proof for the non-degenerate case. 
Denote by 
\[ H_i= {H(\overline{A}_i)}/ \Big({\frac{2}{\pi}  |\mathsf{T}_N|} \frac{\log N}{|A|}  \Big) \text{ for }  i=0,1,2. \]
Then the convergence \eqref{eq:hit-asy-exp} yields,   for $i=0,1,2$ and any $\epsilon>0$  
\begin{equation*} \limsup_{N\to\infty}\,  \sup_{  (x,A,\rho)  \in \mathscr{K}_{\gamma}^{N} }  \,
\sup_{A_{1} \in \mathrm{Sub}_{\theta, \epsilon}(A)}  \, \sup_{t >0} \,  \big| \P_{x} ( H_i > t) - \exp( - t|A_i|/|A| ) \big|  = 0 .
\end{equation*}
 Since for any positive random variable $H$ and $s >0$,
$ 1-\E[ e^{-s H} ] = \int_{0}^{\infty} \P(H > t) s e^{-s t} \dif t $, 
 together with $||A_i|/|A| -\theta_i| \le \epsilon_{N} $ 
 we derive 
\begin{equation} 
	\label{eq:laplace-conv}  \lim_{N\to\infty} \,  \sup_{  (x,A,\rho)  \in \mathscr{K}_{\gamma}^{N} }  \,
\sup_{A_{1} \in \mathrm{Sub}_{\theta, \epsilon_{N}}(A)} \, \sup_{s > 0} \Big| \, \bbE_{x} \bigl[\rme^{ - s  H_i }   \bigr] - \frac{\theta_i  }{\theta_i   +  s} \, \Big| = 0.
\end{equation}  

Since $H_0 = H_1 \wedge H_2$, by the strong Markov property we  obtain
\begin{equation*}
\E_x[e^{-s H_0}] -	\E_x[e^{- s H_1}]  = \E_x\bigl[e^{- s H_0} \ind{   H_1 >H_0 } ( 1- \E_{Y_{H(\overline{A}_2)}}[e^{- s H_1}]  ) \bigr]\,,
\end{equation*}
and hence for any $s>0$, 
\begin{equation*} 
\frac{\bbE_{x} [\rme^{ -s  H_0 }] -\bbE_{x} [ \rme^{ -s  H_1 }   ]}{1 -\inf_{y \in \overline{A}_2}  \bbE_{y} \bigl[\rme^{ -s  H_1 }   \bigr] } 
\leq 
\bbE_{x} \bigl[\rme^{ -s  H_0 } \ind{H_1 >H_0}  \bigr]  \le \P_{x} (H(\overline{A}_1) >  H(\overline{A}_2) ) . 
\end{equation*} 
 The estimate \eqref{eq:laplace-conv} applies uniformly to the terms started from $y\in\overline A_2$ as well. 
Now plugging \eqref{eq:laplace-conv} we obtain for any $s>0$
\begin{equation*}
  \liminf_{N \to \infty} \inf_{  (x,A,\rho)  \in \mathscr{K}_{\gamma}^{N} }  \,
\inf_{A_{1} \in \mathrm{Sub}_{\theta, \epsilon_{N}}(A)} \, \P_{x} (H(\overline{A}_1) >  H(\overline{A}_2) )   \ge  \frac{\frac{1}{1+s}- \frac{\theta}{\theta +s}}{1- \frac{\theta}{\theta +s}} = \frac{1-\theta}{1+s} .
\end{equation*}
Sending $s \downarrow 0$ we get  $  \liminf\limits_{N \to \infty}  \inf\limits_{  (x,A,\rho)  \in \mathscr{K}_{\gamma}^{N} }  \,
\inf\limits_{A_{1} \in \mathrm{Sub}_{\theta, \epsilon_{N}}(A)} \, \P_{x} (H(\overline{A}_1) >  H(\overline{A}_2) ) \ge 1-\theta $. Notice that   $A_1$ and $A_2$ are symmetric, the same argument also gives
 \begin{align*}
	\theta &\le    \liminf_{N \to \infty} \inf_{  (x,A,\rho)  \in \mathscr{K}_{\gamma}^{N} }  \,
\inf_{A_{1} \in \mathrm{Sub}_{\theta, \epsilon_{N}}(A)} \,\P_{x} (H(\overline{A}_2) >  H(\overline{A}_1) )   \\
	&= 1-    \limsup_{N \to \infty}  \sup_{  (x,A,\rho)  \in \mathscr{K}_{\gamma}^{N} }  \,
\sup_{A_{1} \in \mathrm{Sub}_{\theta, \epsilon_{N}}(A)} \, \P_{x} (H(\overline{A}_1) >  H(\overline{A}_2) ) .
 \end{align*}
 This completes the proof of assertion (ii). 
\end{proof}

Lastly, we have:
\begin{proof}[Proof of Lemma~\ref{l:2.13}]
We have the obvious lower bound 
\begin{equation}\label{eq:lbnd-Green}
	\mathbb{G}^{N}_{H(\overline{A}^{\rho})}(x,y) \ge \mathbb{G}^{N}_{H(\partial  \mathsf{B}_{ N/ r^3_N}   (x) )}(x,y) \ge \frac{2}{\pi} \log \frac{ N/r^3_N }{1+d(x,y)} - C   .
\end{equation} 
Above the second inequality follows from \cite[Proposition 6.3.5]{LPW17} and $C>0$ is an absolute constant. 

For the upper bound,  it suffices to bound $\mathbb{G}^{N}_{H(\overline{A}^{\rho})}(x,x)$.
We claim that upon hitting the boundary of $\mathsf{B}_{N/r^{3}_N}(x)$, the simple random walk has probability $o_N(1)$ to return back to $x$ before hitting $\overline{A}^{\rho}$. Indeed for any $z \in \partial \mathsf{B}_{N/r^{3}_N}(x)$, we have 
\begin{equation*}
 \P_{z} ( H(x) \le  H( \overline{A}^{\rho}) ) \le  \P_{z} (  H( \overline{A}^{\rho}) > \omega_N^{-1/2} N^2 \log N  ) +  \P_{z} ( H(x) \le \omega_N^{-1/2} N^2 \log N )\,.
\end{equation*}
By applying \eqref{eq:hit-asy-exp} to  the first term, and using 
 the proof of \eqref{equa:lem:est_laplace}, applied at distance $N/r_N^3$,  with Markov's inequality 
for the second, we find 
a sequence $\epsilon_N \to 0$ 
such that  $ \P_{z} ( H(x) \le  H( \overline{A}^{\rho}) ) \le \epsilon_N $ for any $(x,A,\rho)$ and any $z \in \partial \mathsf{B}_{N/r^{3}_N}(x)$.

Each return from $\partial\mathsf{B}_{N/r^{3}_N}(x)$ to $x$ before hitting
$\overline A^\rho$ starts a new independent excursion from $x$ to
$\partial\mathsf{B}_{N/r^{3}_N}(x)$, and the number of such excursions is
stochastically dominated by a geometric random variable with success
parameter $1-\epsilon_N$.
Thereby we derive
\begin{equation*}
	\mathbb{G}^{N}_{H(\overline{A}^{\rho})}(x,x) \le   (1-\epsilon_N)^{-1} \mathbb{G}^{N}_{H(\partial  \mathsf{B}_{N/r^{3}_N} (x) )}(x,x) \le (1-\epsilon_N)^{-1}  [\frac{2}{\pi} \log{N} + C ]\,.
\end{equation*} 
This completes the proof. 
\end{proof}

\notoc{\section*{Disclosure of AI Use}}

Large language models were used in the preparation of this paper.
In particular, ChatGPT (GPT-5.5) identified an issue in an earlier
version of the proof of
Proposition~\ref{lem:only:moderate:traps:visited}: contrary to our
initial claim, the random variable used in that argument did not have
the uniformly bounded density required for \eqref{eq:error-1}.
It also suggested the alternative quantity $T(\eta,\mathbb{V})$
used in the current version and identified several other minor
errors, all of which were straightforward to correct. All remaining
proof ideas were developed independently by the authors. LLMs were otherwise used only to organize and refine the
writing, assist in the preparation of figures, and to check the mathematical arguments for potential errors.
The authors have independently verified all results and take full
responsibility for the content of this paper.

\notoc{\section*{Acknowledgements}}
The research of all authors was supported by ISF grants no.~1382/2017, ~2870/21 and~3782/25. The research of A.C, H.M and A.S was supported by a fellowship from the Lady Davis Foundation.

\bibliographystyle{abbrv}
\bibliography{GFF_RWRP}

@article{Aldous1978,
  author  = {Aldous, David J.},
  title   = {Stopping Times and Tightness},
  journal = {The Annals of Probability},
  volume  = {6},
  number  = {2},
  pages   = {335--340},
  year    = {1978},
  doi     = {10.1214/aop/1176995579}
}

@article{gayrard2012convergence,
  title={Convergence of clock process in random environments and aging in {B}ouchaud’s asymmetric trap model on the complete graph},
  author={Gayrard, V{\'e}ronique},
  journal={Electron. J. Probab},
  volume={17},
  number={58},
  pages={1--33},
  year={2012}
}

@article{Geb41,
 author = {Gebelein, Hans},
 title = {Das statistische {Problem} der {Korrelation} als {Variations}- und {Eigenwertproblem} und sein {Zusammenhang} mit der {Ausgleichsrechnung}},
 fjournal = {Zeitschrift f{\"u}r Angewandte Mathematik und Mechanik (ZAMM)},
 journal = {Z. Angew. Math. Mech.},
 issn = {0044-2267},
 volume = {21},
 pages = {364--379},
 year = {1941},
 language = {German},
 doi = {10.1002/zamm.19410210604},
 zbMATH = {3042142},
 Zbl = {0026.33402}
}

@article{Royen14,
 author = {Royen, Thomas},
 title = {A simple proof of the {Gaussian} correlation conjecture extended to some multivariate gamma distributions},
 fjournal = {Far East Journal of Theoretical Statistics},
 journal = {Far East J. Theor. Stat.},
 issn = {0972-0863},
 volume = {48},
 number = {2},
 pages = {139--145},
 year = {2014},
 language = {English},
 url = {www.pphmj.com/abstract/8713.htm},
 zbMATH = {6441652},
 Zbl = {1314.60070}
}

@article{Cox89,
 author = {Cox, J. T.},
 title = {Coalescing random walks and voter model consensus times on the torus in {{\({\mathbb{Z}}^ d\)}}},
 fjournal = {The Annals of Probability},
 journal = {Ann. Probab.},
 issn = {0091-1798},
 volume = {17},
 number = {4},
 pages = {1333--1366},
 year = {1989},
 language = {English},
 doi = {10.1214/aop/1176991158},
 zbMATH = {4123027},
 Zbl = {0685.60100}
}

@article{gayrard2013convergence,
  title={Convergence of clock processes on infinite graphs and aging in {B}ouchaud's asymmetric trap model on $Z^d$},
  author={Gayrard, V{\'e}ronique and Svejda, Adela},
  journal={arXiv preprint arXiv:1309.3066},
  year={2013}
}

@article{durrett1978functional,
  title={Functional limit theorems for dependent variables},
  author={Durrett, Richard and Resnick, Sidney I},
  journal={The Annals of Probability},
  pages={829--846},
  year={1978},
  publisher={JSTOR}
}

@article{fyodorov2008freezing,
  title={Freezing and extreme-value statistics in a random energy model with logarithmically correlated potential},
  author={Fyodorov, Yan V and Bouchaud, Jean-Philippe},
  journal={Journal of Physics A: Mathematical and Theoretical},
  volume={41},
  number={37},
  pages={372001},
  year={2008}
}

@article{CHL2,
author = {Aser Cortines and Lisa Hartung and Oren Louidor},
title = {More on the structure of extreme level sets in branching {B}rownian motion},
volume = {26},
journal = {Electronic Communications in Probability},
number = {none},
publisher = {Institute of Mathematical Statistics and Bernoulli Society},
pages = {1 -- 14},
year = {2021},
doi = {10.1214/20-ECP369},
URL = {https://doi.org/10.1214/20-ECP369}
}

@article{CHL1,
  title={The structure of extreme level sets in branching {B}rownian motion},
  author={Cortines, Aser and Hartung, Lisa and Louidor, Oren},
  journal={The Annals of Probability},
  volume={47},
  number={4},
  pages={2257--2302},
  year={2019},
  publisher={JSTOR}
}

@article{mandelbrot1989multifractal,
  title={Multifractal measures, especially for the geophysicist},
  author={Mandelbrot, Beno{\^\i}t B},
  journal={Pure and applied geophysics},
  volume={131},
  number={1},
  pages={5--42},
  year={1989},
  publisher={Springer}
}

@article{cortines2018dynamical,
  title={Dynamical freezing in a spin glass system with logarithmic correlations},
  author={Cortines, Aser and Gold, Julian and Louidor, Oren},
  journal={Electronic Journal of Probability},
  volume={23},
  year={2018},
  publisher={The Institute of Mathematical Statistics and the Bernoulli Society}
}

@article{bovier13,
author = {Anton Bovier and V{\'e}ronique Gayrard},
title = {Convergence of clock processes in random environments and ageing in the $p$-spin {SK} model},
volume = {41},
journal = {The Annals of Probability},
number = {2},
publisher = {Institute of Mathematical Statistics},
pages = {817 -- 847},
year = {2013},
doi = {10.1214/11-AOP705},
URL = {https://doi.org/10.1214/11-AOP705}
}

@article{rhodes2014gaussian,
title={{G}aussian multiplicative chaos and applications: A review},
author={Rhodes, R{\'e}mi and Vargas, Vincent},
journal={Probability Surveys},
volume={11},
pages={315--392},
year={2014},
publisher={Citeseer}
}

@article{arous2008universality,
  title={Universality of the {REM} for dynamics of mean-field spin glasses},
  author={Arous, G{\'e}rard Ben and Bovier, Anton and {\v{C}}ern{\`y}, Ji{\v{r}}{\'\i}},
  journal={Communications in mathematical physics},
  volume={282},
  number={3},
  pages={663--695},
  year={2008},
  publisher={Springer}
}

@article{DRSV1,
  title={Critical {G}aussian multiplicative chaos: convergence of the derivative martingale},
  author={Duplantier, Bertrand and Rhodes, R{\'e}mi and Sheffield, Scott and Vargas, Vincent and others},
  journal={The Annals of Probability},
  volume={42},
  number={5},
  pages={1769--1808},
  year={2014},
  publisher={Institute of Mathematical Statistics}
}

@article{DRSV2,
  title={Renormalization of critical {G}aussian multiplicative chaos and {KPZ} relation},
  author={Duplantier, Bertrand and Rhodes, R{\'e}mi and Sheffield, Scott and Vargas, Vincent},
  journal={Communications in Mathematical Physics},
  volume={330},
  number={1},
  pages={283--330},
  year={2014},
  publisher={Springer}
}

@book{Ber96,
 author = {Bertoin, Jean},
 title = {L{\'e}vy processes},
 fseries = {Cambridge Tracts in Mathematics},
 series = {Camb. Tracts Math.},
 issn = {0950-6284},
 volume = {121},
 isbn = {0-521-56243-0},
 year = {1996},
 publisher = {Cambridge: Cambridge Univ. Press},
 language = {English},
 zbMATH = {918811},
 Zbl = {0861.60003}
}

@misc{AF14,
    AUTHOR = {Aldous, David and Fill, James Allen},
     TITLE = {Reversible {M}arkov Chains and Random Walks on Graphs},
      YEAR = {2002},
      NOTE = {Unfinished monograph, recompiled 2014, available 
      at \url{http://www.stat.berkeley.edu/$\sim$aldous/RWG/book.html}}
      }

@book{LPW17,
 author = {Levin, David A. and Peres, Yuval and Wilmer, Elizabeth L.},
 title = {{M}arkov chains and mixing times. {With} a chapter on ``{Coupling} from the past'' by {James} {G}. {Propp} and {David} {B}. {Wilson}.},
 edition = {2nd edition},
 isbn = {978-1-4704-2962-1; 978-1-4704-4232-3},
 year = {2017},
 publisher = {Providence, RI: American Mathematical Society (AMS)},
 language = {English},
 zbMATH = {6813269},
 Zbl = {1390.60001}
}

@incollection{AB92,
 author = {Aldous, David J. and Brown, Mark},
 title = {Inequalities for rare events in time-reversible {Markov} chains. {I}.},
 booktitle = {Stochastic inequalities. Collection of papers of conference, one of the 1991 AMS-IMS-SIAM joint summer research conferences, Seattle, WA, USA, July 1991},
 isbn = {0-940600-29-3},
 pages = {1--16},
 year = {1992},
 publisher = {Hayward, CA: IMS, Institute of Mathematical Statistics},
 language = {English},
 doi = {10.1214/lnms/1215461937},
 zbMATH = {6980216},
 Zbl = {1400.60096}
}

@misc{HLW25,
      title={A Probabilistic Proof for Stable Fluctuations in the Extremal Process of Branching {B}rownian Motion}, 
      author={Lisa Hartung and Oren Louidor and Tianqi Wu},
      year={2025},
      eprint={2504.21455},
      archivePrefix={arXiv},
      primaryClass={math.PR},
      url={https://arxiv.org/abs/2504.21455}, 
       note = {arXiv:2504.21455}
}

@misc{CHL19,
 author = {Cortines, Aser and Hartung, Lisa and Louidor, Oren},
 title = {Decorated random walk restricted to stay below a curve (supplement material)},
 year = {2019},
 howpublished = {Preprint, {arXiv}:1902.10079 [math.{PR}] (2019)},
 url = {https://arxiv.org/abs/1902.10079},
 note = {arXiv:1902.10079}
}

@article{BDZ16,
 author = {Bramson, Maury and Ding, Jian and Zeitouni, Ofer},
 title = {Convergence in law of the maximum of the two-dimensional discrete {Gaussian} free field},
 fjournal = {Communications on Pure and Applied Mathematics},
 journal = {Commun. Pure Appl. Math.},
 issn = {0010-3640},
 volume = {69},
 number = {1},
 pages = {62--123},
 year = {2016},
 language = {English},
 doi = {10.1002/cpa.21621},
 zbMATH = {6525760},
 Zbl = {1355.60046}
}

@misc{LS24,
      title={Tightness for the Cover Time of Wired Planar Domains}, 
      author={Oren Louidor and Santiago Saglietti},
      year={2024},
      eprint={2406.11034},
      archivePrefix={arXiv},
      primaryClass={math.PR},
      url={https://arxiv.org/abs/2406.11034}, 
}

@Article{arous2008,
  Title                    = {The arcsine law as a universal aging scheme for trap models.},
  Author                   = {Ben Arous, G{\'e}rard and {\v{C}}ern{\`y}, Ji{\v{r}}{\'\i}},
  Journal                  = {Comm. Pure Appl. Math.},
  Year                     = {2008},
  Number                   = {3},
  Pages                    = {289-329},
  Volume                   = {61}
}

@Article{arous2006course,
  Title                    = {Course 8 Dynamics of trap models},
  Author                   = {Ben Arous, G{\'e}rard and {\v{C}}ern{\`y}, Ji{\v{r}}{\'\i}},
  Journal                  = {Les Houches},
  Year                     = {2006},
  Pages                    = {331--394},
  Volume                   = {83},

  Publisher                = {Elsevier}
}

@Article{ding2013exponential,
  Title                    = {Exponential and double exponential tails for maximum of two-dimensional discrete {G}aussian free field},
  Author                   = {Ding, Jian},
  Journal                  = {Probability Theory and Related Fields},
  Year                     = {2013},
  Number                   = {1-2},
  Pages                    = {285--299},
  Volume                   = {157},

  Publisher                = {Springer}
}

@Book{LawL2010,
  Title                    = {Random Walk: A Modern Introduction},
  Author                   = {Lawler, G.F. and Limic, V.},
  Publisher                = {Cambridge University Press},
  Year                     = {2010},
  Series                   = {Cambridge Studies in Advanced Mathematics}
}

@Article{matthews1988,
  Title                    = {Covering problems for {M}arkov chains},
  Author                   = {Matthews, P.},
  Journal                  = {Ann. Probab.},
  Year                     = {1988},
  Number                   = {3},
  Pages                    = {1215-1228},
  Volume                   = {16}
}

@article {BL1,
    AUTHOR = {Biskup, Marek and Louidor, Oren},
     TITLE = {Extreme local extrema of two-dimensional discrete {G}aussian
              free field},
   JOURNAL = {Comm. Math. Phys.},
  FJOURNAL = {Communications in Mathematical Physics},
    VOLUME = {345},
      YEAR = {2016},
    NUMBER = {1},
     PAGES = {271--304},
      ISSN = {0010-3616,1432-0916},
   MRCLASS = {60G15 (60G70)},
  MRNUMBER = {3509015},
MRREVIEWER = {Yoshihiro\ Abe},
       DOI = {10.1007/s00220-015-2565-8},
       URL = {https://doi.org/10.1007/s00220-015-2565-8},
}

@article {BL2,
    AUTHOR = {Biskup, Marek and Louidor, Oren},
     TITLE = {Conformal symmetries in the extremal process of
              two-dimensional discrete {G}aussian free field},
   JOURNAL = {Comm. Math. Phys.},
  FJOURNAL = {Communications in Mathematical Physics},
    VOLUME = {375},
      YEAR = {2020},
    NUMBER = {1},
     PAGES = {175--235},
      ISSN = {0010-3616,1432-0916},
   MRCLASS = {60G15 (60G70 83C45)},
  MRNUMBER = {4082182},
       DOI = {10.1007/s00220-020-03698-0},
       URL = {https://doi.org/10.1007/s00220-020-03698-0},
}

@article {BL3,
    AUTHOR = {Biskup, Marek and Louidor, Oren},
     TITLE = {Full extremal process, cluster law and freezing for the
              two-dimensional discrete {G}aussian free field},
   JOURNAL = {Adv. Math.},
  FJOURNAL = {Advances in Mathematics},
    VOLUME = {330},
      YEAR = {2018},
     PAGES = {589--687},
      ISSN = {0001-8708},
   MRCLASS = {60G15 (60G55 60G57 60G70 62G30 82B41)},
  MRNUMBER = {3787554},
       DOI = {10.1016/j.aim.2018.02.018},
       URL = {https://doi.org/10.1016/j.aim.2018.02.018},
}

@incollection {biskuppims,
    AUTHOR = {Biskup, Marek},
     TITLE = {Extrema of the two-dimensional discrete {G}aussian free field},
 BOOKTITLE = {Random graphs, phase transitions, and the {G}aussian free
              field},
    SERIES = {Springer Proc. Math. Stat.},
    VOLUME = {304},
     PAGES = {163--407},
 PUBLISHER = {Springer, Cham},
      YEAR = {[2020] \copyright 2020},
      ISBN = {978-3-030-32011-9; 978-3-030-32010-2},
   MRCLASS = {60G15 (60G70 60K37)},
  MRNUMBER = {4043225},
       DOI = {10.1007/978-3-030-32011-9\{_}3}

@article{biskup2024near,
  title={Near-maxima of the two-dimensional discrete {G}aussian free field},
  author={Biskup, Marek and Gufler, Stephan and Louidor, Oren},
  journal={Annales de l'Institut Henri Poincar{\'e} (B) Probabilit{\'e}s et Statistiques},
  volume={60},
  number={1},
  pages={281--311},
  year={2024}
}

@article{carpentier2001glass,
  title={Glass transition of a particle in a random potential, front selection in nonlinear renormalization group, and entropic phenomena in {L}iouville and sinh-{G}ordon models},
  author={Carpentier, David and Le Doussal, Pierre},
  journal={Physical review E},
  volume={63},
  number={2},
  pages={026110},
  year={2001},
  publisher={APS}
}

@article{castillo2001freezing,
  title={Freezing of dynamical exponents in low dimensional random media},
  author={Castillo, Horacio E and Le Doussal, Pierre},
  journal={Physical review letters},
  volume={86},
  number={21},
  pages={4859},
  year={2001},
  publisher={APS}
}

@article{derrida1981random,
  title={Random-energy model: An exactly solvable model of disordered systems},
  author={Derrida, Bernard},
  journal={Physical Review B},
  volume={24},
  number={5},
  pages={2613},
  year={1981},
  publisher={APS}
}

@article{arous2003glauberII,
  title={{G}lauber Dynamics of the Random Energy Model {II}},
  author={Arous, G{\'e}rard Ben and Bovier, Anton and Gayrard, V{\'e}ronique},
  journal={Communications in mathematical physics},
  volume={236},
  number={1},
  pages={1--54},
  year={2003},
  publisher={Springer}
}

@article{arous2003glauberI,
  title={{G}lauber Dynamics of the Random Energy Model {I}},
  author={Arous, G{\'e}rard Ben and Bovier, Anton and Gayrard, V{\'e}ronique},
  journal={Communications in mathematical physics},
  volume={235},
  number={1},
  pages={379--425},
  year={2003},
  publisher={Springer}
}

@article{arous2006aging,
  title={Aging in two-dimensional {B}ouchaud's model},
  author={Arous, G{\'e}rard Ben and {\v{C}}ern{\`y}, Ji{\v{r}}{\'\i} and Mountford, Thomas},
  journal={Probability theory and related fields},
  volume={134},
  number={1},
  pages={1--43},
  year={2006},
  publisher={Springer}
}

@article{arous2008arcsine,
  title={The arcsine law as a universal aging scheme for trap models},
  author={Arous, G{\'e}rard Ben and {\v{C}}ern{\`y}, Ji{\v{r}}{\'\i}},
  journal={Communications on Pure and Applied Mathematics},
  volume={61},
  number={3},
  pages={289--329},
  year={2008},
  publisher={Wiley Online Library}
}

@article{arguin2016extrema,
  title={Extrema of Log-correlated Random Variables},
  author={Arguin, Louis-Pierre},
  journal={Advances in Disordered Systems, Random Processes and Some Applications},
  pages={166},
  year={2016},
  publisher={Cambridge University Press}
}

\end{document}


Setup, results and context
	Setup
	Main results
		Aging
		Trapping Landscape 
			Parts from Section 1 + Section 2.
			Proofs deferred.
	Background

Proof of main result
	Section 4
		Top-Level approach.
		Proofs for hitting times deferred.

Proof for the trapping landscape results.
	Use Local extrema estimates
	(Section 2).

Proofs for local estimates
	Section 3.
	
Appendix:
	As now plus proof of hitting times estimates.